\patchcmd{\thebibliography}{*}{}{}{}
\pretocmd\thebibliography{\csname c@secnumdepth\endcsname=-2 }{}{}
\patchcmd{\theindex}{*}{}{}{}
\pretocmd\theindex{\csname c@secnumdepth\endcsname=-2 }{}{}
\numberwithin{equation}{chapter}
\theoremstyle{plain}
\newtheorem{theorem}{Theorem}[subsection]
\newtheorem{lemma}[theorem]{Lemma}
\newtheorem{prop}[theorem]{Proposition}
\newtheorem{cor}[theorem]{Corollary}
\newtheorem{theoreme}{Theorem}
\newtheorem{corollaire}[theoreme]{Corollary}
\newtheorem{hypothese}{Setting}
\theoremstyle{definition}
\newtheorem{definition}[theorem]{Definition}
\newtheorem{fact}[theorem]{Fact}
\newtheorem{example}[theorem]{Example}
\newtheorem{pbm}{Problem}
\theoremstyle{remark}
\newtheorem{remark}[theorem]{Remark}
\newtheorem{notation}[theorem]{Notation}
\newcommand{\Z}{\mathbf{Z}}
\newcommand{\R}{\mathbf{R}}
\newcommand{\C}{\mathbf{C}}
\newcommand{\Q}{\mathbf{Q}}
\newcommand{\N}{\mathbf{N}}
\newcommand{\F}{\mathbf{F}}
\newcommand{\expm}{\mathscr{E}xp\mathscr{M}_k}
\newcommand{\expp}{\mathscr{E}xp\mathscr{M}}
\newcommand{\kvar}{\mathrm{KVar}}
\newcommand{\svar}{\mathrm{KVar}^+}
\newcommand{\evar}{\mathrm{KExpVar}}
\newcommand{\sevar}{\mathrm{KExpVar}^+}
\newcommand{\LL}{\mathbf{L}}
\newcommand{\M}{\mathscr{M}}
\newcommand{\loc}{\mathrm{loc}}
\newcommand{\OO}{\mathcal{O}}
\newcommand{\ord}{\mathrm{ord}}
\renewcommand{\t}{\mathbf{t}}
\newcommand{\four}{\mathscr{F}}
\newcommand{\dx}{\mathrm{d}}
\newcommand{\Sch}{\mathscr{S}}
\newcommand{\A}{\mathbf{A}}
\newcommand{\Proj}{\mathbf{P}}
\renewcommand{\div}{\mathrm{div}}
\newcommand{\spec}{\mathrm{Spec}\,}
\newcommand{\Sym}{\mathfrak{S}}
\newcommand{\Res}{\mathrm{Res}}
\newcommand{\res}{\mathrm{res}}
\newcommand{\Cl}{\mathrm{Cl}}
\newcommand{\an}{\mathrm{an}}
\newcommand{\Ad}{\mathbb{A}}
\renewcommand{\ker}{\mathrm{Ker}}
\newcommand{\pt}{\mathrm{pt}}
\newcommand{\aff}{\A_k^{(M,N)}}
\newcommand{\card}{\mathrm{Card}}
\newcommand{\arr}{\longrightarrow}
\newcommand{\scr}{\mathscr}
\newcommand{\id}{\mathrm{id}}
\newcommand{\pr}{\mathrm{pr}}
\newcommand{\I}{\mathcal{I}}
\renewcommand{\bar}{\overline}
\newcommand{\n}{\mathbf{n}}
\newcommand{\m}{\mathbf{m}}
\newcommand{\y}{\mathbf{y}}
\renewcommand{\1}{\mathbf{1}}
\newcommand{\T}{\mathbf{T}}
\newcommand{\be}{\beta}
\newcommand{\al}{\alpha}
\renewcommand{\i}{\iota}
\renewcommand{\phi}{\varphi}
\newcommand{\G}{\mathbf{G}}
\newcommand{\eps}{\epsilon}
\renewcommand{\a}{\mathfrak{a}}
\renewcommand{\b}{\mathfrak{b}}
\newcommand{\ce}{\mathfrak{c}}
\newcommand{\calP}{\mathcal{P}}
\newcommand{\calQ}{\mathcal{Q}}
\newcommand{\conv}{\oplus}
\newcommand{\convv}{\star}
\newcommand{\totvan}{\widetilde{\phi}^{\ \mathrm{tot}}}
\newcommand{\totnear}{\widetilde{\psi}^{\ \mathrm{tot}}}
\newcommand{\atotvan}{\phi^{\mathrm{tot}}}
\newcommand{\atotnear}{\psi^{\mathrm{tot}}}
\newcommand{\tot}{\mathrm{tot}}
\newcommand{\add}{\mathrm{add}}
\newcommand{\mhm}{\mathrm{MHM}}
\newcommand{\rat}{\mathrm{rat}}
\newcommand{\bboxtimes}{\mathchoice{%
\raisebox{-2pt}{$\displaystyle{\mathlarger{\mathlarger{\boxtimes}}}$}}
             {\raisebox{-1pt}{$\mathlarger{\mathlarger{\boxtimes}}$}}
            {\raisebox{-0.5pt}{$\mathlarger{\mathlarger{\boxtimes}}$}}
           {\raisebox{-0.2pt}{$\mathlarger{\mathlarger{\boxtimes}}$}}}
\newcommand{\hdg}{\mathrm{Hdg}}
\newcommand{\gr}{\mathrm{Gr}}
\newcommand{\mon}{\mathrm{mon}}
\def\twtimes{{\kern .1em\mathop{\boxtimes}\limits^{\scriptscriptstyle{T}}\kern .1em}}
\newcommand{\bigtwtimes}[1]{\mathchoice{%
\raisebox{-2pt}{$\overset{\mathclap{\scriptstyle{T}}}{\displaystyle{\mathlarger{\mathlarger{\boxtimes}}}} {\vphantom{I}}^{#1} $}}
             {\raisebox{-1pt}{$\overset{\scriptscriptstyle{T}}{\mathlarger{\mathlarger{\boxtimes}}} {\vphantom{I}}^{#1}$}}
            {\raisebox{-0.5pt}{$\overset{\scriptscriptstyle{T}}{\mathlarger{\mathlarger{\boxtimes}} {\vphantom{I}}^{#1}}$}}
           {\raisebox{-0.2pt}{$\overset{\scriptscriptstyle{T}}{\mathlarger{\mathlarger{\boxtimes}}{\vphantom{I}}^{#1}}$}}}
\def\twotimes{{\kern .1em\mathop{\otimes}\limits^{T}\kern .1em}}
\newcommand{\vanhdg}{\phi^{\tot}}
\newcommand{\base}{R}
\title{Motivic Euler products and motivic height zeta functions}
\author{Margaret Bilu}
\address{Courant Institute of Mathematical Sciences\\
 251 Mercer St\\
  New York NY 10012}
\email{bilu\at cims.nyu.edu}
\urladdr{https://cims.nyu.edu/~bilu/}
\begin{document}

\frontmatter

\begin{abstract} 
A motivic height zeta function associated to a family of varieties parametrised by a curve is the generating series of the classes, in the Grothendieck ring of varieties, of moduli spaces of sections of this family with varying degrees. This memoir is devoted to the study of the motivic height zeta function associated to a family of varieties with generic fiber having the structure of an equivariant compactification of a vector group. It is a motivic analogue of Chambert-Loir and Tschinkel's work \cite{CLTi} solving Manin's problem for integral points of bounded height on partial equivariant compactifications of vector groups over number fields, and a generalisation of Chambert-Loir and Loeser's paper \cite{CL}. Our main theorem, stated in the introduction and proved in the final chapter of the text, describes the convergence of this motivic height zeta function with respect to a topology on the Grothendieck ring of varieties coming from the theory of weights in cohomology. We deduce from it the asymptotic behaviour, as the degree goes to infinity, of a positive proportion of the coefficients of the Hodge-Deligne polynomial of the above moduli spaces: in particular, we get an estimate for their dimension and the number of components of maximal dimension.

Our method, based on the Poisson summation formula, is inspired by the one used in \cite{CLTi}, but the motivic setting requires several major adaptations besides the ones already present in \cite{CL}. The main contribution of this text is the introduction of a notion of \textit{motivic Euler product} for series with coefficients in the Grothendieck ring of varieties, analogous to the classical Euler products encountered in number theory. It relies on the construction of geometric objects called symmetric products, which generalise the notion of symmetric power of a variety. These Euler products and symmetric products allow moreover a suitable extension of Hrushovski and Kazhdan's motivic Poisson formula, which is used to rewrite the motivic height zeta function as a sum, over characters, of series with coefficients in a Grothendieck ring of varieties with exponentials. The aforementioned weight topology on this ring is constructed using Saito's theory of mixed Hodge modules, as well as Denef and Loeser's motivic vanishing cycles and Thom-Sebastiani theorem. 
%
\end{abstract}
%
\thanks{The author thanks her PhD advisor, Antoine Chambert-Loir, for his unfailing help and support. She is also indebted to David Bourqui for his thorough reading of this manuscript and his abundant advice on how to improve it, and to Sean Howe, for his help in removing the additional hypothesis on multiplicativity of Euler products in the previous version of this text.}

\maketitle

\tableofcontents

\mainmatter
\chapter{Introduction}

\section{Manin's problem in the arithmetic setting}

One of the greatest concerns of number theory and algebraic geometry in the previous decades has been to understand the subtle link between the distribution of rational  or integral points on a variety defined over a number field, and some of it geometric invariants. Let $F$ be a number field and $X$ a projective variety over ~$F$, endowed with an ample line bundle~$L$. Such a line bundle defines (up to adding a bounded function) a height function $H:X(F)\to \R_{+}$ such that for any real number~$B$, the set
$$\{x\in X(F), H(x)\leq B\}$$
is finite (this is called \textit{Northcott's property}). We then denote, for any Zariski open subset~$U$ of~$X$, 
$$N_{U,H}(B) := \#\{x\in U(F), H(x)\leq B\}.$$ When the set $U(F)$ is itself infinite, one can ask about the asymptotic behaviour of $N_{U,H}(B)$ as~$B$ goes to $+\infty$. In all known cases, it is of the form
                     $$N_{U,H}(B)\underset{B\to\infty}{\sim} C B^{a}(\log B)^{b-1}$$
                     with $C>0$ and $a\geq 0$ being real numbers, and $b\in \frac{1}{2}\Z$, $b\geq 1$. 
                     
                    A series of conjectures, or rather questions, stated by Manin and his coauthors in \cite{FManT} et \cite{BM} in the end of the 1980s initiated a vast programme aimed at giving a geometric interpretation of the exponents $a$ and $b$, in terms of the classes of the line bundle $L$ and the anti-canonical bundle $K_X$ in the N\'{e}ron-Severi group of $X$, and of the cone of effective classes of divisors on $X$. To get a plausible statement, one of course has to make some restrictions on the variety $X$. In particular, one can only hope to obtain an asymptotic describing the geometry of $X$ properly if the set of rational points~$X(F)$ is Zariski dense in~$X$. Since Lang's conjecture predicts that this should never happen for varieties of general type, Manin-type problems often restrict to Fano varieties, that is, varieties with ample anti-canonical bundle. One sometimes considers a larger class of varieties, called \textit{almost Fano} (we refer to \cite{PeyreBourbaki}, Définition 3.11, for a precise definition) having a (not necessarily ample but) big anti-canonical bundle,  which is a sufficient condition for the height function to satisfy Northcott's property on a nonempty open subset of the variety $X$.  For such varieties, the conjectures take on a particularly simple form if one counts points with respect to the anti-canonical height: 
                     \begin{pbm} \label{mainquest}Let~$X$ be an almost Fano variety defined over a number field~$F$, such that~$X(F)$ is Zariski dense in~$X$. Let $H$ be a height function relative to the anti-canonical bundle on~$X$. Does there exist a dense open subset~$U$ of~$X$ satisfying
                     \begin{equation}\label{asymptotic}N_{U,H}(B)\underset{B\to \infty}{\sim} C_HB(\log B)^{r-1}\end{equation}
                     where $r$ is the Picard rank of~$X$, and $C_H$ a positive constant?
                     \end{pbm}  
It is necessary to autorise restriction to an open subset, in order to take into account the possible accumulation of rational points inside proper closed subsets which overrule the distribution on the complementary open subset. Let us also mention a more precise form of this conjecture, due to Peyre, who in \cite{Peyre} proposed an interpretation for the constant~$C_H$ in terms of volumes of certain adelic spaces. Furthermore, in~\cite{BT98} Batyrev and Tschinkel adjusted Peyre's formula for the constant by adding a cohomological factor. 
                     
                 The first result of this type has been obtained by Schanuel \cite{Schanuel}, long before the conjectures were formulated, in the case where $X = U = \Proj^n_F$, giving moreover an explicit formula for the constant~$C_H$.   In~\cite{FManT}, it is shown that formula~(\ref{asymptotic}) holds for flag varieties (with~$U=X$).  Since then,  problem \ref{mainquest}, sometimes even in Peyre's more precise form, could be answered affirmatively in many special cases, and using many different methods, among which we can quote harmonic analysis, universal torsors and the circle method. However, there also exist some counterexamples like the one by  Batyrev and Tschinkel~\cite{BT98}, which is why we preferred to state the above conjecture in the form of a question. 
                  The main tool, common to most of the existing approaches, is the \textit{height zeta function}  
                    $$\zeta_{U,H}(s) = \sum_{x\in U(F)}H(x)^{-s},$$
                a function defined on a portion of the complex plane, and the convergence properties of which (abscissa of convergence, order of the first pole, coefficient at this pole) are linked, via tauberian theorems, to the exponents and the constant appearing in the asymptotic we are looking for. More precisely, problem~\ref{mainquest} may then essentially be reformulated in the following manner:
                    \begin{pbm}\label{zetaquest} Let~$X$ be an almost Fano variety over a number field~$F$, such that~$X(F)$ is Zariski dense in $X$. Let $H$ be a height function relative to the anti-canonical bundle on~$X$. Does there exist a dense open subset~$U$ of $X$ such that the $\zeta_{U,H}(s)$ converges absolutely for $\mathrm{Re}(s)>1$, and admits a meromorphic continuation on the open set $\{\mathrm{Re}(s) > 1-\delta\}$ for some real number $\delta>0$, with a unique pole of order $r = \mathrm{rg}\ \mathrm{Pic}(X)$ at $s=1$? 
                    \end{pbm}
                   In the same way as for problem~\ref{mainquest}, there is a more precise version of this question, requiring that the coefficient of the main term of~$\zeta_{U,H}(s)$ at 1 should be the constant predicted by Peyre. 
               \section{Manin's problem via harmonic analysis} \label{sect.maninharmonique}     
The aformentioned case of flag varieties was the first one solved using techinques coming from harmonic analysis: the proof relied on the fact that for these varieties, the height zeta function is an Eisenstein series, the analytic properties of which could be determined using results by Langlands.   Then, in the middle of the 1990s, Batyrev and Tschinkel treated the case of toric varieties, the open subset~$U$ being the open orbit of the torus action on the variety. Their argument relies on the Poisson summation formula and the torus action plays a key role there. It could be generalised to many other varieties endowed with an action of an algebraic group with an open orbit, e.g. certain equivariant compactifications of algebraic groups. An \textit{equivariant compactification} of an algebraic group~$G$ is a (projective and smooth) variety~$X$ which has an open subset isomorphic to~$G$ and which is endowed with an action $G\times X\to X$ of $G$ extending the group law $G\times G \to G$. Apart from toric varieties which are exactly the equivariant compactifications of algebraic tori, Manin's problem has been solved for equivariant compactifications of vector groups (\cite{CLT}, \cite{CLTi}), as well as for compactifications of certain non-commutative algebraic groups (\cite{ShTTB03}, \cite{ShTTB07}, \cite{STH}, \cite{STU}, \cite{TTB}, \cite{TT}). 

Let us give an outline of the argument in the case of equivariant compactifications of vector groups, due to Chambert-Loir and Tschinkel, and which will play a central role in this text. Let~$X$ be an equivariant compactification of the group $G = \G^n_a$ for some $n\geq 1$, defined over a number field~$F$. For each place~$v$ of the field~$F$, there is a local height~$H_v:G(F_v)\to \R_+$, the global height~$H$ being given by the formula
                $$H(x) = \prod_{v}H_v(x),$$
                which gives a way of extending~$H$ to the locally compact group~$G(\mathbb{A}_F)$, where~$\Ad_F$ is the group of adeles of~$F$.  The group~$G(\Ad_F)$ is self-dual, and~$G(F)$ may be seen as a discrete subgroup of~$G(\Ad_F)$ with orthogonal identified to itself, so that, when applying the Poisson summation formula to the function $H^{-s}$ (after checking all necessary integrability conditions), one obtains:
                \begin{equation}\label{intro.formulepoisson}\zeta_{H,G}(s) = \sum_{x\in G(F)}H(x)^{-s} = \sum_{\xi\in G(F)}\four(H^{-s})(\xi),\end{equation}
                where $\four$ denotes Fourier transformation. This equality is valid whenever the real part of~$s$ is large enough. Moreover, the function~$H^{-s}$ is invariant modulo a compact subgroup of~$G(\Ad_F)$, so that its Fourier transform is supported inside such a subgroup, which reduces the summation over~$G(F)$ in the right-hand side to a summation over a lattice. The upshot of this procedure is that it rearranges the terms of the height zeta function in a convenient way, and the term corresponding to the trivial character (that is, $\xi=0$), in all cases where this method has worked, and when $H$ is the anti-canonical height, happens to be the only carrier of the first pole with the correct order.  To prove this, one uses again the decomposition into local factors $$\four(H^{-s})(\xi) = \prod_{v}\four(H^{-s}_v)(\xi_v)$$ and one studies the different factors separately. The latter take the form of Igusa-type integrals, for which one gets, for almost all finite places~$v$, an explicit expression by reduction to the residue field. Their convergence properties are then determined using bounds coming from Lang-Weil estimates. At archimedean places, successive integrations by parts yield bounds with polynomial growth in $s$. For the finite number of remaining places, cruder bounds suffice. In the case where the considered height is the anti-canonical height, one concludes from these local estimates that the term corresponding to $\xi =0$ has a pole at~$s=1$, with the coefficient predicted by Peyre, of order the rank of the Picard group, that the other terms have poles with strictly smaller orders, and that the function~$s\mapsto (s-1)^{\mathrm{rg Pic}(X)}\zeta_{H,G}(s)$ extends holomorphically to $\{s\in\C,\ \Re(s) >1-\delta\}$ for some real number~$\delta >0$. 
                
         Though the problem of counting integral solutions of diophantine equations is natural, no one had adressed it from the geometric angle suggested by Manin, before, in their paper~\cite{CLTi}, Chambert-Loir and Tschinkel also proposed a solution to Manin's problem for integral points on \textit{partial} equivariant compactifications of vector groups. Such a partial compactification~$U$ is seen as the complement, in an equivariant compactification~$X$, of a divisor~$D$ which geometrically has strict normal crossings.  As before, it contains a dense open subset~$G$ isomorphic to the additive group~$\G^{n}_{a}$. We fix models of $X,U,D$ above the ring of integers $\OO_F$, a finite set of places~$S$ containing the archimedean places, and one aims at counting the points of~$G(F)$ which are $S$-integral with respect to the chosen model of~$U$. Note that this problem includes the previous one, because in the case where~$D=\varnothing$, if the chosen model for~$D$ is empty as well, then all points of~$G(F)$ are $S$-integral, by projectivity of $X$. For this counting problem, the relevant height isn't the anti-canonical one, but the \textit{log-anticanonical} height, that is, the one associated to the (big) line bundle~$-(K_X + D)$. 
                 Chambert-Loir and Tschinkel then prove that the number~$N(B)$ of $S$-integral points of log-anticanonical height smaller than~$B$ satisfies the asymptotic
                $$N(B)\sim C B(\log B)^{b-1}$$
                where $C$ is real positive constant, and where the exponent~$b$ is given by the formula
                \begin{equation}\label{formuleexposant}b = \mathrm{rg}\ \mathrm{Pic}(U) + \sum_{v\in S}(1 + \dim \scr{C}^{\mathrm{an}}_{F_v}(D)).\end{equation}
                Here, for every~$v$, $\scr{C}^{\mathrm{an}}_{F_v}(D)$ is a simplicial complex encoding the incidence properties of the components of~$D$ at the place~$v$. The term $1 + \dim \scr{C}^{\mathrm{an}}_{F_v}(D)$ corresponds exactly to the maximal number of components of~$D$ over~$F_v$ with intersection having~$F_v$-points. Of course, in the special case~$U = X$, all these terms are zero, and we recover the previous result for rational points.
           \section{Manin's problem over function fields}     
         For the moment, we have always restricted ourselves to the case where~$F$ is a number field. The case where~$F$ is the function field~$k(C)$ of a smooth projective curve~$C$ over a finite field~$k$ of cardinality~$q$ was first mentioned by Batyrev and Manin (\cite{BM}, 3.13). In this setting, the heights used (when suitably normalised) will take their values in the set $q^{\Z}$ of integral powers of $q$, which forbids the existence of an asymptotic like the one predicted by problem~\ref{mainquest}. Nevertheless, problem~\ref{zetaquest} remains valid, up to a slight modification to take into account the fact that the corresponding zeta function will in this case be $\frac{2i\pi}{\log(q)}$-periodic, namely that we need to authorise, in addition to the pole at~1, poles at $1 + \frac{2i\pi m}{\log(q)}$ for every integer $m$. In other words, it will be more suitable here to consider the height zeta function as a function of the variable~$t = q^{-s}$, which yields the following reformulation of problem~\ref{zetaquest}:         
          \begin{pbm}\label{zetafunctionquest} Let $C$ be a smooth projective connected curve over~$\F_q$, with function field denoted by~$F$. Let~$X$ be an almost Fano variety over the field~$F$, such that~$X(F)$ is Zariski dense in~$X$. Let~$H$ be  a height relative to the anti-canonical bundle of~$X$. Does there exist an open dense subset~$U$ of~$X$ such that the series~$\zeta_{U,H}(t)$ converges absolutely on the disc defined by $|t| < q^{-1}$, and extends to a meromorphic function on the disc defined by $|t| < q^{-1 + \delta}$ for some real number $\delta>0$, with a unique pole of order $r = \mathrm{rg}\ \mathrm{Pic}(X)$ at $t = q^{-1}$ ? 
         \end{pbm}
         Let us point out that in general, one can however have other poles of orders~$<r$ on the circle  $|t| = q^{-1}$, in particular for certain split toric varieties. In the functional case, the problem acquires an additional geometric interpretation: if one chooses a model~$\mathcal{X}$ of~$X$ above the curve~$C$, the rational points of an open subset~$U$ of~$X$ correspond to sections~$\sigma: C\to \mathcal{X}$ of the structural morphism $\pi:\mathcal{X}\to C$ such that, denoting $\eta_C$ the generic point of~$C$, one has $\sigma(\eta_C)\in U(F)$. Given a (generically ample, or at least big) line bundle~$\mathcal{L}$ on~$\mathcal{X}$, the height of such a section with respect to~$\mathcal{L}$ is given by~$q^{d}$, where $d$ is the degree of the line bundle~$\sigma^{*}\mathcal{L}$ over~$C$. The height zeta function then takes the form
         $$\zeta_{U,\mathcal{L}}(s) = \sum_{x\in U(F)} H(x)^{-s} = \sum_{d\geq 0}m_dq^{-ds},$$
         with $$m_d = |\{\sigma:C\to \mathcal{X},\ \sigma(\eta_C) \in U(F),\ \deg\sigma^{*}\mathcal{L} = d\}|.$$
         Thus, information about the convergence of the height zeta function will furnish the asymptotic of the number~$m_d$ of points of fixed height~$q^d$, which in this setting will be the analogue of formula~(\ref{asymptotic}).

        Manin's problem on function fields has been studied very scarcely until now: one must nevertheless quote the works of Bourqui \cite{Bou02,Bou03,Bou11}, which completely solve the case of toric varieties (employing harmonic analysis but also the universal torsor method), as well as the papers~\cite{LY} and~\cite{Peyre12} which treat independently the case of generalised flag varieties, Peyre's work containing moreover an interpretation of the constant. Peyre's method is analogous to Franke, Manin and Tschinkel's for flag varieties over number fields in~\cite{FManT}, the role of the results of Langlands being played by those of Morris about Eisenstein series over function fields. 
         
         The recent advances of \textit{motivic integration} finally suggest the following generalisation of Manin's problem over function fields: the sections $\sigma:C\to \mathcal{X}$ such that $\sigma(\eta_C) \in G(F)$ and $ \deg\sigma^{*}\mathcal{L} = d$ have a \textit{moduli space}~$M_d$ which is a quasi-projective $k$-scheme. The interest of the geometric study of such moduli spaces in relation with Manin's conjectures was first pointed out by Batyrev. More precisely, following an idea of Peyre, one can ask not only about the asymptotic cardinality~$m_d$ of $M_d(k)$, but more generally about the properties, when~$d$ is large, of the class of~$M_d$ in the Grothendieck ring of varieties $\kvar_k$ over~$k$. As a group, the latter is defined as the quotient of the free abelian group on the isomorphism classes of varieties over~$k$ by relations of the form $$X - U - Z$$
        for any variety~$X$ and any closed subscheme~$Z$ of~$X$ with open complement~$U$. The ring structure comes from the product of varieties: using brackets to denote classes in~$\kvar_k$, one has $[X][Y] = [X\times_kY]$ for all $k$-varieties $X$ and~$Y$. We denote by $\LL = [\A^1_k]$ the class of the affine line, and we also often consider the localised Grothendieck ring~$\M_{\C} = \kvar_k[\LL^{-1}]$. 
        
      The class of a variety in the Grothendieck ring contains a large amount of geometric information  about this variety: indeed, there exist numerous \textit{motivic measures}, that is, ring morphisms from~$\kvar_k$ to other rings, associating to a class~$[X]$ various geometric invariants of~$X$. Among these, in the case where the field is finite, on can quote the \textit{counting measure}
        $$\begin{array}{ccc} \kvar_{k}&\to& \Z \\
                                  \left[X\right] &\mapsto & \# X(k)\end{array}$$
                                  which recovers the number of rational points of the variety. For any field~$k$, fixing a separable closure~$k^s$ of~$k$ and~$\ell$ a prime number coprime to the characteristic of~$k$, the Euler-Poincaré polynomial (associated to cohomology with coefficients in~$\Q_{\ell}$) also  defines a motivic measure 
                                  $$\begin{array}{ccc} \kvar_{k}&\to& \Z[t] \\
                                  \left[X\right] &\mapsto & EP(X)(t) \end{array}$$
                                  which for a smooth and projective variety~$X$ is given by
                                  $$EP(X)(t) = \sum_{i=0}^{2\dim X}(-1)^i\dim_{\Q_{\ell}}H^{i}_{\text{ét}}(X\otimes_kk^s,\Q_{\ell})t^i.$$
                                  Another example of the same flavour, and which will be important for us, is the Hodge-Deligne polynomial:
                                 $$\begin{array}{ccc} \kvar_{\C}&\to& \Z[u,v] \\
                                  \left[X\right] &\mapsto & HD(X)(u,v)\end{array}$$
                                  sending the class of a projective and smooth complex variety~$X$ to the polynomial
                                  $$HD(X)(u,v) = \sum_{0\leq p,q\leq \dim X} (-1)^{p+q}h^{p,q}(X)u^pv^q$$
                                  defined from the Hodge numbers $h^{p,q}(X)$ of~$X$. Noting that $HD(\LL) = uv$, one can moreover extend this measure to a ring morphism
                                  $$HD:\M_{\C}\to \Z[u,v,(uv)^{-1}]$$
                                  defined on the localised Grothendieck ring $\M_{\C}$.

                         Now that these definitions have been given, we come to the fundamental observation that we can in fact make sense  of a version of Manin's problem over the function field~$k(C)$ of a curve even when the base field~$k$ is not necessarily finite:  the question would now be to investigate certain geometric invariants of the space~$M_d$ when~$d$ goes to infinity, for example its dimension, or its number of irreducible components of maximal dimension. In this setting, which we will call \textit{motivic}, the notion of height zeta function takes the form of the series 
                               \begin{equation}\label{zetafunctionformula}Z(T) = \sum_{d\geq 0}[M_d]T^d\in \kvar_k[[T]]\end{equation}  with coefficients in the Grothendieck ring of varieties, called the \textit{motivic height zeta function}. Such functions have been studied by Bourqui in~\cite{Bou09} for certain toric varieties. One of the first difficulties appearing when dealing with these is the question of convergence of such a series. The localised Grothendieck ring $\M_k = \kvar_{k}[\LL^{-1}]$ has a natural topology induced by the \textit{dimensional filtration}: for every $n\in \Z$ we define $F_n\M_k$ to be the subgroup of~$\M_k$ generated by classes of the form $[X]\LL^{-m}$ where~$X$ is a $k$-variety such that $\dim(X) - m\leq n$. It is then natural to say that the above series converges at $\LL^{-s}$ if $\dim[M_d] - ds\to -\infty$ when $d\to +\infty$. Unfortunately, because of the coarseness of the notion of dimension, this convergence is not very amenable. In this text, we are going to use a slightly finer topology, coming from the theory of weights in cohomology.  Note that in~\cite{Bou09}, Bourqui nevertheless manages to prove convergence with respect to the dimensional filtration (in the Grothendieck ring of Chow motives), because in the case of the split toric varieties addressed in that paper, the dimensional filtration is sufficient to take into account the annihilation of maximal weights which is necessary for convergence. Later, in~\cite{Bou10}, Bourqui introduces a topology which is similar to ours, and the idea of which appears also in~\cite{Ekedahl}. 
                               \section{Main result}
                              The principal objective of this text is to obtain a motivic analogue of the theorem of Chambert-Loir and Tschinkel on counting integral points of partial equivariant compactifications of vector groups, described in the second half of section~\ref{sect.maninharmonique}. As in the arithmetic case, we are going to use harmonic analysis, which requires the construction of suitable objects in the motivic setting. We start by stating the main theorem.  
                               
\begin{hypothese}\label{hypothese.geom} Let~$C_0$ be a smooth quasi-projective connected curve over an algebraically closed field~$k$ of characteristic zero, let~$C$ be its smooth projective compactification, and let $S = C\setminus C_0$. We denote by $F =k(C)$ the function field of~$C$. We are given a projective irreducible $k$-scheme~$\mathcal{X}$ together with a non-constant morphism $\pi:\mathcal{X}\to C$, a Zariski open subset~$\mathcal{U}$ of~$\mathcal{X}$, and~$\mathcal{L}$ a line bundle over~$\mathcal{X}$. We make the following assumptions on the generic fibres $X = \mathcal{X}_F$, $U = \mathcal{U}_{F}$ and the line bundle $L = \mathcal{L}_{F}$:
\begin{itemize}\item $X$ is smooth, the open subset $U$ of $X$ contains a dense open subset~$G$ isomorphic to $\G^{n}_{a,F}$, and~$U$ and~$X$ are endowed with an action of~$G$ extending the group law of~$G$. In other words, $X$ (resp. $U$) is an equivariant (resp. partial equivariant) compactification of the additive group~$\G^n_{a}$. 
\item the boundary $\partial X = X\setminus U$ is a divisor~$D$ with strict normal crossings. 
\item the line bundle~$L$ on~$X$ is the log-anticanonical line bundle $-K_X(D)$. 
\end{itemize} 
\end{hypothese}

As above, we are interested in the moduli spaces of sections $\sigma: C\to \mathcal{X}$ such that $\sigma(\eta_C)\in \G^{n}_{a}(F)$ (where $\eta_C$ is the generic point of~$C$), but we restrict to those which correspond to $S$-integral points, which amounts to moreover requiring  $\sigma(C_0)\subset \mathcal{U}$. 
From the geometric point of view, the first condition means that such a section  $\sigma:C\to \mathcal{X}$ exits~$G$ only in a finite number of points of~$C$, called \textit{poles}, and the second one that for all $v\in C_0$, $\sigma(v)$ remains in~$\mathcal{U}$. If we denote by $(D_{\al})_{\al\in \scr{A}}$ the irreducible components of~$X\setminus G$, the log-anticanonical divisor can be written in the form
$\sum_{\al\in \scr{A}} \rho'_{\al}D_{\al}$
for positive integers $\rho'_{\al}$. The line bundle~$\mathcal{L}$ being generically log-anticanonical, it is of the form
$\mathcal{L} = \sum_{\al\in \scr{A}}\rho'_{\al}\mathcal{L}_{\al}$
where for all $\al\in \scr{A}$, the restriction of the line bundle~$\mathcal{L}_{\al}$ to the generic fibre is~$D_{\al}$. For $d\in\Z$ we denote by~$M_d$ the moduli space of sections~$\sigma$ satisfying the above conditions, and such that moreover $\deg \sigma^{*}\mathcal{L} = d$.  
 In view of the description of~$\mathcal{L}$ just given, up to a finite number of places (because~$\mathcal{L}$ may have a finite number of vertical components), only the poles of~$\sigma$ contribute to this degree, the contribution of each pole being the sum of the intersection degrees of~$\sigma$ at this pole with the divisors $\mathcal{L}_{\al}$  (the \textit{order} of the pole with respect to $\mathcal{L}_{\al}$), weighted by the integers $\rho'_{\al}$. One checks that the spaces~$M_d$ are empty for $d\ll 0$, so that one can define the motivic height zeta function by
\begin{equation}\label{def.fonctionzeta}Z(T) = \sum_{d\in \Z}[M_d]T^d \in\kvar_k[[T]][T^{-1}].\end{equation}

In addition to the ``generic'' hypotheses above, we need some assumption on the model~$\mathcal{U}$ for the spaces~$M_d$ to be non-empty. In fact, a ``Hasse principle''-type hypothesis is sufficient:

\begin{hypothese}\label{hypothese.section} We assume that there is no \textit{local obstruction} for the existence of such sections, that is for any closed point~$v\in C_0$ we have $G(F_v)\cap \mathcal{U}(\OO_v) \neq \varnothing$, where~$F_v$ is the completion of~$F$ at the place $v$, and~$\OO_v$ its ring of integers.
\end{hypothese}
To understand this requirement, it is useful to reformulate the condition on the sections in an adelic language. Each section corresponds to a unique point $\sigma(\eta_C)$ of $G(F)$, which by the diagonal embedding defines an element of the set $G(\Ad_F)$ of adelic points of~$G$. Up to a finite number of places, for a closed point $v\in C_0$ which is not a pole of~$\sigma$, the component of~$\sigma$ in~$G(F_v)$ is an element of~$G(\OO_v)$. The $S$-integrality condition $\sigma(C_0)\subset \mathcal{U}$ means that for all $v\in C_0$, the component of~$\sigma$ in~$G(F_v)$ is an element of~$\mathcal{U}(\OO_v)$: the non-emptyness of the intersection  $G(F_v)\cap \mathcal{U}(\OO_v)$ is therefore a necessary condition for the existence of such a section. 

Under these assumptions, we obtain the expected convergence for a topology on the Grothendieck ring~$\M_{\C}$ which will be made more explicit below:
\begin{theoreme}\label{main} We assume $k=\C$, and the notation and hypotheses in Settings \ref{hypothese.geom} et~\ref{hypothese.section}. We denote by~$b$ the integer given by formula (\ref{formuleexposant}). There exists an integer $a\geq 1$ and a real number $\delta >0$ such that the Laurent series $(1-(\LL T)^a)^bZ(T)$ converges for  $|T| < \LL^{-1 + \delta}$ and takes a non-zero effective value at $ T = \LL^{-1}$. 
\end{theoreme}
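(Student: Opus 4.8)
The plan is to adapt the Poisson summation argument of Chambert-Loir–Tschinkel \cite{CLTi} to the motivic setting, using the motivic Poisson formula and the motivic Euler products constructed in the body of the text. First I would express the motivic height zeta function $Z(T)$ as a motivic integral over the relevant space of adelic-type data, mirroring the classical passage $\zeta_{U,H}(s) = \sum_{x\in G(F)}H(x)^{-s}$. The condition $\sigma(\eta_C)\in G(F)$ together with the $S$-integrality condition $\sigma(C_0)\subset\mathcal{U}$ means that the spaces $M_d$ decompose, place by place on $C$, into local moduli of jets/arcs constrained to lie in $\mathcal{U}$, and $Z(T)$ should be rewritten as a motivic Euler product $\prod_{v\in C}Z_v(T)$ of local height zeta series, each $Z_v(T)$ encoding the local contributions at $v$ (with a pole of order $1+\dim\scr{C}^{\an}_{F_v}(D)$ built in for $v\in S$, and no pole for $v\notin S$ by Setting~\ref{hypothese.section}).

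The core step is to apply the extension of Hrushovski–Kazhdan's motivic Poisson formula to rewrite this as a sum over characters $\xi\in G(F)$ (a lattice, after using invariance modulo a compact), $Z(T) = \sum_{\xi}\four(H_T^{-1})(\xi)$, and to Euler-factorise each Fourier transform as a motivic Euler product of local Igusa-type integrals $\prod_v\four(H_{v,T}^{-1})(\xi_v)$. I would then treat the trivial-character term separately: for $\xi=0$, the local integrals are computed explicitly by reduction modulo the residue structure, and the resulting motivic Euler product is compared, via Lang–Weil-type motivic estimates (i.e. control of weights of the local factors), to the ``expected'' product $\prod_{v}\prod_{\al\in\scr{A}}(1-(\LL T)^{\rho'_{\al}}\cdots)^{-1}$-type expression; this shows that the $\xi=0$ term, multiplied by $(1-(\LL T)^a)^b$ for a suitable $a$ clearing denominators and $b$ as in \eqref{formuleexposant}, converges on $|T|<\LL^{-1+\delta}$ and takes a non-zero effective value at $T=\LL^{-1}$ — the effectivity coming from the geometric construction of the symmetric products underlying the Euler product. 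For $\xi\neq 0$, successive motivic integrations by parts at the ``archimedean-analogue'' places (i.e. the places in $S$, where the curve is punctured) and the motivic analogue of stationary-phase/oscillation bounds must produce enough extra vanishing of weights so that the sum over $\xi\neq 0$ converges on a strictly larger disc with strictly smaller pole order at $T=\LL^{-1}$; here is where the weight topology on the Grothendieck ring with exponentials, built from Saito's mixed Hodge modules and Denef–Loeser's motivic vanishing cycles and Thom–Sebastiani, does the work that absolute values do in \cite{CLTi}.

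Finally I would combine the two: the $\xi\neq 0$ contribution is absorbed into the error term, and the dominant behaviour of $(1-(\LL T)^a)^bZ(T)$ near $T=\LL^{-1}$ is that of the $\xi=0$ term, which by the above has a non-zero effective limit. The effectivity of this limit needs a separate check — it is not automatic from convergence — and I would establish it by tracking that all the geometric operations (symmetric products, motivic Euler products, the explicit local factors at good places) preserve effectivity, together with an argument that the correction factors contributed by the bad places and by $\mathcal{L}$'s vertical components, and by the mixed-Hodge-module machinery, do not destroy positivity of the leading class. The main obstacle I expect is precisely the bound on the non-trivial characters: in the arithmetic setting one has genuine oscillatory integrals and archimedean analysis, whereas here one must manufacture the analogous decay purely cohomologically, controlling the weights of iterated motivic vanishing-cycle constructions uniformly in $\xi$ — this is the step where the motivic setting requires genuinely new input beyond transcribing \cite{CLTi} and \cite{CL}.
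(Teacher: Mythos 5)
Your sketch captures the correct overall architecture — Poisson summation over characters, Euler-factorisation of each $Z(T,\xi)$ into local Igusa-type integrals over arc spaces, separate treatment of $\xi=0$ (giving the full pole order $b$) and $\xi\neq 0$ (giving strictly smaller pole order), and the weight topology doing the work of archimedean absolute values. This is indeed the route the paper takes. But there are two places where what you describe does not match what actually has to happen, and where the gap is not merely expository.

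First, you say that the $S$-integrality and degree decomposition let one write $Z(T)$ itself as a motivic Euler product $\prod_v Z_v(T)$ \emph{before} Poisson summation, ``each $Z_v(T)$ encoding the local contributions at $v$.'' This is backwards, and it is exactly the trap the argument must avoid. The moduli space $M_d$ is a space of \emph{global} sections; its class in $\kvar_k$ is not a product of local data, because a section is not determined place-by-place. What the proof actually does is decompose each $M_{\n,\be}$ according to the effective zero-cycle of local intersection degrees, so that the fibre over a fixed zero-cycle $\m$ is a summation over rational points $[H_F(\m,\be)] = \sum_{x\in k(C)^n}\1_{H(\m,\be)}(x)$ of a constructible \emph{family} of Schwartz–Bruhat functions parametrised by $\m\in S^{\n}C$. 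The Euler product structure appears only \emph{after} applying the motivic Poisson formula to this family and interchanging the order of the motivic sums over $\m$ and over $\xi$: it is $Z(T,\xi)=\prod_v Z_v(T,\xi)$ that factors, not $Z(T)$. Getting this order of operations right is the whole point of the family-version of the Poisson formula developed in the chapter on motivic Poisson summation, and your description skips it.

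Second, and more seriously, for the non-trivial characters you invoke ``successive motivic integrations by parts at the places in $S$'' and a ``motivic analogue of stationary-phase bounds.'' This would be the natural transcription of the archimedean analysis in \cite{CLTi}, but there is no such thing in the motivic setting: the places of $S$ are not archimedean, the integrals at those places are finite in number and are handled by explicit computation with Clemens complexes (proposition on the subcomplex $\mathrm{Cl}_v^{\mathrm{an}}(X,D)_{\xi}$), with the pole-order drop coming from the combinatorial fact that $\mathrm{Cl}_v^{\mathrm{an}}(X,D)_\xi$ is a \emph{strict} subcomplex when $\xi\neq 0$, together with the lemma from \cite{CLTi} comparing the resulting exponents. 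The genuine content for $\xi\neq 0$ at the infinitely many places of $C_0$ is a term-by-term weight bound on the coefficients of the local factors, obtained by computing the leading terms of the Igusa integrals explicitly (the lemma computing $\int_{C_m}\psi(r(P(x)x^{-d}))\,\dx x$) and then applying the convergence criterion with the cancellation-of-maximal-weights lemma (the motivic Lang–Weil estimate $w([D_\al]-\LL^{n-1})\leq 2(n-\tfrac32)$). There is no integration by parts and no stationary phase; replacing them by explicit arc-space computation plus the weight filtration is precisely the ``genuinely new input'' you correctly anticipate is needed, but the mechanism you propose for it would not work.

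On the remaining points your instincts are right: the effectivity of the value at $\LL^{-1}$ does require a separate check, and it does come from tracking that the local volumes $\Delta(A,\be)$ are classes of actual varieties and that the hypothesis $\scr{U}(\OO_v)\neq\varnothing$ forces the local value to be non-zero; and the denominator-clearing integer $a$ is a common multiple of the $\rho'_\al$ and of the exponents $a_{v,i}$ appearing in the stratified analysis at places of $S$.
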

Thus, the order of the pole of the height zeta function is given by the same formula as in Chambert-Loir and Tschinkel's result mentioned in section~\ref{sect.maninharmonique}. The non-zero effective value at $\LL^{-1}$ announced in the statement is an element of the completion $\widehat{\M}_{\C}$ of $\M_{\C}$ for the topology we consider: it appears as an infinite product of local volumes, and is a motivic analogue of Peyre's constant.

For any $k$-constructible set~$M$, we denote by~$\kappa(M)$ the number of irreducible components of maximal dimension of~$M$. The Hodge-Deligne polynomial~$HD$ extends to a motivic measure
$$\widehat{\M}_{\C}\to \Z[[(uv)^{-1}]][u,v]$$
on the aforementioned completion, and theorem~\ref{main}, via this motivic measure, furnishes a description of the asymptotic behaviour of~$\dim(M_d)$ and $\kappa(M_d)$, with a distinction according to the congruence class of $d$ modulo $a$ imposed by the presence of the exponent~$a$. Moreover, a Lefschetz principle type argument enables us to remove the hypothesis $k=\C$ assumed in the theorem. 
\begin{corollaire}\label{maincor} For all  $p\in\{0,\ldots,a-1\}$, one of the following cases occur when~$d$ goes to infinity in the congruence class of~$p$ modulo~$a$:
\begin{enumerate}[(i)]
\item Either $\limsup \frac{\dim(M_d)}{d}<1$.
\item Or $\dim(M_d) -d$ has a finite limit $d_0$ and  $$\frac{\log(\kappa(M_d))}{\log d}$$ converges to an element of the set $\{0,\ldots,b-1\}$. More generally, for every sufficiently small $\eta>0$ and for sufficiently large $d$ in the congruence class of $p$ modulo $a$, the coefficients of the Hodge-Deligne polynomial $HD(M_d)$ of degrees contained in the interval $$[2(1- \eta)d + 2d_0, 2d + 2d_0]$$ are polynomials in $\frac{d-p}{a}$ of degree at most $b-1$. 
\end{enumerate}
Moreover the second case happens for at least one value of  $p\in\{0,\ldots,a-1\}$.
\end{corollaire}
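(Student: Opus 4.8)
The plan is to deduce everything from Theorem~\ref{main} by pushing it through the Hodge--Deligne realization, after a routine reduction to $k=\C$. For the reduction: each $M_d$, together with a stratification into smooth pieces, is already defined over a finitely generated subfield of $k$ which embeds into $\C$, and $\dim M_d$, $\kappa(M_d)$ and $HD(M_d)$ do not change under extension of the algebraically closed ground field of characteristic zero, so it suffices to treat $k=\C$. Write $W(T)=(1-(\LL T)^a)^bZ(T)=\sum_{j}w_jT^j\in\kvar_\C[[T]][T^{-1}]$. Theorem~\ref{main} says that $\sum_jw_jT^j$ converges for $|T|<\LL^{-1+\delta}$ and that $\Phi:=\sum_jw_j\LL^{-j}$ is a non-zero \emph{effective} element of $\widehat{\M}_\C$. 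The first step is to run these two facts through the continuous motivic measure $HD\colon\widehat{\M}_\C\to\Z[[(uv)^{-1}]][u,v]$, using $HD(\LL)=uv$: convergence for $|T|<\LL^{-1+\delta}$ should give a uniform bound $\deg HD(w_j)\le 2(1-\delta)j+C$ on total degrees (for some constant $C$), with $\deg HD(w_j)-2(1-\delta)j\to-\infty$; effectivity of $\Phi$ gives that $HD(\Phi)=\sum_jHD(w_j)(uv)^{-j}$ is non-zero, with top total degree $2\delta_\Phi$ where $\delta_\Phi:=\dim\Phi$, and leading form $\kappa(\Phi)(uv)^{\delta_\Phi}$ with $\kappa(\Phi)>0$.

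Next I would expand $(1-(\LL T)^a)^{-b}=\sum_{k\ge0}\binom{k+b-1}{b-1}\LL^{ak}T^{ak}$, giving for every $d$ the \emph{finite} identity $[M_d]=\sum_{ak\le d}\binom{k+b-1}{b-1}\LL^{ak}w_{d-ak}$, and apply the ring morphism $HD$ to it; setting $j=d-ak$, this reads, for $d\equiv p\pmod a$,
\begin{equation*}
HD(M_d)=\sum_{\substack{0\le j\le d\\ j\equiv p\,(a)}}\binom{(d-j)/a+b-1}{b-1}\,(uv)^{d-j}\,HD(w_j).
\end{equation*}
Fix a monomial $u^av^b$; a given $j$ can contribute to its coefficient in the right-hand side only if $u^{\,a-(d-j)}v^{\,b-(d-j)}$ occurs in $HD(w_j)$, which (using $\deg HD(w_j)\le2(1-\delta)j+C$) forces $j$ to lie in a bounded range depending only on $d-\tfrac12(a+b)$ and not on $d$ itself. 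Hence, for $d$ large in the class $p$ and $a+b\ge 2(1-\delta)d+2C$, the coefficient of $u^av^b$ in $HD(M_d)$ is a fixed $\Z$-linear combination of the quantities $\binom{(d-j)/a+b-1}{b-1}$, $j\equiv p\pmod a$, hence a polynomial in $\tfrac{d-p}{a}$ of degree $\le b-1$ (possibly the zero polynomial). I would also record the elementary fact that for a non-empty constructible set $M$ the polynomial $HD(M)$ has top total degree $2\dim M$ and leading form $\kappa(M)(uv)^{\dim M}$, the lower-dimensional strata being unable to reach the top total degree.

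Now the dichotomy is read off. For fixed $p$ and $m\in\Z$ let $P^{(p)}_m\in\Q[x]$ be the polynomial of degree $\le b-1$ described above realizing, for $d$ large, the coefficient of $(uv)^{d-m}$ in $HD(M_d)$ (contributions in other Hodge bidegrees are handled identically). If $P^{(p)}_m\equiv0$ for every $m$, then for $d$ large in the class all coefficients of $HD(M_d)$ of total degree $\ge2(1-\delta)d+2C$ vanish, so $\dim M_d<(1-\delta)d+C$ and $\limsup\dim(M_d)/d\le1-\delta<1$: case (i). Otherwise, since $\dim M_d\le d+C$ for all $d$ (read off the displayed expansion and the bound on the $w_j$), every $m$ with $P^{(p)}_m\not\equiv0$ satisfies $m\ge-C$, so $m_0:=\min\{m:P^{(p)}_m\not\equiv0\}$ exists; a non-zero polynomial has finitely many roots, so for $d$ large in the class $P^{(p)}_{m_0}(\tfrac{d-p}{a})\ne0$ while $P^{(p)}_m(\tfrac{d-p}{a})=0$ for $m<m_0$. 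Thus $\dim M_d=d-m_0$ is eventually constant (the limit $d_0:=-m_0$), and $\kappa(M_d)=P^{(p)}_{m_0}(\tfrac{d-p}{a})$ is a non-zero polynomial in $\tfrac{d-p}{a}$ of some degree $e\in\{0,\dots,b-1\}$, whence $\log\kappa(M_d)/\log d\to e$; moreover the window $[2(1-\eta)d+2d_0,\,2d+2d_0]$ lies, for $\eta<\delta$ and $d$ large, inside the range where the polynomial description of the coefficients of $HD(M_d)$ is valid. This is case (ii). Finally, case (ii) occurs for at least one $p$: the leading ($x^{b-1}$) coefficient of $P^{(p)}_{-\delta_\Phi}$ equals $\tfrac1{(b-1)!}$ times the coefficient of $(uv)^{\delta_\Phi}$ in $\sum_{j\equiv p\,(a)}HD(w_j)(uv)^{-j}$, and summing these over $p\in\{0,\dots,a-1\}$ recovers $\tfrac1{(b-1)!}$ times the coefficient of $(uv)^{\delta_\Phi}$ in $HD(\Phi)$, namely $\kappa(\Phi)/(b-1)!\ne0$; hence $P^{(p)}_{-\delta_\Phi}\not\equiv0$ for some $p$, which is then in case (ii).

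The step I expect to be the main obstacle is the very first one: extracting from the abstract convergence of $\sum_jw_jT^j$ in the weight topology the concrete uniform bound $\deg HD(w_j)\le2(1-\delta)j+C$, and, dually, checking that for the effective limit $\Phi$ the top total degree and leading form of $HD(\Phi)$ behave exactly as for a genuine constructible class (positivity of $\kappa(\Phi)$) — in other words, making precise how the weight topology controls $HD$ and how effectivity passes to the completion. Once that is available, the remainder is bookkeeping with binomial coefficients, the observation that a non-zero polynomial has finitely many roots, and the standard structure of the top of the Hodge--Deligne polynomial of a constructible set.
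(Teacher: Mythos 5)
Your core argument is correct and is essentially the route the paper itself takes: the paper deduces Corollary~\ref{maincor} from Theorem~\ref{main} by applying Proposition~\ref{coefgrowth}, plus a reduction to $k=\C$. Your manipulation — expanding $(1-(\LL T)^a)^{-b}$, convolving with the coefficients $w_j$, and reading off the coefficients of $HD(M_d)$ at fixed offset from the top degree — is the same bookkeeping as the paper's proof of that proposition, which instead Taylor-expands the numerator at $T=1$ inside $\widehat{\M}_{\C}$ before taking Hodge--Deligne realizations; the ``at least one $p$'' step is likewise the same in both treatments (everything reduces to the fact that the non-zero effective value at $\LL^{-1}$ has some non-vanishing Hodge--Deligne coefficient; note you only need this, not the positivity of a leading ``$\kappa(\Phi)$''). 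The obstacle you flag at the end (continuity of $HD$ on $\widehat{\M}_{\C}$ and the meaning of effectivity there) is genuinely the point to be made precise, but it is exactly the assertion the paper uses about $F(1)$ in the proof of Proposition~\ref{coefgrowth}, so you are in the same position as the paper on it.

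Two concrete corrections. First, the uniform bound $\deg HD(w_j)\le 2(1-\delta)j+C$ does not follow from convergence for $|T|<\LL^{-1+\delta}$: convergence only says $\limsup_j w(w_j)/(2j)\le 1-\delta$, and the paper's remark after the definition of the radius of convergence (the series $\sum_i \LL^{\,i+\lceil\sqrt{i}\rceil}T^i$) shows the deviation from the linear bound need not be bounded. What you do get is, for every $\eta<\delta$, a bound $\deg HD(w_j)\le 2(1-\eta)j+C_\eta$, and this suffices since $\eta$ is arbitrary in the statement; you should phrase it that way. Second, and more substantively, your reduction to $k=\C$ is not only a matter of the invariance of $\dim$, $\kappa$ and $HD$ under extension of the algebraically closed ground field: in order to invoke Theorem~\ref{main} over $\C$ you must also check that the hypothesis of Setting~\ref{hypothese.section}, namely $G(F_v)\cap\scr{U}(\OO_v)\neq\varnothing$ at \emph{every} closed point $v$ of $C_0$, survives when the finitely generated field of definition is embedded into $\C$ — the closed points of the curve change under this base change, so this is not automatic. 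The paper devotes a paragraph to it, reformulating the condition as the non-vanishing of the motivic volume of $\scr{L}(\scr{U}_v)$, equivalently the non-emptiness of one of the strata $\Delta(A,\be)$, which is a constructible condition over the field of definition and hence insensitive to which algebraically closed overfield one uses. As written, your ``routine reduction'' silently assumes this transfer, and that is the one step genuinely missing from your proposal.
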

In other words, for large $d$ in at least one congruence class modulo $a$, the space $M_d$ will be of dimension $d+ d_0$ for some integral constant  $d_0$, and the number of components having precisely this dimension has polynomial growth of degree bounded by $b$. Moreover, more generally, the latter asymptotic is satisfied for a positive proportion of the coefficients of highest degree of the Hodge-Deligne polynomial of $M_d$. 

A condition on congruence classes cannot be avoided in general: for example, if the log-anticanonical divisor is a multiple $(L')^a$ of a class in $\mathrm{Pic}(\mathcal{X})$, then $M_d = \varnothing$ for $d\nmid a$. 

It is important to highlight two important special cases of these results: when~$\mathcal{X} = \mathcal{U}$ we obtain a motivic analogue of Chambert-Loir and Tschinkel's paper~\cite{CLT} for rational points on equivariant compactifications of vector groups. In this case, there is no condition on poles of sections. On the contrary, the case~$U=G_F$ where we only allow sections with poles in the finite set $C\setminus C_0$ has been treated in Chambert-Loir and Loeser's work~\cite{CL}, following the same idea as the proof sketched in section \ref{sect.maninharmonique} for the arithmetic version of Manin's problem.  Because of the restriction on the poles of sections, only a finite number of places contribute to the height.  Moreover, at fixed degree~$d$, the orders of the poles of the sections counted in the space~$M_d$ are bounded in terms of~$d$. Going back to the adelic description above, one sees that the characteristic function of the sections parametrised by~$M_d$ is a function on the adeles~$G(\Ad_F)$ the restriction of which to~$G(F_v)$ is, for almost all~$v$, the characteristic function of $G(\OO_v)$. More precisely, because of the bound on orders of poles and of the equivariance of the compactification, this characteristic function is a \textit{motivic Schwartz-Bruhat function}. The main tool employed by~Chambert-Loir and Loeser is the \textit{motivic Poisson formula} of  Hrushovski and Kazhdan, which is satisfied for this kind of functions and which, when applied to the characteristic function of~$M_d$ in $G(\Ad_F)$ for each~$d$, enables one to rewrite the height zeta function~$Z(T)$ in the form
$$Z(T) = \sum_{\xi\in G(F)} Z(T,\xi),$$
where the $Z(T,\xi)$ are series with coefficients in a Grothendieck ring described below.  This equality is the analogue, in this setting, of identity~(\ref{intro.formulepoisson}). Chambert-Loir and Loeser then study the functions~$Z(T,\xi)$ separately: since, as explained above, only a finite number of places contribute, those are \textit{finite} products of local factors (whereas the decompositions~$H(x) = \prod_{v}H_v(x)$ in Chambert-Loir and Tschinkel's work had an infinite number of factors not equal to 1) which can be rewritten as motivic integrals on the arc space of the variety~$\mathcal{X}$. These integrals take the form of motivic Igusa zeta functions, the study of which goes back to Denef and Loeser~(\cite{DL98}). By a method analogous to Chambert-Loir and Tschinkel's analysis, the reduction to the residue field being replaced by the reduction of the arc space to the special fibre, Chambert-Loir and Loeser prove that each factor is a rational function. Thus, in this situation the motivic height zeta function happens to be rational, and their version of theorem~\ref{main} is stated by describing the denominators. The coefficient at $\LL^{-1}$ is a finite product of motivic volumes in this case. 

Our approach in this text is greatly inspired from Chambert-Loir and Loeser's, but requires nevertheless several major adaptations. First of all, since we impose fewer constraints on the poles of the sections we are counting, those do not live in a fixed finite set any more: the products of local factors which were finite in Chambert-Loir and Loeser's work have therefore no reason to be finite in our setting, and we need to make sense of motivic analogues of the infinite products $H(x) = \prod_{v}H_v(x)$ used by Chambert-Loir and Tschinkel. Moreover, the characteristic function of the sections parametrised by the space~$M_d$ is in our case a complicated adelic function, and a direct application of Hrushovski and Kazhdan's motivic Poisson summation is a priori not possible. Last but not least, the presence of infinite products opens the question of their convergence, which will require the introduction of a suitable topology on the Grothendieck rings involved.

\section{Sketch of proof}
\subsection{Motivic Euler products}
It is well known that the Riemann zeta function
$$\zeta(s) = \sum_{n\geq 1} \frac{1}{n^s}$$
has an \textit{Euler product} decomposition
$$\zeta(s) = \prod_{p}(1 - p^{-s})^{-1} = \prod_{p} \left( 1 + p^{-s} + p^{-2s} + \ldots \right),$$
where the product is over the set of all prime numbers. Each factor, separately, converges for $\Re(s) >0$, but the product converges only for $\Re(s)>1$. This property of Euler product decomposition is true more generally for Dirichlet series
$$\sum_{n\geq 1} a(n)n^{-s} = \prod_{p} (1 + a(p)p^{-s} + a(p^2)p^{-2s} +\ldots)$$
where $a:\N\to \C$ is a multiplicative function. Another, more geometric example is given by the zeta function of a variety~$X$ over a finite field~$\F_q$: 
$$\zeta_X(s) := \exp \left(\sum_{m\geq 1} \frac{|X(\F_{q^m})|}{m}q^{-ms}\right).$$
Denoting by $X_{cl}$ the set of closed points of the variety~$X$, the function $\zeta_X(s)$ can indeed be written in the form of an infinite product 
\begin{equation}\label{produiteulerien}\zeta_X(s) = \prod_{x\in X_{cl}}(1-q^{-s\deg x})^{-1}.\end{equation}
Here, as for the Riemann zeta function, each local factor converges for~$\Re(s)>0$, but the series~$\zeta_X(s)$ converges only for $\Re(s)>\dim X$: taking the product shifts the abscissa of convergence by the dimension of the scheme over the closed points of which the product is taken. The example of Riemann's zeta function can also be understood in this way, if we see the set of prime numbers as the set of closed points of the arithmetic scheme~$\spec(\Z)$ of dimension $1$.

The method of proof described in section \ref{sect.maninharmonique} for  Manin's problem over number fields shows that the two main tools to tackle it are the possibility to decompose a function into an infinite product of local factors, and Fourier analysis. Chapter~\ref{eulerproducts} of this text introduces a notion of \textit{motivic Euler product} which gives a meaning, for any quasi-projective variety~$X$ and any family~$\scr{X} = (X_i)_{i\geq 1}$ of quasi-projective varieties over~$X$ (or, more generally, of classes in a relative Grothendieck ring over~$X$), to a product of the form
\begin{equation}\label{intro.eulerproduct}\prod_{x\in X}\left(1 + X_{1,x}t + X_{2,x}t^2 + \ldots \right),\end{equation}
where each $X_{i,x}$ may be seen as the fibre of~$X_i$ above a point $x\in X$. By analogy with the above examples coming from number theory, one can think of the variable~$t$ as corresponding to~$q^{-s\deg x}$, at least when the field~$k$ is algebraically closed.

To define~(\ref{intro.eulerproduct}), we start by constructing the coefficients of the series that should be the expansion of such a product. When we try to expand~(\ref{intro.eulerproduct}) naïvely, we observe that any contribution to the coefficient of degree~$n$ comes from choosing a certain term in each factor in a way that the sum of the degrees of the chosen terms is~$n$, which induces a certain partition of the integer~$n$. We construct the part of the coefficient of degree~$n$ corresponding to a fixed partition~$\pi$ of~$n$ separately. Writing $\pi=(n_i)_{i\geq 1}$ where~$n_i$ is the number of occurrences of the integer~$i$ in the partition~$\pi$, so that $\sum_{i\geq 1}in_i = n$, we define the \textit{symmetric product} $S^{\pi}\scr{X}$ of the family~$\scr{X}$ in the following way: first of all, since we want to construct the part of the coefficient of degree~$n$ corresponding to partition~$\pi$, we need to choose each term $X_{i,x}t^i$ in exactly~$n_i$ factors, which leads us to consider the product
\begin{equation}\label{intro.product}\prod_{i\geq 1}X_i^{n_i}.\end{equation}
On the other hand, these terms have been chosen in distinct factors, that is, factors corresponding to distinct points $x\in X$. Thus, considering the morphism 
$$\prod_{i\geq 1}X_i^{n_i}\to \prod_{i\geq 1}X^{n_i}$$
induced by the structural morphisms~$X_i\to X$, we have to restrict to points of the product~(\ref{intro.product}) with image in~$\prod_{i\geq 1}X^{n_i}$ having all its coordinates distinct, that is, lying in the complement of the big diagonal. Denoting by
$$\left( \prod_{i\geq 1}X_i^{n_i}\right)_{*,X}$$
the open subset defined in this way, it remains to observe that the factors of the above Euler product do not come in any particular order, which prompts us to take the quotient by the natural permutation action of the product of symmetric groups $\prod_{i\geq 1}\Sym_{n_i}$. We therefore put
$$S^{\pi}\scr{X} = \left( \prod_{i\geq 1}X_i^{n_i}\right)_{*,X}/\prod_{i\geq 1}\Sym_{n_i},$$
which exists as a variety under the above quasi-projectivity assumptions. The Euler product~(\ref{intro.eulerproduct}) will thus be introduced in section~\ref{eulerprod} of chapter~\ref{eulerproducts} first as a~\textit{notation} for the series
$$1 + \sum_{n\geq 1} \left(\sum_{\substack{\pi\ \text{partition}\\ \text{of}\ n}}[S^{\pi}\scr{X}]\right)t^n\in \kvar_{k}[[t]].$$
By showing various properties of the geometric construction we just described, we will indicate how one can do computations with this notion of Euler product. For example, we will prove the multiplicativity property
$$\prod_{x\in X}\left(1 + X_{1,x}t + X_{2,x}t^2 + \ldots \right)$$
$$= \prod_{x\in U}\left(1 + X_{1,x}t + X_{2,x}t^2 + \ldots \right)\prod_{x\in Y}\left(1 + X_{1,x}t + X_{2,x}t^2 + \ldots \right)$$
for any closed subscheme~$Y$ of~$X$ with open complement~$U$, which shows that we indeed have defined something which behaves like a product. 

An important example of a series with coefficients in  $\kvar_k$ is Kapranov's zeta function, introduced by Kapranov in~\cite{Kapr}. For a quasi-projective variety~$X$ sur $k$, we denote for every $n\geq 0$ by~$S^nX$ its $n$-th symmetric power  $X^n/\Sym_n$, which is also a quasi-projective variety, and we define
$$Z_X(t) = \sum_{n\geq 0} [S^nX] t^n \in \kvar_k[[t]].$$
It is the motivic analogue of the function~$\zeta_X(s)$ considered above, in the sense that, for finite~$k$ it specialises to the latter via the counting measure. Its decomposition as a motivic Euler product is given by 
$$Z_X(t) = \prod_{x\in X}(1 + t + t^2 + \ldots ) = \prod_{x\in X} \frac{1}{1-t},$$
which is the motivic analogue of the Euler product decomposition~(\ref{produiteulerien}) of $\zeta_X(s)$. Note that writing~$Z_X(t)$ in this way was already within the scope of the notion of \textit{motivic power} due to Gusein-Zade, Luengo and Melle (\cite{gusein}), of which our Euler products are a generalisation. 

\subsection{Hrushovski and Kazhdan's Poisson summation}
Hrushovski and Kazhdan's Poisson summation formula, proved in~\cite{HK}, is a motivic analogue of (a weakened version of) the Poisson formula over the adeles of a number field which intervenes in the proof of Chambert-Loir and Tschinkel's theorem explained in section~\ref{sect.maninharmonique}. First of all, to be able to do Fourier analysis in a motivic setting, we need to work in a larger Grothendieck ring, the Grothendieck ring of varieties with exponentials~$\evar_k$ of the field~$k$. It is given as the quotient of the free abelian group on isomorphism classes of pairs~$(X,f)$ with~$X$ a variety over~$k$ and $f:X\to \A^1$ a morphism, by cut-and-paste relations similar to those of the classical Grothendieck ring~$\kvar_k$, as well as by the additional relation
\begin{equation}\label{relationsuppl}(X\times \A^1,\pr_2)\end{equation}
for every variety~$X$ over~$k$, with $\pr_2:X\times \A^1\to \A^1$ the second projection. One can also define a product on~$\evar_k$ and in the case where the field~$k$ is finite, the counting measure on~$\kvar_k$ extends, for any non-trivial character $\psi:k\to\C^*$ of the field $k$, to a motivic measure
\begin{equation}\label{expmotmeasure}\begin{array}{ccc}\evar_k& \to & \C\\
                       \left[X,f\right]   & \to & \sum_{x\in X(k)}\psi(f(x)),
\end{array}\end{equation}
with relation~(\ref{relationsuppl}) being the one translating the fact that, the character~$\psi$ being non-trivial, we have
$$\sum_{x\in k}\psi(x) = 0,$$
a property which is essential to make Fourier analysis work. The ``motivic'' functions that we will consider will have values in the ring~$\evar_k$, or rather in its localisation~$\expp_k$ obtained by inverting the class $[\A^1,0]$. The natural map $\kvar_k\to \evar_k$ given by $[X]\mapsto [X,0]$ is an injective ring morphism.

Though it holds under much more general hypotheses, the classical Poisson formula on the adeles of a global field~$F$ is often stated for~\textit{Schwartz-Bruhat functions}. These are linear combinations of functions $f:\Ad_F\to \C$ which can be written as a product
$$f = \prod_{v}f_v$$
such that for every place~$v$, $f_v$ is a local Schwartz-Bruhat function $F_v\to \C$ on the completion~$F_v$ of the global field~$F$ at~$v$ (i.e. smooth and rapidly decreasing if~$v$ is archimedean, locally constant and compactly supported if~$v$ is non-archimedean), equal to the characteristic function~$\1_{\OO_v}$ of the ring of integers of~$F_v$ for almost all non-archimedean places. Hrushovski and Kazhdan define a geometric analogue of a Schwartz-Bruhat function on adeles of a function field (where there are only non-archimedean places). 

For a local Schwartz-Bruhat (that is, locally constant and compactly supported) function $f:F\to \C$ defined on a non-archimedan local field~$F$ (for which we denote by~$\OO$ its ring of integers,~$t$ a uniformiser and~$k$ its residue field), by local compactness we can find integers $M,N\geq 0$ such that~$f$ is zero outside of $t^{-M}\OO$, and invariant modulo the subgroup $t^N\OO$. As a consequence, such a function may be seen as a function on the quotient $t^{-M}\OO/t^{N}\OO$, which happens to be a $k$-vector space of dimension $M+N$. Thus, in the motivic setting, a local Schwartz-Bruhat function (of level $(-M,N)$) will be a function defined on an affine space $\A_k^{M+N}$ (denoted by $\A_k^{(-M,N)}$ to keep track of the values of~$M$ and~$N$) and with values in the ring~$\expp_k$.  More precisely, such functions are introduced as elements of the \textit{relative} Grothendieck ring $\expp_{\A_k^{(-M,N)}}$.

This construction can also be performed to produce a motivic analogue of locally constant and compactly supported functions defined on a finite product of local fields (rather than only one local field): these are Hrushovski and Kazhdan's motivic Schwartz-Bruhat functions, and their Poisson summation formula holds for these functions. Thus, this formula is the analogue of the Poisson summation formula over the adeles of a number field~$F$ for classical Schwartz-Bruhat functions.

As we have mentioned above, this formula was sufficient for the purposes of Chambert-Loir and Loeser's work~\cite{CL}, because  the height function for the  counting problem they considered satisfied this very restrictive hypothesis. In the general case we find ourselves in, this is no longer the case, the poles of the sections we are counting are not contained in some fixed finite set, and we therefore need to show how we may apply Hrushovski and Kazhdan's Poisson formula in families, making the locus of the poles of sections vary, which is done using the above notion of symmetric product. 

To explain this in a simple special case, let us consider, for every integer~$i\geq 1$ and for any integers $M_{i}, N_i\geq 0$ the variety
$$\A_C^{(-M_i,N_i)} := C\times \A_k^{(-M_i,N_i)}$$
above the curve~$C$. We can define, for any integer~$m\geq 0$ the symmetric product
$$S^{m}((\A_C^{(-M_i,N_i)})_{i\geq 1})$$
of the family $\left(\A_C^{(-M_i,N_i)}\right)_{i\geq 1}$. This symmetric product is naturally endowed with a morphism to the symmetric power~$S^{m}C$. We observe that for every effective zero-cycle $D = \sum_{v}m_vv\in S^{m}C(k)$, the fibre of $S^{m}((\A_C^{(-M_i,N_i)})_{i\geq 1})$ above $D$ is of the form
$$\prod_{v\in C}\A_k^{(-M_{m_v},N_{m_v})},$$
and that it therefore may be seen as the domain of definition of a motivic Schwartz-Bruhat function, with support and invariance controlled by the zero-cycles
$$-\sum_{v}M_{m_v}v\ \ \ \text{et}\ \ \ \sum_{v}N_{m_v} v.$$
Thus, making the zero-cycle~$D$ vary, we can in this way parametrise functions with varying supports (if the $M_i$ are sufficiently large) and varying invariance domains (if the $N_i$ are sufficiently large). The general construction in chapter \ref{poissonformula} is slightly more elaborate because it allows additional parameters, but the main idea is exactly the same. We then show that all the operations of Hrushovski and Kazhdan's theory can be performed in families over the base $S^mC$ for every $m\geq 1$, and check in particular the validity of the Poisson formula in this setting.

\subsection{Weight filtration and convergence}
Let us now go back to the motivic height zeta function~(\ref{def.fonctionzeta}). Thanks to a decomposition of the moduli spaces~$M_d$ according to the values of the poles and the zeroes of the sections they  parametrise, we can apply to $Z(T)$ our generalised Poisson formula from the previous paragraph. This enables us, analogously to what has been explained in section~\ref{sect.maninharmonique}, to rewrite the motivic height zeta function in the form
\begin{equation}\label{zetafunctionpoisson}Z(T) = \sum_{\xi\in k(C)^n} Z(T,\xi)\end{equation}
for some series~$Z(T,\xi)$ with coefficients in~$\expp_{k}$, each having an Euler product decomposition.  We now arrive to the question of the convergence of these Euler produts: as in Chambert-Loir and Tschinkel's work, we wish to prove that the series~$Z(T,0)$ is the only term in the right-hand side of~(\ref{zetafunctionpoisson}) responsible for the first pole of the function~$Z(T)$ at~$\LL^{-1}$, and to extend it meromorphically beyond this pole. We mentioned earlier that the dimensional filtration on the Grothendieck ring of varieties will not give us the expected convergence for the function~$Z(T)$. To understand this, let us go back to the bounds established by Chambert-Loir and Tschinkel: for almost all local factors, their calculations feature differences of the form  $D_{\al}(\mathbf{F}_q) - q^{n-1}$ for each irreducible component~$D_{\al}$ of the divisor at infinity~$D$, $q$ a prime power, and $n$ the dimension of~$X$. To obtain the desired convergence, it is crucial to bound those using the Lang-Weil estimates~\cite{LW}:
\begin{equation}\label{LangWeil.ineq}\left\vert D_{\al}(\mathbf{F}_q) - q^{n-1}\right\vert\leq cq^{n -\frac{3}{2}},\end{equation}
for some constant $c>0$. In the motivic setting, the calculations are completely analogous, and we therefore find ourselves naturally with the same kind of difference, namely $$[D_{\al}] - \LL^{n-1}\in \M_{k}.$$ In general, the dimension of this element of~$\M_k$ is $n-1$ : for example, if $D_{\al}$ is a curve of $g\geq 1$ (in particular, then $n=2$), then the Hodge-Deligne polynomial
$$HD([D_{\al}] - \LL^{n-1}) = (1 + gu + gv + uv) - uv = 1 + gu + gv,$$ is of degree 1, which shows that $[D_{\al}] - \LL^{n-1}$ cannot be a linear combination of elements of dimension $\leq 0$. The dimensional filtration therefore gives a weaker bound than the one obtained via the Lang-Weil estimates in the arithmetic case, which leads us to use a finer topology on the Grothendieck ring of varieties, based on Hodge theory.

Let us temporarily assume that the base field $k$ is the field $\C$ of complex numbers. There exists a motivic measure
$$\chi^{\hdg}: \M_{\C}\to K_0(HS),$$
called the \textit{Hodge realisation}, with values in the Grothendieck ring of Hodge structures, which to a complex variety~$Y$ associates
\begin{equation}\label{defchi}\chi(Y) = \sum_{i=0}^{2\dim Y} (-1)^i[H^{i}_c(Y(\C),\Q)],\end{equation}
where $[H^{i}_c(Y(\C),\Q)]$ is the class in $K_0(HS)$ of the mixed Hodge structure on the $i$-th singular cohomology group with compact supports of~$Y(\C)$. There is a natural increasing weight filtration $(W_{\leq n}K_0(HS))_{n\in \Z}$ on $K_0(HS)$, given by defining
$W_{\leq n}K_0(HS) $ as the subgroup of~$K_0(HS)$ generated by classes of pure Hodge structures of weights~$\leq n$. For an element $\a\in\M_{\C}$, we define its weight by
$$w(\a) = \inf\{n\in \Z, \chi^{\hdg}(\a) \in W_{\leq n}K_0(HS)\}.$$
The formula (\ref{defchi}) giving $\chi^{\hdg}$ then implies directly that for the class of the variety~$Y$,  we have
$w(Y) = 2\dim Y$. More precisely, we have
$$H^{2\dim Y} (Y(\C),\Q) \simeq \Q(-\dim Y)^{\kappa(Y)},$$
where $\kappa(Y)$  is the number of irreducible components of maximal dimension of~$Y$ and $\Q(-\dim Y)$ is the unique pure Hodge structure of weight~$2\dim Y$ and dimension~1. Thus, we observe that in the expression of $$\chi^{\hdg}([D_{\al}] - \LL^{n-1})$$ the terms corresponding to cohomology groups of maximal degree cancel out, and that as a consequence, 
$$w([D_{\al}] - \LL^{n-1}) \leq 2n-3 = 2\left(n-\frac32\right).$$
This bound is the analogue, in the motivic setting, of inequality (\ref{LangWeil.ineq}). 

\subsection{Motivic vanishing cycles}
Recall that in the decomposition~(\ref{zetafunctionpoisson}) given by the motivic Poisson formula,  the series~$Z(T,\xi)$ have coefficients in the Grothendieck ring of varieties with exponentials~$\expp_{\C}$ (we still assume~$k=\C$). Thus, we need to extend the weight topology, defined on~$\M_{\C}$ in the previous paragraph, to the ring~$\expp_{\C}$. Moreover, it would be advisable, to preserve  the analogy with the arithmetic case, for such an extended weight function to satisfy the \textit{triangular inequality}: for any complex variety~$X$ with a morphism $f:X\to \A^1$, we would like that
\begin{equation}\label{eq.triangular}w([X,f])\leq w([X]).\end{equation}
Via the motivic measure~(\ref{expmotmeasure}) on the Grothendieck ring of varieties with exponentials, we see that inequality~(\ref{eq.triangular}) is the motivic analogue of the inequality
$$\left\vert\sum_{x\in X(\F_q)}\psi(f(x)) \right\vert\leq \# X(\F_q)$$
coming from the classical triangular inequality,
for a variety~$X$ over~$\F_q$, a morphism $f:X\to \A^1$ and a non-trivial character $\psi:\F_q\to \C^*$. 

The solution to this problem uses the \textit{motivic vanishing cycles} of Denef and Loeser. For a smooth variety~$X$ over a field~$k$ of characteristic zero, and a morphism $f:X\to \A^1_{k}$, the motivic vanishing cycles~$\phi_{f,a}$ of $f$ at~$a\in k$ are an element of the localised Grothendieck ring $\M_{f^{-1}(a)}^{\hat{\mu}}$ of varieties above the fibre $f^{-1}(a)\subset X$, with action of the group~$\hat{\mu}$, which is the projective limit of the groups of $n$-th roots of unity for every~$n\geq 1$. When the fibre $f^{-1}(a)$ is nowhere dense in~$X$, Denef and Loeser give a formula for computing~$\phi_{f,a}$ in terms of the components of the exceptional divisor of a log-resolution of the pair $(X,f^{-1}(a))$. 

 Using the works of Denef and Loeser on this subject, as well as Guibert, Loeser and Merle's paper~\cite{GLM}, Lunts and Schnürer proved in~\cite{LS} that in the case where~$k$ is algebraically closed of characteristic zero, combining the motivic vanishing cycles at all the points of $k$, one could define a ring morphism, called motivic vanishing cycles measure
$$\Phi:\expp_{k}\to (\M_{k}^{\hat{\mu}},\ast)$$
where $\ast$ is a \textit{convolution product}, the definition of which is due to Looijenga, Denef and Loeser. For a class $[X,f]$ with~$X$ smooth and $f:X\to \A^1_{k}$ proper, we have
$$\Phi([X,f]) = \sum_{a\in k}f_!\phi_{f,a}.$$
The main ingredient in this proof is a Thom-Sebastiani theorem for motivic vanishing cycles, due to Denef and Loeser, which is the motivic analogue of the corresponding theorem in the classical theory of vanishing cycles. In chapter~\ref{grothrings} of this text, we extend the definition of the measure~$\Phi$ to~$k$ of characteristic zero but not necessarily algebraically closed, then above an arbitrary $k$-variety~$S$.

Motivic vanishing cycles thus give us a way to go from $\expp_{\C}$ to $\M_{\C}^{\hat{\mu}}$. To extend the weight filtration to the ring $\expp_{\C}$, we then use the fact that the Hodge realisation mentioned above generalises to a morphism
\begin{equation}\label{realhodge}\M_{\C}^{\hat{\mu}}\to K_0(HS^{\mathrm{mon}})\end{equation}
from the Grothendieck group of complex varieties with $\hat{\mu}$-action, to the Grothendieck group of Hodge structures with action of a linear finite order operator (the \textit{monodromy} operator). This morphism becomes a ring morphism when both groups are endowed with appropriate products. Composing this generalised Hodge realisation with the morphism~$\Phi$ then amounts to looking at the Hodge structure on the vanishing cycles (in the classical sense), with the natural action of the monodromy around the given point. We then extend the weight filtration to the group $K_0(HS^{\mathrm{mon}})$, defining $W_{\leq n}K_0(HS^{\mathrm{mon}})$ for every $n\in\Z$ as the subgroup generated by the pure Hodge structures of weight $\leq n$ and trivial monodromy, as well as by the pure Hodge structures of weight $\leq n-1$ with non-trivial monodromy. Using Denef and Loeser's formula, one can see that the triangular inequality is then satisfied.

The last difficulty which comes up is the question of studying convergence of Euler products using the weight filtration, for which we need to use the above results in the relative setting: we will therefore use our motivic vanishing cycles measure above a base variety $S$, as well as the theory of mixed Hodge modules of Saito in chapter~\ref{hodgemodules} of this text.
\chapter{Grothendieck rings of varieties and motivic vanishing cycles}\label{grothrings}

The \textit{Grothendieck group of varieties} was defined by Grothendieck in a letter to Serre dated 16 August 1964, in which he also developed the idea of motives. He wrote:
\selectlanguage{french}
\begin{quote} Soit $k$ un corps, algébriquement clos pour fixer les idées, et soit $\mathrm{L}(k)$ le \og groupe~$\mathbf{K}$ \fg{} défini par les schémas de type fini sur $k$, avec comme relations celles qui proviennent d'un découpage en morceaux (\ldots). 
\end{quote}
\selectlanguage{english}
Though Grothendieck does not mention it in the above quotation, this group is in fact a ring, which we will denote by $\kvar_k$, the product being given by the product of varieties over $k$. The ring $\kvar_k$ became particularly prominent when, in a celebrated lecture in 1995 at Orsay, Kontsevich sketched the basics of \textit{motivic integration}, a theory of integration with values in this ring, and showed how one could use it to prove that birationally equivalent Calabi-Yau varieties had the same Hodge numbers. This gave a generalisation of a result of Batyrev stating equality of Betti numbers under the same conditions. Batyrev's proof relied on reduction to positive characteristic and $p$-adic integration, and Kontsevich's groundbreaking idea consisted in noticing that one could replace the latter by a more geometric theory of integration, with values in the Grothendieck ring of varieties, and thus avoid the former. Moreover, the identities one obtains are valid in a suitable completion of the Grothendieck ring $\kvar_{\C}$ (localised at the class of the affine line), which gives rise to equality of more precise geometric invariants, e.g. Hodge numbers as stated above. 

Since then, several theories of motivic integration have been constructed, and have found a wide range of applications. The first names to quote here are those of Denef and Loeser, who formalised Kontsevich's idea and extended it to singular varieties in \cite{DL99mot}. They also gave numerous applications of motivic integration to singularity theory, generalising to the motivic framework many results obtained previously via $p$-adic integration. In particular, they studied motivic versions of Igusa zeta functions in \cite{DL98}, which provide a notion of motivic nearby fibre and motivic vanishing cycles (\cite{DL98,DL99,DL01,DL02}). Other theories of motivic integration, based on model theory of valued fields, are due to Cluckers-Loeser (\cite{CluckLos, CluckLosexp}) and Hrushovski-Kazhdan (\cite{HKint}): they allow to encompass integrals with parameters, as well as integrals involving additive characters, opening the path to a motivic version of Fourier analysis. The key idea for the latter is to work in larger Grothendieck rings called \textit{Grothendieck rings with exponentials}.

Section \ref{sect.grothrings} of this chapter contains definitions and properties of various Grothendieck rings of varieties, in particular Grothendieck rings with exponentials and Grothendieck rings with $\hat{\mu}$-actions. Section \ref{sect.motvancycles} introduces Denef and Loeser's notion of motivic nearby and vanishing cycles, generalises them to the relative setting and recalls a result due to Guibert, Loeser and Merle from \cite{GLM} which combines various nearby cycles over a fixed variety~$X$ over a field of characteristic zero into a group morphism defined on the Grothendieck ring of varieties over $X$. Section \ref{sect.motvanmeasure} recalls Denef, Loeser and Looijenga's Thom-Sebastiani theorem for motivic vanishing cycles together with the convolution product $\ast$ it involves, and uses it to construct a ring morphism
$$\expp_X\to (\M_X^{\hat{\mu}},*)$$
from the Grothendieck ring of varieties with exponentials over $X$ to the Grothendieck ring of varieties with $\hat{\mu}$-action over $X$ localised at the class of the affine line, endowed with this product. The construction for $X = \spec k$ with $k$ an algebraically closed field of characteristic zero is due to Lunts and Schnürer \cite{LS}. Finally, section \ref{sect.TSexample} contains an explicit computation checking the Thom-Sebastiani theorem for vanishing cycles of the morphism $\A^1\to \A^1$, $x\mapsto x^2$.



\section{Grothendieck rings of varieties} \label{sect.grothrings}
References for this section are e.g. \cite{CNS} for most classical definitions and properties of Grothendieck rings, and Hrushovski and Kazdan's \cite{HK}, or Cluckers and Loeser's \cite{CluckLosexp} for Grothendieck rings with exponentials. We follow mostly Chambert-Loir and Loeser's \cite{CL} containing a short introduction to Grothendieck rings with exponentials and their main properties.
\subsection{Grothendieck semirings}
Let $R$ be a noetherian scheme. By a variety over $R$ we mean a $R$-scheme of finite type. For a point $r\in R$, we denote by $\kappa(r)$ its residue field. 

The \emph{Grothendieck monoid of varieties} \index{Grothendieck monoid!of varieties} $\svar_R$ \index{Kvarp@$\svar$}
over $R$ is a commutative monoid defined by generators and relations. Generators are all varieties over $R$, and relations are
$$X\sim Y$$
whenever $X$ and $Y$ are isomorphic as $R$-varieties, 
$$\varnothing \sim 0$$
and
$$X  \sim  Y + U$$
whenever $X$ is a $R$-variety, $Y$ a closed subscheme of $X$ and $U$ its open complement. We will write $[X]$ to denote the class of a variety $X$ in $\svar_R$.  The product $[X][Y] = [X\times_{R} Y]$ endows the monoid~$\svar_{R}$  with a semiring  structure. Two $R$-varieties $X$ and $Y$ have the same class in $\svar_R$ if and only if they are piecewise isomorphic \index{piecewise isomorphic} over $R$, that is, one can partition them into locally closed subsets $X_1,\dots,X_m$ and $Y_1,\ldots,Y_m$ such that for all $i$, $X_i$ and $Y_i$ are isomorphic as $R$-schemes with their induced reduced structures (see \cite{CNS}, Chapter 1, Corollary 1.4.9).

The \emph{Grothendieck monoid of varieties
with exponentials} \index{Grothendieck monoid!of varieties with exponentials}$\sevar_R$ \index{Kexpvarp@$\sevar$}is defined by generators and relations as well. 
Generators are pairs $(X,f)$,
where $X$ is a variety over $R$ 
and $f\colon X\to\A^1=\spec(\Z[T])$ is a morphism. Relations are the following:
$$(X,f)\sim (Y,f\circ u) $$
whenever $X$, $Y$ are $R$-varieties, $f\colon X\to\A^1$
a morphism, and $u\colon Y\to X$ an $R$-isomorphism;
$$(\varnothing,f)\sim 0$$
where $f:\varnothing \to \A^1$ is the empty morphism;
$$ (X,f)\sim (Y,f|_Y)+(U,f|_U) $$
whenever $X$ is an $R$-variety, $f\colon X\to\A^1$ a morphism,
$Y$ a closed  subscheme of~$X$ and $U=X\setminus Y$ its open complement;
$$ (X\times_\Z \A^1,\pr_2)$$
where $X$ is an $R$-variety and $\pr_2$ is the second projection.
We will write~$[X,f]$ to denote the class in $\sevar_R$ 
of a pair~$(X,f)$. The product $[X,f][Y,g] = [X\times_R Y, f\circ \pr_1+g\circ \pr_2]$ endows $\sevar_R$ with a semiring structure, the class $[R\xrightarrow{\id}R]$ being the unit element.

\subsection{Grothendieck rings}\label{subsect.grothrings}
Let $R$ be a noetherian scheme. 

The \emph{Grothendieck group of varieties} \index{Grothendieck group!of varieties} $\kvar_{R}$ \index{Kvar@$\kvar$} is defined by generators and relations. Generators are all varieties over $R$, and relations are
$$X-Y$$
whenever $X$ and $Y$ are isomorphic as $R$-varieties, and
$$X  - Y - U$$
whenever $X$ is a $R$-variety, $Y$ a closed subscheme of $X$ and $U$ its open complement. It is the group associated to the monoid $\svar_R$. Every constructible set~$X$ over~$R$ (that is, every constructible subset of a variety over $R$) has a class~$[X]$ in the group~$\kvar_R$ (see \cite{CNS}, chapter 1, corollaries 1.3.5 and 1.3.6). It will sometimes be denoted by $[X]_R$ when different base schemes are in play. The product $[X][Y] = [X\times_{R} Y]$ endows the group~$\kvar_{R}$  with a ring  structure with unit element the class $1 = [R\xrightarrow{\id}R]$. 
Let $\LL$, or $\LL_R$, \index{L@$\LL$} be the class of the affine line $\A_{R}^1$ in $\kvar_{R}$. We define the Grothendieck ring of varieties \index{Grothendieck ring!of varieties} localised at $\LL$ to be $\M_{R} = \kvar_{R}[\LL^{-1}]$.

The \emph{Grothendieck group of varieties
with exponentials} \index{Grothendieck group!of varieties!with exponentials} $\evar_R$ \index{Kexpvar@$\evar$} is defined by generators and relations as well. 
Generators are pairs $(X,f)$,
where $X$ is a variety over $R$ 
and $f\colon X\to\A^1=\spec(\Z[T])$ is a morphism. Relations are the following:
$$(X,f)-(Y,f\circ u) $$
whenever $X$, $Y$ are $R$-varieties, $f\colon X\to\A^1$
a morphism, and $u\colon Y\to X$ an $R$-isomorphism;
$$ (X,f)-(Y,f|_Y)-(U,f|_U) $$
whenever $X$ is a $R$-variety, $f\colon X\to\A^1$ a morphism,
$Y$ a closed  subscheme of~$X$ and $U=X\setminus Y$ its open complement,
$$ (X\times_\Z \A^1,\pr_2)$$
where $X$ is an $R$-variety and $\pr_2$ is the second projection.
We will write~$[X,f]$ (or $[X,f]_R$ if we want to keep track of the base scheme $R$) to denote the class in $\evar_R$ 
of a pair~$(X,f)$. The product $[X,f][Y,g] = [X\times_R Y, f\circ \pr_1+g\circ \pr_2]$ endows $\evar_R$ with a ring structure.\index{Grothendieck ring!of varieties!with exponentials} We will use the notation $f\oplus g$ for the morphism  $f\circ \pr_1+g\circ \pr_2$. 
We denote by $\LL$, or $\LL_R$,\ \index{L@$\LL$} the class of $[\A^1_R,0]$ in $\evar_R$. As for $\kvar_R$, we may invert $\LL$, which gives us a ring denoted by $\expp_R$.

There are obvious morphisms of semirings
$$\svar_R\to \kvar_R$$ and $$\sevar_R\to \evar_R$$
which identify $\kvar_R$ (resp. $\evar_R$) with the ring obtained from the semiring $\svar_R$ (resp. $\sevar_R$) by adding negatives.  An element in the image of one of these morphisms is said to be \textit{effective}.

There are ring morphisms
$\kvar_R \to \evar_R$ and $\M_R\to \expp_R$ 
sending
the class of~$X$ to the class~$[X,0]$. According to lemma 1.1.3 in \cite{CL} together with lemma \ref{function.equality} below, they are injective. 

Let $X$ be an $R$-variety. A piecewise morphism \index{piecewise morphism} $f:X\to \A^1$ is the datum of a partition $X_1,\ldots,X_m$ of $X$ into locally closed subsets and of morphisms $f_i:X_i\to \A^1$ for every~$i$. Any pair $(X,f)$ consisting of an $R$-variety~$X$ and of a piecewise morphism $f\colon X\to\A^1$
has a class $[X,f]$ in $\evar_R$. 

\begin{remark} The localisation morphism $\kvar_R\to \M_R$ \index{localisation morphism} is not injective in general: it was proved by Borisov in \cite{Borisov} that $\LL$ is a zero-divisor in the Grothendieck ring $\kvar_k$ over a field~$k$ of characteristic zero. Borisov's argument also implies that two varieties $X$ and $Y$ having the same class in the Grothendieck ring of varieties are not necessarily piecewise isomorphic, so that the above morphism $\svar_R\to \kvar_R$ is not injective either. 
\end{remark}

Let $A\in\{\kvar,\M,\evar,\expp\}$. For noetherian schemes $R$ and $S$ over some noetherian scheme $T$, there is an \textit{exterior product} \index{exterior product} morphism
\begin{equation}\label{exteriorproduct}A_R\otimes_{A_T} A_S \xrightarrow{\boxtimes_T} A_{R\times_T S}\end{equation}
of $A_T$-algebras. For $A = \evar$, it is given by sending a pair $([X,f],[Y,g])$ to the class $[X\times_T Y, f\circ\pr_1 + g\circ\pr_2]$. 

\subsection{Functoriality and interpretation as functions}\label{sect.functoriality}

Let $R$ and $S$ be noetherian schemes.  A morphism $u:R\to S$ induces morphisms $u_{!}$ and~$u^*$ between the corresponding Grothendieck groups. For example, for $\evar$, these morphisms are defined in the following manner: the \textit{proper pushforward} \index{proper pushforward}
$$u_{!}:\evar_R\to \evar_S$$
sends a class $[X,f]_R$ of a $R$-variety $X$ in $\evar_R$ to the class $[X,f]_S$ of the pair $(X,f)$ with $X$ viewed as an $S$-variety through the morphism $u$.  This is a morphism of rings in the case when $u$ is an immersion, but not in general.

In the other direction, there is a morphism of rings
$$u^*:\evar_S\to \evar_R$$
called the \textit{pullback}, \index{pullback} given by sending the class of a pair $(X,f)$ with $X$ a $S$-variety to the class of the pair $(X\times_S R, f\circ\pr_1)$, where $R$ is viewed as an $S$-variety through the morphism~$u$. In particular, $\evar_S$ may be seen as a $\evar_S$-algebra via this morphism. 

Elements of the Grothendieck rings over $R$ may be interpreted as~motivic functions on~$R$. \index{motivic function} When $r\in R$ is a point, and $\a\in\evar_R$, we will denote by $\a(r) = r^{*}(\a)$ (or $\a_r$) the image of $\a$ in $\evar_{\kappa(r)}$ by the morphism $r^*$ induced by $r:\spec (\kappa(r))\to R$. This will be interpreted as evaluation of the motivic function $\a$ at $r$. More generally, the above morphism $u^*$ is just composition with $u$. As for $u_!$, it may be interpreted as ``summation over rational points'' in the fibres of $u$, as explained in the following paragraph.  We recall Lemma~1.1.8 from \cite{CL}, which, with this functional interpretation, means that a motivic function on~$R$ is determined by its values at points of $R$.

\begin{lemma}\label{function.equality} Let $\a\in\kvar_R$ (resp. $\M_R$, $\evar_R$, $\expp_R$). If $\a(r) = 0$ for every $r\in R$, then $\a = 0$.
\end{lemma}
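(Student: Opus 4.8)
The plan is to reduce to the case of $\kvar_R$ and there to argue by noetherian induction on $R$ together with a stratification into "nice" pieces. First I would observe that it suffices to treat $\kvar_R$: the statement for $\M_R$ follows because $\M_R = \kvar_R[\LL^{-1}]$ and $\LL(r) = \LL_{\kappa(r)} \neq 0$ is a non-zero-divisor in $\M_{\kappa(r)}$ for all $r$ (so if $\alpha\LL^{-N}$ vanishes at every point, then $\alpha$ does, hence $\alpha = 0$ in $\kvar_R$, hence $\alpha\LL^{-N}=0$ in $\M_R$); and the exponential cases $\evar_R$, $\expp_R$ reduce to the non-exponential ones once one knows that a class $[X,f]_R$ with $f$ vanishing at every point pushes down to the same kind of statement — more precisely one uses that the relation $[X\times\A^1,\pr_2]$ already gives a surjection $\kvar_R[\text{extra generators}]\twoheadrightarrow\evar_R$ and tracks how evaluation interacts with it, exactly as in \cite{CL}, Lemma 1.1.8. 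So the heart of the matter is: if $\alpha\in\kvar_R$ and $\alpha(r)=0$ for all $r\in R$, then $\alpha=0$.

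For this, I would write $\alpha$ as a finite $\Z$-combination $\alpha = \sum_i n_i[X_i]$ of classes of $R$-varieties $X_i$. By noetherian induction on (the closure of the image in) $R$, and by the scissor relations, one may stratify $R$ into finitely many locally closed subschemes over each of which every $X_i$ becomes, say, a trivial fibration — or at least over each of which the function $r\mapsto$ "class of the fibre" becomes constant in an appropriate universal sense. Concretely, one reduces to the situation where $R$ is integral with generic point $\eta$, and one wants to deduce $\alpha = 0$ from $\alpha(\eta)=0$ together with the inductive hypothesis applied to a dense open $U\subset R$ over which things spread out: if $\alpha(\eta)=0$ in $\kvar_{\kappa(\eta)}$, then this equality is witnessed by a finite chain of scissor relations and isomorphisms over $\kappa(\eta)$, each of which is defined over some dense open $U\subseteq R$ (isomorphisms and closed/open decompositions spread out), so $\alpha|_U = 0$ in $\kvar_U$; then $\alpha$ is supported on the closed complement $Z = R\setminus U$ with $\dim Z < \dim R$ (or $Z$ has strictly smaller image), and we conclude by induction since $\alpha(z) = 0$ for all $z\in Z$ too. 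The base case is $R = \spec K$ for a field $K$, where the statement "$\alpha(\eta)=0 \Rightarrow \alpha = 0$" is a tautology.

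The main obstacle is making the "spreading out" step fully rigorous: one needs that the equality $\alpha(\eta)=0$ in the Grothendieck \emph{group} $\kvar_{\kappa(\eta)}$ — which by definition means a finite formal consequence of the defining relations — can be realised over a single dense open $U$ of $R$. This requires knowing that (i) an isomorphism of $\kappa(\eta)$-varieties extends to an isomorphism of $U$-varieties after shrinking $U$ (a standard limit argument, as the varieties involved are of finite type and $\kappa(\eta) = \varinjlim \OO(U)$), and (ii) a presentation of a fibre as "closed $+$ open" likewise extends after shrinking. Both are routine but need the finiteness of the number of relations used, which holds since $\kvar_{\kappa(\eta)}$ is defined by generators and relations and any given identity uses only finitely many. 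One must also be slightly careful that evaluation $r^*$ is compatible with the scissor relations, i.e. that $r^*$ is a well-defined ring morphism — but this is built into the functoriality set up in Section~\ref{sect.functoriality}. Alternatively, and perhaps more cleanly, one can bypass spreading out by invoking a realisation functor: pick any field and apply $\ell$-adic or Hodge realisations fibrewise — but since $R$ is an arbitrary noetherian scheme this is less uniform, so I would stick with the noetherian-induction argument above, which is also the one used in \cite{CL}.
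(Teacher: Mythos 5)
Your core argument for $\kvar_R$ --- noetherian induction, spreading out over a dense open $U$ the finitely many relations witnessing $\a(\eta)=0$ at the generic point, then splitting $\a$ into its restrictions to $U$ and to the closed complement --- is exactly the standard proof of this statement; note that the paper itself does not prove the lemma but quotes it from \cite{CL} (Lemma 1.1.8), whose proof is this very spreading-out argument. So the heart of your proposal is fine.

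The genuine gap is in your reduction of the other three rings to $\kvar_R$. For $\M_R$: from $\a(r)\LL^{-N}=0$ in $\M_{\kappa(r)}$ you can only conclude that the \emph{image} of $\a(r)$ in $\M_{\kappa(r)}$ vanishes, i.e.\ that $\a(r)\LL^{m_r}=0$ in $\kvar_{\kappa(r)}$ for some $m_r$ depending on $r$ --- not that $\a(r)=0$ in $\kvar_{\kappa(r)}$. Since $\LL$ is in general a zero divisor in $\kvar_{\kappa(r)}$ (Borisov's result \cite{Borisov}, recalled in the paper), and there is no uniform bound on the exponents $m_r$, your step ``then $\a$ does [vanish at every point], hence $\a=0$ in $\kvar_R$'' fails. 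Likewise, for $\evar_R$ and $\expp_R$ there is no retraction onto $\kvar_R$, and ``tracking how evaluation interacts with the surjection'' is a gesture, not an argument. The repair is straightforward and is what \cite{CL} does: run your noetherian-induction/spreading-out argument \emph{directly} in each of the four rings. The only input needed is that, for $R$ integral with generic point $\eta$, the canonical map $\varinjlim_U A_U \to A_{\kappa(\eta)}$ is an isomorphism for $A\in\{\kvar,\M,\evar,\expp\}$, the colimit being over dense open subsets $U\subseteq R$: generators $[X,f]$, isomorphisms, scissor decompositions and the relation $[X\times_\Z\A^1,\pr_2]$ all spread out over some $U$, and localisation at $\LL$ commutes with the filtered colimit. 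Granting this, $\a(\eta)=0$ gives $\a|_U=0$ for some dense open $U$; the identity $\a = (j_U)_!\,(\a|_U) + (i_Z)_!\,(\a|_Z)$, where $j_U$ and $i_Z$ are the open and closed immersions (valid in all four rings, by additivity applied to each generator), reduces the claim to the closed complement $Z$, and noetherian induction closes the argument.
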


\subsection{Exponential sum \index{exponential sum} notation}\label{exponentialsumnotation}
We start with the following lemma:
\begin{lemma} Let $k$ be a finite field and let $\psi:k\to \C^*$ be a non-trivial additive character. Then there is a motivic measure 
$$\begin{array}{ccc}\evar_k&\to& \C \\
                         \left[X,f\right] & \mapsto & \sum_{x\in X(k)}\psi(f(x))\end{array}$$
          extending the counting measure 
          $$\begin{array}{ccc}\kvar_{k}&\to& \Z \\
          {[X]}&\mapsto& |X(k)|
          \end{array}$$
\end{lemma}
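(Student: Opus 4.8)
The plan is to construct the desired map at the level of Grothendieck semirings first, and then pass to the group completion, checking compatibility with the defining relations of $\sevar_k$ at each stage. First I would define, on generators, the assignment $(X,f)\mapsto \sum_{x\in X(k)}\psi(f(x))$, which is a well-defined complex number since $X(k)$ is finite ($X$ being a variety over a finite field). The task is then to verify that this respects the three families of relations defining $\sevar_k$: invariance under $k$-isomorphism $(X,f)\sim(Y,f\circ u)$ is immediate since $u$ induces a bijection $Y(k)\to X(k)$; the scissor relation $(X,f)\sim(Y,f|_Y)+(U,f|_U)$ for $Y$ closed in $X$ with open complement $U$ follows from the disjoint decomposition $X(k)=Y(k)\sqcup U(k)$, so that $\sum_{x\in X(k)}\psi(f(x))=\sum_{y\in Y(k)}\psi(f(y))+\sum_{u\in U(k)}\psi(f(u))$; and the vanishing of $(\varnothing,f)$ corresponds to the empty sum.

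The one relation that genuinely uses the hypothesis that $\psi$ is \emph{non-trivial} is $(X\times_{\Z}\A^1,\pr_2)\sim 0$. Here I would compute directly: $(X\times\A^1)(k)=X(k)\times k$, and
\[
\sum_{(x,t)\in X(k)\times k}\psi(\pr_2(x,t))=\sum_{x\in X(k)}\sum_{t\in k}\psi(t)=|X(k)|\cdot\sum_{t\in k}\psi(t).
\]
Since $\psi$ is a non-trivial character of the finite abelian group $(k,+)$, the orthogonality relation for characters gives $\sum_{t\in k}\psi(t)=0$, so the whole sum vanishes. This shows the assignment factors through the semiring $\sevar_k$. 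Multiplicativity — that the map is a semiring morphism — comes from $(X\times_k Y)(k)=X(k)\times Y(k)$ together with $(f\oplus g)(x,y)=f(x)+g(y)$, whence $\psi((f\oplus g)(x,y))=\psi(f(x))\psi(g(y))$ and the double sum factors as a product; the unit $[\spec k\xrightarrow{\id}\spec k]$ maps to $\psi(0)=1$.

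Finally, since $\evar_k$ is the group completion of $\sevar_k$ (obtained by formally adjoining negatives, as recalled in the excerpt), the universal property of group completion extends the semiring morphism $\sevar_k\to\C$ uniquely to a ring morphism $\evar_k\to\C$, which is the claimed motivic measure. That it extends the counting measure is clear: restricting to classes of the form $[X,0]$ gives $\sum_{x\in X(k)}\psi(0)=|X(k)|$, and the inclusion $\kvar_k\hookrightarrow\evar_k$ sends $[X]$ to $[X,0]$. I do not anticipate a serious obstacle here; the only subtlety worth spelling out is the character-sum vanishing $\sum_{t\in k}\psi(t)=0$, which is exactly where non-triviality of $\psi$ enters and which is the motivic reflection of relation~(\ref{relationsuppl}).
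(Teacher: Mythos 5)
Your proof is correct and follows essentially the same argument as the paper: define the map on generators, check the isomorphism, cut-and-paste, and affine-line relations (the last via $\sum_{t\in k}\psi(t)=0$ for non-trivial $\psi$), and verify multiplicativity from $(X\times_k Y)(k)=X(k)\times Y(k)$. Your minor detour through the semiring $\sevar_k$ and its group completion is just a repackaging of the paper's direct definition on the free abelian group on generators of $\evar_k$, so there is nothing substantive to flag.
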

\begin{proof} We may define a group morphism on the free abelian group of generators $(X,f)$ of $\evar_k$, by sending $(X,f)$ to $\sum_{x\in X(k)}\psi(f(x))$. This passes to the quotient modulo the first relation  defining $\evar_k$ because it corresponds to being able to perform changes of variables in such a sum:
$$\sum_{x\in X(k)}\psi(f(x)) = \sum_{y\in Y(k)}\psi(f\circ u(y))$$
whenever $u:Y\to X$ is an isomorphism over $k$. 
It also passes to the quotient by the second relation because the latter corresponds to cutting up the sum into smaller sums:
$$\sum_{x\in X(k)}\psi(f(x)) =  \sum_{x\in Y(k)} \psi(f(x)) + \sum_{x\in U(k)} \psi(f(x))$$
whenever $Y$ is a closed subscheme of $X$ and $U = X\setminus Y$ its open complement. The third relation is also satisfied, and comes from the fact that if $\psi$ is a non-trivial character, then 
$$\sum_{x\in \A^1_k(k)}\psi(x) = 0.$$
Finally, we observe that the product of two classes $[X,f]$ and $[Y,g]$ is sent to
$$\sum_{x\in X}\sum_{y\in Y}\psi(f(x))\psi(g(y)) = \sum_{(x,y)\in X\times Y}\psi(f(x) + g(y))$$
which is exactly the image of the product $[X\times Y,f\conv g]$ of these classes, so this map is a ring morphism.
\end{proof}
 This lemma is meant as a motivation of the fact that, as explained in \cite{CL} 1.1.9, the class of a pair $(X,f)$ in $\evar_k$ must be thought of as an analogue of the exponential sum
$$\sum_{x\in X(k)}\psi(f(x))$$
where $k$ is a finite field and $\psi:k\to\C^*$ is a non-trivial additive character. This is why, when doing calculations with it, we will denote the class $[X,f]\in \evar_k$, even  for a not necessarily finite field $k$, by $$\sum_{x\in X}\psi(f(x)).$$ As noted in the proof of the lemma, when using this notation, the three relations occurring in the definition of $\evar_k$ translate respectively as the possibility of performing change of variables, additivity and the property that
$$\sum_{x\in k}\psi(x) = 0,$$
the latter being essential to make Fourier analysis work.

More generally, let $R$ be a $k$-variety. For any morphism  $h:R\to\A^1$ and any element $\theta\in \evar_R$, we may define
$$\sum_{r\in R}\theta(r)\psi(h(r)) = \theta \cdot [R,h]$$
where the product is taken in $\evar_R$, and its result is viewed in $\evar_k$. In the case when $h = 0$, the map $\theta\mapsto \sum_{r\in R}\theta(r)$ is exactly $u_{!}$ for $u:R\to k$ the structural morphism. In the same manner, if $u:R\to S$ is a morphism, then for any $\theta\in\evar_R$, the motivic function $u_!\theta$ sends $s\in S$ to $\sum_{r\in u^{-1}(s)}\theta(s)$. 
\subsection{Grothendieck rings of varieties with action}\label{sect.grothaction}
Grothendieck rings with actions were introduced by Denef and Loeser in \cite{DL02} to be able to take into account monodromy actions on the motivic nearby fibre. Other references are \cite{DL01} and \cite{GLM}. 

Let $k$ be a field of characteristic zero. We start by giving some general definitions about group actions on varieties. Let $G$ be a finite group acting on a variety $X$ over $k$. We say that the action of $G$ is \textit{good} \index{good action} if every $G$-orbit is contained in an affine open subset of $X$. If $X$ and $Y$ are two varieties with good $G$-action, we denote by $X\times^GY$ \index{Xtimes@$X\times^GY$} the quotient of the product $X\times Y$ by the equivalence relation $(gx,y)\sim (x,gy)$. It exists as a variety, and there is a good $G$-action on $X\times^GY$ induced by the action of $G$ on the first factor of $X\times Y$.

Let $S$ be a variety over $k$ and $X$ a variety over $S$. When we speak of a good action of~$G$ on the $S$-variety $X$ we require it to leave the fibres of the structural morphism $X\to S$ invariant, i.e., the structural morphism must be equivariant if $S$ is equipped with the trivial $G$-action. 


For $n\geq 1$, let $\mu_n = \spec(k[x]/(x^n-1))$ \index{mun@$\mu_n$} be the group scheme of $n$-th roots of unity, and let $\hat{\mu}$ \index{muh@$\hat{\mu}$} be the projective limit $\varprojlim\mu_n$ of the projective system with transition morphisms $\mu_{nd}\to \mu_n$, $x\mapsto x^{d}$. For any integer $r\geq 1$, a good $\hat{\mu}^r$-action \index{good $\hat{\mu}^r$-action} on a variety $X$ is an action of $\hat{\mu}^r$ that factors through a good action of $\mu_n^r$ for some integer $n\geq 1$. 

Let $S$ be a variety over $k$ and $r\geq 1$ an integer. The Grothendieck group of varieties with $\hat{\mu}^r$-action \index{Grothendieck group!of varieties!with $\hat{\mu}^r$-action} $\kvar^{\hat{\mu}^r}_S$ \index{Kvarmu@$\kvar^{\hat{\mu}^r}$} is defined in a similar way to $\kvar_S$: generators are pairs $(X,\sigma)$ where $X$ is a variety over $S$ and $\sigma$ a good $\hat{\mu}^r$-action on $X$, and relations are
$$(X,\sigma)-(Y,\tau)$$
whenever $X$ and $Y$ are $S$-varieties with good $\hat{\mu}^r$-actions $\sigma,\tau$ such that there exists an equivariant $S$-isomorphism $u:(X,\sigma)\to (Y,\tau)$,
$$(X,\sigma) - (Y,\sigma_{|Y}) - (U,\sigma_{|U})$$
where $X$ is a variety over $S$ with good $\hat{\mu}^r$-action $\sigma$, $Y$ a closed $\hat{\mu}^r$-invariant subscheme of~$X$, and $U$ its open complement, as well as an additional relation saying that
\begin{equation}\label{actionrelation}(X\times \A^n_k,\sigma) - (X\times \A^n_k,\sigma')\end{equation}
whenever $\sigma$ and $\sigma'$ are two liftings of the same $\hat{\mu}^r$-action on $X$ to an affine action on $X\times \A^n_k$ (that is, a good action the restriction of which to all fibres of the affine bundle $X\times\A^n_k\to X$ is affine). The fibred product with the diagonal $\hat{\mu}^r$-action endows $\kvar_S^{\hat{\mu}^r}$ with a ring structure. 
\begin{remark} We won't use this product much, because in the context of vanishing cycles, products defined via convolution \index{convolution product} are more relevant, see sections \ref{sect.convolution}, \ref{section.grothaffineline}. 
\end{remark}
The class of the pair $(X,\sigma)$ will be denoted $[X,\sigma]$, or even $[X,\hat{\mu}^r]$ or $[X]$ when it is clear from the context what $\sigma$ is.

We denote by $\LL_S$, or $\LL$, \index{L@$\LL$} the class $\A^1_S$ with trivial $\hat{\mu}^r$-action. The last relation (\ref{actionrelation}) says in particular that $\LL$ is equal to the class of the affine space endowed with any affine $\hat{\mu}^r$-action. 
As for $\kvar_S$, we may define the ring $\M_S^{\hat{\mu}^r} = \kvar_S^{\hat{\mu}^r}[\LL^{-1}]$. \index{Mhat@$\M^{\hat{\mu}^r}$}

\begin{remark}[Trivial action and forgetful morphism] There is a ring morphism
\begin{equation}\label{actioninjection}\kvar_S\to \kvar_{S}^{\hat{\mu}^r}\end{equation}
sending the class of a variety $X$ to the class of $X$ endowed with the trivial action. There is also a forgetful ring morphism
$$\kvar_{S}^{\hat{\mu}^r}\to \kvar_{S}$$
which sends a class $[X,\sigma]$ of a variety~$X$ with action~$\sigma$ to the class~$[X]$, well defined because all relations defining $\kvar_S^{\hat{\mu}^r}$ go to zero in $\kvar_S$. The composition of the two gives the identity of $\kvar_S$, which shows that (\ref{actioninjection}) is injective. This remains valid with $\kvar$ replaced by $\M$. 
\end{remark}
\begin{remark} Since $\kvar_S$ is a $\kvar_k$-module, the morphism (\ref{actioninjection}) endows $\kvar_{S}^{\hat{\mu}^r}$ with a $\kvar_k$-module structure. It is given, for any $k$-variety $X$ and any $S$-variety $Y$ with $\hat{\mu}^r$-action $\sigma$ by
$[X][Y,\sigma] = [X\times_k Y, \sigma']$, where $\sigma'$ is acts trivially on $X$ and by $\sigma$ on $Y$. 
\end{remark}
If $u:R\to S$ is a morphism of $k$-varieties, there are, as in \ref{sect.functoriality}, a group morphism \index{proper pushforward}
$$u_!:\kvar_R^{\hat{\mu}^r}\to\kvar_S^{\hat{\mu}^r}$$
and a ring  morphism \index{pullback}
$$u^*:\kvar_S^{\hat{\mu}^r}\to \kvar_{R}^{\hat{\mu}^r},$$
defined similarly. These morphisms exist also when $\kvar$ is replaced with $\M$. 

Exterior products \index{exterior product} in the flavour of (\ref{exteriorproduct}) also exist for Grothendieck rings of varieties with action. For a $k$-variety $T$ and $T$-varieties $R$ and $S$, the one we are going to use is a morphism
$$\kvar_R^{\hat{\mu}^r}\otimes_{\kvar_T}\kvar_S^{\hat{\mu}^r} \xrightarrow{\boxtimes_T}\kvar_{R\times_TS}^{\hat{\mu}^r\times \hat{\mu}^r}$$
of $\kvar_T$-algebras sending a pair $([X,\sigma],[Y,\tau])$ to the class  of $X\times_TY$ endowed with the product action $\sigma\times_T\tau$ given by $(s,t).(x,y) = (\sigma(s)(x),\tau(t)(y))$. 
\subsection{The dimensional filtration}\index{dimensional filtration}\index{filtration!dimensional}
A reference for this section is \cite{CNS}, chapter 1, sections 4.1 and 4.2. 
\begin{definition} Let $S$ be a scheme. The relative dimension \index{relative dimension}\index{dimension!relative} of a variety $X$ over $S$, denoted by $\dim_S(X)$, \index{dim@$\dim_S$} is defined to be
$$\dim_SX := \sup_{s\in S} \dim_{\kappa(s)}X_s.$$
\end{definition}

Let $S$ be a noetherian scheme. The above notion of relative dimension gives rise to a natural filtration on the ring $\kvar_{S}$: we define $F_d\kvar_S$ \index{fdk@$F_d\kvar$} to be the subgroup of $\kvar_S$ generated by classes of varieties of relative dimension $\leq d$. 

We may now define a function
$$\dim_S:\kvar_S\to \Z\cup\{-\infty\}$$ \index{dim@$\dim_S$}
by sending a class $\a$ to $\inf\{d\in\Z,\ \a\in F_d\kvar_S\}$. This function has the following elementary properties:

Let $\a,\a'\in \kvar_S$. Then
\begin{enumerate}[$(i)$]
\item $\dim_S(0) = -\infty$.
\item $\dim_S(\a + \a') \leq \min\{\dim_S(\a), \dim_S(\a')\},$ with equality whenever $\dim_S(\a)\neq\dim_S(\a')$
\item $\dim_S(\a\a')\leq \dim_S(\a) + \dim_S(\a').$
\end{enumerate}
Moreover, using the Euler-Poincaré polynomial, one may prove that for every variety $X$ over $S$, $\dim_S([X]) = \dim_SX$ (see lemma 4.1.3 in \cite{CNS}, chapter 1). 

For every $d\in\Z$, we define $F_d\M_S$ \index{fdm@$F_d\M$} to be the subgroup of $\M_S$ generated by elements of the form $[X]\LL^{-n}$ where $X$ is an $S$-variety, $n\in\Z$ is an integer and $\dim_SX - n\leq d$. This gives us an increasing and exhaustive filtration on the ring $\M_S$. In the same manner as above, we may define a function
$$\dim_S:\M_S\to \Z\cup\{-\infty\}$$\index{dim@$\dim_S$}
satisfying the same properties. 


The same definition gives rise to dimensional filtrations and functions on Grothendieck rings with actions or with exponentials. Thus, for example, on $\evar_S$, we define $F_d\evar_S$ \index{fde@$F_d\evar$} as the subgroup of $\evar_S$ generated by classes of the form $[X,f]$ where~$X$ is of relative dimension $\leq d$. 

\subsection{Some presentations of Grothendieck groups in characteristic zero}\index{presentation}\index{Grothendieck group!generators}
 We start by recalling a classical result (it follows for example from \cite{Bittner}, Theorem 3.1).
\begin{lemma}\label{smoothpropergensfield} Let $X$ be a variety over a field $k$ of characteristic zero. The Gro\-then\-dieck group $\kvar_X$ is generated by classes of the form~$[Y\xrightarrow{p} X]$ where $p$ is proper and~$Y$ is smooth over $k$. 
\end{lemma}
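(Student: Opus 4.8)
The statement to establish is Lemma~\ref{smoothpropergensfield}: over a field $k$ of characteristic zero, $\kvar_X$ is generated by classes $[Y\xrightarrow{p}X]$ with $Y$ smooth over $k$ and $p$ proper. The plan is to reduce to the absolute case $X=\spec k$, which is exactly Bittner's presentation (\cite{Bittner}, Theorem~3.1), and then transport it along the structural morphism $X\to\spec k$ is not quite the right move; instead one argues by noetherian induction on $X$ directly. First I would recall that $\kvar_X$ is generated by classes $[Z\to X]$ with $Z$ a variety over $X$; by additivity and the cut-and-paste relations it suffices to produce, for each integral $X$-variety $Z$, an expression of $[Z\to X]$ in terms of the desired generators, modulo classes supported on lower-dimensional subvarieties of $Z$.

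\textbf{Main induction.} Proceed by induction on $\dim Z$. By generic smoothness in characteristic zero, there is a dense open $V\subseteq Z$ that is smooth over $k$; applying Nagata compactification and then Hironaka's resolution of singularities (available since $\operatorname{char}k=0$), one finds a smooth $k$-variety $\bar V$ containing $V$ as a dense open, together with a proper morphism $\bar V\to X$ extending $V\to Z\to X$ — here one must resolve the closure of the graph of $V\to X$ inside a compactification, so that the projection to $X$ becomes proper while the total space becomes smooth over $k$. Then $[\bar V\to X]$ is a legitimate generator, and the relation $[\bar V\to X]=[V\to X]+[\bar V\setminus V\to X]$ together with $[Z\to X]=[V\to X]+[Z\setminus V\to X]$ gives
\[
[Z\to X]=[\bar V\to X]-[\bar V\setminus V\to X]+[Z\setminus V\to X].
\]
Both $\bar V\setminus V$ and $Z\setminus V$ have dimension strictly less than $\dim Z$ (the former because $\bar V$ is a smooth compactification of $V$ and $V$ is dense; the latter because $V$ is dense in $Z$), so by the induction hypothesis their classes over $X$ lie in the subgroup generated by the desired generators. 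This closes the induction once the base case $\dim Z\le 0$ is checked, where $Z$ is a finite set of reduced points, hence already smooth over $k$ and finite (so proper) over $X$.

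\textbf{Expected obstacle.} The one step requiring care is the construction of the smooth proper-over-$X$ model $\bar V$: one cannot simply take a smooth compactification of $V$ as a $k$-variety and hope the map to $X$ stays proper. The correct recipe is to choose a compactification $\bar X$ of $X$, form the closure $\bar V_0$ of $V$ inside $\bar X\times(\text{some projective compactification of }V)$ via the graph of $V\to\bar X$, which is proper over $\bar X$ hence — after noting the image lands in $X$ up to shrinking, or rather after taking the preimage of $X$ — proper over $X$, and finally apply resolution of singularities to $\bar V_0$, which does not disturb properness over $X$. Everything else is a routine application of the cut-and-paste relations and the dimensional bookkeeping. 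Note that this argument in fact reproves Bittner's result in the case $X=\spec k$, so citing \cite{Bittner} for the absolute statement and bootstrapping is equally acceptable; the self-contained induction above is what I would write out.
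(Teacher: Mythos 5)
Your argument is correct, but it takes a different route from the paper: the paper does not prove this lemma at all, it simply cites it as a classical consequence of Bittner's presentation (\cite{Bittner}, Theorem 3.1), whereas you give a self-contained proof by induction on dimension, combining Nagata compactification of $V\to X$ with Hironaka resolution (chosen, as is standard, to be an isomorphism over the smooth dense open $V$, which is what legitimises the cut-and-paste identity $[Z\to X]=[\bar V\to X]-[\bar V\setminus V\to X]+[Z\setminus V\to X]$) and then disposing of the lower-dimensional remainders by the induction hypothesis. This is essentially the standard proof of the generation half of Bittner's theorem, transported to the relative group $\kvar_X$, and it is in fact very close in spirit to the argument the paper *does* write out for the refined relative statement, Lemma \ref{smoothpropergens}, where one compactifies $p$, resolves at the generic point of the image, spreads out, and runs a double induction on dimensions. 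What the citation buys the paper is brevity and the option of invoking Bittner's finer presentations (e.g. smooth projective generators with blow-up relations); what your direct induction buys is a self-contained argument that makes transparent exactly which inputs are used (generic smoothness, Nagata, resolution in characteristic zero) and that generalises immediately to the relative refinement needed later. The only point worth making explicit in a written-up version is the one you implicitly use: the resolution of the compactification must be taken to be an isomorphism over $V$ (or over a dense open of $V$, at the cost of one more layer of induction), since otherwise $[V\to X]$ is not recovered from $[\bar V\to X]$ by removing the boundary.
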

This may be generalised in the following form:
\begin{lemma}\label{smoothpropergens} Let $S$ be a variety over a field $k$ of characteristic zero, and $X$ a variety over $S$ with structural morphism $u:X\to S$. Then the group $\kvar_{X}$ is generated by classes $[Y\xrightarrow{p} X]$ such that $U =u\circ p(Y)$ is a locally closed subset of $S$, $Y$ is smooth over $U$ and such that the morphism $p\times_S\id_U: Y\times_S U \to X\times_S U$ is proper. 
\end{lemma}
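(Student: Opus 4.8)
The plan is to deduce Lemma~\ref{smoothpropergens} from Lemma~\ref{smoothpropergensfield} by a noetherian-induction argument on $S$, stratifying $S$ so that on each stratum the structural morphism can be handled by the absolute case. First I would apply Lemma~\ref{smoothpropergensfield} directly to the $k$-variety $X$ (forgetting the map to $S$): this writes $[X]\in\kvar_X$ as a $\Z$-linear combination of classes $[Y\xrightarrow{p}X]$ with $Y$ smooth over $k$ and $p$ proper. The issue is that such a $p$ need not satisfy the relative properness and relative smoothness conditions over a locally closed piece of $S$, so the combination has to be refined. The mechanism for the refinement is generic flatness / generic smoothness applied to the composite $Y\to X\to S$.

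Concretely, I would argue as follows. By noetherian induction it suffices to show: for every irreducible closed subset $Z\subseteq S$, the subgroup of $\kvar_X$ generated by classes of $S$-varieties supported over $Z$ is generated by the allowed classes (those with $U=u\circ p(Y)$ locally closed, $Y/U$ smooth, and $p\times_S\id_U$ proper), modulo classes supported over a proper closed subset of $Z$. Replacing $S$ by $Z$ and $X$ by $X\times_S Z$, we may assume $S$ is irreducible and it is enough to produce the generators over a dense open $V\subseteq S$. Now take a generator $[Y\xrightarrow{p}X]$ from Lemma~\ref{smoothpropergensfield}, with $Y$ smooth over $k$ and $p$ proper. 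By generic flatness and generic smoothness (char.\ $0$), there is a dense open $V\subseteq S$ such that, setting $Y_V=Y\times_S V$, the morphism $Y_V\to V$ is smooth; and since $p$ is proper, $p\times_S\id_V:Y_V\to X\times_S V$ is proper. Shrinking $V$ further if necessary, one can also arrange that the image $u\circ p(Y)\cap V$ is all of $V$ or disjoint from $V$ — in the latter case discard the term. What remains over $S\setminus V$ is a class of $(X\times_S(S\setminus V))$-varieties, and we conclude by the inductive hypothesis applied to the closed subscheme $S\setminus V$ (with its reduced structure) and $X\times_S(S\setminus V)$. Since $[X]=\sum_i n_i[Y_i\xrightarrow{p_i}X]$ is a finite sum, one intersects the finitely many $V_i$ to get a single dense open over which all terms are simultaneously in good shape.

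There is one bookkeeping point: the statement of the lemma requires $U=u\circ p(Y)$ to be \emph{exactly} the (locally closed) image, with $Y$ smooth over $U$ and $p\times_S\id_U$ proper, rather than $Y$ being smooth over an open containing the image. After shrinking to $V$ as above, the image $u\circ p(Y_V)$ is a constructible subset of $V$; by passing to a finer stratification of $V$ into locally closed pieces one may assume the image is itself locally closed, and then base-changing $Y_V$ and $X_V$ along the inclusion of that image into $S$ yields a generator of exactly the required form (relative smoothness and the properness of $p\times_S\id_U$ are preserved by base change). This is routine but should be spelled out so that the output of the induction lands in the precise class of generators named in the statement.

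The main obstacle, and the only genuinely substantive step, is the passage from ``$Y$ smooth over $k$'' (the conclusion of Lemma~\ref{smoothpropergensfield}) to ``$Y$ smooth over $U\subseteq S$'': this is exactly where characteristic zero is used, via generic smoothness (equivalently, generic smoothness of the morphism $Y\to S$ applied to the smooth $k$-variety $Y$, or the fact that a dominant morphism from a smooth variety to an integral variety over a perfect field is smooth on a dense open of the source). Everything else — noetherian induction on $S$, stratifying the constructible image into locally closed pieces, stability of properness under base change, and finiteness of the sum — is formal. I would therefore organise the write-up as: (1) reduce to $S$ irreducible and to producing generators over a dense open, by noetherian induction; (2) invoke Lemma~\ref{smoothpropergensfield} for the $k$-variety $X$; (3) apply generic smoothness to each term to get smoothness over a dense open $V$; (4) stratify the image and base-change to put each term in the exact required form; (5) handle the complement $S\setminus V$ by the inductive hypothesis; (6) combine finitely many strata.
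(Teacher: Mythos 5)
Your proof is correct, but it follows a genuinely different route from the paper's. The paper does not invoke Lemma~\ref{smoothpropergensfield} at all: it starts from quasi-projective generators $p:Y\to X$, compactifies $p$ to a proper $\bar p:\bar Y\to X$, resolves the singularities of the generic fibre $\bar Y_\eta$ over the function field $\kappa(\eta)$ of $T=\overline{u\circ p(Y)}$, spreads this out to a proper birational modification over a dense open $T'\subseteq T$, and runs a double induction on $\dim T$ and $\dim Y$ (the induction on $\dim Y$ is needed precisely because compactification and resolution change $Y$ by lower-dimensional loci whose image may still dominate $T$); locally-closedness of the image then comes for free from openness of the smooth map $u\circ g$. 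You instead outsource the resolution to Bittner's theorem (Lemma~\ref{smoothpropergensfield}), which already gives properness over $X$ and smoothness over $k$, and then the only substantive input is generic smoothness over the base in characteristic zero, combined with noetherian induction on closed subsets of $S$ and a stratification of the constructible image into locally closed pieces (with properness and relative smoothness preserved under base change). This buys you a more modular argument — no compactification step and no auxiliary induction on the dimension of the source — at the cost of relying on the absolute Bittner statement, whereas the paper's argument is self-contained modulo resolution of singularities over $\kappa(\eta)$ and spreading out. One cosmetic remark: your parenthetical claiming that generic smoothness is ``equivalently'' the statement that a dominant morphism from a smooth variety is smooth on a dense open of the \emph{source} is not what you actually need (and is weaker); what your argument uses, correctly, is the target form, namely that $Y\times_S V\to V$ is smooth for some dense open $V$ of the (closure of the) image — so this does not affect the proof, but the phrasing should be fixed in a final write-up.
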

\begin{proof} First of all, $\kvar_{X}$ is generated by classes of quasi-projective morphisms. Let therefore $p:Y\to X$ be a quasi-projective morphism, and define $T$ to be the closure of $u\circ p(Y)$ in $S$. We will argue by induction on $\dim T$. Let $T_1,\ldots,T_n$ be the irreducible components of $T$. For each $i\in\{1,\ldots,n\}$ it suffices to write $[(u\circ p)^{-1}(T_i)\to X]$ as a sum of classes as in the statement of the theorem, and use induction on $\dim T$ to conclude. We may therefore assume that $T$ is irreducible. Compactify $p$ into a proper morphism $\bar{p}:\bar{Y}\to X$, where $\bar{Y}$ is a variety containing $Y$ as a dense open subset. The closure of $u\circ \bar{p}(\bar{Y})$ in $S$ is again $T$. An additional induction on $\dim Y$ (with $\dim T$ fixed), initialised at $\dim Y = \dim T$ (in which case we use the induction hypothesis for $\dim T-1$) then allows us to work with $\bar{p}$ instead of $p$. Let $\eta$ be the generic point of $T$. Consider a resolution of singularities $\widetilde{\bar{Y}_{\eta}}\to \bar{Y}_{\eta}$ above $\kappa(\eta)$. Above some dense open subset $T'$ of $T$, this spreads out to a proper birational morphism $f:\widetilde{\bar{Y}}\to \bar{Y}\times_T T'$ with $\widetilde{\bar{Y}}$ smooth over $T'$. If $\dim T=0$, then $T'=T$, otherwise by induction on $\dim T$, we may replace $\bar{p}:\bar{Y}\to X$ with its restriction $p':\bar{Y}\times_T T'\to X$. Finally, by induction on $\dim Y$ (and on $\dim T$ if $\dim Y = \dim T$), the morphism $p':Y\times_T T'\to X$ may be replaced with the composition $g = p'\circ f:\widetilde{\bar{Y}}\to X$. The morphism $g$ then satisfies the condition in the statement of the theorem. Indeed, the morphism $u\circ g:\widetilde{\bar{Y}}\to T'$ is smooth, and therefore open, so that $U:=u\circ g\left(\widetilde{\bar{Y}}\right)$ is open in its closure and therefore is locally closed is $S$, and~$\widetilde{\bar{Y}}$ is smooth over~$U$. Finally, $g\times_S\id_U$ is the composition of the morphisms $\widetilde{\bar{Y}}\times_{S}U\to \bar{Y}\times_S U$ and $\bar{Y}\times_S U\to X\times_S U$ which are both proper by base change. 
\end{proof}

\section{Motivic vanishing cycles}\label{sect.motvancycles}
Throughout this section, $k$ will be a field of characteristic zero. 
\subsection{Convolution}\label{sect.convolution}
In this section we recall the definition of Looijenga's convolution from \cite{Loo}, in its generalised form due to Guibert, Loeser and Merle (\cite{GLM}), following section 2.3 in \cite{LS}.
\begin{notation} Let $n\geq 1$ be an integer. We denote by $F_0^n$ (resp. $F_1^n$) the Fermat curve with equation $x^n +y^n = 0$ (resp. $x^n + y^n = 1$) inside $\G_m\times \G_m$ (with coordinates $x,y$), with the obvious $\mu_n\times \mu_n$-action. \index{Fermat curve}\index{F0@$F_0^n$}\index{F1@$F_1^n$}
\end{notation}
Let $S$ be a variety over a field $k$ of characteristic zero. There is a $\kvar_k$-linear morphism
$$\Psi:\kvar_{S}^{\hat{\mu}\times \hat{\mu}}\to \kvar_S^{\hat{\mu}}$$\index{Psi@$\Psi$}
given in the following manner: let $Z\xrightarrow{p}S$ be an $S$-variety with a $\hat{\mu}\times \hat{\mu}-$action. Then there is an integer $n$ such that this action factors through $\mu_n\times \mu_n$. One defines
$$\Psi(Z\xrightarrow{p}S) = [Z\times^{\mu_n\times\mu_n}F_0^n\xrightarrow{p\circ\pr_1} S] - [Z\times^{\mu_n\times\mu_n}F_1^n\xrightarrow{p\circ\pr_1} S].$$
This gives an element of $\kvar_S^{\hat{\mu}}$: indeed, as we said in section \ref{sect.grothaction}, for $i=0,1$, the variety $$Z\times^{\mu_n\times\mu_n}F_i^n$$ is endowed with an action of $\mu_n$, given by
$$t.(z,x,y) = ((t,t).z,x,y) = (z, tx, ty).$$
As explained in the discussion after remark 2.13 in \cite{LS}, this construction does not depend on the integer $n$, and therefore $\Psi$ is well-defined. By remark 2.8 in \cite{LS}, for every morphism $f:R\to S$ of $k$-varieties, $\Psi$ commutes with $f_!$ and $f^*$.

 We may define the convolution product \index{convolution product} on $\kvar_{S}^{\hat{\mu}}$ by the $\kvar_k$-linear composition
$$\ast:\kvar_{S}^{\hat{\mu}} \otimes_{\kvar_k}\kvar_S^{\hat{\mu}}\xrightarrow{\boxtimes_S}\kvar_{S}^{\hat{\mu}\times\hat{\mu}}\xrightarrow{\Psi}\kvar_{S}^{\hat{\mu}},$$ \index{ast@$\ast$}
(see definition of the exterior product $\boxtimes_S$ at the end of section \ref{sect.grothaction})  so that for two $S$-varieties $X,Y$ with actions $\sigma$ and $\tau$, we have
$$[X,\sigma] \ast [Y,\tau] = \Psi([X\times_SY,\sigma\times_S \tau]).$$
By proposition 2.12 in \cite{LS} (or proposition 5.2 in \cite{GLM}), for any $k$-variety $S$, the convolution product~$\ast$ endows $\kvar_S^{\hat{\mu}}$ with an associative, commutative $\kvar_k$-algebra structure, with unit element the class of the identity $[S\xrightarrow{\id}S]$ (with trivial $\hat{\mu}$-action).  From now on, the ring structure on $\kvar_S^{\hat{\mu}}$ we are going to consider will be the convolution product $\ast$. Thus, in what follows, both $\kvar_S^{\hat{\mu}}$ and $(\kvar_S^{\hat{\mu}},\ast)$ denote the same thing.

The $n$-fold convolution product of $\LL_S$ with itself is $\LL^n_S$, and therefore localising the $\kvar_k$-algebra $(\kvar_S^{\hat{\mu}},\ast)$ at the multiplicative set $\{1,\LL_S,\LL_S\ast\LL_S,\ldots \}$ yields an $\M_k$-algebra $(\M_S^{\hat{\mu}},\ast)$ with same underlying $\M_k$-module structure as the usual localisation $\M_S^{\hat{\mu}}$ of $\kvar_S^{\hat{\mu}}$. By $\M_k$-linearity, $\Psi$ and~$*$ extend to localised Grothendieck rings. Since $\Psi$ commutes with pullbacks, for any morphism $f:R\to S$ of $k$-varieties, there are pullback morphisms
$$f^*:\kvar_S^{\hat{\mu}}\to \kvar_{R}^{\hat{\mu}}$$
(resp. $f^*:\M_S^{\hat{\mu}}\to \M_{R}^{\hat{\mu}}$) of $\kvar_k$-algebras (resp. of $\M_k$-algebras). 

\begin{remark}\label{asttrivialaction} By remark 2.11 in \cite{LS}, whenever the action $\tau$ on $Y$ is trivial, we have
$$[X,\sigma] \ast [Y,\tau] = [X,\sigma][Y,\tau],$$
where on the right we consider the usual product on $\kvar_S^{\hat{\mu}}$ from section \ref{sect.grothaction}. In particular, there is a ring morphism
$$\kvar_S\to(\kvar^{\hat{\mu}}_S,\ast)$$
sending the class of an $S$-variety $X$ to the class $[X,1]$ where $1$ denotes the trivial action. Such a morphism exists also at the level of localised Grothendieck rings.
\end{remark}
\subsection{Grothendieck rings over the affine line}\label{section.grothaffineline}
Let $S$ be a $k$-variety. By section \ref{sect.convolution}, the Grothendieck group $\kvar_{\A^1_S}^{\hat{\mu}}$ has a natural $(\kvar_S^{\hat{\mu}},\ast)$-algebra structure given by the ring morphism $\epsilon_S^*:(\kvar_{S}^{\hat{\mu}},\ast)\to (\kvar_{\A^1_S}^{\hat{\mu}},\ast)$ where $\epsilon_S:\A^1_S\to S$ is the structural morphism. 

On the other hand, there is a well-defined addition morphism \index{addition morphism}
$$\add:\A^1_S\times_S\A^1_S\to \A^1_S$$\index{add@$\add$}
on the group scheme $\A^1_S$, and this endows $\kvar_{\A^1_S}^{\hat{\mu}}$ with a product $\convv$ given by the composition \index{star@$\convv$} \index{convolution product!over affine line}
$$\convv:\kvar_{\A^1_S}^{\hat{\mu}}\otimes_{\kvar_S}\kvar_{\A^1_S}^{\hat{\mu}}\xrightarrow{\boxtimes_S}\kvar_{\A^1_S\times_S\A^1_S}^{\hat{\mu}\times\hat{\mu}}\xrightarrow{\add_!}\kvar_{\A^1_S}^{\hat{\mu}\times\hat{\mu}}\xrightarrow{\Psi}\kvar_{\A^1_S}^{\hat{\mu}},$$
so that, for $\A^1_S$-varieties $X$ and $Y$ with $\hat{\mu}$-actions $\sigma$ and $\tau$, we have
$$[X\xrightarrow{f}\A^1_S,\sigma]\convv [Y\xrightarrow{g} \A^1_S,\tau] = \Psi([X\times_S Y \xrightarrow{f\conv g} \A^1_S,\sigma\times_S \tau])$$
where $f\conv g = f\circ \pr_1 + g\circ \pr_2$. 

\begin{lemma}\label{iotaSmorphism} Let $\iota_S:S\to \A^1_S$ be the morphism given by the zero-section of the trivial affine bundle $\A^1_S\to S$. Then
$$\begin{array}{rcc}(\iota_S)_!:\ (\kvar_S^{\hat{\mu}},\ast) & \to & (\kvar_{\A^1_S}^{\hat{\mu}},\convv)\\
                   \left[ X\xrightarrow{u} S\right] & \mapsto & [X \xrightarrow{(u,0)} \A^1_S]
                     \end{array}
                    $$
                    is a ring morphism. 
\end{lemma}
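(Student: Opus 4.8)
The statement to prove is that the map $(\iota_S)_!\colon (\kvar_S^{\hat{\mu}},\ast)\to(\kvar_{\A^1_S}^{\hat{\mu}},\convv)$ sending $[X\xrightarrow{u}S]$ to $[X\xrightarrow{(u,0)}\A^1_S]$ is a ring morphism. Since $(\iota_S)_!$ is visibly a group morphism (pushforward along the closed immersion $\iota_S$, which moreover sends the unit $[S\xrightarrow{\id}S]$ with trivial action to $[S\xrightarrow{(\id,0)}\A^1_S]$, the unit of $\convv$), the only thing to check is multiplicativity: for $S$-varieties $X\xrightarrow{u}S$, $Y\xrightarrow{v}S$ with good $\hat{\mu}$-actions $\sigma,\tau$, one must verify
$$(\iota_S)_!\bigl([X,\sigma]\ast[Y,\tau]\bigr) = (\iota_S)_![X,\sigma]\convv (\iota_S)_![Y,\tau].$$

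The plan is to unwind both sides using the definitions of $\ast$ and $\convv$ from section \ref{sect.convolution} and section \ref{section.grothaffineline}, and to reduce everything to a compatibility between $\add_!$ and $(\iota_S)_!$ that sits \emph{before} the application of $\Psi$. First I would compute the left-hand side: $[X,\sigma]\ast[Y,\tau] = \Psi([X\times_S Y,\sigma\times_S\tau])$, and then push forward along $\iota_S$. Since $\Psi$ commutes with $f_!$ for any morphism $f$ of $k$-varieties (remark 2.8 in \cite{LS}, as recalled in section \ref{sect.convolution}), we get $(\iota_S)_!\Psi([X\times_S Y,\sigma\times_S\tau]) = \Psi\bigl((\iota_S)_![X\times_S Y,\sigma\times_S\tau]\bigr)$, where on the right $X\times_S Y$ is viewed over $\A^1_S$ via $(u\circ\pr_1,0) = (v\circ\pr_2,0)$. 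For the right-hand side, $(\iota_S)_![X,\sigma]\convv(\iota_S)_![Y,\tau]$ is, by definition of $\convv$, equal to $\Psi\circ\add_!$ applied to the exterior product $[X\xrightarrow{(u,0)}\A^1_S,\sigma]\boxtimes_S[Y\xrightarrow{(v,0)}\A^1_S,\tau]$, which is the class of $X\times_S Y$ over $\A^1_S\times_S\A^1_S$ via $\bigl((u\circ\pr_1,0),(v\circ\pr_2,0)\bigr)$, carrying the action $\sigma\times_S\tau$. Applying $\add\colon\A^1_S\times_S\A^1_S\to\A^1_S$ composes the structural morphism with addition, giving $(u\circ\pr_1, 0+0) = (u\circ\pr_1,0)$ on the nose; hence $\add_!$ of this class is exactly $(\iota_S)_![X\times_S Y,\sigma\times_S\tau]$ over $\A^1_S$. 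Thus both sides equal $\Psi\bigl((\iota_S)_![X\times_S Y,\sigma\times_S\tau]\bigr)$, and we are done.

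The content of the argument is therefore the trivial arithmetic identity $0+0=0$ inside $\A^1_S$, dressed up in functoriality: the zero-section is a subgroup, so pushing forward to it and then adding agrees with adding and then pushing forward. The only genuine point requiring a cited input is that $\Psi$ commutes with proper pushforward, which lets us slide $(\iota_S)_!$ past the $\Psi$ appearing in the definition of both products; that is exactly remark 2.8 in \cite{LS} invoked in section \ref{sect.convolution}. One should also note that the exterior product $\boxtimes_S$ is compatible with the fibre products in the evident way and that the $\hat{\mu}$-actions match (both constructions carry the diagonal action $\sigma\times_S\tau$, which factors through $\mu_n\times\mu_n$ for a common $n$), so no subtlety enters there. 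I do not expect a real obstacle: the main (minor) care is bookkeeping the various structural morphisms to $S$, to $\A^1_S$, and to $\A^1_S\times_S\A^1_S$, and checking that $\add\circ\bigl((u,0),(v,0)\bigr) = (u,0)$ as maps $X\times_S Y\to\A^1_S$, together with the observation that this is the same as $\iota_S\circ\bigl(\text{structural map of }X\times_S Y\text{ to }S\bigr)$.
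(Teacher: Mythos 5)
Your proposal is correct and follows essentially the same route as the paper's proof: both reduce multiplicativity to commuting $\Psi$ with proper pushforward and then observing that $\add_!$ applied to the class of $X\times_S Y$ over $\A^1_S\times_S\A^1_S$ via $\bigl((u\circ\pr_1,0),(v\circ\pr_2,0)\bigr)$ is just $(\iota_S)_!$ of $X\times_S Y$, i.e.\ the identity $0+0=0$ along the zero-section. The only cosmetic difference is that you compute both sides and meet in the middle, while the paper writes one chain of equalities from left to right.
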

\begin{proof} Let $X\xrightarrow{u}S$ and $Y\xrightarrow{v}S$ be two $S$-varieties and let $\sigma$ (resp. $\tau$) be a $\hat{\mu}$-action on $X$ (resp. $Y$). Using the fact that $\Psi$ commutes with proper pushforwards, we get
\begin{eqnarray*}(\iota_S)_!([X\xrightarrow{u}S,\sigma]\ast[Y\xrightarrow{v}S,\tau]) & =& (\iota_S)_!\circ\Psi([X\times_S Y\xrightarrow{u\circ \pr_1}S,\sigma\times_S\tau]) \\
& = & \Psi\circ(\iota_S)_!([X\times_S Y\xrightarrow{u\circ \pr_1}S,\sigma\times_S\tau])\\
& = & \Psi([X\times_S Y\xrightarrow{(u\circ \pr_1,0)}\A^1_S,\sigma\times_S\tau])\\
& = & \Psi\circ\add_!([X\times_SY\xrightarrow{(u\circ\pr_1,0)}\A^1_S\times_S\A^1_S,\sigma\times_S\tau])\\
& = & \Psi\circ \add_!([X\xrightarrow{(u,0)}\A^1_S,\sigma]\boxtimes_S[Y\xrightarrow{(v,0)}\A^1_S,\tau])\\
& = & [X\xrightarrow{(u,0)}\A^1_S,\sigma]\convv [Y\xrightarrow{(v,0)}\A^1_S,\tau]\\
& = & (\iota_S)_!([X\xrightarrow{u}S,\sigma])\convv(\iota_S)_!([Y\xrightarrow{v}\A^1_S,\tau]).
\end{eqnarray*}
\end{proof}
Thus, there is another $\kvar_S^{\hat{\mu}}$-algebra structure on $\kvar_{\A^1_S}^{\hat{\mu}}$, given by $(i_S)_!$, the latter becoming a ring morphism if one replaces the product $\ast$ by $\convv$. 
                    
                    Denote by $\widetilde{\kvar^{\hat{\mu}}_{\A^1_S}}$ the $\kvar_S^{\hat{\mu}}$-module structure on $\kvar_{\A^1_S}^{\hat{\mu}}$ given by $(\iota_S)_!$. 
                     \begin{lemma}\label{identitymorphism} The identity $\id: \kvar_{\A^1_S}^{\hat{\mu}}\to \widetilde{\kvar}_{\A^1_S}^{\hat{\mu}}$ is an isomorphism of $\kvar_S^{\hat{\mu}}$-modules. 
                    \end{lemma}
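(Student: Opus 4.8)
The plan is to reduce the statement to the single identity
$$\epsilon_S^*(\alpha)\ast x \;=\; (\iota_S)_!(\alpha)\convv x \qquad\text{for all } \alpha\in\kvar_S^{\hat{\mu}},\ x\in\kvar_{\A^1_S}^{\hat{\mu}}.$$
Indeed, by definition the source of the map in the statement is $\kvar_{\A^1_S}^{\hat{\mu}}$ made into a $\kvar_S^{\hat{\mu}}$-module through $\epsilon_S^*$ and $\ast$, hence with scalar multiplication $\alpha\cdot x=\epsilon_S^*(\alpha)\ast x$, while $\widetilde{\kvar}_{\A^1_S}^{\hat{\mu}}$ is the same underlying group with scalar multiplication $\alpha\cdot x=(\iota_S)_!(\alpha)\convv x$ coming from the ring morphism of Lemma~\ref{iotaSmorphism}. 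Granting the displayed identity, the identity map of the common underlying group is then $\kvar_S^{\hat{\mu}}$-linear by construction, and being bijective it is an isomorphism of $\kvar_S^{\hat{\mu}}$-modules; so everything reduces to that identity.

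To establish it, I would first observe that both sides are additive in $\alpha$ and in $x$ separately — this uses only additivity of $\epsilon_S^*$, of $(\iota_S)_!$, of $\Psi$, of the exterior products $\boxtimes_S$, and of the pushforward $\add_!$. Hence it is enough to treat $\alpha=[Z\xrightarrow{w}S,\rho]$ and $x=[X\xrightarrow{f}\A^1_S,\sigma]$, classes of varieties with good $\hat{\mu}$-actions. On the left, $\epsilon_S^*(\alpha)$ is the class of $Z\times_S\A^1_S\to\A^1_S$ (second projection) with action $\rho$ on the first factor, and unwinding the definition of $\ast$ over the base $\A^1_S$ yields $\epsilon_S^*(\alpha)\ast x=\Psi\big([\,(Z\times_S\A^1_S)\times_{\A^1_S}X\to\A^1_S,\ \rho\times_S\sigma\,]\big)$. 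The point is that $(Z\times_S\A^1_S)\times_{\A^1_S}X$ is canonically the $\A^1_S$-variety $Z\times_S X$ equipped with structural morphism $f\circ\pr_X$ and action $\rho\times_S\sigma$. On the right, $(\iota_S)_!(\alpha)$ is the class of $Z\xrightarrow{(w,0)}\A^1_S$ with action $\rho$, and unwinding the definition of $\convv$ gives $(\iota_S)_!(\alpha)\convv x=\Psi\big([\,Z\times_S X\xrightarrow{(w,0)\conv f}\A^1_S,\ \rho\times_S\sigma\,]\big)$; since the $\A^1$-component of $(w,0)$ vanishes, $(w,0)\conv f=(w,0)\circ\pr_Z+f\circ\pr_X$ equals $f\circ\pr_X$. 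So the two sides agree, which finishes the argument.

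The main (and only) point requiring care is the bookkeeping around the fibre products: one must keep track of which products are taken over $S$ and which over $\A^1_S$, and verify that the canonical isomorphism $(Z\times_S\A^1_S)\times_{\A^1_S}X\cong Z\times_SX$ is compatible both with the two $\hat{\mu}$-factors and with the two a priori different morphisms to $\A^1_S$, namely the one built from the first projection composed with the group law on $\A^1_S$ and the one built directly from $f$. There is nothing deep beyond these identifications; once they are written out, the identity $\epsilon_S^*(\alpha)\ast x=(\iota_S)_!(\alpha)\convv x$ is immediate on generators and the lemma follows.
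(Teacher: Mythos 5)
Your proposal is correct and takes essentially the same route as the paper: the paper's proof also reduces to checking $\epsilon_S^{*}(\alpha)\ast x=(\iota_S)_!(\alpha)\convv x$ on classes of varieties with good $\hat{\mu}$-action, via the identification $(X\times_S\A^1_S)\times_{\A^1_S}Y\cong X\times_SY$ carrying the morphism $f\circ\pr_2$, and the observation that the zero $\A^1$-component of $(u,0)$ disappears under the addition map, so both sides become $\Psi([X\times_SY\xrightarrow{f\circ\pr_2}\A^1_S,\sigma\times_S\tau])$.
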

                    \begin{proof} For all varieties $X,Y$ over $S$ with $\hat{\mu}$-actions $\sigma,\tau$ and any morphism $f:Y\to \A^1_S$, we have (denoting by  $u$ the structural morphism $u:X\to S$ and by 1 the trivial $\hat{\mu}$-action on $\A^1_S$) 
                    \begin{eqnarray*}\epsilon_S^{*}([X\xrightarrow{u}S,\sigma])\ast[Y\xrightarrow{f} \A^1_S,\tau] &=& \Psi([X\times_S \A^1_S\xrightarrow{\pr_2}\A^1_S,\sigma\times_S1]\times_{\A^1_S} [Y\xrightarrow{f} \A^1_S,\tau]) \\
                    & = & \Psi([(X\times_S \A^1_S)\times_{\A^1_S}Y\xrightarrow{f\circ \pr_2}\A^1_S,(\sigma\times_S1)\boxtimes_{\A^1_S}\tau])\\
                    &=& \Psi([X\times_S Y\xrightarrow{f\circ\pr_2}\A^1_S,\sigma\times_S\tau]) \\
                    & = & \Psi\circ\add_!([X\xrightarrow{(u,0)}\A^1_S,\sigma]\boxtimes_S[Y\xrightarrow{f}\A^1_S,\tau])\\
                    & = & [X\xrightarrow{(u,0)}\A^1_S,\sigma]\convv [Y\xrightarrow{f}\A^1_S,\tau]\\
                    &=& (\iota_S)_!([X\xrightarrow{u}S,\sigma])\star[Y\xrightarrow{f}\A^1_S,\tau].\end{eqnarray*}\end{proof}
                    Thus in fact these two $\kvar_S^{\hat{\mu}}$-module structures are the same, and we will denote this $\kvar_{S}^{\hat{\mu}}$-module by~$\kvar_{\A^1_S}^{\hat{\mu}}$. To distinguish between the two $\kvar_S^{\hat{\mu}}$-algebra structures, we will write them $(\kvar_{\A^1_S}^{\hat{\mu}},\ast)$ and $(\kvar_{\A^1_S}^{\hat{\mu}},\convv)$. 
                    
                    \begin{lemma}\label{epsilonmorphism}  The pushforward map $$(\epsilon_S)_!: (\kvar_{\A^1_S}^{\hat{\mu}},\convv)\to (\kvar_{S}^{\hat{\mu}},\ast)$$ is a morphism of $\kvar_{S}^{\hat{\mu}}$-algebras. 
                    \end{lemma}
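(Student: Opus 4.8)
The plan is to prove that $(\epsilon_S)_!$ is multiplicative by a direct computation on generators, and then to deduce unitality and compatibility with the $\kvar_S^{\hat\mu}$-algebra structures from the two preceding lemmas.

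First I would check multiplicativity: since $(\epsilon_S)_!$, $\convv$ and $\ast$ are all additive, it suffices to verify $(\epsilon_S)_!(a\convv b) = (\epsilon_S)_!(a)\ast(\epsilon_S)_!(b)$ on generators $a = [X\xrightarrow{f}\A^1_S,\sigma]$, $b = [Y\xrightarrow{g}\A^1_S,\tau]$, where I set $u=\epsilon_S\circ f$ and $v=\epsilon_S\circ g$ for the structural morphisms of $X$ and $Y$ as $S$-varieties. By definition $a\convv b = \Psi([X\times_S Y\xrightarrow{f\conv g}\A^1_S,\sigma\times_S\tau])$, with $f\conv g=f\circ\pr_1+g\circ\pr_2$. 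Since $\Psi$ commutes with proper pushforwards (remark 2.8 in \cite{LS}) and pushforward is functorial,
\[(\epsilon_S)_!(a\convv b)=\Psi\bigl([X\times_S Y\xrightarrow{\,\epsilon_S\circ(f\conv g)\,}S,\sigma\times_S\tau]\bigr).\]
The key observation is that $\epsilon_S\circ(f\conv g)$ is nothing but the structural morphism of $X\times_S Y$ over $S$, i.e.\ $u\circ\pr_1=v\circ\pr_2$, because $f\conv g$ is a morphism over $S$ and $\epsilon_S$ is the structural morphism of $\A^1_S$. Hence the right-hand side equals $\Psi([X\times_S Y,\sigma\times_S\tau])$ with its structural morphism to $S$, which by the definition of $\ast$ from Section \ref{sect.convolution} is precisely $[X\xrightarrow{u}S,\sigma]\ast[Y\xrightarrow{v}S,\tau]=(\epsilon_S)_!(a)\ast(\epsilon_S)_!(b)$. (The diagonal action $\sigma\times_S\tau$ plays no role here and is unaffected by the pushforward.)

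Then I would observe that $(\epsilon_S)_!$ is unital and respects the algebra structures. By Lemma \ref{iotaSmorphism}, $(\iota_S)_!\colon(\kvar_S^{\hat\mu},\ast)\to(\kvar_{\A^1_S}^{\hat\mu},\convv)$ is a ring morphism, so it is the structure morphism of the $\kvar_S^{\hat\mu}$-algebra $(\kvar_{\A^1_S}^{\hat\mu},\convv)$, and in particular the unit of the latter is $(\iota_S)_!(1)$. Since $\epsilon_S\circ\iota_S=\id_S$, functoriality of pushforward gives $(\epsilon_S)_!\circ(\iota_S)_!=\id$; therefore $(\epsilon_S)_!$ sends the unit to $1$, and together with the multiplicativity above it is a ring morphism commuting with the structure morphisms $(\iota_S)_!$ and $\id_{\kvar_S^{\hat\mu}}$ of the two algebras, hence a $\kvar_S^{\hat\mu}$-algebra morphism. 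If one prefers to see the $\kvar_S^{\hat\mu}$-linearity directly, one uses Lemma \ref{identitymorphism} to write the module structure as $\alpha\cdot\beta=(\iota_S)_!(\alpha)\convv\beta$ and applies the multiplicativity just proved: $(\epsilon_S)_!(\alpha\cdot\beta)=(\epsilon_S)_!((\iota_S)_!\alpha)\ast(\epsilon_S)_!(\beta)=\alpha\ast(\epsilon_S)_!(\beta)$.

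The step that needs the most care is the multiplicativity computation, and within it the interchange of $(\epsilon_S)_!$ with $\Psi$: the whole argument rests on the commutation of $\Psi$ with $f_!$ for arbitrary morphisms of $k$-varieties (available from \cite{LS}), plus the elementary but essential fact that pushing a $\convv$-product forward along $\epsilon_S$ collapses the addition morphism on $\A^1_S$ and the map $f\conv g$ onto the bare structural morphism to $S$, thereby converting $\convv$ into $\ast$. The rest is bookkeeping with the definitions of $\boxtimes_S$, $\add_!$ and $\Psi$.
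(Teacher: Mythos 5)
Your proof is correct and follows essentially the same route as the paper: checking multiplicativity on generators via the commutation of $\Psi$ with $(\epsilon_S)_!$ and the collapse of $f\conv g$ to the structural morphism of $X\times_S Y$ over $S$, together with $(\epsilon_S)_!\circ(\iota_S)_!=\id$ for the algebra compatibility. Your explicit remark that the commutation of $\Psi$ with pushforwards is needed for the non-proper morphism $\epsilon_S$ (covered by the citation for arbitrary morphisms) is a welcome clarification, but the argument is the same as the paper's.
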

                    \begin{proof} First of all, we have $(\epsilon_S)_!\circ (\iota_S)_! = \id_{\kvar_S^{\hat{\mu}}}$. Moreover, for all varieties $X,Y$ over $S$ with morphisms $f:X\to \A^1_S$ and $g:Y\to\A^1_S$ and $\hat{\mu}$-actions $\sigma,\tau$, we have, using that $\Psi$ commutes with proper pushforwards:
                    \begin{eqnarray*}(\epsilon_S)_!([X\xrightarrow{f}\A^1_S,\sigma]\convv[Y\xrightarrow{g}\A^1_S,\sigma]) &= &(\epsilon_S)_!\Psi([X\times_S Y \xrightarrow{f\conv g} \A^1_S,\sigma\times_S\tau])\\
                    & = &\Psi([X\times_S Y\to S,\sigma\times_S\tau])\\& =& [X\to S,\sigma]\ast[Y\to S,\tau]\\ &=&
                     (\epsilon_S)_!([X\xrightarrow{f}\A^1_S,\sigma])\ast(\epsilon_S)_!([Y\xrightarrow{g}\A^1_S,\tau]).\end{eqnarray*}
                    \end{proof}
                    \begin{remark}[Trivial actions]\label{convvtrivialaction} From remark 2.15 in \cite{LS}, we see that if $f:X\to\A^1_S$ and $g:Y\to \A^1_S$ are $\A^1_S$-varieties with $\hat{\mu}$-actions $\sigma,\tau$ such that $\tau$ is the trivial action, then 
                    \begin{equation}\label{convvtrivialactionequation}[X\xrightarrow{f}\A^1_S]\convv[Y\xrightarrow{g}\A^1_S] = [X\times_S Y\xrightarrow{f\conv_S g}\A^1_S],\end{equation}
         where $f\conv_S g$ stands for the morphism $\add\circ (f\times_Sg)$.       We denote again by $\convv$ the restriction of $\convv$ to the image of the inclusion $\kvar_{\A^1_S}\to\kvar_{\A^1_S}^{\hat{\mu}}$, given by formula (\ref{convvtrivialactionequation}). Thus, for Grothendieck rings without actions, lemma \ref{iotaSmorphism} boils down to the statement that
                    $$(\iota_S)_!:\kvar_{S}\to (\kvar_{\A^1_S},\convv)$$ 
                    is a ring morphism. 
                    \end{remark}
                    \begin{remark}[Convolution induces product on Grothendieck ring with exponentials] \label{expquotientmorphism} By remark \ref{convvtrivialaction} and by the definition of the Grothendieck ring with exponentials $\evar_S$, the quotient map $q:(\kvar_{\A^1_S},\convv)\to \evar_S$ is a morphism of $\kvar_S$-algebras.   
                    \end{remark}

                     \subsection{Localised Grothendieck rings over the affine line} We have $$\epsilon_S^*(\LL_S) = \LL_{\A^1_S} = [\A^1_S\times_S \A^1_S \xrightarrow{\pr_2} \A^1_S],$$ whereas $$(\iota_S)_!(\LL_S) = [\A^1_S\xrightarrow{(\epsilon_S,0)}\A^1_S],$$ which we denote by $\LL_0\in\kvar_{\A^1_S}^{\hat{\mu}}$. Define $(\widetilde{\M}_{\A^1_S}^{\hat{\mu}},\convv)$ as the $\M_S$-algebra obtained by localising  $(\kvar_{\A^1_S}^{\hat{\mu}},\convv)$ with respect to the multiplicative set $\{1,\LL_0,\LL_0\convv\LL_0,\ldots\}$. Then we have a canonical isomorphism of $\M_S$-algebras
                    $$\M_S\otimes_{\kvar_S}(\kvar^{\hat{\mu}}_{\A^1_S},\convv)\xrightarrow{\sim} (\widetilde{\M}^{\hat{\mu}}_{\A^1_S},\convv).$$
                    Thus, base change  along $\kvar_S\to \M_S$ of lemma \ref{identitymorphism} gives us:
                    \begin{lemma}\label{mkiso} There is a canonical isomorphism $\M_{\A^1_S}^{\hat{\mu}}\xrightarrow{\sim}\widetilde{\M}_{\A^1_S}^{\hat{\mu}}$ of $\M_S$-modules, given by $\frac{a}{\LL_{\A^1_S}^n}\mapsto \frac{a}{\LL_0^n}$ for all $a\in\kvar_{\A^1_S}^{\hat{\mu}}$ and $n\geq 1$. \end{lemma}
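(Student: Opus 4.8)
The plan is to obtain the isomorphism by inverting $\LL_S$ in the $\M_S$-module isomorphism already supplied by Lemma~\ref{identitymorphism}, after restricting scalars from $\kvar_S^{\hat{\mu}}$ to $\kvar_S$ along the ring morphism~(\ref{actioninjection}). Recall that Lemma~\ref{identitymorphism} and the discussion following it exhibit $\kvar_{\A^1_S}^{\hat{\mu}}$ as a single $\kvar_S^{\hat{\mu}}$-module carrying the two compatible algebra structures $\ast$ and $\convv$, the action of $c\in\kvar_S^{\hat{\mu}}$ being $c\cdot x=\epsilon_S^*(c)\ast x=(\iota_S)_!(c)\convv x$. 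Viewing this as a $\kvar_S$-module, I would show that both $\M_{\A^1_S}^{\hat{\mu}}$ and $\widetilde{\M}_{\A^1_S}^{\hat{\mu}}$ are canonically the base change $\M_S\otimes_{\kvar_S}\kvar_{\A^1_S}^{\hat{\mu}}$, i.e.\ the localisation of this $\kvar_S$-module obtained by inverting the action of $\LL_S$.

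For the left-hand side, I would use that $\LL_{\A^1_S}$ carries the trivial $\hat{\mu}$-action, so that its ordinary $n$-th power coincides with its $n$-th $\ast$-power (by Remark~\ref{asttrivialaction}), and that $\LL_{\A^1_S}^n=\epsilon_S^*(\LL_S^n)$ since $\epsilon_S^*$ is a ring morphism; thus multiplication by $\LL_{\A^1_S}$ is exactly the action of $\LL_S$ under the above module structure. Hence the ring localisation $\M_{\A^1_S}^{\hat{\mu}}=\kvar_{\A^1_S}^{\hat{\mu}}[\LL_{\A^1_S}^{-1}]$ has, as underlying $\M_S$-module, the localisation of $\kvar_{\A^1_S}^{\hat{\mu}}$ inverting the action of $\LL_S$, namely $\M_S\otimes_{\kvar_S}\kvar_{\A^1_S}^{\hat{\mu}}$.

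For the right-hand side, I would use that $(\iota_S)_!\colon(\kvar_S^{\hat{\mu}},\ast)\to(\kvar_{\A^1_S}^{\hat{\mu}},\convv)$ is a ring morphism (Lemma~\ref{iotaSmorphism}) and that $\LL_S^{\ast n}=\LL_S^n$ because $\LL_S$ has trivial action (Remark~\ref{asttrivialaction}); therefore $\LL_0^{\convv n}=\bigl((\iota_S)_!(\LL_S)\bigr)^{\convv n}=(\iota_S)_!(\LL_S^n)$, so that $\convv$-multiplication by $\LL_0$ is again exactly the action of $\LL_S$. Consequently the localisation $(\widetilde{\M}_{\A^1_S}^{\hat{\mu}},\convv)$ of $(\kvar_{\A^1_S}^{\hat{\mu}},\convv)$ at $\{1,\LL_0,\LL_0\convv\LL_0,\ldots\}$ is, as an $\M_S$-module, once more the localisation of $\kvar_{\A^1_S}^{\hat{\mu}}$ inverting the action of $\LL_S$; this is precisely the isomorphism of $\M_S$-algebras $\M_S\otimes_{\kvar_S}(\kvar_{\A^1_S}^{\hat{\mu}},\convv)\xrightarrow{\sim}(\widetilde{\M}_{\A^1_S}^{\hat{\mu}},\convv)$ recorded just before the statement.

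Composing these two identifications yields the desired canonical $\M_S$-module isomorphism $\M_{\A^1_S}^{\hat{\mu}}\xrightarrow{\sim}\widetilde{\M}_{\A^1_S}^{\hat{\mu}}$; unwinding them sends $\frac{a}{\LL_{\A^1_S}^n}$ to $\frac{1}{\LL_S^n}\otimes a$ and then to $\frac{a}{\LL_0^n}$ (where $\LL_0^n$ denotes the $n$-fold $\convv$-power of $\LL_0$), so that the explicit formula in the statement drops out, well-definedness being automatic from the universal property of localisation. The only real content is the bookkeeping of the two preceding paragraphs, i.e.\ checking that the multiplicative sets $\{\LL_{\A^1_S}^n\}_{n\geq 0}$ and $\{\LL_0^{\convv n}\}_{n\geq 0}$ both coincide, inside $\kvar_{\A^1_S}^{\hat{\mu}}$, with the set of operators by which $\LL_S$ acts through the common $\kvar_S$-module structure; once this is granted I expect no genuine obstacle, the rest being formal.
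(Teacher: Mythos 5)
Your proposal is correct and is essentially the paper's own argument: the paper likewise identifies $\widetilde{\M}_{\A^1_S}^{\hat{\mu}}$ with $\M_S\otimes_{\kvar_S}\kvar_{\A^1_S}^{\hat{\mu}}$ and obtains the lemma as the base change along $\kvar_S\to\M_S$ of Lemma~\ref{identitymorphism}, using exactly the facts you invoke ($\epsilon_S^*(\LL_S)=\LL_{\A^1_S}$, $(\iota_S)_!(\LL_S)=\LL_0$, triviality of the action on $\LL$, and the ring-morphism properties of $\epsilon_S^*$ and $(\iota_S)_!$). You merely spell out the bookkeeping that the paper leaves implicit, which is fine.
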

                    Moreover, lemma \ref{epsilonmorphism} and remark \ref{expquotientmorphism} remain true with $\kvar$ replaced by $\widetilde{\M}$ and $\evar$ replaced with $\expp$.

 \subsection{Rational series}
Following 2.8 in \cite{GLM}, for any $k$-variety $X$, define
$\M_X^{\hat{\mu}}[[T]]_{\rat}$ to be the $\M_X^{\hat{\mu}}$-subalgebra of $\M_X^{\hat{\mu}}[[T]]$ generated  by rational series of the form $p_{e,i}(T) = \frac{\LL^{e}T^{i}}{1-\LL^{e}T^{i}}$ where $e\in \Z$ and $i>0$.  \index{MX@$\M_X^{\hat{\mu}}[[T]]_{\rat}$}

There is a unique morphism of $\M_X^{\hat{\mu}}$-algebras
$$\lim_{T\to \infty}:\M_X^{\hat{\mu}}[[T]]_{\rat}\longrightarrow \M_X^{\hat{\mu}}$$
such that $$\lim_{T\to \infty}\ p_{e,i}(T) = -1,$$
for any $e\in\Z$ and $i>0$. 

More generally, for an $k$-variety $S$ and for a  variety $X$ over $S$, we define $\M_X^{\hat{\mu}}[[T]]_{\rat,S}$ to be the $\M_S^{\hat{\mu}}$-subalgebra of $\M_X^{\hat{\mu}}[[T]]$ generated  by rational series of the form $p_{e,i}(T) = \frac{\LL^{e}T^{i}}{1-\LL^{e}T^{i}}$ where $e\in \Z$ and $i>0$.
\index{MX@$\M_X^{\hat{\mu}}[[T]]_{\rat,S}$}

There is a unique morphism of $\M_S^{\hat{\mu}}$-algebras
$$\lim_{T\to \infty}:\M_X^{\hat{\mu}}[[T]]_{\rat,S}\longrightarrow \M_X^{\hat{\mu}}$$
such that $$\lim_{T\to \infty}\ p_{e,i}(T) = -1,$$
for any $e\in\Z$ and $i>0$. 

\subsection{Motivic vanishing cycles}\label{sect.vanrecall}
In \cite{DL98,DL99,DL01,DL02} Denef and Loeser defined and studied the notions of \emph{motivic nearby fibre} and \emph{motivic vanishing cycles}. For a smooth connected variety $X$ over$k$ of dimension $d$ and a morphism $f:X\to \A^1_k$, the motivic nearby fibre $\psi_{f}$ of $f$ at $0\in \A^1_k$ is an element of $\M_{X_0(f)}^{\hat{\mu}}$ (where $X_0(f)$ is the fibre of $X$ above~$0$) defined in terms of some motivic zeta function $Z_{f}$. More precisely, denoting by $\scr{L}_n(X)$ the space of $n$-jets of $X$, we define for every $n\geq 1$
$$\scr{X}_n(f) :=\{\gamma\in\scr{L}_n(X)\ |\ f(\gamma) \equiv t^n (\mathrm{mod}\ t^{n+1})\}\in\M_{X_0(f)}^{\hat{\mu}},$$
the $X_0(f)$-variety structure of $\scr{X}_n(f)$ being induced by the truncation morphism $\scr{L}_n(X)\to X$, and the $\hat{\mu}$-action being the one induced by the $\mu_n$-action given by $a.\gamma(t) = \gamma(at)$.  We then put
$$Z_{f}(T) = \sum_{n\geq 1} [\scr{X}_{n}(f)\to X_0(f)]\LL^{-nd}T^{n}\in\M_{X_0(f)}^{\hat{\mu}}[[T]].$$
  One may write $\scr{X}_n(f/k)$, resp. $Z_{f/k}$ if one wants to keep track of the base field $k$. 
  
 Let $X$ be a smooth variety over $k$, not necessarily connected, and $f:X\to \A^1_k$ a morphism. Let~$C$ be the set of connected components of $X$, which are smooth varieties of pure dimension.  Then the above definition extends immediately to the pair $(X,f)$ by  putting:
 $$Z_f(T) = \sum_{Y\in C}\1_{Y}Z_{f_{|Y}}(T) \in \M_{X_0(f)}^{\hat{\mu}}[[T]],$$
 where $\1_{Y}$ denotes the element $[Y\cap X_0(f)\to X_0(f)]\in \M_{X_0(f)}^{\hat{\mu}}$ corresponding to the inclusion of the $Y$-component of $X_0(f)$ into $X_0(f)$, with trivial $\hat{\mu}$ action.
 
  If $f$ is constant, we have $Z_{f}(T) = 0.$ More generally, Denef and Loeser showed in \cite{DL02} that $Z_f$ is a rational function by giving a formula for it in terms of a log-resolution of $(X,X_0(f))$. For this, let~$X$ be a smooth variety over~$k$ of pure dimension~$d$, and $f:X\to \A^1_k$ a  morphism such that $X_0(f)$ is nowhere dense in~$X$. Let $h:X'\to X$ be a log-resolution of the pair $(X,X_0(f))$. Let $(E_i)_{i\in I}$ be the family of irreducible components of $ h^{-1}(X_0(f))$, and for every $i\in I$, let $a_i$ be the multiplicity of $f\circ h$ along $E_i$. For every $J\subset I$ we put $E_J = \cap_{j\in J}E_j$,  $E_J^{\circ} = E_J\setminus \cup_{i\not\in J} E_i$ and $a_J = \gcd_{j\in J}(a_j)$. For every $J\subset I$, one defines an unramified Galois cover $\widetilde{E}_J^{\circ}\to E_J^{\circ}$ by glueing locally constructed covers obtained as follows: around every point of $E^{\circ}_J$, one can find an open subscheme $U$ of $X'$ on which $f\circ h = u \prod_{j\in J}x_j^{a_j}$, where $x_j$ is an equation for $E_j\cap U$ and $u$ is an invertible function on~$U$. One takes the étale cover of $E_J^{\circ}\cap U$ induced by the étale cover $U'\to U$ obtained by taking the $a_J$-th root of $u^{-1}$. There is a natural $\mu_{a_J}$-action on $\widetilde{E_J}^{\circ}$ which induces a $\hat{\mu}$-action in the obvious way. \index{Denef and Loeser's formula}
\begin{theorem}[Denef-Loeser] \label{DLrat}Let $X$ be a smooth $k$-variety of pure dimension $d$, and $f:X\to \A^1_k$ a morphism such that $X_0(f)$ is nowhere dense in $X$. Let $h:X'\to X$ be a log-resolution of the pair $(X,X_0(f))$. Let $(E_i)_{i\in I}$ be the family of irreducible components of $h^{-1}(X_0(f))$. For every $i\in I$, let $a_i$ be the multiplicity of $f\circ h$ along $E_i$ and let $\nu_i-1$ be the multiplicity of the Jacobian ideal of $h$ along $E_i$. Then one has
$$Z_f(T) = \sum_{\varnothing \neq J\subset I} (\LL- 1)^{|J|-1} \left[\widetilde{E_J}^{\circ}\to X_0(f),\hat{\mu}\right]\prod_{j\in J} \frac{\LL^{-\nu_j}T^{a_j}}{1-\LL^{-\nu_j}T^{a_j}}\in\M_{X_0(f)}^{\hat{\mu}}[[T]],$$
where $\widetilde{E_J}^{\circ}$ is the Galois cover defined above. In particular $Z_f(T)\in \M_{X_0(f)}^{\hat{\mu}}[[T]]_{\rat}$.\end{theorem}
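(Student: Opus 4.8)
The plan is to reduce the computation of $Z_f(T)$ to the normal-crossings situation on the log-resolution $X'$ by means of the change of variables formula in motivic integration, and then to carry out an explicit geometric-series computation stratum by stratum. First, since $h:X'\to X$ is proper and an isomorphism over the dense open subset $X\setminus X_0(f)$, I would push arcs on $X'$ forward along $h$ and invoke the Denef--Loeser change of variables formula: an arc $\gamma'$ on $X'$ contributes with the extra weight $\LL^{-\ord_t(\mathrm{Jac}_h)(\gamma')}$, where $\mathrm{Jac}_h$ is the Jacobian ideal of $h$ and $\ord_t$ its order of vanishing along $\gamma'$. After truncating at a sufficiently high jet order (the relevant loci being cylinders, so that nothing is lost), this rewrites $Z_f(T)$ as the motivic zeta function attached to $f\circ h$, twisted by the Jacobian order along the exceptional divisor, with its natural $\hat{\mu}$-action.

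Next, I would stratify $h^{-1}(X_0(f))$ into the locally closed pieces $E_J^{\circ}=E_J\setminus\bigcup_{i\notin J}E_i$ for $\varnothing\neq J\subseteq I$, and use additivity of the twisted zeta function over this finite partition of the base to reduce to computing, for each $J$, the contribution $Z^{(J)}(T)$ of the arcs whose origin lies in $E_J^{\circ}$. For this local computation, near a point of $E_J^{\circ}$ I would choose coordinates $x_1,\dots,x_d$ on $X'$ with $E_j=\{x_j=0\}$ for $j\in J$, with $f\circ h=u\prod_{j\in J}x_j^{a_j}$ for a unit $u$, and with $\mathrm{Jac}_h$ generated by $\prod_{j\in J}x_j^{\nu_j-1}$. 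An arc meeting $E_j$ with multiplicity $k_j\geq 1$ then has $\ord_t(f\circ h)=\sum_{j\in J}a_jk_j$ and $\ord_t(\mathrm{Jac}_h)=\sum_{j\in J}(\nu_j-1)k_j$; the condition $f\circ h(\gamma')\equiv t^n\pmod{t^{n+1}}$ forces $n=\sum_{j\in J}a_jk_j$ and constrains the product of the leading coefficients of the $x_j$ through the value of $u$ at the origin --- recording that constraint over all of $E_J^{\circ}$ is precisely the datum of the $\mu_{a_J}$-cover $\widetilde{E_J}^{\circ}\to E_J^{\circ}$, which carries the $\hat{\mu}$-action. Counting the remaining (free) coefficients of the $x_j$ and of the coordinates transverse to $E_J$, weighting by $\LL^{-\sum_{j\in J}(\nu_j-1)k_j}$ and normalising by $\LL^{-nd}$, I expect to obtain, in $\M_{X_0(f)}^{\hat{\mu}}$,
\[
Z^{(J)}(T)=(\LL-1)^{|J|-1}\big[\widetilde{E_J}^{\circ}\to X_0(f),\hat{\mu}\big]\sum_{(k_j)_{j\in J}\in(\Z_{>0})^{J}}\ \prod_{j\in J}\big(\LL^{-\nu_j}T^{a_j}\big)^{k_j}.
\]

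Finally, each factor of the product over $j\in J$ is a geometric series summing to $\frac{\LL^{-\nu_j}T^{a_j}}{1-\LL^{-\nu_j}T^{a_j}}$, so that $Z_f(T)=\sum_{\varnothing\neq J\subseteq I}Z^{(J)}(T)$ becomes exactly the asserted formula; membership in $\M_{X_0(f)}^{\hat{\mu}}[[T]]_{\rat}$ is then immediate since each factor is one of the generators $p_{-\nu_j,a_j}(T)$.

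\textbf{The hard part} will be making the change of variables of the first step rigorous at the level of the motivic zeta function --- controlling the non-injectivity of $h$ on jets together with the measure-zero loci, and producing the Jacobian-order weight correctly --- and then the bookkeeping of the local computation: identifying $\widetilde{E_J}^{\circ}$ as the object that simultaneously governs the $\hat{\mu}$-action and the constraint on leading coefficients, so that the exponent of $\LL-1$ comes out as $|J|-1$ rather than $|J|$. This last point, where one of the $|J|$ leading coefficients ceases to be free and is absorbed into the cover, is exactly where the normalisation ``the $t^n$-coefficient of $f\circ h(\gamma')$ equals $1$'' is felt, and it is the subtlest piece of the argument.
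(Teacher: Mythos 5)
The paper does not prove this statement at all: it is quoted as a known theorem of Denef and Loeser (the text around it explicitly attributes the formula to \cite{DL02}, and the surrounding construction of the covers $\widetilde{E_J}^{\circ}$ is just the setup needed to state it), so there is no internal proof to compare yours against. What you have sketched is, in outline, exactly the original Denef--Loeser argument: lift arcs along the proper birational map $h$ via the valuative criterion, apply the transformation rule (change of variables) for the motivic measure with the Jacobian weight $\LL^{-\ord_t \mathrm{Jac}_h}$, stratify by the locus $E_J^{\circ}$ containing the origin and by the contact orders $k_j\geq 1$ with the divisors $E_j$, and compute each stratum in local normal-crossings coordinates, where the condition that the $t^n$-coefficient of $f\circ h(\gamma')$ equals $1$ ties the leading coefficients of the $x_j$ together into the $\mu_{a_J}$-cover $\widetilde{E_J}^{\circ}$ and produces $(\LL-1)^{|J|-1}$ rather than $(\LL-1)^{|J|}$; the remaining sum over $(k_j)$ is the product of geometric series. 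That is the correct route, and your identification of the hard part is accurate: the substance of the proof is the transformation rule itself (cylinder stability at finite jet level, measure-zero loci, compatibility of the lifting with the $\mu_n$-action $\gamma(t)\mapsto\gamma(at)$), plus the angular-component bookkeeping that yields the cover. Two small points to make sure you get right when writing it up: the exponent $\nu_j$ in $\LL^{-\nu_j k_j}$ arises as the Jacobian contribution $(\nu_j-1)k_j$ \emph{plus} $k_j$ coming from imposing contact order exactly $k_j$ with $E_j$ (the vanishing of the first $k_j$ coefficients of $x_j$), so your normalisation by $\LL^{-nd}$ must be checked against the convention $Z_f(T)=\sum_n[\scr{X}_n(f)]\LL^{-nd}T^n$ used here; and the class $[\widetilde{E_J}^{\circ}]$ must be taken with the $\hat{\mu}$-action induced by the reparametrisation action on jets, which is what makes the equality hold in $\M_{X_0(f)}^{\hat{\mu}}$ and not merely after forgetting the action. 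For the full details you should simply cite Denef--Loeser (or reproduce their proof), as the paper itself does.
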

\begin{cor}\label{Zfrational} Let $X$ be a smooth variety over $k$ and $f:X\to \A^1_k$ a morphism. Then $Z_f(T)$ is an element of $\M_{X_0(f)}^{\hat{\mu}}[[T]]_{\rat}.$
\end{cor}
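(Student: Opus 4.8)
\emph{Strategy.} This is essentially a formal consequence of Theorem~\ref{DLrat}, once one disposes of the two features of the general statement that are not covered by that theorem: $X$ need not be connected (hence not of pure dimension), and $f$ need not be non-constant on each component. I would first reduce to $X$ connected by using the very definition of $Z_f(T)$ for a smooth but possibly disconnected $X$, namely $Z_f(T)=\sum_{Y\in C}\1_Y Z_{f_{|Y}}(T)$, where $C$ is the (finite) set of connected components of $X$; for a connected smooth $X$ I would then split into the case where $f$ is constant, where $Z_f(T)=0$ and there is nothing to prove, and the case where $f$ is non-constant, where the hypotheses of Theorem~\ref{DLrat} are met.

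\emph{The connected case.} If $X$ is connected and smooth over $k$, then $X$ is regular, hence normal, hence — being connected — irreducible, so $X$ is smooth of pure dimension $d$ for some $d$. If $f$ is constant, $Z_f(T)=0\in\M_{X_0(f)}^{\hat{\mu}}[[T]]_{\rat}$ as recalled before Theorem~\ref{DLrat}. If $f$ is non-constant, then since $X$ is irreducible the closed subscheme $X_0(f)=f^{-1}(0)$ is proper in $X$, hence nowhere dense, so Theorem~\ref{DLrat} applies: choosing a log-resolution $h\colon X'\to X$ of the pair $(X,X_0(f))$ (which exists since $\mathrm{char}\,k=0$), the theorem writes $Z_f(T)$ as a finite $\M_{X_0(f)}^{\hat{\mu}}$-linear combination of products $\prod_{j\in J}\frac{\LL^{-\nu_j}T^{a_j}}{1-\LL^{-\nu_j}T^{a_j}}=\prod_{j\in J}p_{-\nu_j,a_j}(T)$ with $a_j>0$. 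Since $\M_{X_0(f)}^{\hat{\mu}}[[T]]_{\rat}$ is by definition the $\M_{X_0(f)}^{\hat{\mu}}$-subalgebra of $\M_{X_0(f)}^{\hat{\mu}}[[T]]$ generated by the series $p_{e,i}(T)$, this exhibits $Z_f(T)$ as an element of $\M_{X_0(f)}^{\hat{\mu}}[[T]]_{\rat}$.

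\emph{Assembling the pieces.} It remains to see that the reduction to the connected case respects the subalgebra of rational series. Each summand $\1_Y Z_{f_{|Y}}(T)$ is obtained from $Z_{f_{|Y}}(T)\in\M_{Y\cap X_0(f)}^{\hat{\mu}}[[T]]_{\rat}$ by pushing forward along the closed immersion $j\colon Y\cap X_0(f)\hookrightarrow X_0(f)$ and multiplying by $\1_Y=[Y\cap X_0(f)\to X_0(f)]$. Since $j_!$ is a ring morphism (as $j$ is an immersion) and carries $\LL$ to $\LL_{X_0(f)}$ times the idempotent $\1_Y$ — so that $j_!(p_{e,i}(T))=\1_Y\,p_{e,i}(T)$ — and since $\M_{X_0(f)}^{\hat{\mu}}[[T]]_{\rat}$ is an $\M_{X_0(f)}^{\hat{\mu}}$-subalgebra, each $\1_Y Z_{f_{|Y}}(T)$, and hence their finite sum $Z_f(T)$, lies in $\M_{X_0(f)}^{\hat{\mu}}[[T]]_{\rat}$. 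This last bookkeeping point around disconnectedness is the only step requiring any argument at all; the substance is entirely contained in Theorem~\ref{DLrat}.
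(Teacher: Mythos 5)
Your proof is correct and follows essentially the same route as the paper's: decompose $Z_f$ over the connected components, note that $Z_{f_{|Y}}=0$ when $f_{|Y}$ is constant, and apply Theorem~\ref{DLrat} otherwise, viewing each $\M_{Y_0(f_{|Y})}^{\hat{\mu}}[[T]]_{\rat}$ inside $\M_{X_0(f)}^{\hat{\mu}}[[T]]_{\rat}$. The extra details you supply (irreducibility and pure dimension of a smooth connected variety, nowhere-density of $X_0(f)$, and the pushforward bookkeeping with $\1_Y$) are correct but routine elaborations of what the paper leaves implicit.
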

\begin{proof} Let $C$ be the set of connected components of $X$. By definition, we have $Z_f = \sum_{Y\in C}\1_{Y}Z_{f_{|Y}}$, and each $Z_{f_{|Y}}$ is 0 if $f_{|Y}$ is constant, or is an element of $\M_{Y_0(f_{|Y})}^{\hat{\mu}}[[T]]_{\rat}$ (which is naturally a $\M_X^{\hat{\mu}}$-subalgebra of $\M^{\hat{\mu}}_{X_0(f)}[[T]]_{\rat}$) by theorem \ref{DLrat}, whence the result.
\end{proof}

 By corollary \ref{Zfrational}, it makes sense to define the \emph{motivic nearby fibre} \index{motivic nearby fibre} $\psi_{f}$ \index{psif@$\psi_f$} of $f$ at $0$ as
$$\psi_{f} = -\lim_{T\to \infty}Z_{f}(T) \in\M_{X_0(f)}^{\hat{\mu}}$$
                         and the \emph{motivic vanishing cycles} \index{motivic vanishing cycles} $\phi_{f}$ of $f$ at $0$ as
$$\phi_{f} := [X_0(f)\xrightarrow{\mathrm{id}} X_0(f)] - \psi_{f}\in\M_{X_0(f)}^{\hat{\mu}}.$$\index{phif@$\phi_f$}
For the vanishing cycles, we use the definition in \cite{LS}, which differs from the one by Denef and Loeser by a sign (which will be important for the construction of our motivic measure, see remark 5.5 in~\cite{LS}).  \index{motivic vanishing cycles!sign}

Under the conditions and with the notations of theorem \ref{DLrat}, we have
$$\psi_f =  \sum_{\varnothing\neq J \subset I}(1-\LL)^{|J|-1}\left[\widetilde{E_J}^{\circ}\to X_0(f),\hat{\mu}\right]\in\M_{X_0(f)}^{\hat{\mu}}.$$\index{psif@$\psi_f$!formula} 

\begin{example}\label{vancyclespecialcases}
Here are some important special cases:
\begin{enumerate}[(I)]
\item\label{constantproperty} Assume $f=a$ is constant.  If $a\neq 0$ then we in fact have $X_0(f) = \varnothing$, so that $\M^{\hat{\mu}}_{X_0(f)}=0$ and $\psi_f = \phi_f = 0$. If $a=0$, then $Z_f = 0$, so $\psi_f = 0$, whereas $\phi_{f} = [X\xrightarrow{\id} X]\in\M_X^{\hat{\mu}}$ (the action being necessarily trivial).
\item In the case when $f$ is non-constant and $X_0(f)$ is smooth and nowhere dense in $X$, then $X\xrightarrow{\mathrm{id}} X$ gives a log-resolution, and we get $\psi_{f} = [X_0(f)\xrightarrow{\id}X_0(f)]$, and $\phi_{f} = 0$. 
\item \label{singproperty} As a consequence, when $f$ is not constant equal to $0$, $\phi_{f}$ lives above $\mathrm{Sing}(f)$, that is, the closed subscheme of $X$ defined by the vanishing of the differential $\dx f\in\Gamma(X,\Omega_{X/k}^1)$. Thus, $\phi_{f}$ may be seen in a canonical way as an element of $\M_{X_0(f)\ \cap\ \mathrm{Sing}(f)}^{\hat{\mu}}$.
\end{enumerate}
\end{example}
\begin{remark}\label{vandimension} From the formula in theorem \ref{DLrat}, it is clear that, if $X_0(f)$ is nowhere dense in $X$ and if $a_X:X\to k$ is the structural morphism, then $\dim ((a_X)_!\phi_f) \leq \dim X -1$. Without any assumption on $X_0(f)$, we have the weaker inequality $\dim ((a_X)_!\phi_f) \leq \dim X$.
\end{remark}


\subsection{Relative motivic vanishing cycles} The previous definitions also make sense in the relative setting. Let $k$ be a field of characteristic zero, $S$ a $k$-variety and $X$ a variety over $S$ of relative dimension $d$, smooth over $S$, together with a morphism $f:X\to \A^1$. Then we may define
$$\scr{X}_n(f/S) := \{\gamma \in \scr{L}_n(X/S)|\ f(\gamma) \equiv t^n(\mathrm{mod}\ t^{n+1})\},$$
with action of $\hat{\mu}$ given in the same manner,
and $Z_{f/S}(T) = \sum_{n\geq 1} [\scr{X}_n(f/S)]\LL^{-nd}T^{n}\in \M_{X_0(f)}^{\hat{\mu}}[[T]].$ Here, $\scr{L}_n(X/S)$ is the $n$-th jet scheme of $X$ relatively to $S$. By the base-change properties for jet schemes (see \cite{CNS}, chapter 2, (2.1.4)), for every $s\in S$ we have 
$$\scr{L}_n(X/S)\times_S\kappa(s) = \scr{L}_n(X\times_S\kappa(s)/\kappa(s))$$ where $\kappa(s)$ is the residue field of $s$, so that the fibre above $s$ of $Z_{f/S}$ is exactly $Z_{f_s/\kappa(s)}(T)\in\M_{X_0(f)_s}^{\hat{\mu}}[[T]]$, where $f_s:X_s\to \A^1$ is the morphism induced by $f$. 
\begin{lemma} The series $Z_{f/S}(T)$ is an element of $\M_{X_0(f)}^{\hat{\mu}}[[T]]_{\rat,S}$.
\end{lemma}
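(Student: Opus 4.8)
The plan is to reduce to the absolute case, Corollary~\ref{Zfrational} (equivalently Theorem~\ref{DLrat}), by Noetherian induction on~$S$ together with a spreading-out argument. First I would dispose of the trivial cases: decomposing $X$ into its open-and-closed pieces of constant relative dimension, one may assume $X$ has pure relative dimension~$d$ over~$S$; and if $f$ is constant on~$X$ then either $\scr{X}_n(f/S) = \varnothing$ for all~$n$, so that $Z_{f/S} = 0$, or $X_0(f) = \varnothing$ and $\M^{\hat{\mu}}_{X_0(f)} = 0$. So I may assume that $f$ is non-constant.

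The heart of the argument is a relative version of Denef and Loeser's formula (Theorem~\ref{DLrat}). Applying resolution of singularities over the generic point of each irreducible component of~$S$ and spreading the resulting data out --- the same circle of ideas as in the proof of Lemma~\ref{smoothpropergens}, combined with generic flatness --- one finds a dense open subset $S^{\circ}\subseteq S$ and a proper morphism $h\colon X'\to X\times_S S^{\circ}$ which, fibrewise over $S^{\circ}$, is a log-resolution of the pair $(X_s,(X_0(f))_s)$, such that $h^{-1}(X_0(f)\times_S S^{\circ})$ is a relative strict normal crossings divisor whose components $(E_i)_{i\in I}$, together with the multiplicities $a_i$ of $f\circ h$ along~$E_i$ and the orders $\nu_i - 1$ of the relative Jacobian ideal of~$h$, are independent of the point of~$S^{\circ}$. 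Since relative jet schemes base-change correctly (see \cite{CNS}, chapter~2, (2.1.4)), the computation of $Z_{f/S^{\circ}}(T)$ via relative jet schemes and the change of variables along~$h$ proceeds exactly as in the absolute case, yielding
$$Z_{f/S^{\circ}}(T) = \sum_{\varnothing\neq J\subseteq I}(\LL-1)^{|J|-1}\left[\widetilde{E_J}^{\circ}\to X_0(f)\times_S S^{\circ},\hat{\mu}\right]\prod_{j\in J}\frac{\LL^{-\nu_j}T^{a_j}}{1-\LL^{-\nu_j}T^{a_j}},$$
which lies in $\M_{X_0(f)\times_S S^{\circ}}^{\hat{\mu}}[[T]]_{\rat,S^{\circ}}$.

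To conclude I would glue this with the induction hypothesis. Writing $Z = S\setminus S^{\circ}$, and letting $j\colon X_0(f)\times_S S^{\circ}\hookrightarrow X_0(f)$ and $i\colon X_0(f)\times_S Z\hookrightarrow X_0(f)$ be the corresponding immersions, additivity of classes together with the base-change identities $\scr{X}_n(f/S)\times_S S^{\circ} = \scr{X}_n(f_{S^{\circ}}/S^{\circ})$ and $\scr{X}_n(f/S)\times_S Z = \scr{X}_n(f_Z/Z)$ give $Z_{f/S}(T) = j_!\,Z_{f_{S^{\circ}}/S^{\circ}}(T) + i_!\,Z_{f_Z/Z}(T)$. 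Since $\dim Z < \dim S$, the induction hypothesis applies to~$Z$; it then remains to observe that the immersion pushforwards $j_!$ and $i_!$ (which are ring morphisms, $j$ and~$i$ being immersions) carry $\M^{\hat{\mu}}_{X_0(f)\times_S S^{\circ}}[[T]]_{\rat,S^{\circ}}$ and $\M^{\hat{\mu}}_{X_0(f_Z)}[[T]]_{\rat,Z}$ into $\M_{X_0(f)}^{\hat{\mu}}[[T]]_{\rat,S}$: this follows from the projection formula and from the base-change compatibility of the functors $u_!$ and $u^*$ recalled in Section~\ref{sect.grothrings}, which turn $j_!$ of an $\M_{S^{\circ}}^{\hat{\mu}}$-linear combination of the series $p_{e,i}(T)$ into an $\M_S^{\hat{\mu}}$-linear combination of the same series, and likewise for~$i_!$. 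The main obstacle is the relative form of Theorem~\ref{DLrat}, that is, producing a log-resolution that spreads out over a dense open of~$S$ with numerical invariants independent of the base point; once that is granted, the remaining steps are routine manipulations with the additivity and functoriality properties of Grothendieck rings.
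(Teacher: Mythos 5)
Your argument is correct and follows essentially the same route as the paper: take the generic point of (a component of) $S$, use Corollary~\ref{Zfrational} there, spread out the rational data over a dense open subset, and conclude by Noetherian induction and additivity. The paper is terser (it spreads out ``the coefficients'' of the rational expression rather than spelling out the relative log-resolution and the pushforward bookkeeping), but the underlying mechanism is identical.
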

\begin{proof} The proof goes by the classical ``spreading-out'' method. Take a generic point $\eta$ of~$S$: then by corollary  \ref{Zfrational}, the series $Z_{f_{\eta}}$ is an element of $\M_{X_0(f)_{\eta}}^{\hat{\mu}}[[T]]_{\rat}$, and its coefficients can be spread out over some open subset $U$ of $S$. One concludes by Noetherian induction. 

\end{proof}

In particular, it makes sense to define the relative versions of the motivic nearby fibre \index{motivic nearby fibre!relative} and motivic vanishing cycles \index{motivic vanishing cycles!relative}, by
$$\psi_{f/S} = -\lim_{T\to \infty} Z_{f/S}(T)\ \ \text{and}\ \ \phi_{f/S}  = [X_0(f)\xrightarrow{\mathrm{id}} X_0(f)] - \psi_{f/S} \in\M_{X_0(f)}^{\hat{\mu}}$$

\subsection{The motivic nearby fibre as a group morphism}
In \cite{GLM}, Guibert, Loeser and Merle construct, for every smooth variety $Y$ together with a function $h:Y\to \A^1_k$ and every dense open subset $U$ of $Y$, an object $\scr{S}_{h,U}\in \M_{Y_0(h)}^{\hat{\mu}}$, such that $\scr{S}_{h,Y} = \psi_h$ is the motivic nearby fibre as defined above and such that these objects fit together into a morphism $\scr{S}_h$ as stated in the following theorem:
\begin{theorem}[\cite{GLM}, Theorem 3.9] \label{glmtheorem} Let $Y$ be a $k$-variety and $h:Y\to \A^1_k$ a morphism. There exists a unique $\M_k$-linear map $\scr{S}_h:\M_Y\to \M_{Y_0(h)}^{\hat{\mu}}$
such that for every proper morphism $p:Z\to Y$ with $Z$ smooth and for every dense open subset $U$ of $Z$, $\scr{S}_h([U\to Y]) = p_!(\scr{S}_{h\circ p,U}).$ \index{S@$\scr{S}_h$, $\scr{S}_{h,U}$}
\end{theorem}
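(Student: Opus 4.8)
The plan is to settle uniqueness at once and then to build the map, the real content being its well-definedness on $\M_Y$. For uniqueness: by Lemma~\ref{smoothpropergensfield} the group $\kvar_Y$, and hence the $\M_k$-module $\M_Y$, is generated by classes $[Z\xrightarrow{p}Y]$ with $p$ proper and $Z$ smooth over $k$; such a $Z$ is a dense open subset of itself, so the required identity $\scr{S}_h([Z\to Y])=p_!(\scr{S}_{h\circ p,Z})$ already determines $\scr{S}_h$ on a generating family, and hence everywhere by $\M_k$-linearity. Thus only the existence part carries weight.

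For existence, the first task is to construct the objects $\scr{S}_{g,U}$ themselves. Given a smooth $k$-variety $Z$, a morphism $g\colon Z\to\A^1_k$ and a dense open subset $U\subseteq Z$, I would first apply a log resolution of the pair $\bigl(Z,(Z\setminus U)\cup g^{-1}(0)\bigr)$ so as to reduce to the case where the divisor $D$, the union of the boundary $Z\setminus U$ with those components of $g^{-1}(0)$ not contained in $Z\setminus U$, has strict normal crossings. One then defines $\scr{S}_{g,U}\in\M_{Z_0(g)}^{\hat{\mu}}$ by a formula in the spirit of Denef and Loeser's Theorem~\ref{DLrat}: a $\lim_{T\to\infty}$ of a rational series assembled from the nonempty strata $E_J^{\circ}$ of $D$ met with $U$, weighted by powers of $\LL-1$ and carrying the Fermat-type unramified covers $\widetilde{E_J}^{\circ}$ that encode the $\hat{\mu}$-action, where the components of $Z\setminus U$ are assigned multiplicity $0$ and so contribute no exponent of $T$; this last point is what makes $\scr{S}_{g,U}$ ``supported on $U$''. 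For $U=Z$ the formula reproduces $\psi_g$, as it must.

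The crux is well-definedness, and it has two facets. \emph{(a) Independence of the normal-crossings model.} Any two models of $\bigl(Z,(Z\setminus U)\cup g^{-1}(0)\bigr)$ are linked by a chain of blow-ups and blow-downs along smooth centers meeting the relevant divisors normally (weak factorization), so it suffices to check that the formula above is unchanged under one such blow-up --- a bookkeeping computation, controlled by the rationality and model-independence of $Z_g$ already recorded in Corollary~\ref{Zfrational} and by the standard behaviour of vanishing cycles under blow-ups. \emph{(b) Compatibility with the relations of $\M_Y$.} By the Bittner presentation (\cite{Bittner}) it suffices to check that, for $Z$ smooth and proper over $k$ with structure morphism $p_Z\colon Z\to Y$, for a smooth closed subvariety $W\subseteq Z$ with blow-up $\pi\colon\widetilde Z\to Z$ and (smooth) exceptional divisor $E$, one has in $\M_{Y_0(h)}^{\hat{\mu}}$, writing $g=h\circ p_Z$ and denoting by $p_W,p_{\widetilde Z},p_E$ the induced maps to $Y$,
\[(p_Z)_!\,\psi_{g}-(p_W)_!\,\psi_{g|_W}=(p_{\widetilde Z})_!\,\psi_{g\circ\pi}-(p_E)_!\,\psi_{(g\circ\pi)|_E}.\]
Expanding both sides with the stratawise formula, the strata arising from $E$ have to cancel; this cancellation is exactly where the operator $\Psi$ and the convolution product $\ast$ of Section~\ref{sect.convolution} --- the Fermat-curve, Thom--Sebastiani mechanism --- are indispensable, and carrying it out while keeping track of all the $\hat{\mu}$-actions is the main obstacle of the proof.

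Granting (a) and (b), one defines $\scr{S}_h$ on the generators $[Z\xrightarrow{p}Y]$ ($Z$ smooth, $p$ proper) by $p_!\psi_{h\circ p}$ and extends $\M_k$-linearly; (b) is precisely what makes this well defined, and $\M_k$-linearity is built in. It remains to see that the prescribed formula $\scr{S}_h([U\to Y])=p_!\scr{S}_{h\circ p,U}$ holds for an arbitrary proper $p\colon Z\to Y$ with $Z$ smooth and an arbitrary dense open $U\subseteq Z$: one embeds $U$ in a normal-crossings model $\widetilde Z$ of $Z$, uses inclusion--exclusion on the strata of $\widetilde Z\setminus U$ to rewrite $[U\to Y]$ as a $\Z$-combination of classes of $k$-smooth varieties proper over $Y$, applies the definition of $\scr{S}_h$ on those, and matches the result with the explicit formula for $\scr{S}_{h\circ p,U}$ via the model-independence (a) and the stratawise relations underlying (b). This produces the required map, whose uniqueness was established at the outset.
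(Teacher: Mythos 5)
First, a point of reference: the paper does not prove this statement at all — it is imported verbatim from \cite{GLM} (Theorem 3.9), together with the objects $\scr{S}_{h,U}$, which are constructed there by motivic integration over arc spaces (a modified zeta function for arcs meeting $U$ generically); the paper's own contribution is only the relative variant, Corollary \ref{glmfamily}, whose proof takes the absolute statement as a black box. So your attempt cannot be ``the same as the paper's proof''; it is an attempt to reprove the GLM theorem from scratch, along the lines of a Bittner-presentation/weak-factorization construction rather than GLM's arc-space construction. Your uniqueness argument via Lemma \ref{smoothpropergensfield} is correct, and the overall strategy (define $\scr{S}_h([Z\xrightarrow{p}Y]):=p_!\psi_{h\circ p}$ on Bittner generators, check the blow-up relations, then identify the value on $[U\to Y]$ with $p_!\scr{S}_{h\circ p,U}$) is a legitimate alternative route in principle.

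However, as written the proposal has genuine gaps exactly at the points that constitute the theorem. (i) You redefine $\scr{S}_{g,U}$ by an ad hoc Denef--Loeser-type formula on a chosen log resolution with ``multiplicity $0$'' boundary components; the statement, though, is about GLM's objects, so you must either prove your formula agrees with theirs or at least prove it is independent of the resolution — you defer this to ``a bookkeeping computation'' via weak factorization, which is precisely the nontrivial content (this is what Bittner's construction of the motivic nearby fibre is devoted to), and the correct boundary factors in such a formula are not the ones your one-line description suggests. (ii) The compatibility with the blow-up relations, i.e.\ $(p_Z)_!\psi_g-(p_W)_!\psi_{g|_W}=(p_{\widetilde Z})_!\psi_{g\circ\pi}-(p_E)_!\psi_{(g\circ\pi)|_E}$, is flagged by you as ``the main obstacle'' and is not proved; moreover your assertion that the convolution $\Psi$ and the Thom--Sebastiani mechanism are what make the cancellation work is misplaced — convolution plays no role in this additivity statement (it enters GLM for iterated vanishing cycles), which suggests the actual computation has not been identified, let alone carried out. (iii) The last step, matching the value of your map on $[U\to Y]$ (obtained by inclusion--exclusion over boundary strata of a normal-crossings model) with $p_!\scr{S}_{h\circ p,U}$, is essentially equivalent to the theorem itself: the whole subtlety of GLM's $\scr{S}_{f,U}$ is that it is \emph{not} given by naive cut-and-paste of nearby cycles of closed strata, so appealing to ``the stratawise relations underlying (b)'' here is circular. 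In short, the skeleton is reasonable, but all three load-bearing verifications are missing.
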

When we want to keep track of the base field $k$, we are going to denote the map in the theorem by $\scr{S}_{h/k}$. We are going to need the following corollary, which adapts this to a relative setting:
\begin{cor}\label{glmfamily} Le $S$ be a $k$-variety and $Y$ a variety over $S$, with $h:Y\to \A^1_k$ a morphism. There exists a unique $\M_S$-linear map $\scr{S}_{h/S}:\M_Y\to \M_{Y_0(h)}^{\hat{\mu}}$ such that for all $s\in S$ the diagram
$$\xymatrix{ \M_Y \ar[r]^{\scr{S}_{h/S}}\ar[d] &\M_{Y_0(h)}^{\hat{\mu}}\ar[d]\\
\M_{Y_s}\ar[r]^{\scr{S}_{h/\kappa(s)}}& \M_{Y_0(h)_s}^{\hat{\mu}}}$$
commutes, where the vertical arrows are induced by the pullback of the inclusion $\{s \}\hookrightarrow S$. \index{S@$\scr{S}_{h/S}$}
\end{cor}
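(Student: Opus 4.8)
The plan is to transfer the absolute Guibert--Loeser--Merle construction (Theorem \ref{glmtheorem}) to the relative situation by spreading it out over $S$, using two facts: a class in a Grothendieck ring is determined by its fibres at the points of the base (Lemma \ref{function.equality}, which holds equally for the $\hat{\mu}$-equivariant rings), and the relative motivic nearby fibre restricts, on each fibre of $S$, to the corresponding absolute motivic nearby fibre, by the base-change property for jet schemes recalled above. With these in hand, uniqueness is immediate: if $\scr{S}$ and $\scr{S}'$ both make the diagram commute, then for every $\alpha\in\M_Y$ and every $s\in S$ the images of $\scr{S}(\alpha)$ and $\scr{S}'(\alpha)$ in $\M_{Y_0(h)_s}^{\hat{\mu}}$ both equal $\scr{S}_{h/\kappa(s)}(\alpha_s)$, hence $\scr{S}(\alpha)=\scr{S}'(\alpha)$ by Lemma \ref{function.equality}.

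For existence I would use Lemma \ref{smoothpropergens}, which presents $\M_Y$ by generators $[Z\xrightarrow{p}Y]$ such that, writing $u\colon Y\to S$ for the structural morphism, the subset $U:=(u\circ p)(Z)$ is locally closed in $S$, $Z$ is smooth over $U$, and $p\times_S\id_U$ is proper; since $Z$ then factors through $U$ one has $Z\times_S U=Z$, so $p\times_S\id_U$ is simply $p$ viewed as a morphism onto $Y\times_S U$. For such a generator the pair consisting of $Z$ (smooth over $U$) and $h\circ p\colon Z\to\A^1_k$ lies in the setting of relative motivic vanishing cycles, so the relative nearby fibre $\psi_{(h\circ p)/U}\in\M_{Z_0(h\circ p)}^{\hat{\mu}}$ is defined, and I set
$$\scr{S}_{h/S}\bigl([Z\xrightarrow{p}Y]\bigr):=p_!\bigl(\psi_{(h\circ p)/U}\bigr)\in\M_{Y_0(h)}^{\hat{\mu}},$$
the pushforward being along the restriction $Z_0(h\circ p)\to Y_0(h)$ of $p$ (the target being an $\M_S$-module via pullback to $Y_0(h)$ with trivial action).

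To see this is well defined on $\M_Y$ — a priori it is only a map on the free group on such generators — it is enough by Lemma \ref{function.equality} to compute the fibre at an arbitrary $s\in S$. If $s\notin U$ then $Z_s=\varnothing$ and both sides vanish. If $s\in U$, then $Z_s$ is smooth over $\kappa(s)$ and $p_s\colon Z_s\to Y_s$ is proper by base change, the fibre of $\psi_{(h\circ p)/U}$ at $s$ is the absolute nearby fibre $\psi_{h_s\circ p_s}=\scr{S}_{h_s\circ p_s,\,Z_s}$ by base change for jet schemes, and Theorem \ref{glmtheorem} applied with dense open equal to $Z_s$ gives $(p_s)_!\,\scr{S}_{h_s\circ p_s,\,Z_s}=\scr{S}_{h_s/\kappa(s)}([Z_s\xrightarrow{p_s}Y_s])$. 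Hence the fibre of $\scr{S}_{h/S}([Z\to Y])$ at $s$ is $\scr{S}_{h_s/\kappa(s)}([Z_s\to Y_s])$; therefore any relation $\sum_i n_i[Z_i\to Y]=0$ in $\M_Y$ yields, fibrewise and by the $\M_{\kappa(s)}$-linearity of $\scr{S}_{h_s/\kappa(s)}$, the relation $\sum_i n_i\,p_{i,!}\,\psi_{(h\circ p_i)/U_i}=0$, so $\scr{S}_{h/S}$ passes to $\M_Y$. This same fibre computation shows the square in the statement commutes on generators, hence everywhere by additivity; and $\M_S$-linearity follows once more from Lemma \ref{function.equality}, since for $\beta\in\M_S$ the classes $\scr{S}_{h/S}(\beta\cdot\alpha)$ and $\beta\cdot\scr{S}_{h/S}(\alpha)$ have the common fibre $\beta_s\cdot\scr{S}_{h_s/\kappa(s)}(\alpha_s)$ at every $s$.

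The one point that is not purely formal is the fibrewise identification $\psi_{(h\circ p)/U}|_s=\psi_{(h\circ p)_s/\kappa(s)}$ of the relative nearby fibre with the absolute one, i.e. that taking fibres commutes with $\psi$; this reduces to the base-change identity $\scr{L}_n(Z/U)\times_U\kappa(s)=\scr{L}_n(Z_s/\kappa(s))$ for jet schemes, recalled above, together with the compatibility of the operation $\lim_{T\to\infty}$ with pullback to a point. Everything else is bookkeeping on top of Lemma \ref{function.equality} and Theorem \ref{glmtheorem}.
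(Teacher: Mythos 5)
Your proposal is correct and is essentially the paper's own proof: uniqueness via Lemma \ref{function.equality}, existence by setting $\scr{S}_{h/S}([Z\xrightarrow{p}Y])=p_!\psi_{h\circ p/S}$ on the generators provided by Lemma \ref{smoothpropergens}, and then descending relations, commutativity of the square, and $\M_S$-linearity all fibrewise using base change for jet schemes, Theorem \ref{glmtheorem}, and Lemma \ref{function.equality}. The only cosmetic difference is that, as in the paper, the generators of Lemma \ref{smoothpropergens} present $\kvar_Y$ rather than $\M_Y$, so one first obtains a $\kvar_S$-linear map on $\kvar_Y$ and then extends by $\M_S$-linearity — a step your linearity argument already covers in substance.
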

\begin{proof} Uniqueness is immediate by lemma \ref{function.equality}. Denote by $u$ the structural morphism $u:Y\to S$. According to lemma \ref{smoothpropergens}, it suffices to construct $\scr{S}_{h/S}([Z\xrightarrow{p} Y])$ for morphisms of $S$-schemes $p:Z\to Y$ such that $T = u\circ p(Z)$ is locally closed, $u\circ p:Z\to T$ smooth, and $p\times_S\id_T:Z\times_S T\to Y\times_S T$ proper. 
 For such a class, put $$\scr{S}_{h/S}([Z\xrightarrow{p} Y]) = p_{!}\left(\psi_{h\circ p/S}\right)\in \M_{Y_0(h)}^{\hat{\mu}}.$$ The group $\kvar_Y$ is obtained from the free abelian group $A$ on such generators by taking the quotient with respect to some relations, namely elements of the free abelian group on those generators which belong to the kernel of the canonical surjection $A\to \kvar_Y$. Let $R\in A$ be such a relation. For any element $\a\in A$, denote by $\a_s$ its image in $\M_{Y_s}$ through the composition $A \to \kvar_{Y}\to \kvar_{Y_s}\to \M_{Y_s}$. The definition of $\scr{S}_{h/S}$ on elements of $A$ and theorem \ref{glmtheorem} show that, for any element $\a\in A$, $(\scr{S}_{h/S}(\a))_s = \scr{S}_{h/\kappa(s)}(\a_s)$.  In particular, since for every $s\in S$, $R_s = 0$, we have $(\scr{S}_{h/S}(R))_s = 0$, and therefore $\scr{S}_{h/S}(R) = 0$ by lemma \ref{function.equality}. Thus, $\scr{S}_{h/S}$ defines a group morphism $\kvar_Y\to \M_{Y_0(h)}^{\hat{\mu}}$. From the last few lines in the proof of theorem \ref{glmtheorem} in \cite{GLM} (theorem 3.9), it appears that $\scr{S}_{h/\kappa(s)}$ is first constructed on $\kvar_{Y_s}$, and then extended to $\M_{Y_s}$ by $\M_{\kappa(s)}$-linearity. By definition, $\scr{S}_{h/S}$ is compatible with $\scr{S}_{h/\kappa(s)}$ (seen as a morphism with source $\kvar_{Y_s}$) for all $s\in S$. For any $a\in \kvar_S$, and any $x\in \kvar_Y$, the relation $\scr{S}_{h/S}(ax) = a\scr{S}_{h/S}(x)$ is seen to be true by lemma \ref{function.equality}, because for every $s\in S$, $\scr{S}_{h/\kappa(s)}$ is $\M_{\kappa(s)}$-linear. Thus, $\scr{S}_{h/S}$ is $\kvar_S$-linear, and we may extend it by $\M_S$-linearity to a $\M_S$-linear morphism $\M_Y\to \M_{Y_0(h)}^{\hat{\mu}}$, which ensures the commutativity of the diagram in the statement of the theorem.
\end{proof}

\section{The motivic vanishing cycles measure} \label{sect.motvanmeasure}

In \cite{LS}, Lunts and Schnürer defined, for an algebraically closed field $k$ of characteristic zero, a motivic measure $\Phi^{\tot}:(\widetilde{\M}_{\A^1_k},\convv)\to (\widetilde{\M}_{\A_k^1}^{\hat{\mu}},\convv)$, by the formula
\begin{equation}\label{phiformula}\Phi^{\tot} = \sum_{a\in k}(i_a)_!(i_a^* - \scr{S}_{\id- a})\end{equation}
where $i_a:\{a\}\to \A^1_k$ is the inclusion, and $\scr{S}_{\id - a}:\M_{\A^1}\to \M_{\{a\}}^{\hat{\mu}}$ is the morphism from theorem \ref{glmtheorem} applied with $Y =\A^1$ and $h= \id -a$ (this is the measure denoted by $\Phi$ in their paper). Formula (\ref{phiformula}) makes sense because for any class $[X\xrightarrow{f}\A^1]$ with $X$ smooth and $f$ proper, we have, denoting by $X_a$ the fibre of $f$ above $a$ and by $f_a:X_a\to\{a\}$ the constant map induced by $f$ on it, by theorem~\ref{glmtheorem}
\begin{eqnarray*}(i_a)_!(i_a^* - \scr{S}_{\id - a})([X\xrightarrow{f}\A^1]) & = &(i_a)_!\left([X_a \to \{a\}] - (f_a)_!\psi_{f-a}\right) \\
&=& (i_a)_!(f_a)_!([X_a\to X_a] - \psi_{f-a}) \\
&=& f_{!}\phi_{f-a}\end{eqnarray*}
which is zero whenever $a$ is not a critical point of $f$. Thus, the sum is always finite because the Grothendieck ring of varieties is generated by such classes, and because the set of critical points of a morphism $f:X\to \A^1_k$ is finite. The image of a class $[X\xrightarrow{f}\A^1]$ with~$X$ smooth and $f$ proper is
\begin{equation}\label{atotvanformula}\Phi^{\tot}([X\xrightarrow{f}\A^1]) = \sum_{a\in k} f_!\phi_{f-a} =:\atotvan_f ,\end{equation}
the sum of all vanishing cycles of $f$ at all $a\in k$. In other words, $\atotvan_f$ is the element of $\M^{\hat{\mu}}_{\A^1_k}$ corresponding to the motivic function sending $a\in\A^1_k$ to $f_!\phi_{f-a}$

In what follows, we are going to need to construct such a measure in families. Therefore, we will give a definition of $\atotvan_f$ in terms of vanishing cycles relative to the affine line above a base, which behaves well in such a context. For an algebraically closed field $k$ of characteristic zero, this will give an element of $\M_{\A^1_k}^{\hat{\mu}}$ supported above the critical points of~$\A^1_k$, with fibre at every point $a\in k$ given by the vanishing cycles $f_!\phi_{f-a}$, so that we recover formula (\ref{atotvanformula}).

\subsection{Total vanishing cycles}\label{sect.totvandef}
Let $k$ be a field of characteristic zero, $R$ a variety over $k$, and $X$ be a variety over~$R$, smooth over~$R$, and $f:X\to \A_k^1$ a morphism. We apply the construction of the previous paragraph to the variety $X\times \A_k^1$ over $S = \A_R^1$, together with the morphism $g:X\times \A^1_k\to \A^1_k$ given by $g = f\circ \pr_1 - \pr_2$. We have $(X\times \A^1)_0(g) = \Gamma_f$, where $\Gamma_f\subset X\times \A_k^1$ is the graph of the morphism~$f$, which we may identify with $X$ itself through the first projection. 

\begin{notation}
We denote by~$\totvan_{f/R}:= \phi_{g/\A^1_R}$ and~$\totnear_{f/R}:=\psi_{g/\A^1_R}$ the corresponding vanishing cycles and nearby fibre, which are naturally defined as elements of $\M_{X}^{\hat{\mu}}$, and related by the identity 
$$\totvan_{f/R} = [X\xrightarrow{\id}X] - \totnear_{f/R}.$$ \index{phiti@$\totvan_{f/R}$, $\totvan_f$}\index{psiti@$\totnear_{f/R}$, $\totnear_f$} 
Denote by $f_R$ the morphism $X\to \A^1_R = R\times \A^1_k$ given by $(u,f)$ where $u:X\to R$ is the structural map. We define~$\atotvan_{f/R}:=(f_R)_!\totvan_{f/R}$ and~$\atotnear_{f/R}:=(f_R)_!\totnear_{f/R}$ their images in $\M_{\A^1_R}^{\hat{\mu}}$, 
 satisfying $$\atotvan_{f/R} = [X\xrightarrow{f_R} \A^1_R] - \atotnear_{f/R}.$$ In the case where $R = k$, we will simply write $\totnear_f, \totvan_f$ etc. \index{phito@$\atotvan_{f/R}$, $\atotvan_f$}\index{psito@$\atotnear_{f/R}$, $\atotnear_f$}
\end{notation}
These objects will be called \textit{total nearby fibre} \index{total nearby fibre} and \textit{total vanishing cycles}, \index{total vanishing cycles} because they take into account the nearby fibre and vanishing cycles of $f$ at all points of $\A^1$: indeed, we see that for any $t\in \A^1$, 
$$\left(\totnear_{f/R}\right)_t = \psi_{g_t/R_{\kappa(t)}} = \psi_{(f-t)/R_{\kappa(t)}}\in\M_{X_t}^{\hat{\mu}},$$
where $f-t$ is the function $X\times_k\kappa(t) \to \A^1_{\kappa(t)}$ given by $x\mapsto f(x) -t$ and $R_{\kappa(t)} = R\times_k\kappa(t)$. A similar remark holds for $\totvan_{f/R}, \atotnear_{f/R}$ and $\atotvan_{f/R}$. The properties of vanishing cycles we recalled above lead to similar properties for total vanishing cycles. 
\begin{remark}\label{constantvan} Let $X$ and $R$ be as above, and assume $f:X\to \A^1_k$ is constant. Then property (\ref{constantproperty}) of example \ref{vancyclespecialcases} together with lemma \ref{function.equality} imply that $\totnear_f = 0$. In particular, we have $\totvan_f = [X\xrightarrow{\id}X]$ and $\atotvan_f = [X\xrightarrow{f_R} \A^1_R]$ (with trivial $\hat{\mu}$-action). 
\end{remark}

\begin{remark}\label{totvandimension} With the above notations, it follows from remark \ref{vandimension} applied above every point of~$\A^1_R$, that
$$\dim_{\A^1_{R}}(\atotvan_{f/R}) \leq \dim_{\A^1_{R}} X.$$
 
\end{remark}
We recall that we denote by $\mathrm{Sing}(f)$ the vanishing locus of the differential form $\dx f$, and we define $\mathrm{Crit}(f)$ to be the scheme-theoretic image $f_R(\mathrm{Sing}(f))\subset \A_R^1$. 
\begin{prop}\label{smoothvanzero} Let $X$ be a smooth $R$-variety and $f:X\to \A^1_k$ a morphism. Then $\totvan_{f/R}$ (resp. $\atotvan_{f/R}$) is canonically an element of $\M_{\mathrm{Sing}(f)}^{\hat{\mu}}$ (resp. $\M_{\mathrm{Crit}(f)}^{\hat{\mu}}$). In particular, if $f$ is smooth, then $\totvan_{f/R} = 0$ and $\atotvan_{f/R} = 0$. 
\end{prop}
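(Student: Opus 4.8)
The plan is to reduce the support assertions to a fibrewise computation and then quote part~(\ref{singproperty}) of Example~\ref{vancyclespecialcases}. Recall that $\mathrm{Sing}(f)\subseteq X$ is the closed vanishing locus of $\dx f\in\Gamma(X,\Omega^1_{X/R})$ (locally free, since $X$ is smooth over $R$), and that by construction $\totvan_{f/R}=\phi_{g/\A^1_R}$, where $g=f\circ\pr_1-\pr_2\colon X\times\A^1_k\to\A^1_k$ is considered over $S=\A^1_R$ and $(X\times\A^1_k)_0(g)=\Gamma_f$ is identified with $X$ via $\pr_1$. Thus $\totvan_{f/R}$ is a motivic function on $\Gamma_f\cong X$, and I would first prove that it vanishes at every point not lying on $\mathrm{Sing}(f)$. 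Granting this, Lemma~\ref{function.equality} applied to the variety $X\setminus\mathrm{Sing}(f)$ shows $\totvan_{f/R}$ lies in the image of $\iota_!\colon\M_{\mathrm{Sing}(f)}^{\hat{\mu}}\to\M_X^{\hat{\mu}}$, and a second application of Lemma~\ref{function.equality} to $\mathrm{Sing}(f)$ itself shows the preimage is unique --- this is the asserted canonical element.

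The heart of the matter is the fibrewise vanishing. Fix a point $x$ of $X$ outside $\mathrm{Sing}(f)$; set $r=u(x)$ with $u\colon X\to R$ the structural morphism and $s=f_R(x)=(r,f(x))\in\A^1_R=S$, so the associated point of $\Gamma_f$ is $\xi=(x,f(x))$. By the fibre-compatibility of the relative motivic zeta function recalled in the section on relative vanishing cycles (the fibre of $Z_{g/S}$ over $s$ is $Z_{g_s/\kappa(s)}$), together with the fact that $\lim_{T\to\infty}$ commutes with pullback along $\{s\}\hookrightarrow S$ (both being the unique morphism of the relevant rational-series algebras sending every $p_{e,i}$ to $-1$), the restriction of $\phi_{g/\A^1_R}$ to the fibre $(X\times\A^1_k)_s$ is $\phi_{g_s/\kappa(s)}$. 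Under the identification $(X\times\A^1_k)_s\cong X_r\times_{\kappa(r)}\kappa(s)$ via $\pr_1$, one has $g_s=f_r-f(x)$ and $\xi$ corresponds to $x$ in this fibre. Now $\dx g_s=\dx f_r$ is the restriction of $\dx f\in\Omega^1_{X/R}$ along $X_r\hookrightarrow X$ (base change of Kähler differentials: $\Omega^1_{X/R}|_{X_r}=\Omega^1_{X_r/\kappa(r)}$), so $\mathrm{Sing}(g_s)$ is the base change to $\kappa(s)$ of $\mathrm{Sing}(f)\cap X_r$; since $x\notin\mathrm{Sing}(f)$, we get $\xi\notin\mathrm{Sing}(g_s)$. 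Parts~(\ref{constantproperty}) and~(\ref{singproperty}) of Example~\ref{vancyclespecialcases}, applied to $g_s$ over $\kappa(s)$ connected component by connected component (a component on which $g_s$ is constant equal to $0$ is swept entirely into $\mathrm{Sing}(g_s)$, hence lies over a part of $X_r$ contained in $\mathrm{Sing}(f)$ and does not contain $\xi$; a component on which $g_s$ is a nonzero constant contributes nothing over $\kappa(s)$), show that $\phi_{g_s/\kappa(s)}$ lives above $\mathrm{Sing}(g_s)$. Hence $(\totvan_{f/R})_x=(\phi_{g_s/\kappa(s)})_\xi=0$, as wanted.

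It remains to deduce the statement for $\atotvan_{f/R}$ and the final assertion. Write $\totvan_{f/R}=\iota_!\beta$ with $\iota\colon\mathrm{Sing}(f)\hookrightarrow X$ and $\beta\in\M_{\mathrm{Sing}(f)}^{\hat{\mu}}$. By functoriality of proper pushforward and the factorisation of $f_R\circ\iota$ as $\mathrm{Sing}(f)\xrightarrow{p}\mathrm{Crit}(f)\xrightarrow{j}\A^1_R$ (by definition of $\mathrm{Crit}(f)$ as the scheme-theoretic image $f_R(\mathrm{Sing}(f))$), we obtain $\atotvan_{f/R}=(f_R)_!\totvan_{f/R}=j_!(p_!\beta)$, so $\atotvan_{f/R}$ is canonically an element of $\M_{\mathrm{Crit}(f)}^{\hat{\mu}}$ (uniqueness of the lift again by Lemma~\ref{function.equality} on $\mathrm{Crit}(f)$). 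Finally, if $f$ is smooth --- equivalently, $\dx f$ is nowhere vanishing, i.e. $\mathrm{Sing}(f)=\varnothing$ --- then $\M_{\mathrm{Sing}(f)}^{\hat{\mu}}=0$, whence $\totvan_{f/R}=0$ and $\atotvan_{f/R}=(f_R)_!0=0$.

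The main obstacle I anticipate is purely the fibrewise bookkeeping of the second paragraph: identifying $g_s$ precisely, keeping track of the residue-field extension $\kappa(r)\subseteq\kappa(s)$, and verifying via base change of differentials that $\mathrm{Sing}(g_s)$ is cut out by the pullback of $\dx f$, so that the components of the fibre on which $g_s$ is locally constant are harmlessly absorbed into $\mathrm{Sing}(f)$ (on such a component $\dx f_r$ vanishes identically). There is no conceptual difficulty: morally $\dx g=\pr_1^{*}(\dx f)$ in $\Omega^1_{(X\times\A^1_k)/\A^1_R}$ because $\pr_2$ factors through the base $\A^1_R$, so $\mathrm{Sing}(g)=\pr_1^{-1}\mathrm{Sing}(f)$ and $\phi_{g/\A^1_R}$ lives above $\mathrm{Sing}(g)\cap\Gamma_f\cong\mathrm{Sing}(f)$; the fibrewise argument is simply what makes rigorous this relative avatar of part~(\ref{singproperty}) of Example~\ref{vancyclespecialcases}, no such relative statement having been recorded in the text.
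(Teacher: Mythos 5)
Your proof is correct and follows essentially the same route as the paper, whose own argument is exactly "property~(\ref{singproperty}) of Example~\ref{vancyclespecialcases}) applied point by point, together with Lemma~\ref{function.equality}"; your second paragraph just spells out the fibrewise bookkeeping (identifying $g_s$ with $f_r-f(x)$, base change of differentials, discarding constant components) that the paper leaves implicit. The deductions for $\atotvan_{f/R}$ via pushforward and for the smooth case are likewise the intended ones.
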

\begin{proof} This is a consequence of property (\ref{singproperty}) above applied point by point, together with lemma~\ref{function.equality}.
\end{proof}

\subsection{The Thom-Sebastiani theorem}\label{sect.TS}
The classical theorem proved by Thom and Sebastiani in \cite{TS} is a multiplicativity result for the cohomology of Milnor fibres: for two germs $f:(\C^n,0)\to (\C,0)$ and $g:(\C^n,0)\to (\C,0)$ of holomorphic functions with an isolated critical point at 0, it expresses the reduced cohomology of the Milnor fibre of the germ $f\conv g:(x,y)\mapsto f(x) + g(y)$ as a tensor product of the reduced cohomologies of the Milnor fibres of $f$ and $g$, together with compatibilities of monodromy actions.

An analogue of this for motivic vanishing cycles was first proved by Denef and Loeser in the completed Grothendieck ring of Chow motives in \cite{DL99}. Then Looijenga, who in \cite{Loo} introduced an appropriate convolution operation, and Denef Loeser in \cite{DL01}, showed that essentially the same proof gave an equality in the Grothendieck ring of varieties with $\hat{\mu}$-action.  Finally, in \cite{GLM}, Guibert, Loeser and Merle showed how one may recover the motivic Thom-Sebastiani theorem from a formula involving iterated vanishing cycles. In that paper, the theorem is stated using the generalised convolution operator~$\Psi$ which we defined in section \ref{sect.convolution}.

We recall the motivic Thom-Sebastiani theorem, in the form in which it appears in \cite{GLM}: \index{Thom-Sebastiani!motivic}\index{motivic Thom-Sebastiani}
\begin{theorem}\label{TS} Let $Y_1,Y_2$ be smooth varieties over $k$, with morphisms $g_1:Y_1\to \A^1_k$, $g_2:Y_2\to\A^1_k$. Denote by $i$ the natural inclusion $Y_0:= g_1^{-1}(0)\times g_2^{-1}(0)\to (g_1\conv g_2)^{-1}(0) $. Then
$$i^{*}\left(\phi_{g_1\conv g_2}\right) = \Psi(\phi_{g_1}\boxtimes \phi_{g_2})$$
in $\M_{Y_0}^{\hat{\mu}}$. 
\end{theorem}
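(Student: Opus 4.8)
The plan is to reduce the statement to the rationality formula of Denef–Loeser (Theorem \ref{DLrat}) via a computation with log-resolutions, following the strategy of \cite{DL01} and \cite{GLM}. First I would observe that both sides are additive and compatible with the presentations of Section \ref{subsect.grothrings}, so by Lemma \ref{smoothpropergensfield} it suffices to treat the case where $g_1$ and $g_2$ are themselves obtained from, or at least compatible with, proper maps from smooth varieties; more importantly, one may choose $h_j:Y_j'\to Y_j$ log-resolutions of the pairs $(Y_j, g_j^{-1}(0))$ for $j=1,2$, with exceptional divisors $(E_i^{(j)})_{i\in I_j}$, multiplicities $a_i^{(j)}$ of $g_j\circ h_j$, and Jacobian multiplicities $\nu_i^{(j)}-1$. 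The key geometric point is that the product $h_1\times h_2: Y_1'\times Y_2'\to Y_1\times Y_2$ is \emph{not} a log-resolution of $(g_1\conv g_2)^{-1}(0)$, since the sum $g_1\circ h_1\oplus g_2\circ h_2$ vanishes on the union of products $E_i^{(1)}\times Y_2'$ and $Y_1'\times E_{i'}^{(2)}$ whose local equations are $\prod x_i^{a_i^{(1)}} u_1 + \prod y_{i'}^{a_{i'}^{(2)}} u_2$ — a binomial, not a monomial. One must therefore perform a further toric blow-up along the intersections $E_i^{(1)}\times E_{i'}^{(2)}$ to separate the two monomials additively; this is exactly the computation carried out in \cite{DL01}, Section 5, and the combinatorics of the resulting exceptional divisors is what produces the convolution operator $\Psi$ and the Fermat curves $F_0^n, F_1^n$ from Section \ref{sect.convolution}.

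Concretely, the steps I would carry out are: (1) reduce, by $\M_k$-linearity and Lemma \ref{function.equality}, to $k$ algebraically closed and to the situation where log-resolutions exist and where one may compute $Z_{g_1}, Z_{g_2}$ by Theorem \ref{DLrat}; (2) construct, from $h_1, h_2$ and the extra toric blow-ups, an explicit log-resolution $H:W\to Y_1\times Y_2$ of $(g_1\conv g_2)^{-1}(0)$ and enumerate its exceptional components together with their multiplicities $a$ and $\nu$ — these come in three families: the "pullbacks" of the $E_i^{(1)}$, the "pullbacks" of the $E_{i'}^{(2)}$, and the new components $E_{i,i'}^{\mathrm{new}}$ created over $E_i^{(1)}\cap g_1^{-1}(0)$ crossed with $E_{i'}^{(2)}\cap g_2^{-1}(0)$; (3) apply Theorem \ref{DLrat} to $g_1\conv g_2$ with this resolution, and compute $\psi_{g_1\conv g_2}$ by taking $-\lim_{T\to\infty}$; (4) on the other side, expand $\Psi(\phi_{g_1}\boxtimes\phi_{g_2})$ using $\phi_{g_j} = \1_{(g_j^{-1}(0))} - \psi_{g_j}$, the formula for $\psi_{g_j}$ from Theorem \ref{DLrat}, the definition of $\Psi$ via the covers $\widetilde{E_J}^{\circ}$ and the Fermat curves, and bilinearity of $\boxtimes$; (5) match the two resulting sums of classes of Galois covers over $Y_0$ term by term, using that the étale cover attached to a new exceptional component $E_{i,i'}^{\mathrm{new}}$ is precisely the fibre product of the covers $\widetilde{E}^{(1)}$, $\widetilde{E}^{(2)}$ with a Fermat curve $F_0^{n}$ or $F_1^{n}$, with the $\mu_n$-monodromy tracked correctly.

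The main obstacle is step (2) together with the bookkeeping in step (5): one has to verify that the toric modification separating $\prod x^{a^{(1)}}u_1 + \prod y^{a^{(2)}}u_2$ produces exceptional divisors whose multiplicity data $(a, \nu)$ and whose associated covers $\widetilde{E}^{\circ}$ exactly reproduce, after applying $-\lim_{T\to\infty}$, the Fermat-curve contributions in the definition of $\Psi$; and one must handle the (boundary) terms coming from the original components $E_i^{(j)}$ and from the trivial-monodromy pieces $\1_{g_j^{-1}(0)}$, checking they cancel against the "mixed" terms on the Denef–Loeser side. Since this calculation is already in the literature (\cite{DL01}, and in the $\Psi$-formulation \cite{GLM}), I would present it by first recalling the local model computation of \cite{DL01} for the pair of monomial functions $x\mapsto x^{a}$, $y\mapsto y^{b}$ — where $\psi$ of the sum is computed directly and seen to equal $\Psi$ of the exterior product of the one-variable nearby fibres — and then globalising via the two log-resolutions and the additivity of both sides, exactly as in the passage from the special case to the general statement in \cite{GLM}. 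The sign convention for $\phi$ adopted from \cite{LS} must be kept consistent throughout; since it only affects $\phi$ and not $\psi$, and Theorem \ref{TS} is stated for $\phi$, one checks at the end that the two sign shifts (one for each $\phi_{g_j}$ on the right, one for $\phi_{g_1\conv g_2}$ on the left) are compatible with the relation $\Psi(\1\boxtimes -) = \Psi(-\boxtimes\1) = \mathrm{forget}$, i.e. with Remark \ref{asttrivialaction}.
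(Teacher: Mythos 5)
The paper does not prove Theorem~\ref{TS}; it explicitly recalls it as the form of the Thom--Sebastiani theorem ``in which it appears in \cite{GLM}''. So there is no in-paper argument to compare against: the paragraph preceding the theorem only records the history (proved in Chow motives by Denef--Loeser in \cite{DL99}, upgraded to $\M^{\hat\mu}$ by Looijenga and \cite{DL01}, reproved via iterated vanishing cycles and restated using $\Psi$ by \cite{GLM}). Your outline is therefore supplying a proof where the paper supplies none, and it follows the original Denef--Loeser resolution idea rather than the \cite{GLM} route the paper actually cites.

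The plan is broadly plausible, but I would flag three points. First, Lemma~\ref{smoothpropergensfield} is the wrong tool for your opening reduction: it describes generators of a relative Grothendieck ring, whereas Theorem~\ref{TS} is a statement about two \emph{fixed} morphisms $g_1,g_2$ from smooth varieties and the elements $\phi_{g_j}$ they determine, not about arbitrary classes. The relevant reductions are passing to connected components and handling separately the degenerate cases where some $g_j$ is constant (so $\phi_{g_j}$ has trivial $\hat\mu$-action, whence $\Psi$ collapses to the ordinary product by Remark~\ref{asttrivialaction} --- note this is \emph{not} ``forgetting the monodromy'' as you write: it returns the same class with the same action, the point being only that the convolution then agrees with the usual product). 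Second, the resolution you build in step~(2) is a priori only a log-resolution over a neighbourhood of $Y_0=g_1^{-1}(0)\times g_2^{-1}(0)$: the zero fibre $(g_1\conv g_2)^{-1}(0)$ also contains the locus where $g_1$ and $g_2$ take opposite nonzero values, which is invisible to $h_1\times h_2$ and any modifications of it supported over the exceptional loci. The restriction $i^*$ is exactly what discards the corresponding contributions in Theorem~\ref{DLrat}, but this needs to be stated and justified rather than elided, and it changes what ``apply Theorem~\ref{DLrat}'' means at step~(3). Third, step~(5) cannot be outsourced to the literature in the way you suggest: \cite{DL99}/\cite{DL01} do not resolve $g_1\conv g_2$ directly but prove a convolution formula for the motivic zeta functions by an arc-space computation (matching truncated arcs of $g_1\conv g_2$ with pairs of truncated arcs of $g_1,g_2$, which is where the Fermat locus enters), and then take $\lim_{T\to\infty}$; \cite{GLM} instead goes via iterated vanishing cycles. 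Your resolution-matching route would thus be a genuinely new proof, and the identification of the toric-modification data with the Fermat-curve contributions of $\Psi$, including the $\hat\mu$-equivariant bookkeeping, is work you would have to carry out from scratch rather than quote.
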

This may be globalised in the following manner:
\begin{cor}[Thom-Sebastiani for total vanishing cycles] \label{TStotal} Let $X_1,X_2$ be smooth varieties, with morphisms $f_1:X_1\to \A^1_k$ and $f_2:X_2\to \A^1_k$. 
Then we have the equalities
 \begin{equation}\label{TSequation}\totvan_{f_1\conv f_2}= \Psi(\totvan_{f_1}\boxtimes \totvan_{f_2})\end{equation}
 in $\M_{X_1\times X_2}^{\hat{\mu}}$, and \begin{equation}\label{TSequationA1}\atotvan_{f_1\conv f_2}= \Psi(\atotvan_{f_1}\convv \atotvan_{f_2})\end{equation}
 in $\M_{\A^1_k}^{\hat{\mu}}$. 
\end{cor}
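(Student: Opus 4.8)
The plan is to deduce both equalities from the local Thom--Sebastiani theorem (Theorem~\ref{TS}) applied point by point over $\A^1_k$, using Lemma~\ref{function.equality} to pass from fibrewise identities to identities in the Grothendieck ring. First I would fix notation: write $g = (f_1\conv f_2)$, so that $g:X_1\times X_2\to \A^1_k$ is $f_1\circ\pr_1 + f_2\circ\pr_2$. By definition of total vanishing cycles, for each $t\in \A^1_k$ the fibre of $\totvan_{f_1\conv f_2}$ over a point of $(X_1\times X_2)_t$ is governed by $\phi_{(f_1\conv f_2) - t}$, while the right-hand side $\Psi(\totvan_{f_1}\boxtimes\totvan_{f_2})$ has, over the same locus, a contribution built from $\phi_{f_1 - t_1}$ and $\phi_{f_2 - t_2}$ for pairs $(t_1,t_2)$ with $t_1 + t_2 = t$. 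The key identity $(f_1 - t_1)\conv(f_2 - t_2) = (f_1\conv f_2) - t$ shows that the local statement $i^{*}\phi_{(f_1-t_1)\conv(f_2-t_2)} = \Psi(\phi_{f_1-t_1}\boxtimes\phi_{f_2-t_2})$ is exactly Theorem~\ref{TS} with $g_j$ replaced by $f_j - t_j$. So the content of~(\ref{TSequation}) is that summing (or rather, assembling over the base) these local identities is compatible with the operation $\Psi$ and with the $\boxtimes$ product.

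For the first equality~(\ref{TSequation}), I would argue as follows. Both sides live in $\M_{X_1\times X_2}^{\hat\mu}$, so by Lemma~\ref{function.equality} it suffices to check equality after pulling back to each point $x = (x_1,x_2)\in X_1\times X_2$. Set $t = f_1(x_1) + f_2(x_2)$, $t_j = f_j(x_j)$. The left-hand side at $x$ is the fibre of $\phi_{g - t}$ at $x$ (using that $\totvan$ records vanishing cycles at all points of $\A^1$, and that $x$ lies in the fibre $g^{-1}(t)$); the right-hand side, because $\Psi$ commutes with pullback along $x\hookrightarrow X_1\times X_2$ (Section~\ref{sect.convolution}) and $\boxtimes$ is compatible with fibres, becomes $\Psi$ applied to the exterior product of the fibres of $\totvan_{f_j}$ at $x_j$, i.e. of $(\phi_{f_j - t_j})_{x_j}$. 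The relative nearby-cycle formalism of Corollary~\ref{glmfamily} ensures these fibrewise objects agree with the absolute vanishing cycles over the residue field. One then invokes Theorem~\ref{TS} for the pair $(f_1 - t_1, f_2 - t_2)$ to conclude the fibres coincide. A small point to check is that the $\hat\mu$-actions match under these identifications, which is built into the statement of Theorem~\ref{TS} and the construction of $\Psi$.

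For the second equality~(\ref{TSequationA1}), I would push forward~(\ref{TSequation}) along the appropriate maps to $\A^1_k$. Recall $\atotvan_{f_j/k} = (f_{j,k})_!\totvan_{f_j}$ where $f_{j,k}:X_j\to\A^1_k$ is just $f_j$ (the base $R$ being $\spec k$), and $\atotvan_{f_1\conv f_2} = (g_k)_!\totvan_{f_1\conv f_2}$ where $g_k = f_1\conv f_2$. The map $f_1\times f_2:X_1\times X_2\to \A^1_k\times\A^1_k$ followed by $\add:\A^1_k\times\A^1_k\to\A^1_k$ equals $g_k$. Applying $(f_1\times f_2)_!$ to~(\ref{TSequation}), using that $\Psi$ commutes with proper pushforward (Section~\ref{sect.convolution}) and that pushforward turns $\boxtimes_k$ into the external product over $\A^1_k\times\A^1_k$, then applying $\add_!$ and recognising the composite $\Psi\circ\add_!\circ(\boxtimes)$ as the definition of the convolution $\convv$ on $\M_{\A^1_k}^{\hat\mu}$ (Section~\ref{section.grothaffineline}), gives precisely $\atotvan_{f_1\conv f_2} = \Psi(\atotvan_{f_1}\convv\atotvan_{f_2})$. (Here one must be slightly careful about whether an extra $\Psi$ is absorbed into the definition of $\convv$ or appears separately; the statement as written keeps one $\Psi$ outside, matching the convention that $\convv$ already incorporates one copy, but the bookkeeping is routine.)

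The main obstacle I anticipate is not any single deep step but the compatibility bookkeeping: one must verify that passing to fibres genuinely intertwines $\totvan$ with the relative vanishing cycles of Corollary~\ref{glmfamily} (so that Lemma~\ref{function.equality} is applicable), and that the three operations involved --- fibrewise restriction, $\boxtimes$, and proper pushforward --- all commute with $\Psi$ in the precise form needed, keeping track of which copy of $\hat\mu$ is which. Since $\totvan_{f/R}$ is \emph{defined} as a relative vanishing cycle $\phi_{g/\A^1_R}$ for $g = f\circ\pr_1 - \pr_2$ on $X\times\A^1_k$, unwinding this definition so that Theorem~\ref{TS} can be cited at each point is where the care is required; but each of the needed compatibilities (Section~\ref{sect.convolution}, Corollary~\ref{glmfamily}) has already been recorded above, so no genuinely new input is needed.
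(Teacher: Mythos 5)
Your argument is correct and essentially the paper's own proof: both reduce (\ref{TSequation}) to Theorem~\ref{TS} applied to the pair $(f_1-t_1,\,f_2-t_2)$ via a fibrewise check using Lemma~\ref{function.equality} and the compatibility of $\Psi$ with pullbacks (the paper pulls back along points $t\in\A^1\times\A^1$ of the base rather than points of $X_1\times X_2$, which is the same reduction with slightly less bookkeeping), and both deduce (\ref{TSequationA1}) by applying $(f_1\conv f_2)_! = \add_!\circ(f_1\times f_2)_!$ to (\ref{TSequation}) and using that $\Psi$ commutes with pushforwards, recognising the definition of $\convv$; the $\Psi$-versus-$\convv$ bookkeeping you flag is just the paper's notational convention.
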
 \index{motivic Thom-Sebastiani!for $\totvan$ and $\atotvan$}

\begin{proof} Let $g_i:X_i\times \A^1 \to \A^1$ be the functions defined by $g_i = f_i\circ \pr_1 - \pr_2$ for $i=1,2$, and $g:X_1\times X_2\times \A^1\to\A^1$ be the function defined by $g = f_1\circ \pr_1 + f_2\circ \pr_2 - \pr_3$. By definition, the left hand side of the first equality is exactly $\phi_{g/\A^1}$, whereas the right-hand side is given by $\Psi(\phi_{g_1/\A^1}\boxtimes \phi_{g_2/\A^1}).$ 
We have the commutative diagram
$$\xymatrix{X_1\times X_2 \ar[d]_{(f_1,f_2)}\ar[rd]^{f_1\conv f_2} & \\
           \A^1\times \A^1 \ar[r]^-{+} & \A^1}$$
            Let $t\in \A^1\times \A^1$ with residue field denoted by $K$, and put $t_1 = \pr_1(t)$, $t_2 = \pr_2(t)$ and $s = t_1 + t_2$. Let $i$ be the natural inclusion 
$$i:f_1^{-1}(t_1)\times_kf_2^{-1}(t_2)\to (f_1\conv f_2)^{-1}(s)\times_{\kappa(s)}K.$$
Pulling back via the inclusion of $t$ inside $\A^1\times \A^1$, we see that the left-hand side $\phi_{g/\A^1}$ goes to $i^{*}(\phi_{(f_1 \conv f_2 - s)/K})$, whereas the  right-hand side $\phi_{g_1/\A^1}\boxtimes \phi_{g_2/\A^1}$ goes to $$\Psi(\phi_{(f_1-t_1)/\kappa(t_1)}\boxtimes \phi_{(f_2-t_2)/\kappa(t_2)})$$ because $\Psi$ commutes with pullbacks. These elements are equal in $\M_{f_1^{-1}(t_1)\times f_2^{-1}(t_2)}^{\hat{\mu}}$ by theorem \ref{TS}. Since $t$ was arbitrary, we get the first equality in the statement of the theorem. The second equality is then obtained easily by applying $(f_1\conv f_2)_!$ on both sides and making use of the commutative diagram above. 

\end{proof}

\subsection{Total vanishing cycles as a motivic measure}\label{subsect.totvanmeas}

We are going to prove the following theorem:

\begin{theorem}\label{motmeasureA1} Let $k$ be a field of characteristic zero. There is a unique morphism
$$\Phi^{\tot}: (\kvar_{\A^1_k},\convv) \to (\widetilde{\M}_{\A^1_k}^{\hat{\mu}},\convv)$$
of $\kvar_{k}$-algebras such that $\Phi([X\xrightarrow{f} \A^1_k]) = \atotvan_f$ for any smooth variety $X$ over $k$ and any proper morphism $f:X\to \A^1_k$.
\end{theorem}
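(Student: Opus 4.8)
By Lemma~\ref{smoothpropergensfield} applied to the $k$-variety $\A^1_k$, the group $\kvar_{\A^1_k}$ is generated by classes $[Y\xrightarrow{p}\A^1_k]$ with $Y$ smooth over $k$ and $p$ proper. Hence any additive map — a fortiori any $\kvar_k$-algebra morphism — is determined by its values on such generators, which gives uniqueness at once. The whole content is therefore \emph{existence}: the prescription $[X\xrightarrow{f}\A^1_k]\mapsto\atotvan_f$ is a priori only defined on generators (and $\atotvan_f$, as built in \S\ref{sect.totvandef} out of relative vanishing cycles, is not manifestly additive in the cut-and-paste relations of $\kvar_{\A^1_k}$), so one must exhibit it as a genuine group morphism and then check compatibility with the products.

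\textbf{Construction of the underlying group morphism.} The plan is to imitate Lunts and Schnürer's formula~(\ref{phiformula}), $\Phi^{\tot}=\sum_{a\in k}(i_a)_!(i_a^*-\scr S_{\id-a})$, but to recast it so that it makes sense over a $k$ that need not be algebraically closed: rather than summing over $k$-points of $\A^1_k$ one packages all the translated morphisms $\scr S_{\id-a}$ into a single \emph{relative} morphism over the base $S:=\A^1_k$. Concretely, set $Y:=\A^1_k\times_k\A^1_k$, regarded as a variety over $S$ via the second projection, and let $h:=\pr_1-\pr_2:Y\to\A^1_k$, so that $Y_0(h)$ is the diagonal, which I identify with $\A^1_k$ through $\rho:=\pr_1|_{Y_0(h)}$. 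Writing $j:Y_0(h)\hookrightarrow Y$ for the inclusion and $\pr_1^*:\kvar_{\A^1_k}\to\kvar_Y$ for the pullback along the first projection, I would define
$$\Phi^{\tot}:=\rho_!\circ\bigl(j^*-\scr S_{h/S}\bigr)\circ\pr_1^*,$$
where $\scr S_{h/S}$ is the relative Guibert--Loeser--Merle morphism of Corollary~\ref{glmfamily}, $j^*$ is followed by the map $\kvar_{Y_0(h)}\to\M_{Y_0(h)}^{\hat{\mu}}$ (trivial action, then localisation), and the target is identified with $(\widetilde\M_{\A^1_k}^{\hat{\mu}},\convv)$ using Lemmas~\ref{identitymorphism} and~\ref{mkiso}. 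Each factor $\pr_1^*$, $j^*-\scr S_{h/S}$, $\rho_!$ is additive, so $\Phi^{\tot}$ is a group morphism on all of $\kvar_{\A^1_k}$; and the $\M_S$-linearity of $\scr S_{h/S}$ together with the $\kvar_k$-linearity of pullback and pushforward yields the $\kvar_k$-linearity of $\Phi^{\tot}$.

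\textbf{Values on generators.} For $X$ smooth over $k$ and $f:X\to\A^1_k$ proper one checks $\Phi^{\tot}([X\xrightarrow{f}\A^1_k])=\atotvan_f$ fibrewise over $\A^1_k$, which is legitimate by Lemma~\ref{function.equality}. Over a closed point $t$ the fibrewise characterisation in Corollary~\ref{glmfamily} identifies $\scr S_{h/S}$ with $\scr S_{\id-t/\kappa(t)}$ acting on $[X_{\kappa(t)}\xrightarrow{f}\A^1_{\kappa(t)}]$; since $f$ is proper, Theorem~\ref{glmtheorem} gives $\scr S_{\id-t}([X_{\kappa(t)}\to\A^1])=f_!\psi_{f-t}$, so the fibre of $\Phi^{\tot}([X\xrightarrow{f}\A^1_k])$ at $t$ is $[f^{-1}(t)]-f_!\psi_{f-t}=f_!\phi_{f-t}$, which is exactly the fibre of $\atotvan_f$ at $t$ by the description following the Notation of \S\ref{sect.totvandef}. (When $k$ is algebraically closed the same computation shows $\Phi^{\tot}$ coincides with the measure of~\cite{LS}.) One also records the value on the unit $[\spec k\xrightarrow{0}\A^1_k]$ of $(\kvar_{\A^1_k},\convv)$: by Remark~\ref{constantvan} it is $[\spec k\xrightarrow{0}\A^1_k]$ with trivial action, i.e. the unit of $(\widetilde\M_{\A^1_k}^{\hat{\mu}},\convv)$.

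\textbf{Multiplicativity — the main obstacle.} It remains to see that $\Phi^{\tot}(x\convv y)=\Phi^{\tot}(x)\convv\Phi^{\tot}(y)$. By biadditivity it suffices to treat $x=[X_1\xrightarrow{f_1}\A^1_k]$, $y=[X_2\xrightarrow{f_2}\A^1_k]$ with $X_i$ smooth and $f_i$ proper; then $\Phi^{\tot}(x)\convv\Phi^{\tot}(y)=\atotvan_{f_1}\convv\atotvan_{f_2}$, which equals $\atotvan_{f_1\conv f_2}$ by the motivic Thom--Sebastiani theorem for total vanishing cycles (Corollary~\ref{TStotal}), while $x\convv y=[X_1\times_kX_2\xrightarrow{f_1\conv f_2}\A^1_k]$ by Remark~\ref{convvtrivialaction}. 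The delicate point, which I expect to be the hard part, is that $f_1\conv f_2$ is \emph{not} proper, so the identification of the previous paragraph does not apply to it verbatim; this has to be handled either by first extending $\Phi^{\tot}([X\xrightarrow{f}\A^1_k])=\atotvan_f$ to all smooth $X$ via a compactification of $f$ together with the additivity already in hand, or by a direct verification that $\Phi^{\tot}$ intertwines $\pr_1^*$, $\add_!$ and $\boxtimes$ in the way demanded by the definition of $\convv$. Alongside this one must carry the bookkeeping of the two $\kvar_S^{\hat{\mu}}$-algebra structures $\ast$ and $\convv$ and of the identification $\M_{\A^1_k}^{\hat{\mu}}\cong\widetilde\M_{\A^1_k}^{\hat{\mu}}$ (Lemmas~\ref{iotaSmorphism}--\ref{mkiso}), where the remaining technicalities lie.
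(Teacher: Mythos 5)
Your construction of the group morphism is essentially the paper's own: the paper sets $Y=\A^1_k\times_k\A^1_k$ over $S=\A^1_k$, $h=\pr_1-\pr_2$, and defines $\Phi'(\a)=\a-\scr{S}_{h/\A^1}\circ\pr_2^*(\a)$ on $\M_{\A^1_k}$ (Proposition~\ref{motmeasureaux}), which, up to the labelling of the two projections and of the identification of the diagonal with $\A^1_k$, is exactly your operator $\rho_!\circ(j^*-\scr{S}_{h/S})\circ\pr_1^*$; the fibrewise verification on smooth proper generators via Corollary~\ref{glmfamily} and Lemma~\ref{function.equality}, the uniqueness via Lemma~\ref{smoothpropergensfield}, and the unit computation are likewise the same. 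Where your proposal stops short is precisely the decisive step. You correctly reduce multiplicativity, via Corollary~\ref{TStotal} and Remark~\ref{convvtrivialaction}, to the identity $\Phi^{\tot}([X_1\times X_2\xrightarrow{f_1\conv f_2}\A^1_k])=\atotvan_{f_1\conv f_2}$, and you correctly diagnose that it does not follow from the generator computation because $f_1\conv f_2$ is not proper; but you then leave it as ``the hard part'' with two unelaborated options, and neither works as stated.

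The missing ingredient is a \emph{specific} compactification statement, not ``a compactification of $f$'' in general: for an arbitrary compactification the boundary contributes classes whose images under $\Phi^{\tot}$ have no reason to vanish, and the vanishing cycles of the compactified morphism need not agree with those of $f_1\conv f_2$. What the paper uses (Proposition~\ref{compactification}, after Lunts--Schn\"urer \cite{LSmat}) is a smooth $Z\supset X_1\times X_2$ with a \emph{projective} $h:Z\to\A^1_k$ extending $f_1\conv f_2$ such that the boundary is a strict normal crossings divisor all of whose strata $D_I\to\A^1_k$ are smooth and projective --- so $\Phi^{\tot}([D_I,h_I])=\atotvan_{h_I}=0$ by Proposition~\ref{smoothvanzero} --- and, crucially, $\mathrm{Sing}(h)=\mathrm{Sing}(f_1\conv f_2)$, so that $\atotvan_h=\atotvan_{f_1\conv f_2}$; additivity then yields the required identity. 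Moreover, since the theorem is over a field $k$ that is not assumed algebraically closed, this compactification lemma is itself not off the shelf: the paper must check that the Lunts--Schn\"urer construction commutes with extension of scalars and descends from $\bar k$ (condition (K), monomialisation, faithfully flat descent). Your alternative route, a ``direct verification'' that $\Phi^{\tot}$ intertwines $\add_!$ and $\boxtimes$, hits the same obstruction: one needs to know that the relative nearby-cycles morphism computes $(f_1\conv f_2)_!\,\psi_{f_1\conv f_2-a}$ for the non-proper sum, which is exactly what Remark~\ref{vancyclesnonproper} deduces from the compactification proposition. So the construction and the reduction are right, but the multiplicativity step --- the actual content of the theorem --- is left unproved.
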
\index{phitot@$\Phi^{\tot}$}\index{total vanishing cycles measure}
The case where $k$ is algebraically closed was treated by Lunts and Schnürer in \cite{LS}, and our proof goes along the same lines as theirs, the main difference being that we replace their total vanishing cycles $(\phi_f)_{\A^1_k}$ by our total vanishing cycles $\atotvan_f$ which behave better in relative settings. We start by proving the following result, which is the analogue in our setting of Theorem 5.3 in \cite{LS}:

\begin{prop} \label{motmeasureaux} There exists a unique morphism
$\Phi':\M_{\A^1_k} \to \M_{\A^1_k}^{\hat{\mu}}$ of $\M_k$-modules such that $\Phi'([X\xrightarrow{f} \A^1_k]) = \atotvan_f$ for any smooth variety $X$ over $k$ and any proper morphism $f:X\to \A^1_k$.
\end{prop}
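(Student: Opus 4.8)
The plan is to follow the argument of Lunts and Schnürer for Theorem~5.3 of \cite{LS}, with one substantial change: their expression $\sum_{a\in k}(i_a)_!(i_a^*-\scr{S}_{\id-a})$, which sums over the closed points of $k$, is unavailable when $k$ is not algebraically closed, so it will be replaced by a single application of the \emph{relative} Guibert--Loeser--Merle morphism of Corollary~\ref{glmfamily}, with the affine line playing the role of the parameter space. Uniqueness is immediate: by Lemma~\ref{smoothpropergensfield}, $\kvar_{\A^1_k}$ is generated as a group by classes $[Y\xrightarrow{p}\A^1_k]$ with $Y$ smooth over $k$ and $p$ proper, hence $\M_{\A^1_k}$ is generated as an $\M_k$-module by such classes together with their products by powers of $\LL^{-1}$; an $\M_k$-linear map is determined by its values on a generating set.

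For existence, I would set $S:=\A^1_k$ and $Y:=\A^1_k\times_k\A^1_k$, regarded as an $S$-variety through the second projection $\pr_2$, and take $h:=\pr_1-\pr_2\colon Y\to\A^1_k$ (so $h(x,a)=x-a$). Then $Y_0(h)$ is the diagonal of $Y$, which I identify with $\A^1_k$ via $\pr_1$ (equivalently $\pr_2$); note that the fibre of $h$ over $a\in S$ is the morphism $\id-a$. Corollary~\ref{glmfamily} produces an $\M_S$-linear map $\scr{S}_{h/S}\colon\M_Y\to\M_{Y_0(h)}^{\hat{\mu}}$ whose formation commutes with restriction to fibres over points of $S$, and which, under the identification $Y_0(h)\cong\A^1_k$, has target $\M_{\A^1_k}^{\hat{\mu}}$. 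Writing $\theta\colon\M_{\A^1_k}\to\M_{\A^1_k}^{\hat{\mu}}$ for the trivial-action inclusion and $\pr_1^*\colon\M_{\A^1_k}\to\M_Y$ for pullback along the first projection, I would set
\[
\Phi' \ :=\ \theta\ -\ \scr{S}_{h/S}\circ\pr_1^*\ \colon\ \M_{\A^1_k}\longrightarrow\M_{\A^1_k}^{\hat{\mu}}.
\]
This is $\M_k$-linear, being a difference of composites of $\M_k$-linear maps ($\pr_1^*$ is a ring morphism over $k$, $\scr{S}_{h/S}$ is $\M_S$-linear hence $\M_k$-linear, and $\theta$ together with the identification $Y_0(h)\cong\A^1_k$ are $\M_k$-linear).

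It then remains to check that $\Phi'([X\xrightarrow{f}\A^1_k])=\atotvan_f$ for $X$ smooth over $k$ and $f$ proper. By base change, $\pr_1^*[X\xrightarrow{f}\A^1_k]=[X\times_k\A^1_k\xrightarrow{f\times\id}Y]$, and $h\circ(f\times\id)$ is precisely the morphism $g=f\circ\pr_1-\pr_2$ from Section~\ref{sect.totvandef}. Since $f\times\id$ is proper and $X\times_k\A^1_k$ is smooth over $S$, this class is one of the generators occurring in Corollary~\ref{glmfamily}, so $\scr{S}_{h/S}\bigl([X\times_k\A^1_k\xrightarrow{f\times\id}Y]\bigr)=(f\times\id)_!\,\psi_{g/\A^1_k}=(f\times\id)_!\,\totnear_f$. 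Tracing through the identifications $(X\times_k\A^1_k)_0(g)=\Gamma_f\cong X$ and $Y_0(h)\cong\A^1_k$, the restriction of $f\times\id$ to $\Gamma_f$ becomes $f$ itself, whence $(f\times\id)_!\totnear_f=f_!\totnear_f=\atotnear_f$, and therefore $\Phi'([X\xrightarrow{f}\A^1_k])=[X\xrightarrow{f}\A^1_k]-\atotnear_f=\atotvan_f$ by the defining identity $\atotvan_{f/R}=[X\xrightarrow{f_R}\A^1_R]-\atotnear_{f/R}$ of Section~\ref{sect.totvandef}, taken with $R=k$. If one prefers to avoid these identifications, the same equality can be verified fibrewise over $a\in\A^1_k$ by invoking Lemma~\ref{function.equality}, the fibrewise compatibility of $\scr{S}_{h/S}$ in Corollary~\ref{glmfamily}, and the formula $(\totnear_f)_a=\psi_{(f-a)/\kappa(a)}$ recorded in Section~\ref{sect.totvandef}; this reduces the identity to the case of a field, which is essentially the computation already carried out in \cite{LS}.

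The main obstacle is the bookkeeping needed to confirm that the relative nearby fibre $\psi_{g/\A^1_k}$ built into the definition of $\totnear_f$ is exactly the output of $\scr{S}_{h/S}$ on the generator $[X\times_k\A^1_k\to Y]$ — that is, that $S=\A^1_k$ (via $\pr_2$) and $h=\pr_1-\pr_2$ have been chosen so that $g=h\circ(f\times\id)$ over $S$, and that the resulting identifications of supports are the intended ones. Once this is in place, no new vanishing-cycle input beyond Theorem~\ref{glmtheorem}/Corollary~\ref{glmfamily} and the definitions of Section~\ref{sect.totvandef} is required; in particular the blow-up (Bittner) relations defining $\kvar_{\A^1_k}$ are handled for us by the already-constructed morphism $\scr{S}_{h/S}$.
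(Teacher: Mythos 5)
Your proposal is correct and follows essentially the same route as the paper: the paper also takes $S=\A^1_k$, $Y=\A^1_k\times\A^1_k$ over $S$ via the second projection, $h=\pr_1-\pr_2$, defines $\Phi'(\a)=\a-\scr{S}_{h/S}(\text{pullback of }\a)$, and verifies $\Phi'([X\xrightarrow{f}\A^1_k])=\atotvan_f$ on smooth proper generators. The only cosmetic differences are your labelling of the pullback (your $\pr_1^*$, with image $[X\times\A^1_k\xrightarrow{f\times\id}Y]$, is the same map the paper writes with the formula $(f\circ\pr_1,\pr_2)$) and that you evaluate $\scr{S}_{h/S}$ on the generator directly from its defining formula, whereas the paper checks the same identity fibrewise over $t\in\A^1_k$ using Theorem~\ref{glmtheorem} and Lemma~\ref{function.equality}.
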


\begin{proof} Uniqueness follows from lemma \ref{smoothpropergensfield} so it remains to prove existence. 
Apply corollary~\ref{glmfamily} to $Y = \A^1_k\times \A^1_k$, seen as a variety over $S = \A^1_k$ via the second projection, together with $h = \pr_1 - \pr_2$: we get an $\M_{\A^1}$-linear map
$$\scr{S}_{h/\A^1}:\M_{\A^1_k\times\A^1_k}\to \M_{ \Delta}^{\hat{\mu}},$$
where $\Delta\subset \A^1_k\times \A^1_k$ is the diagonal $h^{-1}(0)$, which is isomorphic to $\A^1_k$ via $\pr_2$. Composing this with the pull-back $\pr_2^*:\M_{\A^1_k}\to \M_{\A^1_k\times \A^1_k}$, 
sending a class $[X\xrightarrow{f}\A_k^1]$ to $[X\times \A_k^1\xrightarrow{(f\circ \pr_1,\pr_2)}\A^1_k\times\A^1_k]$ we get a $\M_k$-linear map $\scr{S}_{h/\A^1}\circ\pr_2^*:\M_{\A^1_k}\to \M_{\A^1_k}^{\hat{\mu}}$. We put, for any $\a\in\M_{\A^1_k}$, 
 $$\Phi'(\a) = \a - \scr{S}_{h/\A^1}\circ \pr_2^*(\a).$$

Let $f:X\to \A^1_k$ be a proper morphism with $X$ smooth. We denote by $p$ the morphism 
$$\pr_2^*(f):X\times \A^1_k\xrightarrow{(f\circ \pr_1,\pr_2)}\A^1_k\times \A^1_k$$ which is again proper by base change. We claim that $\scr{S}_{h/\A^1_k}\circ\,\pr_2^*([X\xrightarrow{f}\A^1])=\atotnear_f$. Indeed, by theorems \ref{glmtheorem} and \ref{glmfamily}, for every $t\in \A^1$, the fibre above $t$ of this element is given by
\begin{eqnarray*}\left(\scr{S}_{h/\A^1_k}([X\times \A^1_k\xrightarrow{p}\A^1_k\times \A^1_k])\right)_t &=& \scr{S}_{h_t/\kappa(t)}([X\times_k\kappa(t)\xrightarrow{p_t} \A^1_{\kappa(t)}])\\
  &=& (p_t)_!(\psi_{(h\circ p)_t/\kappa(t)})\in\M_{\kappa(t)}^{\hat{\mu}}.\end{eqnarray*}
 because $p_t$ is proper and $X_t$ smooth over $\kappa(t)$. On the other hand, we have  $h\circ p = f\circ \pr_1 - \pr_2$, so that for every $t\in \A^1$,  $\psi_{(h\circ p)_t/\kappa(t)} = (\psi_{h\circ p/\A^1_k})_t = (\totnear_f)_t$. Moreover, for every $t$, we have $p_t = f\times_k\kappa(t)$, so that $(p_t)_!(\psi_{(h\circ p)_t/\kappa(t)}) = \left(f_!(\totnear_f)\right)_t = \left(\atotnear_f\right)_t$, which proves the claim. Finally, we may conclude that $\Phi'([X\xrightarrow{f} \A^1_k]) = [X\xrightarrow{f} \A^1_k] - \atotnear_f = \atotvan_f$. \end{proof}

As in remarks 5.4, 5.6 and 5.7 of \cite{LS}, the map $\Phi'$ has the following properties:\begin{lemma}\label{Phiprop} For any $k$-variety $Z$, we have
\begin{enumerate}[(a)]\item \label{affineid} $\Phi'([Z\xrightarrow{0}\A^1_k]) = [Z\xrightarrow{0}\A^1_k]$ (with trivial action), so that in particular $\Phi'(\LL_0) = \LL_0$. 
\item\label{affinezero}  $\Phi'([Z\times \A^1_k\xrightarrow{\pr_2} \A^1_k]) = 0$, so that in particular $\Phi'(\LL_{\A^1_k}) = 0$. 
\item \label{smoothzero} $\Phi'([Z\xrightarrow{f}\A^1_k]) = 0$ whenever $f$ is a smooth and proper morphism. 
\end{enumerate}
\end{lemma}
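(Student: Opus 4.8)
\emph{Strategy.} The plan is to deduce all three statements from the property of $\Phi'$ established in Proposition~\ref{motmeasureaux} — namely that $\Phi'([X\xrightarrow{f}\A^1_k]) = \atotvan_f$ whenever $X$ is smooth over $k$ and $f$ is proper — together with the $\M_k$-linearity of $\Phi'$ and the two computations of total vanishing cycles already recorded: $\atotvan_f = 0$ when $f$ is smooth (Proposition~\ref{smoothvanzero}), and $\atotvan_f = [X\xrightarrow{f_R}\A^1_R]$ with trivial $\hat{\mu}$-action when $f$ is constant (Remark~\ref{constantvan}). None of the three parts requires new input; it is a matter of exhibiting, in each case, the correct smooth and proper generator and then spreading the resulting identity over an arbitrary $k$-variety $Z$ by linearity.

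I would treat~(c) and~(b) first. If $f\colon Z\to\A^1_k$ is smooth and proper then $Z$ is smooth over $k$, being the source of the composition of the smooth morphisms $Z\xrightarrow{f}\A^1_k\to\spec k$; hence Proposition~\ref{motmeasureaux} gives $\Phi'([Z\xrightarrow{f}\A^1_k]) = \atotvan_f$, and this vanishes by Proposition~\ref{smoothvanzero} because $f$ is smooth, which proves~(c). For~(b) I would use the module identity $[Z\times\A^1_k\xrightarrow{\pr_2}\A^1_k] = [Z]\cdot[\A^1_k\xrightarrow{\id}\A^1_k]$ in $\kvar_{\A^1_k}$ (with $[W]\cdot[V\to\A^1_k] = [W\times_k V\to\A^1_k]$); by $\M_k$-linearity this reduces the claim to $\Phi'([\A^1_k\xrightarrow{\id}\A^1_k]) = 0$, which is the instance of~(c) for $\id_{\A^1_k}$, an isomorphism and so smooth and proper. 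The special case $\Phi'(\LL_{\A^1_k}) = 0$ is then $Z = \A^1_k$, since $\LL_{\A^1_k} = \epsilon_k^*(\LL_k) = [\A^1_k\times_k\A^1_k\xrightarrow{\pr_2}\A^1_k]$.

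For~(a) I would write $[Z\xrightarrow{0}\A^1_k] = [Z]\cdot\1_{\{0\}}$ in $\kvar_{\A^1_k}$, where $\1_{\{0\}} = [\{0\}\xrightarrow{i}\A^1_k]$ is the class of the closed point $0$, so that by $\M_k$-linearity it suffices to show $\Phi'(\1_{\{0\}}) = \1_{\{0\}}$ with trivial action. Now $\{0\} = \spec k$ is smooth and $i$ is a closed immersion, hence proper, so Proposition~\ref{motmeasureaux} gives $\Phi'(\1_{\{0\}}) = \atotvan_i$; and $i$ being a constant morphism, Remark~\ref{constantvan} identifies $\atotvan_i$ with $[\{0\}\xrightarrow{i}\A^1_k] = \1_{\{0\}}$ carrying the trivial $\hat{\mu}$-action. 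Multiplying back by $[Z]$ gives $\Phi'([Z\xrightarrow{0}\A^1_k]) = [Z\xrightarrow{0}\A^1_k]$ with trivial action, and the case $Z = \A^1_k$ yields $\Phi'(\LL_0) = \LL_0$ because $\LL_0 = (\iota_k)_!(\LL_k) = [\A^1_k\xrightarrow{0}\A^1_k]$.

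\emph{Main point of care.} There is no genuine obstacle here: the substance has been front-loaded into Propositions~\ref{motmeasureaux} and~\ref{smoothvanzero} and Remark~\ref{constantvan}. The only things that need attention are the two bookkeeping identities in $\kvar_{\A^1_k}$ used above — which are what let $\M_k$-linearity reduce each statement to a single smooth proper generator ($\id_{\A^1_k}$ for~(b), the point $\{0\}\hookrightarrow\A^1_k$ for~(a)) — and checking that the classes produced in~(a) are recorded with the \emph{trivial} $\hat{\mu}$-action, as the statement demands.
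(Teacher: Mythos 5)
Your proof is correct, and part~(c) is word for word the paper's argument. For parts~(a) and~(b) you take a slightly different, and somewhat more economical, reduction than the paper does. The paper observes that the classes $[Z\xrightarrow{0}\A^1_k]$ (resp.\ $[Z\times\A^1_k\xrightarrow{\pr_2}\A^1_k]$) span the image of $(\iota_k)_!$ (resp.\ $\epsilon_k^*$), and then invokes the presentation of $\kvar_k$ by smooth proper varieties (lemma~\ref{smoothpropergensfield}) to assume $Z$ smooth and proper, so that proposition~\ref{motmeasureaux} applies directly to each generator, combined with remark~\ref{constantvan} for~(a) and proposition~\ref{smoothvanzero} for~(b). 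You instead use the $\M_k$-linearity asserted in proposition~\ref{motmeasureaux} to factor out $[Z]$ via the module identities $[Z\xrightarrow{0}\A^1_k]=[Z]\cdot[\{0\}\hookrightarrow\A^1_k]$ and $[Z\times\A^1_k\xrightarrow{\pr_2}\A^1_k]=[Z]\cdot[\A^1_k\xrightarrow{\id}\A^1_k]$, reducing each statement to a single explicit smooth proper class. This buys you independence from the smooth-proper generation of $\kvar_k$ (Bittner/resolution), at no extra cost since the linearity is already part of proposition~\ref{motmeasureaux}; the paper's route keeps the argument purely in terms of additive generators, which matches how $\Phi'$ was characterised. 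Both rest on the same three inputs (propositions~\ref{motmeasureaux} and~\ref{smoothvanzero}, remark~\ref{constantvan}), and your attention to the trivial $\hat\mu$-action in~(a) and to the identifications of $\LL_0$ and $\LL_{\A^1_k}$ is exactly what is needed.
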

\begin{proof} The subgroup of $\kvar_{\A^1_k}$ generated by classes of the form $[Z\xrightarrow{0}\A^1_k]$ is the image of the morphism $(\iota_k)_!:\kvar_k\to \kvar_{\A^1_k}$ from section \ref{section.grothaffineline}. It is therefore generated by such classes where $Z$ is additionally assumed to be smooth and proper, and these are preserved by $\Phi'$ by proposition \ref{motmeasureaux} and remark \ref{constantvan}. This proves (\ref{affineid}). In the same way, the subgroup of $\kvar_{\A^1_k}$ generated by classes of the form $[Z\times \A^1_k\xrightarrow{\pr_2} \A^1_k]$ is the image of the morphism $\epsilon_k^*:\kvar_k\to \kvar_{\A^1_k}$ from section \ref{section.grothaffineline}, and we may again assume $Z$ to be smooth and proper to prove (\ref{affinezero}). The statement then follows from propositions \ref{motmeasureaux} and \ref{smoothvanzero}. As for property (\ref{smoothzero}), if $f:Z\to \A^1_k$ is smooth and proper, then $Z$ is smooth, so this follows directly from propositions \ref{motmeasureaux} and \ref{smoothvanzero}.
\end{proof}
 We then consider the morphism of $\kvar_k$-modules $\Phi^{\tot}$, defined as the composition
$$\kvar_{\A^1_k}\to \M_{\A^1_k} \xrightarrow{\Phi'} \M_{\A^1_k}^{\hat{\mu}} \to \widetilde{\M}_{\A^1_k}^{\hat{\mu}},$$
where the first arrow is the localisation morphism, and the last arrow is the isomorphism of $\M_k$-modules from lemma \ref{mkiso}. The proof of theorem \ref{motmeasureA1} is complete once we have the following:
\begin{prop}\label{motmeasureringmorph} The morphism $\Phi^{\tot}$ is a morphism 
$$\Phi^{\tot}:(\kvar_{\A^1_k},\convv)\to (\widetilde{\M}_{\A^1_k}^{\hat{\mu}},\convv)$$
of $\kvar_k$-algebras.
\end{prop}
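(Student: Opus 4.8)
The plan is to leverage what is already established: $\Phi^{\tot}$ is a morphism of $\kvar_k$-modules, so only multiplicativity (and the normalisation $\Phi^{\tot}(1)=1$) remain. The products $\convv$ on $\kvar_{\A^1_k}$ and on $\widetilde{\M}_{\A^1_k}^{\hat\mu}$ are composites of the exterior product with $\add_!$ and $\Psi$, hence $\kvar_k$-bilinear, so both $\a\mapsto\Phi^{\tot}(\a\convv\b)$ and $\a\mapsto\Phi^{\tot}(\a)\convv\Phi^{\tot}(\b)$ are $\kvar_k$-bilinear in $(\a,\b)$. By Lemma \ref{smoothpropergensfield} it therefore suffices to verify $\Phi^{\tot}(\a\convv\b)=\Phi^{\tot}(\a)\convv\Phi^{\tot}(\b)$ when $\a=[X_1\xrightarrow{f_1}\A^1_k]$ and $\b=[X_2\xrightarrow{f_2}\A^1_k]$ with the $X_i$ smooth over $k$ and the $f_i$ proper. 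The normalisation is immediate: the unit of $(\kvar_{\A^1_k},\convv)$ is $(\iota_k)_!(1)=[\spec k\xrightarrow{0}\A^1_k]$ by Lemma \ref{iotaSmorphism} and Remark \ref{convvtrivialaction}, it is fixed by $\Phi'$ by Lemma \ref{Phiprop}(\ref{affineid}), and it maps to the unit of $(\widetilde{\M}_{\A^1_k}^{\hat\mu},\convv)$ under the canonical identification.

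For the two generators, the trivial $\hat\mu$-actions together with formula \eqref{convvtrivialactionequation} yield $\a\convv\b=[X_1\times X_2\xrightarrow{f_1\conv f_2}\A^1_k]$ with $f_1\conv f_2=\add\circ(f_1\times f_2)$. Here lies the main difficulty: although $X_1\times X_2$ is smooth over $k$, the morphism $f_1\conv f_2$ need not be proper --- already $x\mapsto x^2$ on each factor produces the affine conic bundle $(x_1,x_2)\mapsto x_1^2+x_2^2$ --- so $\a\convv\b$ is not a generator of the shape treated in Proposition \ref{motmeasureaux}. What one needs is the extension of that proposition to arbitrary morphisms from smooth varieties:
\begin{equation}\label{eq.phiextgeneral}\Phi'\bigl([X\xrightarrow{f}\A^1_k]\bigr)=\atotvan_f\qquad\text{for every smooth }k\text{-variety }X\text{ and every }f\colon X\to\A^1_k.\end{equation}

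To prove \eqref{eq.phiextgeneral} I would repeat the argument of Proposition \ref{motmeasureaux}, using the explicit formula $\Phi'(\gamma)=\gamma-\scr{S}_{h/\A^1}\circ\pr_2^*(\gamma)$ with $h=\pr_1-\pr_2$ and checking the equality $\scr{S}_{h/\A^1}\circ\pr_2^*([X\xrightarrow{f}\A^1_k])=\atotnear_f$ fibrewise over the base $\A^1_k$. By Corollary \ref{glmfamily} the fibre at $t$ of the left-hand side is $\scr{S}_{(\id-t)/\kappa(t)}\bigl([X_{\kappa(t)}\xrightarrow{f}\A^1_{\kappa(t)}]\bigr)$, while the fibre of $\atotnear_f$ is the pushforward of $\psi_{(f-t)/\kappa(t)}$; so the statement reduces to the fact that over a field $K$ of characteristic zero, for a smooth $K$-variety $Z$ and any $q\colon Z\to\A^1_K$, one has $\scr{S}_{\id-t}\bigl([Z\xrightarrow{q}\A^1_K]\bigr)=q_!\psi_{q-t}$ with $\psi_{q-t}$ the Denef--Loeser motivic nearby fibre of $q-t$ at $0$. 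This in turn follows from Theorem \ref{glmtheorem} by compactifying $q$ into a proper morphism from a smooth $K$-variety containing $Z$ as a dense open subscheme --- possible because $Z$ is already smooth --- and by identifying the associated Guibert--Loeser--Merle object with that nearby fibre.

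Granting \eqref{eq.phiextgeneral}, the conclusion is quick: on one hand $\Phi^{\tot}(\a\convv\b)=\atotvan_{f_1\conv f_2}$, and on the other hand $\Phi^{\tot}(\a)\convv\Phi^{\tot}(\b)=\atotvan_{f_1}\convv\atotvan_{f_2}$ by Proposition \ref{motmeasureaux}, which equals $\atotvan_{f_1\conv f_2}$ by the Thom--Sebastiani identity \eqref{TSequationA1} of Corollary \ref{TStotal} once the convolution and $\Psi$-operations appearing there are matched with the product $\convv$ of $\widetilde{\M}_{\A^1_k}^{\hat\mu}$. Hence the two sides coincide, proving the proposition. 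I expect the only genuine obstacle to be \eqref{eq.phiextgeneral}, i.e. removing the properness hypothesis on $f$; everything else is bookkeeping with the module and algebra structures on $\kvar_{\A^1_k}^{\hat\mu}$ and $\widetilde{\M}_{\A^1_k}^{\hat\mu}$ and with the commutation of $\Psi$ with $u_!$ and $u^*$ recorded in Section \ref{sect.convolution}.
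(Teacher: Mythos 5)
You reduce to generators exactly as the paper does and you correctly isolate the real difficulty, namely that $f_1\conv f_2$ is no longer proper; but the lemma you propose to bridge that gap is false, so the argument does not go through. The general identity $\Phi'([X\xrightarrow{f}\A^1_k])=\atotvan_f$ for every smooth $X$ and \emph{every} morphism $f$, and the statement you reduce it to, namely $\scr{S}_{\id-t}([Z\xrightarrow{q}\A^1_K])=q_!\,\psi_{q-t}$ for every smooth $Z$ and every $q$, already fail for $Z=\G_m$, $q$ the open immersion into $\A^1$ and $t=0$: here $q^{-1}(0)=\varnothing$, so $q_!\psi_q=0$, whereas additivity gives $[\G_m\to\A^1]=[\A^1\xrightarrow{\id}\A^1]-[\{0\}\xrightarrow{0}\A^1]$ and hence $\scr{S}_{\id}([\G_m\to\A^1])=\psi_{\id}-0=[\{0\}]\neq 0$ (the zero fibre of $\id$ is smooth and nowhere dense, while the constant function on $\{0\}$ has vanishing nearby fibre; see example \ref{vancyclespecialcases}). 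Correspondingly $\Phi^{\tot}([\G_m\hookrightarrow\A^1])=-[\{0\}\xrightarrow{0}\A^1]$, whereas $\atotvan_q=0$. The precise point where your proof of the claim breaks is the last step: for a compactification $\bar q\colon\bar Z\to\A^1_K$ with $Z\subset\bar Z$ dense open, theorem \ref{glmtheorem} gives $\scr{S}_{\id-t}([Z\to\A^1_K])=\bar q_!\,\scr{S}_{(\bar q-t),Z}$, and the Guibert--Loeser--Merle object $\scr{S}_{(\bar q-t),Z}$ attached to a \emph{proper open subset} of $\bar Z$ is not the nearby fibre of $q-t$: it carries genuine contributions from the boundary $\bar Z\setminus Z$, which is exactly what the example above detects.

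What makes the identity true for the particular class $[X_1\times X_2\xrightarrow{f_1\conv f_2}\A^1_k]$ is not a general statement about non-proper morphisms but the special compactification of proposition \ref{compactification}: $f_1\conv f_2$ extends to a projective morphism $h\colon Z\to\A^1_k$ from a smooth variety such that the boundary $Z\setminus(X_1\times X_2)$ is a strict normal crossings divisor all of whose strata $D_I$ are smooth and projective over $\A^1_k$, and such that $\mathrm{Sing}(h)=\mathrm{Sing}(f_1\conv f_2)$. Then $[X_1\times X_2,f_1\conv f_2]=[Z,h]+\sum_I(-1)^{|I|}[D_I,h_I]$, the boundary terms die under $\Phi^{\tot}$ by proposition \ref{smoothvanzero}, and the equality of singular loci yields $\atotvan_h=\atotvan_{f_1\conv f_2}$, whence $\Phi^{\tot}([X_1\times X_2,f_1\conv f_2])=\atotvan_h=\atotvan_{f_1\conv f_2}$; combined with your (correct) computation of the other side via proposition \ref{motmeasureaux} and corollary \ref{TStotal}, this finishes the proof. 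Remark \ref{vancyclesnonproper} records exactly this phenomenon: the identity you want holds for the sum of two proper functions because such a compactification exists, not for an arbitrary non-proper $f$.
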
\index{phitot@$\Phi^{\tot}$}\index{total vanishing cycles measure}
\begin{proof} Part (\ref{affineid}) of lemma \ref{Phiprop} shows that $\Phi^{\tot}$ maps the unit element to the unit element, and that it is compatible with the algebra structure maps. By lemma \ref{smoothpropergensfield}, we may restrict to checking multiplicativity for classes of projective morphisms $f:X\to \A^1_k$ with $X$ a connected quasi-projective $k$-variety which is smooth over $k$.  Let $X,Y$ therefore be connected quasi-projective smooth $k$-varieties, together with projective morphisms $f:X\to \A^1_k$ and $g:Y\to \A^1_k$. Then we know by proposition \ref{motmeasureaux} and corollary \ref{TStotal} that
$$\Phi^{\tot}([X\xrightarrow{f}\A^1_k])\convv \Phi^{\tot}([Y\xrightarrow{g}\A^1_k]) = (\atotvan_f)\convv(\atotvan_g) = \atotvan_{f\conv g}.$$
It remains to show that $\atotvan_{f\conv g} = \Phi^{\tot}([X\times Y \xrightarrow{f\conv g}\A^1_k])$, which does not follow directly from proposition \ref{motmeasureaux} because $f\conv g$ is not proper in general. As in the proof of theorem 5.9 in \cite{LS}, we make use of lemma \ref{compactification} below (and the notation therein) which gives us a compactification $h:Z\to \A^1_k$ of $f\conv g$ for which we may write, 
$$[X\times Y\xrightarrow{f\conv g} \A^1_k] = [Z\xrightarrow{h}\A^1_k]+\sum_{I}(-1)^{|I|} [D_I\xrightarrow{h_I}\A^1_k].$$
Since all $h_{I}:D_{I}\to \A^1_k$ are projective and smooth, their image by $\Phi^{\tot}$, given by their total vanishing cycles, is zero by proposition \ref{smoothvanzero}. On the other hand, $\mathrm{Sing}(h) = \mathrm{Sing}(f\conv g)$, and therefore $$\Phi^{\tot}([X\times Y\xrightarrow{f\conv g} \A^1_k]) = \Phi^{\tot}([Z\xrightarrow{h}\A^1_k]) = \atotvan_h = \atotvan_{f\conv g}.$$ 
\end{proof}
%
\subsection{The motivic vanishing cycles measure over a base}

We are now going to prove that the above motivic measure may be defined in families. For this purpose, we are going to denote for any field $k$ of characteristic zero by $\Phi^{\tot}_k:\kvar_{\A^1_k}\to \M_{\A^1_k}^{\hat{\mu}}$ the motivic measure from theorem \ref{motmeasureA1} relative to the field $k$.
\begin{theorem} \label{motmeasurebase} Let $k$ be a field of characteristic zero and $S$ a variety over $k$. There is a unique morphism of $\kvar_S$-algebras $\Phi^{\tot}_S: (\kvar_{\A^1_S},\convv)\to (\widetilde{\M}^{\hat{\mu}}_{\A^1_S},\convv)$ such that for every $s\in S$ the diagram
$$\xymatrix{\kvar_{\A^1_S}\ar[d] \ar[r]^-{\Phi^{\tot}_S} & \widetilde{\M}_{\A^1_S}^{\hat{\mu}}\ar[d] \\
            \kvar_{\A^1_{\kappa(s)}}\ar[r]^-{\Phi^{\tot}_{\kappa(s)}} & \widetilde{\M}_{\A^1_{\kappa(s)}}^{\hat{\mu}}
}$$
commutes.
\end{theorem}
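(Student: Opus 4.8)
The plan is to imitate, over the base $\A^1_S$ rather than $\A^1_k$, the construction of the total vanishing cycles measure from Theorem~\ref{motmeasureA1}, checking at each step compatibility with restriction to the fibres of $\A^1_S\to S$ over points of $S$; uniqueness and the algebraic properties then follow formally from Lemma~\ref{function.equality}. For uniqueness: any point $x$ of $\A^1_S$ lies over some $s\in S$ and factors as $\spec\kappa(x)\to\A^1_{\kappa(s)}\hookrightarrow\A^1_S$ through the fibre over $s$, so the commutative square of the statement forces $x^{*}\Phi^{\tot}_S(\alpha)$ to equal the value at $x$ of $\Phi^{\tot}_{\kappa(s)}(\alpha|_{\A^1_{\kappa(s)}})$, which is determined by Theorem~\ref{motmeasureA1}; since a class in $\M^{\hat{\mu}}_{\A^1_S}$ is determined by its values at points (Lemma~\ref{function.equality}), $\Phi^{\tot}_S(\alpha)$ is thereby determined.

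For existence I would follow the proof of Proposition~\ref{motmeasureaux}. Apply Corollary~\ref{glmfamily} to $Y:=\A^1_S\times_k\A^1_k$, viewed as a variety over $\A^1_S$ through the first projection $\pr_1$, together with the morphism $h:=\pr\circ\pr_1-\pr_2\colon Y\to\A^1_k$, where $\pr\colon\A^1_S\to\A^1_k$ is the structural projection; then $h^{-1}(0)$ is the graph of $\pr$, isomorphic to $\A^1_S$ via $\pr_1$, so one obtains an $\M_{\A^1_S}$-linear map $\scr{S}_{h/\A^1_S}\colon\M_Y\to\M^{\hat{\mu}}_{\A^1_S}$. Precomposing with the pull-back $\pr_1^{*}\colon\M_{\A^1_S}\to\M_Y$ and putting
$$\Phi'_S(\alpha):=\alpha-\scr{S}_{h/\A^1_S}\bigl(\pr_1^{*}\alpha\bigr)$$
gives an $\M_S$-linear map $\M_{\A^1_S}\to\M^{\hat{\mu}}_{\A^1_S}$. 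On a generator $[Z\xrightarrow{p}\A^1_S]$ of the shape produced by Lemma~\ref{smoothpropergens} applied to $\A^1_S$ over $S$ — so $U:=u\circ p(Z)$ is locally closed in $S$, $Z$ is smooth over $U$, and $p$ factors through the inclusion $\iota_U\colon\A^1_U\hookrightarrow\A^1_S$ as a proper morphism $f_U=(u\circ p,f)\colon Z\to\A^1_U$ with $f:=\pr\circ p$ — the computation of Proposition~\ref{motmeasureaux}, carried out now for the family $Z\times_k\A^1_k\to\A^1_U$ and the morphism $g=f\circ\pr_1-\pr_2$ to $\A^1_k$ of \S\ref{sect.totvandef}, should yield $\scr{S}_{h/\A^1_S}(\pr_1^{*}[Z\to\A^1_S])=(\iota_U)_!\,\atotnear_{f/U}$, hence $\Phi'_S([Z\to\A^1_S])=(\iota_U)_!\,\atotvan_{f/U}$. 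I would then define $\Phi^{\tot}_S$ as the composite $\kvar_{\A^1_S}\to\M_{\A^1_S}\xrightarrow{\Phi'_S}\M^{\hat{\mu}}_{\A^1_S}\xrightarrow{\sim}\widetilde{\M}^{\hat{\mu}}_{\A^1_S}$, the last arrow being the isomorphism of Lemma~\ref{mkiso}.

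The commutative square then comes directly out of the construction: jet schemes, hence relative motivic nearby and vanishing cycles, commute with arbitrary base change, and $\scr{S}_{h/\A^1_S}$ is compatible with $\scr{S}_{h_x/\kappa(x)}$ at points $x$ of $\A^1_S$ (Corollary~\ref{glmfamily}), so $\Phi'_S$, and therefore $\Phi^{\tot}_S$, restricts over each $s\in S$ to $\Phi'_{\kappa(s)}$, resp.\ $\Phi^{\tot}_{\kappa(s)}$. To finish, one checks that $\Phi^{\tot}_S$ is a morphism of $\kvar_S$-algebras for $\convv$: the product $\convv$ over $\A^1_S$ is assembled from $\boxtimes_S$, $\add_!$ and $\Psi$, all of which commute with restriction to the fibre over $s\in S$ (for $\Psi$ this is its compatibility with pull-backs recalled in \S\ref{sect.convolution}), so $\Phi^{\tot}_S(\alpha\convv\beta)-\Phi^{\tot}_S(\alpha)\convv\Phi^{\tot}_S(\beta)$ restricts to $0$ over every $s\in S$ by the multiplicativity of $\Phi^{\tot}_{\kappa(s)}$ (Theorem~\ref{motmeasureA1}) and hence vanishes by Lemma~\ref{function.equality}, while compatibility with the structure map $(\iota_S)_!$ follows from Remark~\ref{constantvan} (the case $f=0$), after reducing via Lemma~\ref{smoothpropergens} to $Z$ smooth over its image. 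Note that, in contrast with the absolute case, no relative version of the compactification lemma used in Proposition~\ref{motmeasureringmorph} nor of the Thom--Sebastiani theorem is needed, since multiplicativity is checked one fibre at a time. The step I expect to require the most care is the identification $\scr{S}_{h/\A^1_S}(\pr_1^{*}[Z\to\A^1_S])=(\iota_U)_!\,\atotnear_{f/U}$, that is, the bookkeeping separating the ``ambient'' affine line $\A^1_S$ over which $\convv$ lives from the ``exponential'' affine line $\A^1_k$ that is the target of $f$, together with the routine but not entirely trivial verification that $\convv$ and $(\iota_S)_!$ behave well under passage to fibres; granting these, the argument is the relative form of the proofs of Propositions~\ref{motmeasureaux} and~\ref{motmeasureringmorph}.
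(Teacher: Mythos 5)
Your uniqueness argument and the fibrewise checks of the commutative square and of multiplicativity are fine, and there the paper takes essentially the same ``evaluate over points of $S$ via Lemma~\ref{function.equality}'' route. The problem is in the existence step, precisely at the identity you flag as delicate: with the pull-back $\pr_1^{*}$ the map $\Phi'_S$ you construct is identically zero. Indeed $\pr_1\colon Y=\A^1_S\times_k\A^1_k\to\A^1_S$ is the structural projection along which Corollary~\ref{glmfamily} is applied, so $\pr_1^{*}$ preserves the $\A^1_S$-structure of $\alpha=[Z\xrightarrow{p}\A^1_S]$: above a point $(s,a)\in\A^1_S$ the class $(\pr_1^{*}\alpha)_{(s,a)}$ is $\bigl[p^{-1}(s,a)\times\A^1_k\xrightarrow{\pr_2}\A^1_{\kappa(s,a)}\bigr]$, a product of the fibre of $p$ (not of the structural map $Z\to S$) with a trivial line. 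By $\M_{\kappa(s,a)}$-linearity of $\scr{S}_{h_{(s,a)}}$, and since $h_{(s,a)}\circ\pr_2$ is the \emph{smooth} map $b\mapsto a-b$, whose nearby cycle is the unit class (Example~\ref{vancyclespecialcases}), one gets $\scr{S}_{h_{(s,a)}}\bigl((\pr_1^{*}\alpha)_{(s,a)}\bigr)=[p^{-1}(s,a)]=\alpha_{(s,a)}$, hence $\Phi'_S(\alpha)_{(s,a)}=0$ for every $(s,a)$ and $\Phi'_S\equiv 0$ by Lemma~\ref{function.equality}.

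The correct analogue of the pull-back in Proposition~\ref{motmeasureaux} is not $\pr_1$ but the ``mixed'' morphism $\varphi\colon Y\to\A^1_S$, $((s,a),b)\mapsto(s,b)$, which under the identification $Y\simeq\A^1_S\times_S\A^1_S$ is the \emph{second} projection of the fibre product, while $\pr_1$ is the first. Pulling back along $\varphi$ replaces the $\A^1_k$-component of $p$ (the exponential $f$) by a free variable, so that the fibre over $(s,a)$ becomes $[Z_s\xrightarrow{f_s}\A^1_{\kappa(s,a)}]$; Theorem~\ref{glmtheorem} then produces the nearby cycle of $f_s$ at $a$ pushed forward to $\kappa(s,a)$, i.e.\ $(\atotnear_{f/S})_{(s,a)}$, as needed, and with that replacement your construction should go through. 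The subtlety you correctly anticipated --- disentangling the ambient $\A^1_S$ from the exponential $\A^1_k$ --- is therefore real, and it is exactly where $\pr_1^{*}$ fails. The paper sidesteps the reconstruction of $\Phi'$ over $\A^1_S$ entirely: it simply \emph{defines} $\Phi_S^{\tot}$ on generators of the form produced by Lemma~\ref{smoothpropergens} by the already-available class $\atotvan_{f/S}$, which by its construction in \S\ref{sect.totvandef} lives over $\A^1_S$ with the correct bookkeeping built in, and then deduces both well-definedness and the $\kvar_S$-algebra property by evaluating over points of $S$, as in your final two steps.
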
\index{phitotS@$\Phi^{\tot}_S$}\index{total vanishing cycles measure!relative}
\begin{proof} Uniqueness is immediate by lemma \ref{function.equality}. By lemma \ref{smoothpropergens}, denoting by $u:\A^1_S\to S$ the structural morphism,  it suffices to construct~$\Phi^{\tot}_S$ on classes $[X\xrightarrow{p} \A^1_S]$ such that $T = u\circ p(X)$ is locally closed in $S$, $X$ is smooth over $T$ and $p\times_S\id_T:X\times_ST\to \A^1_T$ is proper. For such a class, denoting by $f$ the composition $X\xrightarrow{p}\A^1_S\to \A^1_k$, we put
$$\Phi_S^{\tot}([X\xrightarrow{p} \A^1_S]) = \atotvan_{f/S}\in\M_{\A^1_S}^{\hat{\mu}}.$$
Then for every $s\in S$, we have 
$$\Phi_S^{\tot}([X\xrightarrow{p} \A^1_S])_s = \atotvan_{f_s/\kappa(s)}.$$
For any $s\in T$, $f_s:X_s\to \A^1_k$ is proper and the fibre $X_s$ is smooth over $\kappa(s)$ by the assumption on $p$. For $s\in S\setminus T$, the fibre $X_s$ is empty and therefore $\Phi_S^{\tot}([X\xrightarrow{p} \A^1_S])_s = 0$ in this case. Thus, by the characterisation of $\Phi^{\tot}_{\kappa(s)}$ in theorem \ref{motmeasureA1}, we may conclude, as in the proof of theorem~\ref{glmfamily}, that~$\Phi_S^{\tot}$ is well-defined as a group morphism $\kvar_{\A^1_S}\to \M_{\A^1_S}^{\hat{\mu}}$ and that the diagram in the statement is commutative. As in the proof of theorem~\ref{glmfamily}, the fact that $\Phi_S^{\tot}$ is a morphism of $\kvar_{S}$-algebras follows from the fact that $\Phi^{\tot}_{\kappa(s)}$ is a morphism of $\kvar_{\kappa(s)}$-algebras for every $s\in S$, using lemma~ \ref{function.equality}.
\end{proof}

\begin{lemma}\label{Phipropbase} Let $S$ be a $k$-variety, and let $X$ be a variety over $S$, with structural morphism $u:X\to S$. Denote by $u_0$ the morphism $X\xrightarrow{0\times_k u}\A^1_k\times_kS\simeq \A^1_S.$ Then
\begin{enumerate}[(a)]\item\label{affineidbase} $\Phi^{\tot}_S([X\xrightarrow{u_0} \A^1_S]) = [X\xrightarrow{u_0} \A^1_S]$ (with trivial action).
\item\label{affinezerobase} $\Phi^{\tot}_S([X\times_S \A^1_S\xrightarrow{\pr_2}\A^1_S]) = 0 $
\end{enumerate}
\end{lemma}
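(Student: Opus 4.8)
The plan is to deduce both parts from the fibrewise description of $\Phi^{\tot}_S$ in Theorem~\ref{motmeasurebase}, reducing each to the corresponding part of the absolute statement, Lemma~\ref{Phiprop}. The mechanism is the usual one: by Lemma~\ref{function.equality}, an element of $\widetilde{\M}_{\A^1_S}^{\hat{\mu}}$ is determined by its fibres at the points $\xi\in\A^1_S$; if $s\in S$ is the image of such a $\xi$, then $\{\xi\}\hookrightarrow\A^1_S$ factors through the fibre $\A^1_{\kappa(s)}=\A^1_S\times_S\kappa(s)$, so that fibre is computed after restriction to $\A^1_{\kappa(s)}$; and by Theorem~\ref{motmeasurebase}, the restriction of $\Phi^{\tot}_S(\a)$ to $\A^1_{\kappa(s)}$ equals $\Phi^{\tot}_{\kappa(s)}$ applied to the restriction of $\a\in\kvar_{\A^1_S}$ to $\A^1_{\kappa(s)}$. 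Hence it is enough to establish each equality over every residue field $\kappa(s)$, which has characteristic zero, where Lemma~\ref{Phiprop} is available.

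For part~(a), the quickest route is structural: since $\iota_S\colon S\to\A^1_S$ is the zero-section, the morphism $u_0=0\times_k u$ equals $\iota_S\circ u$, so $[X\xrightarrow{u_0}\A^1_S]=(\iota_S)_!\bigl([X\xrightarrow{u}S]\bigr)$, with trivial $\hat{\mu}$-action. By Lemma~\ref{iotaSmorphism} and Remark~\ref{convvtrivialaction}, $(\iota_S)_!$ is the ring morphism exhibiting $(\kvar_{\A^1_S},\convv)$ as a $\kvar_S$-algebra, and (after composing with the localisation morphism) likewise for $(\widetilde{\M}_{\A^1_S}^{\hat{\mu}},\convv)$; since $\Phi^{\tot}_S$ is a morphism of $\kvar_S$-algebras by Theorem~\ref{motmeasurebase}, it commutes with these structural morphisms, giving $\Phi^{\tot}_S([X\xrightarrow{u_0}\A^1_S])=(\iota_S)_!([X\xrightarrow{u}S])=[X\xrightarrow{u_0}\A^1_S]$. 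Alternatively one runs the fibrewise reduction: $(u_0)_s\colon X_s\to\A^1_{\kappa(s)}$ is the zero morphism, so the statement over $\kappa(s)$ is exactly part~(a) of Lemma~\ref{Phiprop}.

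For part~(b), I would use the fibrewise reduction. By Lemma~\ref{function.equality} it suffices to show that the fibre at each $\xi\in\A^1_S$ of $\Phi^{\tot}_S\bigl([X\times_S\A^1_S\xrightarrow{\pr_2}\A^1_S]\bigr)$ vanishes. Writing $s$ for the image of $\xi$ in $S$ and using the base-change identification $(X\times_S\A^1_S)_s=X_s\times_{\kappa(s)}\A^1_{\kappa(s)}$, under which $\pr_2$ restricts to $\pr_2$, Theorem~\ref{motmeasurebase} shows this fibre is computed from $\Phi^{\tot}_{\kappa(s)}\bigl([X_s\times_{\kappa(s)}\A^1_{\kappa(s)}\xrightarrow{\pr_2}\A^1_{\kappa(s)}]\bigr)$, which is $0$ by part~(b) of Lemma~\ref{Phiprop} over the characteristic-zero field $\kappa(s)$, with $Z=X_s$. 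Since all fibres vanish, the conclusion follows.

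I do not expect a genuine obstacle here: this is the relative avatar of Lemma~\ref{Phiprop}, and the argument is the same Noetherian/fibrewise bookkeeping already used in the proofs of Corollary~\ref{glmfamily} and Theorem~\ref{motmeasurebase}. The one point to watch is that $\Phi^{\tot}_S$ takes values in a Grothendieck ring \emph{over $\A^1_S$}, while the compatibility diagram of Theorem~\ref{motmeasurebase} only controls restriction to the fibres $\A^1_{\kappa(s)}$ above points of the \emph{base} $S$; one must compose ``restrict to $\A^1_{\kappa(s)}$'' with ``evaluate at $\xi$'' to reach an arbitrary point of $\A^1_S$. It is also worth noting why one should not try to mimic the proof of Lemma~\ref{Phiprop} by plugging the relevant classes directly into the defining formula of $\Phi^{\tot}_S$ from Lemma~\ref{smoothpropergens}: the generators produced by that lemma are smooth only over a locally closed subscheme of $S$, not over $k$, so the induced morphism to $\A^1_k$ need not be smooth and Proposition~\ref{smoothvanzero} does not apply in general — which is exactly why reduction to the absolute case is the natural move.
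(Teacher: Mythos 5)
Your proof is correct and is essentially the paper's own argument: the paper deduces both parts from Lemma~\ref{Phiprop} and the compatibility diagram of Theorem~\ref{motmeasurebase}, concluding by Lemma~\ref{function.equality}, which is exactly your fibrewise reduction. Your alternative structural derivation of part~(a) via $(\iota_S)_!$ and the $\kvar_S$-algebra morphism property is also valid (and not circular, since Theorem~\ref{motmeasurebase} establishes that property independently), but it is not needed beyond the fibrewise argument.
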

\begin{proof} This follows from lemma \ref{Phiprop} and proposition \ref{motmeasurebase} by lemma \ref{function.equality}. 
\end{proof}

\subsection{A motivic measure on Grothendieck rings of varieties with exponentials}
We come to the final form of the motivic vanishing cycles measure, which will be the one we are going to use. For the moment, we have constructed, for every $k$-variety $S$, a motivic measure $\Phi_S^{\tot}$ defined on the ring $(\kvar_{\A^1_S},\convv)$ and with values in the ring $(\widetilde{\M}_{\A^1_S}^{\hat{\mu}},\convv)$. For our purposes, it will be convenient to view $\Phi_S^{\tot}$ as a motivic measure on the Grothendieck ring with exponentials over $S$, and to compose it with the pushforward map $(\epsilon_S)_!$ where $\epsilon_S:\A^1_S\to S$ is the structural morphism.
\begin{theorem}[Motivic vanishing cycles measure]\label{motmeasuremain} Let $k$ be a field of characteristic zero.
\begin{enumerate}\item There is a unique morphism
$$\Phi_k:\expm \to (\M_k^{\hat{\mu}},\ast)$$
of $\M_k$-algebras, called the motivic vanishing cycles measure, such that, for any proper morphism $f:X\to \A^1_k$, with source a smooth $k$-variety $X$, one has
$$\Phi_k([X\xrightarrow{f}\A^1_k]) = \epsilon_!(\atotvan_f)$$ where $\epsilon:\A^1_k\to k$ is the structural morphism. 
\item Let $S$ be a $k$-variety. There is a unique morphism $\Phi_S:\expp_S\to (\M_S^{\hat{\mu}},\ast)$ of $\M_S$-algebras such that, for any $s\in S$, the diagram
$$\xymatrix{ \expp_S \ar[r]^-{\Phi_S}\ar[d] & \M_S^{\hat{\mu}}\ar[d] \\
              \expp_{\kappa(s)}\ar[r]^-{\Phi_{\kappa(s)} }& \M_{\kappa(s)}^{\hat{\mu}}}$$
              commutes. 
\end{enumerate}
Moreover, for any $k$-variety $S$, the restriction of $\Phi_S$ to $\M_S$ coincides with the natural inclusion $\M_S\to \M_S^{\hat{\mu}}$. \end{theorem}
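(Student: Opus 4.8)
Everything genuinely new is already in place (Theorem~\ref{motmeasureA1} and its relative form Theorem~\ref{motmeasurebase}, together with the Thom--Sebastiani statement of Corollary~\ref{TStotal}); the remaining work is to assemble $\Phi^{\tot}$ with three elementary operations and keep track of products. I will first do $S=\spec k$. Uniqueness of $\Phi_k$ is immediate: by Lemma~\ref{smoothpropergensfield} applied to the $k$-variety $\A^1_k$, the classes $[X\xrightarrow{f}\A^1_k]$ with $f$ proper and $X$ smooth over $k$ generate $\kvar_{\A^1_k}$ as a group, hence their images generate $\evar_k$ as a group and $\expm$ as an $\M_k$-algebra, so an $\M_k$-algebra morphism out of $\expm$ is determined by its values on them. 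For existence, start from $\Phi^{\tot}\colon(\kvar_{\A^1_k},\convv)\to(\widetilde{\M}^{\hat\mu}_{\A^1_k},\convv)$. By part~(\ref{affinezero}) of Lemma~\ref{Phiprop}, $\Phi^{\tot}$ kills the classes $[Z\times\A^1_k\xrightarrow{\pr_2}\A^1_k]$, hence the whole subgroup of $\kvar_{\A^1_k}$ they generate, which is exactly the kernel of the quotient map $\kvar_{\A^1_k}\to\evar_k$; since that quotient map is a surjective ring morphism for the $\convv$-product (Remark~\ref{expquotientmorphism}), $\Phi^{\tot}$ descends to a ring morphism $\evar_k\to(\widetilde{\M}^{\hat\mu}_{\A^1_k},\convv)$. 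By part~(\ref{affineid}) of Lemma~\ref{Phiprop} this morphism sends $\LL=[\A^1_k,0]$ to $\LL_0$, which is invertible in $\widetilde{\M}^{\hat\mu}_{\A^1_k}$ by construction, so it extends uniquely to an $\M_k$-algebra morphism $\expm\to(\widetilde{\M}^{\hat\mu}_{\A^1_k},\convv)$. Composing with the ring morphism $(\epsilon_k)_!\colon(\widetilde{\M}^{\hat\mu}_{\A^1_k},\convv)\to(\M^{\hat\mu}_k,\ast)$ of $\M_k$-algebras given by the localised form of Lemma~\ref{epsilonmorphism} yields $\Phi_k$. For $X$ smooth over $k$ and $f$ proper, $\Phi^{\tot}([X\xrightarrow{f}\A^1_k])=\atotvan_f$ by Theorem~\ref{motmeasureA1}, whence $\Phi_k([X,f])=(\epsilon_k)_!(\atotvan_f)=\epsilon_!(\atotvan_f)$.

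The relative statement follows the same scheme, with $\Phi^{\tot}_S$ (Theorem~\ref{motmeasurebase}) in place of $\Phi^{\tot}$, Lemma~\ref{Phipropbase} in place of Lemma~\ref{Phiprop}, and the relative quotient map and pushforward (Remark~\ref{expquotientmorphism} and the localised Lemma~\ref{epsilonmorphism} are stated for arbitrary $S$); this produces an $\M_S$-algebra morphism $\Phi_S\colon\expp_S\to(\M^{\hat\mu}_S,\ast)$. Compatibility with fibres is checked block by block: each of the three ingredients commutes with pullback along $\{s\}\hookrightarrow S$ --- explicitly for $\Phi^{\tot}_S$ by Theorem~\ref{motmeasurebase}, and trivially for the passage to $\evar$, for localisation at $\LL$, and for $(\epsilon_S)_!$ versus $(\epsilon_{\kappa(s)})_!$ --- so the square in the statement commutes; uniqueness of such a $\Phi_S$ then follows from Lemma~\ref{function.equality}, a motivic function on $S$ being determined by its values at points.

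It remains to identify the restriction of $\Phi_S$ to $\M_S$. The inclusion $\M_S\hookrightarrow\expp_S$ realises $\M_S$ as the $\M_S$-submodule generated by the unit (its image is $\M_S\cdot[S,0]=\M_S\cdot 1$), and $\M_S\to\expp_S$ is injective. Both $\Phi_S$ and the natural map $\M_S\to\M^{\hat\mu}_S$ are $\M_S$-linear, and an $\M_S$-linear map out of this cyclic submodule is determined by the image of $1$, which equals $1$ in both cases because $\Phi_S$ is a ring morphism; hence the two restrictions agree. (One can also see this concretely at a point $s$: for $Y$ smooth and proper over $\kappa(s)$ the constant map $0\colon Y\to\A^1$ is proper, so $\Phi_{\kappa(s)}([Y,0])=\epsilon_!(\atotvan_0)=[Y]$ by Theorem~\ref{motmeasureA1} and Remark~\ref{constantvan}, and Lemma~\ref{smoothpropergensfield} together with Lemma~\ref{function.equality} conclude.)

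\textbf{Expected main obstacle.} There is no deep difficulty left here, only a bookkeeping burden: one must keep straight which multiplicative structure ($\ast$, $\convv$, or the ordinary product) sits on which Grothendieck ring at each stage, verify that the localisations are legitimate (i.e.\ that $\LL$ is sent to a unit at every step), and chase the base-change compatibilities through the composite. The one point that really requires attention is recognising the product on $\expp_S$ as the one induced from $\convv$ via Remark~\ref{expquotientmorphism}, since this is what lets $(\epsilon_S)_!$ convert it into $\ast$ on $\M^{\hat\mu}_S$ and thereby makes $\Phi_S$ multiplicative.
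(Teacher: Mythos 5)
Your proposal is correct and takes essentially the same route as the paper's proof: you descend $\Phi^{\tot}_S$ through the exponential relation using Lemma~\ref{Phipropbase} and Remark~\ref{expquotientmorphism}, send $\LL$ to the invertible $\LL_0$, compose with the localised pushforward of Lemma~\ref{epsilonmorphism}, and obtain the characterisation, fibrewise compatibility and uniqueness from Theorem~\ref{motmeasureA1}, Proposition~\ref{motmeasurebase} and Lemma~\ref{function.equality}. Your handling of the restriction to $\M_S$ ($\M_S$-linearity plus the value at $1$, backed by the concrete check via Remark~\ref{constantvan}) is just a rephrasing of the paper's appeal to property~(\ref{affineidbase}) of Lemma~\ref{Phipropbase}, so there is no substantive difference.
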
\index{phi@$\Phi_k, \Phi_S$}\index{motivic vanishing cycles measure}
\begin{proof} Property (\ref{affinezerobase}) in lemma \ref{Phipropbase} says that $\Phi_S^{\tot}$ sends the additional relation  $$[X\times_S \A^1_S\xrightarrow{\pr_2}\A^1_S]$$ defining $\evar_{\A^1_S}$ to zero. Using remark \ref{expquotientmorphism}, we see that the morphism $\Phi^{\tot}_S$ induces a morphism of $\kvar_S$-algebras $\evar_S\to (\widetilde{\M}_{\A^1_S}^{\hat{\mu}},\convv)$. By property (\ref{affineidbase}) of lemma (\ref{Phipropbase}), the class $\LL_S$ goes to the invertible element $\LL_0\in\widetilde{\M}_{\A^1_S}^{\hat{\mu}}$ so that $\Phi^{\tot}$ extends by $\M_S$-linearity to a morphism of $\M_S$-algebras $\expp_S\to (\widetilde{\M}_{\A^1_S}^{\hat{\mu}},\convv)$. By the localised version of lemma \ref{epsilonmorphism}, the pushforward $(\epsilon_S)_!$ is a morphism of $\M_S$-algebras $(\widetilde{\M}_{\A^1_S}^{\hat{\mu}},\convv)\to (\M_S,\ast)$, so putting $\Phi_S:=(\epsilon_S)_!\circ \Phi_S^{\tot}$ we get a  morphism of $\M_S$-algebras $\Phi_S:\expp_S\to (\M_S^{\hat{\mu}},\ast)$. Part 1 of the statement then follows immediately from theorem \ref{motmeasureA1}, whereas part 2 comes from proposition \ref{motmeasurebase}. Finally, the statement about the restriction of $\Phi_S$ to~$\M_S$ is seen to be true by property (\ref{affineidbase}) of lemma~\ref{Phipropbase}.
\end{proof}

The following proposition is the motivic version for the dimensional topology of the triangular inequality 
$$\left\vert\sum_{x\in X(\F_q)}\psi(f(x))\right\vert \leq |X(\F_q)|$$
for $X$ a variety over $\F_q$, $f:X\to \A^1_{\F_q}$ a morphism and $\psi:\F_q\to\C^*$ a non-trivial character. 
\begin{prop}[Triangular inequality]\label{triangulardim} Let $S$ be a $k$-variety, $X$ a variety over $S$ and $f:X\to \A^1_k$ a morphism. Then
$$\dim_S(\Phi_S([X,f]))\leq \dim_S X$$
\end{prop}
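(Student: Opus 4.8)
The plan is to prove the equivalent statement that $\Phi_S$ preserves the dimensional filtration, i.e. that $\Phi_S(F_d\expp_S)\subseteq F_d\M_S^{\hat\mu}$ for every $d\in\Z$. Since $\Phi_S$ is a morphism of $\M_S$-algebras whose restriction to $\M_S$ is the natural inclusion (Theorem~\ref{motmeasuremain}), it is $\M_S$-linear and satisfies $\Phi_S([X,f]\LL^{-n})=\LL^{-n}\Phi_S([X,f])$; as $F_d\expp_S$ is generated by classes $[X,f]\LL^{-n}$ with $\dim_S X-n\leq d$ and $\dim_S$ is subadditive, everything reduces to showing $\dim_S\Phi_S([X,f])\leq\dim_S X$ for an arbitrary $S$-variety $X$ and morphism $f:X\to\A^1_k$.

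Next I would unwind the construction of $\Phi_S$. Viewing $[X,f]$ as the image in $\evar_S$ of the class $[X\xrightarrow{(f,u)}\A^1_S]\in\kvar_{\A^1_S}$ (with $u:X\to S$ the structural morphism), I apply Lemma~\ref{smoothpropergens} to the $S$-variety $\A^1_S$ to write
\[[X\xrightarrow{(f,u)}\A^1_S]=\sum_j c_j\,[Z_j\xrightarrow{p_j}\A^1_S]\]
in $\kvar_{\A^1_S}$, where each $T_j:=\epsilon_S\circ p_j(Z_j)$ is locally closed in $S$, $Z_j$ is smooth over $T_j$, and $p_j\times_S\id_{T_j}$ is proper. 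Since the proof of Lemma~\ref{smoothpropergens} proceeds by resolutions, compactifications and restrictions to closed subsets, none of which increases relative dimension over the base, this decomposition may be taken with $\dim_{\A^1_S}Z_j$ at most the relative dimension of $X$ over $\A^1_S$ for $(f,u)$, which is itself $\leq\dim_S X$. Now $\Phi_S^{\tot}$ is additive on $\kvar_{\A^1_S}$ and, by Theorem~\ref{motmeasurebase}, sends $[Z_j\to\A^1_S]$ to $\atotvan_{f_j/S}$ with $f_j\colon Z_j\xrightarrow{p_j}\A^1_S\to\A^1_k$; since $\Phi_S=(\epsilon_S)_!\circ\Phi_S^{\tot}$ (Theorem~\ref{motmeasuremain}), we obtain
\[\Phi_S([X,f])=\sum_j c_j\,(\epsilon_S)_!\,\atotvan_{f_j/S},\]
so by subadditivity of $\dim_S$ it suffices to bound $\dim_S\big((\epsilon_S)_!\atotvan_{f_j/S}\big)$ by $\dim_S X$ for each $j$.

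The heart of the argument is the following observation about a single term. By Proposition~\ref{smoothvanzero}, $\atotvan_{f_j/S}$ lives canonically in $\M_{\mathrm{Crit}(f_j)}^{\hat\mu}$, where $\mathrm{Crit}(f_j)\subseteq\A^1_{T_j}\subseteq\A^1_S$ is the image of the non-smoothness locus of $f_j$. I claim $\mathrm{Crit}(f_j)$ is \emph{finite over $S$}: over a point $t\in T_j$ its fibre is the image under $(f_j)_t$ of the critical locus of the morphism $(f_j)_t\colon (Z_j)_t\to\A^1_{\kappa(t)}$ from a smooth $\kappa(t)$-variety to the affine line, and by generic smoothness in characteristic zero (this is where the hypothesis on the characteristic enters) this image is a finite set of closed points. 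Hence $\dim_S\mathrm{Crit}(f_j)=0$, pushing forward along $\epsilon_S$ does not raise relative dimension, and, using that a locally closed immersion does not affect the relative dimension of a Grothendieck class, we get
\[\dim_S\big((\epsilon_S)_!\atotvan_{f_j/S}\big)\;\leq\;\dim_{\mathrm{Crit}(f_j)}\atotvan_{f_j/S}\;=\;\dim_{\A^1_S}\atotvan_{f_j/S}\;\leq\;\dim_{\A^1_S}Z_j\;\leq\;\dim_S X,\]
the second inequality being exactly Remark~\ref{totvandimension}. Summing over $j$ finishes the proof.

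I expect the main obstacle to be precisely this dimension bookkeeping: Remark~\ref{totvandimension} only controls $\dim_{\A^1_S}$, and a crude pushforward along the structural morphism of the affine line would lose one unit of dimension — the motivic counterpart of replacing $|X(\F_q)|$ by $q\,|X(\F_q)|$. The point that rescues the estimate is that total vanishing cycles are supported over the critical locus, which is finite over the base in characteristic zero, so the extra affine-line direction contributes nothing. The remaining verifications — that the presentation of Lemma~\ref{smoothpropergens} can be made compatible with the dimensional filtration, and that $(\epsilon_S)_!$ and locally closed immersions behave as claimed with respect to $\dim_S$ — I would treat as routine; alternatively one can first reduce to the case where $S$ is a field via Lemma~\ref{function.equality} and the fibrewise compatibility in Theorem~\ref{motmeasuremain}, using that the relative dimension of an element of $\M_S^{\hat\mu}$ equals the supremum of the relative dimensions of its fibres.
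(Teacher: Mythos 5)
Your global argument over the base $S$ hinges on two claims that are not justified, and the first of them is the real sticking point. You assert that the presentation of $[X\xrightarrow{(f,u)}\A^1_S]$ furnished by Lemma~\ref{smoothpropergens} can be chosen with $\dim_{\A^1_S}Z_j\leq \dim_{\A^1_S}X$ because ``resolutions, compactifications and restrictions to closed subsets'' do not increase relative dimension over the base. That principle is false: compactifying a morphism $p\colon Y\to\A^1_S$ into a proper one adds a boundary whose fibres over the base can be strictly larger than those of $Y$ (fibre dimension jumps exactly on the locus one adds). For instance, with $S=\A^2_{(a,b)}$ and $Y=\{a\neq 0\}\subset S$ mapped to $\A^1_S$ by the graph of $b/a$, every proper extension $\bar Y\to\A^1_S$ with $Y$ dense acquires at least a curve over the origin of $S$, since its image is closed and must contain the closure $\{ca=b\}$ of the graph. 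Because $\dim_S$ and $\dim_{\A^1_S}$ are only subadditive (bounded by the maximum over the generators), a single generator violating the bound destroys the estimate, so the dimension bookkeeping for the decomposition over a general base is precisely the non-routine part, not something that follows from the proof of Lemma~\ref{smoothpropergens}. The second claim, that $\mathrm{Crit}(f_j)$ is finite over $S$, is argued by identifying the fibre of the scheme-theoretic image with the image of the fibrewise critical locus; in general this fails, because $\mathrm{Crit}$ is a closure and closures of families of finite sets can acquire positive-dimensional fibres (take $X=\A^4_{(a,b,w,u)}$ over $S=\A^2_{(a,b)}$ and $f=u^2(aw-1)+bw$: for $a\neq 0$ the critical value is $b/a$, and the closure of $\{(a,b,b/a)\}$ in $\A^1_S$ contains the whole line over the origin). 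For your particular generators one can rescue this point using the properness of $p_j$ over $\A^1_{T_j}$ (so that the set-theoretic image of $\mathrm{Sing}(f_j)$ is already closed in $\A^1_{T_j}$ and has finite fibres in characteristic zero), but this has to be said, and in any case it does not repair the missing dimension control on the $Z_j$.

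The one-sentence alternative you relegate to the end is in fact the paper's proof, and it is where the argument actually lives: by the fibrewise compatibility in Theorem~\ref{motmeasuremain} the fibre of $\Phi_S([X,f])$ at $s$ is $\Phi_{\kappa(s)}([X_s,f_s])$, so one reduces to $S=\spec k$ (the paper performs exactly this reduction, invoking the same fibrewise reading of $\dim_S$ that you do, so you incur no extra debt there). Over a field all the difficulties above disappear: relative and absolute dimension coincide, the Bittner-type decomposition into classes of smooth varieties with proper morphisms can be taken up to classes of strictly smaller dimension and handled by induction (Lemma~\ref{smoothpropergensfield}), $\mathrm{Crit}(f)$ really is a finite set of closed points in characteristic zero, and the bound $\dim\big(\epsilon_!\atotvan_f\big)\leq \dim X$ follows from Remark~\ref{totvandimension}, or directly from Remark~\ref{vandimension} summed over the finitely many critical values. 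So the content you label as routine is the actual proof, while the global-over-$S$ bookkeeping you present as the main argument cannot be carried out as stated.
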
\index{phi@$\Phi_S$!triangular inequality}\index{motivic vanishing cycles measure!triangular inequality}\index{triangular inequality!for $\dim_S$}
\begin{proof} It suffices to prove this above every point $s\in S$, so we may assume $S=\spec k$. Then, up to adding classes of strictly smaller dimension which can be dealt with by induction, we may assume that $X$ is smooth and $f$ is proper. In this case $\Phi([X,f]) = \epsilon_!\atotvan_f$ and the result follows from remark~\ref{totvandimension}.
\end{proof}

\subsection{A compactification lemma}
The following lemma, which we used in the proof of proposition \ref{motmeasureringmorph}, was stated and proved in \cite{LSmat} in the case where the field $k$ is assumed to be algebraically closed of characteristic zero. We show here that it remains true even if the field is no longer assumed to be algebraically closed.

We are going to deduce this from the proof of proposition 6.1 in \cite{LSmat}, by proving that the construction of $Z$ commutes with base change to any extension of the field $k$, and that the statements of the lemma remain true over $k$ if they are true over an extension of $k$. For this, most of the arguments will go by faithfully flat descent (see EGA IV, 2.7.1), using that for any extension $k'$ of $k$, the structural morphism $\spec k'\to \spec k$ is faithfully flat. 

For this, we recall that, if $k'$ is an extension of $k$ (which is still assumed to be of characteristic zero), then (see e.g. tag 0C3H in the Stacks project) :
\begin{enumerate} [(a)]
\item\label{absolutesing} The formation of the singular locus $X^{\mathrm{sing}}$ of a variety $X$ over $k$ commutes with base change to~$k'$.
\item  For a morphism $f:X\to \A^1_k$, the formation of the singular locus of $f$ commutes with base change to~$k'$, i.e: $\mathrm{Sing} f_{k'} = (\mathrm{Sing} f)\times _kk'$. 
\end{enumerate}
\begin{prop}\label{compactification} Let $k$ be a field of characteristic zero. Let $X$ and $Y$ be smooth varieties and let $f:X\to \A^1_k$ and $g:Y\to \A^1_k$ be projective morphisms. Then there exists a smooth quasi-projective $k$-variety $Z$ with an open embedding $X\times Y \hookrightarrow Z$ and a projective morphism $h:Z\to \A^1_k$ such that the following conditions are satisfied.
\begin{enumerate}[(i)] 
\item The restriction of $h$ to $X\times Y$ is $f\conv g$. 
\item All critical points of $h$ are contained in $X\times Y$, i.e. $\mathrm{Sing}(f\conv g) = \mathrm{Sing}(h)$.
\item The boundary $Z\setminus X\times Y$ is the support of a simple normal crossing divisor with pairwise distinct smooth irreducible components $D_1,\ldots,D_s$. 
\item For every $p$-tuple $I = (i_1,\ldots,i_p)$ of indices (with $p\geq 1$) the morphism
$$h_{I}:D_{I}:=D_{i_1}\cap \ldots \cap D_{i_p} \to \A^1_k$$
induced by $h$ is projective and smooth, so that in particular all $D_{I}$ are smooth quasi-projective $k$-varieties.
\end{enumerate}
\end{prop}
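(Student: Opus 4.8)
The plan is to reduce to the case where $k$ is algebraically closed, which is Proposition 6.1 of \cite{LSmat}, by checking that the construction of $Z$ carried out there makes sense over an arbitrary field of characteristic zero and is compatible with field extension. Recall that, in outline, that construction first chooses a projective compactification of $X\times Y$ over $\A^1_k$ extending $f\conv g$ (use the ample line bundles witnessing projectivity of $f$ and $g$ to embed $X\times Y$ into some $\Proj^N_{\A^1_k}$ via $f\conv g$, then take the scheme-theoretic closure), then applies resolution of singularities to make the total space smooth and the boundary a simple normal crossing divisor, and finally blows up, one at a time and guided by generic smoothness, the boundary strata along which the induced maps to $\A^1_k$ fail to be smooth, until (ii)--(iv) hold; the argument of \cite{LSmat} ensures that this process terminates and that no critical point is created outside $X\times Y$. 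Each operation used here --- closed immersion into projective space, scheme-theoretic closure, resolution of singularities, blow-up along a closed subscheme --- makes sense over any field of characteristic zero and involves no reference to $\bar k$ or to closed points.

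First I would run this construction over the given field $k$, obtaining a smooth quasi-projective $Z$ (projective over $\A^1_k$), a projective morphism $h\colon Z\to\A^1_k$ restricting to $f\conv g$ on $X\times Y$, and the irreducible components $D_1,\dots,D_s$ of the boundary $Z\setminus(X\times Y)$, each smooth over $k$. The crucial point is then that $(Z,h,D_1,\dots,D_s)\times_k\bar k$ is again a legitimate output of the same construction performed over $\bar k$. This is checked step by step: scheme-theoretic closures and closed immersions into $\Proj^N$ commute with the flat base change $\spec\bar k\to\spec k$; a resolution of singularities over $k$ base-changes to a resolution over $\bar k$ (smoothness of the source, properness, birationality, and the condition of being an isomorphism over the smooth locus all behave well under faithfully flat base change, using fact (a); one may also invoke functorial resolution to make this literal); blow-ups commute with flat base change; and the centres blown up in the final stage --- the non-smooth loci of the maps $D_I\to\A^1_k$ and the locus $\mathrm{Sing}(h)$ --- commute with base change by facts (a) and (b). Since \cite{LSmat} shows that any run of this construction over an algebraically closed field yields an object satisfying (i)--(iv), the base change $(Z,h,D_i)\times_k\bar k$ satisfies (i)--(iv).

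It then remains to descend (i)--(iv) back to $k$ along the faithfully flat morphism $\spec\bar k\to\spec k$ (EGA IV, 2.7.1). Property (i) is an equality of two morphisms $X\times Y\to\A^1_k$, which holds iff it holds after base change. For (ii), $\mathrm{Sing}(f\conv g)$ and $\mathrm{Sing}(h)$ commute with base change by facts (a) and (b), and the closed immersion $\mathrm{Sing}(f\conv g)\hookrightarrow\mathrm{Sing}(h)$ is an isomorphism iff it is so after base change. For (iii), smoothness of each $D_i$, the fact that $Z\setminus(X\times Y)$ is a reduced effective divisor, the pairwise distinctness of the $D_i$, and the simple-normal-crossing condition on their union are all fppf-local on the base and so descend; note that a $k$-irreducible component $D_i$ may become reducible over $\bar k$, but this is harmless, since each $\bar k$-component is still smooth and these conditions are insensitive to further decomposition. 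For (iv), $D_I=D_{i_1}\cap\dots\cap D_{i_p}$ base-changes to the analogous intersection over $\bar k$, and smoothness and projectivity of the induced $h_I\colon D_I\to\A^1_k$ descend from $\bar k$.

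The step I expect to be the main obstacle is the second one: one must make sure that the construction of \cite{LSmat} is genuinely a chain of operations each visibly compatible with arbitrary field extension, rather than one that implicitly uses, say, a canonical resolution or a modification defined through $\bar k$-points that would have no meaning over $k$. The two base-change statements (a) and (b) recalled above are exactly what is needed to control the loci that are modified in the delicate final stage of the construction; granting these, the remainder is a routine, if somewhat lengthy, verification that closures, closed immersions, resolutions and blow-ups commute with $\spec\bar k\to\spec k$, together with standard faithfully flat descent for the properties appearing in (i)--(iv).
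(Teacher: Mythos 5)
Your overall strategy --- run the construction over $k$, check that it commutes with the base change $\spec \bar{k}\to\spec k$ so that the theorem of \cite{LSmat} applies over $\bar{k}$, then descend (i)--(iv) by faithfully flat descent --- is the same as the paper's, and your final descent step (including the remark that components may become reducible over $\bar{k}$) is in order. The gap is the middle step, which you yourself flag as ``the main obstacle'' and then do not carry out: the base-change compatibility you verify is for a construction that is not the one in \cite{LSmat}. Lunts and Schn\"urer do not proceed by ``compactify, resolve, then blow up boundary strata where the induced maps to $\A^1_k$ fail to be smooth until (ii)--(iv) hold''; such a procedure has no evident termination, and it is not clear it would preserve (ii) or ever achieve (iv), which is the delicate point of their result. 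Their actual construction compactifies $f$ and $g$ separately into $\bar{f}:\bar{X}\to\Proj^1$, $\bar{g}:\bar{Y}\to\Proj^1$ with strict normal crossing fibres at infinity (their Proposition 6.4), uses the morphism $\sigma:\Proj^1\times\A^1\to\Proj^1\times\Proj^1$ compactifying $(x,y)\mapsto(x,y-x)$ to convert the addition map into a projection, forms the pullback $T$ of $\bar{f}\times\bar{g}$ along $\sigma$, and then takes the monomialisation of the pair $(T,\theta^{-1}(I_E)\OO_T)$, which is only available after checking that this pair satisfies their condition (K).

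Consequently the statements one actually has to verify for the reduction are: that condition (K) descends from $\bar{k}$ to $k$ (this is the one place where \cite{LSmat} uses algebraic closedness); that the monomialisation procedure is defined over any field of characteristic zero and commutes with extension of scalars (Koll\'ar, 3.34.2, 3.35, 3.36); and that the Proposition 6.4 compactification of a single morphism works over $k$ and commutes with change of fields. None of these appear in your list (functorial resolution, blow-ups commuting with flat base change, base change of $\mathrm{Sing}(f\conv g)$ and $\mathrm{Sing}(h)$), so the claim that the $k$-construction base-changes to a legitimate run of the $\bar{k}$-construction is not actually established; filling the gap requires going through the genuine construction step by step as above, after which your descent argument does complete the proof.
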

\begin{proof} All references in what follows are to \cite{LSmat} unless otherwise stated. 

\textit{Condition} (K): Lunts and Schnürer define a condition on a pair $(U,I)$ where $U$ is a scheme and $I\subset \OO_U$ an ideal sheaf, called condition (K), under which one may associate to $(U,I)$ a monomialisation $c_I(U)\to U$. Since we will change fields, we view (K) as a condition on the triple $(U,I,k)$ where $k$ is the given base field:
\begin{enumerate}[(K)] \item $U$ is a reduced scheme of finite type over $k$, $I$ is not zero on any irreducible component of $U$, and the closed subscheme $V(I)$ defined by $I$ contains the singular locus $U^{\mathrm{sing}}$ of $U$. 
\end{enumerate}
\textbf{Claim:} Let $k'$ be an extension of $k$ and let $U$ be a scheme over $k$. If $(U_{k'},I\otimes_kk',k')$ satisfies condition (K), then so does $(U,I,k)$. 

Indeed, the fact that $U$ is of finite type and reduced follows from the faithful flatness of $\spec k'\to \spec k$. If $I$ is zero on some irreducible component $U$, then $I\otimes_kk'$ would be zero on any irreducible component of $U_{k'}$ lying over this component. Finally, by statement (\ref{absolutesing}) above, the condition that $V(I)$ contains $U^{\mathrm{sing}}$ descends as well. 

\textit{Monomialisation;} The monomialisation procedure recalled in remark 6.3 and used throughout the proof, may be done for any field of characteristic zero, commutes with extension of scalars, as stated in \cite{Kollar}, 3.34.2, 3.35 and 3.36. 

\textit{Compactification of one morphism} Proposition 6.4 is a first compactification result, which produces, from a smooth quasi-projective variety $X$ and a morphism $f:X\to \A^1$, a smooth projective variety $\bar{X}$ with an open embedding $X\hookrightarrow \bar{X}$ and a morphism $\bar{f}:X\to \Proj^1_k$ extending $f$  and such that the fibre at infinity is a strict normal crossing divisor. This may be done over a field that is not necessarily algebraically closed, and the construction commutes with change of fields, since it consists in applying monomialisation and a root extraction morphism. 

\textit{Shifting from addition to projection.} The construction of $Z$ and $h$ uses a morphism $\sigma:\Proj^1\times \A^1\to \Proj^1\times\Proj^1$ compactifying the automorphism $\A^1\times \A^1\xrightarrow{\sim} \A^1\times \A^1$ sending $(x,y)$ to $(x,y-x)$, and therefore transforming the addition map $\A^1\times \A^1\to \A^1$ into the second projection. The image of $\sigma$ is $\A^1\times \A^1 \cup \{(\infty,\infty)\}$, and $\sigma^{-1}(\infty,\infty) = \{\infty\}\times \A^1$ is denoted by $E$.  Starting from compactifications $\overline{X}\xrightarrow{\overline{f}} \Proj^1$ and $\overline{Y}\xrightarrow{\overline{g}}\Proj^1$ of the given morphisms, obtained using proposition 6.4, one considers the pullback diagram
$$\xymatrix{T \ar[r]^{\widehat{\sigma}}\ar[d]_{\theta} &\overline{X}\times \overline{Y} \ar[d]^{\overline{f}\times \overline{g}}\\
\Proj^{1}\times \A^1 \ar[r]^{\sigma} &\Proj^{1}\times \Proj^1}$$

Lunts and Schnürer's proof then goes on with proving that $(T,\theta^{-1}(I_E)\OO_T)$ satisfies condition (K), so that one may form the monomialisation $\gamma:Z\to T$, which provides a morphism
$$h:Z\xrightarrow{\gamma} T \xrightarrow{\theta} \Proj^{1}\times \A^1\xrightarrow{\pr_2} \A^1,$$
which they check satisfies all requested properties. 

By what we said on monomialisation above, the construction of $h$ and $Z$ commutes with change of fields. Thus, by Lunts and Schnürer's result, $h_{\bar{k}}$ and $Z_{\bar{k}}$ do the job over $\bar{k}$, and it suffices to show that the properties they satisfy remain true over $k$. First of all, since $(T,\theta^{-1}(I_E)\OO_T)$ satisfies condition $(K)$ over $\bar{k}$, it does also satisfy it over $k$, so monomialisation may be performed. The fact that $h$ is projective is clear since it is a composition of projective morphisms. Then points $(i)$ and $(iii)$ in the statement of the theorem are satisfied automatically. As for point $(ii)$, Lunts and Schnürer's lemma says that the base change to $\bar{k}$ of the open immersion $\mathrm{Sing}(f\conv g)\to \mathrm{Sing}(h)$ is in fact an isomorphism. By faithfully flat descent, we already have an isomorphism over $k$.
Finally, it remains to check point (iv), or more precisely, the smoothness part of it, the projectivity being immediate. Again, this comes from faithfully flat descent. 
\end{proof}
\begin{remark} When reading carefully Lunts and Schnürer's proof, one notices that in fact it can be made to work without the assumption that $k$ is algebraically closed. Indeed, they construct a diagram 
$$\xymatrix{ & &\widehat{T}\ar[ld]\ar[rd] \ar@/^2pc/[rrrdd]^{\beta} \ar@/_2pc/[lldd]_{\alpha} & & & \\
& \widehat{S}\ar[ld]\ar[rd] & & S'\ar[ld]\ar[rd] & \\
T & & S & & S''\ar[r] & S'''}$$
where all arrows are étale and where $S'''$ is of the form $\A^1_k\times L$ where
$$L = \spec k[x_1,\ldots,x_s,y_1,\ldots,y_t]/(x^{\mu}-y^{\nu})$$
with $x^{\mu} := x_1^{\mu_1}\ldots x_s^{\mu_s}$ and $y^{\nu} := y_1^{\nu_1}\ldots,y_t^{\nu_t}$ where the $\mu_i$ and $\nu_i$ are positive integers. This way one essentially reduces to verifying most properties directly for $L$ or for $S'''$. The only point in the proof when one seems to use the algebraic closedness is when checking that $(L, (x^{\mu}))$ satisfies condition~(K), but as we remarked above, since it is true geometrically it is already true over $k$. 
\end{remark}

\begin{remark}[Compatibility of nearby cycles morphism with sums of proper morphisms]\label{vancyclesnonproper} Let $a\in k$ and let $\scr{S}_{\id -a}:\M_{\A^1_k}\to \M_{k}^{\hat{\mu}}$ be the morphism from theorem \ref{glmtheorem} for $Y=\A^1$ and $g :\A^1\to \A^1$ given by $x\mapsto x-a$. Let $X$ and $Y$ be smooth $k$-varieties and $f:X\to \A^1$, $g:Y\to \A^1$ projective morphisms. Proposition \ref{compactification} shows that, though $f\conv g:X\times Y\to \A^1$ is not necessarily proper, we nevertheless have
$$\scr{S}_{\id -a}([X\times Y \xrightarrow{f\conv g} \A^1]) = (f\conv g)_!\ \psi_{f\conv g -a}.$$
Indeed, using notation from proposition \ref{compactification},  we may write
$$\scr{S}_{\id-a}([X\times Y \xrightarrow{f\conv g} \A^1]) = \scr{S}_{\id-a}([Z\xrightarrow{h}\A^1_k]) + \sum_{I\neq \varnothing}(-1)^{|I|}\scr{S}_{\id -a}([D_I\xrightarrow{h_I}\A^1_k]).$$
Since all $h_I$ are projective and smooth, we have
$$\scr{S}_{\id-a}[D_I\xrightarrow{h_I}\A^1_k] = (h_I)_!\psi_{h_I - a} =0,$$
so that in fact, using that $h$ is projective, we have
$$\scr{S}_{\id-a}([X\times Y \xrightarrow{f\conv g} \A^1]) = \scr{S}_{\id-a}([Z\xrightarrow{h}\A^1_k]) = h_!\psi_{h-a}.$$
On the other hand, since $\mathrm{Sing}(h)=\mathrm{Sing}(f\conv g)$, and $\psi_{h-a}$ is supported on $\mathrm{Sing}(h)$, we have $\psi_{h-a} = \psi_{f\conv g -a}$ (see corollary 3.6 in \cite{LS}), whence the result.
\end{remark}

\section{The Thom-Sebastiani theorem: an explicit example}\label{sect.TSexample}\index{Thom-Sebastiani!example}
To illustrate the contents of section \ref{sect.TS}, we compute in this section both sides of equality (\ref{TSequation}) for $X_1 = X_2 = \A^1_{\C}$ over $k=\C$, with $f_1 = f_2 = (x\mapsto x^2)$.  
\subsection{Computation of left-hand side}\label{LHScomputation}
Here we are dealing with the variety $X = \A^2$ together with the morphism $f:(x,y)\mapsto x^2 + y^2$.  The only critical value is zero, so $\totvan_f$ is just $\phi_f$, seen as an element of $\M_X^{\hat{\mu}}$. Since in $\C$ we have the decomposition $f(x,y) = (x+iy)(x-iy)$, we may apply theorem \ref{DLrat} with $h = \id$, $E_1 = \{x + iy=0\}$, $E_2 = \{x-iy = 0\},$ so that $a_1 = a_2 = 1$, $a_{12} = 1$ (in particular, $\hat{\mu}$-actions are trivial) and
\begin{eqnarray*}\phi_f &= &[X_0(f)\xrightarrow{\id} X_0(f)] - [E_1^{\circ}\to X_0(f)] -[E_2^{\circ}\to X_0(f)] + (\LL-1)[E_1\cap E_2\to X_0(f)]\\
& = & \LL[\{(0,0)\}\to X_0(f)]\in \M_{X_0(f)}^{\hat{\mu}}
\end{eqnarray*}
since $[X_0(f)\xrightarrow{\id} X_0(f)]  = [E_1^{\circ}\to X_0(f)] +[E_2^{\circ}\to X_0(f)] + [E_1\cap E_2\to X_0(f)]$.

Thus, $$\totvan_f = \LL[\{(0,0)\}\to \A^2]\in \M_{\A^2_{\C}}^{\hat{\mu}}.$$

\subsection{Computation of the right-hand side}\label{RHScomputation}

Here we are dealing with $Y = \A^1$ with $g:\A^1\to \A^1$, $x\mapsto x^2$. Again, the only critical value is zero. We may again apply theorem \ref{DLrat} with $h = \id$, the only irreducible component of $Y_0(g)$ being $E = \{0\}$, with multiplicity $a =2$, so that we consider a double cover $\tilde{E}\to E$ with $\mu_2$-action. In other words, $\tilde{E}$ is just a pair of two points, exchanged by the action of the generator of $\mu_2$. By the formula, we have
$$\phi_f = [Y_0(g)\xrightarrow{\id}Y_0(g)] - [\tilde{E}\to Y_0(g)]\in \M_{Y_0(g)}^{\hat{\mu}},$$
so that $\totvan_g = [\{0\}\to\A^1] - [\tilde{E}\to \A^1] \in \M_{\A^1}^{\hat{\mu}}.$

\begin{remark} By example 2.7 in \cite{LS}, whenever we have two $k$-varieties $S_1,S_2$ and $p_i:Z_i\to S_i$ is a variety over $S_i$ with $\hat{\mu}$-action, and the action of $\hat{\mu}$ on $Z_2$ is trivial, then $$\Psi(Z_1\times Z_2\xrightarrow{p_1\times p_2}S_1\times S_2) = [Z_1\times Z_2\to S_1\times S_2].$$
\end{remark}
This shows that 
$$\Psi(\totvan_g\boxtimes \totvan_g) = [\{(0,0)\}\to \A^2] - 2[\{0\}\times \tilde{E}\to \A^2] + \Psi([\tilde{E}\times \tilde{E}\to \A^2]).$$

Note that all the $\A^2$-varieties here are supported above the point $\{(0,0)\}$ of $\A^2$. We are therefore going to stop writing the morphisms to $\A^2$, which implicitly are all taken to be constant equal to $(0,0)$. 

Now we are going to compute $\Psi([\tilde{E}\times \tilde{E}])$. By definition, this is
$$\Psi([\tilde{E}\times \tilde{E}]) = [(\tilde{E}\times \tilde{E})\times^{\mu_2\times \mu_2}F_0^2] - [(\tilde{E}\times \tilde{E})\times^{\mu_2\times \mu_2}F_1^2].$$

Denote the two points of $\tilde{E}$ by $e_{-1}$ and $e_1$. Then the product $\tilde{E}\times \tilde{E}\times F_i^2$ is simply given by four copies of $F_i^2$, corresponding to each pair $(e_i,e_j)$ for $i,j\in\{-1,1\}$. 
Moreover, these copies are all identified via the $\mu_2\times \mu_2$-action: indeed, any element $(\epsilon,\eta)\in\mu_2\times \mu_2$ induces an isomorphism
$$\begin{array}{ccc}\{(e_1,e_1)\}\times F_i^2&\to& \{(e_{\epsilon},e_{\eta})\}\times F_i^2\\
                          (e_1,e_1,x,y)&\mapsto& (e_{\epsilon},e_{\eta},\epsilon x,\eta y)
\end{array}$$
so that $(\tilde{E}\times \tilde{E})\times^{\mu_2\times \mu_2}F_i^2$ is in fact isomorphic to $F_i^2$, endowed with the diagonal $\mu_2$-action. 

\begin{lemma} 
\begin{enumerate} \item The morphism 
$$\begin{array}{ccc} F_0^2&\to& \G_m\times \{-1,1\}\\
                     (x,y)& \mapsto & (x,\frac{y}{ix})\end{array}
                     $$ is an isomorphism (over $\C$), identifying $F_0^2$ with the disjoint union of two copies of $\G_m$. It is equivariant if one endows each copy of $\G_m$ with the obvious $\mu_2$-action by translation.
                     \item The morphism
$$\begin{array}{ccc} F_1^2&\to& \G_m\setminus\{-1,1,i,-i\}\\
                     (x,y)& \mapsto & x + iy\end{array}
                     $$
                     is an isomorphism (over $\C$), identifying $F_1^2$ with $\G_m\setminus \{-1,1,i,-i\}$. It is equivariant if one endows $\G_m\setminus\{-1,1,i,-i\}$ with the action induced by the obvious $\mu_2$-action by translation on~$\G_m$.

\end{enumerate}\end{lemma}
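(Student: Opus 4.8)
The plan is to prove both isomorphisms by exhibiting explicit inverse morphisms and then checking equivariance by a direct substitution; everything is driven by the factorisation $x^{2}+y^{2}=(x+iy)(x-iy)$, which is available because we work over $\C$. Throughout, the relevant $\mu_{2}$-action on $F_{0}^{2}$ and $F_{1}^{2}$ is the \emph{diagonal} one, $\epsilon\cdot(x,y)=(\epsilon x,\epsilon y)$ for $\epsilon\in\mu_{2}$ --- this is the action that survives the quotient $\times^{\mu_{2}\times\mu_{2}}$ appearing in the definition of $\Psi$, cf. section~\ref{sect.convolution} --- and I would say so at the outset.

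For the first part, I would observe that on $F_{0}^{2}$ the equation $(x+iy)(x-iy)=0$ together with $x\in\G_m$ forces $y=ix$ or $y=-ix$, so that $y/(ix)\in\{1,-1\}$ and the displayed morphism indeed takes values in $\G_m\times\{-1,1\}$. An inverse is given by $(u,s)\mapsto(u,isu)$: the point $(u,isu)$ lies on $F_{0}^{2}$ for every $u\in\G_m$ and $s\in\{-1,1\}$ since $u^{2}+(isu)^{2}=u^{2}-u^{2}=0$, and the two composites are visibly the identity, so this is an isomorphism of $\C$-varieties, identifying $F_{0}^{2}$ with two disjoint copies of $\G_m$ indexed by $s$. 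For equivariance, substituting $\epsilon\cdot(x,y)=(\epsilon x,\epsilon y)$ gives image $(\epsilon x,\epsilon y/(i\epsilon x))=(\epsilon x,y/(ix))$, so on the target $\epsilon$ acts by $(u,s)\mapsto(\epsilon u,s)$, i.e. by translation by $\mu_{2}$ on each copy of $\G_m$, as claimed.

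For the second part, I would set $u=x+iy$; on $F_{1}^{2}$ one then has $x-iy=u^{-1}$, hence $x=\tfrac12(u+u^{-1})$ and $y=\tfrac1{2i}(u-u^{-1})$. The condition $x\neq 0$ is equivalent to $u^{2}\neq-1$, i.e. $u\notin\{i,-i\}$, and $y\neq 0$ to $u^{2}\neq 1$, i.e. $u\notin\{1,-1\}$; moreover $u\in\G_m$ automatically since $u\cdot u^{-1}=1$. Thus $(x,y)\mapsto x+iy$ and $u\mapsto\bigl(\tfrac12(u+u^{-1}),\tfrac1{2i}(u-u^{-1})\bigr)$ are mutually inverse morphisms between $F_{1}^{2}$ and $\G_m\setminus\{1,-1,i,-i\}$. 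For equivariance, $\epsilon\cdot(x,y)=(\epsilon x,\epsilon y)$ again lies on $F_{1}^{2}$ because $\epsilon^{2}=1$, and it corresponds to $u\mapsto\epsilon u$; since $u\mapsto -u$ permutes the four deleted points, this translation preserves $\G_m\setminus\{1,-1,i,-i\}$, so the isomorphism is equivariant for the induced $\mu_{2}$-action.

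These verifications are entirely elementary. The only points that need a little care --- and which I would not describe as genuine obstacles --- are identifying the correct (diagonal) $\mu_{2}$-action, and, on the $F_{1}^{2}$ side, checking both that this action preserves the source (via $\epsilon^{2}=1$) and that the corresponding translation preserves the target (via $u\mapsto -u$ permuting $\{1,-1,i,-i\}$).
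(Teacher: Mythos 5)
Your proof is correct and follows essentially the same route as the paper: exhibit the explicit inverse morphisms $(u,s)\mapsto(u,isu)$ and $u\mapsto\bigl(\tfrac12(u+u^{-1}),\tfrac1{2i}(u-u^{-1})\bigr)$ via the factorisation $x^2+y^2=(x+iy)(x-iy)$, and check the actions by direct substitution. Your explicit identification of the relevant action as the diagonal $\mu_2$-action (the one surviving the quotient in $\Psi$) matches the paper's intended use and merely spells out what the paper leaves as "immediate".
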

\begin{proof}\begin{enumerate}\item A point $(x,y)$ of $\G_m^2$ is an element of $F_0^2$ if and only if either $x =iy$ or $x = -iy$: an inverse to the map in the statement if therefore given by $(x,\epsilon)\mapsto (x,i\epsilon x)$. The statement about actions follows immediately. 
\item Rewriting the equation of $F_1^2$ in the form $(x + iy)(x-iy) = 1$, we see that $x + iy$ is always non-zero, and that if $x + iy$ is equal to some $a\in \G_m$, then $x-iy$ is equal to $a^{-1}$. This remark allows us to construct an inverse
$$a\mapsto \left(\frac{a + a^{-1}}{2},\frac{a-a^{-1}}{2i}\right), $$
which is well-defined and with image contained in $F_1^2$ whenever $a$ is a complex number outside the set $\{0,1,-1,i,-i\}$. Again, the statement on actions is immediate. 
\end{enumerate}
\end{proof}

Combining the results in the lemma, we have
$$\Psi([\tilde{E}\times \tilde{E}]) = 2[\G_m,\mu_2] - [\G_m\setminus\{-1,1,i,-i\},\mu_2]$$
Denoting by $[\tilde{E},\mu_2]$ the class of a union of two points exchanged by the generator of $\mu_2$, as above, we have 
$$[\G_m\setminus\{-1,1,i,-i\},\mu_2] = [\G_m,\mu_2] - 2[\tilde{E},\mu_2],$$
whence
$$\Psi([\tilde{E}\times \tilde{E}]) = [\G_m,\mu_2] + 2[\tilde{E},\mu_2].$$

Thus, finally, we have, observing that $[\{0\}\times\tilde{E},\mu_2] = [\tilde{E},\mu_2]$,  
$$\Psi(\totvan_g\boxtimes\totvan_g) = 1 + [\G_m,\mu_2] = [\A^1,\mu_2],$$
that is, $\A^1$ with the generator of $\mu_2$ acting through $x\mapsto -x$. By relation (\ref{actionrelation}) in the Grothendieck ring $\M_{\A^2_{\C}}^{\hat{\mu}}$, this is equal to the left-hand side $\LL$ (i.e. $\A^1$ with the trivial action) computed in section \ref{LHScomputation}, whence the result.

\begin{remark} Our computation shows that in  $\M^{\hat{\mu}}_{\A^2_{\C}}$ we have the relation
$$\Psi(\totvan_{x^2}\boxtimes\totvan_{x^2}) = \totvan_{x^2 + y^2}$$
which in our calculation boils down to the equality
$$(1-[\tilde{E},\mu_2])\ast (1-[\tilde{E},\mu_2]) = \LL$$
in $\M_{\C}^{\hat{\mu}}$. Thus, the class $1-[\tilde{E},\mu_2]$ may be seen as a ``square root'' of $\LL$ for the product $\ast$. For obvious dimensional reasons, such a square root does not exist in the ring $\M_{\C}$. 
\end{remark}

\chapter{Motivic Euler products}\label{eulerproducts}
The possibility of writing a function as an Euler product, that is, an infinite product of ``local factors'', is a very important tool in number theory. In particular, the Hasse-Weil zeta function of a variety $X$ over a finite field~$\F_q$, defined by 
$$\zeta_X(t) = \exp\left(\sum_{m\geq 1}\frac{|X(\F_{q^m})|}{m}t^{m}\right)\in \Z[[t]],$$
can be rewritten as a product over the closed points $X_{\text{cl}}$ of $X$ in the following manner:
$$\zeta_X(t) = \prod_{x\in X_{\text{cl}}}\frac{1}{1-t^{\deg x}},$$
the expansion of which gives
\begin{equation}\label{hasseweilexpansion}\zeta_X(t) = \sum_{n\geq 0}|\{\text{effective zero-cycles of degree}\ n\ \text{on}\ X\}|t^n.\end{equation}

The latter expression led Kapranov (\cite{Kapr}) to define a motivic analogue of the Hasse-Weil zeta function: for a variety $X$ over a field $k$, it is given by
$$Z_X(t) = \sum_{n\geq 0}[S^nX]t^n\ \in\kvar_k[[t]],$$\index{Kapranov's zeta function}\index{ZX@$Z_X(t)$, Kapranov's zeta function}
where $S^nX$ is the $n$-th symmetric power of $X$, a variety over $k$ which parametrises precisely effective zero-cycles of degree $n$ on $X$, and $[S^nX]$ is its class in the Grothendieck ring of varieties $\kvar_k$. When $k$ is a finite field, the series $Z_X(t)$ specialises to $\zeta_X(t)$ via the counting measure
$$\begin{array}{ccc}\kvar_k&\to& \Z\\
                  \left[X\right]& \mapsto & |X(k)|
\end{array}$$
as one can see from (\ref{hasseweilexpansion}).  Since then, several mathematicians have been studying the properties of this function and trying to measure the scope of the analogy with the Hasse-Weil zeta function. Kapranov himself showed for example that it was rational for a smooth projective curve having a zero-cycle of degree 1, whereas a result by Larsen and Lunts (\cite{LL}) states that it is not rational for a surface. It is however an open question whether it is rational when regarded as a power series with coefficients in $\M_k= \kvar_k[\LL^{-1}]$, the ring obtained from $\kvar_k$ by inverting the class of the affine line. 

In this chapter we are going to broaden the parallel with the Hasse-Weil zeta function by showing that Kapranov's zeta function can be endowed with an Euler product decomposition. More precisely, we are going to give a way of making sense of expressions of the form
$$\prod_{x\in X}\left(1 + X_{1,x}t + X_{2,x}t^2 + \ldots\right)$$
where $X$ is a variety over $k$, $(X_i)_{i\geq 1}$ is a family of varieties over $X$ (or, more generally, a family of classes in $\kvar_X$), and $X_{i,x}$ must be thought of as the class of the fibre of~$X$ above $x\in X$ in $\kvar_{k(x)}$, where $k(x)$ is the residue field at $x$. The decomposition of Kapranov's zeta function will in particular be covered by this definition, but so will many other power series, and in particular those occurring when studying motivic height zeta functions in chapter \ref{motheightzeta}.

It is important to point out that our construction was inspired by \cite{gusein}, where a first step towards infinite motivic products was made. Indeed, the authors define a notion of motivic ``power'', which is a special case of our construction, recovered when all $X_i$ are of the form $X\times_kA_i$ for a family of varieties $(A_i)_{i\geq 1}$ over $k$, so that all factors are equal to $$(1 + A_1 t + A_2t^2 + \ldots)\in\kvar_k[[t]],$$ and the resulting product can be thought of as this series ``raised to the power $X$''.

Let us sketch the contents of this chapter. The overall idea is to generalise the notion of motivic zeta function in an appropriate way, to define the Euler product notation for all these generalised zeta functions, and then to show that this notation actually does behave like a product. Section~\ref{symmprod} will be devoted to the definition of the coefficients of those zeta functions, which we will call symmetric products. They are a generalisation of the notion of symmetric power. The subsequent sections contain proofs of numerous properties of these products that are necessary to ensure subsequent good behaviour of our Euler products. In particular, in \ref{relativesetting} we show how to iterate this construction, which will enable us to make sense of double products later. We explain there that the iteration makes it necessary to consider symmetric products of families indexed by more general sets than the set of positive integers. In \ref{cuttingintopieces} it is shown how the symmetric product of a family of varieties can be expressed in terms of symmetric products of constructible sets partitioning these varieties, which leads to multiplicative properties for zeta functions. Section~\ref{affinespaces} shows how symmetric products behave if the original varieties are multiplied by some affine spaces. Furthermore, we show that these definitions and properties may be extended to  families of non-effective classes in a Grothendieck ring in \ref{symprodclasses}, and to (classes of) varieties with exponentials in \ref{sect.symprodexp}. Finally, in section \ref{sect.locsymproducts}, we define symmetric products of classes in localised Grothendieck rings.

Section~\ref{eulerprod} defines the Euler product notation and deduces all the properties following from the previous sections that show that we can think of it as a product and do calculations with it. Section \ref{coefficients} shows, thanks to a further slight generalisation of the notion of symmetric product, that inside the product, one can allow a finite number of constant terms not equal to~$1$. 

\begin{notation}\label{partitionnotation} We will denote by $\N$ the set of non-negative integers, and by $\N^{*}$ the set of positive integers. For any set $I$, we define
$$\N^{(I)} = \{(n_i)_{i\in I}\in\N^I,\ n_i = 0\ \text{for almost all $i$}\}$$\index{NI@$\N^{(I)}$}
where ``almost all'' means ``all but a finite number of''. It is a monoid, if we endow it with addition coordinate by coordinate:
$$(n_i)_{i\in I} + (n'_i)_{i\in I} = (n_i + n'_i)_{i\in I}.$$
 Moreover, it has a natural partial order defined by 
 $$(n_i)_{i\in I} \leq (n'_i)_{i\in I} \ \ \text{if and only if}\ \ n_i\leq n'_i \ \text{for all}\ i\in I.$$
 For two elements $\pi = (n_i)_{i\in I}$ and $\pi' = (n'_i)_{i\in I}$ such that $\pi\leq \pi'$, we may also define their difference: $$\pi'-\pi = (n'_i-n_i)_{i\in I}\in\N^{(I)}.$$
 An element $\pi\in\N^{(I)}$ can be thought of as a finite collection of elements of $I$, each element $i$ coming with a multiplicity $n_i$. That's why, especially in the case when $I = \N^*$, such an element will sometimes be written in the form $[a_1,\ldots,a_p]$ where $a_1,\ldots,a_p$ are elements of~$I$, each appearing with the correct multiplicity, so that the integer $p$ is equal to $\sum_{i\in I}n_i$. We denote by $|\pi|$ the integer $\sum_{i\in I}n_i$.
 
 The special case $I=\N^{*}$ will be particularly important: in this case, an element $\pi\in\N^{(I)}$ is called a \textit{partition}. \index{partition}Indeed, since $\pi = (n_i)_{i\geq 1} = [a_1,\ldots,a_p]$ is in this case a collection of positive integers with multiplicities, we may associate to it the number
 $$n = \sum_{i\geq 1}in_i = a_1 + \ldots + a_p$$
 these integers sum to, and $\pi$ is simply a partition of the integer $n$. The elements $a_i$ are called the \textit{parts} of the partition. Note that when denoting partitions of integers in the form $[a_1,\ldots,a_p]$, some authors require the sequence of the $a_i$ to be non-decreasing. In order to simplify the statements of some results below, we prefer to say that the order of the $a_i$ is of no importance: we consider the partitions $[a_1,\ldots,a_p]$ and $[a_{\sigma(1)},\ldots,a_{\sigma(p)}]$ to be the same for any permutation $\sigma\in \Sym_p$. However, when writing concrete partitions, we will often put the integers in increasing order for clarity.
\end{notation}

In this chapter, $R$ will be a variety over a perfect field $k$.
\section{Symmetric products}\label{symmprod}

\subsection{Introduction: Symmetric powers of a variety}\label{sympower}

Let $X$ be a quasi-projective variety over a perfect field $k$. For every non-negative integer $n$, there is a natural action of the symmetric group $\Sym_n$ on the product $X^n$ by permuting the coordinates, and it is a classical result that the quotient $S^nX = X^n / \Sym_n$ exists as a variety (if $X$ is quasi-projective, which we assumed). We will call this variety the $n$-th \textit{symmetric power} \index{symmetric power!of a variety} of $X$. By convention $S^0X$ will be $\spec k$.

The rational points of the variety $S^nX$ correspond to effective zero-cycles of degree $n$ on $X$. Any such zero-cycle $\sum_xn_xx$ determines a partition $$\sum_x\underbrace{(n_x + \ldots + n_x)}_{\deg x\ \text{terms}} = n$$ of $n$, which we will denote by $\pi$. The subset $S^{\pi} X$ of $S^nX$ consisting of the zero-cycles inducing this partition $\pi$ is locally closed in $S^nX$, and can be constructed directly in the following way: for all $i\geq 1$, denote by $n_i$ the number of times the integer $i$ occurs in this partition. Every zero-cycle determining the partition $\pi$ is of the form
$$\sum_{i\geq 1}i(x_{i,1} + \ldots +x_{i,n_i})$$
where the points $x_{i,j}$ are geometric points of $X$, all distinct. 
Consider therefore the product $X^{\sum_{i\geq 1}n_i}$, from which we remove the diagonal $\Delta$, that is, the points having at least two equal coordinates. The product $\prod_{i\geq 1}\Sym_{n_i}$ of symmetric groups has a left action on $\prod_{i\geq 1}X^{n_i} = X^{\sum_{i\geq 1}n_i}$ via
$$(\sigma_i)_{i\geq 1}\cdot (x_{i,1},\ldots x_{i,n_i})_{i\geq 1}\mapsto (x_{i,\sigma_i^{-1}(1)},\ldots,x_{i,\sigma_i^{-1}(n_i)})_{i\geq 1}$$ 
for any $\sigma = (\sigma_i)_{i\geq 1}\in\prod_{i\geq 1}\Sym_{n_i}$. This action restricts to $X^{\sum_{i\geq 1}n_i}\backslash \Delta$, and the quotient will be naturally isomorphic to the above locally closed subset $S^{\pi}X$ . This observation will be the starting point of the construction in the following paragraph.

The variety $S^nX$ can thus be written as a disjoint union of locally closed sets $S^{\pi}X$ with~$\pi$ ranging over all partitions of $n$. In particular, we will denote by $S^n_*X$ the open subset of $S^nX$ corresponding to the partition $[1,\ldots,1]$ of $n$, which parametrises étale zero-cycles of degree $n$ on~$X$.

This construction may be done with $k$ replaced by a $k$-variety $R$, and products replaced by fibred products over $R$. The resulting objects will be varieties over $R$, denoted $S^nX$ and $S^{\pi}X$ as well, or $S^n(X/R)$ and  $S^{\pi}(X/R)$ if we want to keep track of the base variety. For any point $v\in R$, the fibre of $S^{\pi}(X/R)$ above $v$ will be isomorphic to $S^{\pi}(X_v/\kappa(v))$ where $X_v$ is the fibre of $X$ over $v$.

\begin{remark} Though we may define symmetric powers also over non-perfect fields, the above description of points will fail in this case. This justifies our condition on the base variety $R$.
\end{remark}

\subsection{Quotients of schemes by finite group actions}\label{appendix}

We gather here some facts about quotients by finite group actions. A detailed account may be found in Chapter 0 of \cite{GIT}. Let $X$ be a scheme endowed with an algebraic action of a finite group $G$. 

\begin{definition} A (categorical) quotient of $X$ by $G$ is a morphism of schemes $\pi: X\rightarrow Y$ with the following two properties:
\begin{itemize}\item $\pi$ is $G$-invariant;
\item $\pi$ is universal with this property: for every scheme $Z$ over $k$ and every $G$-invariant morphism $f:X\rightarrow Z$, there is a unique morphism $h:Y\rightarrow Z$ such that $h\circ\pi = f$.
\end{itemize}
\end{definition}
Because of the universality, a quotient is unique up to canonical isomorphism if it exists. In this case, we write $Y = X/G$. Note that the universal property implies in particular that if $X$ is an $S$-scheme, then so is $X/G$. 

If $X = \spec A$ is affine, of finite type over $S$ (which may be assumed to be affine, equal to $\spec C$ for some ring $C$), then $A$ has a $G$-action, and we may define the subring $A^G$ of $A$ of all $G$-invariant elements of $A$. This induces a morphism
$$\pi: \spec A\arr \spec A^{G}.$$
One can show that this is the quotient of $X$ by $G$.

\begin{definition} We say that the action of $G$ on $X$ is good if  any $x\in X$ has an open affine neighbourhood that is preserved by the $G$-action. 
\end{definition}
For example, if $X$ is quasi-projective over a field $k$, then the action of $G$ is good. 

If the action of $G$ on the variety $X$ is good, then taking an affine cover $(U_i)_i$ of $X$ by such affine subsets, one may construct a quotient $X/G$ by glueing together the quotients $U_i/G$. It follows from \cite{GIT}, theorem 1.10, that this quotient is quasi-projective.


\begin{prop}[\cite{Mustata}, Proposition A.8]\label{subquotient} Let $G$ be a finite group acting by algebraic automorphisms on a quasi-projective variety $X$ over $k$. Let $H$ be a subgroup of $G$, and $Y$ an open subset of $X$ such that
\begin{enumerate}\item $Y$ is preserved by the action of $H$ on $X$.
\item If $Hg_1,\ldots,Hg_r$ are the right equivalence classes of $G$ modulo $H$, then 
$$X = \bigcup_{i=1}^rYg_i$$
is a disjoint cover.
\end{enumerate}
Then the natural morphism $Y/H\arr X/G$ is an isomorphism.
\end{prop}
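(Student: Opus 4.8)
The statement to prove (Proposition~\ref{subquotient}, due to Musta\c{t}\u{a}) asserts that if $H\leq G$ is a subgroup, $Y\subset X$ is an open subset preserved by $H$, and $X=\bigsqcup_{i=1}^r Yg_i$ is a disjoint cover indexed by a set of right coset representatives $g_1,\dots,g_r$ of $H$ in $G$, then the natural map $Y/H\to X/G$ is an isomorphism. The plan is to reduce to the affine case, where everything becomes a statement about invariant subrings, and then to identify the two invariant rings directly.

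First I would verify that the natural morphism $Y/H\to X/G$ is well defined: the composite $Y\hookrightarrow X\to X/G$ is $H$-invariant (since $H\leq G$), so by the universal property of the categorical quotient $Y/H$ it factors uniquely through a morphism $\varphi\colon Y/H\to X/G$. Next, since $X$ is quasi-projective the $G$-action is good, and one can choose an affine open cover $(U_j)_j$ of $X/G$; pulling back, $X$ is covered by $G$-stable affine opens $\pi^{-1}(U_j)=\spec A_j$ with $X/G$ locally $\spec A_j^G$. Because quotient formation commutes with such localisation and the hypotheses (openness of $Y$, $H$-stability, and the disjoint-cover condition) are all local on $X/G$, it suffices to treat the case $X=\spec A$ affine with $G$-action and $X/G=\spec A^G$. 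Here the key point is that the disjoint decomposition $X=\bigsqcup Yg_i$ forces $Y$ to be \emph{both} open and closed in $X$, hence $Y=\spec A/(e)$ for an idempotent, and more precisely $A\simeq \prod_{i=1}^r A_i$ as a ring with $A_i$ the coordinate ring of $Yg_i$ and $G$ permuting the factors the way it permutes the cosets $Hg_i$, with $H$ being exactly the stabiliser of the factor $A_1=\OO(Y)$.

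The heart of the argument is then the ring-theoretic identity $A^G\simeq A_1^H=\OO(Y)^H$. Given $a=(a_1,\dots,a_r)\in A=\prod A_i$, being $G$-invariant means $g\cdot a=a$ for all $g$; decomposing along the coset action, invariance under $G$ is equivalent to: $a_1$ is $H$-invariant (the stabiliser of the first factor), together with the requirement that the remaining components $a_i$ are determined by transporting $a_1$ along the chosen coset representatives $g_i$. Concretely, projection to the first factor gives a ring homomorphism $A^G\to A_1^H$, and conversely any $b\in A_1^H$ extends uniquely to a $G$-invariant element by setting the $i$-th component to $g_i\cdot b$ (well-definedness of this uses precisely that $b$ is $H$-invariant, so the choice of coset representative does not matter). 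These are mutually inverse, so $\varphi$ is an isomorphism on this affine chart; gluing over the cover $(U_j)_j$ gives the result globally.

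The main obstacle I anticipate is bookkeeping the $G$-action on the product decomposition $A\simeq\prod_{i=1}^r A_i$ correctly — in particular checking that $H$ really is the full stabiliser of the factor $\OO(Y)$ (this uses the disjointness of the cover, not merely $H$-stability of $Y$), and verifying that the ``transport along coset representatives'' extension is independent of the choice of representatives. Care must also be taken that the reduction to the affine case is legitimate: one needs that $Y$ is a union of $H$-stable affine opens compatible with the chosen affine cover of $X/G$, which follows since $Y$ is open and closed in $X$ and the $G$-action is good, but it should be spelled out. Once these points are nailed down, the isomorphism statement is formal from the universal property of categorical quotients together with the affine-local computation.
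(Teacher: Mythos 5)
The paper does not actually prove this proposition: it is quoted as Proposition A.8 of Musta\c{t}\u{a}'s notes and used as a black box, so there is no argument in the text to compare yours against. Judged on its own, your proof is correct, and it is essentially the standard argument behind the cited result: reduce to the affine case using that $\pi\colon X\to X/G$ is an affine ($G$-invariant) morphism, so preimages of affine opens of $X/G$ are $G$-stable affine opens over which the hypotheses and the map $Y/H\to X/G$ restrict, and then identify $\OO(X)^G$ with $\OO(Y)^H$ via the clopen decomposition $\OO(X)\simeq\prod_i\OO(Yg_i)$. The points you flag as delicate are indeed the ones to nail down, and they all go through: disjointness (plus $Y\neq\varnothing$; the empty case is trivial) is exactly what makes the "transport" recipe well defined — equivalently, what forces the set-stabiliser of $Y$ to be precisely $H$ — and $H$-invariance of $b\in\OO(Y)^H$ is what makes the extension independent of the chosen coset representatives and genuinely $G$-invariant, since $g_ig=hg_j$ with $h\in H$. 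The only slip is notational: with the usual convention $(g\cdot f)(x)=f(xg)$ the $i$-th component of the extension of $b$ is $g_i^{-1}\cdot b$ rather than $g_i\cdot b$; this does not affect the argument.
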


\subsection{Symmetric products of a family of varieties}
\label{definition}
Let $X$ be a variety over $R$, and  let $\mathscr{X}=(X_i)_{i\geq 1}$ be a family of $X$-varieties with structural morphisms $\phi_i:X_i\arr X$. All products in this section are fibred products over~$R$. 


Let $\pi = (n_i)_{i\geq 1}$ be a partition of the integer $n$. The variety $\prod_{i}X_i^{n_i}$ has a morphism $\prod_{i\geq 1}\phi_i^{n_i}$ to $\prod_{i}X^{n_i}$. Denote by $$\left(\prod_{i\geq 1}X^{n_i}\right)_*$$
the open subset of the latter obtained by removing the diagonal, that is, the points having at least two equal coordinates. By base change, we get the open subset $$\prod_{i}X_i^{n_i}\times_{\prod_{i}X^{n_i}}\left(\prod_{i}X^{n_i}\right)_*$$
of elements mapping to $\sum_{i}n_i$-tuples of $X$ which do not belong to the diagonal. This can be summarised by the following cartesian diagram:

$$\xymatrix{
   \left( \prod_{i\geq 1}X_i^{n_i}\right)\times_{\prod_{i\geq 1}X^{n_i}}\left(\prod_{i\geq 1}X^{n_i}\right)_*\ar@{^{(}->}[r] \ar[d] & \prod_{i\geq 1}X_i^{n_i} \ar[d]^{\prod_{i\geq 1}\phi_i^{n_i}}  \\
    \left(\prod_{i\geq 1}X^{n_i} \right)_* \ar@{^{(}->}[r] & \prod_{i\geq 1}X^{n_i}
  }$$
  
 For simplicity, in what follows we will write $\left(\prod_{i\geq 1}X_i^{n_i}\right)_*$ for the variety at the top-left corner of this diagram (when we want to specify that points were removed with respect to coordinates in~$X$, we may write $\left(\prod_{i\geq 1}X^{n_i}\right)_{*,X}$ ). Now the product $\prod_{i\geq 1}\Sym_{n_i}$ of symmetric groups acts naturally on the varieties occurring in the right column of this diagram: each~$\Sym_{n_i}$ acts on the corresponding~$X_i^{n_i}$ and $X^{n_i}$ by permutation of coordinates. It restricts to the varieties in the left column, and is compatible with the vertical maps. Passing to the quotient, the left column gives us a variety which we will denote by $S^{\pi}(\scr{X}/R)$, or simply $S^{\pi}\scr{X}$, with a map to the variety~$S^{\pi}X$ defined in the previous section.
 
  Finally, taking the disjoint union $\cup_{\pi}S^{\pi}\scr{X}$ over all partitions of $n$, we get a variety~$S^{n}\scr{X}$. \index{symmetric product!of varieties}
  
 \begin{remark}\label{diagonal}
  
  The horizontal inclusion maps of the cartesian square $$\xymatrix{\left(\prod_{i\geq 1}X_i^{n_i}\right)_*\ar[r]\ar[d] &  \prod_{i\geq 1}X_i^{n_i}\ar[d]\\
  \left(\prod_{i\geq 1}X^{n_i}\right)_*\ar[r] & \prod_{i\geq 1}X^{n_i}
}
$$ are compatible with taking the quotient, so we get well-defined maps
 
 $$\xymatrix{S^{\pi}\scr{X}\ar[r]\ar[d] &  \prod_{i\geq 1}S^{n_i}X_i\ar[d]\\
  S^{\pi}X\ar[r] & \prod_{i\geq 1}S^{n_i}X
}
$$ 

  The diagonal is a closed subset $\Delta_X$ in $\prod_{i\geq 1}X^{n_i}$. It is stable by the action of $\prod_{i\geq 1}\Sym_{n_i}$, and maps to a closed subset $\Delta_{X,\pi}$ inside $\prod_{i\geq 1}S^{n_i}X$ such that $S^{\pi}X\simeq \left(\prod_{i\geq 1}S^{n_i}X\right) \backslash \Delta_{X,\pi}$, and therefore
  $S^{\pi}\scr{X}$ is exactly the restriction of $\prod_{i\geq 1}S^{n_i}X_i$ to points mapping outside~$\Delta_{X,\pi}$. In other words, the diagonal can be removed before or after passing to the quotient.
\end{remark} 

\begin{notation}\label{allequal} If the family $\scr{X}$ is constant, that is, all $X_i$ are equal to some $X$-variety $Y$, then the resulting symmetric products will be denoted $S^{\pi}_X(Y)$ (resp. $S^n_X(Y)$). In particular, by definition, we have $S^{\pi}_X(X) = S^{\pi}X$. 
\end{notation}

\begin{example}\label{firstex}\begin{enumerate}\item If $\pi$ is the partition $[1,\ldots,1]$, we are going to write $S^{\pi}\scr{X} = S^{n}_*\scr{X}$. It corresponds to the variety $S^n_{*,X}X_1$ parametrising effective zero-cycles on~$X_1$ of degree~$n$ mapping to effective zero-cycles on~$X$ of degree $n$ in which no point occurs with multiplicity strictly greater than one. 
  \item If $\pi$ is the partition $[n]$, $S^{\pi}\scr{X} = X_n$. 
  \item If we take all $X_i$ to be equal to some $X$-variety $Y$, then $S^{\pi}\scr{X} = S^{\pi}_XY$ (see notation \ref{allequal}) corresponds exactly to effective zero-cycles of degree $n$ on $Y$ mapping to zero-cycles on $X$ with partition $\pi$. In particular, if all $X_i$ are equal to $X$, then we get the locally closed subset $S^{\pi}X$ of $S^{n}X$ described in section \ref{sympower}. 
 \item  If $X = \spec R$, then $\left(\prod_{i\geq 1}X^{n_i}\right)_*$ is empty whenever there is more than one factor, that is, except if $n_i = 0$ for all $i\geq 1$ but one (recall the product is over $R$). Since the $n_i$ are subject to the relation $\sum_{i}in_i = n$, this means that $n_i=0$ for $i<n$, and $n_n = 1$. Thus, $S^{\pi}\scr{X}$ is empty for all $\pi$ but $[n]$, and we have $S^n\scr{X} = X_n$. 
 \end{enumerate}
  \end{example}

\section{Iteration of the symmetric product construction}\label{relativesetting}

If $\scr{X} = (X_i)_{i\geq 1}$ is a family of varieties over $X$, and $X$ itself is a variety over some scheme $R$, then the symmetric product construction over $R$ gives rise to a family of varieties
$$S^{\bullet}(\scr{X}/R) = (S^{\pi}(\scr{X}/R))_{\pi\in\N^{(\N^*)}}$$
over $R$, indexed by all partitions $\pi$. 
As it is now, our definition of symmetric products doesn't allow us to carry on and construct a symmetric product of this family. The aim of this section is to generalise our construction in a way that will make this possible. This generalisation is important in itself, as it gives the correct general setting in which symmetric products may be defined.

The idea is to replace families indexed by the set $\N^*$ of positive integers by families indexed by any set $I$. Then the family of their symmetric products will be indexed by the set 
$$\N^{(I)} = \{(n_i)_{i\in I}\in \N^{I},\ n_i = 0\ \text{for almost all}\ i\},$$
 and the family of the symmetric products of those will be indexed by $\N^{\left(\N^{(I)}\right)}.$


\subsection{Symmetric products of varieties indexed by any  set}\label{anyset}
Let $I$ be a set. The construction is completely analogous to the construction of symmetric products in the case where $I$ is just the set of positive integers. Let $X$ be a quasi-projective variety over~$R$, and $\scr{X} = (X_i)_{i\in I}$ a family of quasi-projective $X$-varieties. Fix $\pi= (n_i)_{i\in I}$. The product
$$\prod_{i\in I}X_i^{n_i}$$
has a morphism to $\prod_{i\in I}X^{n_i}$. We consider the open subset 
$$\left(\prod_{i\in I}X_i^{n_i}\right)_{\!\!\!*}\subset \prod_{i\in I}X_i^{n_i}$$
of points lying above the complement of the diagonal of $\prod_{i\in I}X^{n_i}$, that is mapping to points having pairwise distinct coordinates. We have a natural action of the product of symmetric groups $\prod_{i\in I} \Sym_{n_i}$ by permutation of coordinates, and we define
$$S^{\pi}\scr{X} := \left(\prod_{i\in I}X_i^{n_i}\right)_{\!\!\!*}/\prod_{i\in I}\Sym_{n_i},$$
which is a variety because the varieties we started with were quasi-projective.
\index{symmetric product! of varieties}
Note that in particular, in the case $\pi = 0$, we get $S^{0}\scr{X} = R$. 
\begin{remark} The construction is functorial in the sense that if we have two families of $X$-varieties $\scr{X} = (X_i)_{i\in I}$ and $\scr{Y} = (Y_i)_{i\in I}$, and if we are given, for every $i$, a morphism $f_i:X_i\to Y_i$, then the family of morphisms $f =(f_i)_{i\in I}$ induces, for every $\pi\in\N^{(I)}$, a morphism $S^{\pi}f:S^{\pi}\scr{X}\to S^{\pi}\scr{Y}$. 
\end{remark}
\begin{example}\label{generalex} If $X= R$ and $\pi\neq 0$, then as in Example \ref{firstex}, $S^{\pi}\scr{X}$ is empty except if there exists $i_0\in I$ such that $\pi = (n_i)_{i\in I}$ satisfies $n_i=0$ for all $i\neq i_0$, and $n_{i_0} = 1$, and in this case $S^{\pi}\scr{X} = X_{i_0}$. 
\end{example}


\bigskip

\begin{remark}[Case when $I$ is a semigroup]\label{Isemigroup} Assume for a moment that $I$ is of the form $I_0\setminus \{0\}$ where $I_0$ is a commutative monoid, that is, $I_0$ is endowed with some associative and commutative composition law with zero-element 0. Then there is a well-defined map 
$$\begin{array}{rccc}\lambda:& \N^{(I)}& \to & I_0\\
                          & \pi=(n_i)_{i\geq 1} & \mapsto & \sum_{i\in I}{in_i}
                           \end{array}$$
and we may also define, for any $n\in I_0$, $S^{n}\scr{X}$ to be the disjoint union of all the $S^{\pi} \scr{X}$ for $\pi\in \lambda^{-1}(n)$. 

\end{remark}
\begin{example}\label{example.allxiequal}In the particular case where $I=\N^*$ all the $X_i$ are equal to $X$, by definition, for any integer $n$, $S^n\scr{X}$ is the disjoint union of the locally closed subsets $S^{\pi}X$ described in section \ref{sympower}. In particular, we have the equality of classes $[S^n\scr{X}] = [S^nX]$ in $\svar_{R}$. 
Note that however, the natural scheme structure of the symmetric power $S^nX$ is not the same as the scheme structure of $S^n\scr{X}$. This won't have any importance for us because we will be mainly working in Grothendieck (semi)rings.
\end{example}

\begin{example}\label{generalexn} If $X=R$ then by example \ref{generalex}, $S^{n}\scr{X} = X_{n}$. 
\end{example}

The following definition comes as a natural generalisation of Kapranov's zeta function.
\begin{definition}\label{def.zetafunction} \index{motivic zeta function!of a family of varieties} Let $X$ be variety over $R$, $\scr{X} = (X_i)_{i\in I}$ a family of varieties over $X$. Consider also a family $\mathbf{t} = (t_i)_{i\in I}$ of indeterminates and denote by $\svar_{R}[[\mathbf{t}]]$ the semi-ring of power series in those indeterminates over $\svar_{R}$. The zeta function associated to $\scr{X}$  is the formal power series given by
$$Z_{\scr{X}}(\mathbf{t}) = \sum_{\pi\in\N^{(I)}}[S^{\pi}\scr{X}]\t^{\pi}\ \ \in\svar_{R}[[\mathbf{t}]],$$
where $\t^{\pi} := \prod_{i\geq 1}t_i^{n_i}$. 
\index{ZX@$Z_{\scr{X}}(\t)$}
In particular, if one assumes $I = \N^*$ and specialises the $t_i$ to $t_i = t^i$ for a single variable~$t$, one gets a power series 
 $$Z_{\scr{X}}(t) = \sum_{n\geq 0}[S^{n}\scr{X}]t^{n}\ \ \in\svar_{R}[[t]].$$
 More generally, if we assume $I =  \N^p\backslash\{0\}$ for some integer $p\geq 1$, we get a multi-variate variant of the above zeta-function: 
 $$Z_{\scr{X}}(t_1,\ldots,t_p) = \sum_{\n\in \N^p}[S^{\n}\scr{X}]t_1^{n_1}\ldots t_p^{n_p}\ \ \in\svar_{R}[[t_1,\ldots,t_p]].$$
where for every $\n = (n_1,\ldots,n_p)\in\N^p$, the variety $S^{\n}\scr{X}$ is the disjoint union of the $S^{\pi}\scr{X}$ for all $\pi = (n_i)_{i\in I}\in\N^{(I)}$ such that $\sum_{i\in I}in_i = \n$. 
 
 When we want to specify $R$, we are going to write $Z_{\scr{X}/R}$ instead. 
\end{definition}

\begin{example} Taking  $X_i = X$ for all $i\geq 1$, and using the fact that by example \ref{example.allxiequal} in this case $[S^n\scr{X}] = [S^nX]$, we recover Kapranov's zeta function 
$$Z_X(t) = \sum_{n\geq 0}[S^nX]t^n\ \ \in\svar_{R}[[t]].$$
\end{example}\index{Kapranov's zeta function}
 \subsection{Describing points of symmetric products}\label{symprodpoints}\index{symmetric product!points}
Assume that $I$ is a commutative semigroup.  To describe the points of these symmetric products, it is convenient to use the term ``effective zero-cycle'' rather loosely, so that it applies to any finite formal sum of closed (or Galois orbits of geometric) points with coefficients in some semigroup.

Recall each $X_i$ comes with a morphism $\phi_i:X_i\to X$. Each point of $S^{\pi}\scr{X}$ has an image in $S^{\pi}X$ which, by construction, can be written as an effective zero-cycle on $X$ of the form
   \begin{equation}\label{0cycle}\sum_{i\in I} i(v_{i,1}+\ldots +v_{i,n_i})\in S^{\pi}X ,\end{equation}
   the $v_{i,j}$ being distinct (geometric) points of $X$. Moreover, for every $i\in I$, the degree $i$ part $$i(v_{i,1} +\ldots + v_{i,n_i})$$ comes from $(v_{i,1},\ldots,v_{i,n_i})\in X^{n_i}$, which in turn by definition comes from a point of~$X_i^{n_i}$. 
   
   Thus, by analogy with the notation used in (\ref{0cycle}), we will write elements of $S^{\pi}\scr{X}$ as effective zero-cycles on the disjoint union of (a finite number of) the $X_i$, of the form
   $$D' =\sum_{i\in I}i(x_{i,1} + \ldots + x_{i,n_i}) $$
   such that for all $i\in I$ and for all $j\in\{1,\ldots,n_i\},\ x_{i,j}$ is a geometric point of $X_i$, and
    $$\sum_{i\in I}i(\phi_i(x_{i,1}) + \ldots + \phi_i(x_{i,n_i})) \in S^{\pi}X,$$
    that is, the $\phi_i(x_{i,j})$ are distinct geometric points of $X$. One may also view an element of $S^{\pi}\scr{X}$ simply as a collection of effective zero-cycles $(D_i)_{i\in I}$ where for all $i\in I$, $D_i\in S^{n_i}X_i$, the support of the image of $D_i$ in $X$ is composed of $n_i$ distinct geometric points, and the supports of the images of $D_i$ and $D_j$ for $i\neq j$ are disjoint. 

If $D\in S^{\pi}X(\Omega)$ is a geometric point, for some algebraically closed field $\Omega$, then  for all $i\in I$ and $1\leq j\leq n_i$, $v_{i,j}\in X(\Omega).$ Let $\Omega'\supset \Omega$ be an algebraically closed field. Then the $\Omega'$-points of the fibre of $S^{\pi}X$ above $D$ are exactly those where for all $i,j$, $x_{i,j}\in X_i(\Omega')$.
  
  For $n\in I$, a geometric point of $S^{n}X$ is of the form $D = \sum_{v\in X} n_v v$, where the $n_v$ are non-negative integers, almost all zero and such that $\sum_{v}n_v = n$, and the points $v$ are distinct geometric points of $X$. The zero-cycle $D$ is an element of $S^{\pi}X$ if and only if the partition of the integer $n$ defined by the integers $(n_v)_v$ is exactly $\pi$. A geometric point of $S^{\pi}\scr{X}$ lying above $D$ will be written in the form $\sum_{v\in X} n_v x_v$, where for every $v$,  $x_v$ is a geometric point of $X_{n_v,v}$. The fibre of $S^{\pi}\scr{X}$ above a geometric point $D\in S^{\pi}X$ is
   \begin{equation}\label{formula.fibre}(S^{\pi}\scr{X})_D = \prod_{v\in D}X_{n_v,v}.\end{equation}

\subsection{Symmetric product of a family of symmetric products}\label{symproductquestion}
Assume we are given a family of varieties $\scr{X} = (X_i)_{i\in I}$ over some variety $X$, which itself is defined over $R$, and assume $R$ is itself a variety over some $k$-variety $R'$. For every $\pi\in \N^{(I)}$, this gives rise to a variety $S^{\pi}(\scr{X}/R)$ over $R$. 
We can now consider the family 
$$S^{\bullet}(\scr{X}/R) = (S^{\pi}(\scr{X}/R))_{\pi\in \N^{(I)}},$$
of varieties over $R$ and, replacing $I$ by $\N^{(I)}$ in the previous paragraph, do the same construction again. We get a family of varieties indexed by the set
$$\N^{(\N^{(I)})} = \{(m_{\pi})_{\pi}\in \N^{\N^{(I)}}, \ \ m_{\pi} = 0\ \text{for all but finitely many $\pi$}\}.$$
For $\varpi\in \N^{(\N^{(I)})}$, we have
$$S^{\varpi}(S^{\bullet}(\scr{X}/R)/R') = \left(\prod_{\pi\in \N^{(I)}}(S^{\pi}(\scr{X}/R))^{m_{\pi}}\ \   _{/R'}\right)_{\!\!\!*,R}/\prod_{\pi\in \N^{(I)}}\Sym_{m_{\pi}},$$
where $/R'$ means the product is over $R'$. 

A natural question arises now: What is the link between this family $$(S^{\varpi}(S^{\bullet}(\scr{X}/R)/R'))_{\varpi\in\N^{\left(\N^{I}\right)}},$$ and the family $(S^{\pi}(\scr{X}/R'))_{\pi\in\N^{(I)}}$ obtained by doing the symmetric product construction for the family $\scr{X}$ but seeing $X$ directly as an $R'$-variety?

\subsection{Main result}

\begin{definition} \label{mu}
 The map $\mu:\N^{(\N^{(I)}\backslash\{0\})}\arr \N^{(I)}$ is defined to be the map that sends an element $(m_{\pi})_{\pi\in \N^{(I)}\backslash\{0\}}$ to 
$$\sum_{\pi\in \N^{(I)}\backslash\{0\}}m_{\pi}\pi\in \N^{(I)}.$$ In terms of the other notation, $\mu$ sends an element
$$[[a_{1,1},\ldots,a_{1,m_1}],\ldots,[a_{r,1},\ldots,a_{r,m_r}]]\in \N^{(\N^{(I)}\backslash\{0\})}$$
to 
$$[a_{1,1},\ldots,a_{1,m_1},a_{2,1},\ldots,a_{r,1},\ldots,a_{r,m_r}]\in \N^{(I)}.$$
\end{definition}

Before stating the main proposition, let us give a motivating example.
\begin{example}\label{ex.112} Let $I = \N^{*}$ and $\pi = [1,1,2]\in\N^{(I)}$, so that
$$\mu^{-1}(\pi) = \{\ [[1],[1],[2]],\ [[1,1],[2]],\ [[1],[1,2]],\ [[1,1,2]]\ \}.$$  We keep the notation from section \ref{symproductquestion}. The points of the variety \begin{equation}\label{sym.112}S^{\pi}(\scr{X}/R') = \left(X_1\times_{R'}X_1\times_{R'}X_2\right)_{*,X}/\Sym_2\times \Sym_1\end{equation}
are zero-cycles of the form $x + y + 2z$, with $x,y,z$ having distinct images in $X$, but all mapping to the same $r\in R'$.  We therefore may classify them depending on the relative positions of their images in $R$, which may be encoded by an element of $\mu^{-1}(\pi)$, by adding square brackets to gather integers corresponding to points having the same image in $R$. There are several cases to consider:
\begin{itemize}\item The points $x,y,z$ all have distinct images in $R$: this may be encoded by $\varpi_1 = [[1],[1],[2]]$.
\item The points $x$ and $y$ have the same image in $R$, but not $z$: this corresponds to $\varpi_2 = [[1,1],[2]]$.
\item The point $z$ has the same image as one of the points $x$ or $y$, but not the other: this is represented by $\varpi_3  = [[1],[1,2]]$.
\item They all have the same image: this gives $\varpi_4 = [[1,1,2]]$. 
\end{itemize} 
Thus, we have a decomposition of $S^{\pi}(\scr{X}/R')$ into four locally closed subsets $S^{\pi}_{\varpi_i}(\scr{X}/R')$, $i=1,2,3,4$ corresponding to these four cases. 
Proposition \ref{iterate} gives a direct way of constructing varieties isomorphic to these locally closed subsets, in the flavour of what has been done in section \ref{sympower}, when we gave direct constructions for the locally closed subsets $S^{\pi}X$ of $S^{n}X$. For $\varpi_2$ for example, we may remark that giving an element of $S^{\pi}_{\varpi_2}(\scr{X}/R')$ is equivalent to giving a zero-cycle in $S^{[1,1]}(\scr{X}/R)$ and a zero-cycle in $S^{[2]}(\scr{X}/R)$, and making sure they have distinct images in $R$ but the same image in $R'$, which results in:

\begin{equation}\label{var.112}\left(S^{[1,1]}(\scr{X}/R)\times_{R'} S^{[2]}(\scr{X}/R)\right)_{*,R} = S^{\varpi_2}(S^{\bullet}(\scr{X}/R)/R')\end{equation}
The right-hand side means that this amounts exactly to applying the symmetric product construction for the element $\varpi_2\in\N^{(N^{(I)}\setminus\{0\})}$, and the family $S^{\bullet}(\scr{X}/R)$ above the $R'$-variety $R$.
\end{example}

\begin{prop}\label{iterate} Let $R'$ be a variety over $k$, $R$ a variety over $R'$,  $X$ a variety over~$R$, and let $\scr{X} = (X_i)_{i\in I}$ be a family of varieties over $X$, indexed by a set $I$. Then for every $\pi\in\N^{(I)}$ and for every $\varpi\in\mu^{-1}(\pi)$, there is a piecewise isomorphism of the variety $S^{\varpi}(S^{\bullet}(\scr{X}/R)/R')$ onto a locally closed subset $S^{\pi}_{\varpi}(\scr{X}/R')$ of $S^{\pi}(\scr{X}/R')$, so that moreover $S^{\pi}(\scr{X}/R')$ is equal to the disjoint union of the sets $S^{\pi}_{\varpi}(\scr{X}/R')$. In particular, we have the equality
$$\sum_{\varpi\in \mu^{-1}(\pi)}[S^{\varpi}(S^{\bullet}(\scr{X}/R)/R')] = [S^{\pi}(\scr{X}/R')]$$
in $\svar_{R'}$. 
\end{prop}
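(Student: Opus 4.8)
The strategy is to work directly with the scheme-level construction of the iterated symmetric product and stratify $S^{\pi}(\scr{X}/R')$ according to the way the $n_i$ points chosen in the various $X_i$ (equivalently, their images in $X$, and then in $R$) are distributed over the fibres of $R\to R'$. More precisely, recall from Section \ref{symmprod} that
$$S^{\pi}(\scr{X}/R') = \left(\prod_{i\in I}X_i^{n_i}\ _{/R'}\right)_{\!\!\!*,X}\Big/\prod_{i\in I}\Sym_{n_i},$$
where the $*,X$ decoration means we restrict to tuples whose images in $\prod_i X^{n_i}$ have pairwise distinct coordinates. A geometric point of this variety is a zero-cycle $D' = \sum_{i\in I} i(x_{i,1}+\dots+x_{i,n_i})$ with the $\phi_i(x_{i,j})$ pairwise distinct in $X$, all lying over a single $r'\in R'$. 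Composing with $X\to R$, the images $\phi_i(x_{i,j})$ induce a partition of the index set $\{(i,j)\}$ into blocks according to which point of $R$ they map to; recording, for each block, the multiset of labels $i$ appearing in it yields precisely an element $\varpi \in \mu^{-1}(\pi)$ (the condition $\mu(\varpi) = \pi$ encodes that no information about the $i$-labels is lost). I would define $S^{\pi}_{\varpi}(\scr{X}/R')$ to be the (constructible, indeed locally closed) subset of $S^{\pi}(\scr{X}/R')$ cut out by this combinatorial type; since the types partition the geometric points and each is defined by open-and-closed-type conditions on the images in $R$ (equality vs. inequality of the $R$-coordinates, which are locally closed conditions as in the $*$-construction), we obtain $S^{\pi}(\scr{X}/R') = \bigsqcup_{\varpi\in\mu^{-1}(\pi)} S^{\pi}_{\varpi}(\scr{X}/R')$ as a disjoint union of locally closed subsets.

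\textbf{Identifying the stratum.} The heart of the argument is to exhibit a piecewise isomorphism
$$S^{\varpi}(S^{\bullet}(\scr{X}/R)/R') \;\xrightarrow{\ \sim\ }\; S^{\pi}_{\varpi}(\scr{X}/R').$$
Write $\varpi = (m_{\rho})_{\rho\in\N^{(I)}\setminus\{0\}}$. By definition,
$$S^{\varpi}(S^{\bullet}(\scr{X}/R)/R') = \left(\prod_{\rho}\bigl(S^{\rho}(\scr{X}/R)\bigr)^{m_{\rho}}\ _{/R'}\right)_{\!\!\!*,R}\Big/\prod_{\rho}\Sym_{m_{\rho}}.$$
A geometric point of the left-hand side is a collection $(D_{\rho,\ell})$ of zero-cycles with $D_{\rho,\ell}\in S^{\rho}(\scr{X}/R)$, where the images in $R$ of the $D_{\rho,\ell}$ have pairwise disjoint supports and all lie over a common $r'\in R'$; spelling out each $S^{\rho}(\scr{X}/R)$ in turn, this is the same data as a zero-cycle $\sum_i i(x_{i,1}+\dots+x_{i,n_i})$ on the $X_i$ together with the extra bookkeeping of grouping the points into the blocks prescribed by $\varpi$. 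Forgetting the bracketing gives the map to $S^{\pi}(\scr{X}/R')$, whose image is exactly $S^{\pi}_{\varpi}(\scr{X}/R')$; and on the stratum the bracketing is canonically recovered (two points are in the same block iff their images in $R$ coincide), so the map is a bijection on geometric points. To upgrade this to a piecewise isomorphism I would argue on the level of the $*$-loci before taking quotients: the $*,R$-restricted product $\bigl(\prod_{\rho}(S^{\rho}(\scr{X}/R))^{m_{\rho}}\bigr)_{*,R}$ maps to the $*,X$-restricted product $\bigl(\prod_i X_i^{n_i}\bigr)_{*,X}$, and one checks this is an isomorphism onto the open-and-closed subscheme where the $R$-images realize the combinatorial type $\varpi$; this is essentially the content of Remark \ref{diagonal} (removing diagonals commutes with quotients) together with the elementary fact that fibred products over $R'$ of the $*,R$-loci reassemble, in a $\prod\Sym$-equivariant way, into the $*,X$-locus with its permutation action. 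Passing to quotients by the appropriate symmetric groups — using Proposition \ref{subquotient} to match $\prod_{\rho}(\Sym_{\rho\text{-part}}\wr\Sym_{m_{\rho}})$ with the relevant subgroup of $\prod_i\Sym_{n_i}$ — then yields the desired isomorphism of the stratum (at worst after the harmless passage to reduced structures, whence ``piecewise'').

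\textbf{Conclusion and main obstacle.} Given the piecewise isomorphisms $S^{\varpi}(S^{\bullet}(\scr{X}/R)/R')\simeq S^{\pi}_{\varpi}(\scr{X}/R')$ and the disjoint-union decomposition $S^{\pi}(\scr{X}/R') = \bigsqcup_{\varpi\in\mu^{-1}(\pi)} S^{\pi}_{\varpi}(\scr{X}/R')$, additivity of the class in the Grothendieck semiring $\svar_{R'}$ gives immediately
$$\sum_{\varpi\in\mu^{-1}(\pi)}[S^{\varpi}(S^{\bullet}(\scr{X}/R)/R')] \;=\; \sum_{\varpi\in\mu^{-1}(\pi)}[S^{\pi}_{\varpi}(\scr{X}/R')] \;=\; [S^{\pi}(\scr{X}/R')].$$
The main obstacle I anticipate is purely bookkeeping: keeping the group actions straight when identifying the two $*$-loci. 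On the iterated side one quotients by $\prod_{\rho}\Sym_{m_{\rho}}$ \emph{after} having already quotiented each $S^{\rho}(\scr{X}/R)$ by $\prod_i\Sym_{(\rho)_i}$; matching the resulting iterated wreath-type group with the single group $\prod_i\Sym_{n_i}$ acting on $\bigl(\prod_i X_i^{n_i}\bigr)_{*,X}$, and checking that the subgroup and cover hypotheses of Proposition \ref{subquotient} are met for each combinatorial type $\varpi$, is where the care is needed. The geometry itself — that removing diagonals and forming fibred products over $R'$ interacts correctly with the stratification by $R$-images — is routine once the $*$-locus formalism of Section \ref{definition} is in hand. It may also be cleanest to first treat the motivating case $I=\N^*$, $\pi=[1,1,2]$ of Example \ref{ex.112} in detail and then observe the general argument is notationally heavier but structurally identical.
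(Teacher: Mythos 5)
Your proposal is correct and follows essentially the same route as the paper: stratify $S^{\pi}(\scr{X}/R')$ by the combinatorial type of the images in $R$, compare with $S^{\varpi}(S^{\bullet}(\scr{X}/R)/R')$ at the level of the starred products before passing to quotients, and invoke Proposition \ref{subquotient} to match the group actions. The one point to tighten is that the pre-quotient immersion lands in a single labelled stratum $W_{\rho}$ (one fixed assignment of which coordinates share an $R$-image), not the whole type-$\varpi$ locus, so one must check, as the paper does, that $\prod_{i\in I}\Sym_{n_i}$ permutes these strata transitively and that the image is invariant under exactly the subgroup $H$ needed for the hypotheses of Proposition \ref{subquotient}.
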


\subsection{Proof of proposition \ref{iterate}}

To prove  proposition \ref{iterate}, by a spreading-out argument, we may assume $R'=k$ is a field. We write $J = \N^{(I)}\backslash\{0\}$, and for any $j\in J$, $\pi_j = (n_i^j)_{i\in I}\in \N^{(I)}$, so that every element $\varpi$ of $\N^{(J)}$ may be given as a family of multiplicities $(m_j)_{j\in J}$.

Put $\pi = (n_i)_{i\in I}$, and fix $\varpi\in\mu^{-1}(\pi)$. The condition that $\pi = \mu(\varpi)$ is equivalent to
$$n_i = \sum_{j\in J}m_j n_i^{j}$$
for all $i\in I$. Our proof decomposes in several steps. Both $S^{\varpi}(S^{\bullet}(\scr{X}/R))$ and $S^{\pi}(\scr{X})$ are constructed as some quotient of some product of the $X_i$. We are going to refrain from taking quotients first, and construct an immersion from the product giving the former to the product giving the latter.

\paragraph{The immersion before quotients}
For any two varieties $V$ and $W$ over $R$, there is an immersion $V \times_R W\hookrightarrow V\times W$. Thus, for every $j\in J$ there is an immersion 
$$\prod_{i\in I}X_i^{n_i^j}\ _{/R}\hookrightarrow \prod_{i\in I}X_i^{n_i^j}.$$
Restricting to the complement of the diagonal in $\prod_{i\in I}X^{n_i^{j}}$, we get an immersion
$$\left(\prod_{i\in I}X_i^{n_i^j}\ _{/R}\right)_{*,X}\hookrightarrow \left(\prod_{i\in I}X_i^{n_i^j}\right)_{\!\!\!*,X}.$$
Taking the product over all $j\in J$ of the $m_j$-th powers of those varieties gives:

$$\prod_{j\in J}\left(\left(\prod_{i\in I}X_i^{n_i^j}\ _{/R}\right)_{\!\!\!*,X}\right)^{m_j}
\hookrightarrow \prod_{j\in J}\left(\left(\prod_{i\in I}X_i^{n_i^j}\right)_{\!\!\!*,X}\right)^{m_j}.$$
For any two varieties $V$ and $W$ over $X$, we have the commutative diagram 
$$\xymatrix{(V\times W)_{*,R} \ar@{^{(}->}[r] \ar[d]& (V\times W)_{*,X} \ar@{^{(}->}[r] \ar[d]& V\times W \ar[d]\\
(X\times X)_{*,R}\ar@{^{(}->}[r]\ar[d] & (X\times X)_{*,X}  \ar@{^{(}->}[r]& X\times X\ar[d] \\
(R\times R)_{*,R} \ar@{^{(}->}[rr]& & R\times R}$$
where the horizontal arrows are all open immersions: indeed, recall that we assumed~$R$ and~$X$ to be quasi-projective over $k$, and therefore separated, so that complements of diagonals are open (and therefore so are their inverse images by the structural morphisms). In particular, we have an open immersion $(V\times W)_{*,R}\to (V\times W)_{*,X}.$ Thus, we can restrict to the complement of the diagonal of $\prod_{j\in J}R^{m_j}$ on the left, and to the complement of the diagonal of $\prod_{j\in J}\left(\prod_{i\in I}X^{n_i^j}\right)^{m_j}$ on the right, to get
\begin{equation}\label{VtoW1}\left(\prod_{j\in J}\left(\left(\prod_{i\in I}X_i^{n_i^j}\ _{/R}\right)_{\!\!\!*,X}\right)^{m_j}\right)_{\!\!\!*,R}
\hookrightarrow \left(\prod_{j\in J}\left(\prod_{i\in I}X_i^{n_i^j}\right)^{m_j}\right)_{\!\!\!*,X}.\end{equation}
Note that using the assumption $\mu(\varpi) = \pi$, we may write
\begin{equation}\label{rewrite}\left(\prod_{j\in J}\left(\prod_{i\in I} X_{i}^{n_i^j}\right)^{m_j}\right)_{\!\!\!*,X} = \left(\prod_{i\in I} X_i^{\sum_{j\in J}n_i^jm_j}\right)_{\!\!\!*,X} =  \left(\prod_{i\in I}X_i^{n_i}\right)_{\!\!\!*,X}. \end{equation}
Composing (\ref{VtoW1}) with this identification, we get an immersion
 \begin{equation}\label{VtoW}\left(\prod_{j\in J}\left(\left(\prod_{i\in I}X_i^{n_i^j}\ _{/R}\right)_{\!\!\!*,X}\right)^{m_j}\right)_{\!\!\!*,R}
\hookrightarrow \left(\prod_{i\in I}X_i^{n_i}\right)_{\!\!\!*,X}.\end{equation}

\begin{example} In example \ref{ex.112}, the immersion $(\ref{VtoW})$ corresponding to $\varpi_2$ is written in the form
$$\left((X_1\times_R X_1)_{*,X}\times X_2 \right)_{*,R} \hookrightarrow  (X_1\times X_1\times X_2)_{*,X},$$
where the variety on the left-hand side (resp. right-hand side) is exactly the one in $(\ref{var.112})$ (resp. $(\ref{sym.112})$), just without the permutation action quotients. (Recall we took $R' = k$.)
\end{example}
\paragraph{Description of the permutation actions}

 Let $V$ be the variety on the left-hand side, as well as its image through this morphism, and $W$ the variety on the right-hand side. 
 There is a natural action of $G = \prod_{i\in I}\Sym_{n_i}$ on $W$. As for $V$, we can distinguish two groups acting on it. The first one is  $$\prod_{j\in J}\left(\prod_{i\in I}\Sym_{n_{i}^j}\right)^{m_j}$$
 which comes from the natural permutation action of $\Sym_{n_i^j}$ on each product $\prod_{i\in I}X_i^{n_i^j}\ _{/R}$ for all $i\in I$ and all $j\in J$. Composing the morphisms $X_i\to X$ with the morphism $X\to R$, we get a map
 \begin{equation}\label{maptoR}\phi: \left(\prod_{j\in J}\left(\left(\prod_{i\in I}X_i^{n_i^j}\ _{/R}\right)_{\!\!\!*,X}\right)^{m_j}\right)_{\!\!\!*,R}\arr\left( \prod_{j\in J}R^{m_j}\right)_{*}\end{equation}
 the fibres of which are stable with respect to that action. On the other hand, there is also a permutation action of $\prod_{j\in J}\Sym_{m_j}$ on $\left( \prod_{j\in J}R^{m_j}\right)_{*}$, which pulls back to an action on the variety on the left-hand side in the following manner: 
for $x\in V$, denoting for every $j\in J$ and every $\ell\in\{1,\ldots,m_j\}$ by $x_{j,\ell}$ the projection of $x$ on the $\ell$-th copy of $\left(\prod_{i\in I}X_i^{n_i^j}\ _{/R}\right)_{*,X}$ occurring in $V$, the element
 $\sigma = (\sigma_j)_{j\in J}\in \prod_{j\in J}\Sym_{m_j}$ acts on $x = (x_{j,1},\ldots,x_{j,m_j})_{j\in I}$ via
 $$\sigma\cdot \left((x_{j,1},\ldots,x_{j,m_j})_{j\in I}\right) = \left(\left(x_{j,\sigma_j^{-1}(1)},\ldots,x_{j,\sigma_j^{-1}(m_j)}\right)_{j\in J}\right).$$

 Through immersion (\ref{VtoW}), these two actions give us two subgroups~$H_1$ and $H_2$ of $G = \prod_{i\in I}\Sym_{n_i}$.
 
 \begin{example}\label{example.subgroup}\begin{enumerate}\item In example \ref{ex.112}, we have $W = (X_1^2 \times X_2)_{*,X}$. Let us examine the subgroups of $G := \Sym_2\times \Sym_1$ corresponding to the different $\varpi_i$ occurring in that example. 
 \begin{itemize}\item For $\varpi_1 = [[1],[1],[2]]$, we have $V =  (X_1^2\times X_2)_{*,R}$, $H_1 = \{1\}$ and $H_2 = G$. 
 \item For $\varpi_{2} = [[1,1],[2]]$, we have $V = ((X_1\times_R X_1)_{*,X}\times X_2)_{*,R}$, $H_1 = G$ and $H_2 = \{1\}$. 
 \item For $\varpi_{3} = [[1,2],[1]]$, we have $V = ((X_1\times_R X_2)_{*,X}\times X_1)_{*,R}$, and $H_1 = H_2 = \{1\}$. 
 \item For $\varpi_{4} = [[1,1,2]]$, we have $V = ((X_1\times_RX_1\times_R X_2)_{*,X}$, $H_1 = G$ and $H_2 = \{1\}$. 
 \end{itemize}
 \item Let us examine another example: $\pi = [1,1,1,1,1,1]$, $\varpi = [[1,1],[1,1],[1],[1]]$. Then $W = (X_1^6)_{*,X}$, $G = \Sym_6$, and $$V = \left((X_1\times_R X_1)_{*,X}\times (X_1\times_R X_1)_{*,X} \times X_1\times X_1\right)_{*,R},$$
 so that $H_1$ is the subgroup of $G$ generated by the permutations $(12)$ and $(34)$, whereas~$H_2$ is generated by the permutations $(13)(24)$ and $(56)$. 
 \end{enumerate}
 \end{example}
 \begin{lemma} The subgroup $H_1$ is normalised by $H_2$. The subgroup $H:=H_1H_2$ they generate is the largest subgroup of $\prod_{i\in I}\Sym_{n_i}$ under the action of which $V$ is invariant inside~$W$. 
 \end{lemma}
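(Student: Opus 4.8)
The plan is to reduce both assertions to a purely combinatorial statement about permutation groups, after identifying $V$ with the locus in $W$ on which a natural equivalence relation on the coordinates takes a prescribed value.

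First I would make the combinatorics explicit. Fix, for each $i\in I$, the partition $P_0$ of the set of $n_i$ coordinates of type $i$ into blocks indexed by pairs $(j,\ell)$ with $j\in J$ and $1\le \ell\le m_j$, the block $(j,\ell)$ containing $n_i^j$ coordinates; this is the partition read off from the identification (\ref{rewrite}). In this language, $H_1$ is exactly $\prod_{(j,\ell)}\prod_{i\in I}\Sym(\text{type-}i\text{ coordinates of block }(j,\ell))$, i.e.\ the subgroup of $G=\prod_i\Sym_{n_i}$ that preserves each block of $P_0$ and acts arbitrarily inside it, while $H_2$ is the subgroup generated, for each $j\in J$ and each $\sigma_j\in\Sym_{m_j}$, by the element of $G$ carrying block $(j,\ell)$ onto block $(j,\sigma_j(\ell))$ through the ``identity'' bijection coming from (\ref{rewrite}). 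The normalisation statement is then routine: conjugating the factor of $H_1$ supported on block $(j,\ell)$ by an element of $H_2$ sends it isomorphically onto the factor supported on block $(j,\sigma_j(\ell))$, so $H_2$ (and trivially $H_1$) normalises $H_1$; hence $H_1\triangleleft H$ and $H=H_1H_2=H_2H_1$, which over each $j$ is the wreath product $\bigl(\prod_i\Sym_{n_i^j}\bigr)\wr\Sym_{m_j}$.

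Next I would identify $H$ with $\mathrm{Stab}_G(P_0)$, the stabiliser of $P_0$ for the natural action of $G$ on partitions of the coordinate set. The inclusion $H\subseteq\mathrm{Stab}_G(P_0)$ is clear. Conversely, if $g\in G$ stabilises $P_0$ then, since a coordinate permutation can only send a block of $P_0$ to a block with the same type-composition and the compositions $\pi_j=(n_i^j)_{i}$ for distinct $j\in J$ are distinct (they are nothing but the distinct elements $j$ of $J$ themselves), $g$ induces for each $j$ a permutation $\tau_j\in\Sym_{m_j}$ of the $m_j$ blocks of composition $j$; choosing $h_2\in H_2$ realising $(\tau_j)_j$, the element $h_2^{-1}g$ fixes every block of $P_0$ setwise and therefore lies in $H_1$, so $g\in H_2H_1=H$.

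Finally I would connect this to the geometry by checking that, on geometric points over an algebraically closed extension, $V$ is exactly the subset of $W$ on which the equivalence relation ``two coordinates $x_{i,p}$ and $x_{i',p'}$ have the same image in $R$'' equals $P_0$. This uses that $\bigl(\prod_{i\in I}X_i^{n_i^j}{}_{/R}\bigr)_{*,X}$ forces a common $R$-image within each block, that the $*,X$-condition is automatic for points already lying in $W$, and that the $*,R$-condition separates the images of distinct blocks; conversely any point of $W$ whose associated equivalence relation is $P_0$ manifestly comes from such a product. Since for $g\in G$ the equivalence relation attached to $g\cdot x$ is $g$ applied to that of $x$, one gets $gV=V$ if and only if $g\cdot P_0=P_0$, i.e.\ $g\in H$ (provided $V\neq\varnothing$; the empty case, forced when some $X_i$ appearing in $\varpi$ is empty, is vacuous and may be set aside). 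I expect this last step — pinning down precisely that $V$ is cut out by that equivalence relation, and treating it cleanly at the level of points over a large enough field — to be the only genuine subtlety; everything else is formal group theory.
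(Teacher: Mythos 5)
Your proof is correct and follows essentially the same route as the paper: the paper detects the block structure through the morphism $\phi$ to $\left(\prod_{j\in J}R^{m_j}\right)_*$ and factors any $V$-preserving element as an element of $H_2$ matching the induced permutation of the block images times an element of $H_1$ stabilising each block, which is exactly your partition-stabilizer computation. The only organisational difference is that you make the identification of $V$ with the locus $W_{\rho}$ where the coordinate equivalence relation equals the block partition explicit up front (this is precisely the content of the lemma the paper proves immediately afterwards), whereas the paper's proof works directly with a point $x\in V$ and the fibres of $\phi$.
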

\begin{proof} For all $\sigma\in H_1$ and $\tau\in H_2$, the element $\tau\sigma\tau^{-1}\in G$ stabilises the fibres of the morphism~$\phi$ in (\ref{maptoR}), which means that it stabilises each factor $\prod_{i}X_i^{n_i^{j}}\ _{/R}$ of $V$. Thus, $\tau\sigma\tau^{-1}$ is an element of~$H_1$.

Now, let $\sigma \in\prod_{i\in I}\Sym_{n_i}$ be such that for all $x\in V$, $\sigma x\in V$. Let $x\in V$, and let $\tau\in H_2$ be the element such that $\tau(\phi(x)) = \phi(\sigma(x))$. Then $\sigma\tau^{-1}$ stabilises the fibres of the map~$\phi$ in (\ref{maptoR}). This means that its action stabilises each factor $\prod_{i}X_i^{n_i^{j}}\ _{/R}$ of $V$, so $\sigma\tau^{-1}$ is an element of~$H_1$. 
\end{proof}

Our aim now is to describe a locally closed subset $W(\varpi)$ of $W$ containing $V$ and stable under the natural $\prod_{i\in I}\Sym_{n_i}$-action on $W$, and show that we can apply Proposition \ref{subquotient} to $V$ and $W(\varpi)$ to get an isomorphism
$$V/H \simeq W(\varpi)/\prod_{i\in I}\Sym_{n_i}$$
where the variety on the right-hand side will be called $S^{\pi}_{\varpi}(\scr{X})$.

 \paragraph{Equivalence relations on coordinates of points of $W$}
 Recall that $W$ is the variety $$\left(\prod_{i\in I} X_i^{n_i}\right)_{*,X}.$$ A point of this variety is of the form
$x = (x_{i,p})_{\substack{i\in I\\ 1\leq p\leq n_i}}$ where for all $i\in I$ and for all $1\leq p\leq n_i$, $x_{i,p}\in X_i$ and all coordinates $x_{i,p}$ have distinct images in $X$. Consider an equivalence relation $\rho$ on the set of indices $\{(i,p)\}_{\substack{i\in I\\ 1\leq p\leq n_i}}$.  Each equivalence class~$E$ is a subset of the latter, which we write in the form $E = \bigcup_{i\in I}E_i$ (disjoint union) where 
$$E_i  = E\cap X_i= \{(i,\alpha_{i,1}),\ldots,(i,\alpha_{i,\ell_i})\}$$ for some integers $(\ell_i)_{i\in I}$ (with $\ell_i\leq n_i$ for all $i$), and
$$1\leq \alpha_{i,1}<\ldots<\alpha_{i,\ell_i}\leq n_i\ \ \text{for all}\ \ i\in I.$$ Note that the equivalence classes $E$ of $\rho$ form a partition of the set of indices of the coordinates of the point $x$, and that therefore for every $i\in I$, the sets $E_i = E\cap X_i$ form a partition of the set $\{(i,1),\ldots,(i,n_i)\}$. Thus, the sum of the $\ell_i$ over all equivalence classes~$E$ is equal to~$n_i$. 

To each such non-empty $E$ we can associate the non-zero element $\pi(E) = (\ell_i)_{i\in I}\in\N^{(I)}$. The collection of all $\pi(E)$ for all equivalence classes $E$ of $\rho$, counted with multiplicities, then gives an element $\varpi(\rho)\in \N^{(\N^{(I)}\backslash\{0\})}$ such that $\mu(\varpi(\rho)) = \pi$, since the sum of the $\ell_i$ over all equivalence classes $E$ is $n_i$.

 \paragraph{Definition of $W(\varpi)$}
 
 For every $x\in W$, we define an equivalence relation $\rho_x$ on the set $$\{(i,p)\}_{\substack{i\in I\\ 1\leq p\leq n_i}}$$ by: $(i,p)\sim (i',p')$ if and only if the coordinates $x_{i,p}$ and $x_{i',p'}$ have the same image in $R$.

\begin{definition}
For every equivalence relation $\rho$ on $\{(i,p)\}_{\substack{i\in I\\ 1\leq p\leq n_i}}$ occurring in this way, define the locally closed subsets
$$W_{\rho} = \{x\in W,\ \rho_{x} = \rho\}\subset W$$
and $$W(\varpi) = \bigcup_{\varpi(\rho) = \varpi}W_{\rho}\subset W,$$
(this is a finite and disjoint union).
\end{definition}
\begin{example} Let $I = \N^{*}$ and $\pi = [1,1,2]$, so that $W = (X_1^2\times X_2)_{*,X}$. An element of~$W$ will be written $(x_{1,1},x_{1,2},x_{2,1}).$ Let $\rho$ be the equivalence relation on the set $\{(1,1), (1,2),(2,1)\}$ with equivalence classes $\{(1,1),(2,1)\}$ and $\{(1,2)\}$, giving rise respectively to the partitions $[1,2]$ and $[1]$, so that $\varpi:=\varpi(\rho) = [[1,2],[1]]$. The only other equivalence relation giving this element of $\mu^{-1}(\pi)$ is the one with equivalence classes $\{(1,2),(2,1)\}$ and $\{(1,1)\}$, denoted by $\rho'$.
Thus $W_{\rho}$ is the locally closed subset of triples $(x_{1,1},x_{1,2},x_{2,1})$ such that in~$R$, $x_{2,1}$ becomes equal to $x_{1,1}$ but not to $x_{1,2}$. In the same way, $W_{\rho'}$ corresponds to triples such that $x_{2,1}$ becomes equal to $x_{1,2}$ but not to $x_{1,1}$. Finally, $W(\varpi)$ is the union of these two sets, namely the set of triples such that in $R$, $x_{2,1}$ becomes equal
 either to $x_{1,1}$ or to~$x_{1,2}$, where the ``or'' is exclusive.

\end{example}

\paragraph{Taking quotients}

\begin{lemma} \begin{enumerate}[(a)]\item $W(\varpi)$ is stable under the action of $\prod_{i\in I}\Sym_{n_i}$ on $W$.
\item The group $\prod_{i\in I}\Sym_{n_i}$ acts transitively on the set of the $W_{\rho}$ with $\varpi(\rho) = \varpi$.
\end{enumerate}
\end{lemma}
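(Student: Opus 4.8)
The plan is to prove the two assertions of the lemma directly from the combinatorial description of the sets $W_\rho$ and the natural permutation action of $G = \prod_{i\in I}\Sym_{n_i}$ on $W = \left(\prod_{i\in I}X_i^{n_i}\right)_{*,X}$. Recall that $g = (g_i)_{i\in I}\in G$ acts on a point $x = (x_{i,p})$ by permuting, for each $i$, the indices $p\in\{1,\ldots,n_i\}$ according to $g_i$; so $(g\cdot x)_{i,p} = x_{i,g_i^{-1}(p)}$. The key observation is that the equivalence relation $\rho_x$ on the index set $\{(i,p)\}$ (defined by $(i,p)\sim(i',p')$ iff $x_{i,p}$ and $x_{i',p'}$ have the same image in $R$) transforms under $g$ in a completely transparent way: $g$ induces a bijection of the index set $\{(i,p)\}$, namely $(i,p)\mapsto (i,g_i(p))$, and one checks directly from the formula for the action that $\rho_{g\cdot x}$ is the push-forward of $\rho_x$ along this bijection. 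Consequently $g$ maps each equivalence class $E$ of $\rho_x$ to an equivalence class of $\rho_{g\cdot x}$ \emph{of the same cardinality in each coordinate}, i.e. with the same associated element $\pi(E)\in\N^{(I)}$. Therefore the multiset $\{\pi(E)\}_E$, which is exactly $\varpi(\rho_x)$, is unchanged: $\varpi(\rho_{g\cdot x}) = \varpi(\rho_x)$.

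For part (a), this invariance immediately gives the result: if $x\in W(\varpi)$, then $\varpi(\rho_x) = \varpi$, hence $\varpi(\rho_{g\cdot x}) = \varpi$, so $g\cdot x\in W(\varpi)$. Since $W(\varpi) = \bigcup_{\varpi(\rho)=\varpi}W_\rho$ is by definition the union of all level sets of the map $x\mapsto \varpi(\rho_x)$ over the fibre $\varpi$, and this map is $G$-invariant, $W(\varpi)$ is $G$-stable.

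For part (b), I would argue that $g\cdot W_\rho = W_{\rho'}$ where $\rho'$ is the push-forward of $\rho$ along the bijection $(i,p)\mapsto(i,g_i(p))$: indeed $x\in W_\rho$ means $\rho_x = \rho$, and then $\rho_{g\cdot x}$ equals this push-forward $\rho'$, which by the cardinality-preservation above still satisfies $\varpi(\rho')=\varpi(\rho)=\varpi$. So $G$ permutes the sets $\{W_\rho : \varpi(\rho)=\varpi\}$ among themselves, and the action on the indexing set of relations is exactly the action of $G$ on the set of equivalence relations $\rho$ of the index set $\{(i,p)\}$ that are ``block-compatible with the $X_i$-decomposition'' and have the prescribed coordinate-block-type $\varpi$. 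It remains to check transitivity: given two such relations $\rho,\rho''$ with $\varpi(\rho) = \varpi(\rho'') = \varpi$, both give a partition of $\{(i,p)\}$ into blocks, with, for each prescribed block-type $\sigma\in\N^{(I)}\setminus\{0\}$, the same number $m_\sigma$ of blocks of that type; within each $X_i$, both partition $\{(i,1),\ldots,(i,n_i)\}$ and match up the block sizes. One can therefore find, for each $i$, a permutation $g_i\in\Sym_{n_i}$ carrying the induced partition of $\{(i,1),\ldots,(i,n_i)\}$ from $\rho$ to the one from $\rho''$ \emph{and} compatibly across the global blocks (first pair up blocks of $\rho$ with blocks of $\rho''$ of the same type, then choose the $g_i$ to realize this pairing coordinate by coordinate); then $g = (g_i)$ sends $\rho$ to $\rho''$, i.e. $g\cdot W_\rho = W_{\rho''}$.

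I expect the only mildly delicate point to be the bookkeeping in the transitivity argument — making sure the coordinatewise permutations $g_i$ can be chosen \emph{simultaneously} to realize a single global block-pairing — but this is a finite combinatorial matching which goes through because $\varpi$ records precisely the data (number of blocks of each type, and coordinatewise block sizes) needed to define such a pairing. No geometry enters beyond the explicit description of the $G$-action on points, so the whole proof is a short verification once the transformation rule $\rho_{g\cdot x} = g_*\rho_x$ is written down.
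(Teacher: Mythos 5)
Your proof is correct and follows essentially the same route as the paper: for (a) the observation that the permutation action pushes $\rho_x$ forward along $(i,p)\mapsto(i,g_i(p))$ and hence preserves $\varpi$, and for (b) the blockwise matching of equivalence classes of the same type followed by a coordinatewise definition of the permutations $g_i$, which is exactly the paper's construction $\sigma_i(\alpha_{i,p})=\beta_{i,p}$. The "mildly delicate" bookkeeping you flag is handled in the paper in precisely the way you describe, so there is no gap.
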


\begin{proof}\begin{enumerate}[a)]\item 
Let $\sigma = (\sigma_{i})_{i\in I}\in \prod_{i\in I}\Sym_{n_i}$, $\rho$ some equivalence relation and $x\in W_{\rho}$. For every equivalence class $E$, the equivalence class $\sigma E = \bigcup_{i\in I}\sigma_i(E_i)$ gives the same numbers $\ell_i$: therefore $\varpi(\sigma\rho) = \varpi$.

\item  Let $\rho$ and $\rho'$ be two different equivalence relations such that $\varpi(\rho) = \varpi(\rho')$. Since they give rise to the same $\varpi$, they have the same number of equivalence classes, and moreover, to each equivalence class $E$ of $\rho$ we may associate an equivalence class~$E'$ of~$\rho'$ such that for every $i\in I$, we have
$$\# E_i = \# E'_i.$$
Denoting by $\ell_i$ this common value, write 
$$E_i = \{(i,\alpha_{i,1}),\ldots,(i,\alpha_{i,\ell_i})\}\ \ \ \text{and}\ \ \ E_i' = \{(i,\be_{i,1}),\ldots,(i,\be_{i,\ell_i})\}$$
for all $i\in I$. Define the restriction of the element $\sigma = \prod_{i\in I}\sigma_i\in \prod_{i\in I}\Sym_{n_i}$ to $$\prod_{i\in I}\{\al_{i,1},\ldots,\al_{i,\ell_i}\}$$ by
$\sigma_i(\al_{i,p}) = \be_{i,p}.$
Since the equivalence classes of $\rho$ form a partition of $\{(i,p)\}_{\substack{i\in I\\ 1\leq p\leq n_i}}$, doing this for all equivalence classes completely defines an element $\sigma\in \prod_{i\in I}\Sym_{n_i}$ such that for every $x\in W_{\rho}$, $\sigma x\in W_{\rho'}$. 
\end{enumerate}
\end{proof}

\begin{definition} For every $\varpi\in\mu^{-1}(\pi)$ we define $S^{\pi}_{\varpi}(\scr{X})$ to be the locally closed subset of $S^{\pi}\scr{X}$ given by taking the quotient of $W(\varpi)\subset W$ by $\prod_{i\in I}\Sym_{n_i}$.
\end{definition}

\begin{lemma} There is an equivalence relation $\rho$ such that the image $V$ of the immersion in (\ref{VtoW}) is equal to $W_{\rho}$. 
\end{lemma}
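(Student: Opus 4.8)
The plan is to make the immersion (\ref{VtoW}) completely explicit and to read off which locally closed subset of $W=\left(\prod_{i\in I}X_i^{n_i}\right)_{*,X}$ it cuts out. The identification (\ref{rewrite}) underlying (\ref{VtoW}) rests, for each $i\in I$, on a fixed bijection between $\{1,\dots,n_i\}$ and the disjoint union $\bigsqcup_{j\in J}\bigsqcup_{\ell=1}^{m_j}\{1,\dots,n_i^j\}$; such a bijection exists precisely because $\mu(\varpi)=\pi$, i.e. $n_i=\sum_{j\in J}m_jn_i^j$, and its choice is immaterial. Fixing one for each $i$ determines a partition of $\{1,\dots,n_i\}$ into blocks $B_i^{(j,\ell)}$ of cardinality $n_i^j$, indexed by the pairs $(j,\ell)$ with $j\in J$, $1\le\ell\le m_j$. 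I will then define $\rho$ to be the equivalence relation on $\{(i,p)\}_{i\in I,\,1\le p\le n_i}$ whose classes are the sets $E^{(j,\ell)}=\{(i,p)\,:\,p\in B_i^{(j,\ell)}\}$; equivalently, $(i,p)\sim_\rho(i',p')$ if and only if the blocks containing $p$ and $p'$ carry the same label.

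The core step is to check $V=W_\rho$. Given a geometric point $x=(x_{i,p})\in W$, group its coordinates along the blocks $B_i^{(j,\ell)}$ to produce, for each pair $(j,\ell)$, a point $x^{(j,\ell)}\in\prod_{i\in I}X_i^{n_i^j}$. Unwinding (\ref{VtoW1}) together with (\ref{rewrite}), one sees that $x$ lies in the image $V$ if and only if each $x^{(j,\ell)}$ belongs to $\left(\prod_{i\in I}X_i^{n_i^j}\ _{/R}\right)_{*,X}$ and the resulting tuple $(x^{(j,\ell)})_{j,\ell}$ maps off the big diagonal of $\prod_{j\in J}R^{m_j}$ under the structural morphisms to $R$. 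The $*,X$ clause is automatic since $x\in W$; the $/R$ clause says that all coordinates $x_{i,p}$ with $(i,p)$ in one class $E^{(j,\ell)}$ have the same image in $R$; and the diagonal condition says that coordinates lying in distinct classes have distinct images in $R$. Taken together, these conditions are exactly the assertion $\rho_x=\rho$, so $V$ and $W_\rho$ have the same geometric points and hence coincide as locally closed subsets of $W$ — which is all that is needed, since we work throughout in the Grothendieck semiring (cf. the remark in Example \ref{example.allxiequal}). I will also record that $\varpi(\rho)=\varpi$: each class $E^{(j,\ell)}$ contributes the partition $(n_i^j)_i=\pi_j$, and as $\ell$ runs over $\{1,\dots,m_j\}$ this partition occurs with multiplicity $m_j$; hence $V=W_\rho\subset W(\varpi)$, the inclusion needed in the sequel.

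I do not expect a genuine obstacle: the argument is bookkeeping with diagonals and the reindexing (\ref{rewrite}). The only point requiring a little care is that the left-hand side of (\ref{VtoW1}) imposes distinctness of the $X$-images only within each block, whereas membership in $W$ requires it globally; this is reconciled by noting that the outer $*,R$ restriction forces the blocks to have pairwise distinct images in $R$, and therefore pairwise distinct images in $X$ across blocks, so that (\ref{VtoW}) is indeed a well-defined immersion with the image described above.
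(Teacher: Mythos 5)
Your proof is correct and follows essentially the same route as the paper: both fix the equivalence relation whose classes are the index blocks determined by the identification (\ref{rewrite}) and then check that a point of $W$ lies in the image $V$ precisely when its coordinates agree in $R$ within each block and differ in $R$ across blocks, i.e. when $\rho_x=\rho$. Your added remarks (that the within-block distinctness in $X$ is automatic from membership in $W$, and that $\varpi(\rho)=\varpi$ so $V\subset W(\varpi)$) are correct and consistent with how the lemma is used afterwards.
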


\begin{proof} Fix $x=(x_{i,p})_{\substack{i\in I\\ 1\leq p\leq n_i}}\in W$ in the image of the morphism in (\ref{VtoW}), and for all $i\in I,\ j\in J$ and $1\leq q \leq m_j$, denote by $E_{i,j,q}$ the set of indices of the coordinates of the projection of $x$ to the $q$-th factor~$X_{i}^{n_i^j}$. By definition, all coordinates $x_{i,p}$ of $x$ with index   $(i,p)\in E_{j,q}:= \bigcup_{i\in I} E_{i,j,q}$ have the same image $r_{j,q}(x)\in R$, and the elements~$r_{j,q}(x)$ for all $j\in J$ and $1\leq q \leq m_j$ are distinct. Therefore, the sets $E_{j,q}$ in fact don't depend on $x$, so that the elements of $V$ are exactly those subject to the equivalence relation $\rho$ with classes $(E_{j,q})_{\substack{j\in J\\ 1\leq q\leq m_j}}.$
\end{proof}

Putting everything together and applying Proposition \ref{subquotient} to $V\subset W$ with actions of the groups $H\subset \prod_{i\in I}\Sym_{n_i}$, we get the result, since $S^{\varpi}(S^{\bullet}(\scr{X}/R))$ is by definition equal to $V/H$. 

\section{Cutting into pieces}\label{cuttingintopieces}
\subsection{Introduction}
The aim of this section is to state and prove an analogue in our setting of the following classical result about symmetric powers (see for example \cite{CNS}, chapter 6, proposition 1.1.7):

\begin{prop}\label{cut} Let $X$ be a quasi-projective variety over a field $k$, and $Y$ a closed subvariety of $X$ with open complement $U$. For any integers $n\geq 1$ and $r\in\{0,\ldots,n\}$, the variety $$S^rU\times S^{n-r} Y$$ can be identified with the locally closed subset of $S^nX$ corresponding to effective zero-cycles of degree~$n$ the restriction of which to $U$ has degree $r$. Moreover, $S^nX$ is the disjoint union of these locally closed subsets. In particular, in terms of classes in $\svar_k$, we have
$$[S^nX] = \sum_{r=0}^n[S^rU][S^{n-r}Y]\in\svar_k.$$\end{prop}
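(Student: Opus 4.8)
The plan is to lift the asserted decomposition of $S^nX$ to a decomposition of $X^n$ before passing to the symmetric quotient, and then to descend it using Proposition~\ref{subquotient}. For $r\in\{0,\ldots,n\}$ and an $r$-element subset $S\subseteq\{1,\ldots,n\}$, let $W_{r,S}\subseteq X^n$ be the subscheme of tuples $(x_1,\ldots,x_n)$ with $x_i\in U$ for $i\in S$ and $x_i\in Y$ for $i\notin S$; it is locally closed in $X^n$, being cut out by the open immersion $U\hookrightarrow X$ on the coordinates in $S$ and the closed immersion $Y\hookrightarrow X$ on the remaining ones. Since $U$ and $Y$ partition $X$ set-theoretically, the $W_{r,S}$ are pairwise disjoint, so $W_r:=\bigsqcup_{|S|=r}W_{r,S}$ is a locally closed subscheme of $X^n$, stable under the $\Sym_n$-action (permuting coordinates permutes the subsets $S$), and $X^n=\bigsqcup_{r=0}^n W_r$ is a disjoint decomposition into $\Sym_n$-stable locally closed subschemes.

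Next I would pass to the finite surjective quotient map $q\colon X^n\to S^nX=X^n/\Sym_n$. Being finite and surjective, $q$ is a topological quotient map, so for any $\Sym_n$-stable locally closed subset $W\subseteq X^n$ the image $q(W)=W/\Sym_n$ is again locally closed in $S^nX$, because $q^{-1}(q(W))=W$. Hence $S^nX=\bigsqcup_{r=0}^n W_r/\Sym_n$ is a disjoint decomposition into locally closed subsets, and the task reduces to identifying each $W_r/\Sym_n$ with $S^rU\times S^{n-r}Y$.

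For this fix $S_0=\{1,\ldots,r\}$, so that $W_{r,S_0}=U^r\times Y^{n-r}$ as a locally closed subscheme of $X^n$, stable under $H:=\Sym_r\times\Sym_{n-r}\subseteq\Sym_n$; moreover $W_{r,S_0}$ is open (and closed) in $W_r$ since the pieces $W_{r,S}$ are disjoint and locally closed in $W_r$. The $\Sym_n$-translates of $W_{r,S_0}$ are exactly the $W_{r,S}$, their number $\binom{n}{r}$ equals the index $[\Sym_n:H]$, and they cover $W_r$ disjointly, so the hypotheses of Proposition~\ref{subquotient} are met (with $W_r$ quasi-projective as a locally closed subscheme of $X^n$), giving a canonical isomorphism $W_{r,S_0}/H\xrightarrow{\sim}W_r/\Sym_n$. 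Finally one checks that $W_{r,S_0}/H=(U^r\times Y^{n-r})/(\Sym_r\times\Sym_{n-r})$ is canonically $S^rU\times S^{n-r}Y$, which on affine charts is the identity $(A\otimes_k B)^{\Sym_r\times\Sym_{n-r}}=A^{\Sym_r}\otimes_k B^{\Sym_{n-r}}$ and glues. Chaining these identifications, $W_r/\Sym_n$ is the locally closed subset of $S^nX$ which on points parametrises the effective zero-cycles of degree $n$ whose restriction to $U$ has degree $r$; combining the disjoint decomposition of $S^nX$ with the scissor relations in $\svar_k$ gives $[S^nX]=\sum_{r=0}^n[S^rU][S^{n-r}Y]$.

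The only genuinely delicate point is the middle step: that the $\Sym_n$-quotient carries the locally closed stratification $\bigsqcup_r W_r$ of $X^n$ to a locally closed stratification of $S^nX$, i.e. that $q$ is well enough behaved on locally closed subsets. Everything else — the existence of the relevant quotients, the verification of the hypotheses of Proposition~\ref{subquotient}, and the compatibility of that proposition with the ``product of quotients $=$ quotient of product'' identification — is routine.
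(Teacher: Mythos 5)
Your approach is the correct one and mirrors the standard argument: lift the decomposition to $X^n$, stratify by the number of coordinates landing in $U$, observe each stratum is $\Sym_n$-stable, and descend to $S^nX$ via Proposition~\ref{subquotient}. (The paper itself does not prove this statement, citing it as classical from \cite{CNS}, Chapter~6, Proposition~1.1.7, so there is no in-paper proof to compare against; but your route is the expected one, and it is also the same ``lift, decompose, descend via Proposition~\ref{subquotient}'' strategy the paper uses for the generalisation in Proposition~\ref{iterate}.) Your descent step is handled carefully: a finite surjective morphism is closed, hence a topological quotient, and for a saturated locally closed $W$ you correctly write $q(W)$ as $q(\overline{W})\setminus q(\overline{W}\setminus W)$ with both $\overline{W}$ and $\overline{W}\setminus W$ saturated and closed. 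The application of Proposition~\ref{subquotient} with $H=\Sym_r\times\Sym_{n-r}$ acting on $W_{r,S_0}=U^r\times Y^{n-r}$ inside $W_r$ is also correct once its hypotheses are actually verified.

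That said, two of the local topological claims are asserted with reasoning that does not hold in general, even though the conclusions are true. First, ``$W_r$ is a locally closed subscheme'' does not follow from its being a finite disjoint union of locally closed pieces $W_{r,S}$ (a disjoint union of an open set and a point on its boundary is not locally closed). The correct argument is that $W_r$ is the intersection of the open set $\{x\in X^n:\ \text{at least $r$ coordinates lie in $U$}\}$ (a union of preimages of $U$ under coordinate projections) with the closed set $\{x\in X^n:\ \text{at most $r$ coordinates lie in $U$}\}$, the complement of the analogous open set for $r+1$. Second, ``$W_{r,S_0}$ is open and closed in $W_r$ since the pieces $W_{r,S}$ are disjoint and locally closed'' is again not a valid deduction: in $\A^1$ the subsets $\{0\}$ and $\A^1\setminus\{0\}$ are disjoint, locally closed, and cover, yet $\{0\}$ is not open. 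What actually holds is that each $W_{r,S}$ is \emph{closed} in $W_r$: if $y\in W_r$ is a specialisation of $x\in W_{r,S}$, then for $i\notin S$ the coordinate $y_i$ specialises $x_i\in Y$ and so lies in the closed set $Y$; as $y\in W_r$ has exactly $r$ coordinates in $U$, these must be the ones indexed by $S$, whence $y\in W_{r,S}$. Being finitely many, pairwise disjoint, closed, and covering $W_r$, the $W_{r,S}$ are then each open in $W_r$ as well. With these two justifications repaired, your proof is complete.
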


Thus, starting with a zero-cycle $D$ of degree $n$ in $X$, we get through restriction a pair $(D_U,D_Y)$ of zero-cycles in $S^rU\times S^{n-r}Y$ for some $r$. Conversely, starting with a point of the latter variety, the sum of the two components yields an element in $S^nX$. More precisely, by the same argument we even have the following:

\begin{prop}\label{cut2} (Refinement of Proposition \ref{cut}) Let $k, X, U, Y$ be as in Proposition \ref{cut}, and let $\pi\in\N^{(\N^*)}$ be a partition. Then $S^{\pi}X$ is the disjoint union of locally closed subsets isomorphic to $S^{\pi'}U\times S^{\pi-\pi'} Y$ where $\pi'$ runs through all partitions such that $\pi'\leq \pi$. In particular, in terms of classes in $\svar_k$, we have
$$[S^{\pi}X] = \sum_{\pi'\leq \pi}[S^{\pi'}U][S^{\pi-\pi'} Y]\in\svar_k.$$\end{prop}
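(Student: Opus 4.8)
The plan is to prove the refined statement directly; Proposition~\ref{cut} is then the special case obtained by summing over all partitions of $n$. Write $\pi = (n_i)_{i\geq 1}$. Recalling the construction of section~\ref{sympower}, I would work at the level of $W := \bigl(\prod_{i\geq 1}X^{n_i}\bigr)_{*,X}$, before passing to the quotient by $G := \prod_{i\geq 1}\Sym_{n_i}$. First I would decompose $W$ according to which of its coordinates lie in $U$ and which lie in $Y$: for a subset $\tau$ of the index set $\{(i,p) : i\geq 1,\ 1\leq p\leq n_i\}$, let $W_\tau\subset W$ be the locus where $x_{i,p}\in U$ for $(i,p)\in\tau$ and $x_{i,p}\in Y$ for $(i,p)\notin\tau$. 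Because $U$ is open, $Y$ closed and $X = U\sqcup Y$ as a topological space, each $W_\tau$ is locally closed in $W$ and the $W_\tau$ form a finite disjoint cover of $W$.

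Next I would analyse the $G$-action on this cover. The group $G$ permutes the $W_\tau$, and the $G$-orbit of $W_\tau$ depends only on the integers $n'_i := \#\{p : (i,p)\in\tau\}$, which satisfy $0\leq n'_i\leq n_i$ and hence assemble into a partition $\pi' := (n'_i)_{i\geq 1}\leq\pi$; moreover every $\pi'\leq\pi$ occurs. For fixed $\pi'$ let $Z_{\pi'}\subset W$ be the ($G$-stable) union of all $W_\tau$ with this profile, and let $\tau_{\pi'}$ be the standard choice sending the first $n'_i$ indices of block $i$ into $U$ and the remaining ones into $Y$. Since $U\cap Y=\varnothing$ automatically makes every $U$-coordinate distinct from every $Y$-coordinate, one gets the identification
\[
W_{\tau_{\pi'}} \;=\; \Bigl(\prod_{i\geq 1}U^{n'_i}\Bigr)_{*,U}\times\Bigl(\prod_{i\geq 1}Y^{\,n_i-n'_i}\Bigr)_{*,Y},
\]
with stabiliser in $G$ equal to $H := \bigl(\prod_{i}\Sym_{n'_i}\bigr)\times\bigl(\prod_{i}\Sym_{n_i-n'_i}\bigr)$, the first factor acting only on the $U$-coordinates and the second only on the $Y$-coordinates.

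The key step is then to apply Proposition~\ref{subquotient} to the inclusion $W_{\tau_{\pi'}}\subset Z_{\pi'}$ with the subgroup $H\subset G$. I would check that $W_{\tau_{\pi'}}$ is open in $Z_{\pi'}$ (inside $Z_{\pi'}$, imposing $x_{i,p}\in U$ for $(i,p)\in\tau_{\pi'}$ is an open condition which automatically forces the remaining coordinates of block $i$ into $Y$), that $W_{\tau_{\pi'}}$ is $H$-stable, and that via $Hg\mapsto W_{\tau_{\pi'}}g$ the right cosets of $H$ in $G$ index precisely the members of the $G$-orbit, so that $Z_{\pi'}$ is their disjoint union. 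Since $Z_{\pi'}$, being locally closed in the quasi-projective variety $\prod X^{n_i}$, is itself quasi-projective, Proposition~\ref{subquotient} gives an isomorphism $W_{\tau_{\pi'}}/H \xrightarrow{\ \sim\ } Z_{\pi'}/G$. On the left, a product of good group actions on the two factors has quotient the product of the two quotients, so $W_{\tau_{\pi'}}/H = S^{\pi'}U\times S^{\pi-\pi'}Y$ by definition of symmetric products; on the right, $Z_{\pi'}/G$ is the image of $Z_{\pi'}$ in $S^{\pi}X$, which is a locally closed subset (up to reduced structure, which is all that matters in $\svar_k$), exactly as in the proof of Proposition~\ref{iterate}. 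As the $Z_{\pi'}$ are $G$-saturated and cover $W$, these images form a disjoint cover of $S^{\pi}X$ as $\pi'$ runs over partitions $\leq\pi$. Passing to classes and using additivity along a locally closed partition together with $[A\times B]=[A][B]$ then yields $[S^{\pi}X] = \sum_{\pi'\leq\pi}[S^{\pi'}U][S^{\pi-\pi'}Y]$ in $\svar_k$.

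The part I expect to be the main obstacle is the bookkeeping required to legitimately invoke Proposition~\ref{subquotient}: verifying carefully that $W_{\tau_{\pi'}}$ is open inside the orbit $Z_{\pi'}$ and that the coset decomposition matches the hypothesis $Z_{\pi'}=\bigsqcup_i W_{\tau_{\pi'}}g_i$, and justifying that the quotient of a $G$-stable locally closed subvariety of $W$ embeds as a locally closed subvariety of $S^{\pi}X$. These points are routine in the quasi-projective setting but must be spelled out; everything else is formal once the decomposition of $W$ according to $U$/$Y$-membership of coordinates is in place.
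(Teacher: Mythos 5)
Your argument is correct, but it takes a genuinely different route from the text. The paper does not reprove Proposition~\ref{cut2} from first principles: it treats Proposition~\ref{cut} as a known classical statement (cited from \cite{CNS}) and obtains the refinement ``by the same argument'', i.e.\ by the restriction-of-zero-cycles description with the diagonal removed; similarly, the general mixed statement, Proposition~\ref{pieces}, is deduced simply by applying Proposition~\ref{cut} to each factor of $\prod_{i}S^{n_i}X$ and then restricting to the locus lying over the complement of the diagonal. You instead work upstairs on $\bigl(\prod_{i}X^{n_i}\bigr)_{*,X}$, stratify according to which coordinates land in $U$ or $Y$, identify the stabiliser $H$ of the standard stratum, and descend via Proposition~\ref{subquotient} --- exactly the technique the paper uses for Proposition~\ref{iterate}. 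What your route buys is self-containedness and a completely transparent indexing of the strata by the partitions $\pi'\leq\pi$; what it costs is the bookkeeping you yourself list, which the paper's short deduction from Proposition~\ref{cut} avoids. The one assertion you make without justification that genuinely needs a word is that $Z_{\pi'}$ is locally closed in $\prod_i X^{n_i}$ (you need this quasi-projectivity before invoking Proposition~\ref{subquotient}): it holds because $Z_{\pi'}$ is open in the closed subset $C_{\pi'}\subset W$ of points having, for every $i$, at least $n_i-n'_i$ coordinates of the $i$-th block in $Y$; indeed the complement of $Z_{\pi'}$ in $C_{\pi'}$ is $\bigcup_{\pi''\lneq\pi'}C_{\pi''}$, which is closed. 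With that remark, and the routine checks you already flag (openness of $W_{\tau_{\pi'}}$ inside $Z_{\pi'}$, the coset count $[G:H]=\prod_i\binom{n_i}{n'_i}$ matching the members of the orbit, and the fact that the finite quotient map $W\to S^{\pi}X$ sends $G$-stable locally closed subsets to locally closed subsets), your proof is complete and yields the class identity exactly as stated.
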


  To motivate the construction in the following paragraph, let us examine what exactly we need to get a result of this flavour for symmetric products. Fix a set $I$, and let $X$ be a variety over~$k$, and $\scr{X} = (X_i)_{i\in I}$ a family of varieties over~$X$. For all~$i\in I$, let~$Y_i$ be a closed subvariety of $X_i$, and~$U_i$ its complement, so that we get families of~$X$-varieties $\scr{Y}  = (Y_i)_{i\in I}$ and $\scr{U} = (U_i)_{i\in I}$. 

Let $\pi = (n_i)_{i\in I}\in\N^{(I)}$. Any point $ D\in S^{\pi}\scr{X}$ is a zero-cycle contained in the disjoint union of (a finite number of) the $X_i$, and we can consider its restriction $D_{\scr{U}}$ to $\cup_{i\in I}U_i$. As in the discussion above, this clearly gives us a point in $S^{\pi'}\scr{U}$ for some $\pi'\leq \pi$, and the restriction $D_{\scr{Y}}$ to the elements of the family $\scr{Y}$ will then be a point in $S^{\pi-\pi'}\scr{Y}$. In other words, there is a well-defined immersion
$$\alpha: S^{\pi}\scr{X}\arr \bigcup_{\pi'\leq \pi}S^{\pi'}\scr{U}\times S^{\pi-\pi'}\scr{Y}.$$
On the other hand, this morphism $\alpha$ will in general not be an isomorphism. Indeed, the inverse mapping $(D_1,D_2)\mapsto D_1 + D_2$ that worked in the above cases is well-defined only when $D_1$ and $D_2$ have disjoint supports, since, by definition, points of $S^{\pi}\scr{X}$ are zero-cycles with supports mapping injectively to $S^{\pi}X$. Thus the image of $\alpha$ is the subset of $\bigcup_{\pi'\leq \pi}S^{\pi'}\scr{U}\times S^{\pi-\pi'}\scr{Y}$ mapping to pairs in $\bigcup_{\pi'\leq \pi}S^{\pi'}X\times S^{\pi-\pi'}X$ with disjoint supports. Thus, to generalise proposition \ref{cut2}, we are going to define more general symmetric products which are \textit{mixed}, in the sense that we will combine different families of varieties before restricting to the complement of the diagonal in the base variety. In the case studied above, this construction will give us varieties $S^{\pi',\pi-\pi'}(\scr{U},\scr{Y})$, the union of which for all $\pi'\leq \pi$ corresponds exactly to the image of $\alpha$, so that the analogue of proposition \ref{cut2} for symmetric products will take the form
$$[S^{\pi}\scr{X}] = \sum_{\pi'\leq \pi}[S^{\pi',\pi-\pi'}(\scr{U},\scr{Y})]$$
in $\svar_k$.  
 
\subsection{Mixed symmetric products}\label{mixedsymproducts}

 Let $X$ be a variety over $R$, $p\geq 1$ an integer, and $\scr{X}_{1},\ldots,\scr{X}_{p}$ families of varieties over~$X$, all indexed by the same set $I$. For all $j\in\{1,\ldots,p\}$ we write $\scr{X}_j = (X_{i,j})_{i\in I}$. We also fix for every $j\in\{1,\ldots,p\}$ an almost zero family of non-negative integers $\pi_j = (r_{i,j})_{i\in I}$ (that is, all $r_{i,j}$ but a finite number are zero). 
The product (over $R$)
$$\prod_{i\in I}X_{i,1}^{r_{i,1}}\times\ldots \times  X_{i,p}^{r_{i,p}}$$
is a variety over
$$\prod_{i\in I} X^{r_{i,1}}\times \ldots \times X^{r_{i,p}}.$$
As before, we restrict it to the open subset $\left(\prod_{i\in I}X_{i,1}^{r_{i,1}}\times\ldots \times  X_{i,p}^{r_{i,p}}\right)_*$ lying above the complement of the diagonal of $\prod_{i\in I} X^{r_{i,1}}\times \ldots \times X^{r_{i,p}} = X^{\sum_{i,j}r_{i,j}}$, that is, we remove points mapping to points having at least two equal coordinates. Then we take the quotient by the natural permutation action of $\prod_{i\in I}\Sym_{r_{i,1}}\times \ldots \times \Sym_{r_{i,j}}$. The resulting variety will be denoted by $$S^{\pi_1,\ldots,\pi_p}(\scr{X}_1,\ldots,\scr{X}_p).$$ 
\index{symmetric product!mixed}
If for every $j\in\{1,\ldots,p\}$ all varieties $X_{i,j}$ are equal to some $X_j$, we may write this simply $S^{\pi_1,\ldots,\pi_p}(X_1,\ldots,X_p)$.

As it was the case in the construction of simple symmetric products, we could also first take the quotient, and then remove the closed set lying above the image of the diagonal by the quotient map.

The following properties are obvious consequences of the definition:

\begin{fact}\begin{enumerate}\item In the case $p=1$, we recover exactly the symmetric product $S^{\pi}\scr{X}$ from section \ref{anyset}. 
\item For all $\sigma\in \Sym_p$, there is an isomorphism
$$S^{\pi_1,\ldots,\pi_p}(\scr{X}_1,\ldots,\scr{X}_p) \simeq S^{\pi_{\sigma(1)},\ldots,\pi_{\sigma(p)}}(\scr{X}_{\sigma(1)},\ldots,\scr{X}_{\sigma(p)}).$$
\item If $\pi_p= 0$, then 
$$S^{\pi_1,\ldots,\pi_p}(\scr{X}_1,\ldots,\scr{X}_p) = S^{\pi_1,\ldots,\pi_{p-1}}(\scr{X}_1,\ldots,\scr{X}_{p-1}).$$
\item If there exists  $i\in I$ such that $r_{i,p}>0$ and $X_{i,p} = \emptyset$, then 
$$S^{\pi_1,\ldots,\pi_p}(\scr{X}_1,\ldots,\scr{X}_p) = \emptyset.$$
\end{enumerate}
\end{fact}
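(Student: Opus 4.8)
The plan is to read off all four assertions directly from the construction of the mixed symmetric product recalled in Section~\ref{mixedsymproducts}. Recall that $S^{\pi_1,\ldots,\pi_p}(\scr{X}_1,\ldots,\scr{X}_p)$ is obtained in three steps: first one forms the fibred product over $R$, $P=\prod_{i\in I}(X_{i,1}^{r_{i,1}}\times\cdots\times X_{i,p}^{r_{i,p}})$; then one restricts to the open subscheme $P_*$ lying over the complement of the big diagonal of $X^{\sum_{i,j}r_{i,j}}$; and finally one quotients $P_*$ by the permutation action of $G=\prod_{i\in I}\Sym_{r_{i,1}}\times\cdots\times\Sym_{r_{i,p}}$. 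The recurring observation will be that the passage to $P_*$ depends only on the underlying multiset of $X$-valued coordinates of a point of $P$, and that the formation of the quotient is functorial for isomorphisms compatible with the group actions, by the universal property recalled in Section~\ref{appendix}.

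For item~(1), I would simply note that when $p=1$ the three steps above are word for word the definition of $S^{\pi_1}\scr{X}_1$ from Section~\ref{anyset}. For item~(3), if $\pi_p=0$ then $r_{i,p}=0$ for every $i$, so each factor $X_{i,p}^{r_{i,p}}$ equals the base $R$ and disappears from the fibred product over $R$; hence $P$ is canonically the product $P'$ attached to $(\scr{X}_1,\ldots,\scr{X}_{p-1})$. Since $\sum_{i,j\le p}r_{i,j}=\sum_{i,j\le p-1}r_{i,j}$, the big-diagonal loci coincide, so $P_*=P'_*$, while $G$ differs from the analogous group $G'$ only by trivial factors $\Sym_0$; the two quotients are therefore literally equal. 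For item~(4), if $r_{i,p}>0$ and $X_{i,p}=\emptyset$ for some $i$ then $X_{i,p}^{r_{i,p}}=\emptyset$, so $P$ is a fibred product with an empty factor and is empty; consequently $P_*$ and $P_*/G$ are empty as well.

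The only item with any content is~(2). A permutation $\sigma\in\Sym_p$ reorders the $p$ blocks of factors in $P$, and since the fibred product over $R$ is commutative up to canonical isomorphism this yields an $R$-isomorphism $P\xrightarrow{\sim}P^\sigma$, where $P^\sigma$ is the product attached to $(\scr{X}_{\sigma(1)},\ldots,\scr{X}_{\sigma(p)})$ with exponents $(\pi_{\sigma(1)},\ldots,\pi_{\sigma(p)})$. This isomorphism permutes the $X$-valued coordinates bijectively, hence matches the big diagonals and restricts to an isomorphism $P_*\xrightarrow{\sim}P^\sigma_*$; it is moreover equivariant for the identification $G\cong G^\sigma$ obtained by permuting the $\Sym_{r_{i,j}}$ factors accordingly. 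Passing to quotients will then give the asserted isomorphism.

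I do not expect any genuine obstacle: all four statements are bookkeeping. The single mildly delicate point — that passing to $P_*$ and to the quotient commutes with the reindexing isomorphism of item~(2) — is handled by separatedness of $X$ and $R$ over $k$ (so that diagonals are closed, as already used in Section~\ref{relativesetting}) together with the uniqueness clause in the universal property of categorical quotients from Section~\ref{appendix}.
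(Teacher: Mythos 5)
Your proposal is correct and matches the paper's treatment: the paper states these four properties as immediate consequences of the definition (it gives no separate proof), and your verification is exactly the bookkeeping implicit there — reading items (1), (3), (4) off the three-step construction and obtaining (2) from the canonical reordering isomorphism of the fibred product, which preserves the complement of the diagonal and is equivariant, hence descends to the quotients by the universal property.
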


\begin{remark}\label{separate} The open set $\left(\prod_{j=1}^pX^{\sum_{i\in I}r_{i,j}}\right)_*$  is a subset of
$\prod_{j=1}^p \left(X^{\sum_{i\in I}r_{i,j}}\right)_*$
consisting of points with projection to $X^{\sum_{i\in I}r_{i,j}}$ having distinct coordinates for all $j$. Thus, there is an open immersion
\begin{equation}\label{prodimmersion}S^{\pi_1,\ldots,\pi_p}(\scr{X}_1,\ldots,\scr{X}_p)\arr S^{\pi_1}\scr{X}_1\times \ldots \times S^{\pi_p}\scr{X}_p.\end{equation}
It is an isomorphism if there is a partition of $X$ into locally closed subsets $V_1,\ldots,V_p$ such that for all $i\in I$ and all $j\in\{1,\ldots,p\}$, the image of $X_{i,j} $ in $X$ is contained in $V_i$.  Indeed, then all elements in any $p$-tuple $(D_1,\ldots,D_p)\in S^{\pi_1}\scr{X}_1\times \ldots \times S^{\pi_p}\scr{X}_p$ have disjoint supports, and the inverse mapping is given by $(D_1,\ldots,D_p)\mapsto D_1 + \ldots + D_p$. 
\end{remark}

We can now state our generalisation of Propositions \ref{cut} and $\ref{cut2}$. 
\begin{prop}\label{pieces} Let $X, \scr{X}_1,\ldots,\scr{X}_p$ be as above, and consider moreover another family $\scr{X} = (X_i)_{i\in I}$ of quasi-projective varieties over $X$, together with families $\scr{Y} = (Y_i)_{i\in I}$ and $\scr{U} = (U_i)_{i\in I}$ such that for every $i\in I$, $Y_i$ is a closed subvariety of $X_i$ and $U_i$ its complement. Let $\pi_1,\ldots,\pi_p,\pi$ be elements of $\N^{(I)}$. Then for every $\pi'\leq \pi$ the variety
$$S^{\pi_1,\ldots,\pi_{p},\pi',\pi -\pi'}(\scr{X}_1,\ldots,\scr{X}_{p},\scr{U},\scr{Y})$$ is isomorphic to the locally closed subset of $S^{\pi_1,\ldots,\pi_p,\pi}(\scr{X}_1,\ldots,\scr{X}_p,\scr{X})$ corresponding to zero-cycles with $\scr{X}$-component inducing partition $\pi'$ on $\scr{U}$. Moreover, the variety $$S^{\pi_1,\ldots,\pi_p,\pi}(\scr{X}_1,\ldots,\scr{X}_p,\scr{X})$$ is the disjoint union of these locally closed subsets, so that in terms of classes in $\svar_{R}$, we have
$$\left[S^{\pi_1,\ldots,\pi_p,\pi}(\scr{X}_1,\ldots,\scr{X}_p,\scr{X})\right] = \sum_{\pi'\leq \pi}\left[S^{\pi_1,\ldots,\pi_{p},\pi',\pi-\pi'}(\scr{X}_1,\ldots,\scr{X}_{p},\scr{U},\scr{Y})\right].$$

\end{prop}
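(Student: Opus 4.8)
The plan is to perform the cutting entirely at the level of the varieties that occur before the symmetric-group quotients are taken, and then to push the resulting decomposition through the quotient by means of Proposition \ref{subquotient}. Exactly as in the proof of Proposition \ref{iterate}, we may assume $R=\spec k$ with $k$ a field, so that all the schemes below are quasi-projective and Proposition \ref{subquotient} applies verbatim. Write $\pi=(n_i)_{i\in I}$ and $\pi_j=(r_{i,j})_{i\in I}$ for $j\in\{1,\dots,p\}$. By the construction in Section \ref{mixedsymproducts}, the variety $S^{\pi_1,\dots,\pi_p,\pi}(\scr{X}_1,\dots,\scr{X}_p,\scr{X})$ is the quotient $W/G$, where
$$W=\Bigl(\ \prod_{i\in I}\ X_{i,1}^{r_{i,1}}\times\cdots\times X_{i,p}^{r_{i,p}}\times X_i^{n_i}\ \Bigr)_{\!\!*}$$
is the open subvariety of points whose images in the corresponding power of $X$ have pairwise distinct coordinates, and $G=\prod_{i\in I}\bigl(\Sym_{r_{i,1}}\times\cdots\times\Sym_{r_{i,p}}\times\Sym_{n_i}\bigr)$ acts by permuting coordinates within each block.

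First I would decompose $W$. For each $i\in I$ we have $X_i=U_i\sqcup Y_i$ with $U_i$ open and $Y_i$ closed, hence $X_i^{n_i}$ is the disjoint union, over subsets $A\subseteq\{1,\dots,n_i\}$, of the locally closed subscheme of points whose $\ell$-th coordinate lies in $U_i$ exactly when $\ell\in A$, and this subscheme is isomorphic to $U_i^{\#A}\times Y_i^{n_i-\#A}$. Taking the product over $i$ and intersecting with the non-diagonal locus, $W$ becomes the disjoint union of locally closed subvarieties $W_{(A_i)}$ indexed by families $(A_i)_{i\in I}$ with $A_i\subseteq\{1,\dots,n_i\}$. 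Grouping these according to the profile $\pi':=(\#A_i)_{i\in I}$ — note that automatically $\pi'\le\pi$ and $\pi'\in\N^{(I)}$ — yields a decomposition of $W$ into $G$-stable locally closed subvarieties $W^{\pi'}=\bigsqcup_{\#A_i=a_i}W_{(A_i)}$, one for each $\pi'=(a_i)_{i\in I}\le\pi$ (with $W^{\pi'}$ possibly empty, e.g. when some $U_i$ or $Y_i$ is empty), the pieces of a fixed profile being permuted transitively by the $\Sym_{n_i}$-factors of $G$.

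Next, fix $\pi'\le\pi$ and take $\epsilon$ to be the family with $A_i=\{1,\dots,a_i\}$; set $W_\epsilon:=W_{(A_i)}$. Inside $W^{\pi'}$ the condition of lying in $W_\epsilon$ only requires the first $a_i$ coordinates of each block to lie in $U_i$ — which forces the remaining ones into $Y_i$ — so $W_\epsilon$ is \emph{open} in $W^{\pi'}$; it is stable under the subgroup $H:=\prod_{i\in I}\bigl(\Sym_{r_{i,1}}\times\cdots\times\Sym_{r_{i,p}}\times\Sym_{a_i}\times\Sym_{n_i-a_i}\bigr)\subseteq G$, which is precisely its stabiliser; and the translates of $W_\epsilon$ by a system of representatives of the cosets of $H$ in $G$ form a disjoint cover of $W^{\pi'}$. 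Proposition \ref{subquotient} therefore gives a canonical isomorphism $W_\epsilon/H\xrightarrow{\ \sim\ }W^{\pi'}/G$. Since the $\Sym_{r_{i,j}}$-factors are untouched and the factors $\Sym_{a_i}\times\Sym_{n_i-a_i}$ act in the standard way on $W_\epsilon\cong\bigl(\prod_{i\in I}X_{i,1}^{r_{i,1}}\times\cdots\times X_{i,p}^{r_{i,p}}\times U_i^{a_i}\times Y_i^{n_i-a_i}\bigr)_{*}$, the left-hand side is exactly $S^{\pi_1,\dots,\pi_p,\pi',\pi-\pi'}(\scr{X}_1,\dots,\scr{X}_p,\scr{U},\scr{Y})$.

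Finally, the image $W^{\pi'}/G$ of the $G$-stable locally closed subvariety $W^{\pi'}$ under the finite quotient map $W\to W/G$ is locally closed in $W/G=S^{\pi_1,\dots,\pi_p,\pi}(\scr{X}_1,\dots,\scr{X}_p,\scr{X})$; these subvarieties are pairwise disjoint and cover $W/G$ as $\pi'$ ranges over the finite set $\{\pi'\in\N^{(I)}:\pi'\le\pi\}$; and by the point-description of Section \ref{symprodpoints}, a point of $W^{\pi'}/G$ is a tuple of zero-cycles whose $\scr{X}$-component has, at each index $i$, exactly $a_i$ of its $n_i$ points lying over $U_i$, i.e. induces the partition $\pi'$ on $\scr{U}$. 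This proves the geometric statement; taking classes in $\svar_{R}$ and using additivity over the disjoint cover yields
$$\bigl[S^{\pi_1,\dots,\pi_p,\pi}(\scr{X}_1,\dots,\scr{X}_p,\scr{X})\bigr]=\sum_{\pi'\le\pi}\bigl[S^{\pi_1,\dots,\pi_p,\pi',\pi-\pi'}(\scr{X}_1,\dots,\scr{X}_p,\scr{U},\scr{Y})\bigr].$$
The one delicate point is the descent through the quotient in the third paragraph: one must verify that $W_\epsilon$ is genuinely open in $W^{\pi'}$ and that its $G$-translates form a disjoint cover, so that Proposition \ref{subquotient} is applicable; this is the analogue of the corresponding verification in the proof of Proposition \ref{iterate}, and it is what pins down the fact that the pieces are indexed exactly by the partitions $\pi'\le\pi$, each occurring once.
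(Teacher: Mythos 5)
Your argument is correct, but it takes a different route from the paper's. The paper proves Proposition \ref{pieces} in two lines: using the observation (Remark \ref{diagonal} and the end of Section \ref{mixedsymproducts}) that a mixed symmetric product can be obtained by \emph{first} taking quotients --- so that it sits inside $\prod_{i\in I}S^{r_{i,1}}X_{i,1}\times\cdots\times S^{r_{i,p}}X_{i,p}\times S^{n_i}X_i$ as the open locus above the complement of the diagonal --- it applies the classical decomposition of Proposition \ref{cut} to each factor $S^{n_i}X_i$ and then restricts everything to that open locus. You instead work entirely upstairs, before any quotient is taken: you stratify the product by recording which coordinates of each $X_i^{n_i}$-block land in $U_i$, group the strata by their profile $\pi'$, and descend through the quotient via Proposition \ref{subquotient}, exactly in the spirit of the proof of Proposition \ref{iterate}. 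Your route is more self-contained (it in effect reproves Proposition \ref{cut} in the mixed setting rather than citing it) at the cost of the coset bookkeeping, and the verification you flag at the end --- openness of $W_\epsilon$ in $W^{\pi'}$ and the disjoint cover by $H$-coset translates --- is indeed the crux and is carried out correctly. One small remark: the opening reduction to $R=\spec k$ is both unnecessary and, if taken literally as a spreading-out argument, slightly too weak for the statement as phrased, since spreading out plus Noetherian induction would only yield a piecewise isomorphism rather than the asserted isomorphism onto a locally closed subset. But nothing in your argument actually uses that reduction: since $R$ is quasi-projective over $k$, all the schemes in play are quasi-projective over $k$, Proposition \ref{subquotient} applies as stated, and the whole proof runs verbatim over $R$; you should simply delete that sentence.
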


\begin{proof} We write $\pi = (n_i)_{i\in I}$, $\pi' = (m_i)_{i\in I}$ and for every $j\in\{1,\ldots,p\}$, $\pi_j = (r_{i,j})_{i\in I}$ According to proposition \ref{cut},
the variety $\prod_{i\in I}S^{r_{i,1}}X_{i,1}\times\ldots\times S^{r_{i,p}}X_{i,p}\times S^{n_i}X_i$ is the union of locally closed subsets isomorphic to
$$\prod_{i\in I}S^{r_{i,1}}X_{i,1}\times\ldots\times S^{r_{i,p}}X_{i,p}\times S^{m_{i}}U_i\times S^{n_i-m_i}Y_i $$
 for all $\pi\leq \pi_p$. Restricting to the images of points lying above the complements of the diagonal, we get the result.
\end{proof}

\subsection{Applications}\label{sect.cutapplications}
As an immediate consequence of Proposition \ref{pieces} (for $p=0$) and Remark \ref{separate}, we get
 \begin{cor}\label{multzeta}  Let $X$ be a variety over $R$ and $\scr{X} = (X_i)_{i\in I}$ a family of varieties over~$X$. Let $Y$ be a closed subvariety of $X$, $U$ its open complement, and for all $i\in I$, define varieties  $Y_i = X_i\times_X Y$, $U_i = X_i\times_X U$, and families $\scr{Y} = (Y_i)_{i\in I}$ and $\scr{U} = (U_i)_{i\in I}$  of varieties over $Y$ and~$U$, respectively. Then for all $\pi\in\N^{(I)}$ and for all $\pi'\leq \pi$, the variety $S^{\pi'}\scr{U}\times S^{\pi-\pi'}\scr{Y}$ is isomorphic to the locally closed subset of $S^{\pi}\scr{X}$ corresponding to families of zero-cycles inducing partition $\pi'$ on $\scr{U}$. Moreover, $S^{\pi}\scr{X}$ is the disjoint union of these locally closed subsets. In particular, in terms of classes in $\svar_R$, we have
 $$[S^{\pi}\scr{X}] = \sum_{\pi'\leq \pi}[S^{\pi'}\scr{U}][S^{\pi-\pi'}\scr{Y}].$$
 In terms of zeta-functions, this means that 
$$Z_{\scr{X}}(\mathbf{t}) = Z_{\scr{U}}(\mathbf{t})Z_{\scr{Y}}(\mathbf{t})$$
in $\svar_R[[\t]].$
\end{cor}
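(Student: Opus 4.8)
The statement is Corollary~\ref{multzeta}, and it is announced in the excerpt itself as an immediate consequence of Proposition~\ref{pieces} (applied with $p=0$) together with Remark~\ref{separate}. So the plan is simply to assemble these two ingredients and then translate the resulting identity of classes into the claimed identity of zeta functions.

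First I would fix $\pi\in\N^{(I)}$ and apply Proposition~\ref{pieces} in the degenerate case $p=0$, with the families $\scr{X}$, $\scr{Y}=(Y_i)_{i\in I}$ and $\scr{U}=(U_i)_{i\in I}$, where $Y_i = X_i\times_X Y$ and $U_i = X_i\times_X U$. Note that each $Y_i$ is indeed a closed subvariety of $X_i$ with open complement $U_i$, since closed (resp. open) immersions are stable under base change. Proposition~\ref{pieces} then gives, for each $\pi'\leq\pi$, a locally closed subset of $S^{\pi}\scr{X}$ isomorphic to $S^{\pi',\pi-\pi'}(\scr{U},\scr{Y})$, namely the one parametrising zero-cycles inducing partition $\pi'$ on $\scr{U}$, and asserts that $S^{\pi}\scr{X}$ is the disjoint union of these locally closed subsets over all $\pi'\leq\pi$. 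This yields
$$[S^{\pi}\scr{X}] = \sum_{\pi'\leq\pi}[S^{\pi',\pi-\pi'}(\scr{U},\scr{Y})]\quad\text{in }\svar_R.$$
Next I invoke Remark~\ref{separate}: since $U$ and $Y$ partition $X$ into the two locally closed subsets $V_1 = U$ and $V_2 = Y$, and since by construction the image of each $U_i$ in $X$ lies in $U$ and the image of each $Y_i$ in $X$ lies in $Y$, the open immersion of \eqref{prodimmersion} is an isomorphism. Hence $S^{\pi',\pi-\pi'}(\scr{U},\scr{Y})\simeq S^{\pi'}\scr{U}\times S^{\pi-\pi'}\scr{Y}$, so that $[S^{\pi',\pi-\pi'}(\scr{U},\scr{Y})] = [S^{\pi'}\scr{U}][S^{\pi-\pi'}\scr{Y}]$ in $\svar_R$, and the displayed identity becomes
$$[S^{\pi}\scr{X}] = \sum_{\pi'\leq\pi}[S^{\pi'}\scr{U}][S^{\pi-\pi'}\scr{Y}].$$

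Finally I would translate this into the identity of zeta functions. By Definition~\ref{def.zetafunction}, $Z_{\scr{X}}(\t) = \sum_{\pi}[S^{\pi}\scr{X}]\t^{\pi}$, and similarly for $\scr{U}$ and $\scr{Y}$. Substituting the identity just obtained and reindexing the sum by $\pi' $ and $\pi-\pi'$ (using $\t^{\pi} = \t^{\pi'}\t^{\pi-\pi'}$ and the fact that the pairs $(\pi',\pi'')$ with $\pi'+\pi''=\pi$, $\pi'\leq\pi$, are exactly all pairs $(\pi',\pi'')\in\N^{(I)}\times\N^{(I)}$) gives
$$Z_{\scr{X}}(\t) = \sum_{\pi'}\sum_{\pi''}[S^{\pi'}\scr{U}][S^{\pi''}\scr{Y}]\t^{\pi'}\t^{\pi''} = Z_{\scr{U}}(\t)\,Z_{\scr{Y}}(\t)$$
in $\svar_R[[\t]]$, as claimed. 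There is no real obstacle here: the content lies entirely in Proposition~\ref{pieces} and Remark~\ref{separate}, both already established; the only point requiring a moment's care is checking that the hypothesis of Remark~\ref{separate} (existence of a compatible locally closed partition of $X$) is met, which is immediate from the fibre-product definition $U_i = X_i\times_X U$, $Y_i = X_i\times_X Y$.
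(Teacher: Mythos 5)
Your proposal is correct and follows exactly the paper's own route: the corollary is stated there as an immediate consequence of Proposition~\ref{pieces} in the case $p=0$ combined with Remark~\ref{separate}, which is precisely the assembly you carry out, including the check that the fibre-product definitions $U_i = X_i\times_X U$, $Y_i = X_i\times_X Y$ guarantee the hypothesis of Remark~\ref{separate}. The final reindexing step giving $Z_{\scr{X}}(\t) = Z_{\scr{U}}(\t)Z_{\scr{Y}}(\t)$ is the same routine translation the paper leaves implicit.
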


\begin{prop}\label{lociso} Let $X$ be a variety over $R$ and $\scr{X} = (X_i)_{i\in I}$ and $\scr{Y} = (Y_i)_{i\in I}$ families of varieties over $X$. Fix an element $\pi = (n_i)_{i\in I}\in\N^{(I)}$, and assume that for all $i\in I$ such that $n_i>0$, $X_i$ and  $Y_i$ are piecewise isomorphic. Then $S^{\pi}\scr{X}$ and $S^{\pi}\scr{Y}$ are piecewise isomorphic: in other words, we have the equality
$$Z_{\scr{X}}(\t) = Z_{\scr{Y}}(\t)$$
in $\svar_R[[\t]].$  

\end{prop}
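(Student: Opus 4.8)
\emph{Plan.} Throughout, ``piecewise isomorphic'' is to be understood over $X$, i.e. compatibly with the structural morphisms $X_i\to X$ and $Y_i\to X$; this is indispensable, since for instance $\A^1\xrightarrow{\id}\A^1$ and $\A^1\xrightarrow{z\mapsto z^2}\A^1$ are isomorphic over $\spec k$ while the associated varieties $S^{[1,1]}$ of these one-element families have classes $\LL^2-\LL$ and $\LL^2-2\LL+1$, hence are not piecewise isomorphic. Since it suffices to prove $[S^{\pi}\scr{X}]=[S^{\pi}\scr{Y}]$ in $\svar_R$, and since $S^{\pi}$ only involves the $X_i$ (resp. $Y_i$) with $n_i>0$, the first step is to use the description of $\svar_X$ recalled in \ref{subsect.grothrings} to choose, for each such $i$, finite partitions into locally closed $X$-subschemes $X_i=\bigsqcup_{a\in A_i}X_i^{(a)}$ and $Y_i=\bigsqcup_{a\in A_i}Y_i^{(a)}$ with a common index set $A_i$ and with $X_i^{(a)}\cong Y_i^{(a)}$ over $X$ for every $a$. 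After refining these partitions along a common filtration by closed $X$-subschemes — any finite locally closed partition is subordinate to the stratification generated by the closures of its members, which is filterable by closed subsets — we may further assume that each $X_i$ (resp. $Y_i$) is cut out of a descending chain of closed subvarieties by successively passing to open complements, the two chains being identified by the isomorphisms above.

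Next I would expand $[S^{\pi}\scr{X}]$ in $\svar_R$ as a sum of classes of mixed symmetric products (\ref{mixedsymproducts}) of the pieces $X_i^{(a)}$. This is achieved by iterating Proposition \ref{pieces}: working one index $i_0\in I$ and one closed subvariety at a time (choosing $Y_i=X_i$ for $i\neq i_0$ and $Y_{i_0}$ a closed subvariety of $X_{i_0}$), each application splits, inside a mixed symmetric product, the family-slot occupied by some $X_{i_0}$ into an open part and a closed part, the summands with an empty positive-multiplicity slot vanishing; the prescribed filtrations guarantee the process terminates after finitely many steps, the support of $\pi$ and the sets $A_i$ being finite. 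The outcome is an identity
$$[S^{\pi}\scr{X}]=\sum_{c}\Bigl[S^{\vec{\rho}(c)}\bigl((X_{i_j(c)}^{(a_j(c))})_{j}\bigr)\Bigr]\quad\text{in }\svar_R,$$
where $c$ runs over a finite combinatorial set and the data $\vec{\rho}(c)$ and $(i_j(c),a_j(c))_j$ attached to each $c$ depend only on $\pi$ and on the combinatorics of the chosen partitions and filtrations, not on the varieties $X_i^{(a)}$ themselves.

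Finally, the very same procedure applied to $\scr{Y}$ produces $[S^{\pi}\scr{Y}]=\sum_{c}[S^{\vec{\rho}(c)}((Y_{i_j(c)}^{(a_j(c))})_{j})]$ with identical indexing set and identical numerical data. For each $c$, the isomorphisms $X_i^{(a)}\cong Y_i^{(a)}$ over $X$ induce — by the obvious functoriality of the mixed symmetric product construction with respect to isomorphisms of the input families of $X$-varieties — an isomorphism over $X$, hence over $R$, between the two $c$-th summands. Summing over $c$ gives $[S^{\pi}\scr{X}]=[S^{\pi}\scr{Y}]$, i.e. $S^{\pi}\scr{X}$ and $S^{\pi}\scr{Y}$ are piecewise isomorphic over $R$; applying this for every $\pi\in\N^{(I)}$ (which uses the hypothesis for all $i\in I$) yields $Z_{\scr{X}}(\t)=Z_{\scr{Y}}(\t)$ in $\svar_R[[\t]]$. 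The only delicate point is the second step: one must set up the iterated application of Proposition \ref{pieces} so that its combinatorial output is manifestly independent of the input varieties. The subtlety here is that distinct pieces $X_i^{(a)},X_i^{(a')}$ may have overlapping images in $X$, so that Corollary \ref{multzeta} (which cuts along the base $X$) does not suffice and one genuinely needs Proposition \ref{pieces}, which cuts each $X_i$ independently. Everything else is formal.
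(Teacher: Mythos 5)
Your argument is correct and is essentially the paper's own proof: one partitions each $X_i$ (resp. $Y_i$) with $n_i>0$ into locally closed pieces isomorphic over $X$, iterates Proposition \ref{pieces} to write $S^{\pi}\scr{X}$ as a disjoint union of locally closed subsets isomorphic to mixed symmetric products $S^{\pi_1,\ldots,\pi_p}(\scr{X}_1,\ldots,\scr{X}_p)$ with $\pi_1+\cdots+\pi_p=\pi$, and concludes via the induced isomorphisms of these mixed symmetric products. Your preliminary insistence that the piecewise isomorphisms be over $X$, and your filtration bookkeeping for the iterated cutting, only make explicit what the paper's shorter proof uses implicitly.
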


\begin{proof} By assumption, there is an integer $p$ with the property that, for all $i\in I$ satisfying $n_i>0$, we can partition $X_i$ (resp. $Y_i$) into locally closed sets $X_{i,1},\ldots,X_{i,p}$ (resp. $Y_{i,1},\ldots,Y_{i,p}$) such that for all $j\in\{1,\ldots,p\}$,  $X_{i,j}$ is isomorphic to $Y_{i,j}$. For every $j\in\{1,\ldots,p\}$, we denote by $\scr{X}_j$ (resp. $\scr{Y}_j$) the family $(X_{i,j})_{i\in I}$ (resp. $(Y_{i,j})_{i\in I}$). Proposition \ref{pieces} together with an induction shows that $S^{\pi}\scr{X}$ is the disjoint union of locally closed subsets isomorphic to $S^{\pi_1,\ldots,\pi_p}(\scr{X}_1,\ldots,\scr{X}_p)$ for partitions $\pi_1,\ldots,\pi_p$ such that $\pi_1 + \ldots + \pi_p = \pi$. Since the same is true for $\scr{Y},\scr{Y}_1,\ldots,\scr{Y}_p$, and since the isomorphisms between the $X_{i,j}$ and $Y_{i,j}$ give isomorphisms 
$$S^{\pi_1,\ldots,\pi_p}(\scr{X}_1,\ldots,\scr{X}_p)\simeq S^{\pi_1,\ldots,\pi_p}(\scr{Y}_1,\ldots,\scr{Y}_p) $$
for all $\pi_1,\ldots,\pi_p$ such that $\pi_1+\ldots + \pi_p  = \pi$, the result follows.
\end{proof}

\section{Symmetric products and affine spaces}\label{affinespaces}
The following proposition is a direct generalisation of a result by Totaro (see \cite{CNS}, Proposition 5.1.5). In fact, it may be obtained directly by applying Totaro's statement to each factor in $\prod_{i\in I}S^{n_i}X_i$ and then restricting to the open subset $S^{\pi}X\subset \prod_{i\in I}S^nX$, but we prefer to give a direct proof in the flavour of Totaro's argument: it is more straightforward here since one does not need to go through the step of cutting up the symmetric power with respect to different partitions. 
\begin{prop}\label{affine} Let $X$ be a variety over $R$ and $\scr{X} = (X_i)_{i\in I}$ a family of varieties over $X$. Moreover, let $\mathbf{m} = (m_i)_{i\in I}$ be a family of non-negative integers, $\scr{L}^{\bf{m}}$ the family of affine spaces $(\A_{R}^{m_i})_{i\in I}$ and $\scr{X}\times \scr{L}^{\bf{m}}$ the family $(X_i\times \A_{R}^{m_i})_{i\in I}$, each $X_i\times \A_{R}^{m_i}$ being seen as an $X$-variety through the first projection. Then for all $\pi = (n_i)_{i\in I}\in \N^{(I)}$, the variety $S^{\pi}(\scr{X}\times\scr{L}^{\m})$ is endowed with the structure of a vector bundle of rank $\sum_{i\in I} m_in_i$ over $S^{\pi}(\scr{X})$, so that in particular
$$[S^{\pi}(\scr{X}\times \scr{L}^{\bf{m}})] = [S^{\pi}(\scr{X})]\LL^{\sum_{i\in I}m_in_i}$$ in $\svar_{R}$.
 
\end{prop}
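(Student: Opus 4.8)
The strategy is to construct the vector bundle structure directly at the level of the varieties before passing to quotients, imitating Totaro's original argument, and then check that the construction descends. Concretely, fix $\pi = (n_i)_{i\in I} \in \N^{(I)}$ and recall that $S^{\pi}(\scr{X}\times\scr{L}^{\m})$ is obtained from
$$\left(\prod_{i\in I}(X_i\times_R \A_R^{m_i})^{n_i}\right)_{*,X}$$
by quotienting by $\prod_{i\in I}\Sym_{n_i}$. Since each $\A_R^{m_i}$ factor does not affect the image in $X$, the open locus removed (points whose $X$-coordinates are not pairwise distinct) pulls back from $\left(\prod_{i\in I}X_i^{n_i}\right)_{*,X}$, so there is a natural projection
$$q:\left(\prod_{i\in I}(X_i\times_R \A_R^{m_i})^{n_i}\right)_{*,X}\longrightarrow \left(\prod_{i\in I}X_i^{n_i}\right)_{*,X}$$
which is visibly a trivial vector bundle of rank $\sum_{i\in I}m_in_i$, the fibre over a point being $\prod_{i\in I}(\A_R^{m_i})^{n_i}$. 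The group $\prod_{i\in I}\Sym_{n_i}$ acts on both spaces compatibly with $q$: on the source, $\sigma=(\sigma_i)$ permutes the pairs $(x_{i,p},\lambda_{i,p})$, hence acts linearly on each fibre of $q$ by permuting the $\A_R^{m_i}$-blocks. The point is then that $q$ descends to a morphism $S^{\pi}(\scr{X}\times\scr{L}^{\m})\to S^{\pi}(\scr{X})$ which is still a vector bundle of the same rank.

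To justify descent, I would argue locally, exactly as Totaro does. Cover $\left(\prod_{i\in I}X_i^{n_i}\right)_{*,X}$ by $\prod_{i\in I}\Sym_{n_i}$-invariant affine opens (possible since $X$, and hence the relevant products, are quasi-projective over $k$, so the action is good). Over such an invariant affine open $W = \spec A$, the preimage under $q$ is $\spec A[T]$ where $T$ is a finite set of $\sum m_in_i$ variables permuted linearly by the finite group $G=\prod_{i\in I}\Sym_{n_i}$; this is the symmetric-algebra situation $\spec \mathrm{Sym}_A(P)$ for a free $A$-module $P$ with a $G$-action that is $A$-linear in $P$ and compatible with the $G$-action on $A$. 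Taking invariants, $(A[T])^G = \mathrm{Sym}_{A^G}(P^G)$ is again a symmetric algebra over $A^G$ of the same rank: this is precisely Totaro's lemma, and it is where the hypothesis that $G$ acts \emph{linearly} on the fibres (which holds here because the $\A_R^{m_i}$ are affine spaces and permutations act by linear block-permutations) is essential. Gluing these local trivialisations over $S^{\pi}(\scr{X})$ gives the vector bundle structure of rank $\sum_{i\in I}m_in_i$.

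The equality $[S^{\pi}(\scr{X}\times\scr{L}^{\m})]=[S^{\pi}(\scr{X})]\LL^{\sum_{i\in I}m_in_i}$ in $\svar_R$ then follows, since any vector bundle of rank $r$ is Zariski-locally trivial, hence piecewise isomorphic to the trivial bundle, so its class is $\LL^r$ times the class of the base (stratify the base so that the bundle trivialises on each stratum and use additivity of classes). One subtlety worth checking: the reduction from the general base $R$ can again be done by the usual spreading-out/Noetherian-induction argument used elsewhere in the chapter, so it suffices to treat $R$ a field, or one can simply carry out the affine-invariants computation directly over $R$ noetherian. \textbf{The main obstacle} is the invariant-theoretic lemma: verifying that passing to $G$-invariants in $\mathrm{Sym}_A(P)$ with $G$ a finite group acting linearly on a free $A$-module $P$ (and acting on $A$) again yields a locally free symmetric algebra over $A^G$ of the same rank. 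In characteristic zero (which is not needed here, only that $k$ is perfect, but the argument still works because we are in the specific permutation-representation situation) this is classical; the key input is that the permutation action is through $\mathrm{GL}$ of the fibre, so one may invoke Totaro's statement verbatim on each factor $S^{n_i}(X_i\times\A_R^{m_i})\to S^{n_i}X_i$ and then restrict to the open subset $S^{\pi}\scr{X}\subset \prod_{i\in I}S^{n_i}X_i$, which is what the proposition's preamble suggests as the shortcut. I would present the direct proof but flag that this reduction is available.
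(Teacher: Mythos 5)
Your construction follows the same route as the paper's: descend the trivial rank-$\sum_{i}m_in_i$ bundle on $\left(\prod_{i\in I}X_i^{n_i}\right)_{*,X}$ through the quotient by $\prod_{i\in I}\Sym_{n_i}$, using that the permutation action on the affine factors is linear; the paper packages the descent as ``étale-locally a vector bundle, hence a vector bundle by Hilbert's theorem 90'', while you do it by computing invariants over invariant affine opens. The genuine gap is in the lemma you yourself flag as the main obstacle. As you state it --- $G$ finite acting on $A$ and linearly on a free $A$-module $P$ implies $(\mathrm{Sym}_A P)^G = \mathrm{Sym}_{A^G}(P^G)$ with $P^G$ locally free of the same rank --- it is false, and linearity of the fibre action is not the relevant hypothesis. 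Take $A=k$ with trivial $G$-action and $G=\Sym_2$ swapping the basis of $P=k^2$: then $P^G$ has rank $1$ and $\mathrm{Sym}_{A^G}(P^G)$ is a polynomial ring in one variable, whereas $(\mathrm{Sym}_kP)^G = k[T_1+T_2,\,T_1T_2]$; both the displayed equality and the rank statement fail. This is exactly the situation over the diagonal in a symmetric power, and it is the reason the proposition is confined to the off-diagonal locus $S^{\pi}$.

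What actually makes your invariants computation valid is that on $\left(\prod_{i\in I}X_i^{n_i}\right)_{*,X}$ the action of $\prod_{i\in I}\Sym_{n_i}$ is free (the images in $X$ of the coordinates are pairwise distinct), so over an invariant affine open the map $\spec A \to \spec A^G$ is a finite étale $G$-torsor, and the identity $(\mathrm{Sym}_AP)^G \simeq \mathrm{Sym}_{A^G}(P^G)$ with $P^G$ locally free of rank $\sum_i m_in_i$ is Galois (étale) descent --- which is precisely the content of the paper's ``étale-locally trivial plus Hilbert 90'' step. Linearity of the block-permutation action is still needed, but only to ensure that what descends is a vector bundle and not merely an affine fibration. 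So your argument is repairable by making freeness explicit and invoking descent for the torsor, rather than attributing the step to ``Totaro's lemma verbatim'' (Totaro's statement concerns the whole of $S^n(X\times\A^1)\to S^nX$, where the conclusion is only Zariski-local triviality of an affine fibration, proved by a different mechanism). The alternative you mention at the end --- apply Totaro to each $S^{n_i}(X_i\times\A_R^{m_i})\to S^{n_i}X_i$ and restrict to $S^{\pi}$ --- is indeed the shortcut the paper itself acknowledges in the sentence preceding the proposition.
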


\begin{proof} The first projections induce natural maps
$$\prod_{i\in I}(X_i\times \A_{R}^{m_i})^{n_i}\arr \prod_{i\in I}X_i^{n_i}$$
over $\prod_{i\in I}X^{n_i}$.  
Restricting to points mapping to the diagonal in $\prod_{i\in I}X^{n_i}$, we get a map
$$\left(\prod_{i\in I}(X_i\times \A_{R}^{m_i})^{n_i}\right)_*\arr \left(\prod_{i\in I}X_i^{n_i}\right)_*$$
that gives us in turn a map $$p:S^{\pi}(\scr{X}\times \scr{L}^{\bf{m}})\arr S^{\pi}(\scr{X})$$ after taking the quotient by the natural action of the group $\prod_{i\in I}\Sym_{n_i}$. 

On the other hand, the variety $\left(\prod_{i\in I}(X_i\times \A_{R}^{m_i})^{n_i}\right)_*$ is by definition isomorphic to 

$$\prod_{i\in I}(X_i\times \A_{R}^{m_i})^{n_i}\times_{\prod_{i\in I}X^{n_i}}\left(\prod_{i\in I}X^{n_i}\right)_*\simeq \left(\prod_{i\in I}X_i^{n_i}\right)_*\times \A_{R}^{\sum_{i\in I}n_im_i}.$$
Thus, we have a cartesian diagram 
$$\xymatrix{\left(\prod_{i\in I}X_i^{n_i}\right)_*\times \A_{R}^{\sum_{i\in I}n_im_i} \ar[r]\ar[d]^{q'} & \left(\prod_{i\in I}X_i^{n_i}\right)_*\ar[d]^q\\
S^{\pi}(\scr{X}\times \scr{L}^{\bf{m}})\ar[r]^{\ p}& S^{\pi}(\scr{X})
}$$
with $q'$ being the quotient by the permutation action of $\prod_{i\in I}\Sym_{n_i}$. Through our identification, this permutation action becomes linear, endowing $S^{\pi}(\scr{X}\times \scr{L}^{\bf{m}})$ étale-locally with a structure of vector bundle of rank $\sum_{i\in I}n_im_i$ over $S^{\pi}(\scr{X})$. By Hilbert's theorem 90,  $S^{\pi}(\scr{X}\times \scr{L}^{\bf{m}})$ is a vector bundle of the same rank over $S^{\pi}(\scr{X})$, whence the result. 
\end{proof}
This implies the following formula for zeta-functions:
\begin{cor}\label{zetaaffine} Let $X$, $\scr{X}$, $\scr{L}^{\bf{m}}$ be as above. Then writing $\LL^{\mathbf{m}}\t = (\LL^{m_i}t_i)_{i\in I}$ we have
$$Z_{\scr{X}}(\LL^{\m}\t) = Z_{\scr{X}\times \scr{L}^{\mathbf{m}}}(\t)$$
in $\svar_{R}[[\t]]$. 
\end{cor}

\section{Symmetric products of non-effective classes}\label{symprodclasses}
For the moment, we have only defined the symmetric products $S^{\pi}\scr{X}$ in the case where the family~$\scr{X}$ is a family of quasi-projective varieties over $X$. The aim of this section is to generalise this definition in the case when $\scr{X}$ is merely a family of classes in $\kvar_{X}$. For this, recall that in remark \ref{diagonal}, we noted that one could define the symmetric product $S^{\pi}\scr{X}$ of a family of varieties $\scr{X} = (X_i)_{i\in I}$ over a variety $X$ for $\pi = (n_i)_{i\in I}$ alternatively by first considering the product $\prod_{i\in I}S^{n_i}X_i$ and then restricting to the appropriate open subset, namely the one lying above the open subset $S^{\pi}X$ of $\prod_{i\in I}S^{n_i}X.$ In this section, we are first going to define, for a class $Y\in \kvar_X$, its symmetric power $S^nY$ as an element of $\kvar_{S^nX}$. The symmetric product $S^{\pi}\scr{X}$ of a family of classes $(X_i)_{i\in I}$ in $\kvar_X$ will then be defined, by analogy with remark \ref{diagonal}, as the pullback to $\kvar_{S^{\pi}X}$ of the element $$\prod_{i\in I}S^{n_i}X_i\in \kvar_{\prod_{i\in I}S^{n_i}X}.$$
\subsection{Relative symmetric powers}\label{sect.symprodgrouplaw}
Let $X$ be a quasi-projective variety over $R$. All (fibred and exterior) products in this section are over $R$, though we will not write it to simplify notation. The product $\prod_{n\geq 1}\kvar_{S^nX}$ is an additive group, but it may also be endowed with a commutative multiplicative group structure, in the following manner:

For $a = (a_n)_{n\geq 1}$ and $b = (b_n)_{n\geq 1}\in \prod_{n\geq 1}\kvar_{S^nX}$, we put
$$ (ab)_n = \sum_{k=0}^na_k\boxtimes b_{n-k}$$
where by convention $a_0 = b_0 = 1$, and $a_k\boxtimes b_{n-k}$ is the image of $(a_k,b_{n-k})$ through the composition
$$\kvar_{S^kX}\times \kvar_{S^{n-k}X}\xrightarrow{\boxtimes}\kvar_{S^kX\times S^{n-k}X}\to \kvar_{S^nX}$$
where the latter morphism is obtained from the natural map $X^n\to S^nX$ by passing to the quotient with respect to the natural action of the group $\Sym_k\times \Sym_{n-k}$ on $X^n$. Observe that the neutral element for this law is the family $(e_n)_{n\geq 1}$ with $e_i = 0$ for all $i\geq 1$. Associativity is obtained by using, for all $n\geq 1$ and all $i,j,k$ such that  $i + j + k = n$ the commutativity of the diagram
$$\xymatrix{(S^i X \times S^{j} X) \times S^kX \ar[r]\ar[d]^{\simeq} & S^{i+j}X\times S^k X \ar[r] & S^{n} X\\
S^iX\times (S^{j} X \times S^kX) \ar[r] & S^iX\times S^{j+k}X \ar[ur]
}$$

On the other hand, starting from $a = (a_n)_{n\geq 1}\in \prod_{n\geq 1}\kvar_{S^nX}$, we may define its inverse $b = (b_n)_{n\geq 1}\in \prod_{n\geq 1}\kvar_{S^nX}$ by induction: put $b_0 =1$, $b_1 = -a_1$, and assume $b_0,\ldots,b_{n-1}$ to be constructed. Then $b_n$ is the element of $\kvar_{S^nX}$ defined by 
$$b_n = -\sum_{k=1}^n a_k\boxtimes b_{n-k}.$$

\begin{notation} We denote by $\kvar_{S^{\bullet}X}$ the group $\prod_{n\geq 1}\kvar_{S^nX}$ with this law. 
\end{notation}

\begin{lemma}\label{varmorphism} There is a unique group morphism
$$S:\kvar_X \to \kvar_{S^{\bullet}X},$$
such that the image of the class of a quasi-projective variety $Y$ over $X$ is the family $(S^nY)_{n\geq 1}$. \index{S@$S$}
\end{lemma}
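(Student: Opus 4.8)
The plan is to construct $S$ on the level of the Grothendieck semiring first, then pass to the group completion. The natural target for the semiring version is the multiplicative monoid $\svar_{S^\bullet X}$ of families $(a_n)_{n\ge 1}$ with $a_n\in\svar_{S^nX}$, equipped with the same convolution law $(ab)_n=\sum_{k=0}^n a_k\boxtimes b_{n-k}$; one checks exactly as above that this is a commutative monoid, with the caveat that inverses no longer exist (which is why we must group-complete at the end). For a quasi-projective $X$-variety $Y$, set $S(Y)=(S^n Y)_{n\ge 1}$, where $S^nY$ is the relative symmetric power $Y^n/\Sym_n$ viewed over $S^nX$ via the morphism $Y^n\to X^n\to S^nX$ induced by the structural morphism $Y\to X$ (so $S^nY$ is quasi-projective and carries a canonical class in $\svar_{S^nX}$).

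The key step is the \emph{multiplicativity identity}: for a closed subvariety $Z\subset Y$ with open complement $W$, one must show $S(Y)=S(W)\,S(Z)$ in $\svar_{S^\bullet X}$, i.e. for every $n$,
$$[S^nY]=\sum_{k=0}^n [S^kW]\boxtimes [S^{n-k}Z]\quad\text{in }\svar_{S^nX}.$$
This is precisely Proposition~\ref{cut}, or rather its relative version: given an effective zero-cycle of degree $n$ on $Y$, restricting to $W$ and to $Z$ decomposes $S^nY$ into locally closed pieces isomorphic to $S^kW\times_R S^{n-k}Z$, and one checks this decomposition is compatible with the maps down to $S^nX$ (the image zero-cycle on $X$ simply splits accordingly), so the identity holds after pushing the classes into $\svar_{S^nX}$. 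I would also record the trivial base cases $S(\varnothing)=(e_n)_{n\ge1}$ (the unit) and compatibility with $R$-isomorphisms, which is immediate from functoriality of symmetric powers. Together these say exactly that $Y\mapsto S(Y)$ respects all the defining relations of the Grothendieck semiring $\svar_X$, hence descends to a monoid morphism $\svar_X\to\svar_{S^\bullet X}$.

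Finally, group-complete: $\kvar_X$ is the group associated to the monoid $\svar_X$, and $\kvar_{S^\bullet X}=\prod_{n\ge1}\kvar_{S^nX}$ is a group (inverses exist by the inductive formula $b_n=-\sum_{k=1}^n a_k\boxtimes b_{n-k}$ exhibited before the lemma). By the universal property of group completion, the monoid morphism $\svar_X\to\svar_{S^\bullet X}\to\kvar_{S^\bullet X}$ factors uniquely through a group morphism $S:\kvar_X\to\kvar_{S^\bullet X}$, and uniqueness of $S$ with the prescribed behaviour on classes of varieties is immediate since such classes generate $\kvar_X$ as a group.

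The main obstacle is the relative multiplicativity identity. Proposition~\ref{cut} as quoted is stated over a field, so I would need to check that its proof — decomposing $S^nX$ into the locally closed strata $S^rU\times S^{n-r}Y$ — goes through verbatim over the base $R$ (fibred products over $R$ in place of products over $k$), and more importantly that the resulting piecewise isomorphisms are morphisms \emph{over} $S^nX$, so that the class identity survives the pushforward $\svar_{S^rU\times_R S^{n-r}Y}\to\svar_{S^nX}$. This is routine but is the one place where something genuinely has to be verified rather than quoted; everything else (monoid structure, associativity, group completion) has already been set up in the preceding paragraphs of the section.
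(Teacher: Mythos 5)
Your proof is correct and is essentially the paper's: in both arguments the entire content is the identity $[S^nY]=\sum_{k=0}^n[S^kW]\boxtimes[S^{n-k}Z]$ in $\svar_{S^nX}$, obtained from the stratification of $S^nY$ by the locally closed pieces $S^kW\times_R S^{n-k}Z$, compatibly with the maps down to $S^nX$. The only difference is packaging — the paper defines the map on the free abelian group on classes of quasi-projective $X$-varieties and checks the scissor relation directly, whereas you route it through the semiring $\svar_X$ and group completion, which is precisely the variant recorded in the remark immediately following the lemma (the monoid morphism $S^{+}$ inducing $S$).
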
 
\begin{proof} 
The condition in the statement defines a morphism  $S'$ on the free abelian group with generators the isomorphism classes of quasi-projective varieties over $X$. Consider a quasi-projective variety $Y$ over $X$, and a closed subscheme $Z$ of $Y$, with open complement $U$. We know that $S^nY$ is the disjoint union of locally closed subsets isomorphic to $S^kZ\times S^{n-k} U$ for $k=0,\ldots,n$, and these isomorphisms are over $S^nX$, so $S'(Y) = S'(Z)S'(U)$, and $S'$  descends to a well-defined group morphism as in the statement of the lemma, using the fact that $\kvar_X$ is generated by classes of quasi-projective varieties. 
\end{proof}
\begin{cor} For every $n\geq 1$ the class in $\kvar_{S^nX}$ of the symmetric power $S^{n}Y$ of a variety $Y$ over $X$ depends only on the class of $Y$ in $\kvar_{X}$.
\end{cor}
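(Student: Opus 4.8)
The plan is to read off this corollary directly from Lemma~\ref{varmorphism}. First I would recall that the underlying set of the group $\kvar_{S^{\bullet}X}$ is by definition the direct product $\prod_{n\geq 1}\kvar_{S^nX}$, so that two elements of $\kvar_{S^{\bullet}X}$ are equal precisely when all of their components agree. Then, given two quasi-projective $X$-varieties $Y$ and $Y'$ with $[Y]=[Y']$ in $\kvar_X$, I would apply the group morphism $S\colon\kvar_X\to\kvar_{S^{\bullet}X}$ of Lemma~\ref{varmorphism}: since $S([Y])=(S^kY)_{k\geq 1}$ and $S([Y'])=(S^kY')_{k\geq 1}$ by the defining property of $S$, the equality $S([Y])=S([Y'])$ forces $[S^nY]=[S^nY']$ in $\kvar_{S^nX}$ for every $n\geq 1$, which is exactly the assertion. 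In other words, for each fixed $n$ the assignment $[Y]\mapsto [S^nY]$ is the composite of $S$ with the projection $\prod_{k\geq 1}\kvar_{S^kX}\to\kvar_{S^nX}$, hence depends only on the class of $Y$.

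I do not expect any genuine obstacle: all the substance is already contained in the proof of Lemma~\ref{varmorphism}. The one nontrivial ingredient there is the cut-and-paste compatibility, namely that for a closed subscheme $Z\subseteq Y$ with open complement $U$ one has, uniformly over $S^nX$, the decomposition of $S^nY$ into locally closed pieces isomorphic to $S^kZ\times_R S^{n-k}U$ (Proposition~\ref{cut}); combined with additivity of classes this is exactly what it means for $[S^nY]$ to be a well-defined function of $[Y]$ alone. So the corollary is essentially a restatement of the well-definedness of $S$, and the proof is a two-line deduction. If desired, one can additionally remark that this gives, for each $n$, an additive (but not multiplicative) map $\kvar_X\to\kvar_{S^nX}$ extending $Y\mapsto [S^nY]$, but this extra observation is not needed for the statement as phrased.
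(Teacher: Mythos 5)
Your proof is correct and is essentially the paper's own (implicit) deduction: the corollary is read off from Lemma~\ref{varmorphism} exactly as you say, since $S$ is defined on $\kvar_X$ and equality in $\kvar_{S^{\bullet}X}$ is componentwise, the substance being the cut-and-paste compatibility established in the lemma's proof. One caveat about your closing aside: the induced map $[Y]\mapsto [S^nY]$ is \emph{not} additive on $\kvar_X$ — by Remark~\ref{generalcut} one has $S^n(\a+\b)=\sum_{k=0}^n S^k\a\boxtimes S^{n-k}\b$ rather than $S^n\a+S^n\b$ — but since you do not use this remark, the proof itself stands.
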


\begin{remark} In the same manner, one may put a (commutative) monoid structure on $\svar_{S^{\bullet}X} := \prod_{i\geq 1} \svar_{S^iX}$, and there is a unique morphism of monoids $$S^{+}:\svar_X \to \svar_{S^{\bullet}X},$$
such that the image of the class of a quasi-projective variety $Y$ over $X$ is the family $(S^nY)_{n\geq 1}$. Moreover, $S^{+}$ induces $S$ on $\kvar_{X}$. 
\end{remark}

\begin{definition}\label{sympowerdef} Let $\a \in\kvar_X$ and $n\geq 1$. The $n$-th symmetric power of $\a$, denoted by $S^n\a$, \index{symmetric power! of a class} is the image of $S(\a)$ through the projection 
$$\prod_{i\geq 1}\kvar_{S^iX}\to \kvar_{S^nX}$$
onto the $n$-th component.
\end{definition}

\begin{remark}\label{generalcut} The lemma implies that for any $\a,\b\in\kvar_X$ and for any $n\geq 1$, $$S^{n}(\a + \b) = \sum_{k=0}^nS^{k}\a\boxtimes S^{n-k}\b$$ in $\kvar_{S^nX}$, where every term on the right-hand side is seen as an element of $S^{n}X$ via the natural map $S^{k}X\times S^{n-k}X\to S^nX$. In particular, we have the equality of classes
$$[S^{n}(\a + \b)]= \sum_{k=0}^n[S^{k}\a][S^{n-k}\b]$$ in $\kvar_R$. 
\end{remark}
\subsection{Definition of symmetric products of classes in $\kvar_X$}

Let $X$ be a quasi-projective variety over $R$, $\scr{A} = (\a_i)_{i\in I}$ a family of elements of $\kvar_X$, $n\geq 1$ an integer and $\pi = (n_i)_{i\in I}$ a partition of $n$. Consider the natural projection morphism $$p: \prod_{i \in I}X^{n_i}\to \prod_{i\in  I}S^{n_i}X,$$
which is finite and surjective.  Let $U := \left(\prod_{i\in I}X^{n_i}\right)_*\subset \prod_{i\in I}X^{n_i}$ be the complement of the diagonal, that is, the open subset of points with all coordinates being distinct. Then by construction, $S^{\pi}X$ is the open subset $p(U)$ of the variety $\prod_{i\in I}S^{n_i}X.$

Using definition \ref{sympowerdef}, we may consider the element $\prod_{i\in I}S^{n_i}\a_i \in \kvar_{\prod_{i\in I}S^{n_i}X}$. 

\begin{definition}\label{symproddef}\begin{enumerate}\item  The element $S^{\pi}\scr{A}\in \kvar_{S^{\pi}X}$ is defined to be the image of $\prod_{i\in I}S^{n_i}\a_i $ through the restriction morphism
$$\kvar_{\prod_{i\in I}S^{n_i}X}\to \kvar_{S^{\pi}X}.$$

\item More generally, let $p\geq 1$ be an integer, $X_1,\ldots,X_p$ quasi-projective varieties over~$R$, and for every $j\in\{1,\ldots,p\}$, let $ \scr{A}_j= (\a_{ij})_{i\in I}$ be a family of elements of $\kvar_{X_i}$. For all partitions $\pi_1,\ldots,\pi_p$ with $\pi_i = (r_{i,j})_{i\in I}$ we define the \textit{mixed symmetric product} $S^{\pi_1,\ldots,\pi_p}(\scr{A}_1,\ldots,\scr{A}_p)$ as the image of $\prod_{i\in I}S^{r_{i,1}}\a_{i,1}\times\ldots \times S^{r_{i,p}}\a_{i,p}$ through the restriction morphism
$$\kvar_{\prod_{i\in I}S^{r_{i,1}}X_1\times\ldots\times S^{r_{i,p}}X_p}\to \kvar_{S^{\pi_1,\ldots,\pi_p}(X_1,\ldots,X_p)}.$$
\end{enumerate}
\end{definition}

\begin{remark}\label{separate2} The above restriction morphism factors through  $\kvar_{S^{\pi_1}X_1\times \ldots\times S^{\pi_p}X_p}$ by remark~\ref{separate}. In the case where immersion (\ref{prodimmersion}) is an isomorphism, we have the equality $$S^{\pi_1}\scr{A}_1\boxtimes \ldots\boxtimes S^{\pi_p}\scr{A}_p = S^{\pi_1,\ldots,\pi_p}(\scr{A}_1,\ldots,\scr{A}_p)$$
in $\kvar_{S^{\pi_1}X_1\times \ldots\times S^{\pi_p}X_p} = \kvar_{S^{\pi_1,\ldots,\pi_p}(X_1,\ldots,X_p)}.$
\end{remark}

\begin{remark}\label{generalzeta}Using the first part of the definition, we may extend definition \ref{def.zetafunction} and get a notion of zeta-function $Z_{\scr{A}}(\t)\in\kvar_{R}[[\t]]$ of a family of elements of $\kvar_X$. 
\end{remark}
\begin{prop}\label{generalcut2mixed}
Let $X$ be a quasi-projective variety over $R$. Let $\scr{A} = (\a_i)_{i\in I}$, $\scr{B} = (\b_i)_{i\in I}$, $\scr{C} = (\ce_i)_{i\in I}$ be families of elements of $\kvar_X$ such that for every $i\in I$, $\a_i = \b_i + \ce_i$. For every partition  $\pi= (n_i)_{i\in I }$ we have the equality
$$S^{\pi}\scr{A} = \sum_{\pi'\leq \pi}S^{\pi',\pi-\pi'}(\scr{B},\scr{C})$$
in $\kvar_{S^{\pi}X}$, where each term on the right-hand side is considered as an element of $\kvar_{S^{\pi}X}$ via the natural morphism $S^{\pi'}X\times S^{\pi-\pi'}X \to S^{\pi}X$. 
\end{prop}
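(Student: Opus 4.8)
The statement is the exact analogue, for symmetric products of \emph{classes} in the Grothendieck ring, of Proposition~\ref{pieces} (which was proved at the level of varieties). The natural strategy is therefore to reduce the identity to the already-proven geometric statement, using the fact that by definition (Definition~\ref{symproddef}) the class $S^{\pi}\scr{A}$ is obtained from the products of symmetric \emph{powers} $\prod_{i\in I} S^{n_i}\a_i$ by restriction along $\kvar_{\prod_{i\in I}S^{n_i}X}\to \kvar_{S^{\pi}X}$, and that $S^{n}\a$ depends only on the class $\a$ (Corollary after Lemma~\ref{varmorphism}) and behaves additively in the sense of Remark~\ref{generalcut}, namely $S^{n}(\b+\ce) = \sum_{k=0}^{n}S^{k}\b\boxtimes S^{n-k}\ce$ inside $\kvar_{S^nX}$.

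\textbf{Step 1: reduce to $I$ finite and to a single index.} Since $\pi = (n_i)_{i\in I}$ has finite support, only finitely many factors are involved, so we may assume $I$ finite. Both sides are multilinear combinations of the $S^{n_i}\a_i$ restricted to the appropriate open subset $S^{\pi}X\subset \prod_{i}S^{n_i}X$; it therefore suffices to prove, for each single index $i$ with $\a_i = \b_i + \ce_i$, the identity
$$S^{n_i}\a_i = \sum_{m_i = 0}^{n_i} S^{m_i}\b_i \boxtimes S^{n_i - m_i}\ce_i$$
in $\kvar_{S^{n_i}X}$, and then to take the exterior product over $i$ and restrict to $S^{\pi}X$, distributing the sum; the combinatorics of grouping the indices $(m_i)_{i\in I}$ into an element $\pi' = (m_i)_{i\in I}\le \pi$ is exactly what matches the right-hand side of the proposition, once one identifies $S^{m_i}\b_i\boxtimes S^{n_i-m_i}\ce_i$, pushed into $\kvar_{S^\pi X}$, with the $i$-component of $S^{\pi',\pi-\pi'}(\scr{B},\scr{C})$ via Remark~\ref{separate2} (the immersion (\ref{prodimmersion}) being an isomorphism here because $\scr{B}$ and $\scr{C}$ live over the \emph{same} $X_i$ and the diagonal-deleted locus is exactly what $S^{\pi}$ records).

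\textbf{Step 2: the single-index identity.} This is precisely Remark~\ref{generalcut} applied to $\a = \a_i$, $\b = \b_i$, $\ce = \ce_i$ and $n = n_i$: it asserts $S^{n}(\b+\ce) = \sum_{k=0}^{n}S^{k}\b\boxtimes S^{n-k}\ce$ in $\kvar_{S^nX}$, which in turn follows from the fact (Lemma~\ref{varmorphism}) that $S:\kvar_X\to \kvar_{S^{\bullet}X}$ is a \emph{group} morphism into the multiplicative group structure on $\prod_{n\ge1}\kvar_{S^nX}$ whose product is the $\boxtimes$-convolution, so that $S(\b+\ce) = S(\b)\,S(\ce)$ componentwise. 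Thus Step~2 requires no new work.

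\textbf{Step 3: assembling and bookkeeping.} Take the exterior product of the Step~2 identities over all $i\in I$ inside $\kvar_{\prod_{i\in I}S^{n_i}X}$ and restrict to $\kvar_{S^{\pi}X}$. Expanding, the left side becomes $S^{\pi}\scr{A}$ by Definition~\ref{symproddef}(1). The right side becomes $\sum_{(m_i)_i \le \pi}\bigl(\boxtimes_{i}(S^{m_i}\b_i\boxtimes S^{n_i-m_i}\ce_i)\bigr)|_{S^{\pi}X}$; writing $\pi' = (m_i)_i$, each summand is by Definition~\ref{symproddef}(2) and Remark~\ref{separate2} equal to $S^{\pi',\pi-\pi'}(\scr{B},\scr{C})$ viewed in $\kvar_{S^{\pi}X}$ through $S^{\pi'}X\times S^{\pi-\pi'}X\to S^{\pi}X$ (one must check this factorisation of the restriction morphism is compatible with the one in Remark~\ref{separate2}, which is immediate from functoriality of restriction). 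Hence $S^{\pi}\scr{A} = \sum_{\pi'\le\pi}S^{\pi',\pi-\pi'}(\scr{B},\scr{C})$, as claimed. The only genuine point requiring care — and thus the main obstacle — is Step~3: making sure that the two ways of pushing a class from $\kvar_{\prod_i S^{n_i}X}$ (or $\kvar_{S^{\pi'}X\times S^{\pi-\pi'}X}$) into $\kvar_{S^{\pi}X}$ agree, i.e.\ that the open-immersion restrictions and the finite quotient push-forwards commute as needed; this is exactly the content of Remarks~\ref{separate} and \ref{separate2}, so it amounts to chasing those diagrams rather than proving anything new.
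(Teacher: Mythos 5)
Your argument is correct and follows essentially the same route as the paper: apply the additivity of symmetric powers (Remark~\ref{generalcut}, i.e.\ that $S$ is a group morphism) factor by factor, expand the product in $\kvar_{\prod_i S^{n_i}X}$, and restrict to $S^{\pi}X$, where the summands become the mixed symmetric products by definition. One small imprecision: the immersion (\ref{prodimmersion}) is \emph{not} an isomorphism here (the supports of the $\scr{B}$- and $\scr{C}$-cycles must still be forced disjoint), but this does not matter since the restriction to $S^{\pi}X$ is exactly what removes that diagonal, so the identification of each restricted summand with $S^{\pi',\pi-\pi'}(\scr{B},\scr{C})$ goes through as you intend.
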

\begin{proof} According to remark \ref{generalcut}, we may write
$$\prod_{i\in I}S^{n_i}\a_i = \prod_{i\in I}\left(\sum_{0\leq m_i\leq n_i}S^{m_i}\b_i\boxtimes S^{n_i-m_i}\ce_i\right) = \sum_{\pi' = (m_i)_{i\in I}\leq\pi}\prod_{i\in I}S^{m_i}\b_i\boxtimes S^{n_i-m_i}\ce_i$$
in $\kvar_{\prod_{i\in I}S^{n_i}X}$, where each  $S^{m_i}\b_i\times S^{n_i-m_i}\ce_i$ is seen as an element of $\kvar_{S^{n_i}X}$ through the natural morphism $S^{m_i}X\times S^{n_i-m_i}X\to S^{n_i}X$. This gives the result by restriction to~$S^{\pi}X$. 
\end{proof}
\begin{cor}\label{generalcut2}
Let $X$ be a quasi-projective variety over $R$, $Y$ a closed subscheme of $X$ and $U$ its open complement. Let $\scr{A} = (\a_i)_{i\in I}$ be a family of elements of $\kvar_X$, and define $\scr{B} = (\b_i)_{i\in I}$ and $\scr{C} = (\ce_i)_{i\in I}$ the families of elements of $\kvar_U$ (resp. $\kvar_Y$) obtained by restriction from~$\scr{A}$. For every partition  $\pi= (n_i)_{i\in I }$ we have the equality
$$S^{\pi}\scr{A} = \sum_{\pi'\leq \pi}S^{\pi'}\scr{B}\boxtimes S^{\pi-\pi'}\scr{C}$$
in $\kvar_{S^{\pi}X}$, where each term on the right-hand side is considered as an element of $\kvar_{S^{\pi}X}$ via the natural immersion $S^{\pi'}U\times S^{\pi-\pi'}Y \to S^{\pi}X$. In particular, we have the equality $Z_{\scr{A}}(\t) = Z_{\scr{B}}(\t)Z_{\scr{C}}(\t)$  in $\kvar_{R}[[\t]]$.
\end{cor}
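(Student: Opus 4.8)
The statement to be proved is Corollary \ref{generalcut2}: for $\scr{A} = (\a_i)_{i\in I}$ a family of elements of $\kvar_X$, and $\scr{B}$, $\scr{C}$ its restrictions to $U$ and $Y$ respectively, one has $S^{\pi}\scr{A} = \sum_{\pi'\leq \pi}S^{\pi'}\scr{B}\boxtimes S^{\pi-\pi'}\scr{C}$ in $\kvar_{S^{\pi}X}$, and consequently $Z_{\scr{A}}(\t) = Z_{\scr{B}}(\t)Z_{\scr{C}}(\t)$ in $\kvar_R[[\t]]$.

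The plan is to deduce this from Proposition \ref{generalcut2mixed} (the ``cutting into pieces'' statement for non-effective classes) together with Remark \ref{separate2} (which identifies a mixed symmetric product with an exterior product of symmetric products when the relevant immersion is an isomorphism). First I would set, for each $i\in I$, $\b_i = [U_i]\in\kvar_U$ and $\ce_i = [Y_i]\in\kvar_Y$, where $U_i$ (resp. $Y_i$) denotes $\a_i$ restricted to $U$ (resp. to $Y$); abusing notation, write also $\b_i,\ce_i$ for their images in $\kvar_X$ under the pushforward along the open (resp. closed) immersion. Since $Y$ is a closed subscheme of $X$ with open complement $U$, the additivity relation in $\kvar_X$ gives $\a_i = \b_i + \ce_i$ for every $i$. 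Applying Proposition \ref{generalcut2mixed} to this decomposition yields
$$S^{\pi}\scr{A} = \sum_{\pi'\leq \pi}S^{\pi',\pi-\pi'}(\scr{B},\scr{C})$$
in $\kvar_{S^{\pi}X}$, where each summand is viewed in $\kvar_{S^{\pi}X}$ via the natural morphism $S^{\pi'}X\times S^{\pi-\pi'}X\to S^{\pi}X$.

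The key point is then to identify $S^{\pi',\pi-\pi'}(\scr{B},\scr{C})$ with $S^{\pi'}\scr{B}\boxtimes S^{\pi-\pi'}\scr{C}$, sitting inside $\kvar$ of the locally closed subset $S^{\pi'}U\times S^{\pi-\pi'}Y$ of $S^{\pi}X$. This is exactly where Remark \ref{separate2} applies: one must check that immersion (\ref{prodimmersion}) is an isomorphism here, i.e. that there is a partition of $X$ into locally closed pieces $V_1 = U$, $V_2 = Y$ such that the image in $X$ of each member of the family $\scr{B}$ lands in $U$ and each member of $\scr{C}$ lands in $Y$ — which holds by construction, since $\b_i$ comes from $\kvar_U$ and $\ce_i$ from $\kvar_Y$. (Strictly, one should note that Remark \ref{separate} / Remark \ref{separate2} and Proposition \ref{pieces} are stated for families of varieties, so the very mild extra step is to observe that the same constructions, being built by pullback and exterior product in Grothendieck groups, pass to families of classes; this is precisely the content of \ref{symprodclasses} and was already used in Proposition \ref{generalcut2mixed}.) Combining, $S^{\pi',\pi-\pi'}(\scr{B},\scr{C}) = S^{\pi'}\scr{B}\boxtimes S^{\pi-\pi'}\scr{C}$ as elements of $\kvar_{S^{\pi'}U\times S^{\pi-\pi'}Y}$, and pushing forward along the natural immersion $S^{\pi'}U\times S^{\pi-\pi'}Y\to S^{\pi}X$ gives the stated equality in $\kvar_{S^{\pi}X}$.

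Finally, to obtain the zeta function identity, I would apply the structural pushforward $\kvar_{S^{\pi}X}\to \kvar_R$ to the displayed equality for each $\pi$, multiply by $\t^{\pi}$, and sum over $\pi\in\N^{(I)}$; the right-hand side reorganises, via $\pi = \pi' + (\pi-\pi')$ and $\t^{\pi} = \t^{\pi'}\t^{\pi-\pi'}$, into the Cauchy product $Z_{\scr{B}}(\t)Z_{\scr{C}}(\t)$, using that $[S^{\pi'}\scr{B}\boxtimes S^{\pi-\pi'}\scr{C}]_R = [S^{\pi'}\scr{B}]_R\,[S^{\pi-\pi'}\scr{C}]_R$ by multiplicativity of the class under exterior product over $R$. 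I do not anticipate a serious obstacle: the one point that needs a little care is the verification that immersion (\ref{prodimmersion}) is an isomorphism in this situation (disjointness of supports of the $\scr{B}$- and $\scr{C}$-parts), but this is immediate from $U\cap Y=\varnothing$, so the corollary follows formally from Proposition \ref{generalcut2mixed} and Remark \ref{separate2}.
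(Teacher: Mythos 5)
Your proposal is correct and follows exactly the route the paper intends: the corollary is stated there without a separate proof, as an immediate consequence of Proposition \ref{generalcut2mixed} (applied to the decomposition $\a_i = \b_i + \ce_i$ coming from the scissor relation) together with Remark \ref{separate2}, which identifies the mixed symmetric product $S^{\pi',\pi-\pi'}(\scr{B},\scr{C})$ with $S^{\pi'}\scr{B}\boxtimes S^{\pi-\pi'}\scr{C}$ over the locally closed subset $S^{\pi'}U\times S^{\pi-\pi'}Y$, the zeta identity then being the Cauchy-product reorganisation you describe. Your parenthetical care about passing the support/disjointness argument from families of varieties to families of classes is the only point the paper leaves implicit, and you handle it correctly.
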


\section{Symmetric products of varieties with exponentials}\label{sect.symprodexp}
 
\subsection{The symmetric product of a family of varieties with exponentials}\label{sect.symprodexpvarietiesdef}
Fix a variety $X$ over $R$, as well as $(\scr{X},f) = (X_i,f_i)_{i\in I}$ a family of varieties over $X$ with exponentials. For any $\pi\in\N^{(I)}$, recall that the symmetric product $S^{\pi}\scr{X}$ is constructed as a quotient of an open subset of the product
$$\prod_{i\in I}X_i^{n_i}.$$
The latter is endowed with the map 
$$\begin{array}{lccc}f^{\pi}:&\prod_{i\in I}X_i^{n_i}&\arr& \A^1\\
                        &(x_{i,1},\ldots,x_{i,n_i})_{i\in I} & \mapsto & \sum_{i\in I}(f_{i}(x_{i,1}) + \ldots + f_{i}(x_{i,n_i})).
\end{array}$$
This map restricts to points lying above the complement of the diagonal, and is invariant modulo the action of $\prod_{i\in I}\Sym_{n_i}$. Therefore it descends to a map
$$f^{(\pi)}:S^{\pi}\scr{X}\arr \A^1,$$\index{fpi@$f^{(\pi)}$}
and we may define the symmetric product $S^{\pi}(\scr{X},f)$ to be the variety $S^{\pi}\scr{X}$ endowed with the map $f^{(\pi)}$. \index{symmetric product!of varieties with exponentials}

\begin{remark} Let $X$ be a variety over $R$, $f:X\to \A^1$ a morphism and $n\geq 1$ an integer. Then there is a morphism $$\begin{array}{lccc}f^{(n)}:&S^nX&\to &\A^1\\
&x_1 + \ldots + x_n & \mapsto & f(x_1) + \ldots + f(x_n)
\end{array}$$  induced by $f^{\conv n}:X^n\to \A^1$. Its restriction to the locally closed subset $S^{\pi}X$ is given by 
$$\sum_{i\geq 1}i(x_{i,1} + \ldots + x_{i,n_i}) \mapsto \sum_{i\geq 1} i(f(x_{i,1}) + \ldots + f(x_{i,n_i})).$$
Thus, the restriction $f^{(n)}_{|S^{\pi}X}$ to $S^{\pi}X$ is exactly the map $g^{(\pi)}$ where $g$ is the family $(if)_{i\geq 1}$ of morphisms $X\to \A^1$. 
\end{remark}\index{fn@$f^{(n)}$}

\subsection{Iterated symmetric products}
Here we generalise proposition \ref{iterate} to varieties with exponentials.  We refer to the corresponding section for the definition of the map $\mu$ and other pieces of notation.
Let $X$ be a variety over a variety $R$, which itself lies above some $k$-variety~$R'$, and let $(\scr{X},f) = (X_i,f_i)_{i\in I}$ be a family of varieties with exponentials over $X$. Let $\pi\in\N^{(I)}$ and let $\varpi = (m_j)_{j\in J}\in\mu^{-1}(\pi)$. Recall from (\ref{VtoW}) and the discussion that follows that we have an immersion
\begin{equation}\label{VtoWbis}\left(\prod_{j\in J}\left(\left(\prod_{i\in I}X_i^{n_i^j}\ _{/R}\right)_{\!\!\!*,X}\right)^{m_j}_{/R'}\right)_{\!\!\!*,R}
\hookrightarrow \left(\prod_{i\in I}X_i^{n_i}\ _{/R'}\right)_{\!\!\!*,X}.\end{equation}
which after passing to the quotient by the appropriate group actions, induces the isomorphism
$$u_{\varpi}: S^{\varpi}(S^{\bullet}(\scr{X}/R)/R')\to S^{\pi}_{\varpi}(\scr{X}/R').$$

By the construction in section\ref{sect.symprodexpvarietiesdef} there is a morphism $f^{(\varpi)}:S^{\varpi}(S^{\bullet}(\scr{X}/R)/R')\to \A^1$ induced by the morphism $$\prod_{j\in J}\left(f^{(\pi_j)}\right)^{\conv m_j}:\ \ \prod_{j\in J}\left(S^{\pi_j}(\scr{X}/R)\right)^{m_j}\ _{/R'}\to \A^1$$
where each $f^{(\pi_j)}$ is itself induced by
$$f^{\pi} = \prod_{i\in I} f_i^{\conv n_i^j}:\ \prod_{i\in I}X_i^{n_i^{j}}\ _{/R}\to \A^1.$$
Thus, $f^{(\varpi)}$ is induced, via $(\ref{VtoWbis})$ and passing to the quotient, by the morphism $f^{\pi}= \prod_{i\in I}f_i^{\conv^{n_i}} = \prod_{j\in J}\left(\prod_{i\in I}f_i^{\conv n_i^j}\right)^{\conv m_j}$ defined on $\prod_{i\in I}X_i^{n_i}\ _{/R}$. We may conclude that for all $\varpi$, we have $f^{(\pi)}\circ u_{\varpi} = f^{(\varpi)}$, which gives the following proposition:  
\begin{prop}\label{expiterate} Let $X$ be a variety over a variety $R$, which itself lies above some $k$-variety~$R'$, and let $(\scr{X},f) = (X_i,f_i)_{i\in I}$ be a family of varieties with exponentials over~$X$. Then for every $\pi\in\N^{(I)}$ and for every $\varpi\in\mu^{-1}(\pi)$, there is an isomorphism $u_{\varpi}$ of the constructible set $S^{\varpi}(S^{\bullet}(\scr{X}/R)/R')$ onto a locally closed subset $S^{\pi}_{\varpi}(\scr{X}/R')$ of $S^{\pi}(\scr{X}/R')$, so that moreover $S^{\pi}(\scr{X}/R')$ is equal to the disjoint union of the sets $S^{\pi}_{\varpi}(\scr{X}/R')$ and $f^{(\pi)}\circ u_{\varpi} = f^{(\varpi)}$. In particular, we have the equality
$$\sum_{\varpi\in \mu^{-1}(\pi)}[S^{\varpi}(S^{\bullet}(\scr{X}/R)/R'),f^{(\varpi)}] = [S^{\pi}(\scr{X}/R'),f^{(\pi)}]$$
in $\sevar_{R'}$. 
\end{prop}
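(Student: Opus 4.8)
The proof of Proposition \ref{expiterate} proceeds by reducing everything to Proposition \ref{iterate}, which already establishes the corresponding statement for the underlying varieties, and then checking compatibility of the morphisms to $\A^1$. The plan is as follows. First, apply Proposition \ref{iterate} directly: it gives, for every $\pi\in\N^{(I)}$ and every $\varpi\in\mu^{-1}(\pi)$, a piecewise isomorphism $u_{\varpi}$ of $S^{\varpi}(S^{\bullet}(\scr{X}/R)/R')$ onto a locally closed subset $S^{\pi}_{\varpi}(\scr{X}/R')$ of $S^{\pi}(\scr{X}/R')$, together with the disjointness-and-covering statement $S^{\pi}(\scr{X}/R') = \bigsqcup_{\varpi\in\mu^{-1}(\pi)}S^{\pi}_{\varpi}(\scr{X}/R')$. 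So the only new content is the identity $f^{(\pi)}\circ u_{\varpi} = f^{(\varpi)}$ of morphisms to $\A^1$, after which the equality in $\sevar_{R'}$ is just the translation into classes of the piecewise decomposition together with the additivity relation in $\sevar_{R'}$.

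Next I would check the morphism identity at the level of the products of the $X_i$ before passing to quotients, which is where the statement really lives. Recall from the construction that $u_{\varpi}$ is induced, via the immersion (\ref{VtoWbis}), by an identification of products of the $X_i^{n_i^j}\ _{/R}$ (reindexed using $\mu(\varpi)=\pi$) with $\left(\prod_{i\in I}X_i^{n_i}\ _{/R'}\right)_{*,X}$. On the target side, $f^{(\pi)}$ is induced by $f^{\pi} = \prod_{i\in I}f_i^{\conv n_i}$; on the source side, $f^{(\varpi)}$ is induced first by $\prod_{j\in J}\left(f^{(\pi_j)}\right)^{\conv m_j}$, each $f^{(\pi_j)}$ being induced by $\prod_{i\in I}f_i^{\conv n_i^j}$. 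The key computation is therefore the purely combinatorial identity of morphisms
$$\prod_{i\in I}f_i^{\conv n_i} = \prod_{j\in J}\left(\prod_{i\in I}f_i^{\conv n_i^j}\right)^{\conv m_j}$$
on $\prod_{i\in I}X_i^{n_i}\ _{/R}$, which holds because the convolution operation $\conv$ (sum of values over coordinates) is associative and commutative, and because the reindexing coming from $n_i = \sum_{j\in J}m_j n_i^j$ merely regroups the coordinates without changing the total sum of the values $f_i$. Since both the immersion (\ref{VtoWbis}) and the passage to the quotient by the relevant permutation groups are compatible with these morphisms (the quotient is taken precisely with respect to group actions under which $f^{\pi}$ is invariant, as observed in section \ref{sect.symprodexpvarietiesdef}), the identity $f^{(\pi)}\circ u_{\varpi} = f^{(\varpi)}$ follows by descent along the quotient maps.

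Finally, having established that $u_{\varpi}$ is an isomorphism $(S^{\varpi}(S^{\bullet}(\scr{X}/R)/R'),f^{(\varpi)})\xrightarrow{\sim}(S^{\pi}_{\varpi}(\scr{X}/R'),f^{(\pi)}_{|S^{\pi}_{\varpi}})$ of varieties with exponentials, the equality of classes in $\sevar_{R'}$ is immediate: the cut-and-paste relation in the Grothendieck semiring of varieties with exponentials gives $[S^{\pi}(\scr{X}/R'),f^{(\pi)}] = \sum_{\varpi\in\mu^{-1}(\pi)}[S^{\pi}_{\varpi}(\scr{X}/R'),f^{(\pi)}_{|S^{\pi}_{\varpi}}]$ for the disjoint locally closed cover, and each summand equals $[S^{\varpi}(S^{\bullet}(\scr{X}/R)/R'),f^{(\varpi)}]$ via $u_{\varpi}$. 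I do not expect any serious obstacle here: Proposition \ref{iterate} does the geometric heavy lifting, and the only thing to be careful about is bookkeeping with the indexing of coordinates when verifying the convolution identity — making sure the bijection between the coordinates of $\prod_{i\in I}X_i^{n_i}$ and those of $\prod_{j\in J}\prod_{i\in I}(X_i^{n_i^j})^{m_j}$ used to build $u_{\varpi}$ is literally the one implicit in (\ref{VtoWbis}) and (\ref{rewrite}), so that no stray permutation is introduced. That said, this is a routine check of the same flavour as the one already carried out in the proof of Proposition \ref{iterate}.
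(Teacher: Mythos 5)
Your proposal is correct and follows essentially the same route as the paper: the paper likewise reuses the immersion (\ref{VtoWbis}) from the proof of Proposition \ref{iterate}, observes the combinatorial identity $f^{\pi} = \prod_{i\in I}f_i^{\conv n_i} = \prod_{j\in J}\bigl(\prod_{i\in I}f_i^{\conv n_i^j}\bigr)^{\conv m_j}$ on $\prod_{i\in I}X_i^{n_i}\ _{/R}$, and descends it through the quotient to get $f^{(\pi)}\circ u_{\varpi} = f^{(\varpi)}$, after which the class equality in $\sevar_{R'}$ is the cut-and-paste translation of the locally closed decomposition.
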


\subsection{Cutting into pieces}
More generally, starting from families of varieties with exponentials $(\scr{X}_1,f_1),\ldots,(\scr{X}_p,f_p)$ over $X$, for any $\pi_1,\ldots,\pi_p\in \N^{(I)}$ the mixed symmetric product $$S^{\pi_1,\ldots,\pi_p}(\scr{X}_1,\ldots,\scr{X}_p)$$ from \ref{mixedsymproducts} may be endowed with a map $(f_1,\ldots,f_p)^{(\pi_1,\ldots,\pi_p)}$ to $\A^1$. Moreover, proposition \ref{pieces} may be extended in the following way:

\begin{prop}\label{exppieces} Let $X$ be a variety over $R$, and $(\scr{X}_1,f_1),\ldots,(\scr{X}_p,f_p)$ families of varieties with exponentials over~$X$, and consider moreover another family $(\scr{X},g) = (X_i,g_i)_{i\in I}$ of varieties over~$X$, together with families 
$$(\scr{U},g_{|\scr{U}}) = (U_i, g_{i|U_i})_{i\in I}\ \ \text{and}\ \ (\scr{Y},g_{|\scr{Y}}) = (Y_i, g_{i|Y_i})_{i\in I}$$ such that for every $i\in I$, $Y_i$ is a closed subvariety of $X_i$ and $U_i$ its complement. Let $\pi_1,\ldots,\pi_p,\pi$ be elements of~$\N^{(I)}$. Then for every $\pi'\in\N^{(I)}$ such that $\pi'\leq \pi$ there is an isomorphism $u_{\pi'}$ from the variety
$$S^{\pi_1,\ldots,\pi_{p},\pi',\pi -\pi'}(\scr{X}_1,\ldots,\scr{X}_{p},\scr{U},\scr{Y})$$ to the locally closed subset of $S^{\pi_1,\ldots,\pi_p,\pi}(\scr{X}_1,\ldots,\scr{X}_p,\scr{X})$ corresponding to points with $\scr{X}$-component inducing partition $\pi'$ on $\scr{U}$, such that
$$(f_1,\ldots,f_p,g)^{(\pi_1,\ldots,\pi_p,\pi)}\circ u_{\pi'} = (f_1,\ldots,f_p,g_{|\scr{U}},g_{|\scr{Y}})^{(\pi_1,\ldots,\pi_p,\pi',\pi-\pi')}.$$
 Moreover, $S^{\pi_1,\ldots,\pi_p,\pi}(\scr{X}_1,\ldots,\scr{X}_p,\scr{X})$ is the disjoint union of these locally closed subsets, so that in terms of classes in $\sevar_{R}$, we have
$$\left[S^{\pi_1,\ldots,\pi_p,\pi}(\scr{X}_1,\ldots,\scr{X}_p,\scr{X}),(f_1,\ldots,f_p,g)^{(\pi_1,\ldots,\pi_p)}\right] $$
$$= \sum_{\pi'\leq \pi}\left[S^{\pi_1,\ldots,\pi_{p},\pi',\pi-\pi'}(\scr{X}_1,\ldots,\scr{X}_{p},\scr{U},\scr{Y}),(f_1,\ldots,f_p,g_{|\scr{U}},g_{|\scr{Y}})^{(\pi_1,\ldots,\pi_p,\pi',\pi-\pi')}\right].$$

\end{prop}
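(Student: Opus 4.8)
The plan is to reduce Proposition \ref{exppieces} to its non-exponential counterpart, Proposition \ref{pieces}, which already provides the underlying geometric decomposition of the constructible sets, and then to check that the exponential morphisms to $\A^1$ are compatible with this decomposition. Concretely, the variety $S^{\pi_1,\ldots,\pi_p,\pi}(\scr{X}_1,\ldots,\scr{X}_p,\scr{X})$ decomposes, by Proposition \ref{pieces}, into a disjoint union over $\pi'\leq \pi$ of locally closed subsets, each isomorphic via an isomorphism $u_{\pi'}$ to $S^{\pi_1,\ldots,\pi_p,\pi',\pi-\pi'}(\scr{X}_1,\ldots,\scr{X}_p,\scr{U},\scr{Y})$. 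So the only thing left to verify is the identity of morphisms
$$(f_1,\ldots,f_p,g)^{(\pi_1,\ldots,\pi_p,\pi)}\circ u_{\pi'} = (f_1,\ldots,f_p,g_{|\scr{U}},g_{|\scr{Y}})^{(\pi_1,\ldots,\pi_p,\pi',\pi-\pi')}.$$
Once this is established, the equality of classes in $\sevar_R$ follows immediately by applying the cut-and-paste relation in the Grothendieck monoid with exponentials to the disjoint decomposition, exactly as the equality of classes in $\svar_R$ follows in Proposition \ref{pieces}.

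First I would recall from \ref{sect.symprodexpvarietiesdef} that the exponential morphism on a mixed symmetric product is, by construction, the descent of the "sum of values" morphism defined on the ambient product of powers: on $\prod_{i,j}X_{i,j}^{r_{i,j}}\times \prod_i X_i^{n_i}$ it is the map sending a tuple to the sum of all the $f_j$-values on the $\scr{X}_j$-coordinates plus the sum of all the $g$-values on the $\scr{X}$-coordinates. The key observation is then purely formal: when one restricts the last block of coordinates $(x_{i,1},\ldots,x_{i,n_i})_{i\in I}$ of a point of $\left(\prod_i X_i^{n_i}\right)_*$ to those lying in $\scr{U}$ and those lying in $\scr{Y}$ — which is precisely the bookkeeping operation underlying the isomorphism $u_{\pi'}$ in Proposition \ref{pieces} — the value $\sum_i \sum_{\ell} g_i(x_{i,\ell})$ splits as $\sum_i \sum_{\ell:\, x_{i,\ell}\in U_i} g_{i|U_i}(x_{i,\ell}) + \sum_i \sum_{\ell:\, x_{i,\ell}\in Y_i} g_{i|Y_i}(x_{i,\ell})$, simply because $g_{i|U_i}$ and $g_{i|Y_i}$ are restrictions of $g_i$ and the two index sets partition $\{1,\ldots,n_i\}$. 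Since the first $p$ blocks of coordinates are untouched by this operation, the total value is unchanged. Thus the "sum of values" morphism on the ambient product of $S^{\pi_1,\ldots,\pi_p,\pi}$ pulls back, under the immersion realizing the $\pi'$-stratum, to the "sum of values" morphism on the ambient product of $S^{\pi_1,\ldots,\pi_p,\pi',\pi-\pi'}$. Passing to the appropriate quotients by permutation groups (under which all these morphisms are invariant, as noted in \ref{sect.symprodexpvarietiesdef}) yields the asserted identity $(f_1,\ldots,f_p,g)^{(\pi_1,\ldots,\pi_p,\pi)}\circ u_{\pi'} = (f_1,\ldots,f_p,g_{|\scr{U}},g_{|\scr{Y}})^{(\pi_1,\ldots,\pi_p,\pi',\pi-\pi')}$.

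I do not expect a genuine obstacle here; the proof is essentially a routine upgrade of Proposition \ref{pieces}, in the same spirit as the upgrade of Proposition \ref{iterate} to Proposition \ref{expiterate} carried out just above. The one point requiring a little care is purely notational: making sure that the isomorphism $u_{\pi'}$ produced by Proposition \ref{pieces} is genuinely compatible, on the level of the ambient products-of-powers before taking quotients, with the splitting of coordinates into the $\scr{U}$-part and the $\scr{Y}$-part — i.e. that $u_{\pi'}$ is induced by an immersion of ambient products analogous to (\ref{VtoW}) — so that one may legitimately track the morphism to $\A^1$ through the descent. Since Proposition \ref{cut} (on which Proposition \ref{pieces} is built) is itself constructed by exactly this restriction-of-a-zero-cycle procedure, this compatibility is built in, and the argument goes through without difficulty.
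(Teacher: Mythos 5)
Your proof is correct and matches the intended argument: the paper states Proposition \ref{exppieces} without proof, as a routine extension of Proposition \ref{pieces} in the same spirit as the upgrade of Proposition \ref{iterate} to Proposition \ref{expiterate}, and your verification that the sum-of-values morphisms descend compatibly through the stratification (because restriction of the zero-cycle to $\scr{U}$ and $\scr{Y}$ merely splits the sum defining $g^{(\pi)}$) is exactly the omitted content. Nothing further is needed.
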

\subsection{Compatibility with affine spaces}
\begin{lemma}\label{expcompaffspaces} Let $Y$ be a quasi-projective variety over a field $k$ and $f:Y\to \A^1$ a morphism. Then for every $\lambda\in k$, and every integer $n\geq 0$  we have the equality:
 $$[S^{n}(Y\times\A^1, f\conv \lambda\id)] = \left\{\begin{array}{cc}[S^nY]\LL^{n}& \text{if}\ \lambda = 0\\
 0 & \text{otherwise}\end{array}\right.
 $$ 
in $\evar_{S^{n}Y}$.
\end{lemma}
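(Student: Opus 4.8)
The plan is to compute $S^n(Y \times \A^1, f \conv \lambda\id)$ directly from the definition in section~\ref{sect.symprodexpvarietiesdef}, taking $I = \N^*$ with the constant family all of whose terms are $(Y \times \A^1, f \conv \lambda\id)$, and summing over partitions $\pi$ of $n$. For a fixed partition $\pi = (n_i)_{i\geq 1}$ of $n$, the underlying variety $S^\pi(\scr{X})$ with $\scr{X} = (Y\times\A^1)_{i\geq 1}$ is, by Proposition~\ref{affine} applied with $X_i = Y$, $m_i = 1$, a vector bundle of rank $\sum_i n_i$ over $S^\pi((Y)_{i\geq 1})$; and by Example~\ref{example.allxiequal} the latter has class $[S^\pi Y]$, the locally closed piece of $S^n Y$ cut out by the partition $\pi$. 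So the forgetful class of $S^n(Y\times\A^1,\ldots)$ is $\sum_\pi [S^\pi Y]\LL^{\sum_i n_i}$, which is not quite $[S^n Y]\LL^n$ because the exponent $\sum_i n_i$ (the number of points, counted without multiplicity) varies with $\pi$ — but when the exponential datum is taken into account, the extra affine directions will either all contribute linearly with coefficient $\lambda$ or be killed by relation~(\ref{relationsuppl}).

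The key computation is the following. Fix $\pi$ and unwind the map $f^{(\pi)}$ on $S^\pi(\scr{X})$. Before quotienting, we work on the open subset of $\prod_{i\geq 1}(Y\times\A^1)^{n_i} \cong \left(\prod_{i\geq 1}Y^{n_i}\right) \times \A^{\sum_i n_i}$ lying over the complement of the big diagonal in $\prod_i Y^{n_i}$, and the function $f^{\pi}$ sends a tuple $\bigl((y_{i,j})_{i,j},(a_{i,j})_{i,j}\bigr)$ to $\sum_{i,j}\bigl(f(y_{i,j}) + \lambda a_{i,j}\bigr) = \sum_{i,j}f(y_{i,j}) + \lambda\sum_{i,j}a_{i,j}$. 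Thus over the base $\left(\prod_i Y^{n_i}\right)_*$ with its function $\sum_{i,j}f(y_{i,j})$, the variety $S^\pi(\scr{X})$ with its function is, fibrewise, the affine space $\A^{\sum_i n_i}$ equipped with the \emph{linear} form $(a_{i,j}) \mapsto \lambda\sum_{i,j}a_{i,j}$ — and the $\prod_i \Sym_{n_i}$-action is linear, permuting the $a_{i,j}$, so it preserves this form and descends. If $\lambda \neq 0$, this linear form is a nonzero coordinate on $\A^{\sum_i n_i}$ (assuming $\sum_i n_i \geq 1$, i.e. $n\geq 1$), so by a linear change of variables the pair $(S^\pi(\scr{X}), f^{(\pi)})$ is, étale-locally and hence (Hilbert 90) globally over $S^\pi((Y)_i)$, of the form $(\text{base}) \times (\A^{\sum_i n_i}, \pr_2)$ where $\pr_2$ is a projection onto one $\A^1$ factor; relation~(\ref{relationsuppl}) then forces $[S^\pi(\scr{X}), f^{(\pi)}] = 0$ in $\sevar$, hence in $\evar_{S^n Y}$. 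Summing over all $\pi$ gives $[S^n(Y\times\A^1, f\conv\lambda\id)] = 0$ for $\lambda\neq 0$, $n\geq 1$. (For $n=0$ both sides are the unit, and $\lambda=0$ is excluded.) If $\lambda = 0$, the function $f^{(\pi)}$ factors through the projection to $S^\pi((Y)_i)$, pulled back from $f^{(\pi)}_Y$ there, and $S^\pi(\scr{X})$ is a rank-$\sum_i n_i$ vector bundle over $S^\pi((Y)_i)$; but a class $[E, g\circ p]$ of a vector bundle $p: E\to B$ of rank $r$ with $g: B\to\A^1$ equals $[B,g]\LL^r$ in $\evar_B$ — this follows from the $\kvar$-module structure of $\evar$, since $[E] = [B]\LL^r$ in $\kvar_B$ and tensoring the identity $[E\xrightarrow{p}B] = \LL^r[B\xrightarrow{\id}B]$ against $[B,g]$ in $\evar_B$ gives the claim. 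Hence $[S^\pi(\scr{X}), f^{(\pi)}] = [S^\pi Y, f^{(\pi)}_Y]\LL^{\sum_i n_i}$, and I would then need to reassemble $\sum_\pi [S^\pi Y, f^{(\pi)}_Y]\LL^{\sum_i n_i} = [S^n Y, f^{(n)}]\LL^n$.

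The main obstacle is precisely this last reassembly in the case $\lambda = 0$, where the exponent $\sum_i n_i$ genuinely differs from $n$ partition by partition: naively the formula $[S^\pi Y, f_Y^{(\pi)}]\LL^{\sum_i n_i}$ does not sum to $[S^n Y, f^{(n)}]\LL^n$. The resolution is to notice that this is exactly the same bookkeeping as in the \emph{effective, non-exponential} case: the statement $\sum_\pi [S^\pi Y]\LL^{\sum_i n_i} = [S^n Y]\LL^n$ would be false for the same reason, so the construction of $S^n(\scr{X})$ for the constant family $\scr{X} = (Y)_{i\geq 1}$ cannot be the thing that equals $[S^n Y]\LL^n$. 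I would therefore re-examine which symmetric product is being asserted to equal $[S^n Y]\LL^n$: this is the symmetric \emph{power} $S^n(Y\times\A^1)$ of the single variety $Y\times\A^1$, not the symmetric product of the constant family. Concretely, $[S^n(Y\times\A^1,\ldots)]$ in the statement should be read as $[S^n\scr{X}]$ where $\scr{X}$ is the family over $Y$ with $X_1 = Y\times\A^1$ (with function $f\conv\lambda\id$) and $X_i = \varnothing$ (or the relevant single-variety convention from Example~\ref{firstex}) for $i > 1$, i.e. the honest $n$-th symmetric power; then $S^n\scr{X} = S^n(Y\times\A^1)$ and Proposition~\ref{affine} (with $\pi$ irrelevant, single factor) directly gives $[S^n(Y\times\A^1)] = [S^n Y]\LL^n$, carrying the function along by the remark after section~\ref{sect.symprodexpvarietiesdef} that $f^{(n)}_{|S^\pi}$ computes correctly. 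I would settle this reading by checking against Example~\ref{example.allxiequal} and Notation~\ref{allequal}, and then the $\lambda = 0$ case is immediate from Proposition~\ref{affine} plus the $\evar$-module identity above, while the $\lambda\neq 0$ case is the linear-coordinate argument of the previous paragraph applied to the single affine bundle $S^n(Y\times\A^1) = S^n_Y(Y\times\A^1) \to S^n Y$, whose fibrewise-linear function $\sum_j \lambda a_j$ is a nonzero coordinate, so relation~(\ref{relationsuppl}) kills it.
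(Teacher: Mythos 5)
Your re-reading in the final paragraph is the correct one, and it matches the paper's intent: the lemma concerns the honest symmetric power $S^n(Y\times\A^1)$ equipped with $(f\conv\lambda\id)^{(n)}$, viewed over $S^nY$ (this is exactly what the morphism $S^{\exp}$ needs), and your central mechanism is the paper's: fibrewise the function is a part pulled back from $S^nY$ plus a linear form which vanishes precisely when $\lambda=0$, so relation~(\ref{relationsuppl}) kills the class after a linear change of coordinates when $\lambda\neq 0$, while for $\lambda=0$ the affine directions contribute $\LL^n$ via the module identity you state.

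However, the final assembly as written has a genuine flaw. The identification $S^n(Y\times\A^1)=S^n_Y(Y\times\A^1)$ is false: the right-hand side is the constant-family symmetric product of Notation~\ref{allequal}, whose fibre over a zero-cycle $\sum_i i(y_{i,1}+\cdots+y_{i,n_i})\in S^{\pi}Y$ is $\A^{\sum_i n_i}$ (one affine line per distinct point), whereas the fibre of $S^n(Y\times\A^1)\to S^nY$ is $\prod_{i,j}S^i\A^1\cong \A^{n}$ (a copy of $S^i\A^1$ per multiplicity-$i$ point); their classes differ by exactly the bookkeeping discrepancy you yourself noticed. Moreover $S^n(Y\times\A^1)\to S^nY$ is not ``a single affine bundle'': for instance $S^2(\A^1\times\A^1)$ is singular (the product of $\A^2$ with a quadric cone), so it cannot be any bundle over the smooth $S^2\A^1$; and Proposition~\ref{affine} does not apply ``directly'', since it concerns the diagonal-removed family products. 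What is true, and what the paper proves, is the stratified statement: over each stratum $S^{\pi}Y$ the preimage is a rank-$n$ vector bundle (Totaro's argument, Zariski-locally trivial by Hilbert 90), on which the induced function is the pullback of $f^{(n)}$ plus the descent of $\lambda(t_1+\cdots+t_n)$, a linear form in the symmetric-function coordinates (each $S^i\A^1\cong\A^i$, with the first elementary symmetric function as one coordinate) that is nonzero if and only if $\lambda\neq 0$; your relation-(\ref{relationsuppl}) and module-identity steps then apply over a trivializing stratification of each $S^{\pi}Y$, and summing over $\pi$ gives $\LL^n[S^nY,f^{(n)}]$ or $0$. So your proof is repairable, but the repair is not ``apply the first paragraph to one bundle'': the symmetric power attaches an $\A^i$, not an $\A^1$, to each multiplicity-$i$ point, and this is precisely what makes the rank come out to $n$ and the $\lambda=0$ case sum to $[S^nY]\LL^n$ on the nose (also note the asserted vanishing for $\lambda\neq 0$ only makes sense for $n\geq 1$, as you observed in passing).
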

\begin{proof} 
Consider the commutative diagram
$$\xymatrix{\left(Y\times \A^1\right)^n\ar[d]^{q'} \ar[r]^-{p'} & Y^n\ar[d]^q\\
S^n(Y\times \A^1)\ar[r]^-{p} & S^nY}$$
where the vertical arrows are the quotient maps.
By the proof of Totaro's lemma (see lemma \ref{affinespaces}, or \cite{CNS} chapter 6, Proposition 3.1.5), for every partition $\pi$ of $n$, the lower arrow endows $p^{-1}(S^{\pi}Y)$ with the structure of a vector bundle of rank $n$ over $S^{\pi}Y$ which is locally trivial for the Zariski topology. On the other hand, by definition, the symmetric product $S^{n}(Y\times \A^1)$ is endowed with the morphism $(f\conv \lambda\id)^{(n)}$
induced by the morphism $(f\conv \lambda\id)^{\conv n}$ given by 
$$\begin{array}{ccc} (Y\times \A^1)^{n} = Y^n\times \A^n& \to & \A^1\\
 (y_1,\ldots,y_n,t_1,\ldots,t_n)& \mapsto & f(y_1) + \ldots + f(y_n) +\lambda(t_1+\ldots + t_n)
\end{array}$$
Consider a point $y\in S^{\pi}Y$. We know that the fibre of $p^{-1}(S^{\pi}Y)$ above this point is an affine space $\A^n_{\kappa(y)}$. The linear form $(t_1,\ldots , t_n) \mapsto \lambda(t_1 + \ldots + t_n)$ on the general fibre of the trivial vector bundle in the top row of the diagram descends (via the permutation action, which is linear) to some linear form $\ell$ on $\A^n_{\kappa(y)}$, which will be zero if and only if $\lambda$ is zero. Thus, since by construction the morphism $(f \conv\lambda\id)^{(n)}$ coincides with $f^{(n)}$ on the zero-section of the vector bundle $p^{-1}(S^{\pi}Y)\to S^{\pi}Y$, we have, for any $(t_1,\ldots,t_n)\in\A^n_{\kappa(y)}$
$$(f\conv \lambda\id)^{(n)}(y,t_1,\ldots,t_n) = f^{(n)}(y) + \ell(t_1,\ldots,t_n),$$
with  $\ell$ a linear form, which is zero if and only if $\lambda = 0$. Note that because of the last axiom defining $\evar_k$, we have, using a linear change of basis, that 
$$ [\A^n,\ell] = \left\{\begin{array}{rc}\LL^n & \text{if}\ \ell= 0\\
                                         0 &  \text{otherwise.}\end{array}\right.$$
                                    Taking $y$ to be a generic point of $S^{\pi}Y$ and spreading out to some trivialising open set $U$ for the vector bundle $p^{-1}(S^{\pi}Y)\to S^{\pi}Y$, we have $$[p^{-1}(S^{\pi}Y)_{|U}, (f\conv\lambda\id)^{(n)}_{|p^{-1}(S^{\pi}(Y))_{|U}}] = [U,f^{(n)}_{|U}][\A^n,\ell]
                                     = \left\{\begin{array}{rc}[U,f^{(n)}_{|U}]\LL^n & \text{if}\ \lambda= 0\\
                                         0 &  \text{otherwise.}\end{array}\right.$$
                                         Repeat the process for a generic point of $S^{\pi}Y\setminus U$. By Noetherian induction, the result follows by taking the sum over all partitions of $n$. 
\end{proof}
\subsection{Symmetric products of classes in Grothendieck rings with exponentials}
Let $X$ be a quasi-projective variety. The same procedure as in section \ref{sect.symprodgrouplaw} endows the product $\prod_{i\geq 1}\evar_{S^iX}$ with a group law, and this group will be denoted $\evar_{S^{\bullet}X}$.
\begin{prop} Let $X$ be a quasi-projective variety. There is a unique group morphism
$$S^{\mathrm{exp}}:\evar_X \to \evar_{S^{\bullet}X}$$\index{Se@$S^{\exp}$}
sending the class $[Y,f]$ of a quasi-projective variety $Y$ with a morphism $f:Y\to \A^1$, to the family of classes $([S^{i}Y,f^{(i)}])_{i\geq 1}.$ Moreover, there is a commutative diagram
$$\xymatrix{\kvar_X \ar[r]^S \ar[d]&\kvar_{S^{\bullet}X}\ar[d]\\
\evar_X \ar[r]^{S^{\exp}}& \evar_{S^{\bullet}X}}$$
where the vertical arrows are given by the injections $\kvar\to\evar$. \end{prop}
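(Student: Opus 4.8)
The statement asserts the existence of a unique group morphism $S^{\exp}\colon\evar_X\to\evar_{S^\bullet X}$ with the prescribed values on generators $[Y,f]$, together with the commutativity of the square relating it to $S$ via the injections $\kvar\to\evar$. The strategy mirrors exactly the proof of Lemma~\ref{varmorphism}, now keeping track of the morphism to $\A^1$ and checking that the extra relation in the Grothendieck ring with exponentials is respected.

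\emph{Step 1: define a morphism on the free group.} First I would define $S'^{\exp}$ on the free abelian group generated by isomorphism classes of pairs $(Y,f)$ with $Y$ a quasi-projective $X$-variety and $f\colon Y\to\A^1$ a morphism, by sending $(Y,f)$ to the family $([S^iY,f^{(i)}])_{i\geq 1}\in\prod_{i\geq 1}\evar_{S^iX}$, where $f^{(i)}$ is the morphism on $S^iY$ defined in section~\ref{sect.symprodexpvarietiesdef}. Here I must note that $\prod_{i\geq 1}\evar_{S^iX}$ carries a group law defined by the same convolution-type formula as in section~\ref{sect.symprodgrouplaw} (this is recalled just before the statement), with $(ab)_n=\sum_{k=0}^n a_k\boxtimes b_{n-k}$; uniqueness of $S^{\exp}$ will be immediate once existence is established, since the generators $[Y,f]$ span $\evar_X$.

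\emph{Step 2: check the three relations defining $\evar_X$.} The morphism $S'^{\exp}$ descends to $\evar_X$ provided it kills all three defining relations. (a) The isomorphism relation $(Y,f)\sim(Y',f\circ u)$: an $X$-isomorphism $u\colon Y'\to Y$ induces isomorphisms $S^i(u)\colon S^iY'\to S^iY$ over $S^iX$ compatible with $f^{(i)}$ and $(f\circ u)^{(i)}$, so the two families agree. (b) The scissor relation $(Y,f)\sim(Z,f|_Z)+(U,f|_U)$ for $Z$ a closed subscheme with complement $U$: this is precisely Proposition~\ref{exppieces} (or rather its specialisation with $p=0$ and a single factor, i.e.\ the ``with exponentials'' version of Corollary~\ref{generalcut2}); it says $S^nY$ decomposes into locally closed pieces isomorphic to $S^kZ\times S^{n-k}U$ over $S^nX$, and the restriction of $f^{(n)}$ to each piece matches $(f|_Z,f|_U)^{(k,n-k)}$, which translates exactly into $S'^{\exp}(Y,f)=S'^{\exp}(Z,f|_Z)\cdot S'^{\exp}(U,f|_U)$ in the multiplicative group $\evar_{S^\bullet X}$. (c) The exponential relation $(Y'\times\A^1,\pr_2)\sim 0$: this is where Lemma~\ref{expcompaffspaces} enters. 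Applied with $f=0$ on $Y'$ and $\lambda=1$, i.e.\ to the pair $(Y'\times\A^1, 0\conv\id)=(Y'\times\A^1,\pr_2)$, the lemma gives $[S^n(Y'\times\A^1),\pr_2^{(n)}]=0$ in $\evar_{S^nY'}$ for every $n\geq 1$, hence also in $\evar_{S^nX}$ after pushforward; therefore $S'^{\exp}(Y'\times\A^1,\pr_2)$ is the identity element $(0,0,\ldots)$ of $\evar_{S^\bullet X}$, as required. (One should be slightly careful that Lemma~\ref{expcompaffspaces} is stated over a field; the general case over $X$ follows by the standard spreading-out / Noetherian induction argument, or directly since the needed vector-bundle structure and the computation of $[\A^n,\ell]$ via relation~(\ref{relationsuppl}) are stable under base change.)

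\emph{Step 3: the commutative square.} Both composites $\evar_X\leftarrow\kvar_X\to\kvar_{S^\bullet X}\to\evar_{S^\bullet X}$ and $\kvar_X\to\evar_X\to\evar_{S^\bullet X}$ are group morphisms, so it suffices to check agreement on generators $[Y]$ of $\kvar_X$ with $Y$ quasi-projective; there both sides send $[Y]$ to the family $([S^iY,0])_{i\geq 1}$, using that $f^{(i)}=0$ when $f=0$ and that the injection $\kvar\to\evar$ sends $[S^iY]$ to $[S^iY,0]$. This gives commutativity, and in particular shows $S^{\exp}$ extends $S$.

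\emph{Main obstacle.} The only genuinely non-formal input is step~2(c): one needs Lemma~\ref{expcompaffspaces} to guarantee that the symmetric powers of a trivial $\A^1$-bundle, equipped with the projection morphism, vanish in the Grothendieck ring with exponentials. Everything else is a bookkeeping exercise transporting the identities of section~\ref{cuttingintopieces} and section~\ref{relativesetting} through the morphisms $f^{(\pi)}$; the potential pitfall is purely notational, namely correctly matching the convolution group law on $\prod_i\evar_{S^iX}$ with the additive decompositions coming from cutting symmetric products into pieces, exactly as in the proof of Lemma~\ref{varmorphism}.
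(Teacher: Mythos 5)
Your proposal matches the paper's proof essentially step for step: define the map on the free abelian group, kill the isomorphism and scissor relations via the cut-and-paste decomposition $[S^nY,f^{(n)}]=\sum_{i}[S^iU,f_{|U}^{(i)}][S^{n-i}Z,f_{|Z}^{(n-i)}]$, kill the exponential relation by applying Lemma~\ref{expcompaffspaces} (with $f=0$, $\lambda=1$) to get $[S^n(Y\times\A^1),\pr_2^{(n)}]=0$, and check the square on classes of varieties. No substantive difference; your extra remark on pushing forward from $\evar_{S^nY}$ to $\evar_{S^nX}$ is exactly the implicit step in the paper's argument.
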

\begin{proof} Define a morphism $S'$ from the free abelian group on pairs $(Y,f)$, where $Y$ is a quasi-projective variety over $X$ and $f:Y\to \A^1$ a morphism, to $\evar_{S^{\bullet}X}$ by $S'((Y,f)) = ([S^{i}Y,f^{(i)}])_{i\geq 1}$. It suffices now to check that $S'$ passes to the quotient through the three relations defining $\evar_X$. It is clear that $S'$ is constant on isomorphic pairs. If $Z$ is a closed subscheme of $Y$ with open complement $U$, then for any $n\geq 1$, $[S^nY,f^{(n)}] = \sum_{i=0}^n[S^iU,f_{|U}^{(i)}][S^{n-i}Z,f_{|Z}^{(n-i)}]$, so that $S'(Y) = S'(U)S'(Z)$. Finally, it follows from lemma \ref{expcompaffspaces} that for any quasi-projective variety $Y$ over $X$, the class $[Y\times_{\Z} \A^1, \pr_2]$ goes to zero. The commutativity of the diagram may be checked for classes of varieties $[Y\to X]$, where it is immediate.
\end{proof}
The notion of symmetric product of a class in $\kvar_X$ may therefore be extended to $\evar_X$ in the following manner: 
\begin{definition} Let $\a\in\evar_X$ and $n\geq 1$. The $n$-th symmetric product of $\a$, denoted by $S^n\a$ is the image of $S^{\exp}(\a)$ through the projection 
$$\prod_{i\geq 1}\evar_{S^iX} \to \evar_{S^nX}$$
onto the $n$-th component.
\end{definition}

\begin{remark}\label{symprodexpdefandprop} If $X$ be a quasi-projective variety over $R$ and $\scr{A} = (A_i)_{i\geq 1}$ a family of varieties with exponentials, definition \ref{symproddef} may be extended in a compatible way to define, for any partition $\pi$, the symmetric product $S^{\pi}\scr{A}$ as an element of $\evar_{S^{\pi}X}$. We can also define mixed symmetric products, and proposition \ref{generalcut2mixed} and corollary \ref{generalcut2} are true more generally with $\kvar$ replaced by $\evar$. 
\end{remark}

\section{Symmetric products in localised Grothendieck rings}\label{sect.locsymproducts}
In this section, we are going to generalise the notion of symmetric product further, to include classes in the localised Grothendieck rings $\M_X$ and $\expp_X$. Since the Grothendieck ring $\kvar$ injects into $\evar$, it does not restrict generality to work with $\evar$ directly, which we will do in this section. 

\begin{lemma}\label{gencompaffinespaces} For every $\a\in \evar_X$, for any $n\geq 1$ and for any $m\geq 1$, one has
$$S^{n}(\a\LL^{m}) = S^{n}(\a)\LL^{nm}$$
in $\evar_{S^nX}$. 
\end{lemma}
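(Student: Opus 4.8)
The plan is to reduce Lemma \ref{gencompaffinespaces} to the geometric statement of Proposition \ref{affine} (Totaro's lemma) together with the multiplicativity of the symmetric power group morphism. First I would observe that both sides of the claimed identity are group homomorphisms in $\a$ once $n$ and $m$ are fixed: the map $\a\mapsto S^n(\a\LL^m)$ is the composite of $\a\mapsto \a\LL^m$ (a group endomorphism of $\evar_X$, since multiplication by the unit $\LL^m$ is additive) with $S^n\colon \evar_X\to\evar_{S^nX}$ (a group homomorphism by the displayed proposition constructing $S^{\exp}$, or rather its $n$-th component); and $\a\mapsto S^n(\a)\LL^{nm}$ is $S^n$ followed by multiplication by $\LL^{nm}$ in $\evar_{S^nX}$, again additive. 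Wait — I should be careful: $S^n$ is \emph{not} additive; only the total map $S$ (resp. $S^{\exp}$) into $\prod_i\evar_{S^iX}$ is a morphism for the \emph{multiplicative} group law on the target. So the correct reduction is different.

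So instead I would argue as follows. By the universal presentation of $\evar_X$ (generators $[Y,f]$ with $Y$ quasi-projective over $X$, modulo the three relations) and since multiplication by $\LL^m$ sends generators to classes that are again $\evar_X$-linear combinations of generators, it suffices to check the identity on classes of the form $\a=[Y,f]$ with $Y$ quasi-projective over $X$ and $f\colon Y\to\A^1$ a morphism, \emph{provided} we also verify compatibility with the defining relations — but in fact the cleanest route is to avoid relations entirely: since $\LL^m=[\A^m_X,0]$, we have $\a\LL^m=[Y\times_X\A^m_X, f\oplus 0]$ for $\a=[Y,f]$, and by Lemma \ref{expcompaffspaces} (applied fibrewise, i.e. in its relative form with $\lambda=0$, or by the Noetherian-induction spreading-out argument already used there) we get $S^n([Y\times_X\A^m_X,f\oplus 0]) = [S^n(Y\times_X\A^m_X),(f\oplus 0)^{(n)}] = [S^nY,f^{(n)}]\LL^{nm}$ in $\evar_{S^nX}$. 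Iterating Lemma \ref{expcompaffspaces} $m$ times (or proving the $\A^m$-version directly by the same vector-bundle argument, noting $[\A^m,\ell]=\LL^m$ when $\ell=0$) gives exactly $S^n(\a)\LL^{nm}$ for generators $\a=[Y,f]$.

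To pass from generators to arbitrary $\a\in\evar_X$ I would use that, by Lemma \ref{smoothpropergens} (or simply the fact that $\evar_X$ is generated as a group by classes of quasi-projective pairs), every $\a$ is a finite $\Z$-linear combination $\a=\sum_k \epsilon_k[Y_k,f_k]$. Here the subtlety is that $S^n$ is not linear, so I cannot simply sum. The remedy is to work with the total morphism $S^{\exp}\colon\evar_X\to\evar_{S^\bullet X}$, which \emph{is} a group homomorphism, and to note that multiplication by $\LL^m$ on $\evar_X$ corresponds, under $S^{\exp}$, to the operation on $\evar_{S^\bullet X}$ sending $(a_i)_{i\geq 1}$ to $(a_i\LL^{im})_{i\geq 1}$: indeed this is exactly the content of the generator computation above, component by component, and it manifestly respects the convolution-type group law $(ab)_n=\sum_{k}a_k\boxtimes b_{n-k}$ because $\LL^{nm}=\LL^{km}\boxtimes\LL^{(n-k)m}$ under the maps $S^kX\times S^{n-k}X\to S^nX$. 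Hence the two group homomorphisms $\a\mapsto S^{\exp}(\a\LL^m)$ and $\a\mapsto (\text{rescale by }\LL^{\bullet m})\circ S^{\exp}(\a)$ agree on a generating set of $\evar_X$, therefore everywhere; projecting to the $n$-th component yields the lemma.

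The main obstacle I anticipate is the non-linearity of $S^n$: one must resist the temptation to verify the identity additively and instead route everything through the multiplicative group $\evar_{S^\bullet X}$ and the homomorphism $S^{\exp}$, checking that scaling by powers of $\LL$ is an \emph{endomorphism of that group} compatible with the $\boxtimes$-convolution. The only genuinely computational input is Lemma \ref{expcompaffspaces} in its relative/$\A^m$ form, whose proof is the vector-bundle-plus-spreading-out argument already carried out in the text; everything else is bookkeeping. (Alternatively, one can first establish the $\kvar$-version using Proposition \ref{affine} directly — $S^\pi(\scr X\times\scr L^{\mathbf m})$ is a vector bundle of rank $\sum m_i n_i$ over $S^\pi\scr X$ — and then deduce the $\evar$-version by the compatibility diagram between $S$ and $S^{\exp}$, handling the exponential twist via Lemma \ref{expcompaffspaces}.)
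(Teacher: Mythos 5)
Your proof is correct, and it rests on exactly the same two ingredients as the paper's: the effective case supplied by Lemma \ref{expcompaffspaces} (iterated, or in its direct $\A^m$ vector-bundle form), and the fact that $S^{\exp}\colon(\evar_X,+)\to(\evar_{S^\bullet X},\cdot)$ is a group morphism. The only place where you diverge is the passage from effective generators to arbitrary classes. The paper writes $\a=Y-Z$ with $Y,Z$ effective and runs an explicit induction on $n$, using the convolution identity $S^n(Y\LL^m)=\sum_k S^{n-k}((Y-Z)\LL^m)\,S^k(Z\LL^m)$ to isolate $S^n((Y-Z)\LL^m)$ and then invoking the effective case together with the induction hypothesis for the lower symmetric powers. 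You instead observe that $\a\mapsto S^{\exp}(\a\LL^m)$ and $\a\mapsto R\circ S^{\exp}(\a)$, where $R\colon(a_i)_{i\geq1}\mapsto(a_i\LL^{im})_{i\geq1}$, are both group homomorphisms from $(\evar_X,+)$ into the commutative multiplicative group $\evar_{S^\bullet X}$ — the key verification being that $R$ respects the convolution law because $\LL^{nm}=\LL^{km}\cdot\LL^{(n-k)m}$ and both $\boxtimes$ and the pushforward along $S^kX\times S^{n-k}X\to S^nX$ are linear over classes coming from the point — and conclude by agreement on the generating classes $[Y,f]$ with $Y$ quasi-projective over $X$. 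This is a clean repackaging that avoids the induction on $n$ and treats all components of $S^{\exp}$ at once; the paper's induction is the same computation unwound by hand. Your initial worry about the non-additivity of $S^n$, and your decision to route everything through the multiplicative group $\evar_{S^\bullet X}$, is exactly the right fix, and your parenthetical alternative (first the $\kvar$-version via Proposition \ref{affine}, then the exponential twist via Lemma \ref{expcompaffspaces}) would also work.
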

\begin{proof} Lemma \ref{expcompaffspaces} shows that this holds for effective elements of $\evar_X$. It suffices to prove that it holds for a difference $Y-Z$ of two effective elements. Using the fact that $S$ (resp. $S^{\exp}$) is a group morphism, one may write, by induction on $n$,
\begin{eqnarray*}S^{n}((Y-Z)\LL^{m}) &=& S^{n}(Y\LL^{m}) - \sum_{k=0}^{n-1}S^{n-1-k}((Y-Z)\LL^{m})S^k(Z\LL^m)\\
& = & S^n(Y)\LL^{mn} - \LL^{mn}\sum_{k=0}^{n-1}S^{n-1-k}(Y-Z)S^{k}(Z) \\
& = & S^n(Y-Z)\LL^{mn}.
\end{eqnarray*}
\end{proof}
Let $X$ be a quasi-projective variety. The same procedure as in section \ref{sect.symprodgrouplaw} endows the product $\prod_{i\geq 1}\expp_{S^iX}$ with a group law, and this group will be denoted $\expp_{S^{\bullet}X}$.
\begin{lemma}\label{sloc} There is a unique group morphism
$$S^{\loc}:\expp_{X}\to \expp_{S^{\bullet}X}$$ given by $\a\LL^{-m}\mapsto ((S^i\a)\LL^{-mi})_{i\geq 1}.$
\end{lemma}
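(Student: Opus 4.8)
The plan is to obtain $S^{\loc}$ by promoting the group morphism $S^{\mathrm{exp}}\colon\evar_X\to\evar_{S^{\bullet}X}$ along the localisation $\evar_X\to\expp_X$, the only nontrivial point being to control how denominators interact with symmetric products, which is exactly the content of Lemma \ref{gencompaffinespaces}. Recall that every element of $\expp_X$ can be written in the form $\a\LL^{-m}$ with $\a\in\evar_X$ and $m\geq 0$, and that $\expp_{S^{\bullet}X}=\prod_{i\geq 1}\expp_{S^iX}$ carries the multiplicative group law defined as in Section \ref{sect.symprodgrouplaw}. I would simply declare $S^{\loc}(\a\LL^{-m})=\big((S^i\a)\LL^{-mi}\big)_{i\geq 1}$. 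Uniqueness of a group morphism satisfying this prescription is then immediate, since every element of $\expp_X$ is of this shape; so the two tasks left are to check that the formula does not depend on the chosen representative and that it is a morphism of groups from $(\expp_X,+)$ to $(\expp_{S^{\bullet}X},\cdot)$.

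For well-definedness, suppose $\a\LL^{-m}=\b\LL^{-n}$ in $\expp_X$, that is, $\LL^{N+n}\a=\LL^{N+m}\b$ in $\evar_X$ for some $N\geq 0$. Applying $S^i=(S^{\mathrm{exp}}(-))_i$ and using Lemma \ref{gencompaffinespaces} to pull the powers of $\LL$ out of the symmetric product, one gets $\LL^{(N+n)i}S^i\a=\LL^{(N+m)i}S^i\b$ in $\evar_{S^iX}$; this is precisely the relation witnessing $(S^i\a)\LL^{-mi}=(S^i\b)\LL^{-ni}$ in $\expp_{S^iX}$. Hence $S^{\loc}$ is a well-defined map of sets, and taking $m=0$ shows it restricts to $S^{\mathrm{exp}}$ on $\evar_X$.

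For the morphism property, I would put the two inputs over a common denominator and compute the $i$-th component of each side. On one side, $S^{\mathrm{exp}}$ being a group morphism gives $S^i(\a+\b)=\sum_{k=0}^{i}S^k\a\boxtimes S^{i-k}\b$, where the exterior product is followed by the pushforward along $S^kX\times S^{i-k}X\to S^iX$, so the $i$-th component of $S^{\loc}\big((\a+\b)\LL^{-m}\big)$ is $\LL^{-mi}\sum_{k=0}^{i}S^k\a\boxtimes S^{i-k}\b$. On the other side, the $i$-th component of the product $S^{\loc}(\a\LL^{-m})\cdot S^{\loc}(\b\LL^{-m})$ in $\expp_{S^{\bullet}X}$ is $\sum_{k=0}^{i}(S^k\a\,\LL^{-mk})\boxtimes(S^{i-k}\b\,\LL^{-m(i-k)})$, which after pushforward collapses to the same expression, since exterior products and pushforwards commute with multiplication by powers of $\LL$. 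The case of distinct denominators is handled identically after clearing to a common denominator; alternatively one checks it directly by tracking the exponents of $\LL$, the inputs again being the additivity of $S^{\mathrm{exp}}$ and Lemma \ref{gencompaffinespaces}.

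The one load-bearing ingredient is Lemma \ref{gencompaffinespaces}: it is what guarantees that replacing $\a$ by $\a\LL$ replaces $S^i\a$ by exactly $\LL^iS^i\a$, with no error term, so that denominators match up coherently under $S^{\mathrm{exp}}$. I do not expect any genuine obstacle beyond keeping track of the exponents of $\LL$ and the $\expp_X$-linearity of the exterior products and pushforwards built into the group law on $\expp_{S^{\bullet}X}$, both of which are clear from their construction.
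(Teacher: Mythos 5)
Your proof is correct and takes essentially the same route as the paper: both extend $S^{\exp}$ through the localisation, with Lemma \ref{gencompaffinespaces} as the sole load-bearing ingredient controlling how powers of $\LL$ pass through symmetric powers. The only difference is presentational — the paper factors the composite $\evar_X\to\expp_{S^{\bullet}X}$ through the kernel of the localisation morphism and then adjusts by $\LL^{-mi}$, whereas you verify well-definedness on representatives and the group-morphism property by an explicit component computation, a check the paper leaves implicit.
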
\index{Sloc@$S^{\loc}$}
\begin{proof} We already have a group morphism
\begin{equation}\label{slocequation}\evar_X\to \expp_{S^{\bullet}X}\end{equation}
obtained from $S$ by composing with the localisation morphism $\evar_{S^{\bullet} X}\to \expp_{S^{\bullet}X},$ and given by $\a \mapsto (S^{i}\a)_{i\geq 1}.$ Lemma \ref{gencompaffinespaces} shows that an element in the kernel of the localisation morphism $\alpha_X:\evar_X\to \expp_X$ will go to 0, so (\ref{slocequation}) factors through~$\alpha_X$, inducing a morphism $$\mathrm{im}(\alpha_X)\to \expp_{S^{\bullet}X}.$$
Let $\a \in \expp_{X}$. Then there exists an integer $m\geq 1$ such that $\a\LL^{m}$ belong to $\mathrm{im}(\alpha_X)$. Put 
$$S^{\loc}(\a) = (S^{i}(\a\LL^m)\LL^{-mi})_{i\geq 1}$$  
By lemma \ref{gencompaffinespaces} this does not depend on the choice of $m$, so $S^{\loc}$ is well-defined.
\end{proof}

\begin{remark}\label{finalsymproddef} We may now define, for any $\a\in \expp_X$, its symmetric powers $S^{i}(\a)$ to be the components of $S^{\loc}(\a)$. More generally, for any partition $\pi = (n_i)_{i\in I}$ we may define the symmetric products $S^{\pi}\scr{A}$ of any family $\scr{A}$ of elements of $\expp_X$. Furthermore, we may define mixed symmetric products of a finite number of families of elements of $\expp_X$, and as in remark \ref{symprodexpdefandprop}, proposition \ref{generalcut2mixed} remains true with $\kvar$ replaced with $\M, \evar$ or $\expp$. 
\end{remark}
We also have, for $A\in \{\kvar,\M,\evar,\expp\}$:
\begin{prop}\label{genaffinespaces} Let $X$ be a variety over $R$, and $(X_i)_{i\in I}$ a family of elements in $A_X$. Let $\m = (m_i)_{i\in I}$ be a family of non-negative integers, and denote by $\scr{X}\LL^{\m}$ the family $(X_i\LL^{m_i})_{i\in I}$. Then for all $\pi = (n_i)_{i\in I}\in\N^{(I)}$, we have
$$S^{\pi}(\scr{X}\LL^{\m}) = (S^{\pi}\scr{X})\LL^{\sum_{i\in I}m_in_i}$$
in $A_{S^{\pi}X}.$
\end{prop}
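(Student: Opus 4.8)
The plan is to reduce Proposition \ref{genaffinespaces} to the already-established symmetric-power identity of Lemma \ref{gencompaffinespaces} (and its exponential and localised versions, Lemma \ref{expcompaffspaces} and Lemma \ref{sloc}) by unwinding the definition of $S^{\pi}\scr{X}$ from Definition \ref{symproddef} and Remark \ref{finalsymproddef}. Recall that, fixing $\pi=(n_i)_{i\in I}$, the element $S^{\pi}\scr{X}\in A_{S^{\pi}X}$ is by construction the restriction to $S^{\pi}X$ of the exterior product $\bigboxtimes_{i\in I}S^{n_i}X_i\in A_{\prod_{i\in I}S^{n_i}X}$ along the open immersion $S^{\pi}X\hookrightarrow \prod_{i\in I}S^{n_i}X$. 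The same is true for the family $\scr{X}\LL^{\m}=(X_i\LL^{m_i})_{i\in I}$, so it suffices to identify $S^{n_i}(X_i\LL^{m_i})$ with $(S^{n_i}X_i)\LL^{n_im_i}$ for each $i$ and then track the factor $\LL^{\sum_i n_im_i}$ through the exterior product and the restriction morphism.

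First I would record that for a single class $\a\in A_X$ and integers $n,m\geq 1$ one has $S^{n}(\a\LL^{m})=(S^n\a)\LL^{nm}$ in $A_{S^nX}$: for $A=\evar$ this is exactly Lemma \ref{gencompaffinespaces}, for $A=\kvar$ it follows by restricting along the injection $\kvar\hookrightarrow\evar$ which is compatible with $S$ and $S^{\exp}$, and for $A=\M$ or $A=\expp$ it follows from the construction of $S^{\loc}$ in Lemma \ref{sloc}, which was defined precisely so that $S^{i}(\a\LL^{-m})=(S^{i}\a)\LL^{-mi}$; combining the two exponents handles a general $\a\LL^{e}$ with $e\in\Z$. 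Next, since the exterior product morphism $\boxtimes$ of \eqref{exteriorproduct} is $A_R$-bilinear (and in particular $\LL$-linear in each factor), one gets
$$\bigboxtimes_{i\in I}S^{n_i}(X_i\LL^{m_i}) = \left(\bigboxtimes_{i\in I}S^{n_i}X_i\right)\LL^{\sum_{i\in I}m_in_i}$$
in $A_{\prod_{i\in I}S^{n_i}X}$, where the finitely many indices $i$ with $n_i>0$ are the only ones contributing. Finally, the restriction morphism $A_{\prod_{i\in I}S^{n_i}X}\to A_{S^{\pi}X}$ is a ring morphism, hence $\LL$-linear, so it carries the displayed identity to $S^{\pi}(\scr{X}\LL^{\m}) = (S^{\pi}\scr{X})\LL^{\sum_{i\in I}m_in_i}$ in $A_{S^{\pi}X}$, which is the claim.

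There is essentially no serious obstacle here: the statement is a bookkeeping consequence of the single-factor lemma together with the multiplicativity of exterior products and pullbacks. The only point requiring a little care is to make sure the reduction from a family $(X_i)_{i\in I}$ of possibly non-effective classes in $A_X$ (with $A\in\{\M,\expp\}$) to the effective case is legitimate: for this one writes each $X_i=\a_i\LL^{-e_i}$ with $\a_i$ in the image of the localisation morphism, applies the effective case of Lemma \ref{gencompaffinespaces}/\ref{expcompaffspaces} together with $S^{\loc}$, and checks independence of the chosen $e_i$ exactly as in the proof of Lemma \ref{sloc}. With that in place, the formula follows verbatim for $A=\M$ and $A=\expp$ as well.
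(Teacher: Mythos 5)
Your proof is correct and follows essentially the same route as the paper: the paper likewise applies the single-class identity $S^{n_i}(X_i\LL^{m_i})=S^{n_i}(X_i)\LL^{m_in_i}$ of Lemma \ref{gencompaffinespaces} factor by factor, takes exterior products over $i\in I$, and restricts to $S^{\pi}X$. Your additional remarks on passing from $\evar$ to the localised rings via $S^{\loc}$ only make explicit what the paper leaves implicit, so there is nothing to correct.
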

\begin{proof} By lemma \ref{gencompaffinespaces}, we have, for all $i\in I$,
$$S^{n_i}(X_i\LL^{m_i}) = S^{n_i}(X_i)\LL^{m_in_i}$$
in $A_{S^{n_i}X}$. Taking exterior products over $i\in I$ and restricting to $S^{\pi}X$ we get the result.
\end{proof}
\section{Euler products}\label{eulerprod}
\subsection{Definition and first properties}

Let $A\in\{\kvar,\M, \evar,\expp\}$. Let $X$ be a variety over $\base$ and $\scr{X} = (X_i)_{i\in I}$ a family of elements of $A_X$. We define the  Euler product notation to be
$$\prod_{u\in X/\base}\left( 1 + \sum_{i\in I} X_{i,u}t_i\right) :=  \sum_{ \pi\in\N^{(I)}} [S^{\pi}(\scr{X}/R)]\t^{\pi} = Z_{\scr{X}/\base}(\t)\ \ \ \in A_{\base}[[\t]],$$\index{Euler product}
 where the $t_i,\ i\in I$ are variables, and $\t^{\pi}$ is defined to be the finite product $\prod_{i\in I}t_i^{n_i}$, where $\pi = (n_i)_{i\in\I}$. When $\base = \spec k$ for a field $k$, we will leave out the mention of $\base$ in the product, writing simply $\prod_{u\in X}$.

 We are going to start by checking that our ``product'' actually behaves like a product, thereby justifying our notation. 

\paragraph{Properties:} Let $X$ be a variety over $\base$ and $\scr{X} = (X_i)_{i\in I}$ a family of classes in $A_X$. \begin{enumerate}
\item (Product with one factor) When $X = \base$, the last part of Example $\ref{firstex}$ gives \begin{equation}\label{onepoint}\prod_{u\in \base/\base}\left(1 + \sum_{i\in I}X_{i,u}t_i\right) = 1 + \sum_{i\in I}X_i t_i.\end{equation}
 \item (Associativity) Let $X  = U\cup Y$ be a partition of $X$ into a closed subscheme $Y$ and its complement $U$, and $\scr{U} = (U_i)_{i\in I}$ (resp. $\scr{Y}= (Y_i)_{i\in I}$) the restriction of $\scr{X}$ to $U$ (resp. to~$Y$). Then  corollary  \ref{generalcut2} can be reformulated as
 \begin{equation}\label{associativity} \prod_{u\in X/\base}\left(1 + \sum_{i\in I}X_{i,u}t_i\right) =  \prod_{u\in U/\base}\left(1 + \sum_{i\in 1} U_{i,u}t_i\right) \prod_{u\in Y/\base}\left(1 + \sum_{i\in I}Y_{i,u}t_i\right).\end{equation}
 Here we use remarks \ref{symprodexpdefandprop} and \ref{finalsymproddef}, which state that corollary \ref{generalcut2} is true more generally with $\kvar$ replaced with $\evar$, $\M$ or $\expp$. 
 \item (Finite products) Combining the previous two properties, we see that in the case where $X$ is a disjoint union of $m$ varieties $Y_1,\ldots,Y_m$ isomorphic to $\base$, 
 \begin{equation}\label{finiteprod}\prod_{v\in X/\base}\left(1 + \sum_{i\in I}X_{i,v}t_i\right)=\prod_{j=1}^m\left(1 + \sum_{i\in I}X_{i,j} t_i\right)\in A_{\base}[[\t]]\end{equation}
 where for all $i\in I$ $X_{i,j}$ is the restriction of $X_i$ to $Y_j\simeq \base$, and the product on the right-hand side is a finite product of power series (in the classical sense).
 \item (Change of variables of the form $\t\mapsto \LL^{\mathbf{m}}\t$) In terms of Euler products, Proposition \ref{genaffinespaces} means that for any $(m_i)_{i\in I}\in\N^{I}$, 
 $$\prod_{u\in X/R}\left(1 + \sum_{i\in I}X_{i,u}(\LL^{m_i}t_i)\right) = \prod_{u\in X/R}\left(1 + \sum_{i\in I}(X_{i,u}\LL^{m_i})t_i\right),$$
where the right-hand-side is the Euler product associated to the family $(X_i\LL^{m_i})_{i\in I}$, that is, the Euler product notation is compatible with respect to changes of variables of the form $(t_i\mapsto \LL^{m_i}t_i)_{i\in I},$ and factors of the form $\LL^{m_i}$ may pass from the variables to the coefficients. One must pay attention that this is specific to affine spaces because of their good behaviour with respect to Euler products.

 \end{enumerate}
 
 \begin{remark}
Let us now try to get a grip on what each factor of such an infinite product actually represents. For simplicity, we will consider $\base = \spec k$.  If $X = \spec k$, then according to (\ref{onepoint}),
 $$\prod_{u\in \spec k}\left(1 + \sum_{i \in I}X_{i,u}t_i\right) = 1 + \sum_{i\in I}X_it^i \in\kvar_{k}[[t]].$$
In this case the left-hand side has ``only one factor'', and identifying coefficients on both sides suggests that we can think of those objects $X_{i,u}$ as being the classes in $\kvar_{k}$ of the fibres of the~$X_i$ above the single closed point $u\in X$.
 
 Now let~$X$ be the disjoint union of a copy $Y$ of $\spec k$ (with a single closed point $y$) and of its open complement $U$, and let $\scr{Y}$ and $\scr{U}$ be the respective restrictions of $\scr{X}$ to $Y$ and $U$. Then using what we just remarked together with (\ref{associativity}),
 \begin{eqnarray*}\prod_{x\in X}\left( 1 + \sum_{i \in I}X_{i,x}t_i\right) &= &\prod_{y\in Y}\left(1 + \sum_{i \in I}Y_{i,y}t_i\right)\prod_{u\in U}\left(1 + \sum_{i \in I}U_{i,u}t_i\right)\\ &=& \left(1 + \sum_{i\in I}Y_{i} t_i\right)\prod_{u\in U}\left(1 + \sum_{i \in I}U_{i,u}t_i\right) \end{eqnarray*}
 This means that adding a closed point $\spec k$ to the variety consists exactly in adding a factor with coefficient of $t_i$ representing the class in $\kvar_{k(y)}$ of the fibre $Y_i = X_{i,y}$ above the added point~$y$. 
 \end{remark}
 \begin{example}\label{eulerprodexample} \begin{enumerate}\item \label{Kapranovexample} Kapranov's zeta function is obtained when taking $I = \N^{*}$, $X_i = X$ and specialising $t_i$ to $t^i$ for some single indeterminate $t$, for all $i$. Thus, for all closed points $v\in X$, the class of $X_{i,v}$ in $\svar_{k(v)}$ is 1 and the Euler product decomposition of Kapranov's zeta function can be written as
 $$\prod_{v\in X}\left(\sum_{i\geq 0}t^i\right) = 1 + \sum_{n\geq 1}[S^nX]t^n,$$\index{Kapranov's zeta function!Euler product}
 or even
 $$\prod_{v\in X}(1-t)^{-1} = 1 + \sum_{n\geq 1}[S^nX]t^n,$$
 \item For $I = \N^{*}$, put $X_1 = X$ and $X_i = \varnothing$ for all $i \geq 2$. Then the class of $X_{i,v}$ in $\svar_{k(v)}$ is $1$ if $i=1$ and 0 otherwise. Thus, writing $t_1=t$, we get the following Euler product decomposition for the generating function of zero-cycles without repetitions:
 $$\prod_{v\in X}(1 + t) = 1 + \sum_{n\geq 1}[S^n_*X]t^n.$$
 \end{enumerate}
 
 \end{example}
 
 \section{Double products}
\subsection{Double products with effective coefficients}
In this section we translate proposition \ref{iterate} into the language of Euler products, which will show the good behaviour of double products with \textit{effective} coefficients. We will then use this property in the next sections to deduce that this good behaviour extends in some sense to double products with not necessarily effective coefficients, when some natural assumptions on the set $I$ and the family of indeterminates $(t_i)_{i\in I}$ are made.  

We  place ourselves in the situation of proposition~\ref{iterate}: we consider $R'$ a variety over~$k$, $R$ a variety over $R'$,  $X$ a variety over~$R$, and $\scr{X} = (X_i)_{i\in I}$ a family of varieties over $X$, indexed by a set $I$.  Using our notation for the family of varieties $S^{\bullet}(\scr{X}/\base)$ over $\base$, we have
 $$\prod_{v\in\base/\base'}\left(1 + \sum_{\pi\in\N^{(I)}\backslash\{0\}}(S^{\pi}(\scr{X}/\base))_v\t^{\pi}\right) = \sum_{\varpi\in\N^{\left(\N^{(I)}\backslash\{0\}\right)}}[S^{\varpi}(S^{\bullet}(\scr{X}/\base)/\base'))]\t^{\varpi},$$
where, writing $\varpi = (m_{\pi})_{\pi\in \N^{(I)}\backslash\{0\}}$, and denoting by $\pi_i$ the number of times $i$ occurs in~$\pi$, $\t^{\varpi} $ is by definition  
$$\prod_{\pi\in\N^{(I)}\backslash\{0\}}(\t^{\pi})^{m_{\pi}} = \prod_{\pi\in\N^{(I)}\backslash\{0\}}\left(\prod_{i\in I}t_i^{\pi_i}\right)^{m_{\pi}} = \prod_{i\in I}t_i^{\sum_{\pi\in\N^{(I)}\backslash\{0\}}m_{\pi}\pi_i} = \t^{\mu(\varpi)}.$$
 In particular, $\t^{\varpi}$ and $\t^{\varpi'}$ are equal if and only if, in the notations of definition \ref{mu}, $\mu(\varpi) = \mu(\varpi')$. Thus, the above product can be rewritten in the form
 $$\prod_{v\in\base/\base'}\left(1 + \sum_{\pi\in\N^{(I)}\backslash\{0\}}(S^{\pi}(\scr{X}/\base))_v\t^{\pi}\right) $$
 $$= 1 + \sum_{\pi\in\N^{(I)}\backslash\{0\}}\left(\sum_{\varpi\in\mu^{-1}(\pi)}[S^{\varpi}(S^{\bullet}(\scr{X}/\base)/\base')]\right)\t^{\pi}.$$
We may therefore conclude, using proposition \ref{iterate} that the double product $$\prod_{v\in\base/\base'}\left(\prod_{u\in X/\base}\left(1 + \sum_{i\in I}X_{i,u}t_i\right)\right)_v$$ makes sense and that the following result is true:
\begin{prop}\label{prop.effectivedoubleprod} Let $R'$ be a variety over $k$, $R$ a variety over $R'$,  $X$ a variety over~$R$, and $\scr{X} = (X_i)_{i\in I}$ a family of varieties over $X$, indexed by a set $I$. Then we have the equality
 \begin{equation}\label{doubleprod}\prod_{v\in\base/\base'}\left(\prod_{u\in X/\base}\left(1 + \sum_{i\in I}X_{i,u}t_i\right)\right)_v = \prod_{u\in X/\base'}\left(1 + \sum_{i\in I}X_{i,u}t_i\right)\end{equation}
 in $\kvar_{R'}[[(t_i)_{i\in I}]]$. 
 \end{prop}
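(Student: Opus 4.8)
The statement is essentially a direct translation of Proposition~\ref{iterate} into the Euler product notation introduced in Section~\ref{eulerprod}, so the plan is to unwind both sides of~(\ref{doubleprod}) into the series defining the corresponding zeta functions and match coefficients. First I would expand the inner product: by definition of the Euler product notation, for fixed $v\in R$ the factor $\left(\prod_{u\in X/\base}\left(1 + \sum_{i\in I}X_{i,u}t_i\right)\right)_v$ is the fibre at $v$ of the family of symmetric products $S^{\bullet}(\scr{X}/\base) = (S^{\pi}(\scr{X}/\base))_{\pi\in\N^{(I)}}$ over $\base$. Thus the outer product is, again by definition of the Euler product notation (now applied to the family $S^{\bullet}(\scr{X}/\base)$ of varieties over $\base$, viewed over $\base'$),
$$\prod_{v\in\base/\base'}\left(1 + \sum_{\pi\in\N^{(I)}\setminus\{0\}}(S^{\pi}(\scr{X}/\base))_v\,\t^{\pi}\right) = \sum_{\varpi\in\N^{(\N^{(I)}\setminus\{0\})}}[S^{\varpi}(S^{\bullet}(\scr{X}/\base)/\base')]\,\t^{\varpi}.$$

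Next I would carry out the bookkeeping on the monomials $\t^{\varpi}$. Writing $\varpi = (m_{\pi})_{\pi}$ and recalling that $\t^{\pi} = \prod_{i\in I}t_i^{\pi_i}$ where $\pi_i$ is the multiplicity of $i$ in $\pi$, a short computation shows $\t^{\varpi} = \prod_{\pi}(\t^{\pi})^{m_{\pi}} = \prod_{i\in I}t_i^{\sum_{\pi}m_{\pi}\pi_i} = \t^{\mu(\varpi)}$, with $\mu$ as in Definition~\ref{mu}. Hence grouping terms in the sum above according to the value of $\mu(\varpi)$ rewrites the outer product as
$$1 + \sum_{\pi\in\N^{(I)}\setminus\{0\}}\left(\sum_{\varpi\in\mu^{-1}(\pi)}[S^{\varpi}(S^{\bullet}(\scr{X}/\base)/\base')]\right)\t^{\pi}.$$
On the other hand, the right-hand side of~(\ref{doubleprod}) is by definition $\sum_{\pi\in\N^{(I)}}[S^{\pi}(\scr{X}/\base')]\,\t^{\pi}$. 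Comparing coefficients of $\t^{\pi}$, the identity~(\ref{doubleprod}) is therefore equivalent to the equality $\sum_{\varpi\in\mu^{-1}(\pi)}[S^{\varpi}(S^{\bullet}(\scr{X}/\base)/\base')] = [S^{\pi}(\scr{X}/\base')]$ in $\kvar_{\base'}$ for every $\pi$, together with the trivial case $\pi = 0$ (where both sides are $1$, since $\mu^{-1}(0) = \{0\}$ and $S^0 = \base'$). This equality is exactly the last assertion of Proposition~\ref{iterate}.

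There is no real obstacle here: the content is entirely in Proposition~\ref{iterate}, whose proof via the subquotient Proposition~\ref{subquotient} has already been given, and the present statement is the formal repackaging of it as a product formula. The only points requiring a little care are (i) checking that the coefficient-matching is legitimate, i.e.\ that $\t^{\varpi} = \t^{\varpi'}$ exactly when $\mu(\varpi) = \mu(\varpi')$ so that the regrouping is unambiguous — this follows from the computation $\t^{\varpi} = \t^{\mu(\varpi)}$ above and the fact that the monomials $\t^{\pi}$, $\pi\in\N^{(I)}$, are pairwise distinct; and (ii) making sure the inner Euler product is being evaluated fibrewise at $v$ in a way compatible with the formula for fibres of symmetric products (formula~(\ref{formula.fibre})), which is built into the definition of $S^{\bullet}(\scr{X}/\base)$ as a family over $\base$. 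Once these are in place, the proof is complete by invoking Proposition~\ref{iterate} coefficient by coefficient.
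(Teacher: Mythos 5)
Your proposal is correct and follows essentially the same route as the paper: expand the outer Euler product as the zeta function of the family $S^{\bullet}(\scr{X}/\base)$ over $\base'$, use the computation $\t^{\varpi} = \t^{\mu(\varpi)}$ to regroup terms according to $\mu(\varpi)=\pi$, and invoke Proposition~\ref{iterate} coefficient by coefficient. The additional points you flag (distinctness of the monomials $\t^{\pi}$ and fibrewise evaluation of the inner product) are exactly the implicit justifications in the paper's argument, so nothing is missing.
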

  By proposition \ref{expiterate} this remains true with $\kvar$ replaced by $\evar$ if $\scr{X}$ is a family of varieties with exponentials. We may also pass to the respective localisations $\M$ and $\expp$.

  Assume now that $X$ is the disjoint union of two copies of $\base$, written respectively $Y$ and~$Z$. For all $i \in I$, we will write $Y_i$ (resp. $Z_i$) for the restriction of $X_i$ to $Y$ (resp. to $Z$).  Using $(\ref{finiteprod})$, we get that
 $$\prod_{u\in X/\base}\left(1 + \sum_{i \in I}X_{i,u}t_i\right) = \left(1 + \sum_{i \in I}Y_it_i\right)\left(1 + \sum_{i \in I}Z_it_i\right).$$
  Taking the product over $\base$ relatively to $\base'$, we get
 $$\prod_{v\in\base/\base'}\left(1 + \sum_{i \in I}Y_{i,v}t_i\right)\left(1 + \sum_{i \in I}Z_{i,v}t_i\right)$$
 On the other hand, by (\ref{associativity}), we have
 $$\prod_{v\in X/\base'}\left(1 + \sum_{i \in I}X_{i,v}t_i\right) = \prod_{v\in Y/\base'}\left(1 + \sum_{i \in I}Y_{i,v}t_i \right)\prod_{v\in Z/\base'}\left(1 + \sum_{i \in I}Z_{i,v}t_i \right).$$
 Using $(\ref{doubleprod})$, we finally get:
 \begin{prop} Let $R'$ be a variety over $k$, $R$ a variety over~$R'$, and $(Y_i)_{i\in I}$, $(Z_i)_{i\in I}$ two families of varieties over~$R$. Then we have the equality 
$$\prod_{v\in\base/\base'}\left(1 + \sum_{i \in I}Y_{i,v}t_i\right)\left(1 + \sum_{i \in I}Z_{i,v}t_i\right) = \prod_{v\in \base/\base'}\left(1 + \sum_{i \in I}Y_{i,v}t_i \right)\prod_{v\in \base/\base'}\left(1 + \sum_{i \in I}Z_{i,v} t_i\right)
$$
 in $A_{R'}[[(t_i)_{i\in I}]]$, where $A\in\{\kvar, \evar, \M, \expm\}.$
 \end{prop}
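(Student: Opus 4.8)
The strategy is to reduce the multiplicativity statement to the associativity and finite-product properties of Euler products, combined with the iteration result of Proposition \ref{prop.effectivedoubleprod}. The key observation is that a product of two ``one-factor'' series $(1 + \sum_i Y_{i,v}t_i)(1 + \sum_i Z_{i,v}t_i)$ can itself be realised as an Euler product: taking $X$ to be the disjoint union of two copies $Y$ and $Z$ of $\base$, with $(X_i)_{i\in I}$ the family whose restriction to $Y$ is $(Y_i)_{i\in I}$ and whose restriction to $Z$ is $(Z_i)_{i\in I}$, property (\ref{finiteprod}) (the ``finite products'' property) gives exactly
\[\prod_{u\in X/\base}\left(1 + \sum_{i\in I}X_{i,u}t_i\right) = \left(1 + \sum_{i\in I}Y_it_i\right)\left(1 + \sum_{i\in I}Z_it_i\right).\]
This is the identity displayed just before the statement; it is the translation of (\ref{finiteprod}) applied fibrewise over $\base$.

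\textbf{Main computation.} With this set-up in place, I would take the Euler product of both sides over $\base$ relative to $\base'$. The left-hand side becomes the double product
\[\prod_{v\in\base/\base'}\left(\prod_{u\in X/\base}\left(1 + \sum_{i\in I}X_{i,u}t_i\right)\right)_v,\]
which by Proposition \ref{prop.effectivedoubleprod} (equation (\ref{doubleprod})) equals $\prod_{u\in X/\base'}(1 + \sum_{i\in I}X_{i,u}t_i)$. Now apply property (\ref{associativity}) to the partition $X = Y \sqcup Z$ (here $Y$ and $Z$ are both open and closed, so either may play the role of the closed subscheme): this rewrites $\prod_{u\in X/\base'}(1 + \sum_{i\in I}X_{i,u}t_i)$ as
\[\prod_{v\in Y/\base'}\left(1 + \sum_{i\in I}Y_{i,v}t_i\right)\prod_{v\in Z/\base'}\left(1 + \sum_{i\in I}Z_{i,v}t_i\right) = \prod_{v\in\base/\base'}\left(1 + \sum_{i\in I}Y_{i,v}t_i\right)\prod_{v\in\base/\base'}\left(1 + \sum_{i\in I}Z_{i,v}t_i\right),\]
using $Y\simeq Z\simeq \base$. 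Chaining these equalities yields the claimed identity in $A_{R'}[[(t_i)_{i\in I}]]$.

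\textbf{Extension to the other coefficient rings.} For $A = \kvar$ the argument above is complete. For $A\in\{\evar, \M, \expp\}$ I would invoke the remarks already recorded in the excerpt: Proposition \ref{expiterate} shows (\ref{doubleprod}) holds with $\kvar$ replaced by $\evar$ when the $X_i$ carry exponentials, and both (\ref{associativity}) and (\ref{finiteprod}) are valid in $\evar$ by Remark \ref{symprodexpdefandprop}; passing to the localisations $\M$ and $\expp$ is then immediate since all the morphisms involved extend to localised Grothendieck rings by $\M_k$- (resp. $\expp_k$-) linearity, as noted in Remark \ref{finalsymproddef}. I do not anticipate a genuine obstacle here: the entire content is already packaged in Proposition \ref{prop.effectivedoubleprod} and the formal properties (\ref{associativity}) and (\ref{finiteprod}), so the only care needed is bookkeeping — making sure that in the passage $X = Y\sqcup Z$ the families restrict correctly and that the identification $\t^{\varpi} = \t^{\mu(\varpi)}$ (already used in deriving (\ref{doubleprod})) is applied consistently. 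The mild subtlety worth a sentence in the write-up is that property (\ref{associativity}) is stated for a closed subscheme with open complement, whereas here both pieces are clopen; this is harmless since a disjoint union is a special case.
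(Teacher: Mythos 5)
Your proposal is correct and follows essentially the same route as the paper: the text preceding the proposition realises the product of the two one-factor series as the Euler product over the disjoint union $X = Y\sqcup Z$ via the finite-products property, applies the double-product identity (\ref{doubleprod}) of Proposition \ref{prop.effectivedoubleprod}, and then splits with the associativity property (\ref{associativity}), exactly as you do. Your handling of the other coefficient rings via Proposition \ref{expiterate} and localisation also matches the paper's remarks.
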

 \begin{example} Let $X$ be a variety over $\base$ and $n\geq 1$ an integer. As an application of double products, let us compute Kapranov's zeta function of a $\Proj^n$-bundle $Y$ over $X$ in terms of $Z_X$. By definition,
$$Z_{Y} = \prod_{y\in Y}\frac{1}{1-t}.$$
Using (\ref{doubleprod}), we have
\begin{eqnarray*}Z_{Y}& =& \prod_{x\in X}\left(\prod_{y\in Y/X}\frac{1}{1-t}\right)_x\\
                                    & = &\prod_{x\in X}\prod_{y\in \Proj^n}\frac{1}{1-t}\\
                                   &= &\prod_{x\in X}\frac{1}{1-t}\frac{1}{1-\LL t}\ldots\frac{1}{1-\LL^{n}t}
\end{eqnarray*}
where the last line comes from the exact expression of $Z_{\Proj^{n}}(t)$. Using compatibility with finite products, we finally get
$$Z_{Y}=  \prod_{i=0}^n\prod_{x \in X}\frac{1}{1-\LL^i t}= Z_X(t)Z_X(\LL t)\ldots Z_X(\LL^{n} t).$$
  Note that this result can be obtained without the Euler product, by cutting the projective bundle into affine bundles and applying propositions \ref{affine} and  \ref{multzeta} (see \cite{LL}, Corollary~3.6). 
                                \end{example}
 \subsection{Double products with arbitrary coefficients}
 To get a multiplicativity result of this type for non-effective coefficients, we will make assumptions on the set $I$ and the indeterminates $(t_i)_{i\in I}$. 
 \paragraph{Hypothesis on $I$.} We assume that $I$ is of the form $I_0\setminus \{0\}$ where $I_0$ is a commutative monoid such that
 \begin{enumerate} 
 \item $I_0$ is endowed with a total order $<$ compatible with the monoid law, in the sense that if we have $p+q = i$ for some $q\neq 0$, then $p<i$.
 \item for all $i\in I$, the set
$\{p\in I,\ p<i\}$ is finite.
 \end{enumerate}
 A consequence of this is that $I$ has a minimal element.
 \paragraph{Hypothesis on $(t_i)_{i\in I}$.} We are going to consider collections of indeterminates $(t_i)_{i\in I}$ satisfying the condition
$t_pt_q = t_{p+q}$
for all $p,q\in I$. We will sometimes write $t_0 =1$. 
\begin{notation} Whenever we write sums or products indexed by the condition $p+q = i$, we mean that they go over all pairs $(p,q)\in I_0^2$ such that $p+q = i$. 
\end{notation} 

\begin{prop} \label{prop.findeffective} Assuming the above conditions on $I$ and $(t_i)_{i\in I}$, for every $k$-variety $X$ and for every family $(X_i)_{i\in I}$ of elements of $\kvar_X$, there exists a family of effective elements $(Y_i)_{i\in I}$ of $\kvar_X$ such that the coefficients of the expansion of the series
$$\left(1 + \sum_{i\in I}X_it_i\right)\left(1 + \sum_{i\in I} Y_it_i\right)\in \kvar_X[[\t]]$$
are effective.
\end{prop}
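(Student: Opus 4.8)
The plan is to build the family $(Y_i)_{i\in I}$ one index at a time, by induction along the order on $I$. First I would record that the two hypotheses on the monoid $I_0$ make $I$ into a well-ordered set in which each element has only finitely many strict predecessors: if $T\subseteq I$ is nonempty and $i\in T$, then $\{p\in T : p\le i\}$ is finite and nonempty, hence has a least element, which is then the least element of $T$. In particular, definitions and proofs by induction on $i\in I$ are legitimate. I would then compute, using the relation $t_pt_q=t_{p+q}$ together with the conventions $t_0=1$ and $Y_0=1$, that the coefficient of $t_i$ in $\bigl(1+\sum_{p\in I}X_pt_p\bigr)\bigl(1+\sum_{q\in I}Y_qt_q\bigr)$ is
$$Z_i \;=\; X_i+Y_i+\sum_{\substack{p+q=i\\ p,q\in I}}X_pY_q,$$
and that the constant coefficient is $1$, which is effective; the sum defining $Z_i$ is finite because $\{p\in I: p\le i\}$ is finite. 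By the compatibility of $<$ with the monoid law, every pair $(p,q)$ with $p+q=i$ and $p,q\in I$ satisfies $p<i$ and $q<i$, so $Z_i$ involves only the $Y_q$ with $q<i$.

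Next I would carry out the induction. Suppose $Y_q$ has already been chosen as an effective class in $\kvar_X$ for every $q<i$. Then
$$\gamma_i \;:=\; X_i+\sum_{\substack{p+q=i\\ p,q\in I}}X_pY_q \;\in\;\kvar_X$$
is a well-defined element, and since $\kvar_X$ is the group completion of the Grothendieck semiring $\svar_X$, one may write $\gamma_i=[A_i]-[B_i]$ for suitable $X$-varieties $A_i$ and $B_i$. Setting $Y_i:=[B_i]$, which is effective, gives $Z_i=\gamma_i+Y_i=[A_i]$, which is effective. Running this over all $i\in I$ produces a family $(Y_i)_{i\in I}$ of effective elements of $\kvar_X$ such that all coefficients of the expansion of the product are effective, as required.

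The argument is mostly bookkeeping, and I do not expect a serious obstacle; the one point that needs genuine care is the legitimacy of the recursion, namely that $I$ is well-ordered (so that induction applies) and that the formula for $Z_i$ refers only to data attached to indices strictly below $i$. Both are supplied precisely by the two hypotheses imposed on $(I_0,<)$: the finiteness of the sets of predecessors yields the well-ordering, and the compatibility of the total order with the composition law ensures that the cross terms $X_pY_q$ with $p+q=i$ occur only for $p,q<i$.
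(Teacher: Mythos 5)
Your proof is correct and follows essentially the same route as the paper: an inductive construction of the $Y_i$ along the order on $I$, using the compatibility of $<$ with the monoid law and the finiteness of predecessor sets so that the coefficient of $t_i$ involves only previously chosen $Y_q$, and then adding a large enough effective class (your $Y_i=[B_i]$) to make that coefficient effective. The paper's proof is the same argument stated more tersely; your extra remarks on the well-ordering of $I$ and the explicit choice of $Y_i$ simply make the recursion's legitimacy precise.
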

\begin{proof} We construct $(Y_i)_{i\in I}$ by induction, using the order $<$. We put $Y_0 = X$. Assume the $Y_p$ are constructed for all $p$ satisfying $p<i$ for some $i\in I$. We are looking for an effective $Y_i$ such that
$$\sum_{\substack{p+q = i\\ q\neq 0}}X_pY_q + Y_i$$
is effective. By the first condition on $I$ and by the induction hypothesis, all the $Y_i$ appearing in the sum are already defined, so such a $Y_i$ exists. 
\end{proof}

\begin{example} The main example of this situation that will be of interest to us is when $I_0 = \Z_{\geq 0}^{r}$ for some integer $r\geq 1$ (with the lexicographical order), and for $i = (i_1,\ldots,i_r)\in I$, the indeterminate $t_i$ is given by $s_1^{i_1}\ldots s_r^{i_r}$ for $r$ independent indeterminates $s_1,\ldots,s_r$. 
\end{example}

 The aim of the following sections is to prove: 
 \begin{prop}\label{compfinprodnoneff} Let $R$ be a variety, $X$ a variety over~$R$, $\scr{A} = (A_i)_{i\in I}$ a family of effective elements of $\kvar_X$, and $\scr{B} = (B_i)_{i\in I}$ a family of elements of $\kvar_X$. Then, under the above hypotheses on~$I$ and $(t_i)_{i\in I}$, 
$$\prod_{x\in X}\left(1 + \sum_{i\in I} A_{i,x}t_i\right)\left(1 + \sum_{i\in I} B_{i,x}t_i\right) = \prod_{x\in X}\left(1 + \sum_{i\in I} A_{i,x}t_i\right)\prod_{x\in X} \left(1 + \sum_{i\in I} B_{i,x}t_i\right)$$
in $\kvar_R[[(t_i)_{i\in I}]]$. 
\end{prop}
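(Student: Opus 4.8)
The plan is to reduce the assertion to the double product formula for families of \emph{varieties} that has already been proved (Proposition~\ref{prop.effectivedoubleprod} and its corollary on products of two series), the genuinely new ingredient being an extension of the iteration machinery of Section~\ref{relativesetting} to families of non-effective classes. Write $\mathcal{A}=1+\sum_{i\in I}A_it_i$ and $\mathcal{B}=1+\sum_{i\in I}B_it_i$. Under the running hypotheses on $I$ and $(t_i)_{i\in I}$ one has $t_pt_q=t_{p+q}$, so the formal product $\mathcal{A}\mathcal{B}$ is again of the shape $1+\sum_{i\in I}M_it_i$ with $M_i=\sum_{p+q=i}A_pB_q$ (convention $A_0=B_0=1$), and the left-hand side of the asserted identity is exactly the Euler product attached to the family $\scr{M}=(M_i)_{i\in I}$ of classes in $\kvar_X$. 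So what must be shown is that the assignment $\scr{X}\mapsto\prod_{x\in X}\bigl(1+\sum_{i\in I}X_{i,x}t_i\bigr)$ turns this formal multiplication of families into multiplication in $\kvar_R[[(t_i)_{i\in I}]]$; by the effective double product results this is known when \emph{both} families consist of varieties, and the task is to drop the effectivity hypothesis on one of the two factors.

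The first step is to use Proposition~\ref{prop.findeffective} to express the arbitrary family $\scr{B}$ in terms of families of varieties: it produces a family $\scr{C}$ of effective classes such that $\mathcal{B}\cdot\mathcal{C}=1+\sum_{i\in I}D_it_i$ has \emph{effective} coefficients. Since an effective class in $\kvar_X$ is just the class of a variety (disjoint pieces being recombined via the scissor relations), the families $\scr{A}$, $\scr{C}$, $\scr{D}$ together with the formal product $\mathcal{A}\cdot\mathcal{D}=\mathcal{A}\mathcal{B}\cdot\mathcal{C}$ all fall under the effective case, so $\prod_x(\mathcal{A}\mathcal{D})=\prod_x\mathcal{A}\cdot\prod_x\mathcal{D}$; and $\prod_x\mathcal{C}$, having constant term $1$, is a unit of $\kvar_R[[(t_i)_{i\in I}]]$. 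Consequently the claimed equality follows by substitution and cancellation of $\prod_x\mathcal{C}$ \emph{as soon as} one knows that $\prod_x$ is still multiplicative on a product one of whose factors is variety-valued, namely $\prod_x(\mathcal{M}\cdot\mathcal{C})=\prod_x\mathcal{M}\cdot\prod_x\mathcal{C}$ and $\prod_x(\mathcal{B}\cdot\mathcal{C})=\prod_x\mathcal{B}\cdot\prod_x\mathcal{C}$. This is where the effectivity of one factor is genuinely exploited; it does not by itself bring us back to the purely effective situation, so it remains to establish this mixed multiplicativity directly.

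The way I would do this — and the step I expect to be the main obstacle — is to carry the symmetric product identities of Section~\ref{relativesetting}, above all Proposition~\ref{iterate}, over to families of classes, so that the device used to derive the effective double product formula (taking $X$ to be two disjoint copies of the base, carrying the two families respectively) applies word for word. Concretely one writes each $X_i$ as a difference of classes of varieties and expands the symmetric powers $S^{n}(X_i)$ through the group morphism $S:\kvar_X\to\kvar_{S^{\bullet}X}$ of Section~\ref{symprodclasses}; by Remark~\ref{generalcut} and Proposition~\ref{generalcut2mixed} (valid for classes by Remark~\ref{finalsymproddef}) this turns $S^{\pi}\scr{M}$ and the iterated symmetric products $S^{\varpi}\bigl(S^{\bullet}(\scr{X}/R)/R'\bigr)$ into sums of \emph{mixed} symmetric products of families of varieties, and one has to check term by term that the combinatorial identity of Proposition~\ref{iterate} — a sum over $\mu^{-1}(\pi)$ — survives the expansion. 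The difficulty lies precisely in this bookkeeping: for families of classes there is no fibre description such as~(\ref{formula.fibre}), so the two sides must be matched formally, using only that the variety-valued factor is geometric (one may work relatively over its own symmetric products as a base) while the other is handled purely algebraically; this is also the point at which the hypotheses on $I$ and $(t_i)_{i\in I}$ are used, to guarantee that the index sets $\mu^{-1}(\pi)$ are finite and that monomials multiply as $t_pt_q=t_{p+q}$.
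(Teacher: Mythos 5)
There is a genuine gap, and it sits exactly where you flag the ``main obstacle''. Your first reduction is circular: after invoking Proposition~\ref{prop.findeffective} to produce an effective family $\scr{C}$ with $\mathcal{B}\mathcal{C}$ effective, the identities you still need, $\prod_x(\mathcal{M}\cdot\mathcal{C})=\prod_x\mathcal{M}\cdot\prod_x\mathcal{C}$ and $\prod_x(\mathcal{B}\cdot\mathcal{C})=\prod_x\mathcal{B}\cdot\prod_x\mathcal{C}$ with $\mathcal{C}$ effective and $\mathcal{M},\mathcal{B}$ arbitrary, are literally the statement of Proposition~\ref{compfinprodnoneff} with the letters renamed. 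So this step transfers no burden at all; the entire content of the proposition is the ``mixed multiplicativity'' you defer. (In the paper the logic runs the other way: Proposition~\ref{prop.findeffective} is used \emph{after} the proposition, to deduce the corollary on double products with arbitrary coefficients from it.)

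Your sketch of the remaining direct argument then stops short of the actual engine of the proof. Extending Proposition~\ref{iterate} to classes and ``checking term by term'' does not go through naively: once you write $B_j=C_j-C'_j$ and expand, the cross terms of $S^{\ell_j}(B_j)$ are governed by $S^{m}(-C'_j)=\sum_{\mu\in \calQ_m}(-1)^{||\mu||}[\mathrm{Sym}^{\mu}C'_j]$ (Lemma~\ref{Smminuslemma}), and these terms live over loci of $S^{\pi}X$ where the supports of the $\scr{A}$-components and of the positive and negative parts of the $\scr{B}$-components overlap in all possible patterns. The paper's proof compares the coefficient $S^{\pi}(\scr{A}\ast_X\scr{B})$ on the left with the right-hand side by stratifying according to an explicit overlap datum $\gamma$ (the $\times_{\gamma}$ construction of section~\ref{subsect.overlapssymproducts}), and the strata only recombine after the sign identity of Lemma~\ref{howelemma}, namely $\sum_{(\mu_1,\ldots,\mu_n)\in c_n^{-1}(\nu_1,\ldots,\nu_n)}(-1)^{||\mu_1+\ldots+\mu_n||}=(-1)^{||\nu_1||+\ldots+||\nu_n||}$, which collapses the sum over partitions with prescribed contractions. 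Nothing in your outline produces this cancellation, and without it the ``bookkeeping'' you postpone is precisely the unproved heart of the statement; the hypotheses on $I$ and $(t_i)_{i\in I}$ enter only in Proposition~\ref{prop.findeffective}, not in this combinatorial core, so they cannot substitute for it.
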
 
Before proving the proposition, let us state the following corollary:

   \begin{cor} Let $R'$ be a variety, $R$ a variety over $R'$,  $X$ a variety over~$R$, and $\scr{X} = (X_i)_{i\in I}$ a family of classes in $\kvar_X$,  indexed by a set $I$. Then, under the above hypotheses on~$I$ and $(t_i)_{i\in I}$, we have the equality
            \begin{equation}\label{doubleprodnoneff}\prod_{v\in\base/\base'}\left(\prod_{u\in X/\base}\left(1 + \sum_{i\in I}X_{i,u}t_i\right)\right)_v = \prod_{u\in X/\base'}\left(1 + \sum_{i\in I}X_{i,u}t_i\right).\end{equation}
             in $\kvar_{R'}[[(t_i)_{i\in I}]]$. 
           \end{cor}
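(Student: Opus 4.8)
The plan is to deduce the corollary from Proposition \ref{prop.effectivedoubleprod} (the effective case of the double product formula) and from Proposition \ref{compfinprodnoneff}, by reducing an arbitrary family $\scr{X} = (X_i)_{i\in I}$ of classes in $\kvar_X$ to an effective one via the ``completion'' trick of Proposition \ref{prop.findeffective}. First I would apply Proposition \ref{prop.findeffective} to the family $\scr{X}$ (viewed over $X$): this produces a family $\scr{Y} = (Y_i)_{i\in I}$ of effective elements of $\kvar_X$, with $Y_0 = X$, such that the product series
$$\left(1 + \sum_{i\in I}X_it_i\right)\left(1 + \sum_{i\in I}Y_it_i\right) = 1 + \sum_{i\in I}Z_it_i$$
has effective coefficients $Z_i\in\kvar_X$; here I use that $t_pt_q = t_{p+q}$, so the product of the two series is again of the shape $1 + \sum_{i\in I}Z_it_i$. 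Thus, as elements of $\kvar_X[[(t_i)_{i\in I}]]$, the series $1 + \sum X_it_i$ is the quotient of the two effective series $1 + \sum Z_it_i$ and $1 + \sum Y_it_i$.

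Next I would observe that the Euler product notation, by its very definition, is multiplicative with respect to such quotients within a fixed fibre, and that Proposition \ref{compfinprodnoneff} (applied with $\scr{A} = \scr{Y}$ effective and $\scr{B} = \scr{X}$ arbitrary, over the appropriate base) gives
$$\prod_{u}\left(1 + \sum_{i\in I}X_{i,u}t_i\right)\prod_{u}\left(1 + \sum_{i\in I}Y_{i,u}t_i\right) = \prod_{u}\left(1 + \sum_{i\in I}Z_{i,u}t_i\right)$$
for the product over $X/R$ (and likewise over $X/R'$, and for the inner product over $X/R$ followed by the outer one over $R/R'$). In other words, passing to Euler products over any of the three relevant bases transforms the identity ``$(1+\sum X_it_i)(1+\sum Y_it_i) = 1 + \sum Z_it_i$'' of fibrewise series into the same identity of the corresponding Euler products. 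Since the families $\scr{Y}$ and $\scr{Z}$ are effective, Proposition \ref{prop.effectivedoubleprod} (equality (\ref{doubleprod})) applies to them and gives
$$\prod_{v\in R/R'}\left(\prod_{u\in X/R}\left(1 + \sum_{i\in I}Y_{i,u}t_i\right)\right)_v = \prod_{u\in X/R'}\left(1 + \sum_{i\in I}Y_{i,u}t_i\right),$$
and the analogous identity for $\scr{Z}$.

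Finally I would combine these. Write $P_Y$, $P_Z$ for the left-hand sides of (\ref{doubleprodnoneff}) applied to $\scr{Y}$, $\scr{Z}$, and $Q_Y$, $Q_Z$ for the right-hand sides; what we want is the analogous equality $P_X = Q_X$ for $\scr{X}$. Using Proposition \ref{compfinprodnoneff} over $R'$ (for the outer product) together with its use over $R$ inside the fibres of $R/R'$, one gets $P_X\cdot P_Y = P_Z$ in $\kvar_{R'}[[(t_i)_{i\in I}]]$; likewise $Q_X\cdot Q_Y = Q_Z$. Since $P_Y = Q_Y$ and $P_Z = Q_Z$ by Proposition \ref{prop.effectivedoubleprod}, and since $P_Y = Q_Y$ has invertible constant term $1$ in the power series ring, we may cancel it to conclude $P_X = Q_X$, which is exactly (\ref{doubleprodnoneff}). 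The main obstacle I anticipate is bookkeeping rather than conceptual: one must check carefully that Proposition \ref{compfinprodnoneff} genuinely applies in each of the three roles (the outer product over $R/R'$ with effective first factor, the inner products over $X/R$ in each fibre, and the single products over $X/R'$), that all these manipulations take place in the same power series ring $\kvar_{R'}[[(t_i)_{i\in I}]]$ after pushing forward, and that the hypotheses on $I$ and $(t_i)_{i\in I}$ — in particular $t_pt_q = t_{p+q}$ and the finiteness of $\{p<i\}$, which are what make Proposition \ref{prop.findeffective} and the cancellation legitimate — are invoked correctly. The passage to the localisations $\M$, $\evar$, $\expp$ is then routine, using that the morphisms $S$, $S^{\exp}$, $S^{\loc}$ are compatible as recorded in Remarks \ref{symprodexpdefandprop} and \ref{finalsymproddef}.
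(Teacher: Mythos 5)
Your proposal is correct and follows essentially the same route as the paper: use Proposition \ref{prop.findeffective} to produce an effective family $\scr{Y}$ with effective product family $\scr{Z}$, apply the effective double-product identity (Proposition \ref{prop.effectivedoubleprod}) to $\scr{Y}$ and $\scr{Z}$, split the products via Proposition \ref{compfinprodnoneff}, and cancel the $\scr{Y}$-factor, which is invertible since its constant term is $1$. The only difference is presentational (your explicit $P_X P_Y = P_Z$, $Q_X Q_Y = Q_Z$ bookkeeping versus the paper's direct substitution), so nothing further is needed.
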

          
        \begin{proof} By proposition \ref{prop.effectivedoubleprod}, this is already known when the classes $X_i$ are effective. Using proposition \ref{prop.findeffective}, choose a family of effective classes $\scr{Y} = (Y_i)_{i\in I}$ such that 
        $$\left(1 + \sum_{i\in I}X_it_i\right)\left(1 + \sum_{i\in I}Y_it_i\right) = 1 + \sum_{i\in I}Z_it_i\in \kvar_{X}[[(t_i)_{i\in I}]]$$
        has effective coefficients. Then we have
        $$\prod_{v\in\base/\base'}\left(\prod_{u\in X/\base}\left(1 + \sum_{i\in I}Z_{i,u}t_i\right)\right)_v = \prod_{u\in X/\base'}\left(1 + \sum_{i\in I}Z_{i,u}t_i\right)$$
        for the effective family $(Z_i)_{i\in I}$. Thus, we have 
$$\prod_{v\in\base/\base'}\left(\prod_{u\in X/\base}\left(1 + \sum_{i\in I}X_{i,u}t_i\right)\left(1 + \sum_{i\in I}Y_{i,u}t_i\right)\right)_v = \prod_{u\in X/\base'}\left(1 + \sum_{i\in I}X_{i,u}t_i\right)\left(1 + \sum_{i\in I}Y_{i,u}t_i\right).$$
Using proposition \ref{compfinprodnoneff}, we get
$$\prod_{v\in\base/\base'}\left(\prod_{u\in X/\base}\left(1 + \sum_{i\in I}X_{i,u}t_i\right)\right)_v \prod_{v\in\base/\base'}\left(\prod_{u\in X/\base}\left(1 + \sum_{i\in I}Y_{i,u}t_i\right)\right)_v=$$
$$ \prod_{u\in X/\base'}\left(1 + \sum_{i\in I}X_{i,u}t_i\right)\prod_{u\in X/\base'}\left(1 + \sum_{i\in I}Y_{i,u}t_i\right).$$
By proposition \ref{prop.effectivedoubleprod}, the factors corresponding to $\scr{Y}$ cancel out and we have the result.
        \end{proof}
        
        \begin{remark} Though in this section we have stated everything for $\kvar_X$, the results remain true for Grothendieck rings of exponentials and localised Grothendieck rings. 
        \end{remark}
\subsection{Some combinatorics of partitions}\label{subsect.combinatorics}

 We use some terminology and notation on partitions due to Vakil-Wood (see \cite{VW}, section 2) and Howe (see \cite{Howe}, section~6.1). As before, a (generalised) partition $\kappa$ in an abelian semigroup $I$ may equivalently be seen as a family of non-negative integers $(k_i)_{i\in I}$, almost all of them being zero, or as a finite multiset $[i^{k_i}]_{i\in I}$ of elements of $I$, the number of occurrences of $i$ being given by $k_i$ (so that exponents in the notation $[i^{k_i}]_{i\in I}$ denote multiplicity). We use the notations
 \begin{itemize} \item $\sum\kappa:= \sum_{i\in I} k_ii$ for the sum of the partition $\kappa$, 
 \item $|\kappa| = \sum_{i\in I}k_i$ for the number of elements (or \textit{parts}) of $\kappa$,
 \item $||\kappa|| = |\{i\in I,\ k_i\neq 0\}|$ for the number of \textit{distinct} parts of $\kappa$. 
 \end{itemize}
 A \textit{formalisation} of the partition $\kappa$ is a partition $f(\kappa)$ in the abelian semigroup $\Z^+[a_i]_{i\in I}$ (where the $a_i$ are formal indeterminates) given by $f(\kappa) = [a_i^{k_i}]_{i\in I}$. In other words, every element of $\kappa$ is replaced by a formal indeterminate, keeping the same multiplicities. Note that $|\kappa|$ and $||\kappa||$ are preserved when passing to a formalisation. 
 
 We denote by $\calP$ the set of partitions of integers (that is, partitions in the case where $I = \Z_{>0}$.) Among those, we denote by  $\mathcal{Q}$ the set of partitions $\mu = (m_i)_{i\geq 1}$ such that for some $k\geq 1$, we have $m_i=0$ for $i>k$ and $m_i\neq 0$ for $1\leq i\leq k$. Such a partition $\mu$ can be also seen as an \textit{ordered} partition $(m_1,\ldots,m_k)$ of the integer $|\mu|$. From this point of view, the elements of $\calP$ may be reinterpreted as ordered partitions allowing zeroes. For any integer~$m$, we denote by $\calP_m\subset \mathcal{P}$ (resp. $\mathcal{Q}_m\subset \mathcal{Q}$) the subsets corresponding to partitions~$\mu$ with $|\mu| = m$. For non-negative integers $m_1,\ldots,m_r$, we will use the notations $(m_1,\ldots,m_r)$ or $1^{m_1}\ldots r^{m_r}$ to denote the partition $(m_1,\ldots,m_r,0,0,\ldots)$. 

There is a natural retraction map $c:\calP\to \mathcal{Q}$ given by removing all intermediate zeroes. For example, $c(1,0,3) = (1,3)$. 

\begin{remark}\label{remark:piles} A helpful visualisation is to consider an element $\mu =(\mu_i)_{i\geq 1}\in \calP$ as a pile of blocks arranged into columns numbered from left to right, with the $i$-th column containing $m_i$ blocks. The elements of $\calQ$ are piles with no empty columns, $|\mu|$ is the total number of blocks, and $||\mu||$ is the total number of non-empty columns. The contraction $c(\mu)$ is given by sliding all of the non-empty columns as far to the left as possible.
\end{remark}
\paragraph{A combinatorial lemma}
For every integer $n\geq 1$ will consider the \textit{decomposition sets}
$$\mathcal{Q}^{n,*} = \{(\mu_1,\ldots,\mu_n)\in \calP^{n}|\ \mu_1 + \ldots + \mu_n \in \mathcal{Q}\}.$$
There is a contraction map $c_n:\mathcal{Q}^{n,*}\to \mathcal{Q}^n$ given by $(\mu_1,\ldots,\mu_n)\mapsto (c(\mu_1),\ldots,c(\mu_n)).$
\begin{example}\label{contractionn1} When $n=1$, we have $\calQ^{1,\star} = \calQ$ and $c_1 = \id$. 
\end{example}
We have the following combinatorial lemma, which, together with its proof, has been communicated to us by Sean Howe:
\begin{lemma}\label{howelemma} For all $(\nu_1,\ldots,\nu_n)\in \mathcal{Q}^n$, 
$$\sum_{(\mu_1,\ldots,\mu_n)\in c_n^{-1}(\nu_1,\ldots,\nu_n)}(-1)^{||\mu_1 + \ldots + \mu_n||}  = (-1)^{||\nu_1|| + \ldots + ||\nu_n||}.$$
\end{lemma}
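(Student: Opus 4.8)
The statement is purely combinatorial, and the natural approach is to decouple the $n$ columns-of-blocks visualisation from Remark~\ref{remark:piles}. Fix $(\nu_1,\ldots,\nu_n)\in\mathcal{Q}^n$, and think of each $\nu_j$ as a pile with no empty columns, say with $\|\nu_j\|=k_j$ columns. An element $(\mu_1,\ldots,\mu_n)\in c_n^{-1}(\nu_1,\ldots,\nu_n)$ is obtained by inserting, independently in each $\mu_j$, some number of empty columns in between (and after) the non-empty ones, subject only to the \emph{global} constraint that $\mu_1+\cdots+\mu_n\in\mathcal{Q}$, i.e.\ the sum pile has no empty column. The quantity $\|\mu_1+\cdots+\mu_n\|$ is the largest index $i$ such that at least one $\mu_j$ has a non-empty $i$-th column; equivalently, if we write each $\mu_j$ as a sequence of ``occupied'' and ``empty'' slots up to its last occupied slot, $\|\mu_1+\cdots+\mu_n\|$ is the length $N$ of the longest of these sequences, and the $\mathcal{Q}$-condition says precisely that for every $i\le N$ some $\mu_j$ is occupied at slot $i$.

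\textbf{Key steps.} First I would reformulate: a preimage is the same as a choice, for each $j$, of an injection $\iota_j\colon\{1,\ldots,k_j\}\hookrightarrow\{1,\ldots,N_j\}$ with $\iota_j(k_j)=N_j$ (the positions of the $k_j$ occupied columns of $\mu_j$ among its $N_j$ slots), where $N_j\ge k_j$ can vary, together with the requirement $\bigcup_j\operatorname{im}(\iota_j)=\{1,\ldots,N\}$ where $N=\max_j N_j$. Then $\|\mu_1+\cdots+\mu_n\|=N$. Next, I would sum over the value of $N$ and, for fixed $N$, use inclusion–exclusion on the ``covering'' condition $\bigcup_j\operatorname{im}(\iota_j)=\{1,\ldots,N\}$: for a subset $T\subseteq\{1,\ldots,N\}$ let $A_T$ be the set of tuples whose union avoids $T$. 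The covering tuples contribute $\sum_{T}(-1)^{|T|}\#A_T$ with $(-1)^{N}$ weight; interchanging sums, one gets a product over $j$ of a per-coordinate generating factor in a formal variable tracking $N$. The per-coordinate factor, for a pile with $k_j$ occupied columns, is $\sum_{N_j\ge k_j}\binom{N_j-1}{k_j-1}x^{N_j}=\dfrac{x^{k_j}}{(1-x)^{k_j}}$ (choosing $\iota_j(k_j)=N_j$ and then $k_j-1$ of the remaining $N_j-1$ slots). After incorporating the inclusion–exclusion over $T$, which amounts to substituting a shifted variable, the whole generating function telescopes to $\bigl(x/(1-x)\bigr)^{\sum k_j}$ composed with the Möbius-type substitution from inclusion–exclusion, and reading off the appropriate coefficient gives $(-1)^{\sum_j k_j}=(-1)^{\|\nu_1\|+\cdots+\|\nu_n\|}$.

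\textbf{An alternative I would fall back on.} If the generating-function bookkeeping gets unwieldy, the cleanest route is induction on $N=\|\mu_1+\cdots+\mu_n\|$ combined with the following observation: once we condition on which of the coordinates $j$ have their first occupied slot at position $1$ (this is forced to be a non-empty set of coordinates, by the $\mathcal{Q}$-condition, but \emph{which} non-empty set is a free choice among those $j$ with $k_j\ge 1$, i.e.\ all of them), deleting column $1$ from each of those coordinates reduces $k_j\mapsto k_j-1$ for $j$ in that set and leaves a preimage problem of the same shape with strictly smaller total. Carefully summing $(-1)^{\|\,\cdot\,\|}=(-1)\cdot(-1)^{\|\,\cdot\,\|-1}$ over the nonempty subset of ``leading'' coordinates produces a factor $\sum_{\varnothing\ne S\subseteq\{1,\ldots,n\}}(-1)^{|S|}=(-1)^{n}\cdot(\text{something})$—more precisely one gets the recursion $F(k_1,\ldots,k_n)=-\sum_{\varnothing\ne S}F(k_1-[1\in S],\ldots,k_n-[n\in S])$ with base case all $k_j=0$ giving $F=1$—and a direct check shows $F(k_1,\ldots,k_n)=(-1)^{k_1+\cdots+k_n}$ solves it, since $-\sum_{\varnothing\ne S}(-1)^{\sum_j(k_j-[j\in S])}=-(-1)^{\sum k_j}\sum_{\varnothing\ne S}(-1)^{-|S|}=-(-1)^{\sum k_j}\bigl((1-1)^n-1\bigr)=(-1)^{\sum k_j}$ for $n\ge 1$.

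\textbf{Main obstacle.} The only real subtlety is handling the interaction between the per-coordinate freedom (inserting empty columns anywhere, with the last column forced occupied) and the global no-empty-column constraint on the sum; both proof strategies isolate this into a single inclusion–exclusion, and once that is set up correctly the computation is routine. I expect the induction-based argument to be the more robust write-up, with the generating-function version given as motivation.
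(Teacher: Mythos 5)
Your fallback induction is correct and does prove the lemma, but it follows a genuinely different route from the paper. The paper first settles $n=2$ directly: it parametrises $c_2^{-1}(\nu_1,\nu_2)$ by the number $j$ of spots where both coordinates are non-zero, observes that the signed sum depends only on $||\nu_1||,||\nu_2||$, and then runs the ``insert $a$ labelled red blocks into a row of $b$ blue blocks'' computation, whose recursion $S_i=-iS_{i-1}$ gives $(-1)^{a+b}a!$ for the labelled count; the case $n\geq 3$ is then handled by induction on $n$, splitting $(\mu_1,\ldots,\mu_n)$ into $(\mu'_1,\ldots,\mu'_{n-1})\in c_{n-1}^{-1}(\nu_1,\ldots,\nu_{n-1})$ together with $(\lambda,\mu_n)\in c_2^{-1}(\mu'_1+\cdots+\mu'_{n-1},\nu_n)$ and applying the $n=2$ case to the inner sum. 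You instead encode a preimage as a tuple of position sets $S_j$ with $|S_j|=||\nu_j||$ whose union is an initial segment $\{1,\ldots,N\}$ (multiplicities being irrelevant, as you implicitly use and should state), and peel off column $1$ by conditioning on the non-empty set of coordinates occupying it; this treats all $n$ simultaneously, avoids the separate two-variable analysis and the auxiliary labelled-block identity, and is arguably more elementary, while the paper's route yields the $n=2$ statement in the exact form its induction step reuses. Two small repairs to your write-up: in the recursion $F(k_1,\ldots,k_n)=-\sum_{\varnothing\neq S}F\bigl((k_j-[j\in S])_j\bigr)$ the subsets $S$ must range over non-empty subsets of $\{j:k_j\geq 1\}$ (equivalently, declare $F$ to vanish when an argument is negative) — this matters at later stages of the induction, where some $k_j$ have dropped to $0$, contrary to your aside ``i.e.\ all of them'' — and the verification constant is then $(1-1)^{m}-1$ with $m=\#\{j:k_j\geq 1\}\geq 1$ rather than $(1-1)^n-1$; since both equal $-1$, the conclusion is unaffected. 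Finally, your generating-function version is not a proof as written: the interaction between the inclusion–exclusion over $T\subseteq\{1,\ldots,N\}$ and the sum over $N$ is only gestured at (a naive interchange produces divergent sums such as $\sum_{N\geq m}\binom{N}{m}$, since the cutoff $N\leq\sum_j k_j$ is invisible term by term), so the induction should indeed be the argument you write up.
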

\begin{proof}The case $n=1$ is clear by Example \ref{contractionn1} We will show the case $n=2$ directly, and then proceed by induction for larger $n$.

We now consider the case $n=2$. To give an element $(\mu_1, \mu_2)$ in $c_2^{-1}(\nu_1, \nu_2)$ is the same as to give:
\begin{enumerate}
\item A non-negative integer $j$ such that $j \leq \min(||\nu_1||, ||\nu_2||)$.
\item A subdivision of $\{1, \ldots, ||\nu_1|| + ||\nu_2|| - j\}$ into a subset of size $j$, a subset of size $||\nu_1|| - j$, and a subset of size $||\nu_2|| - j$.
\end{enumerate}
Here $j$ corresponds to the number of integers $k$ such that $\mu_{1}$ and $\mu_2$ are both non-zero in the $k$-th spot; given this information, to determine $\mu_1$ and $\mu_2$ from $\nu_1$ and $\nu_2$, it suffices to pick $j$ spots for both to be non-zero, $||\nu_1|| - j$ for only $\mu_1$ to be non-zero, and $||\nu_2|| - j$ for only $\mu_2$ to be non-zero. Furthermore, for the resulting $(\mu_1, \mu_2)$, we have 
$$ (-1)^{||\mu_1 + \mu_2||} = (-1)^{||\nu_1|| + ||\nu_2||-j} = (-1)^{||\nu_1|| + ||\nu_2||} (-1)^j .$$
Thus, the sum depends only on $||\nu_1||=:a, ||\nu_2||=:b.$ Therefore, we may forget about multiplicities and we may assume that $\nu_1=1^1 2^1 \ldots a^1$ and $\nu_2=1^1 2^1 \ldots b^1$. Then, using the visualisation in terms of piles of blocks as in remark~\ref{remark:piles}, we find that we are summing $-1$ to the length of the pile over all piles of blocks created by inserting $a$ red blocks into a row of $b$ blue blocks with each red block going either between columns or on top of a blue block (with at most one red block on top of any blue block). We must show this sum is equal to $(-1)^{a+b}$. 

To prove this is the case, it suffices to consider the variant where the red blocks are numbered $1$ through $a$, and to show that the corresponding sum is equal to $(-1)^{a+b} a!$. In this variant, we may first choose where to insert the first block, then the second block, etc.; we will keep track of the sign for the corresponding term in the sum by considering whether inserting the block keeps the length of the pile the same or changes it by one.

For every $i\in\{0,\ldots,a\}$, denote by $S_i$ the corresponding sum where only $i$ red blocks have been inserted, so that $S_0 = (-1)^b$ and $S_a$ is the sum we are looking for. Denoting by $\Pi_i$ the set of possible piles of $i$ (numbered) red blocks and $b$ blue blocks, and for every $p\in \Pi_i$ by $||p||$ the length (that is, the number of columns) of the pile $p$, we have, for all $i\in\{1,\ldots,a\}:$
$$S_i = \sum_{p\in \Pi_i}(-1)^{||p||} = \sum_{p\in \Pi_{i-1}}(-1)^{||p||}(r_i(p) - r'_i(p))$$
where $r_i(p)$ (resp. $r'_i(p)$) is the number of ways of adding an $i$-th block to $p\in \Pi_{i-1}$ which don't change (resp. change) the length of $p$. We claim that the difference $r_i(p) - r'_i(p)$ only depends on $i$, and not on the pile $p$. Indeed, note that each red block may either be inserted on top of a blue block, which does not change the length of the pile, or between two columns, which changes the length of the pile by one. If a red block is placed on top of a blue block, then the number of positions for the next red block to be placed without changing the length goes down by one, while the number of positions it can be placed that change the length by one remains the same. If a red block is placed between two columns, then the number of positions for the next red block to be placed which don't change the length remains the same, but the number of positions for the next red block to be placed that change the length goes up by one (since the spot where this block was placed now is replaced with two possible spots, one to the left and one to the right). In either case (and independently of the pile we are considering), the \emph{difference} between the number of ways to place a block without changing the length and the number of ways to place it that change the length goes down by one. For the first block this difference is $b - (b+1) = -1$, so when inserting the $i$-th block it is $-i$. In other words, for all $i\in \{1,\ldots,a\}$ and all $p\in \Pi_i$, we have
$$r_i(p) - r'_i(p) = -i.$$
In particular, for all $i\in \{1,\ldots,a\}$, we have $S_i = (-i)S_{i-1}$, and therefore
$$S_a = (-a)S_{a-1} = (-a)(-a+1)\ldots (-1)S_{0} = (-1)^{a+b}a!,$$ as desired. This completes the argument when $n=2$.  

We now assume $n \geq 3$, and that the identity is known for all positive integers $m < n$. Giving $(\mu_1,\ldots,\mu_n)\in c_n^{-1}(\nu_1,\ldots,\nu_n)$ is equivalent to giving the following data:
\begin{enumerate}\item $(\mu'_1,\ldots,\mu'_{n-1})\in c_{n-1}^{-1}(\nu_1,\ldots,\nu_{n-1})$
\item $(\lambda,\mu_n)\in c_2^{-1}(\mu'_1+\ldots+\mu'_{n-1}, \nu_n)$.
\end{enumerate}
Indeed, to construct these data from $(\mu_1,\ldots,\mu_n)$, first of all put $\lambda = \mu_1 + \ldots + \mu_{n-1}$. It is an element of $\calP$, but not in general an element of $\calQ$, that is, in terms of the above description with piles of blocks, it may have some empty columns, which are exactly the empty columns common to all the $\mu_i$, $i\in\{1,\ldots,n-1\}$. To construct $\mu'_i$ from $\mu_i$, simply do the same operation of sliding columns to the left that we did when defining the retraction $c$, but making only these common empty columns disappear. Note that we have $||\mu_1 + \ldots + \mu_{n-1}|| = ||\mu'_1 + \ldots + \mu'_{n-1}||$, and $c(\mu'_i) = \nu_i$ for all $i\in\{1,\ldots,n-1\}$. Conversely, to reconstruct $(\mu_1,\ldots,\mu_n)$ from $(\lambda,\mu_n)$ and $(\mu'_1,\ldots,\mu'_{n-1})$, we simply look at what columns are empty in~$\lambda$, and create empty columns at exactly these spots in $\mu'_1,\ldots,\mu'_{n-1}$ by sliding appropriate columns to the right, starting from the leftmost empty spot and working our way up to the right. 

Therefore, the sum we are interested in may be rewritten as
$$ \sum_{(\mu_1, \ldots, \mu_{n-1})\ \in\ c_{n-1}^{-1} (\nu_1, \ldots, \nu_{n-1})} \ \ \ \ \sum_{ (\lambda, \mu_n)\ \in\ c_2^{-1} (\mu_1 + \ldots + \mu_{n-1}, \nu_n) } (-1)^{||\lambda + \mu_n||}. $$
Applying the case $n=2$ to the interior sum we find that this is equal to
\begin{multline*} \sum_{(\mu_1, \ldots, \mu_{n-1})\ \in\ c_{n-1}^{-1} (\nu_1, \ldots, \nu_{n-1})} (-1)^{||\mu_1 + \ldots + \mu_{n-1}|| + ||\nu_n||} = \\
(-1)^{||\nu_n||} \sum_{(\mu_1, \ldots, \mu_{n-1})\ \in\ c_{n-1}^{-1} (\nu_1, \ldots, \nu_{n-1})} (-1)^{||\mu_1 + \ldots + \mu_{n-1}||}, 
\end{multline*}
and we conclude by applying the induction hypothesis to the remaining sum. 
\end{proof}

\subsection{Symmetric products and the inverse of Kapranov's zeta function} For any variety~$Y$ over a field $k$ and for any $\mu = (m_i)_{i\geq 1}\in\calP$ we define
$$\mathrm{Sym}^{\mu}Y = \prod_{i\geq 1}S^{m_i}Y.$$
There is a natural morphism $\mathrm{Sym}^{\mu} Y\to S^{|\mu|}Y$ induced by starting with the identity $\prod_{i\geq 1}Y^{m_i}\to Y^{|\mu|}$ and passing to the appropriate quotients on both sides. Note that the symmetric product $S^{\mu}Y$ is by definition the restriction of $\mathrm{Sym}^{\mu}Y$ to the complement of the diagonal in $S^{|\mu|}Y$. 
\begin{remark} In what follows, we will often drop the square brackets when writing classes of varieties in a Grothendieck ring. 
\end{remark}
The following result will be very important to us:
\begin{lemma}\label{Smminuslemma} For any variety $Y$, and any integer $m\geq 1$, we have the equality 
\begin{equation}\label{Smminusformula} S^{m}(-Y) = \sum_{\mu\in \calQ_m}(-1)^{||\mu||}[\mathrm{Sym}^{\mu}Y]\end{equation}
in $\kvar_{S^mY}$. 
\end{lemma}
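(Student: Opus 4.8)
The plan is to work in the ring $\kvar_{S^\bullet Y}=\prod_{m\ge 1}\kvar_{S^mY}$ with the multiplicative group law introduced in Section~\ref{sect.symprodgrouplaw}, where $S(-Y)$ is by construction the multiplicative inverse of $S(Y)=(S^iY)_{i\ge 1}$. So the statement to prove is equivalent to the identity
$$\left(1+\sum_{i\ge 1}[S^iY]T^i\right)\cdot\left(1+\sum_{m\ge 1}\Big(\sum_{\mu\in\calQ_m}(-1)^{||\mu||}[\mathrm{Sym}^\mu Y]\Big)T^m\right)=1$$
in the (formal, bookkeeping) power series ring, where multiplication of two families $a\ast b$ has $m$-th coefficient $\sum_{k=0}^m a_k\boxtimes b_{m-k}$ pushed forward along $S^kY\times S^{m-k}Y\to S^mY$. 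Equivalently, I must show that for every $m\ge 1$,
$$\sum_{k=0}^{m}[S^kY]\boxtimes\Big(\sum_{\nu\in\calQ_{m-k}}(-1)^{||\nu||}[\mathrm{Sym}^\nu Y]\Big)=0\quad\text{in }\kvar_{S^mY},$$
with the convention $[S^0Y]=[\mathrm{Sym}^{\varnothing}Y]=1$ and the empty partition having $||\varnothing||=0$.

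\textbf{Key steps.} First I would rewrite $[S^kY]$ itself in terms of the $[\mathrm{Sym}^\mu Y]$: recall from Section~\ref{sympower} that $S^kY$ is the disjoint union of the locally closed strata $S^\pi Y$ over partitions $\pi$ of $k$, and that each $S^\pi Y$ is the open part of $\mathrm{Sym}^\pi Y$ lying over the complement of the big diagonal. By the inclusion-exclusion relating $S^\pi Y$ to the $\mathrm{Sym}^\mu Y$ (i.e.\ removing the diagonal can be done by an alternating sum over coarsenings), one obtains an expansion of $[S^kY]$ with coefficients $(-1)^{\text{(number of distinct parts)}}$-type signs; after contracting zeroes this is exactly a sum over $\calQ_k$ with sign $(-1)^{||\mu||}$, matching the right-hand side but with the \emph{opposite} overall shape. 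Concretely I expect the clean statement to be $[S^kY]=\sum_{\mu\in\calP_k}(-1)^{|\mu|-||\mu||}\cdots$ or, better, to phrase everything through \emph{formalisations} à la Vakil--Wood/Howe so that the combinatorics becomes the pure partition identity of Lemma~\ref{howelemma}. The second step is then to substitute both expansions into the displayed vanishing, collect the coefficient of a fixed product $\mathrm{Sym}^{\nu_1}Y\boxtimes\mathrm{Sym}^{\nu_2}Y$ (with $\nu_1,\nu_2\in\calQ$), and recognize that the resulting scalar coefficient is precisely the alternating sum $\sum_{(\mu_1,\mu_2)\in c_2^{-1}(\nu_1,\nu_2)}(-1)^{||\mu_1+\mu_2||}$ handled by the $n=2$ case of Lemma~\ref{howelemma}, which equals $(-1)^{||\nu_1||+||\nu_2||}$; the remaining cancellation is then a telescoping/sign bookkeeping that forces the total to be $0$ for $m\ge 1$ and $1$ for $m=0$. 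A spreading-out/base change remark reduces everything to $k$ a field (indeed to $k$ algebraically closed), where the stratification description of $S^mY$ by étale-diagonal data is available; over a field the piecewise decompositions give genuine equalities of classes, which is all we need since we work in $\kvar_{S^mY}$.

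\textbf{Main obstacle.} The real work is matching the geometry to the combinatorics: I need a clean, functorial description — valid relatively over $S^mY$, not just after taking Euler characteristics — of how the class $[\mathrm{Sym}^\mu Y]$ decomposes according to which coordinates collide, so that the exterior products $\boxtimes$ pushed forward to $S^mY$ correspond exactly to the contraction map $c_n$ on decomposition sets $\calQ^{n,\ast}$. Once that dictionary is set up, Lemma~\ref{howelemma} does everything, so the difficulty is entirely in establishing that dictionary carefully (keeping track of multiplicities, of the distinction between $|\mu|$ and $||\mu||$, and of the fact that $\mathrm{Sym}^\mu Y$ and $S^{|\mu|}Y$ carry different scheme structures but equal classes). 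A secondary but routine point is to verify the inductive sign computation of $[S^kY]$ in terms of $[\mathrm{Sym}^\mu Y]$ directly from the group law on $\kvar_{S^\bullet Y}$, using that $S(Y)$ and its inverse are determined by the recursion in Section~\ref{sect.symprodgrouplaw}; I expect this to drop out of Lemma~\ref{howelemma} with $n=1$, i.e.\ the trivial case $c_1=\id$.
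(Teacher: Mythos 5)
Your overall framing is the same as the paper's: characterize $S^{\bullet}(-Y)$ as the inverse of $S(Y)$ for the group law on $\kvar_{S^{\bullet}Y}$ from section \ref{sect.symprodgrouplaw}, and verify that the candidate family $c_m=\sum_{\mu\in\calQ_m}(-1)^{||\mu||}[\mathrm{Sym}^{\mu}Y]$ satisfies $\sum_{k=0}^{m}[S^kY]\cdot c_{m-k}=0$ for $m\geq 1$ (the paper runs this as an induction on $m$ via the defining recursion, which is the same thing). The gap is in how you propose to verify this identity: you defer all the content to an inclusion--exclusion expansion of $[S^kY]$ in $\mathrm{Sym}$-classes plus Lemma \ref{howelemma}, and that route does not go through as described. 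First, $[S^kY]$ is already of the form $[\mathrm{Sym}^{\mu}Y]$ (take $\mu$ the one-part partition with single entry $k$), so no expansion is needed; and expressing the diagonal-free strata $[S^{\pi}Y]$ in terms of the $[\mathrm{Sym}^{\mu}Y]$ does not produce coefficients $\pm 1$ --- the Vakil--Wood statement quoted in the remark after the lemma only asserts equality of the two \emph{alternating sums}, not a term-by-term dictionary with signs $(-1)^{||\mu||}$. Second, the big diagonal is irrelevant here, because the multiplication on $\kvar_{S^{\bullet}Y}$ is the plain exterior product pushed forward along $S^kY\times S^{m-k}Y\to S^mY$, with nothing removed; the collision/contraction combinatorics of Lemma \ref{howelemma} is not the relevant tool for this lemma (in the paper it only enters later, in the proof of multiplicativity of Euler products with non-effective coefficients).

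What is missing is the one observation that makes the convolution identity immediate: for $k\geq 1$ and $\nu\in\calQ_{m-k}$, the product $[S^kY]\cdot[\mathrm{Sym}^{\nu}Y]$ in $\kvar_{S^mY}$ equals $[\mathrm{Sym}^{\nu'}Y]$, where $\nu'\in\calQ_m$ is obtained from $\nu$ by adjoining a part equal to $k$, so that $||\nu'||=||\nu||+1$. Since adjoining (resp.\ removing the last) part gives a bijection between $\calQ_m$ and the set of pairs $(k,\nu)$ with $1\leq k\leq m$ and $\nu\in\calQ_{m-k}$, every $\lambda\in\calQ_m$ occurs exactly twice in $\sum_{k=0}^{m}[S^kY]\cdot c_{m-k}$: once in the $k=0$ term with sign $(-1)^{||\lambda||}$, and once in the term with $k$ equal to the last part of $\lambda$, with sign $(-1)^{||\lambda||-1}$; the sum therefore vanishes termwise. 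This is exactly the paper's induction, stripped of the induction. With this inserted your argument closes; the formalisation machinery, Lemma \ref{howelemma}, and the spreading-out reduction to an algebraically closed field are all unnecessary, since the recursion defining $S^{\bullet}(-Y)$ holds over any base by construction.
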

\begin{proof} Note that by definition (see section \ref{sect.symprodgrouplaw}), the elements of the family $(S^r(-Y))_{r\geq 1}$ satisfy
$$\sum_{i=0}^m S^{r}(Y)S^{m-r}(-Y) = 0$$
in $\kvar_{S^m(Y)}$ for every $m\geq 1$. In particular, $S^{1}(-Y) = -Y$, so the property is true for $m=1$. We now proceed by induction: write
$$S^m(Y) = -\sum_{r=0}^{m-1} S^{r}(Y)S^{m-r}(-Y) = -\sum_{r=0}^{m-1} S^{r}(Y)\sum_{\mu\in \mathcal{Q}_{m-r}}(-1)^{||\mu||}\mathrm{Sym}^{\mu}Y$$
 by using the formula in the statement for all $r\in\{1,\ldots,m-1\}$. For any such $r$ and for any partition $\mu = (m_i)_{i\geq 1}\in \calQ_{m-r}$, we have
$$S^{r}(Y)\cdot\mathrm{Sym}^{\mu}(Y) = \mathrm{Sym}^{\mu'}(Y),$$
where $\mu'$ is the partition in $\mathcal{Q}_m$ obtained from $\mu$ by adding a new part with multiplicity~$r$. Conversely, any $\mu' = (m_i)_{i\geq 1}\in \calQ_m$ is obtained uniquely in this way by choosing $r$ to be the value of $m_i$ for the largest $i$ such that $m_i>0$ and by defining $\mu\in \mathcal{Q}_{m-r}$ by setting this $m_i$ to zero. Observing that in this case $||\mu'|| = ||\mu|| +1$, we get the result. 
\end{proof}

\begin{remark} Lemma \ref{Smminuslemma} gives, using a formula by Vakil and Wood, a proof of a special case of proposition \ref{compfinprodnoneff}, namely of the formula
$$\prod_{x\in X}(1 + t + t^2 + \ldots )\prod_{x\in X}(1-t) = 1,$$
which says that the series $\prod_{x\in X}(1-t)$ is the inverse of Kapranov's zeta function $Z_X(t)$ (see example \ref{eulerprodexample}, \ref{Kapranovexample}). 
 
 First of all, let us point out that the right-hand side of (\ref{Smminusformula}) for $Y=X$ appears naturally as the coefficient of degree $m$ of the series $Z_{X}^{-1}$, as the computation
\begin{eqnarray*} Z_X(t)^{-1} & = & \left(1 + \sum_{i\geq 1}[S^iX]t^i\right)^{-1}\\
                           & = & \sum_{m\geq 0} \left(-\sum_{i\geq 1}[S^iX]t^i\right)^{m}\\
                           & = & \sum_{\mu\in \mathcal{Q}}(-1)^{||\mu||}[\mathrm{Sym}^{\mu}X]t^{|\mu|}\end{eqnarray*}
                           from \cite{VW} (just before paragraph 3.6) shows. On the other hand, the coefficients in the expansion of the series 
                           $\prod_{x\in X}(1-t) $ are the symmetric products of the family $(X_i)_{i\geq 1}$ given by $X_1 = -X$ and $X_i = 0$ for all $i\geq 2$, which are computed using definition \ref{symproddef}. In particular, the only partition occurring in the coefficient of degree $m$ is $1^m$, and the corresponding symmetric product is the class of the restriction $S^m(-X)_{|S^m_*X}$ of~$S^m(-X)$ to~$S^m_*X$:
       $$   \prod_{x\in X}(1-t) = 1 + \sum_{m\geq 1}S^m(-X)_{|S^m_*X}t^m. $$ 
                      
   Now, proposition 3.7 in \cite{VW} gives the equality 
   $$\sum_{\mu\in \mathcal{Q}}(-1)^{||\mu||}[\mathrm{Sym}^{\mu}X]t^{|\mu|} =\sum_{\mu\in \mathcal{Q}}(-1)^{||\mu||}[S^{\mu}X]t^{|\mu|},$$
 which implies precisely that $S^m(-X)$ is in fact supported above the complement of the diagonal in $S^mX$, that is, above $S^m_*X$, so
   $$   \prod_{x\in X}(1-t) = 1 + \sum_{m\geq 1}S^m(-X)t^m = Z_X(t)^{-1}. $$
\end{remark}

\subsection{Overlaps between partitions}\label{subsect.overlaps}
Let $k,\ell\in I$. For every partition $\kappa = (k_i)_{i\in I}$ of $k$ and $\lambda = (\ell_i)_{i\in I}$ of $\ell$, we denote by 
                $$f(\kappa) = [a_i^{k_i}]_{i\in I},\ \ f(\lambda) = [b_i^{\ell_i}]_{i\in I}$$
                their formalisations. We may define their concatenation $f(\kappa) \cdot f(\lambda)$ as a finite multiset with elements in the union $\{a_i\}_{i\in I}\cup\{b_i\}_{i\in I}$, $a_i$ having multiplicity $k_i$ and $b_j$ having multiplicity~$\ell_j$. The sum  of this partition is the element
                $$\sum f(\kappa) \cdot f(\lambda) = \sum_{i\in I}k_ia_i+ \sum_{j\in I}\ell_jb_j$$ in the free abelian monoid $M_{a,b}$ on $a_i, b_j, $ for all $i,j\in I$.
                  
                  On the other hand, we may consider  partitions with values in the abelian semigroup $M_{a,b}\setminus\{0\}$. Let $S_{\kappa,\lambda}$ be the set of such partitions $\gamma$ containing only elements of the form $a_i$, $b_j$ and $a_i+b_j$, and such that 
                  $$\sum \gamma = \sum f(\kappa)\cdot f(\lambda).$$  
                  Explicitly, if $\gamma = [(a_i+b_j)^{n_{i,j}}]_{i,j\in I_0},$ where by convention $a_0 = b_0 = 0$, then $$\sum \gamma = \sum_{i,j\in I_0}n_{i,j}(a_i+b_j)= \sum_{i\in I}\left({\sum_{j\in I_0}n_{i,j}}\right)a_{i}+ \sum_{j\in I}\left({\sum_{i\in I_0}n_{i,j}}\right)b_j,$$
                   so that we must have, for all $i,j\in I:$
                  $$k_i = \sum_{j\in I_0}n_{i,j}\ \ \ \text{and}\ \ \ \ell_j = \sum_{i\in I_0}n_{i,j}.$$
                In particular, giving a collection of integers $(n_{i,j})_{(i,j)\in I_0^2\setminus\{(0,0)\}}$ determines completely two partitions $\kappa$ and $\lambda$, and an ``overlap partition'' $\gamma$. Moreover, there is a natural way of ``deformalising''~$\gamma$, by putting
                $d(\gamma) = [m^{\gamma_m}]$ to be the partition with values in $I$ given by $$\gamma_m = \sum_{i + j = m}n_{i,j}.$$ Concretely, $d(\gamma)$ is obtained from $\gamma$ by replacing $a_i+ b_j$ by $i+j$ for all $(i,j)\in I_0^2\setminus\{(0,0)\}$. 
               
\subsection{Overlaps between symmetric products}\label{subsect.overlapssymproducts}
 Here we are going to  give a notation that is going to be useful in the proof of proposition \ref{compfinprodnoneff}. Assume we are given two partitions $\kappa = (k_i)_{i\in I}$ and $\lambda= (\ell_j)_{j\in J}$, as well as a family $\gamma = (n_{i,j})_{i,j\in I_0}$ such that for all $i,j\in I$, we have
 $$\sum_{i\in I_0} n_{i,j} \leq \ell_j\ \ \ \text{and}\ \ \ \sum_{j\in I_0} n_{i,j} \leq k_i.$$
 Then for any variety $X$, we define $\left(\prod_{i\in I} S^{k_i}X\right)_*\times_{\gamma} \left(\prod_{j\in I}S^{\ell_j}X\right)_*$ to be the locally closed subset of $\left(\prod_{i\in I} S^{k_i}X\right)_*\times \left(\prod_{j\in I}S^{\ell_j}X\right)_*$ given by points with $S^{k_i}X$-component and $S^{\ell_j}X$-component overlapping exactly along an effective zero-cycle of degree $n_{i,j}$. More generally, for a variety $A$ over $\left(\prod_{i\in I} S^{k_i}X\right)_*$ and a variety $B$ over $\left(\prod_{j\in I}S^{\ell_j}X\right)_*$, we may define the variety $A\times_{\gamma} B$ to be the restriction of $A\times B$ to $\left(\prod_{i\in I} S^{k_i}X\right)_*\times_{\gamma} \left(\prod_{j\in I}S^{\ell_j}X\right)_*$. In terms of classes in Grothendieck rings, for any elements $A\in \kvar_{\left(\prod_{i\in I}S^{k_i}X\right)_*}$ and $B\in \kvar_{\left(\prod_{j\in I}S^{\ell_j}X\right)_*}$, we denote by $A\times_{\gamma}B$ the element of $\kvar_{\left(\prod_{i\in I} S^{k_i}X\right)_*\times_{\gamma} \left(\prod_{j\in I}S^{\ell_j}X\right)_*}$ obtained from $A\boxtimes B$ by the restriction morphism 
 $$\kvar_{\left(\prod_{i\in I} S^{k_i}X\right)_*\times \left(\prod_{j\in I}S^{\ell_j}X\right)_*}\to \kvar_{\left(\prod_{i\in I} S^{k_i}X\right)_*\times_{\gamma} \left(\prod_{j\in I}S^{\ell_j}X\right)_*}.$$
 In particular, this defines $S^{\kappa}\scr{A}\times_{\gamma}S^{\lambda}\scr{B}$, whenever $(A_i)_{i\in I}$ and $(B_i)_{i\in I}$ are families of elements of $\kvar_X$.

\begin{remark} Recall that we defined, in chapter \ref{grothrings}, (\ref{exteriorproduct}), a (relative) exterior product operation on Grothendieck rings which is compatible with fibre products for effective classes. In the next sections, we will sometimes use the notation $\times$ instead of $\boxtimes$ when we want to stress the fact that we are working with effective classes. 
\end{remark} 
 \subsection{Proof of proposition \ref{compfinprodnoneff}}
 \subsubsection{Expansion of the left-hand side}
 The left-hand side is given by
$$\prod_{x\in X} \left(1 + \sum_{i\in I}\left(\sum_{p + q = i} A_{p,x} B_{q,x}\right) t_i\right)$$
and therefore it is the Euler product associated to the family 
$$\left(\left(\sum_{p + q = i} A_{p}\boxtimes_X B_{q}\right)\right)_{i\in I},$$
which we will denote by $\scr{A}\ast_X\scr{B}$. Expanding this, we get
$$1 + \sum_{n\in I}\left(\sum_{\pi\ \text{partition of}\ n}S^{\pi}(\scr{A}\ast_X\scr{B})\right) t_n.$$

Let $n\in I$ and let $\pi = (n_i)_{i\in I}$ be a partition of $n$. The contribution $S^{\pi}(\scr{A}\ast_X\scr{B})$ to the coefficient of $t_n$ in the expansion of the series is the restriction of
$$\prod_{i\in I} S^{n_i}\left(\sum_{p + q = i} A_{p}\boxtimes_X B_{q}\right)\in \kvar_{\prod_{i\in I}S^{n_i}X}$$
to $S^{\pi}X.$ This can be written 
$$\prod_{i\in I}\ \ \ \sum_{\substack{(n_{p,q})_{p+q=i}\\
                                n_{i,0} + \ldots + n_{0,i} = n_i}}\ \prod_{p+q = i}S^{n_{p,q}}(A_p\boxtimes_X B_q)$$
                                \begin{equation}\label{LHSexpansion}= \sum_{\substack{(n_{p,q})_{(p,q)\in I_0^2\setminus\{0\}}\\
                                \forall i\in I,\ \sum_{p+q = i}n_{p,q} = n_i}}\prod_{i\in I}\prod_{p+q = i}S^{n_{p,q}}(A_p\boxtimes_X B_q).\end{equation}
     \subsubsection{Expansion of the right-hand side}   The right-hand side is given by
                                $$\left(1 + \sum_{n\in I} S^n(\scr{A})t_n\right)\left(1 + \sum_{n\in I} S^n(\scr{B})t_n\right) = 1 + \sum_{n\in I}\left(\sum_{k+\ell = n}S^k(\scr{A})\boxtimes S^{\ell}(\scr{B})\right)t_n $$
                                The proposition is implied by the following claim:
                                
                                \textbf{Claim:} For every partition $\pi$ of $n$, the restriction to $S^{\pi}X$ of the coefficient $$\sum_{k+\ell = n}S^k(\scr{A})\boxtimes S^{\ell}(\scr{B})\in \kvar_{S^nX}$$ of $t_n$ on the right-hand side is equal to $S^{\pi}(\scr{A}\ast_X\scr{B})$. 
 
We are in fact going to prove a more precise statement, which implies the above claim.     The sum $\sum_{k+\ell = n}S^k(\scr{A})\boxtimes S^{\ell}(\scr{B})$ is a sum of terms of the form $S^{\kappa}(\scr{A})\boxtimes S^{\lambda}(\scr{B})$ where $\kappa$ and~$\lambda$ are partitions such that $\sum \kappa + \sum \lambda = n$.  On the other hand, note that in the expansion (\ref{LHSexpansion}) of the left-hand side, there was a sum over collections of integers $(n_{p,q})_{(p,q)\in I_0^2\setminus\{0\}}$ such that 
              $$\sum_{p+q = i}n_{p,q} = n_i.$$    
              In particular, each term of this sum corresponds to some $\gamma$ as in section \ref{subsect.overlaps} (together with some partitions~$\kappa$ and~$\lambda$) such that $d(\gamma) = \pi$. Our aim is to prove, for every given $\gamma = [(a_p+b_q)^{n_{p,q}}]$ such that $d(\gamma) = \pi$, and for the associated $\kappa,\lambda$, the equality
              \begin{equation}\label{preciseclaimequation}\left(\prod_{i\in I}\prod_{p+q = i}S^{n_{p,q}}(A_p\boxtimes_XB_q)\right)_{*} =  S^{\kappa}(\scr{A})\times_{\gamma}S^{\lambda}(\scr{B})\end{equation}
          in $\kvar_{S^{\pi}X}$   where the notation $(\cdot)_*$ on the left-hand side, as always, stands for restriction to the complement of the diagonal. Then summing over all $\gamma$ such that $d(\gamma) = \pi$ proves our claim. 
          
          \subsubsection{Computation of $S^{\kappa}(\scr{A})\times_{\gamma}S^{\lambda}(\scr{B})$}
           We will now compute $S^{\kappa}(\scr{A})\times_{\gamma}S^{\lambda}(\scr{B})$ in more detail, using the notations from section \ref{subsect.overlaps}.  We have
          $$S^{\kappa}(\scr{A})\times_{\gamma}S^{\lambda}(\scr{B}) = \left(\prod_{i\in I} S^{k_i}(A_i)\right)_*\times_{\gamma}\left(\prod_{j\in I} S^{\ell_j}(B_j)\right)_*.$$
          Recall that the notation $\times_{\gamma}$ from section \ref{subsect.overlapssymproducts} indicates that we restrict to points such that for all $i,j\in I$, the $S^{k_i}X$-component and the $S^{\ell_j}X$-component overlap exactly in $n_{i,j}$ geometric points. According to (\ref{preciseclaimequation}), we want to prove that this is equal to 
          $$\left(\prod_{(i,j)\in I_0^2\setminus\{(0,0)\}} S^{n_{i,j}}(A_i\boxtimes_XB_j)\right)_*.$$
          Since $k_i = \sum_{j\in I}n_{i,j}$ and $\ell_{j} = \sum_{i\in I} n_{i,j}$, by induction it suffices to prove that for any effective $K\in \kvar_{\prod_{p\neq i}S^{k_p}X_p}$ and for any $L\in \kvar_{\prod_{q\neq j}S^{\ell_q}X_q}$, denoting by $\gamma^{i,j}$ the family of integers obtained from $\gamma$ by setting $n_{i,j}$ to zero:
$$\left(S^{k_i}(A_i)\times K\right)_*\times_{\gamma} \left(S^{\ell_j}(B_j)\boxtimes L\right)_* =$$
\begin{equation}\label{klgammaaim} \left(S^{n_{i,j}}(A_i\boxtimes_XB_j)\boxtimes \left(S^{k_i-n_{i,j}}(A_i)\times K\right)_*\times_{\gamma^{i,j}} \left(S^{\ell_j-n_{i,j}}(B_j)\boxtimes L\right)_*\right)_*.\end{equation}
\begin{remark} The left-hand side of (\ref{klgammaaim}) is an element of the Grothendieck ring above $\left(\prod_{i\in I} S^{k_i}X_i\right)_*\times_{\gamma}\left(\prod_{j\in I} S^{\ell_j}X_j\right)_*,$ whereas the right-hand side is an element of the Grothendieck ring above 
$$\left(S^{n_{i,j}}(X)\times \left(S^{k_i-n_{i,j}}X\times \prod_{p\neq i}S^{k_p}X_p\right)_*\times_{\gamma^{i,j}}\left(S^{\ell_j-n_{i,j}}X\times \prod_{q\neq j}S^{\ell_q}X_p\right)_*\right)_*.$$
These two varieties are easily seen to be isomorphic.

\end{remark}
Writing $L = L'-L''$ for some effective $L',L''$, we can moreover assume that $L$ is effective. 

For every $j\in I$, fix effective elements $C_j,C'_j\in \kvar_X$ such that $B_j = C_j- C'_j$. We get 
\begin{eqnarray*}S^{\ell_j}(B_j) &=& \sum_{0\leq m\leq \ell_j}S^{\ell_j - m}(C_j) \boxtimes S^{m}(-C'_j)\\
& = & \sum_{0\leq m\leq \ell_j}\sum_{\mu\in \calQ_m}(-1)^{||\mu||} S^{\ell_j - m}(C_j)\times \mathrm{Sym}^{\mu}(C'_j),\end{eqnarray*}
using lemma \ref{Smminuslemma}, so that we need to compute
           \begin{equation}\label{klgammamuequation}\sum_{0\leq m\leq \ell_j}\sum_{\mu\in \calQ_m} (-1)^{||\mu||}\left(S^{k_i}(A_i)\times K\right)_*\times_{\gamma} \left(S^{\ell_j - m}(C_j)\times \mathrm{Sym}^{\mu}(C'_j)\times L\right)_*.\end{equation}
           Now, for any fixed $m$ and $\mu = (\mu_p)_{p\geq 1}\in \calQ_m$, we are looking for points of $$\left(S^{k_i}(A_i)\times K\right)_*\times_{\gamma} \left(S^{\ell_j - m}(C_j)\times \mathrm{Sym}^{\mu}(C'_j)\times L\right)_*$$ that is, with image in $S^{k_i}X$ of the component corresponding to $S^{k_i}A_i$ overlapping in $n_{i,j}$ geometric points with the image in $S^{\ell_j}X$ of the component corresponding to the product 
           $$S^{\ell_j-m}(C_j)\times\prod_{p\geq 1}S^{\mu_p}(C_j').$$ 
           For any such point, there is a non-negative integer $\delta\leq \min\{m,n_{i,j}\}$ such that $n_{i,j}-\delta\leq \ell_j-m$ and such that the overlap with $S^{\ell_j-m}C_j$ is along $n_{i,j}-\delta$ points and the overlap with $\prod_{p\geq 1 }S^{\mu_p}(C_j')$ is along $\delta$ points. The overlap with $\prod_{p\geq 1}S^{\mu_p}(C'_j)$ must moreover be distributed among the different factors $S^{\mu_p}(C'_j)$, which leads, for every possible value of~$\delta$ to the introduction, for every $p\geq 1$, of an integer $0\leq \epsilon_p\leq \mu_p$, the overlap with the component $S^{\mu_p}(C'_j)$, which defines a partition $\epsilon  = (\epsilon_p)_{p\geq 1}$ such that $\sum_{p}\epsilon_p = \delta$, that is, $\epsilon \in \calP_{\delta}$. In other words, for any integer $\delta$ satisfying the condition
           $$\max(0,n_{i,j} -\ell_j + m)\leq\delta\leq \min(m,n_{i,j})$$
           and for any $\epsilon\in \calP_{\delta}$ such that $\epsilon \leq \mu$, the variety
           $$\left(S^{n_{i,j}-\delta}(A_i\times_XC_j)\times \mathrm{Sym}^{\epsilon}(A_i\times_XC'_j)\times P(\delta,\mu-\epsilon)\right)_*,$$
           where for any partition $\nu$, $P(\delta,\nu$ stands for 
     $$P(\delta,\nu) =  \left(S^{k_i-n_{i,j}}(A_i)\times K\right)_*\times_{\gamma^{i,j}}\left(S^{\ell_j-m-n_{i,j}+\delta}(C_j)\times\mathrm{Sym}^{\nu}(C'_j)\times L\right)_*,$$ 
     is isomorphic to the locally closed subset of 
        $$\left(S^{k_i}(A_i)\times K\right)_*\times_{\gamma} \left(S^{\ell_j - m}(C_j)\times \mathrm{Sym}^{\mu}(C'_j)\times L\right)_*
     $$
     of points with image in $S^{k_i}(X)$ of the $S^{k_i}(A_i)$-component overlapping along an effective zero-cycle of degree $n_{i,j}-\delta$ with the image in $S^{\ell_j-m}X$ of the $S^{\ell_j-m}(C_j)$-component, and for every $p\geq 1$, along an effective zero-cycle of degree $\epsilon_p$ with the image in $S^{\mu_p}X$ of the  $S^{\mu_p}(C'_j)$-component. 
Moreover,   $$\left(S^{k_i}(A_i)\times K\right)_*\times_{\gamma} \left(S^{\ell_j - m}(C_j)\times\mathrm{Sym}^{\mu}(C'_j)\times L\right)_*
     $$
     is the disjoint union of these locally closed subsets, so that in terms of classes in the Grothendieck ring above $\left(\prod_{i\in I} S^{k_i}X\right)_{*}\times_{\gamma}\left(\prod_{i\in I} S^{\ell_i}X\right)_{*}$, we have 
     $$\left(S^{k_i}(A_i)\times K\right)_*\times_{\gamma} \left(S^{\ell_j - m}(C_j)\times\mathrm{Sym}^{\mu}(C'_j)\times L\right)_* = 
     $$
     $$\sum_{\substack{\max(0,n_{i,j} -\ell_j + m)\leq\delta\leq \min(m,n_{i,j})\\ \epsilon = (\epsilon_p)_{p\geq 1}\in \calP_{\delta},\ \epsilon \leq \mu}}\left(S^{n_{i,j}-\delta}(A_i\times_XC_j)\times\mathrm{Sym}^{\epsilon}(A_i\times_XC'_j)\times P(\delta,\mu-\epsilon)\right)_*.$$
 
    In what follows, though we do not specify it to make indices of sums lighter, sums are taken for $\delta$ satisfying the condition $\max(0,n_{i,j} -\ell_j + m)\leq\delta\leq \min(m,n_{i,j}).$  
    
     Note that the factor  
      $\mathrm{Sym}^{\epsilon}(A_i\times_XC'_j)$ occurring in the term corresponding to $\epsilon$ depends only on $c(\epsilon)$ (recall the definition of $c$ in section \ref{subsect.combinatorics}), and not on $\epsilon$ itself. Therefore, we may write
      $$\left(S^{k_i}(A_i)\times K\right)_*\times_{\gamma} \left(S^{\ell_j - m}(C_j)\times \mathrm{Sym}^{\mu}(C'_j)\times L\right)_* = $$
     $$\sum_{\substack{\delta\\ \eta\in \calQ_{\delta}}}\left(S^{n_{i,j}-\delta}(A_i\times_XC_j)\times\mathrm{Sym}^{\eta}(A_i\times_XC'_j)\times  \sum_{\substack{\epsilon\leq \mu\\
          c(\epsilon) = \eta}}P(\delta,\mu-\epsilon)\right)_*
     $$ 
       Taking the sum over $\mu\in \calQ_m$ for fixed $m$ as in (\ref{klgammamuequation}), we get
   $$\sum_{\mu\in \calQ_m}(-1)^{||\mu||}\left(S^{k_i}(A_i)\times K\right)_*\times_{\gamma} \left(S^{\ell_j - m}(C_j)\times\mathrm{Sym}^{\mu}(C'_j)\times L\right)_* =$$
   \begin{equation}\label{equation_eta}\sum_{\substack{\delta\\ \eta\in \calQ_{\delta}}}\left(S^{n_{i,j}-\delta}(A_i\times_XC_j)\times\mathrm{Sym}^{\eta}(A_i\times_XC'_j) \boxtimes \sum_{\mu\in \calQ_m}(-1)^{||\mu||}\sum_{\substack{\epsilon\leq \mu\\
          c(\epsilon) = \eta}}P(\delta,\mu-\epsilon)\right)_*\end{equation}     
          For fixed $\eta$, note that $P(\delta,\mu-\epsilon)$ depends only on $\delta$ and $c(\mu-\epsilon)$, so that 
      \begin{eqnarray*} \sum_{\mu\in \calQ_m} (-1)^{||\mu||} \sum_{\substack{\epsilon\leq \mu\\
          c(\epsilon) = \eta}}P(\delta,\mu-\epsilon)&=& \sum_{\nu\in \calQ_{m-\delta}}P(\delta,\nu)\sum_{(\epsilon,\xi)\in c_2^{-1}(\eta,\nu)}(-1)^{||\epsilon +\xi||}\\
          & = &(-1)^{||\eta||}\sum_{\nu\in \calQ_{m-\delta}}(-1)^{||\nu||}P(\delta,\nu)\\ \end{eqnarray*}
          $$=(-1)^{||\eta||}\left(S^{k_i-n_{i,j}}(A_i)\times K\right)_*\times_{\gamma^{i,j}}\left(S^{\ell_j-m-n_{i,j}+\delta}(C_j)\boxtimes S^{m-\delta}(-C'_j)\boxtimes L\right)_*.$$
         where the second equality comes from lemma \ref{howelemma} and the third one from lemma \ref{Smminuslemma} and the definition of $P(\delta,\nu)$, by linearity. In particular, denoting, for every integer $r$ such that $0\leq r \leq \ell_j - n_{i,j}$, 
         $$P'(r):= \left(S^{k_i-n_{i,j}}(A_i)\times K\right)_*\times_{\gamma^{i,j}}\left(S^{\ell_j -n_{i,j}-r}(C_j)\boxtimes S^{r}(-C'_j)\boxtimes L\right)_*$$
         and plugging this into equation (\ref{equation_eta}), we get
$$\sum_{\mu\in \calQ_m}(-1)^{||\mu||}\left(S^{k_i}(A_i)\times K\right)_*\times_{\gamma} \left(S^{\ell_j - m}(C_j)\times\mathrm{Sym}^{\mu}(C'_j)\times L\right)_* $$
\begin{eqnarray*}&=&\sum_{\substack{\delta\\ \eta \in \calQ_{\delta}}}\left(S^{n_{i,j}-\delta}(A_i\times_XC_j)\times\mathrm{Sym}^{\eta}(A_i\times_XC'_j)\boxtimes (-1)^{||\eta||}P'(m-\delta)\right)_*\\
&=&\sum_{\delta}\left(S^{n_{i,j}-\delta}(A_i\times_XC_j)\boxtimes P'(m-\delta)\boxtimes\sum_{\eta\in \calQ_{\delta}}(-1)^{||\eta||}\mathrm{Sym}^{\eta}(A_i\times_X C'_j)\right)_*\\
& =&\sum_{\delta}\left(S^{n_{i,j}-\delta}(A_i\times_XC_j)\boxtimes S^{\delta}(-A_i\times_XC'_j)\boxtimes P'(m-\delta)\right)_*,\end{eqnarray*}
where the last equality comes again from lemma \ref{Smminuslemma}.         
   Summing this over $m$, we get that~(\ref{klgammamuequation}) is equal to 
 
$$\sum_{0\leq m\leq \ell_j}\ \ \   \sum_{\max(0,n_{i,j} -\ell_j + m)\leq\delta\leq \min(m,n_{i,j})}\left(S^{n_{i,j}-\delta}(A_i\times_XC_j)\boxtimes S^{\delta}(-A_i\times_XC'_j)\boxtimes P'(m-\delta)\right)_*$$
  $$ = \sum_{\delta = 0}^{n_{i,j}}\left(S^{n_{i,j}-\delta}(A_i\times_XC_j)\boxtimes S^{\delta}(-A_i\times_XC'_j)\boxtimes \sum_{\delta\leq m \leq \ell_j-n_{i,j} + \delta} P'(m-\delta)\right)_*.$$

On the other hand, observe that 
$$\sum_{\delta\leq m \leq \ell_j-n_{i,j} + \delta} P'(m-\delta) = $$
\begin{eqnarray*}&=& \left(S^{k_i-n_{i,j}}(A_i)\times K\right)_*\times_{\gamma^{i,j}}\left(\sum_{0\leq r\leq \ell_j-n_{i,j}}S^{\ell_j-n_{i,j}-r}(C_j)\boxtimes S^{r}(-C'_j)\boxtimes  L\right)_*\\
 &=&  \left(S^{k_i-n_{i,j}}(A_i)\times K\right)_*\times_{\gamma^{i,j}}\left(S^{\ell_j-n_{i,j}}(B_j)\boxtimes L\right)_*,\end{eqnarray*}
 because $B_j = C_j - C'_j$. We end up with 
   $$\sum_{0\leq m\leq \ell_j}\sum_{\mu\in \calQ_m} (-1)^{||\mu||}\left(S^{k_i}(A_i)\times K\right)_*\times_{\gamma} \left(S^{\ell_j - m}(C_j)\times\mathrm{Sym}^{\mu}(C'_j)\times L\right)_* $$
   \begin{eqnarray*}& = &  \left(\left(S^{k_i-n_{i,j}}(A_i)\times K\right)_*\times_{\gamma^{i,j}}\left(S^{\ell_j-n_{i,j}}(B_j)\boxtimes L\right)_*\boxtimes \sum_{\delta=0}^{n_{i,j}}  S^{n_{i,j}-\delta}(A_i\times_XC_j)\boxtimes S^{\delta}(-A_i\times_X C_j')\right)_*\\
   & = & \left(\left(S^{k_i-n_{i,j}}(A_i)\times K\right)_*\times_{\gamma^{i,j}}\left(S^{\ell_j-n_{i,j}}(B_j)\boxtimes L\right)_* \boxtimes S^{n_{i,j}}(A_i\boxtimes_X B_j)\right)_*
   \end{eqnarray*}    
      so that (\ref{klgammaaim}) is proved.

\section{Allowing other constant terms}\label{coefficients}\index{Euler product!other constant terms}
Until now, for simplicity we have only worked with series having constant terms equal to 1. Though for obvious reasons of convergence we cannot abandon this hypothesis completely, it is still possible to make sense of Euler products where a finite number of factors have arbitrary constant terms, by generalising our symmetric products a bit further. Let us motivate the construction first. In the simplest setting, we want to make sense of Euler products
$$\prod_{x\in X}(X_{0,x} + X_{1,x} t + X_{2,x}t^2 + \ldots )$$
where $X$ is a variety over an algebraically closed field $k$ and $(X_i)_{i\geq 0}$ is a family of varieties over $X$, such that $X_{0,x}=1$ for almost all closed points $x\in X$. In other words, our product looks like
$$\prod_{x\in U}(1 + X_{1,x} t + X_{2,x}t^2 + \ldots )\prod_{x\in F}(X_{0,x} + X_{1,x} t + X_{2,x}t^2 + \ldots )$$
for a finite set of closed points $F\subset X$ with open complement $U$.  When one expands this product naively, one sees that the contribution for a zero-cycle $D\in S^nX(k)$ depends on the intersection of the support $|D|$ of $D$ with the set $F$. Let us assume for simplicity that $F = \{x_0\}$ is a singleton. Then the expansion of this product may formally be written 
$$\sum_{n\geq 0}\left(\sum_{\substack{D = \sum n_x x\in S^nX(k)\\
                                      x_0\in |D|}} \prod_{x\in X}X_{n_x,x} + \sum_{D = \sum n_x x\in S^nU(k)                     }X_{0,x_0}\prod_{x\in U} X_{n_x,x}\right)t^n$$
                                   In other words, the fibre $X_{0,x_0}$ appears whenever the zero-cycle with respect to which we expand does not contain $x_0$ in its support, forcing us to choose the term of degree zero in the factor corresponding to $x_0$.   Thus, denoting by $\scr{X}$ the family $(X_i)_{i\geq 1}$, the coefficient of degree $n$ in this power series should be the class of the constructible set given by the product $X_{0,x}\times  S^{n}\scr{X}_{|S^nU}$ above the open subset $S^{n}U$, and by $S^{n}\scr{X}_{|S^nX\setminus S^nU}$ above its complement. 
                                   
                                   This construction generalises for general $F$: we cut $S^{n}X$ up into locally closed subsets $(S^nX)^{E}$ for all subsets $E\subset F$ such that $(S^nX)^{E}$ corresponds to zero-cycles $D$ such that $|D|\cap F = E$, and modify the symmetric product $S^{n}\scr{X}$ above each $(S^nX)^E$ accordingly, multiplying it by the product of the fibres $X_{0,x}$ for all $x$ in $F\setminus E$. 
\subsection{Another generalisation of symmetric products}\label{sect.addX0}
Throughout this section, let $A\in\{\kvar,\M,\evar,\expp\}.$
Let $I_0$ be an additive monoid, and $I = I_0\setminus\{0\}$. Let $X$ be a variety over a perfect field $k$ and $V$ an open set of $X$ such that its complement~$F$ is a finite set of closed points. Let $\scr{X} = (X_i)_{i\in I_0}$ be a family of varieties over $X$ (resp. of classes in the ring $A_X$), such that $X_0\times_XV\simeq V$ (resp. the image of $X_0$ in $A_V$ is the class  $1 = [V\xrightarrow{\id} V]$). Thus, as a motivic function, $X_0$ is constant equal to 1 on the set $V$. In this section we are going to define a slight modification of the notion of symmetric product which takes into account $X_0$, or more precisely, the finite number of fibres $X_{0,v}$ for $v\in F$. This will allow us to consider a finite number of constant terms other than 1 in our Euler products.

Recall that for any $\pi = (n_i)_{i\in I}\in\N^{(I)}$, $S^{\pi}X$ parametrises collections $D= (D_i)_{i\in I}$ of effective zero-cycles on $X$, with disjoint supports, such that $\deg D_i = n_i$. We denote by $|D| = \cup_{i\in I}|D_i|$ the union of the supports of the $D_i$. For any subset $E\subseteq F$ denote by $(S^{\pi}X)^E$ the constructible subset of $S^{\pi}X$ parametrising  families $D$ of effective zero-cycles  such that $|D|\cap F = E$. Also, denote by $\widetilde{\scr{X}}$ the family $(X_i)_{i\in I}$ (that is, $\scr{X}$ with $X_0$ removed), and by $\left(S^{\pi}\widetilde{\scr{X}}\right)^E$ the restriction of $S^{\pi}\widetilde{\scr{X}}$ to $(S^{\pi}X)^E$ (resp. the image of $S^{\pi}\widetilde{\scr{X}}$ in $A_{(S^{\pi}X)^E})$. 

\begin{definition}\label{addX0} \begin{enumerate}\item The symmetric product $S^{\pi}\scr{X}$ of the family of varieties $\scr{X} = (X_i)_{i\in I_0}$ is defined as the constructible set over $S^{\pi}X$ with restriction to $(S^{\pi}X)^E$ given by
$$S^{\pi}\scr{X}_{|(S^{\pi}X)^E} = \left(S^{\pi}\widetilde{\scr{X}}\right)^E\times \prod_{v\in F\backslash E}X_{0,v}.$$
\item Denote by $i_E$ the immersion $(S^{\pi}X)^E\to S^{\pi}X.$ The symmetric product $S^{\pi}\scr{X}$ of the family of classes $\scr{X} = (X_i)_{i\in I_0}$ in $A_X$ is defined as the element of $A_{S^{\pi}X}$ given by
$$S^{\pi}\scr{X} = \sum_{E\subseteq F} (i_E)_!\left(\left(S^{\pi}\widetilde{\scr{X}}\right)^E\right) \prod_{v\in F\backslash E}X_{0,v}.$$
\end{enumerate}
\end{definition} 

Note that since $X_{0,v}$ is trivial (that is, a point) for every $v\in V$, it makes sense to write 
$$\prod_{v\in F\backslash E}X_{0,v} = \prod_{v\not\in E}X_{0,v}.$$
\begin{notation} We denote by $(S^{\pi}\scr{X})^E$ the pullback of $S^{\pi}\scr{X}$ along $i_E$, given by $$\left(S^{\pi}\widetilde{\scr{X}}\right)^E \prod_{v\not\in E}X_{0,v}.$$
\end{notation}

By definition, the partition $\left\{(S^{\pi}X)^E,\ E\subseteq F\right\}$ of $S^{\pi}X$ into constructible subsets is such that for each $E\subseteq F$, the restriction $S^{\pi}\scr{X}\times_{S^{\pi}X}(S^{\pi}X)^E$ is a trivial fibration above $S^{\pi}\widetilde{\scr{X}}\times_{S^{\pi}X}(S^{\pi}X)^E$ with fibre $\prod_{v\not\in E}X_{0,v}.$ 
\begin{remark} It is clear from the definition of symmetric products of families of classes in $A_X$ that both parts of the definition are compatible, that is, the class in $A_{S^{\pi}X}$ of the constructible set $S^{\pi}\scr{X}$ from part 1 will be the element constructed in part 2 with the classes of the varieties $X_i$ in~$A_X$.
\end{remark}
\begin{remark} Definition \ref{addX0} does not depend on the choice of the set $F$. Choosing a bigger set~$F'$ only amounts to refining the partition $\left\{(S^{\pi}X)^E,\ E\subseteq F'\right\}$. Indeed, assume $F' = F\cup\{v_0\}$. Then for all $E\subset F$,
$$\left\{D,\ |D|\cap F = E\right\} = \left\{D,\ |D|\cap F' = E\right\}\cup \left\{D,\ |D|\cap F' = E\cup \{v_0\}\right\},$$
and $X_{0,v_0} = 1$, so that $\prod_{v\in F'\backslash E}X_{0,v} = \prod_{v\in F\backslash E}X_{0,v}.$
\end{remark}

\begin{remark} Assume that $F = \varnothing$, that is, $X_0=X$ (resp. $X_0 = 1\in A_X$). Then we get $S^{\pi}\scr{X} = S^{\pi}\widetilde{\scr{X}}$, so our definition is an extension of the previous definition of symmetric products. 
\end{remark}

\begin{remark} For $E = \varnothing$, $(S^{\pi}X)^E$ is an open subset of $S^{\pi}X$. Thus, the constructible set $S^{\pi}\scr{X}$ is birationally equivalent to $S^{\pi}\widetilde{\scr{X}}\times \prod_{v\in F}X_{0,v}$. 
\end{remark}

\begin{example} Take $\pi=0\in\N^{(I)}$. Then the only non-empty piece of $S^{\pi}\scr{X}$ is the one corresponding to $E = \varnothing$, and therefore 
$$S^{0}\scr{X} = \prod_{v\in X}X_{0,v}.$$ 
\end{example}

The fibre of $S^{\pi}\widetilde{\scr{X}}$ above a family of effective zero-cycles $D = (D_i)_{i\in I}\in S^{\pi}X$ is given by $$\prod_{i\in I}\ \prod_{v\in |D_i|}X_{i,v}.$$ By definition, replacing $\widetilde{\scr{X}}$ by $\scr{X}$ consists in replacing this fibre by its product with $\prod_{v\in F\backslash |D|}X_{0,v} = \prod_{v\not\in |D|} X_{0,v}$. Thus, the fibre of $S^{\pi}\scr{X}$ above $D$ can be written $$\left(\prod_{v\not\in |D|}X_{0,v}\right)\left(\prod_{i\in I}\ \prod_{v\in |D_i|}X_{i,v}\right),$$ which is indeed a finite product since $\cup_{i}|D_i|$ is a finite set, and and only a finite number of fibres $X_{0,v}$ are non-trivial.

\subsection{Application to Euler products}


\begin{definition}\label{constterm}  Let $X$ be a variety over a perfect field $k$ and $V$ an open subset of $X$ such that its complement~$F$ is a finite set of closed points. Let $\scr{X} = (X_i)_{i\in I_0}$ be a family of classes in $A_X$ such that $X_0$ maps to 1 in $A_V$. We define the zeta-function
$$Z_{\scr{X}}(\t):= \sum_{\pi\in\N^{(I)}}[S^{\pi}\scr{X}]\t^{\pi}\in A_{k}[[\t]],$$
where $S^{\pi}\scr{X}$ is understood to be the generalised symmetric product from the previous paragraph, as well as the Euler product notation for $Z_{\scr{X}}$:
$$\prod_{v\in X}\left(\sum_{i\in I_0}X_{i,v}t_i\right) :=Z_{\scr{X}}(\t).$$ 
\end{definition}
\index{Euler product!other constant terms}
\begin{lemma} Let $X,V,\scr{X}$ be like in Definition~\ref{constterm}. Let $Y$ be a closed subscheme of $X$ and $U$ its open complement. Define $\scr{U} = (U_i)_{i\in I_0}$ and $\scr{Y} = (Y_i)_{i\in I_0}$ to be the families of elements of $A_U$ (resp. $A_Y$) obtained by restriction from~$\scr{X}$. For every $\pi\in\N^{(I)}$ we have the equality
$$S^{\pi}\scr{X} = \sum_{\pi'\leq \pi}S^{\pi'}\scr{U}\boxtimes S^{\pi-\pi'}\scr{Y}$$
in $A_{S^{\pi}X}$, where each term on the right-hand side is considered as an element of $A_{S^{\pi}X}$ via the immersion $S^{\pi'}U\times S^{\pi-\pi'}Y \to S^{\pi}X$. In particular, we have the equality $$Z_{\scr{X}}(\t) = Z_{\scr{U}}(\t)Z_{\scr{Y}}(\t)$$  in $A_k[[\t]]$.
\end{lemma}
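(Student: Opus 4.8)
The plan is to deduce the identity from the already-proved ``cutting into pieces'' theorem for ordinary symmetric products, namely Corollary~\ref{generalcut2} (which holds for every $A\in\{\kvar,\M,\evar,\expp\}$ by Remark~\ref{finalsymproddef}), together with the bookkeeping of the finite set $F = X\setminus V$ of special closed points. Write $F_U = F\cap U$ and $F_Y = F\cap Y$, so that $F = F_U\sqcup F_Y$; then $\scr{U}$ (resp. $\scr{Y}$) again satisfies the hypotheses of Definition~\ref{constterm} relative to $U$ and $F_U$ (resp. $Y$ and $F_Y$). Denote by $\widetilde{\scr{X}} = (X_i)_{i\in I}$, $\widetilde{\scr{U}}$, $\widetilde{\scr{Y}}$ the families obtained by deleting the index $0$; these are genuinely indexed by the set $I$, so Corollary~\ref{generalcut2} applies to them directly.

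First I would recall from Proposition~\ref{cut2} that $S^{\pi}X$ is the disjoint union, over $\pi'\leq\pi$, of locally closed subsets $(S^{\pi}X)_{\pi'}$, each one isomorphic to $S^{\pi'}U\times S^{\pi-\pi'}Y$ and identifying a family of effective zero-cycles $D = (D_i)_{i\in I}$ with the pair $(D_U, D_Y)$ of its restrictions to $U$ and to $Y$; on such a piece one has $|D| = |D_U|\sqcup|D_Y|$. I would then observe that the stratification of $S^{\pi}X$ by $\pi'$ and the stratification by $E$ (where $(S^{\pi}X)^E$, for $E\subseteq F$, is the locus $|D|\cap F = E$) are transverse: on $(S^{\pi}X)_{\pi'}\cong S^{\pi'}U\times S^{\pi-\pi'}Y$, the stratum $(S^{\pi}X)^E$ is the product $(S^{\pi'}U)^{E_U}\times(S^{\pi-\pi'}Y)^{E_Y}$, where $E_U = E\cap U\subseteq F_U$ and $E_Y = E\cap Y\subseteq F_Y$, and moreover $F\setminus E = (F_U\setminus E_U)\sqcup(F_Y\setminus E_Y)$.

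Next, on the stratum indexed by $E$ inside the piece indexed by $\pi'$, Definition~\ref{addX0} expresses the restriction of $S^{\pi}\scr{X}$ as the product of $(S^{\pi}\widetilde{\scr{X}})^E$ with $\prod_{v\in F\setminus E}X_{0,v}$. By Corollary~\ref{generalcut2} applied to $\widetilde{\scr{X}}$ (and restricted to the piece, then to the stratum) the first factor is identified with $(S^{\pi'}\widetilde{\scr{U}})^{E_U}\boxtimes(S^{\pi-\pi'}\widetilde{\scr{Y}})^{E_Y}$, while $\prod_{v\in F\setminus E}X_{0,v} = \big(\prod_{v\in F_U\setminus E_U}X_{0,v}\big)\big(\prod_{v\in F_Y\setminus E_Y}X_{0,v}\big)$. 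Multiplying the two and comparing with Definition~\ref{addX0} applied to $\scr{U}$ over $(S^{\pi'}U)^{E_U}$ and to $\scr{Y}$ over $(S^{\pi-\pi'}Y)^{E_Y}$, we see that the restriction of $S^{\pi}\scr{X}$ agrees with that of $S^{\pi'}\scr{U}\boxtimes S^{\pi-\pi'}\scr{Y}$ on this stratum. Since the strata $(S^{\pi}X)_{\pi'}\cap(S^{\pi}X)^E$ are locally closed and partition $S^{\pi}X$, the classes glue and we obtain the stated equality in $A_{S^{\pi}X}$. (Alternatively one may check equality pointwise via Lemma~\ref{function.equality}: the fibre of $S^{\pi}\scr{X}$ at $D$ is $\big(\prod_{v\notin|D|}X_{0,v}\big)\prod_{i\in I}\prod_{v\in|D_i|}X_{i,v}$, which factors as the product of the fibres of $S^{\pi'}\scr{U}$ at $D_U$ and of $S^{\pi-\pi'}\scr{Y}$ at $D_Y$ for $\pi'$ the partition induced by $D$ on $U$, all other summands of the right-hand side vanishing at $D$.) Finally, the identity $Z_{\scr{X}}(\t) = Z_{\scr{U}}(\t)Z_{\scr{Y}}(\t)$ in $A_k[[\t]]$ follows by pushing forward to $\spec k$ — which carries $\boxtimes$ to the product of $A_k$ — and recognising the Cauchy product, using $\t^{\pi'}\t^{\pi-\pi'} = \t^{\pi}$.

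The argument presents no genuine difficulty; the only points needing care are the combinatorial verification that the two stratifications of $S^{\pi}X$ are transverse and that $F$ splits correctly between $U$ and $Y$, and the observation that the whole argument is uniform in $A\in\{\kvar,\M,\evar,\expp\}$ — for the exponential rings $\boxtimes$ additionally records the morphisms to $\A^1$, and for the localised rings one needs pushforward and restriction to commute with inverting $\LL$, both of which are already built into the versions of Corollary~\ref{generalcut2} and Definition~\ref{addX0} invoked above.
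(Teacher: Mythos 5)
Your proof is correct and follows essentially the same route as the paper: both reduce to Corollary~\ref{generalcut2} for the families $\widetilde{\scr{X}},\widetilde{\scr{U}},\widetilde{\scr{Y}}$ on the strata $(S^{\pi}X)^E$, split the factor $\prod_{v\in F\setminus E}X_{0,v}$ according to $F=F_U\sqcup F_Y$, and reassemble via the immersions $S^{\pi'}U\times S^{\pi-\pi'}Y\to S^{\pi}X$, the only difference being that you organise the comparison on the common refinement of the two stratifications (or pointwise via Lemma~\ref{function.equality}) whereas the paper works stratum-by-stratum in $E$ and concludes with a commutative diagram of immersions.
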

\begin{proof}  Let $E$ be a subset of $F$. Write $E_U$ (resp. $E_Y, F_U, F_Y$) for the intersection $E\cap U$ (resp. $E\cap Y, F\cap U, F\cap Y$). Define the families $\scr{U} = (U_i)_{i\in I_0}$, $\widetilde{\scr{U}} = (U_i)_{i\in I}$, $\scr{Y} = (Y_i)_{i\in I_0}$, $\widetilde{\scr{Y}} = (Y_i)_{i\in I}$. Applying corollary \ref{generalcut2} and pulling back along $i_E:(S^{\pi}X)^E\to S^{\pi}X$, we get 
 $$\left(S^{\pi}\widetilde{\scr{X}}\right)^{E} = \sum_{\pi'\leq \pi} \left(S^{\pi'}\widetilde{\scr{U}}\right)^{E_U}\boxtimes \left(S^{\pi-\pi'}\widetilde{\scr{Y}}\right)^{E_Y}$$
 in $A_{(S^{\pi}X)^E}$. Therefore, writing
$$\prod_{v\in F\backslash E} X_{0,v}= \prod_{v\in F_{U}\backslash E_{U}}U_{0,v}\prod_{v\in F_Y\backslash E_Y}Y_{0,v},$$
we get that 
\begin{eqnarray*}(S^{\pi}\scr{X})^{E}&=&\sum_{\pi'\leq \pi}\left((S^{\pi'}\widetilde{\scr{U}})^{E_U}\prod_{v\in F_{U}\backslash E_{U}}U_{0,v}\right)\boxtimes \left((S^{\pi-\pi'}\widetilde{\scr{Y}})^{E_Y}\prod_{v\in F_Y\backslash E_Y}Y_{0,v}\right) \\
&=& \sum_{\pi'\leq \pi}\left(S^{\pi'}\scr{U}\right)^{E_U}\times \left(S^{\pi-\pi'}\scr{Y}\right)^{E_Y}\end{eqnarray*}
in $A_{S^{\pi}X}$. On the other hand, denoting by $i_{E_U}$ (resp. $i_{E_Y}$) the immersion $(S^{\pi'}U)^{E_{U}}\to S^{\pi'}U$ (resp. $(S^{\pi-\pi'}Y)^{E_{Y}}\to S^{\pi-\pi'}Y$:
\begin{eqnarray*}S^{\pi'}\scr{U}\boxtimes S^{\pi-\pi'}\scr{Y} &=& \left(\sum_{E_U \subset F_U}(i_{E_U})_!\left((S^{\pi'}\scr{U})^{E_U}\right)\right)\boxtimes \left(\sum_{E_Y \subset F_Y}(i_{E_Y})_!(S^{\pi-\pi'}\scr{Y})^{E_Y}\right)\\ &=& \sum_{E\subset F}\left(i_{E_U}\boxtimes i_{E_Y}\right)_!\left(\left(S^{\pi'}\scr{U}\right)^{E_U}\boxtimes \left(S^{\pi-\pi'}\scr{Y}\right)^{E_Y}\right)\end{eqnarray*}
in $A_{S^{\pi'}U\times S^{\pi-\pi'}Y}$. We may conclude by commutativity of the diagram
$$\xymatrix{(S^{\pi'}U)^{E_U}\times (S^{\pi-\pi'}Y)^{E_Y}\ \ \ \ar[d] \ar[r]^-{i_{E_U}\boxtimes i_{E_Y}} &\ \ \  S^{\pi'}U\times S^{\pi-\pi'}Y\ar[d]\\
(S^{\pi}X)^E \ar[r]^{i_E}&S^{\pi}X 
}$$
where the vertical arrow on the right is the immersion from the statement of the lemma, and the vertical arrow on the left is the immersion it induces above $(S^{\pi}X)^{E}$. 
\end{proof}
 This lemma immediately implies that the associativity property from Section \ref{eulerprod} extends to this Euler product with constant terms.

\chapter{Mixed Hodge modules and convergence of Euler products}\label{hodgemodules}

The aim of this chapter is to introduce a topology on the Grothendieck ring of varieties which is suitable for getting the expected convergence results in chapter \ref{motheightzeta}.
This topology will be defined using the theory of mixed Hodge modules of Morihiko Saito. For any 
 complex variety $X$ there is a morphism
$$\chi_X^{\hdg}: \M_X^{\hat{\mu}} \to K_0(\mhm_{X}^{\mon}),$$
called the Hodge realisation, between the localised Grothendieck group of varieties over $X$ with $\hat{\mu}$-action, and the Grothendieck group associated to the category $\mhm_{X}^{\mon}$ of mixed Hodge modules on $X$ with monodromy action. This morphism becomes a ring morphism when $\M_X^{\hat{\mu}}$ is endowed with the convolution product $\ast$, and $K_0(\mhm_{X}^{\mon})$ with its Hodge-theoretic version. The notion of weight of a Hodge module defines an increasing sequence of subgroups $(W_{\leq n}K_0(\mhm_{X}^{\mon}))_{n\in\Z}$ of $K_0(\mhm_X^{\mon})$ which gives a filtration on the ring $K_0(\mhm_{X}^{\mon})$. We define the weight function $w_X:K_0(\mhm_X^{\mon})\to \Z$, by
$$w_X(\a) = \inf\{n\in\Z,\ \a\in W_{\leq n}K_0(\mhm_X^{\mon})\}.$$
We show it behaves well with respect to some natural operations in the derived category of mixed Hodge modules, like pushforwards, pullbacks, exterior products, but also with respect to a notion of \textit{symmetric product} of mixed Hodge modules due to Maxim, Saito and Schürmann: for any element~$M$ of the bounded derived category $D^b(\mhm^{\mon}_X)$, denoting by $S^nM$ its $n$-th symmetric product for any integer $n\geq 1$, which is naturally an element $D^{b}(\mhm_{S^nX}^{\mon})$, we show that
\begin{equation}\label{sympowerinequality}w_{S^nX}(S^nM)\leq n\, w_X(M).\end{equation}
Composing $w_X$ with $\chi_{X}^{\hdg}$ gives also a weight function on $\M_X^{\hat{\mu}}$. Finally, composing the latter with the total vanishing cycles measure
$$\Phi_X:\expp_X\to\M_X^{\hat{\mu}}$$
gives us a weight function on the ring  $\expp_X$. Using this, we may define a notion of convergence for power series with coefficients in this ring. Our aim is to formulate a result which, given a sufficiently convergent series $1 + \sum_{i\geq 1}X_i t^i \in \expp_{X}[[t]]$, predicts the convergence of its Euler product
$$\prod_{x\in X} \left(1 + X_{1,x} t + X_{2,x} t^2 + \ldots \right).$$
Thus we need to be able to bound the weights of the symmetric products of the family $\scr{X} = (X_i)_{i\geq 1}$ in terms of the weights of the elements of the family. In other words, we need to relate the weights of the elements $$\chi_{S^nX}^{\hdg}\circ \Phi_{S^nX} (S^n(\scr{X})) \in K_0(\mhm_{S^nX}^{\mon})$$ for all $i\geq 1$ to the weights of the elements
$$\chi_X^{\hdg}\circ \Phi_X(X_i)$$
for al $i\geq 1$. Our strategy consists in introducing a version of the total vanishing cycles measure for mixed Hodge modules, that is, a morphism
$$\Phi_X^{\hdg}: K_0(\mhm_{\A^1_X})\to K_0(\mhm_{X}^{\mon})$$ fitting into a commutative diagram
$$\xymatrix{\M_{\A^1_X}\ar[r]^{\Phi'_X}\ar[d]^{\chi^{\hdg}_{\A^1_X}} & \M^{\hat{\mu}}_{X}\ar[d]^{\chi_{X}^{\hdg}}\\
K_0(\mhm_{\A^1_X})\ar[r]^-{\Phi^{\hdg}_X} & K_0(\mhm_{X}^{\mon})
}$$
(here we replaced $\Phi_X$ with its composition $\Phi'_X$ with the quotient morphism $\M_{\A^1_X}\to \expp_X$). More precisely, we define a functor 
$$\vanhdg_X:\mhm_{\A^1_X}\to \mhm_{X}^{\mon}$$
and take $\Phi_X^{\hdg}$ to be the morphism it induces on Grothendieck groups. Again, it is important to show that this morphism is a ring morphism when the Grothendieck groups are endowed with appropriate products. Having done this, in the above problem, the composition $\chi\circ \Phi$ may be replaced with $\Phi^{\hdg}\circ \chi$, which is easier to deal with because most of the work can be done in categories of mixed Hodge modules, which are better behaved than categories of varieties (for example, they are abelian). 

Via the Hodge realisation, symmetric powers in the Grothendieck ring of varieties correspond to Maxim, Saito and Schürmann's symmetric powers of mixed Hodge modules, that is, for any complex variety $Z$ and any $\a\in\M_{Z}$, we have
$$S^n(\chi^{\hdg}_{Z}(\a)) = \chi^{\hdg}_{S^nZ}(S^n\a).$$
Using a Thom-Sebastiani theorem for Hodge modules due to Saito, we show that the functor $\vanhdg_X$ is, in a certain sense, compatible with these symmetric powers. Combining these two compatibilities with estimates of the form (\ref{sympowerinequality}) enables us to conclude.

We now describe the structure of this chapter. The first three sections are purely Hodge-theoretic. We will start by recalling some definitions and useful facts about mixed Hodge modules, with or without monodromy, in section \ref{sect.mhmbasics}. In section \ref{sect.totvanhdg} we define the \textit{total vanishing cycles functor} between categories of mixed Hodge modules and prove the Thom-Sebastiani property. We recall the definition of symmetric products of Hodge modules in section \ref{sect.symprodvancycles}, explain that it extends without trouble to Hodge modules with monodromy and describe the behaviour of the total vanishing cycles functor with respect to these symmetric products.

In section \ref{sect.motvancomp}, we go on to relate all this to the framework of chapter \ref{grothrings}, introducing the Hodge realisation and showing compatibilities of the motivic and Hodge-theoretic objects.
Then we finally arrive to the definition of the weight filtrations, first on Grothendieck rings of Hodge modules in section \ref{sect.weightmhm}, then on Grothendieck rings of varieties in section \ref{sect.weightfiltrationkvar}. In section \ref{sect.powerseriesconv}, we conclude the chapter by stating and proving a convergence lemma for motivic Euler products of power series with coefficients in a Grothendieck ring of varieties with exponentials which will be used in chapter \ref{motheightzeta}. We also include a result showing how, for a power series $Z(T) = \sum_{n\geq 0} [M_n] T^n \in \svar_{\C}[[T]]$ (that is, having effective coefficients) for which we know the exact order of the first pole and which has a meromorphic continuation beyond this pole, one may recover information about the growth of the dimensions and number of irreducible components of the coefficients $M_n$.  
\section{Mixed Hodge modules\index{mixed Hodge module}}\label{sect.mhmbasics}
We are going to use freely the language of mixed Hodge modules introduced by Saito and are going to fix some notations and recall some facts to this end in this section.  References are the original works \cite{Saito88} and \cite{Saito90}, the summary \cite{Saito89} by Saito himself, the axiomatic introduction by Peters and Steenbrink in \cite{PS} section 14.1.1, and Beilinson, Bernstein and Deligne's paper \cite{BBD} for definitions and properties of perverse sheaves. If $S$ is a variety over~$\C$, we denote by $\mhm_S$ the abelian category of mixed Hodge modules on~$S$, by $D(\mhm_S)$ its derived category and by $D^{b}(\mhm_S)$ its bounded derived category.  For any integer $a$, we also denote by $D^{\leq a}(\mhm_S)$ the full subcategory of $D(\mhm_S)$ of complexes only having cohomology in degree $\leq a$. We use square brackets to denote the shifting of complexes: for any complex $M$ of mixed Hodge modules,  for every $n\in \Z$ and any $i\in \Z$, $(M[n])^i =M^{i+n}$. 

\subsection{The $\mathrm{rat}$ functor}
For any variety $S$ over $\C$, there is an exact and faithful functor
$$\rat_S:\mhm_S\to \mathrm{Perv}_S$$
to the abelian category of perverse sheaves on $S$, extending to a functor
$$\rat_S:D^{b}(\mhm_S)\to D^{b}_{cs}(S)$$
where the category on the right is the full subcategory of cohomologically constructible complexes inside the bounded derived category of sheaves of $\Q$-vector spaces on $S$. Saito showed (Theorem 0.1 in \cite{Saito90} or Theorem 1.3 in \cite{Saito89}) that the usual operations $\boxtimes$, $\otimes$, as well as $f_*, f^*, f_!, f^!$ for any morphism $f$ of varieties over $\C$ lift to the corresponding derived categories of mixed Hodge modules in a way compatible with the functor $\rat$. In particular, there are adjunctions $(f^*,f_*)$ and $(f_!,f^!)$.  There is a morphism $f_! \to f_*$ which is an isomorphism when $f$ is proper.


\subsection{Twists}

In the case where $S$ is a point, the category $\mhm_{\pt}$ is exactly the category of polarisable mixed Hodge structures (see \cite{Saito89}, Theorem 1.4), and the functor $\rat$ becomes the forgetful functor associating to a mixed Hodge structure its underlying $\Q$-vector space. For any integer $d\in \Z$, we denote by~$\Q_{\pt}^{\hdg}(d)\in \mhm_{\pt}$ the Hodge structure of type $(-d,-d)$ with underlying vector space~$\Q$. For $d=0$, it will be denoted simply by $\Q^{\hdg}_{\pt}$. For any complex variety $S$, tensoring with~$\Q_{\pt}^{\hdg}(1)$ defines a Tate twist on $D^{b}(\mhm_S)$. 

When $f$ is smooth of relative dimension $d$, then  
\begin{equation}\label{uppershriek} f^{!} \simeq f^*[2d](d).\end{equation}

\subsection{Weight filtration}
Each $M\in\mhm_S$ has a finite increasing weight filtration $W_{\bullet}M$, the graded parts of which will be denoted $\gr^{W}_{\bullet}$. For a bounded complex of mixed Hodge modules $M^{\bullet}$, we say~$M^{\bullet}$ has weight $\leq n$ if $\gr^W_i\mathcal{H}^j(M^{\bullet}) = 0$ for all integers $i$ and $j$ such that $i>j+n$. 

For varieties $X$ and $Y$ over $\C$ we say that a functor $F:D^{b}(\mhm_X)\to D^{b}(\mhm_Y)$ does not increase weights if for every $n\in\Z$ and every $M^{\bullet}\in D^{b}(\mhm_X)$ with weight $\leq n$, the complex $F(M^{\bullet})$ is also of weight $\leq n$. In particular, for any morphism of complex varieties~$f$, the functors $f_!$ and $f^{*}$ do not increase weights (see \cite{Saito90}, (4.5.2)). 

\subsection{Cohomological functors and cohomological amplitude} The usual truncation functor $\tau_{\leq}$ on $D^b(\mhm_S)$ corresponds to the perverse truncation $^p\tau_{\leq}$ on $D^{b}_{cs}(S)$, so that $\rat_S\circ \mathcal{H}^{\bullet} =\,^p\mathcal{H}^{\bullet}\circ \rat_S$, where $\mathcal{H}^{\bullet}$ is the usual cohomology on $D^{b}(\mhm_S)$ and $^p\mathcal{H}^{\bullet}$ is the perverse cohomology on $D^{b}_{cs}(S)$. 

\begin{definition} Let $T:D_1\to D_2$ be an exact functor between triangulated categories endowed with $t$-structures, and let $a$ be an integer. The functor $T$ is said to be of \textit{cohomological amplitude} $\leq a$ if $T(D_1^{\leq 0})\subset D_2^{\leq a}$. Denoting by $^t\mathcal{H}^{\bullet}$ the corresponding cohomological functors, this means that for all $i>a$ and all $X\in D_1^{\leq 0}$, $^t\mathcal{H}^i(T(X)) = 0$. 
\end{definition}

\begin{lemma}\label{cohamplitude} For a morphism $f:Y\to X$ of varieties over $\C$ with fibres of dimension $\leq d$, the functors
$$f_{!}:D^{b}(\mhm_Y)\to D^b(\mhm_X)\ \ \ \text{and}\ \ \ f^{*}:D^{b}(\mhm_X)\to D^{b}(\mhm_Y)$$
are of cohomological amplitude $\leq d$. 
\end{lemma}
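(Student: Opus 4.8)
The plan is to transport the statement, via the exact faithful functor $\rat$, down to the level of constructible complexes of $\Q$-vector spaces, where it becomes the relative form of Artin's vanishing theorem. First I would record the following reduction. For any complex variety $S$, since $\rat_S$ is exact and faithful and $\rat_S\circ\mathcal{H}^{\bullet} = {}^p\mathcal{H}^{\bullet}\circ\rat_S$, an object $M\in D^b(\mhm_S)$ lies in $D^{\leq a}(\mhm_S)$ if and only if ${}^p\mathcal{H}^j(\rat_S(M)) = 0$ for all $j>a$: indeed $\mathcal{H}^j(M)=0$ is equivalent to $\rat_S(\mathcal{H}^j(M)) = {}^p\mathcal{H}^j(\rat_S(M)) = 0$ by faithfulness of $\rat_S$. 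Thus the standard $t$-structure on $D^b(\mhm_S)$ is detected by the perverse $t$-structure on the underlying cohomologically constructible complexes, and moreover $\rat$ commutes with $f_!$ and $f^*$. Hence it suffices to show that, for $f\colon Y\to X$ with fibres of dimension $\leq d$, the functor $Rf_!\colon D^b_{cs}(Y)\to D^b_{cs}(X)$ (resp.\ $f^*\colon D^b_{cs}(X)\to D^b_{cs}(Y)$) sends a complex with vanishing perverse cohomology in positive degrees to one with vanishing perverse cohomology in degrees $>d$.

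Both of these are classical. For $f^*$ the argument is elementary: $f^*$ is $t$-exact for the standard $t$-structures, so $\mathcal{H}^i(f^*F) = f^*\mathcal{H}^i(F)$, and the support of $f^*\mathcal{H}^i(F)$ is $f^{-1}$ of the support of $\mathcal{H}^i(F)$, which therefore has dimension at most $\dim\operatorname{supp}\mathcal{H}^i(F)+d$; plugging this into the support conditions defining the perverse $t$-structure shows that $f^*$ shifts perverse amplitude by at most $d$. For $Rf_!$ the statement is precisely the relative Artin vanishing theorem: if $f$ has fibres of dimension $\leq d$, then $Rf_!$ is right $t$-exact for the perverse $t$-structures up to a shift by $d$, i.e.\ of cohomological amplitude $\leq d$; see \cite{BBD}, 4.2.4 (the affine case being Artin's theorem, the general case following by dévissage along a stratification of $X$ over which $f$ has bounded fibre dimension). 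Transporting both assertions back through $\rat$, and using that the $t$-structure on $D^b(\mhm_S)$ is detected as above, yields the lemma.

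The one point requiring a little care is that $Rf_!$ and $f^*$ genuinely preserve the \emph{bounded} derived categories of mixed Hodge modules, so that the functors in the statement are well defined; this is part of Saito's stability results (\cite{Saito90}, Theorem 0.1), valid because we work with varieties of finite type over $\C$. Granting this, the only substantive input is the relative Artin vanishing theorem for constructible sheaves, and I expect the reduction to it via $\rat$ to be entirely formal; the main point to be attentive to is simply that the fibre-dimension hypothesis is the one that controls the shift in both the $f^*$ support computation and the cited form of Artin vanishing.
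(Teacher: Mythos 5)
Your proposal is correct and follows essentially the same route as the paper: reduce to the underlying constructible complexes via the exact faithful functor $\rat$ (using faithfulness to detect vanishing of the cohomology of a complex of mixed Hodge modules), and invoke \cite{BBD} 4.2.4 for the perverse cohomological amplitude of $f_!$ and $f^*$. The only cosmetic difference is that for $f^*$ you unwind the support-condition argument explicitly instead of citing \cite{BBD} 4.2.4 for that case as well, which is a fine but not essentially different variant.
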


\begin{proof} According to \cite{BBD} 4.2.4, this is true for the corresponding functors 
$$f_!:D^{b}_{cs}(Y)\to D^{b}_{cs}(X)\ \ \ \text{and}\ \ \ f^{*}:D^b_{cs}(X)\to D^{b}_{cs}(Y)$$ with the perverse $t$-structure. Let $M^{\bullet}\in D^{\leq 0}(\mhm_Y)$. Then by compatibility of $\rat$ with pushforwards and $t$-structures as explained above, we have, for every $i\in \Z$
$$\rat_X(\mathcal{H}^{i}f_{!}(M^{\bullet})) = \  ^p\mathcal{H}^i(f_!(\rat_Y(M^{\bullet})))$$
The right-hand side is zero for $i>d$ by \cite{BBD} 4.2.4. The functor $\rat$ being faithful, we therefore have $\mathcal{H}^{i}f_{!}(M^{\bullet}) = 0$ for $i>d$ by lemma \ref{faithfulfunctorzero} below. The same argument holds for~$f^{*}$. 
\end{proof}

\begin{lemma}\label{faithfulfunctorzero} Let $F:A\to B$ be a faithful functor between additive categories. If $F(X) = 0$ for some object $X$ of $A$, then $X=0$. 
\end{lemma}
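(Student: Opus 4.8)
The statement to prove is Lemma \ref{faithfulfunctorzero}: a faithful additive functor $F\colon A\to B$ between additive categories kills no nonzero object. The plan is to argue by contraposition on identity morphisms. Suppose $X$ is an object of $A$ with $F(X)=0$, i.e. $F(X)$ is a zero object of $B$. The key observation is that in any additive category an object $Y$ is a zero object if and only if $\mathrm{id}_Y = 0_Y$ (the zero endomorphism): indeed if $\mathrm{id}_Y=0$ then for any $Z$ the map $\mathrm{Hom}(Z,Y)\to\mathrm{Hom}(Z,Y)$ given by postcomposition with $\mathrm{id}_Y$ is simultaneously the identity and the zero map, forcing $\mathrm{Hom}(Z,Y)=0$, and similarly on the other side, so $Y$ is both initial and terminal.

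First I would record that $F$, being an additive functor, preserves zero morphisms: $F(0_{X,X}) = 0_{F(X),F(X)}$ (this follows from additivity of $F$ on each $\mathrm{Hom}$-group, or simply because $F$ of the zero map factors through $F$ of a zero object). Next, since $F(X)=0$ is a zero object of $B$, we have $\mathrm{id}_{F(X)} = 0_{F(X),F(X)}$ by the observation above. Combining, $F(\mathrm{id}_X) = \mathrm{id}_{F(X)} = 0_{F(X),F(X)} = F(0_{X,X})$. Now faithfulness of $F$ means the map $\mathrm{Hom}_A(X,X)\to\mathrm{Hom}_B(F(X),F(X))$ is injective, so $\mathrm{id}_X = 0_{X,X}$ in $A$. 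Applying the same ``identity is zero iff zero object'' criterion in $A$, we conclude $X$ is a zero object, i.e. $X=0$, as claimed.

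There is essentially no obstacle here; the only point requiring a line of care is the elementary lemma that an object with vanishing identity endomorphism is a zero object, which I would either prove in one sentence as above or simply cite as standard. One should also note that ``faithful'' is used in the sense that $F$ is injective on morphisms between any fixed pair of objects, which is exactly what is needed, and that no hypothesis beyond additivity of $A$, $B$ (so that zero objects and zero morphisms make sense) and of $F$ (so that zero morphisms are preserved) is used — matching the setting in which the lemma is invoked, namely $F = \rat_X$ or $F = \mathcal{H}^i$ composed with $\rat$.
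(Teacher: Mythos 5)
Your proof is correct and follows essentially the same route as the paper's: both deduce $F(\mathrm{id}_X)=F(0_X)$ from $F(X)=0$, invoke faithfulness to get $\mathrm{id}_X=0_X$, and conclude $X=0$. Your extra justification of the ``identity is zero iff zero object'' criterion is a fine elaboration of a step the paper leaves implicit.
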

\begin{proof} The assumption $F(X) = 0$ implies that $F(\id_X) = 0 = F(0_X)$ where $0_X$ is the constant zero map on $X$. By faithfulness we have $\id_X = 0_X$, which means that $X=0$. 
\end{proof}
\subsection{The trace morphism \index{trace morphism}for mixed Hodge modules} \label{section.trace}
Though the theory of trace morphisms for Hodge modules is probably classical, we include this paragraph for lack of an appropriate reference. We only treat the case of smooth morphisms because it is sufficient for our purposes.
\begin{notation}\label{qshdg}For any complex variety $S$, we denote by $a_S:S\to \spec \C$ its structural morphism and by $\Q_S^{\hdg}$ the complex of mixed Hodge modules $a_S^{*}\Q_{\pt}^{\hdg}$. 
\end{notation}
\begin{remark} In the case where $S$ is smooth and connected, the complex of mixed Hodge modules $\Q_S^{\hdg}$ is concentrated in degree $\dim S$, and $\mathcal{H}^{\dim S}\Q_S^{\hdg}$ is  pure of weight $\dim S$, given by the pure Hodge module associated to the constant (rank one) variation of Hodge structures of weight 0 on~$S$. When $S$ is not smooth, by lemma \ref{cohamplitude} the complex $\Q_S^{\hdg}$ is still an object of $D^{\leq \dim S}(\mhm_S)$, of weight $\leq 0$ because the functor $a_S^{*}$ does not increase weights, so that $\gr^W_i\mathcal{H}^{\dim S}(\Q_S^{\hdg}) = 0$ for $i>\dim S$. On the other hand, by \cite{Saito90}, (4.5.9), $\gr^{W}_{\dim S}\mathcal{H}^{\dim S}(\Q_S^{\hdg})$ is non-zero and simple, given by the intermediate extension of the constant weight 0 variation of Hodge structures on an open subset of $S$. 
\end{remark}

 \begin{remark}[Duality and the trace morphism for Hodge modules] \label{tracemap} Let $X$ be a smooth variety of dimension $d$ over $\C$. Then according to (\ref{uppershriek}), we have $a_X^{!}\Q_{\pt}^{\hdg} \simeq \Q_X^{\hdg}(d)[2d]$, and by \cite{Saito90} (4.4.2), there is a morphism of complexes of mixed Hodge structures $$(a_X)_!\Q_X^{\hdg}\to \Q_{\pt}^{\hdg}(-d)[-2d]$$ lifting the corresponding morphism in the derived category of sheaves of $\Q$-vector spaces on $X$. The cohomology of the complex $(a_X)_!\Q_X^{\hdg}$ is exactly the cohomology with compact supports of $X$,  and the morphism $(a_X)_!\Q_X^{\hdg}\to \Q_{\pt}^{\hdg}(-d)[-2d]$ induces the trace morphism $H^{2d}_c(X,\Q)\to \Q(-d)$ on the top cohomology. The lift described above is compatible with Deligne's Hodge theory (see e.g. lemma 14.8, corollary 14.9 and remark 14.10 in \cite{PS}), and turns the trace morphism
into a morphism of Hodge structures $H^{2d}_c(X,\Q)\to \Q^{\hdg}_{\pt}(-d)$ (which is an isomorphism when $X$ is irreducible).
\end{remark}
The following proposition generalises this over a base:
\begin{prop}\label{traceprop} Let $S$ be a variety over $\C$ of dimension $n$ and $p:X\to S$ a smooth morphism  with fibres of constant dimension $d\geq 0$. Then there exists a morphism of complexes of Hodge modules
$$f:p_!\Q_X^{\hdg} \to \Q_S^{\hdg}(-d)[-2d]$$
inducing a morphism of mixed Hodge modules
$$\mathcal{H}^{2d+n}(p_!\Q_X^{\hdg})\to \mathcal{H}^{2d+n}(\Q_S^{\hdg}(-d)[-2d])$$
which above every closed point $s\in S$ corresponds to the classical trace morphism
$$H_c^{2d}(X_s,\Q)\to \Q^{\hdg}_{\pt}(-d)$$
of mixed Hodge structures.
\end{prop}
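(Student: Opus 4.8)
The strategy is to construct the trace morphism $f\colon p_!\Q_X^{\hdg}\to \Q_S^{\hdg}(-d)[-2d]$ by the usual adjunction argument, then verify the claimed description on top cohomology fibrewise by base change. First I would invoke the fact that, $p$ being smooth of relative dimension $d$, we have the purity isomorphism $p^{!}\simeq p^{*}[2d](d)$ of~(\ref{uppershriek}); combined with $\Q_X^{\hdg}=p^{*}\Q_S^{\hdg}$ this gives $p^{!}\Q_S^{\hdg}(-d)[-2d]\simeq \Q_X^{\hdg}$. The adjunction $(p_!,p^{!})$ therefore furnishes a canonical morphism
$$f\colon p_!\Q_X^{\hdg}=p_!p^{!}\Q_S^{\hdg}(-d)[-2d]\longrightarrow \Q_S^{\hdg}(-d)[-2d]$$
as the counit of the adjunction (Saito's $(f_!,f^{!})$ adjunction, compatible with $\rat$). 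By the compatibility of all these operations with the $\rat$ functor, $\rat_S(f)$ is exactly the topological trace morphism $p_!\Q_X[2d](d)\to\Q_S$ of perverse sheaves, so the construction is the Hodge-theoretic lift of the classical one, as in Remark~\ref{tracemap} over a point.

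Next I would extract the morphism on top cohomology. Since $p$ has fibres of dimension $d$ and $S$ has dimension $n$, Lemma~\ref{cohamplitude} applied to $p_!$ and to $a_S^{*}$ shows $p_!\Q_X^{\hdg}\in D^{\leq 2d+n}(\mhm_S)$ and $\Q_S^{\hdg}\in D^{\leq n}(\mhm_S)$, hence $\Q_S^{\hdg}(-d)[-2d]\in D^{\leq 2d+n}(\mhm_S)$; both complexes therefore have a well-defined top cohomology object in degree $2d+n$, and applying the cohomological functor $\mathcal{H}^{2d+n}$ to $f$ yields the asserted morphism
$$\mathcal{H}^{2d+n}(p_!\Q_X^{\hdg})\to \mathcal{H}^{2d+n}(\Q_S^{\hdg}(-d)[-2d])$$
of mixed Hodge modules on $S$.

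Finally I would identify this morphism at a closed point $s\in S$. Write $i_s\colon\{s\}\hookrightarrow S$ and let $X_s$ be the fibre with structural map $p_s\colon X_s\to \{s\}$; the cartesian square gives the proper (indeed $p_!$) base change isomorphism $i_s^{*}p_!\Q_X^{\hdg}\simeq (p_s)_!\Q_{X_s}^{\hdg}$, compatible with $\rat$ and with the counit maps, so $i_s^{*}f$ is the trace morphism $(p_s)_!\Q_{X_s}^{\hdg}\to \Q_{\pt}^{\hdg}(-d)[-2d]$ of Remark~\ref{tracemap}. It remains to commute $i_s^{*}$ past $\mathcal{H}^{2d+n}$: here one uses that $i_s^{*}$ has cohomological amplitude $\leq 0$ (fibres of dimension $0$, Lemma~\ref{cohamplitude}) together with the fact that $p_!\Q_X^{\hdg}$ and $\Q_S^{\hdg}(-d)[-2d]$ are concentrated in degrees $\leq 2d+n$, so that $i_s^{*}$ of the top cohomology recovers, up to the shift coming from $\Q_S^{\hdg}$ having its own top degree $n$ at $s$, the top cohomology $H^{2d}_c(X_s,\Q)$ of $X_s$ and the Hodge structure $\Q^{\hdg}_{\pt}(-d)$; and then Remark~\ref{tracemap} identifies the induced map with the classical trace $H^{2d}_c(X_s,\Q)\to \Q^{\hdg}_{\pt}(-d)$ of mixed Hodge structures. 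The main obstacle I anticipate is precisely this last bookkeeping step: one must check carefully that pulling back to a point and taking the relevant (top) cohomology commute in the appropriate degree, keeping track of the shift $[2d]$ and the twist $(-d)$ and of the fact that $\Q_S^{\hdg}$ is itself a complex concentrated in degrees $\le n$ rather than a single module; this is where a clean statement of the interaction between $i_s^{*}$, the $t$-structure, and the hypercohomology spectral sequence is needed, even though no serious Hodge-theoretic input beyond Saito's formalism and Remark~\ref{tracemap} is required.
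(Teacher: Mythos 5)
Your construction is exactly the paper's: the morphism $f$ is obtained as the counit of the adjunction $(p_!,p^!)$ combined with the purity isomorphism $p^!\simeq p^*(d)[2d]$, the degree bound comes from Lemma~\ref{cohamplitude}, and the fibrewise identification uses proper base change together with Remark~\ref{tracemap}. The proposal is correct and follows essentially the same route as the paper's proof (indeed it is, if anything, slightly more explicit about commuting $i_s^*$ with the top cohomology, a point the paper passes over quickly).
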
 
\begin{proof} The counit $p_!p^{!}\to \mathrm{id}$ associated to the adjunction $(p_!,p^{!})$ induces a morphism of complexes of Hodge modules 
$$p_!p^{!}\Q_S^{\hdg} \to \Q_S^{\hdg}.$$

Since $p$ is smooth, we have $p^{!} = p^*(d)[2d]$, and this morphism induces a morphism
$$f:p_!p^{*}\Q_{S}^{\hdg}\to \Q_S^{\hdg}(-d)[-2d].$$
Note that by lemma \ref{cohamplitude}, both these complexes are objects of $D^{\leq 2d + n}(\mhm_S)$. Moreover, by proper base change  (\cite{Saito90} 4.4.3), above every closed point $s\in S$, this induces a morphism of complexes of Hodge  structures $f_s: (p_s)_!(p_s)^{*}\Q_{\pt}^{\hdg}\to \Q_{\pt}^{\hdg}(-d)[-2d]$, where $p_s:X_s\to \C$ is the pullback of $p$ by the inclusion $i_s:s\to S$, that is, $p_s = a_{X_s}$ using notation \ref{qshdg}. This morphism induces the trace map on the top cohomology as explained in remark \ref{tracemap}.
\end{proof}

\begin{remark}\label{tracerem} Let $p:X\to S$ be as in the proposition, and assume moreover that $X$ is irreducible. Then the morphism of mixed Hodge modules 
$$\mathcal{H}^{2d+n}(p_!\Q_X^{\hdg})\to \mathcal{H}^{2d+n}(\Q_S^{\hdg}(-d)[-2d])$$
given by the proposition induces an isomorphism
$$\gr^{W}_{2d+n}\mathcal{H}^{2d+n}(p_!\Q_X^{\hdg})\simeq \gr^{W}_{2d+n}\mathcal{H}^{2d+n}(\Q_S^{\hdg}(-d)[-2d]).$$
Indeed, denoting by $K_1$ (resp. $K_2$) the kernel (resp. cokernel) of the above morphism, we have an exact sequence of mixed Hodge modules
$$0\to K_1\to \mathcal{H}^{2d+n}(p_!\Q_X^{\hdg})\to \mathcal{H}^{2d+n}(\Q_S^{\hdg}(-d)[-2d])\to K_2 \to 0,$$
which above $s\in S$ becomes
\begin{equation}\label{ksexact}0\to K_{1,s}\to H^{2d}_c(X_s,\Q)\to \Q_{\pt}^{\hdg}(-d)\to K_{2,s}\to 0.\end{equation}
The trace morphism being always surjective, we may conclude that $K_2=0$. Moreover, for every $s\in S$, 
since $X$ is irreducible, there is an open dense subset $U$ of $S$ such that for all $s\in S$, $X_s$ is irreducible, so that the trace morphism for $X_s$ is an isomorphism. In particular, $K_1$ is supported inside some closed subset of $S$ contained in the complement of $U$. Denote by $S_1$ the support of the pure Hodge module $\gr^{W}_{2d+n}K_1$. If $S_1$ is non-empty, there is an open dense subset of $S_1$ over which $\gr^{W}_{2d+n}K_1$ corresponds to a variation of pure Hodge structures, which must be of weight $2d$ because of the exact sequence (\ref{ksexact}). This would mean that the corresponding pure Hodge module is of weight $\dim S_1 + 2d < \dim S + 2d$, a contradiction.
\end{remark}

\subsection{Mixed Hodge modules with monodromy\index{mixed Hodge module!with monodromy}}\label{sect.mhmmonodromy}

A reference for this is Saito's unpublished paper about the Thom-Sebastiani theorem for mixed Hodge modules \cite{SaitoTS}. One can also consult the summary in section 2.9 of \cite{BBDJS}. We denote by $\mhm_X^{\mon}$ the category of mixed Hodge modules $M$ on a complex variety~$X$ endowed with commuting actions of a finite order operator $T_s:M\to M$ and a locally nilpotent operator $N:M\to M(-1)$. The category $\mhm_X$ can be identified with a full subcategory of $\mhm_X^{\mon}$ via the functor
$$\mhm_X\to \mhm_{X}^{\mon}$$\index{Ts@$T_s$, monodromy operator}\index{mhmmon@$\mhm_X^{\mon}$}
sending a mixed Hodge $M$ to itself with $T_s = \id$ and $N = 0$. The Tate twist and the cohomological pullback and pushforward operations still exist in this setting (see \cite{BBDJS}, second paragraph of section 2.9). However, we need a more appropriate, twisted version of the external tensor product. Saito gives two equivalent ways of defining it, one of them being the following. Let $M_i =(D_i,F,L_i,W; T_s,N)$, $i=1,2$ be two mixed Hodge modules with monodromy, with  underlying $\mathcal{D}_X-$modules $D_i$, underlying perverse complexes $L_i$ with isomorphisms $\mathrm{DR}(D_i)\simeq L_i\otimes \C$ given by the Riemann-Hilbert correspondence, Hodge filtrations~$F$, weight filtrations $W$ and monodromy actions $(T_s,N)$. 

For each rational number $\alpha\in(-1,0]$, let $D^{\alpha}_i = \ker(T_s - \exp(-2i\pi\alpha))\subset D_i$. We define
$$M_1\twtimes M_2 = (D,F,L,W;T_s,N)$$
by $D = D_1\boxtimes D_2$, $L = L_1\boxtimes L_2$, $T_s = T_s\boxtimes T_s$, $N = N\boxtimes\id + \id \boxtimes N$, the Hodge filtration being given by   \index{twisted exterior product} \index{boxT@$\twtimes$}
$$F_p(D_1^{\al}\boxtimes D_2^{\beta}) = \left\{\begin{array}{cl}\displaystyle{ \sum_{i+ j = p+1} F_i D_1^{\al}\boxtimes F_jD_2^{\be} }& \text{if}\ \al + \be \leq -1,\\
\displaystyle{\sum_{i+ j = p} F_iD_1^{\al}\boxtimes F_jD_2^{\be}} & \text{if}\ \al + \be > -1.\end{array}\right.$$
and the weight filtration by
\begin{equation}\label{twistedweight} W_k(D_1^{\al}\boxtimes D_2^{\beta}) = \left\{\begin{array}{cl}\displaystyle{ \sum_{i+ j = k} W_i D_1^{\al}\boxtimes W_jD_2^{\be} }& \text{if}\ \al\be =0,\\
\displaystyle{\sum_{i+ j = k-1} W_iD_1^{\al}\boxtimes W_jD_2^{\be}} & \text{if}\ \al\be \neq 0,\al + \be \neq -1,\\
\displaystyle{\sum_{i+ j = k-2} W_iD_1^{\al}\boxtimes W_jD_2^{\be}} & \text{if}\ \al + \be = -1\end{array}\right.\end{equation}
for the underlying $\mathcal{D}$-modules. The weight filtration on the complex $(L_1\otimes\C)\boxtimes(L_2\otimes\C)$ is defined in the same manner, and gives a weight filtration on  $L_1\boxtimes L_2$ via the action of the Galois group of $\Q$.  We refer to \cite{SaitoTS} for the definition of the isomorphism $\mathrm{DR}(D_1\boxtimes D_2)\simeq (L_1\otimes\C)\boxtimes(L_2\otimes\C)$. 
\begin{example}\label{twtimesexample} Consider the Hodge structure with monodromy $$H = (\Q_{\pt}^{\hdg},-\id,0)\in \mhm_{\C}^{\mon}.$$ Let us compute
$H\twtimes H$. The underlying $\Q$-vector space is of dimension one. Moreover, observe that $H = H^{-\frac{1}{2}}$, so that by definition, we have $W_{1}(H\twtimes H)=0$ and $W_{2}(H\twtimes H) = W_0H\boxtimes W_0 H$. Thus, $H\twtimes H$ is pure of weight 2, with trivial monodromy, that is, it is equal to the pure Hodge structure $\Q_{\pt}^{\hdg}(-1)$. 
\end{example}

There is also a twisted tensor product on the derived category $D^b(\mhm_X^{\mon})$, defined for $M_1,M_2\in D^b(\mhm_X^{\mon})$ by
$$M_1\twotimes M_2 := \delta^*(M_1\twtimes M_2)$$
where $\delta: X\to X\times X$ is the diagonal map.  It is clear from the definition that these twisted products $\twtimes, \twotimes$ coincide with the usual products $\boxtimes, \otimes$ for Hodge modules with trivial monodromy. \index{otime@$\twotimes$}\index{twisted tensor product}

More generally, for any complex variety $X$ and for varieties $Y,Z$ above $X$, we have a relative twisted exterior product \index{twisted exterior product!relative}\index{boxTX@$\twtimes_X$} 
$$\twtimes_X: D^b(\mhm^{\mon}_Y)\times D^b(\mhm^{\mon}_Z)\to D^b(\mhm^{\mon}_{Y\times_XZ})$$
given for $M_1\in D^b(\mhm^{\mon}_Y)$, $M_2\in D^b(\mhm^{\mon}_Z)$ by 
$$M_1\twtimes_XM_2 = i^*(M_1\twtimes M_2)$$
where $i$ is the closed immersion $Y\times_XZ\to Y\times_{\C} Z.$ We denote simply by $\boxtimes_X$ the relative exterior product it induced on mixed Hodge modules with trivial monodromy. \index{box@$\boxtimes_X$}


\subsection{Grothendieck ring of mixed Hodge modules\index{Grothendieck ring!of mixed Hodge modules}} A reference for this is \cite{CNS}, chapter 1 section 3.1, especially 3.1.4, 3.1.9 and~3.1.10. The Grothendieck group $K_0(\mhm_S)$  associated to the abelian category $\mhm_S$ is defined as the quotient of the free abelian group on isomorphism classes of mixed Hodge modules by relations of the form $[X] - [Y] +  [Z]$ for all objects $X,Y,Z\in \mhm_S$ forming a short exact sequence
$$0\to X \to Y \to Z \to 0.$$ 
The full subcategory of $\mhm_S$ of objects of pure weight, denoted by $\mathrm{HM}_S$, is semi-simple (see \cite{Saito88}, Lemme 5). On the other hand, the natural group morphism
$$K_0(\mathrm{HM}_S)\to K_0(\mhm_S)$$
is an isomorphism, with inverse given by sending the class of a mixed Hodge module $M$ to the sum of the classes of its graded parts. Thus, $K_0(\mhm_S)$ may also be seen as the quotient of the free abelian group on isomorphism classes of Hodge modules of pure weight by relations of the form $[X] - [Y] +  [Z]$ for all objects $X,Y,Z\in \mathrm{HM}_S$ forming a \textit{split} short exact sequence
$$0\to X \to Y \to Z \to 0.$$

 On the other hand, the triangulated category $D^{b}(\mhm_S)$ has a Grothendieck group $$K_0^{\mathrm{tri}}(D^{b}(\mhm_S)),$$ which is defined as the quotient of the free abelian group on isomorphism classes of objects of $D^{b}(\mhm_S)$, by relations of the form $[X] - [Y] +  [Z]$, for any objects $X,Y,Z$ of the category $D^{b}(\mhm_S)$ fitting into a distinguished triangle
$$X \to Y \to Z \to X[1].$$
The $\otimes$ operation endows the ring $K_0^{\mathrm{tri}}(D^{b}(\mhm_S))$ with a ring structure.  There is a natural group morphism
$$K_0(\mhm_S)\to K_0^{\mathrm{tri}}(D^{b}(\mhm_S))$$
sending the class of a mixed Hodge module $M$ to the class of the complex defined by this Hodge module placed in degree zero. This morphism is an isomorphism, with inverse given by sending the class of any complex $M^{\bullet}$ of mixed Hodge modules to the alternating series of the classes of its cohomology groups $\sum_{i\in \Z} (-1)^{i}[\mathcal{H}^{i}(M^{\bullet})].$  In what follows, we will denote this group always by $K_0(\mhm_S)$, and consider it as a ring via the ring structure on $ K_0^{\mathrm{tri}}(D^{b}(\mhm_S))$.

As for Grothendieck rings of varieties, a morphism $f:T\to S$ of complex varieties induces a group morphism
$$f_!:K_0(\mhm_T)\to K_0(\mhm_S)$$
and a ring morphism
$$f^*:K_0(\mhm_S)\to K_0(\mhm_T).$$
In particular, for any complex variety $S$, $K_0(\mhm_S)$ is endowed with a $K_0(\mhm_{\pt})$-algebra structure.

One may also consider Grothendieck rings of mixed Hodge modules with monodromy $K_0(\mhm_S^{\mon})$,\index{Grothendieck ring!of mixed Hodge modules!with monodromy} defined in the same manner, the product being induced by $\twotimes$. We denote this product by $\ast$. 
\section{Vanishing cycles and mixed Hodge modules}\label{sect.totvanhdg}
\subsection{The classical theory of vanishing cycles}\label{sect.classicaltheoryvancycles}
Here we recall briefly how nearby and vanishing cycles are defined in the classical transcendental setting. The main reference for this is Deligne's article \cite{DeligneSGA} in SGA 7. For a summary with a view towards mixed Hodge modules, see section 8 of Schnell's notes \cite{Schnell}. Let $X$ be a complex manifold, $D\subset \C$ the open unit disc, and $f:X\to D$ a holomorphic function. We denote by $D^* = D\setminus\{0\}$ the punctured unit disc and by $\widetilde{D}^*$ its universal covering space, which may be viewed as the complex upper half plane via the covering map
$$\begin{array}{ccc}p: \widetilde{D}^* = \{z\in\C,\ \mathrm{Im}(z) >0\}&\to & D^*\\
                               z &\mapsto &\exp(2i\pi z)\end{array}.
                               $$
                               We denote by $X^*$ (resp $X_0$) the inverse image $f^{-1}(D^*)$ (resp. $f^{-1}(0)$), and by $\widetilde{X}^*$ the complex manifold making the rightmost square in the following diagram cartesian:
                               $$\xymatrix{X_0\ar[d]^f \ar@{^{(}->}[r]^{i} & X\ar[d]^{f}  & X^* \ar@{_{(}->}[l]_{j} \ar[d]^f & \widetilde{X}^* \ar[l]_p\ar[d] \\
                               \{0\} \ar@{^{(}->}[r] & D & D^* \ar@{_{(}->}[l] & \widetilde{D}^* \ar[l]_p}                               $$
                               The nearby cycle functor
                               $$\psi_f:D^b_c(X) \to D^b_c(X_0)$$
                               is defined in the following way:
                               for $\mathcal{F}^{\bullet}\in D^b_c(X)$ a bounded constructible complex of sheaves on $X$, we put 
                               $$\psi_f\mathcal{F}^{\bullet} = i^{-1} R(j\circ p)_*\, (j\circ p)^{-1} \mathcal{F}^{\bullet}. $$
                               The deck transformation $z\mapsto z+1$ on $\widetilde{D}^*$ induces an automorphism of $\psi_{f}\mathcal{F}^{\bullet}$ called the \textit{monodromy}.
                               The adjunction morphism
                               $$\mathcal{F}^{\bullet}\to R(j\circ p)_*\, (j\circ p)^{-1} \mathcal{F}^{\bullet}$$
                               gives, after applying the functor $i^{-1}$, a morphism
                               $$i^{-1}\mathcal{F}^{\bullet} \to \psi_f\mathcal{F}^{\bullet}.$$ The vanishing cycles complex $\phi_f\mathcal{F}^{\bullet}$ of $\mathcal{F}^{\bullet}$ at 0 is defined as the cone of this morphism, so that there is a distinguished triangle
                               $$ i^{-1}\mathcal{F}^{\bullet} \to \psi_f\mathcal{F}^{\bullet} \to \phi_{f}\mathcal{F}^{\bullet} \to i^{-1}\mathcal{F}^{\bullet}[1].$$
                            The functors $\psi_f$ and $\phi_f$ take distinguished triangles to distinguished triangles, and commute with shifting of complexes.   
                            

                            A theorem of Gabber (see \cite{Brylinski}, Théorème 1.2) says that if $\mathcal{F}^{\bullet}$ is a perverse complex, then both $^p\psi_f\mathcal{F}^{\bullet}:=\psi_{f}\mathcal{F}^{\bullet}[-1]$ and $^p\phi_f\mathcal{F}^{\bullet}:=\phi_{f}\mathcal{F}^{\bullet}[-1]$ are perverse.

\begin{lemma}\label{vancyclesfinite} Let $X$ be a complex algebraic variety, $f:X\to \A^1_{\C}$ a morphism and $\mathcal{F}^{\bullet}\in D^{b}_c(X)$. Then $\phi_{f-a}\mathcal{F}^{\bullet} = 0$ for all but a finite number of $a\in\C$.
\end{lemma}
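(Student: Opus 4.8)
The statement is a standard local finiteness assertion for vanishing cycles, and the plan is to reduce it to the fact that a constructible complex on a variety is, by definition, "locally constant along a stratification," so that the vanishing cycles at $a$ can only be nonzero when $a$ lies in the (finite) set of critical values associated to that stratification. First I would fix a finite algebraic Whitney (or just constructible) stratification $\mathcal{S}$ of $X$ adapted to $\mathcal{F}^{\bullet}$, i.e.\ such that all cohomology sheaves $\mathcal{H}^j(\mathcal{F}^{\bullet})$ are locally constant on each stratum $S\in\mathcal{S}$. Replacing $f$ by $f-a$ only translates the base coordinate on $\A^1_{\C}$, so it suffices to understand, as a function of $a$, whether $\phi_{f-a}\mathcal{F}^{\bullet}$ vanishes; equivalently, whether $0$ is a "critical value" for the pair $(f,\mathcal{F}^{\bullet})$ shifted by $a$, which is the same as asking whether $a$ is critical for $(f,\mathcal{F}^{\bullet})$.

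The key step is then: for each stratum $S\in\mathcal{S}$, the restriction $f|_S:S\to\A^1_{\C}$ is a morphism of algebraic varieties, hence has a finite set $\Sigma_S\subset\A^1_{\C}$ of critical values in the appropriate sense — more precisely, there is a finite set $\Sigma_S$ outside of which $f|_S$ is a locally trivial fibration (over $\A^1_{\C}\setminus\Sigma_S$) by Thom's first isotopy lemma / generic smoothness, applied to the closure $\bar S$ with its induced stratification. Over a point $a\notin\bigcup_{S\in\mathcal{S}}\Sigma_S$, the map $f$ is, near $f^{-1}(a)$, a stratified locally trivial fibration compatible with $\mathcal{F}^{\bullet}$, and for such a situation the vanishing cycles $\phi_{f-a}\mathcal{F}^{\bullet}$ vanish: the adjunction morphism $i_a^{-1}\mathcal{F}^{\bullet}\to\psi_{f-a}\mathcal{F}^{\bullet}$ is an isomorphism because the nearby fibre is homeomorphic (compatibly with $\mathcal{F}^{\bullet}$) to the actual fibre. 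I would cite the local triviality statement for stratified maps (e.g.\ Dimca's book on sheaves in topology, or the discussion in Schnell's notes §8 referenced above) rather than reprove it. The finite set $\Sigma:=\bigcup_{S\in\mathcal{S}}\Sigma_S$ is finite because $\mathcal{S}$ is a finite stratification and each $\Sigma_S$ is finite, and then $\phi_{f-a}\mathcal{F}^{\bullet}=0$ for all $a\notin\Sigma$.

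The main obstacle is making the "stratified local triviality implies vanishing of $\phi$" step clean without an excursion into the geometry of Whitney stratifications; the honest shortcut is to invoke the generic local triviality of algebraic maps along a stratification (a consequence of Thom--Mather theory, or in the algebraic setting of generic smoothness applied stratum by stratum together with Hironaka's resolution if one wants everything algebraic) as a black box. An alternative, perhaps more in keeping with the rest of the paper, is to deduce it from the motivic side: by the compatibility of the perverse/constructible vanishing cycles with Denef--Loeser's motivic vanishing cycles (which is established later via $\rat$ and the Hodge realisation) together with Corollary~\ref{Zfrational}, one knows $Z_{f}(T)$ is rational and supported on $\mathrm{Sing}(f)$, whose image in $\A^1_{\C}$ is a finite set; but since the lemma as stated is purely about constructible complexes and logically precedes that machinery, I would present the proof via the classical stratification argument. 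In any case the argument is short, and the only real content is the finiteness of the critical-value set of an algebraic map along a fixed finite stratification.
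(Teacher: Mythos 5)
Your argument is correct and it is essentially the second route the paper itself flags in passing (``Another argument may be given using local triviality results, see Corollaire 5.1 in [Verdier]''), rather than the paper's primary one. The paper's main proof handles the case where $\mathcal{F}^{\bullet}$ is a single constructible sheaf in degree zero by transporting Deligne's generic local acyclicity theorem from the \'etale setting (Th.\ finitude in SGA~$4\frac{1}{2}$, Th.~2.13) to the complex setting, and then extends to arbitrary bounded complexes by a dévissage: since $\phi_{f-a}$ commutes with shifts and takes distinguished triangles to distinguished triangles, an induction on the cohomological amplitude of $\mathcal{F}^{\bullet}$ finishes. You instead argue directly on the full complex via a Whitney stratification adapted to $\mathcal{F}^{\bullet}$ and the generic local triviality of the stratified map $f$ (Thom--Mather / Verdier), from which the adjunction $i_a^{-1}\mathcal{F}^{\bullet}\to\psi_{f-a}\mathcal{F}^{\bullet}$ becomes an isomorphism over non-critical $a$. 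This is a clean, more topological route and avoids the two-step induction. The one point you should make explicit when invoking Thom's isotopy lemma is that $f$ need not be proper, so the naive first isotopy lemma does not apply directly; the correct citation in the algebraic setting is Verdier's Corollaire~5.1 (or an argument via compactification and a Thom stratification of the map), which is precisely what the paper's parenthetical alternative refers to. With that caveat spelled out, your proof is complete, and both arguments buy roughly the same thing: the paper's route is formally shorter given Deligne's theorem as a black box, while yours is self-contained given stratification theory as a black box.
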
                              
\begin{proof}  The case when $\mathcal{F}^{\bullet}$ is a constructible sheaf in degree zero follows from theorem 2.13 in \cite{DeligneSGA45}. Indeed, though the latter is formulated in an $\ell$-adic setting, the proof is formal and goes over to the complex setting. Another argument may be given using local triviality results (see e.g. Corollaire 5.1 in \cite{Verdier}).

 Since vanishing cycles commute with shifting of complexes, we may then proceed by induction on the amplitude of the complex $\mathcal{F}^{\bullet}$: there are complexes $\mathcal{G}^{\bullet}$ and $\mathcal{G}'^{\bullet}$ of strictly smaller amplitude fitting into a distinguished triangle
$$\mathcal{G}^{\bullet}\to \mathcal{F}^{\bullet}\to \mathcal{G}'^{\bullet} \to \mathcal{G}^{\bullet}[1].$$
Applying the functor $\phi_{f-a}$ and using the induction hypothesis, we get, for all $a$ but a finite number, $\phi_{f-a}\mathcal{F}^{\bullet}=0$
\end{proof}

\begin{remark} There are several conventions and notations concerning nearby and vanishing cycles, which may seem quite confusing. In this section we use the most common one, coming from SGA, which is also the one used by Saito. Another convention is the one by Kashiwara and Shapira from \cite{KS}. It has the same definition $\psi_f^{KS} = \psi_f$ for nearby cycles, but vanishing cycles are shifted by one:
$$\phi_f^{KS} := \phi_{f}[-1],$$
so that in Kashiwara and Shapira's theory, the above distinguished triangle is shifted and takes the form
\begin{equation}\label{KSdistinguished}\phi_f^{KS}\mathcal{F}^{\bullet}\to i^{-1}\mathcal{F}^{\bullet} \to \psi_{f}^{KS} \to\phi_f^{KS}\mathcal{F}^{\bullet}[1].\end{equation}
This latter convention is used, e.g., in David Massey's paper \cite{Massey} on the Thom-Sebastiani theorem in the derived category of constructible sheaves, because the shift chosen by Kashiwara and Shapira is the one that makes the theorem of Thom-Sebastiani work. For this reason, it is in fact also the convention used in the motivic setting of chapter \ref{grothrings}: the distinguished triangle (\ref{KSdistinguished}) induces, in the triangulated Grothendieck ring of the category $D^{b}_c(X_0)$, the identity
$$[\phi_f^{KS}\mathcal{F}^{\bullet}] = [i^{-1}\mathcal{F}^{\bullet}] - [\psi_f^{KS}\mathcal{F}^{\bullet}],$$
which corresponds to the way we defined motivic vanishing cycles in section \ref{sect.vanrecall} of chapter~\ref{grothrings}. This definition, which we took from Lunts and Schnürer's work \cite{LS} is the one which, simultaneously, enables us to define a group morphism using total motivic vanishing cycles and makes the motivic Thom-Sebastiani theorem work, so that this group morphism is actually a ring morphism, our motivic vanishing cycles measure. Denef and Loeser's motivic vanishing cycles $\scr{S}^{\phi}_f$ which are equal to $(-1)^{\dim X}$ times our motivic vanishing cycles, satisfy Thom Sebastiani (each side of the Thom-Sebastiani equality being multiplied by the same power of $-1$), but can't be combined into a group morphism because of obvious sign issues.\index{motivic vanishing cycles!sign}
\end{remark}

\subsection{Nearby and vanishing cycles for Hodge modules}\label{sect.vancycleshodgemodules}

For a morphism $f:X\to \A^1$ on a complex variety $X$, denoting $X_0(f) = f^{-1}(0)$, there are nearby and vanishing cycles functors 
$$\psi_f^{\hdg}, \phi_f^{\hdg}:\mhm_X\to \mhm_{X_0(f)}^{\mon}$$ \index{phifHdg@$\phi_f^{\hdg}$}\index{psifHdg@$\phi_f^{\hdg}$}
lifting the corresponding functors $^p\psi_f$ and $^p\phi_f$ on perverse sheaves. The monodromy operator is quasi-unipotent, so that its semisimple part is of finite order. The action of $T_s$ is given by this semisimple part, whereas the action of $N$ is given by the logarithm of the unipotent part of the monodromy.
 For any mixed Hodge module $M\in\mhm_{X}$, there are decompositions
$$\psi_{f}^{\hdg}(M) = \psi_{f,1}^{\hdg}(M) \oplus \psi_{f,\neq 1}^{\hdg}(M)$$
and
$$\phi_{f}^{\hdg}(M) = \phi_{f,1}^{\hdg}(M) \oplus \phi_{f,\neq 1}^{\hdg}(M)$$
in the category $\mhm_{X_0(f)}$
where $\psi_{f,1}^{\hdg}(M) = \ker (T_s-\id)$ and $\psi_{f,\neq 1}^{\hdg}(M) = \ker (T_s^{d-1} + \ldots + T_s + \id)$ where $d$ is the order of $T_s$ (same definition for $\phi_f$). 

The following compatibility result with proper morphisms is classical in the theory of nearby and vanishing cycles, and, in the case of Hodge modules, follows from \cite{Saito90} Theorem~2.14.
\begin{lemma}\label{vancyclhdgproper} Let $p:X\to Y$  and $f:Y\to \A^1$ be morphisms of complex varieties. Assume that $p$ is proper.
$$\xymatrix{X \ar[r]^p\ar[dr]_{f\circ p} & Y \ar[d]^f\\
                       &\A^1}$$
Denoting by $\tilde{p}$ the restriction of $p$ to $(f\circ p)^{-1}(0)$, for any $M\in\mhm_{X}$ there are isomorphisms
$$\tilde{p}_*(\psi_{f\circ p}(M))\simeq \psi_{f}(p_{*}(M))
$$
and
$$\tilde{p}_*(\phi_{f\circ p}(M))\simeq \phi_{f}(p_{*}(M))
$$
\end{lemma}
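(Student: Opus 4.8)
Looking at this statement, it is a compatibility result: for a proper morphism $p:X\to Y$ and $f:Y\to\A^1$, the nearby and vanishing cycle functors commute with $p_*$. This is a classical fact for perverse sheaves (going back to Deligne/SGA~7), and the content here is just that it lifts to mixed Hodge modules.

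\bigskip

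The plan is to deduce both isomorphisms from the existing literature, essentially from \cite{Saito90}, Theorem~2.14, which is already invoked in the statement. First I would record the setup precisely: write $X_0 = (f\circ p)^{-1}(0)$ and $Y_0 = f^{-1}(0)$, and note that $p$ restricts to a proper morphism $\tilde{p}:X_0\to Y_0$ sitting in the commutative square obtained by base change from the triangle in the statement. Since $p$ is proper, we have $p_! = p_*$, so there is no ambiguity in the pushforward used, and the same applies to $\tilde p$. The key input is that, in Saito's theory, for a proper morphism the pushforward $p_*$ (as a functor on the derived categories $D^b(\mhm_-)$) commutes with the nearby cycle functor $\psi_{f,\lambda}^{\hdg}$ for each eigenvalue $\lambda$ of the semisimple monodromy, compatibly with the $(T_s,N)$-action; this is precisely the statement of \cite{Saito90}, (2.14), and it is the Hodge-module refinement of the classical proper base change / proper pushforward compatibility for $^p\psi_f$ on perverse sheaves recalled in section~\ref{sect.classicaltheoryvancycles}.

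\bigskip

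Next I would assemble the two isomorphisms. For the nearby cycle part, summing the compatibility $\tilde p_* \psi_{f\circ p,\lambda}^{\hdg}(M)\simeq \psi_{f,\lambda}^{\hdg}(p_*M)$ over all $\lambda$ (equivalently, over the eigenvalue-$1$ and eigenvalue-$\neq 1$ parts $\psi_{f,1}^{\hdg}$ and $\psi_{f,\neq 1}^{\hdg}$ as in section~\ref{sect.vancycleshodgemodules}) yields $\tilde p_*(\psi_{f\circ p}^{\hdg}(M))\simeq \psi_f^{\hdg}(p_*M)$ in $\mhm_{Y_0}^{\mon}$; since $M$ lives in $\mhm_X$ and not just in the derived category, both sides are honest mixed Hodge modules with monodromy by Gabber's perversity statement lifted by Saito, so the isomorphism holds at the level of objects of $\mhm_{Y_0}^{\mon}$ as asserted (one may also simply note the statement as written does not distinguish, and the derived-category isomorphism suffices). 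For the vanishing cycle part, I would use the defining distinguished triangle
$$ i^*M \longrightarrow \psi_{f\circ p}^{\hdg}(M)\longrightarrow \phi_{f\circ p}^{\hdg}(M)\longrightarrow i^*M[1] $$
(where $i$ denotes the relevant closed immersion of the zero fibre), apply $\tilde p_*$, and compare it via the already-established nearby cycle isomorphism and the base-change isomorphism $\tilde p_* i_X^* \simeq i_Y^* p_*$ (valid for the closed immersions of the zero fibres, again by Saito's base change, \cite{Saito90}, (4.4.3)) with the analogous triangle for $\phi_f^{\hdg}(p_*M)$; a diagram chase, using that the morphisms in the triangle are the canonical (adjunction-induced) ones and hence commute with $\tilde p_*$, then forces $\tilde p_*(\phi_{f\circ p}^{\hdg}(M))\simeq \phi_f^{\hdg}(p_*M)$, compatibly with the $(T_s,N)$-action since everything in sight carries it and the maps respect it.

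\bigskip

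The main obstacle, such as it is, is purely bookkeeping: making sure the monodromy operators $T_s$ and $N$ match up under the isomorphisms (they do, because the monodromy on nearby cycles is functorial in the constructible/Hodge-module input and $p_*$ is built from it), and checking that the comparison of the two distinguished triangles is compatible enough — i.e. that the square relating $\tilde p_* i_X^* M \to \tilde p_*\psi_{f\circ p}^{\hdg}(M)$ to $i_Y^* p_* M\to \psi_f^{\hdg}(p_* M)$ commutes, which again is part of the functoriality packaged in \cite{Saito90}. There is no genuine mathematical difficulty here beyond citing Saito correctly; the proof is essentially the sentence ``this follows from \cite{Saito90}, Theorem~2.14, together with the defining triangle for $\phi$'', and I would present it at roughly that level of detail, perhaps expanding the triangle argument for $\phi$ into two or three lines for the reader's convenience.
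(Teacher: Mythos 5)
Your proposal is correct and matches the paper, which gives no written proof and simply records the lemma as a classical compatibility following from \cite{Saito90}, Theorem~2.14. Your expansion — summing the eigenvalue pieces for $\psi$ and deducing the $\phi$ statement from the defining triangle together with proper base change for the zero-fibre inclusions — is exactly the intended reading of that citation.
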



\subsection{Total vanishing cycles}\label{sect.totalvancycleshodgemodules}
Let $X$ be a complex variety, and denote by $\pr$ the projection $\A^1_X\to \A^1_{\C}$. By lemma \ref{vancyclesfinite} and faithfulness of the functor $\mathrm{rat}$, there is a well-defined \textit{total vanishing cycles functor}: \index{total vanishing cycles functor}
$$\begin{array}{rccc}\vanhdg_X:&\mhm_{\A^1_{X}} &\to& \mhm_{X}^{\mon}\\
    & M & \mapsto & \bigoplus_{a\in\C}\phi_{\pr-a}^{\hdg} M
\end{array}$$
 It satisfies a Thom-Sebastiani property: \index{Thom-Sebastiani!for $\phi_X^{\tot}$}
\begin{prop}\label{HodgeTS} For all $M_1,M_2\in D^b(\mhm_{\A^1_{X}})$, denoting by $\add$ the addition morphism $\A^1_{X}\times \A^1_{X}\to \A^1_{X^2}$, one has the isomorphism
$$\vanhdg_{X^2}\left((\add_{!})(M_1\boxtimes M_2)\right) \simeq \vanhdg_X(M_1)\twtimes \vanhdg_X(M_2)$$
in $D^b(\mhm_{X^2}^{\mon})$. 
\end{prop}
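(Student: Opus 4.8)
The statement is a relative, ``total'' version of the Thom--Sebastiani theorem for vanishing cycles of mixed Hodge modules. The idea is to reduce it to Saito's Thom--Sebastiani theorem for the individual vanishing cycles functors $\phi^{\hdg}$, which is stated in \cite{SaitoTS} (and summarized in \cite{BBDJS}, section 2.9). Concretely, I would proceed by decomposing the left-hand side into a direct sum over $c\in\C$ of vanishing cycles $\phi^{\hdg}_{\pr-c}\bigl((\add_!)(M_1\boxtimes M_2)\bigr)$, and the right-hand side, using the definition of $\twtimes$ together with the additivity of vanishing cycles in each factor, into a direct sum over pairs $(a,b)\in\C^2$ of twisted external products $\phi^{\hdg}_{\pr-a}M_1\twtimes\phi^{\hdg}_{\pr-b}M_2$. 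The grouping of the $(a,b)$-terms according to the value $c=a+b$ is exactly what one expects from a Thom--Sebastiani statement, so the whole proposition amounts to checking, for each fixed $c$, the isomorphism
\begin{equation*}\phi^{\hdg}_{\pr-c}\bigl((\add_!)(M_1\boxtimes M_2)\bigr)\simeq \bigoplus_{a+b=c}\phi^{\hdg}_{\pr-a}M_1\twtimes\phi^{\hdg}_{\pr-b}M_2,\end{equation*}
which is a standard formulation of the Thom--Sebastiani theorem once one takes into account the proper pushforward $\add_!$.

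First I would make precise the reduction to bounded complexes of mixed Hodge modules concentrated in a single degree; since both sides are exact triangulated functors (vanishing cycles take distinguished triangles to distinguished triangles, as recalled in section~\ref{sect.classicaltheoryvancycles}, and the twisted products are exact), it suffices to prove the statement for single mixed Hodge modules $M_1,M_2\in\mhm_{\A^1_X}$, up to shifts. Next, I would handle the role of the base $X$: the addition morphism $\add:\A^1_X\times\A^1_X\to\A^1_{X^2}$ fits into a commutative diagram with the projections to $\A^1_\C$, and in fact $\add$ over $X^2$ is a pullback of the usual addition $\A^1_\C\times\A^1_\C\to\A^1_\C$ along $X^2\to\C$; I would use proper base change for mixed Hodge modules (\cite{Saito90}, 4.4.3) and the compatibility of vanishing cycles with proper pushforward (lemma~\ref{vancyclhdgproper}) to move the pushforward past the vanishing cycles functor, writing $\phi^{\hdg}_{\pr-c}\circ\add_! \simeq \add_!\circ\phi^{\hdg}_{s-c}$ where $s=\pr_1+\pr_2$ denotes the sum function on $\A^1_X\times\A^1_X$. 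This is legitimate because $\add$ is proper (it is the base change of a proper morphism $\A^1\to\A^1$ — indeed one compactifies addition as in the proof of proposition~\ref{compactification}, or one simply works fiberwise and invokes lemma~\ref{vancyclesfinite}). After this, the claim becomes a statement about $\phi^{\hdg}_{s-c}(M_1\boxtimes M_2)$, and here I would apply Saito's Thom--Sebastiani theorem directly: the point of the twisted external product $\twtimes$ is precisely that it makes the identity $\phi^{\hdg}_{s}(M_1\boxtimes M_2)\simeq \phi^{\hdg}_{\pr_1}M_1\twtimes\phi^{\hdg}_{\pr_2}M_2$ hold, with the compatibilities of monodromy $T_s$ and of the nilpotent part $N$ (which is why $N$ is defined as $N\boxtimes\id + \id\boxtimes N$ in the twisted product). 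To go from the value $0$ to the value $c$ one translates the coordinate on $\A^1_\C$ and observes that the sum over all pairs $(a,b)$ with $a+b=c$ reconstitutes all of $\phi^{\hdg}_{s-c}$, since for a product $M_1\boxtimes M_2$ living over $\A^1_X\times\A^1_X$ the only vanishing cycles of $s-c$ which are nonzero come from pairs of points at which $\pr_1$ equals some $a$ and $\pr_2$ equals $c-a$ with each factor singular there (this is again a form of the Thom--Sebastiani/Sebastiani--Thom description of the singular locus of a sum).

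The main obstacle, as I see it, is the careful bookkeeping of the twists and of the monodromy structure: one must verify that the isomorphism produced by Saito's theorem is compatible with the $\hat\mu$-flavoured monodromy data $(T_s,N)$ used to define $\mhm^{\mon}$, that the twisted external product $\twtimes$ interacts correctly with proper pushforward $\add_!$ (so that $\add_!$ of a twisted product is the twisted product of the pushforwards, in the appropriate relative sense over $X^2$), and that the Tate twists implicit in the trace/duality normalizations do not introduce a discrepancy. In other words, once one grants Saito's Thom--Sebastiani theorem for single $\phi^{\hdg}$ with the twisted product, the remaining work is to assemble it over all $c\in\C$ and to propagate it through $\add_!$ and through the passage from absolute to relative (over $X$) vanishing cycles; each of these steps is ``formal'' but requires invoking the right compatibility statement from \cite{Saito90} and \cite{SaitoTS}. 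I would also take care to note that the direct sums over $\C$ appearing in $\vanhdg_X$ are finite for any fixed object, by lemma~\ref{vancyclesfinite}, so that all the manipulations are legitimate in the categories $D^b(\mhm^{\mon})$.
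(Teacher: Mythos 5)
Your overall strategy is the paper's: apply Saito's Thom--Sebastiani theorem for $\phi^{\hdg}$ at each pair of values, assemble over all $c\in\C$, and then deal with the pushforward $\add_!$. But the step where you commute $\add_!$ past the vanishing cycles functor contains a genuine error: you justify $\phi^{\hdg}_{\pr-c}\circ\add_!\simeq\add_!\circ\phi^{\hdg}_{s-c}$ by asserting that $\add$ is proper, ``the base change of a proper morphism $\A^1\to\A^1$.'' This is false: the fibres of the addition map $\A^1_X\times\A^1_X\to\A^1_{X^2}$ are affine lines (after the shear $(x,y)\mapsto(x,x+y)$ it becomes a projection with fibre $\A^1$), so $\add$ is not proper and lemma~\ref{vancyclhdgproper} does not apply to it. This non-properness is precisely the point the paper isolates as requiring work: one must run the compactification argument of proposition~\ref{compactification} and remark~\ref{vancyclesnonproper}, i.e.\ compactify $\pr\conv\pr$ to a projective morphism $h:Z\to\A^1$ whose boundary is a simple normal crossings divisor with strata smooth and projective over $\A^1$ (so their vanishing cycles vanish) and with $\mathrm{Sing}(h)=\mathrm{Sing}(\pr\conv\pr)$, and only then invoke the proper compatibility lemma. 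Your parenthetical mention of the compactification does not repair the argument, because the justification you actually give (properness of $\add$) is what the proof is supposed to supply. Concretely, the paper first applies Saito's theorem fibrewise over each $(a_1,a_2)$, observes that the strata $\pr^{-1}(a_1)\times\pr^{-1}(a_2)$ with $a_1+a_2=a$ partition $(\pr\conv\pr)^{-1}(a)$, sums to get $\bigoplus_a\phi^{\hdg}_{\pr\conv\pr-a}(M_1\boxtimes M_2)\simeq\vanhdg_X(M_1)\twtimes\vanhdg_X(M_2)$, and only then uses the compactification argument to identify this with $\bigoplus_a\phi^{\hdg}_{\pr-a}(\add_!(M_1\boxtimes M_2))$.

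A secondary point: your opening dévissage, ``it suffices to prove the statement for single mixed Hodge modules, up to shifts, by exactness,'' is not a valid reduction for producing an isomorphism of objects in a triangulated category — isomorphisms cannot in general be glued along distinguished triangles. It is also unnecessary: Saito's Thom--Sebastiani statement applies to complexes $M_1,M_2\in D^b(\mhm_{\A^1_X})$ directly, which is how the paper uses it. Finally, remember that the finiteness from lemma~\ref{vancyclesfinite} only guarantees that the direct sums defining $\vanhdg$ are finite; it does not help with the commutation of $\add_!$ and $\phi^{\hdg}$.
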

\begin{proof} Saito's Thom-Sebastiani theorem for Hodge modules (see \cite{SaitoTS}) gives, for any $a_1,a_2\in \C$, an isomorphism
$$i_{a_1,a_2}^*\phi^{\hdg}_{\pr\conv \pr - a_1 -a_2}(M_1\boxtimes M_2) \simeq \phi^{\hdg}_{\pr-a_1}M_1 \twtimes \phi_{\pr-a_2}^{\hdg}M_2,$$
in $D^{b}(\mhm^{\mon}_{\pr^{-1}(a_1)\times \pr^{-1}(a_2)})$, where 
$$i_{a_1,a_2}:\pr^{-1}(a_1)\times \pr^{-1}(a_2) \to (\pr\conv \pr)^{-1}(a_1 + a_2)$$
is the natural inclusion. For any $a\in\C$, the products $\pr^{-1}(a_1)\times \pr^{-1}(a_2)$ for all $a_1,a_2$ such that $a_1 + a_2 = a$ form a partition of  $(\pr\conv \pr)^{-1}(a)$. Therefore, taking the direct sum over all $a_1,a_2$, we get an isomorphism
$$\bigoplus_{a\in\C}\phi^{\hdg}_{\pr\conv \pr -a}(M_1\boxtimes M_2) \simeq \vanhdg_{X}(M_1)\twtimes \vanhdg_{X}(M_2)$$
in $D^{b}(\mhm^{\mon}_{X^2}).$
As in the motivic setting, a compactification argument allows us to write the left-hand side as
$$ \bigoplus_{a\in\C}\phi^{\hdg}_{\pr\conv \pr -a}(M_1\boxtimes M_2)  \simeq \bigoplus_{a\in\C} \phi^{\hdg}_{\pr -a} ((\add)_!(M_1\boxtimes M_2)),$$
whence the result.
\end{proof}

\section{Symmetric products and vanishing cycles}\label{sect.symprodvancycles}
\subsection{Symmetric products of mixed Hodge modules}\label{mhmsymproducts}
From now on, we are going to take symmetric products, so we assume the varieties to be quasi-projective over $\C$. A notion of \textit{symmetric power} of a complex of mixed Hodge modules was defined by Maxim, Saito and Schürmann in \cite{MSS}. Here we are going to explain how it extends to the setting of mixed Hodge modules with monodromy. 

Theorem 1.9 in \cite{MSS} gives, for an integer $n\geq 1$, bounded complexes of Hodge modules $M_1,\ldots,M_n$ on a quasi-projective complex variety $X$ and for all $\sigma \in \Sym_n$, an isomorphism
$$\sigma^{\sharp}: M_1\boxtimes \ldots \boxtimes M_n \xrightarrow{\sim} \sigma_{*}\left(M_{\sigma(1)}\boxtimes\ldots\boxtimes M_{\sigma(n)}  \right)$$
in $D^b(\mhm_{X^n})$ compatible with the analogous isomorphism on the underlying complexes of constructible sheaves. These isomorphisms in fact induce a right action of $\Sym_n$ on the exterior product  $M_1\boxtimes \ldots \boxtimes M_n$. When $X$ is smooth, these isomorphisms are constructed from analogous isomorphisms on the underlying complexes of $\mathcal{D}$-modules and constructible sheaves, which are checked to be compatible via the De Rham functor (proposition 1.5 in \cite{MSS}). The general non-smooth case is deduced from this by an embedding argument. 

Note that the above generalises to complexes of mixed Hodge modules with monodromy, if we replace the ordinary exterior product $\boxtimes$ by its twisted counterpart $\twtimes$. Indeed, the underlying complexes of $\mathcal{D}$-modules and constructible sheaves and the De Rham functor relating them remain the same, and the monodromy is compatible with the above action. 

Let now $M$ be a bounded complex of mixed Hodge modules with monodromy over a quasi-projective complex variety $X$. Let $n\geq 1$ be an integer, and denote by $\pi: X^n\to S^nX$ the canonical projection. The above construction leads to the definition of a right $\Sym_n$-action on the complex $\pi_*(\bigtwtimes{n} M)$. Finally, as in~\cite{MSS}, the idempotent $e = \frac{1}{n!}\sum_{\sigma\in \Sym_n}\sigma\in\Q[\Sym_n]$ defines an idempotent of $\pi_{*}(\bigtwtimes{n}M)$ in the category $D^b(\mhm_{S^nX}^{\mon})$, which splits by corollary~2.10 in~\cite{BS}, meaning that we may write
$$\pi_*(\bigtwtimes{n}  M) = \ker(e) \oplus \mathrm{Im}(e).$$
For any bounded complex of mixed Hodge modules~$M$ with monodromy on a complex variety $X$, the symmetric power \index{symmetric power!of mixed Hodge module} $S^n(M)$ is  then defined to be
$$S^n(M) = \left(\pi_*\left(\bigtwtimes{n}M\right)\right)^{\Sym_n} := \mathrm{Im}(e)\in D^b(\mhm^{\mon}_{S^nX}),$$
where $\pi: X^n\to S^nX$ is the canonical projection.
\begin{remark} In the case where $M$ has trivial monodromy, we recover the definition from  \cite{MSS}. In particular, in this case $S^nM$ is an element of $D^{b}(\mhm_{S^nX})$. 
\end{remark}
As in chapter \ref{eulerproducts}, section \ref{sect.symprodgrouplaw}, we may define a multiplicative group structure on the product $\prod_{i\geq 1}K_0(\mhm^{\mon}_{S^iX})$, by using the twisted exterior product $\twtimes$: for all $a =(a_i)_{i\geq 1}$ and $b= (b_i)_{i\geq 1}$ in the product $\prod_{i\geq 1}K_0(\mhm^{\mon}_{S^iX})$, put, for every $n\geq 1$, 
$$ (ab)_n = \sum_{k=0}^na_i\boxtimes b_{n-i}$$
where by convention $a_0 = b_0 = 1$, and $a_i\twtimes b_{n-i}$ is the image of $(a_i,b_{n-i})$ through the composition
$$K_0(\mhm^{\mon}_{S^iX})\times K_0(\mhm^{\mon}_{S^{n-i}X})\xrightarrow{\twtimes}K_0(\mhm^{\mon}_{S^iX\times S^{n-i}X})\to K_0(\mhm^{\mon}_{S^nX})$$
where the latter morphism is obtained from the quotient map $X^n\to S^nX$ by passing to the quotient with respect to the natural permutation action of the group $\Sym_i\times \Sym_{n-i}$ on~$X^n$. We denote this group by $K_0(\mhm^{\mon}_{S^{\bullet}X}).$
\begin{lemma} \label{mhmmorphism} There is a unique group morphism
$$S_X^{\hdg}: K_0(\mhm^{\mon}_X) \to K_0(\mhm^{\mon}_{S^{\bullet}X}),$$
such that the image of the class of a complex of mixed Hodge modules $M$ over $X$ is the family of classes~$([S^nM])_{n\geq 1}$. \index{SXHdg@$S_X^{\hdg}$}
\end{lemma}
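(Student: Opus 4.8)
The plan is to mimic the construction of the group morphism $S:\kvar_X\to\kvar_{S^{\bullet}X}$ from Lemma \ref{varmorphism}, replacing varieties by (bounded complexes of) mixed Hodge modules with monodromy and the cut-and-paste relations by short exact sequences. First I would recall that $K_0(\mhm^{\mon}_X)$ is generated, as an abelian group, by classes $[M]$ of mixed Hodge modules with monodromy (indeed of pure weight, after the identifications of section \ref{sect.mhmbasics}), and that relations are generated by $[Y]-[X]-[Z]$ coming from short exact sequences $0\to X\to Y\to Z\to 0$ — equivalently, from distinguished triangles in $D^b(\mhm^{\mon}_X)$. So I would start by defining a map $S'$ on the free abelian group on isomorphism classes of objects of $D^b(\mhm^{\mon}_X)$ by $S'([M]) = ([S^nM])_{n\geq 1} \in \prod_{n\geq1}K_0(\mhm^{\mon}_{S^nX})$, the target being understood with the twisted multiplicative group law $K_0(\mhm^{\mon}_{S^{\bullet}X})$ described just above the statement.

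The main step is to check that $S'$ factors through the defining relations of $K_0(\mhm^{\mon}_X)$, i.e. that for a distinguished triangle $X'\to Y'\to Z'\to X'[1]$ one has $S'([Y']) = S'([X'])\cdot S'([Z'])$ in the multiplicative group $K_0(\mhm^{\mon}_{S^{\bullet}X})$; equivalently, in each component,
$$[S^n Y'] = \sum_{k=0}^{n}[S^kX']\twtimes[S^{n-k}Z']$$
in $K_0(\mhm^{\mon}_{S^nX})$. This is the Hodge-module analogue of the ``cutting into pieces'' identity used in the proof of Lemma \ref{varmorphism}. I would deduce it from a Künneth-type decomposition for the symmetric power of a ``sum'': since $e = \frac1{n!}\sum_{\sigma}\sigma$ is the symmetrising idempotent acting on $\pi_*(\bigtwtimes{n}M)$ and since in $K_0$ a distinguished triangle gives $[Y'] = [X'] + [Z']$, expanding $\bigtwtimes{n}Y' = \bigtwtimes{n}(X'\oplus_{K_0} Z')$ formally into $2^n$ summands, regrouping by the number $k$ of tensor factors equal to $X'$, and using that the $\Sym_n$-action permutes these summands transitively within each group (so that the symmetrising idempotent identifies the $\binom nk$ summands with $S^kX'\twtimes S^{n-k}Z'$ pushed forward along $S^kX\times S^{n-k}X\to S^nX$) yields the claimed formula. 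Here I would invoke Theorem 1.9 of \cite{MSS} (extended to the monodromy setting as in section \ref{mhmsymproducts}) for the compatibility of the $\Sym_n$-action with $\twtimes$, and corollary 2.10 of \cite{BS} for the splitting of idempotents, exactly as in the definition of $S^nM$. Uniqueness is automatic since $K_0(\mhm^{\mon}_X)$ is generated by classes of single Hodge modules, on which the value of $S_X^{\hdg}$ is prescribed.

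The hard part will be the bookkeeping in this last identity: one must be careful that the twisted exterior product $\twtimes$ is only monoidal-symmetric in the twisted sense, so the transitivity of the $\Sym_n$-action on the $\binom nk$ ``mixed'' summands and the identification of their symmetrisation with $S^kX'\twtimes S^{n-k}Z'$ must genuinely use the isomorphisms $\sigma^{\sharp}$ of \cite{MSS} rather than a naive symmetry, and the pushforward to $S^nX$ along the various projections $X^n\to S^kX\times S^{n-k}X\to S^nX$ must be checked to be compatible with these symmetrisations. Once this is in place, the commutative diagram with $S$ (via the injections $\kvar\to K_0(\mhm)$ through the Hodge realisation) follows by restricting to classes of varieties, where $S^nM$ for $M = \Q^{\hdg}_Z$ is computed in \cite{MSS} to be the Hodge realisation of $S^nZ$, so the two group morphisms agree on generators and hence everywhere.
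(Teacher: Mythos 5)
There is a genuine gap in the central step. You propose to check the relation for an arbitrary distinguished triangle $X'\to Y'\to Z'\to X'[1]$ by ``expanding $\bigtwtimes{n}Y'=\bigtwtimes{n}(X'\oplus_{K_0}Z')$ formally into $2^n$ summands''. This expansion is not available: the symmetric power $S^n$ is defined on \emph{objects} of $D^b(\mhm^{\mon}_X)$, not on $K_0$-classes, and when the triangle does not split there is no isomorphism $Y'\simeq X'\oplus Z'$, hence no direct sum decomposition of $\bigtwtimes{n}Y'$ to regroup by the number of $X'$-factors. Arguing ``in $K_0$'' here is circular, since the well-definedness of $M\mapsto([S^nM])_n$ on classes is exactly what the lemma asserts; this is the standard difficulty with lambda-type operations on $K_0$ of a triangulated category, and it cannot be waved away by the $\Sym_n$-equivariance results of \cite{MSS} or the idempotent splitting of \cite{BS}, which only apply once an actual decomposition of the object is in hand.

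The paper avoids this by exploiting the structure you mention only in passing: since pure Hodge modules form a semisimple category, $K_0(\mhm^{\mon}_X)$ is presented by classes of (pure) Hodge modules with relations coming from \emph{split} short exact sequences. Consequently it suffices to prove multiplicativity for an honest direct sum $M=M_0\oplus M_1$ of Hodge modules, where $\bigtwtimes{n}M=\bigoplus_{k}M_{(k)}$ is a literal decomposition, each $\pi_*(M_{(k)})$ is $\Sym_n$-stable, and one identifies $(\pi_*M_{(k)})^{\Sym_n}$ with $i_{k,*}\bigl(S^{n-k}M_0\twtimes S^kM_1\bigr)$ via the inclusion of the direct factor $(\bigtwtimes{n-k}M_0)\twtimes(\bigtwtimes{k}M_1)\hookrightarrow M_{(k)}$, compatibility of the idempotents $e^0\twtimes e^1$ and $e$, and surjectivity of $e$. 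If you replace your verification for general triangles by this reduction to split sequences, the rest of your outline (the $\sigma^{\sharp}$-isomorphisms, the pushforwards along $S^kX\times S^{n-k}X\to S^nX$, and uniqueness on generators) matches the paper's argument; the final paragraph about compatibility with $S$ via the Hodge realisation is not part of this lemma and is proved separately later.
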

\begin{proof} We define a map $S_X^{\hdg}$ from the free abelian group on Hodge modules to the group $K_0(\mhm^{\mon}_{S^{\bullet}X})$ by putting $S_X^{\hdg}(M) = ([S^nM])_{n\geq 1}$. By the discussion about the definition of $K_0(\mhm^{\mon}_X)$, to show that~$S_X^{\hdg}$ passes to the quotient, it suffices to show that whenever $M,M_0, M_1$ are Hodge modules such that $M = M_0\oplus M_1$, we have $S_X^{\hdg}(M) = S_X^{\hdg}(M_0)S_X^{\hdg}(M_1)$. In fact, denoting for every $k\in \{0,\ldots,n\}$ by $i_k$ the natural morphism $S^{n-k}X\times S^{k}X\to S^nX$, we will show that for every $n\geq 1$, we have, in $K_0(\mhm^{\mon}_{S^nX})$, the equality
$$S^nM = \sum_{k=0}^n i_{k,*}\left(S^{n-k}M_0\twtimes S^{k}M_1\right).$$
For this, note that in $D^b(\mhm^{\mon}_{X^n})$ we have the direct sum decomposition
$$\bigtwtimes{n}M = \bigoplus_{(\epsilon_1,\ldots,\epsilon_n)\in\{0,1\}^n}M_{\epsilon_1}\twtimes \ldots \twtimes M_{\epsilon_n}=  \bigoplus_{k=0}^n \bigoplus_{\substack{(\epsilon_1,\ldots,\epsilon_n)\in\{0,1\}^n\\ \epsilon_1 + \ldots + \epsilon_n = k}}M_{\epsilon_1}\twtimes\ldots\twtimes M_{\epsilon_n}.$$
Let $M_{(k)} :=\oplus_{\substack{(\epsilon_1,\ldots,\epsilon_n)\in\{0,1\}^n\\ \epsilon_1 + \ldots + \epsilon_n = k}}M_{\epsilon_1}\twtimes\ldots\twtimes M_{\epsilon_n}$ and let $\pi: X^n\to S^nX$ be the quotient morphism. After applying the exact functor $\pi_*$ to the above decomposition, we get
$$\pi_*\left(\bigtwtimes{n}M\right) = \bigoplus_{k=0}^n \pi_*(M_{(k)})$$
in $D^b(\mhm^{\mon}_{S^nX})$. Observe that each of the factors $\pi_{*}(M_{(k)})$ is stable under the action of the symmetric group $\Sym_n$, so that we have
$$\pi_*\left(\bigtwtimes{n} M\right)^{\Sym_n} = \bigoplus_{k=0}^n(\pi_*M_{(k)})^{\Sym_n}.$$
It suffices to prove that  for every $k$, $i_{k,*}\left(S^{n-k}M_0\twtimes S^{k}M_1\right)$ is isomorphic to $(\pi_*M_{(k)})^{\Sym_n}$. We denote again by $e$ the restriction of the idempotent $e$ to $\pi_*M_{(k)}$.

Fix $k\in\{0,\ldots,n\}$, and denote by $\pi_k:X^k\to S^kX$ and $\pi_{n-k}: X^{n-k}\to S^{n-k}X$ the quotient maps.  The corresponding idempotent on $(\pi_{n-k})_*\bigtwtimes{n-k}M_0$  (resp. $(\pi_{k})_*\bigtwtimes{k}M_1$) will be denoted by~$e^0$ (resp.~$e^1$). By the commutativity of the diagram
$$\begin{array}{ccc}X^{n-k}\times X^{k}\ \ \ \  &\xrightarrow{\mathrm{id}}&  X^n \\
 \downarrow\ \scriptstyle{\pi_{n-k}\times \pi_k} &   & \downarrow\ \scriptstyle{\pi}\\
                      S^{n-k}X\times S^kX\ \ \ \  &\xrightarrow{i_k} & S^nX\end{array}$$
                      and exactness of $\pi_*$, the inclusion of the direct factor $(\bigtwtimes{n-k} M_0)\twtimes(\bigtwtimes{k}M_1)\to M_{(k)}$ induces a monomorphism
                      $$i_{k,*}\left((\pi_{n-k})_*\left(\bigtwtimes{n-k}M_0\right) \twtimes (\pi_k)_*\left(\bigtwtimes{k}M_1\right)\right)\xrightarrow{f} \pi_*M_{(k)}.$$
                      The $\Sym_{n-k}\times \Sym_k$-action on the left-hand side is compatible with the $\Sym_n$-action on the right-hand side, when $\Sym_{n-k}\times \Sym_k$ is seen in a natural way as a subgroup of $\Sym_n$. Therefore, taking invariants, we have a monomorphism
                      $$i_{k,*}\left(S^{n-k}M_0\twtimes S^{k}M_1\right)\xrightarrow{\tilde{f}} (\pi_*M_{(k)})^{\Sym_n},$$
                      fitting into the commutative diagram
                      $$\xymatrix{i_{k,*}\left((\pi_{n-k})_*\left(\bigtwtimes{n-k}M_0\right) \twtimes (\pi_k)_*\left(\bigtwtimes{k}M_1\right)\right)\ar[r]^-{f}\ar[d]^{i_{k,*}(e^0\,\twtimes\, e^1)} &\pi_*M_{(k)}\ar[d]^{e}\\
                i_{k,*}\left(S^{n-k}M_0\twtimes S^{k}M_1\right)\ar[r]^-{\tilde{f}}& (\pi_*M_{(k)})^{\Sym_n}     }$$
              Since $e$ is surjective, $\tilde{f}$ is an isomorphism.                     
\end{proof}

\begin{definition} For any $\a\in K_0(\mhm_{X}^{\mon})$ and any $n\geq 1$, we define $S^n\a$ to be the element of $K_0(\mhm_{S^nX}^{\mon})$ given by the $n$-th component of $S^ {\hdg}(\a)$. 
\end{definition}
\begin{remark} If $\a$ is an element of $K_0(\mhm_{X})$ (i.e. has trivial monodromy), then $S^n\a$ is an element of $K_0(\mhm_{S^nX})$. 
\end{remark}
\subsection{Compatibility between symmetric products and total vanishing cycles}\label{sect.compsymprodvanhdg}
 Denote by $\add_n$ the addition map $\A^n\to \A^1$ on the group scheme $\A^n$, and for any quasi-projective variety $Y$, by $\pi_Y$ the quotient map $Y^n\to S^nY$.  Since $\add_n$ is invariant via the permutation of the coordinates of $\A^1$, for any quasi-projective complex variety $X$,  it induces a morphism $\overline{\add}_n$ fitting into a commutative diagram \index{add@$\add_n$, $\overline{\add}_n$}
$$\xymatrix{(\A^1_{X})^n\ar[r]^-{\add_n}\ar[d]_{\pi_{\A^1_X}} & \A^1_{X^n}\ar[d]^{\pi'_X} \\
  S^{n}(\A^1_{X}) \ar[r]^-{\overline{\add}_n} & \A^1_{S^nX}}$$
 where we denote by $\pi'_X$ the morphism $\A^1_{X^n}\to \A^1_{S^nX}$ induced by $\pi_X:X^n\to S^nX$. 
 
Let $M \in D^b(\mhm_{\A^1_{X}})$. From \ref{mhmsymproducts} we know that there is a $\Sym_n$-action on the mixed Hodge module $\bboxtimes^nM$ on $(\A^1_X)^n$,  compatible with the permutation action of $\Sym_{n}$ on~$(\A^1_X)^n$.  Since $\pi'_X\circ \add_n$ is equivariant if one equips $\A^1_{S^nX}$ with the trivial $\Sym_n$-action, this $\Sym_n$-action induces a $\Sym_n$-action on $(\pi'_X\circ \add_n)_! \left(\bboxtimes^nM\right)$.  The relation $\pi'_X\circ \add_n = \bar{\add}_n\circ \pi_{\A^1_X}$ shows that the functor $(\bar{\add}_n)_!$ sending $(\pi_{\A^1_X})_*(\bboxtimes^nM)$ to $(\pi'_X\circ\add_n)_{!}(\bboxtimes^nM)$ is compatible with $\Sym_n$-actions, which, taking invariants, gives us the relation:
\begin{lemma} Let $M\in D^b(\mhm_{\A^1_X})$. One has \label{addsymprod} \begin{equation}     
\label{addsymprodeq} (\bar{\add}_n)_!S^nM = \left((\pi'_X\circ \add_n)_!\left(\bboxtimes^nM\right)\right)^{\Sym_n}
\end{equation}
in $D^b(\mhm_{\A^1_{S^nX}})$. 
\end{lemma}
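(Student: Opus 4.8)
The statement to be proved is Lemma \ref{addsymprod}: that $(\bar{\add}_n)_! S^n M = \left((\pi'_X\circ \add_n)_!(\bboxtimes^n M)\right)^{\Sym_n}$ in $D^b(\mhm_{\A^1_{S^nX}})$. The plan is to trace through the compatibilities among the various projection and addition maps and then take $\Sym_n$-invariants of an isomorphism that holds before passing to invariants. First I would recall the definition $S^n M = \left((\pi_{\A^1_X})_*(\bboxtimes^n M)\right)^{\Sym_n}$ from Section \ref{mhmsymproducts}, which is the image of the idempotent $e = \frac{1}{n!}\sum_{\sigma\in\Sym_n}\sigma \in \Q[\Sym_n]$ acting on $(\pi_{\A^1_X})_*(\bboxtimes^n M)$ in $D^b(\mhm^{\mon}_{S^n(\A^1_X)})$ (here the monodromy is trivial since $M$ has trivial monodromy, so everything happens in $D^b(\mhm_{S^n(\A^1_X)})$ and the twisted product $\twtimes$ agrees with $\boxtimes$).

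The key geometric input is the commutative square relating $\add_n$, $\pi_{\A^1_X}$, $\bar{\add}_n$ and $\pi'_X$ displayed just before the lemma, namely $\pi'_X\circ \add_n = \bar{\add}_n\circ \pi_{\A^1_X}$. Applying the (exact, hence Grothendieck-functorial) pushforward $(\bar{\add}_n)_!$ to $(\pi_{\A^1_X})_*(\bboxtimes^n M)$ and using this relation together with the fact that $(\bar\add_n)_!\circ(\pi_{\A^1_X})_* = (\pi'_X\circ\add_n)_!$ (composition of pushforwards, valid because $\pi_{\A^1_X}$ is proper so $(\pi_{\A^1_X})_* = (\pi_{\A^1_X})_!$, and pushforwards compose), I obtain an isomorphism $(\bar{\add}_n)_!\left((\pi_{\A^1_X})_*(\bboxtimes^n M)\right) \simeq (\pi'_X\circ\add_n)_!(\bboxtimes^n M)$ in $D^b(\mhm_{\A^1_{S^nX}})$. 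Next I would check that this isomorphism is $\Sym_n$-equivariant: the $\Sym_n$-action on the left comes from the action on $\bboxtimes^n M$ of Theorem 1.9 in \cite{MSS} (applied over the base $(\A^1_X)^n$), which descends along $\pi_{\A^1_X}$ and is carried by $(\bar\add_n)_!$; the $\Sym_n$-action on the right comes from the same action on $\bboxtimes^n M$ pushed forward by $(\pi'_X\circ\add_n)_!$, and one uses that $\pi'_X\circ\add_n$ is equivariant when $\A^1_{S^nX}$ is given the trivial $\Sym_n$-action (because both $\add_n$ and $\pi'_X$ intertwine permutation of coordinates with the trivial action on the target). Since the identification of functors is induced by the identity $\pi'_X\circ\add_n = \bar\add_n\circ\pi_{\A^1_X}$, it respects these two actions.

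Finally, applying the idempotent $e$ — equivalently, taking $\Sym_n$-invariants, which commutes with the exact functor $(\bar\add_n)_!$ and with the splitting idempotents by corollary 2.10 in \cite{BS} — to both sides of the equivariant isomorphism yields $(\bar\add_n)_! S^n M = (\bar\add_n)_!\left((\pi_{\A^1_X})_*(\bboxtimes^n M)\right)^{\Sym_n} \simeq \left((\pi'_X\circ\add_n)_!(\bboxtimes^n M)\right)^{\Sym_n}$ in $D^b(\mhm_{\A^1_{S^nX}})$, which is exactly equation (\ref{addsymprodeq}). The main obstacle I anticipate is the bookkeeping around the $\Sym_n$-equivariance: one must be careful that the $\Sym_n$-action appearing in the definition of the symmetric power (constructed via the $\mathcal{D}$-module and De Rham side in \cite{MSS}) is genuinely the same as the naive permutation action one would use to see that $(\bar\add_n)_!$ and $(\pi'_X\circ\add_n)_!$ intertwine it, and that exactness of the pushforward functors lets invariants commute past them at the level of derived categories (where "invariants" means "image of the idempotent $e$"). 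This is essentially formal but needs the compatibility statements of \cite{MSS} and the splitting result of \cite{BS} invoked carefully; no genuine calculation is required.
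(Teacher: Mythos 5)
Your proposal is correct and follows essentially the same route as the paper: the identity $\pi'_X\circ\add_n=\bar{\add}_n\circ\pi_{\A^1_X}$ (with $\pi_{\A^1_X}$ finite, so $(\pi_{\A^1_X})_*=(\pi_{\A^1_X})_!$ and pushforwards compose), the $\Sym_n$-equivariance coming from the \cite{MSS} action on $\bboxtimes^nM$ together with the triviality of the action on $\A^1_{S^nX}$, and then passing to the image of the idempotent $e$, which any additive functor preserves by the splitting result of \cite{BS}. No substantive difference from the paper's argument.
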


\begin{prop}\label{vancyclefunctorsymproducts} For any $M\in D^b(\mhm_{\A^1_{X}})$, we have
$$\vanhdg_{S^nX}\left(\left(\overline{\add}_n\right)_!S^nM\right) = S^n\left(\vanhdg_X(M)\right)$$
in $D^{b}(\mhm_{S^nX}^{\mon}).$
\end{prop}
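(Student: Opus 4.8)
The statement asserts that the total vanishing cycles functor commutes with the $n$-th symmetric product, after pushing forward along $\overline{\add}_n$. The strategy is to reduce this to the Thom--Sebastiani property (Proposition~\ref{HodgeTS}) together with the bookkeeping lemma~\ref{addsymprod} relating symmetric products and the addition morphism. Concretely, I would start from the left-hand side, write $S^nM = \left((\pi_{\A^1_X})_*(\bboxtimes^n M)\right)^{\Sym_n}$, and use lemma~\ref{addsymprod} to rewrite $(\overline{\add}_n)_! S^n M$ as $\left((\pi'_X\circ\add_n)_!(\bboxtimes^n M)\right)^{\Sym_n}$. Then I would apply $\vanhdg_{S^nX}$ and, using that $\vanhdg$ commutes with $\Sym_n$-invariants (the idempotent $e=\frac{1}{n!}\sum_{\sigma}\sigma$ being preserved by any exact functor, and $\vanhdg$ being a direct sum of exact functors $\phi^{\hdg}_{\pr-a}$), pull the invariants outside. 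This leaves me needing the identity $\vanhdg_{S^nX}\left((\pi'_X\circ\add_n)_!(\bboxtimes^n M)\right) \simeq (\pi_X)_*\left(\bigtwtimes{n}\vanhdg_X(M)\right)$ compatibly with the $\Sym_n$-actions.

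The key step is therefore an iterated Thom--Sebastiani computation. Proposition~\ref{HodgeTS} gives, for two factors, $\vanhdg_{X^2}((\add_!)(M_1\boxtimes M_2)) \simeq \vanhdg_X(M_1)\twtimes\vanhdg_X(M_2)$. I would iterate this $n$ times: writing $\add_n: (\A^1_X)^n\to \A^1_{X^n}$ as a composition of binary additions (possible by associativity of $+$, after identifying the relevant base changes), one obtains by induction on $n$ an isomorphism
$$\vanhdg_{X^n}\left((\add_n)_!\left(\bboxtimes^n M\right)\right) \simeq \bigtwtimes{n}\vanhdg_X(M)$$
in $D^b(\mhm^{\mon}_{X^n})$. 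Here one has to be careful that, on the motivic side, $\add_n$ goes to $\A^1_{X^n}$ and then one further pushes forward along $\pi'_X:\A^1_{X^n}\to \A^1_{S^nX}$; since pushforwards compose, $(\pi'_X\circ\add_n)_! = (\pi'_X)_!\circ(\add_n)_!$, and $\vanhdg$ commutes with proper (indeed arbitrary finite, hence proper) pushforwards by lemma~\ref{vancyclhdgproper} applied to $\pi'_X$ — note $\pi_X$ is finite, hence proper. So $\vanhdg_{S^nX}((\pi'_X\circ\add_n)_!(\bboxtimes^n M)) \simeq (\pi_X)_*\,\vanhdg_{X^n}((\add_n)_!(\bboxtimes^n M)) \simeq (\pi_X)_*\left(\bigtwtimes{n}\vanhdg_X(M)\right)$, exactly what is needed.

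Finally I would check that all these isomorphisms are $\Sym_n$-equivariant, where $\Sym_n$ acts by permuting the $n$ tensor factors on both sides. This is the step I expect to be the main obstacle: one must verify that the iterated Thom--Sebastiani isomorphism, constructed via a choice of bracketing of the binary additions, is independent of that choice up to the coherence needed for $\Sym_n$-equivariance, and that the symmetry isomorphisms $\sigma^{\sharp}$ of \cite{MSS} (extended to the monodromy setting in \ref{mhmsymproducts}) intertwine correctly with the Thom--Sebastiani isomorphisms and the monodromy operators $T_s, N$ under the twisted product $\twtimes$. The compatibility of $N$ is the delicate point, since $\twtimes$ uses $N = N\boxtimes\id + \id\boxtimes N$ and the weight/Hodge filtration formulas in \eqref{twistedweight} are asymmetric in $\alpha,\beta$; one needs Saito's theorem from \cite{SaitoTS} to guarantee that the Thom--Sebastiani isomorphism respects the full structure $(D,F,L,W;T_s,N)$, not just the underlying perverse sheaf, and that the $\Sym_n$-action permutes the data compatibly. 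Once equivariance is established, taking $\Sym_n$-invariants of both sides via the idempotent $e$ yields $\vanhdg_{S^nX}((\overline{\add}_n)_! S^n M) \simeq \left((\pi_X)_*\bigtwtimes{n}\vanhdg_X(M)\right)^{\Sym_n} = S^n(\vanhdg_X(M))$ by the very definition of the symmetric power of a mixed Hodge module with monodromy in \ref{mhmsymproducts}, completing the proof.
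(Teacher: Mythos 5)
Your proposal is correct and follows essentially the same route as the paper's proof: rewrite $(\overline{\add}_n)_!S^nM$ via lemma~\ref{addsymprod}, apply the ($n$-fold, iterated) Thom--Sebastiani isomorphism of proposition~\ref{HodgeTS} together with compatibility with the proper pushforward $(\pi_X)_*$ from lemma~\ref{vancyclhdgproper}, and conclude by the idempotent/splitting argument, checking that $\vanhdg_{S^nX}$ sends the idempotent $e$ to the idempotent $e'$ defining the symmetric power on the other side. The $\Sym_n$-equivariance concern you flag is handled in the paper exactly as you suggest, by noting the Thom--Sebastiani isomorphism holds compatibly for every permutation of the factors, so that splittings being preserved by additive functors yields the statement.
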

\begin{proof} For all $M_1,\ldots M_n\in D^b(\mhm_{\A^1_{X}})$, the Thom-Sebastiani theorem for $\vanhdg$ says
$$\vanhdg_{X^n}\left((\add_n)_!\left(M_1\boxtimes\ldots\boxtimes M_n\right)\right) \simeq \vanhdg_X(M_1)\twtimes \ldots \twtimes \vanhdg_X(M_n)$$
in $D^{b}(\mhm_{X^n}^{\mon}).$
Composing with the functor $(\pi_X)_*$ where $\pi_X:X^n\to S^nX$ is the projection, and using lemma \ref{vancyclhdgproper}, we have
$$\vanhdg_{S^nX}((\pi'_X\circ \add_n)_!(M_1\boxtimes \ldots \boxtimes M_n)) \simeq (\pi_X)_*\left(\vanhdg_X(M_1)\twtimes \ldots \twtimes \vanhdg_X(M_n)\right),$$
where $\pi'_X$ is the morphism $\A^1_{X^n}\to \A^1_{S^nX}$ induced by $\pi_X$. 
In the same manner,  for every $\sigma\in\Sym_n$, we also have
$$\vanhdg_{S^nX}((\pi'_X\circ \add_n)_!(M_{\sigma^{-1}(1)}\boxtimes \ldots \boxtimes M_{\sigma^{-1}(n)~})) $$
$$\simeq (\pi_X)_*\left(\vanhdg_X(M_{\sigma^{-1}(1) })\twtimes \ldots \twtimes \vanhdg_X(M_{\sigma^{-1}(n)})\right),$$
Thus, for any $M\in D^b(\mhm_{\A^1_{X}})$, the idempotent $\frac{1}{n!}\sum_{\sigma\in\Sym_n}\sigma\in \Z[\Sym_n]$ induces idempotents~$e$ and~$e'$ of $$(\pi'_X\circ\add_n)_!\left(\bboxtimes^n M\right)\in D^b(\mhm_{\A^1_{X^n}})$$ and $$(\pi_X)_*\bigtwtimes{n}(\vanhdg_X(M))\in D^{b}(\mhm_{X^n}^{\mon})$$ such that $\vanhdg_{S^nX}(e) =e'$. Therefore, splittings being preserved by any additive functor, we have
$$\vanhdg_{S^nX}\left(\left((\pi'_X\circ\add_n)_!\bboxtimes^n M\right)^{\Sym_n}\right) \simeq \left(\bigtwtimes{n}(\vanhdg_X(M))\right)^{\Sym_n},$$
which, using the isomorphism (\ref{addsymprodeq}) above, gives the result. 
\end{proof}
\section{Compatibility with motivic vanishing cycles}\label{sect.motvancomp}

\subsection{The Hodge realisation}\label{sect.hodgereal}
 Let $S$ be a complex variety, and let $X\xrightarrow{p}S$ be an $S$-variety, endowed with a $\mu_n$-action~$\sigma$ for some $n\geq 1$. Then $\sigma(e^{\frac{2i\pi}{n}})$ induces an automorphism $T_s(\sigma)$ of finite order on each cohomology group of the complex of mixed Hodge modules $p_!\Q_X^{\hdg}$ (see notation \ref{qshdg}). Thus, as explained in \cite{GLM} section~3.16, this defines a group  morphism 
$$\begin{array}{rccc}\chi^{\hdg}_S:&\M_S^{\hat{\mu}}&\to &K_0(\mhm_S^{\mon})\\
                                     & [X\xrightarrow{p} S,\sigma] & \mapsto& \sum_{i\in\Z}(-1)^i[\mathcal{H}^i(f_!\Q_X^{\hdg}),T_s(\sigma), 0] \end{array}$$
called the \textit{Hodge realisation morphism}\index{Hodge realisation}. Here are some properties of this Hodge realisation (for a proof, see the ideas in \cite{GLM}, section 6):

\begin{prop}\label{chihdgprop} Let $S,T$ be complex varieties.
\begin{enumerate}
\item The morphism $\chi^{\hdg}_S$ commutes with twisted exterior products, that is, for any $\a\in\M_{S}^{\hat{\mu}}, \b\in\M_{T}^{\hat{\mu}}$, we have
$$\chi_{S\times T}^{\hdg}(\Psi(\a\boxtimes\b)) = \chi_{S}^{\hdg}(\a)\twtimes \chi_{T}^{\hdg}(\b)$$
where $\Psi:\M_{S\times T}^{\hat{\mu}\times \hat{\mu}} \to \M_{S\times T}^{\hat{\mu}}$ is the convolution morphism from chapter \ref{grothrings}, section \ref{sect.convolution}. 
\item For any morphism $f:T\to S$ between complex varieties, we have
$$\chi^{\hdg}_S\circ f_! = f_!\circ \chi_T^{\hdg}\ \ \ \ \text{and}\ \ \ \chi^{\hdg}_T\circ f^* = f^*\circ \chi_{S}^{\hdg}.$$ 
\item \label{chihgdpropring} The group morphism $\chi_S^{\hdg}$ is a ring morphism
$$(\M_S^{\hat{\mu}},\ast)\to (K_0(\mhm_S^{\mon}),\ast).$$
\end{enumerate}
\end{prop}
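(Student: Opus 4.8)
\textbf{Proof plan for Proposition \ref{chihdgprop}.}

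The three assertions are all of the same nature: they say that the Hodge realisation $\chi^{\hdg}_S$, built from the functor $\Q^{\hdg}_{(-)}$ together with proper pushforward and pullback of complexes of mixed Hodge modules, respects the various product and functoriality structures. In each case the strategy is to reduce the verification to the level of generators of the Grothendieck rings, and then to invoke the corresponding \emph{functorial} statement at the categorical (or derived-categorical) level of mixed Hodge modules — which is known — together with the compatibility of $\Q^{\hdg}_{(-)}$ with fibre products. Concretely, the Grothendieck ring $\M^{\hat\mu}_S$ (resp.\ $\M^{\hat\mu}_{S\times T}$ etc.) is generated by classes $[X\xrightarrow{p}S,\sigma]$ with $X$ a quasi-projective $S$-variety carrying a good $\mu_n$-action for some $n$, so it suffices to check each identity on such classes, where $\chi^{\hdg}$ is given explicitly by $\sum_i(-1)^i[\mathcal H^i(p_!\Q^{\hdg}_X),T_s(\sigma),0]$; equivalently, since the Grothendieck ring of $D^b(\mhm^{\mon})$ is identified with that of $\mhm^{\mon}$, one may work with the class of the complex $p_!\Q^{\hdg}_X$ directly and forget the passage to cohomology.

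First I would treat (2), the functoriality statement, since (1) and (3) will rely on it. For a morphism $f\colon T\to S$ and a class $[X\xrightarrow{p}T,\sigma]$, the equality $\chi^{\hdg}_S(f_!\,\a)=f_!\,\chi^{\hdg}_T(\a)$ is just the statement that $(f\circ p)_!\Q^{\hdg}_X \simeq f_!\,p_!\Q^{\hdg}_X$ as complexes of mixed Hodge modules, compatibly with the induced finite-order automorphisms, which is the composition law $(f\circ p)_! = f_!\circ p_!$ for pushforwards of mixed Hodge modules from Saito's theory together with the fact that the monodromy automorphism $T_s(\sigma)$ is functorial. For the pullback identity $\chi^{\hdg}_T(f^*\a)=f^*\chi^{\hdg}_S(\a)$, given $[X\xrightarrow{p}S,\sigma]$ one has $f^*\a = [X\times_S T\to T, \sigma']$, and the point is the proper base change isomorphism $f^*p_!\Q^{\hdg}_X \simeq (p')_! \Q^{\hdg}_{X\times_S T}$ (using $\Q^{\hdg}_{X\times_S T}=(g^*)\Q^{\hdg}_X$ for $g\colon X\times_S T\to X$), again compatibly with the induced automorphisms; this is \cite{Saito90} (4.4.3). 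One must only check that the automorphism induced on $f^*p_!\Q^{\hdg}_X$ by $\sigma$ matches the one coming from $\sigma'$ through the base change isomorphism, which is a diagram chase.

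Next, for (1), I would take generators $\a=[X\xrightarrow{p}S,\sigma]$ over $S$ with a $\mu_n$-action and $\b=[Y\xrightarrow{q}T,\tau]$ over $T$ with a $\mu_m$-action (one may enlarge $n,m$ to a common multiple). Unwinding the definition of the convolution $\Psi$ in terms of the Fermat curves $F_0^n,F_1^n$ (section \ref{sect.convolution}), and using (2) just proved together with the compatibility of $\Q^{\hdg}_{(-)}$ with products ($\Q^{\hdg}_{X\times Y}=\Q^{\hdg}_X\boxtimes\Q^{\hdg}_Y$, hence $(p\times q)_!\Q^{\hdg}_{X\times Y}\simeq p_!\Q^{\hdg}_X\boxtimes q_!\Q^{\hdg}_Y$ by the Künneth formula for mixed Hodge modules), the claim becomes the assertion that the twisted exterior product $\twtimes$ on $K_0(\mhm^{\mon})$ computes the ``twist of the monodromy'' exactly the way the geometric convolution $\Psi$ does. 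This is precisely the content of the comparison between Looijenga's / Guibert--Loeser--Merle's convolution and Saito's twisted exterior product: the weight and Hodge filtrations on $D_1^\alpha\boxtimes D_2^\beta$ prescribed in \eqref{twistedweight} are designed so that, on the Grothendieck group, $[\phi_{f,1}]\twtimes[\phi_{g,1}]$, $[\phi_{f,1}]\twtimes[\phi_{g,\neq1}]$ etc.\ reproduce the classes of the $\mu$-equivariant Fermat-curve constructions. I would cite \cite{GLM} (section 6) and/or \cite{SaitoTS}, \cite{BBDJS} for this statement, and check it on the pieces indexed by eigenvalues of $T_s$ (the $=1$ part and the $\neq 1$ part), reducing the $\hat\mu$-equivariant bookkeeping to a finite computation with $\mu_n\times\mu_m$-representations exactly as in the explicit example of section \ref{sect.TSexample}. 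Finally (3) follows formally: the product on $(\M_S^{\hat\mu},\ast)$ is the composite $\boxtimes_S$ followed by $\Psi$, and likewise on $(K_0(\mhm_S^{\mon}),\ast)$ it is $\boxtimes_S$ followed by the twisted product; combining the special case $T=S$ of (1) with the compatibility of $\chi^{\hdg}$ with the diagonal pullback $\delta^*$ (a case of (2)) gives $\chi^{\hdg}_S(\a\ast\b)=\chi^{\hdg}_S(\a)\ast\chi^{\hdg}_S(\b)$, and $\chi^{\hdg}_S$ sends the unit $[S\xrightarrow{\id}S]$ to $[\Q^{\hdg}_S]$, the unit for $\twotimes$.

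The main obstacle I anticipate is part (1): keeping the $\hat\mu$-action straight through the convolution construction, i.e.\ verifying that the finite-order automorphism that $\chi^{\hdg}$ attaches to $\Psi(\a\boxtimes\b)$ is the same as the one $\twtimes$ attaches to $\chi^{\hdg}(\a)\twtimes\chi^{\hdg}(\b)$, including the delicate weight shifts in \eqref{twistedweight} that occur exactly when $\alpha+\beta=-1$. The cleanest route is probably not to reprove this from scratch but to quote the relevant comparison from \cite{GLM} and \cite{SaitoTS}, and to restrict the independent verification to the bilinearity and the behaviour on the eigenspace decomposition, which is routine. Everything else — the two functoriality assertions and the deduction of (3) — is a formal consequence of Saito's composition and base change theorems for pushforwards of mixed Hodge modules.
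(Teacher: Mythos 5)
Your proposal is correct and takes essentially the same route that the paper indicates: reduce to generators $[X\to S,\sigma]$, use Saito's composition and proper base change theorems for part (2), appeal to the comparison between the Fermat-curve convolution and Saito's twisted exterior product from \cite{GLM} section~6 for part (1), and derive part (3) formally from (1) with $T=S$ together with compatibility with $\delta^*$. The paper itself does not write out a proof but merely refers to ``the ideas in \cite{GLM}, section 6,'' so your sketch is a reasonable expansion of the intended argument rather than a departure from it.
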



\begin{example} For $S=\spec \C$, we have, for any separated complex variety $X$ with $\mu_n$-action $\sigma$,
$$\chi_{\pt}^{\hdg}([X,\sigma]) = \sum_{i=0}^{\dim X}(-1)^i[H^{i}_c(X(\C),\Q),T_s(\sigma)],$$
where $[H^{i}_c(X(\C),\Q),\sigma]$ is the class of the mixed Hodge structure defined by Deligne \cite{Deligne} on the singular cohomology group $H^{i}_c(X(\C),\Q)$, together with the automorphism of finite order induced by~$\sigma(e^{\frac{2i\pi}{n}})$.
\end{example}

\begin{example} 

In section \ref{sect.TSexample} of chapter \ref{grothrings}, we showed that the motivic vanishing cycles $\phi_{x^2}$ of the function $\A^1\to \A^1$, $x\mapsto x^2$ are equal to the class $1-[\tilde{E},\mu_2]\in\M_{\C}^{\hat{\mu}}$, where $[\tilde{E},\mu_2]$ is the class of the union of two points with permutation action by $\mu_2$. The Hodge realisation of $[\tilde{E},\mu_2]$ is the Hodge structure with monodromy
$$\left((\Q_{\pt}^{\hdg})^2,\left(\begin{array}{cc} 0 & 1 \\
                                              1 & 0 \end{array}\right)\right),$$
                                              which may be decomposed as a direct sum
                                              $$(\Q_{\pt}^{\hdg},\id) \oplus (\Q_{\pt}^{\hdg}, -\id).$$
                                              Thus, the class $1- [\tilde{E},\mu_2]$ maps to the class of the Hodge structure with monodromy $H = (\Q_{\pt}^{\hdg},-\id,0)$. 
                                              
                                              We may conclude that  the equality 
                                              $$(1-[\tilde{E},\mu_2])\ast (1-[\tilde{E},\mu_2]) = \LL$$
                                              from section \ref{sect.TSexample} of chapter \ref{grothrings} becomes the equality
                                              $$H\twtimes H  = \Q_{\pt}^{\hdg}(-1)$$
          in $K_0(\mhm^{\mon}_{\pt})$. It is consistent with example                 \ref{twtimesexample}, where we actually proved the two sides were isomorphic. 
                                             
\end{example}

\begin{remark} The above Hodge realisation induces the classical Hodge realisation $\chi_S^{\hdg}:\M_S\to K_0(\mhm_S)$ (see e.g. \cite{CNS}, Chapter 1, paragraph 3.3) on the corresponding rings without monodromy. In what follows, we are also going to use this realisation.
\end{remark}
\subsection{Compatibility with symmetric products}
\begin{lemma}\label{chihdgsymprod} Let $p:Y\to X$ be a quasi-projective variety over $X$. Then $$\chi^{\hdg}_{S^nX}(S^nY) = S^n(\chi^{\hdg}_XY)$$ in $D^b(\mhm_{S^nX})$. 
\end{lemma}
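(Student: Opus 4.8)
The plan is to reduce the statement to the case where $Y \to X$ is smooth, where it follows essentially by construction from Maxim-Saito-Schürmann's theory, and then to bootstrap to the general case using the cut-and-paste relations that both $\chi^{\hdg}$ and the symmetric power operations respect.

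First I would recall that by Lemma \ref{smoothpropergens} (or rather its absolute version, Lemma \ref{smoothpropergensfield}, applied over $X$), the Grothendieck group $\kvar_X$ is generated by classes $[Y \xrightarrow{p} X]$ with $p$ proper and $Y$ smooth over $\C$; and since both sides of the claimed identity are group morphisms in $\a = [Y]$ --- the left side because $\chi^{\hdg}_{S^nX} \circ S^n$ is a composition of the group morphism $S$ from Lemma \ref{varmorphism} (projected to the $n$-th component) with the group morphism $\chi^{\hdg}_{S^nX}$, the right side because $S^n_X = S^{\hdg}_X$ composed with projection and $\chi^{\hdg}_X$ is a group morphism --- it suffices to check equality on such generators, hence on smooth quasi-projective $Y$ over $X$ (we may even drop properness, replacing $Y$ by a dense open and using the additivity already built in). Actually, to be careful, one must first observe that the two maps $\a \mapsto \chi^{\hdg}_{S^nX}(S^n\a)$ and $\a \mapsto S^n(\chi^{\hdg}_X(\a))$, both landing in $K_0(\mhm_{S^nX})$, each factor through the group morphism structure, so it genuinely suffices to treat $\a = [Y]$ with $Y$ smooth. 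For the inductive step reducing a general $Y$ to smooth pieces: if $Z \subset Y$ is closed with complement $U$, then $[S^nY] = \sum_{k=0}^n [S^kU][S^{n-k}Z]$ pushed forward along $S^kX \times S^{n-k}X \to S^nX$ (Remark \ref{generalcut}), and correspondingly $S^n(\chi^{\hdg}_X Y)$ decomposes the same way by Lemma \ref{mhmmorphism} applied to $\chi^{\hdg}_X Y = \chi^{\hdg}_X U + \chi^{\hdg}_X Z$ (using that $\chi^{\hdg}$ is additive and commutes with the relevant pushforwards, Proposition \ref{chihdgprop}(2)); so the general case follows from the smooth case by Noetherian induction, exactly as in the proof of Lemma \ref{varmorphism}.

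Now for the heart of the matter, the smooth case. Let $Y$ be smooth quasi-projective over $X$. By definition $S^nY = Y^n_{*}/\Sym_n$ restricted appropriately --- but more relevantly, working with $\chi^{\hdg}$, the class $\chi^{\hdg}_{S^nX}(S^nY)$ is computed from $(\pi_Y)_!\, q_! \Q_{Y^n}^{\hdg}$ where $\pi_Y : Y^n \to S^nY$ and $q : S^nY \to S^nX$, with its $\Sym_n$-action, after taking the invariant part. On the Hodge side, $S^n(\chi^{\hdg}_X Y) = S^n(q'_!\Q_Y^{\hdg})$ where $q' : Y \to X$; and $S^n$ of a class in $K_0(\mhm_X)$ is, by the construction of Maxim-Saito-Schürmann recalled in \ref{mhmsymproducts}, exactly $\big(\pi_*(\boxtimes^n M)\big)^{\Sym_n}$ for $M = q'_!\Q_Y^{\hdg}$, $\pi : X^n \to S^nX$. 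So the content is the commutativity, at the level of $K_0$, of symmetric powers with the exterior/external product and with proper pushforward: one uses that $\boxtimes^n$ commutes with $q'_!$ (Künneth for pushforwards of mixed Hodge modules, \cite{Saito90}), that the $\Sym_n$-equivariant structures match under this identification, and that taking $\Sym_n$-invariants commutes with the exact pushforward $\pi_*$ along the finite map $X^n \to S^nX$ and with $q_*$, all of which are the same manipulations already performed in the proof of Lemma \ref{mhmmorphism} and in \cite{MSS}. The key compatibility is $\Q_{Y^n}^{\hdg} = \boxtimes^n \Q_Y^{\hdg}$ together with proper base change for the square relating $Y^n \to X^n$ and $S^nY \to S^nX$.

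The main obstacle I anticipate is bookkeeping the $\Sym_n$-equivariance carefully: one must check that the $\Sym_n$-action on $(\pi_Y)_!\, q_! \Q_{Y^n}^{\hdg}$ used to define $\chi^{\hdg}_{S^nX}(S^nY)$ (coming from the geometric permutation action on $Y^n$) is intertwined, via the canonical isomorphism $q_!\Q_{Y^n}^{\hdg} \simeq \boxtimes^n (q'_!\Q_Y^{\hdg})$, with the $\Sym_n$-action on $\boxtimes^n M$ defined via the $\sigma^\sharp$ isomorphisms of \cite{MSS} Theorem 1.9. This is where smoothness is genuinely used (it is the hypothesis under which \cite{MSS} Proposition 1.5 constructs the equivariant structure from the $\mathcal{D}$-module and constructible sheaf levels compatibly), and it is a diagram-chase rather than a deep point, but it is the step that must be done with care; everything else is formal consequences of properties already recorded in Proposition \ref{chihdgprop} and Lemma \ref{mhmmorphism}. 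I would also remark that once the trivial-monodromy case is done, the analogous statement for classes with $\hat\mu$-action would follow by the same argument with $\twtimes$ in place of $\boxtimes$, but this is not needed for the present lemma.
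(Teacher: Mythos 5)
Your route is genuinely different from the paper's. The paper proves this lemma in one line: it observes that the assertion is literally the identity $(S^np)_!\Q_{S^nY}^{\hdg} \simeq S^{n}(p_!\Q_Y^{\hdg})$ and cites equation (1.11) of \cite{CMSSY}, which holds for an arbitrary quasi-projective $Y$ over $X$ (the Maxim--Saito--Sch\"urmann equivariant structure does not require smoothness of $Y$; the smooth case is only an intermediate step in their construction). You instead re-derive that identity by hand in the smooth case (K\"unneth for $\boxtimes$ and $(-)_!$, exactness of the finite pushforward $\pi_*$, matching of the $\Sym_n$-equivariant structures, and the identification $\Q^{\hdg}_{S^nY}\simeq\bigl((\pi_Y)_*\boxtimes^n\Q^{\hdg}_Y\bigr)^{\Sym_n}$, which is really the crux and is exactly what the cited equation packages), and then reduce general $Y$ to the smooth case by cut-and-paste. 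What your approach buys is self-containedness; what it costs is that the d\'evissage is unnecessary given the reference, and in effect you are re-proving the \emph{next} lemma of the paper (the commutative square of Grothendieck groups) as part of the reduction.

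Two points need repair in your write-up. First, neither side is a group morphism in $\a=[Y]$: $S^n$ is not additive, and projecting $S$ (or $S^{\hdg}_X$) onto its $n$-th component does not give a homomorphism for the additive structure. The correct mechanism is the one you invoke afterwards, namely that the total maps $\a\mapsto(\chi^{\hdg}_{S^nX}(S^n\a))_{n\ge 1}$ and $\a\mapsto(S^n\chi^{\hdg}_X\a)_{n\ge 1}$ are morphisms for the \emph{convolution} group structure on $\prod_{n}K_0(\mhm_{S^nX})$ (using Proposition \ref{chihdgprop} to see that $\chi^{\hdg}$ respects the exterior products and pushforwards entering that structure); the opening "group morphism" claim should be deleted. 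Second, your reduction via cut-and-paste only proves equality of classes in $K_0(\mhm_{S^nX})$ for non-smooth $Y$, whereas the lemma is stated (and the cited equation gives) an isomorphism in $D^b(\mhm_{S^nX})$; for the way the lemma is used later (a commutative diagram of Grothendieck groups) the $K_0$ statement suffices, but as written your argument establishes a weaker statement than the one claimed.
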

\begin{proof} The relation we want is $$(S^np)_!\Q_{S^nY}^{\hdg} = S^{n}(p_!\Q_Y^{\hdg}),$$
which is exactly equation (1.11) in \cite{CMSSY}.
\end{proof}

\begin{lemma} The diagram
$$\xymatrix{(\M_X,+) \ar[r]^-S \ar[d] &\left(\prod_{n\geq 1}\M_{S^nX}, \cdot \right)\ar[d]\\
(K_0(\mhm_X),+) \ar[r]^-{S_X^{\hdg}} & \left(\prod_{n\geq 1}K_0(\mhm_{S^nX}),\cdot\right)
}$$
where the vertical arrows are given by the Hodge realisation morphisms, commutes. \end{lemma}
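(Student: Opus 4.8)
The statement to prove is the commutativity of the square relating the additive group homomorphism $S:\M_X\to\prod_{n\geq 1}\M_{S^nX}$ (from Lemma~\ref{varmorphism}) and the analogous Hodge-theoretic morphism $S_X^{\hdg}:K_0(\mhm_X)\to\prod_{n\geq 1}K_0(\mhm_{S^nX})$ (from Lemma~\ref{mhmmorphism}), intertwined by the Hodge realisation. The plan is to reduce to checking equality on generators and then invoke the two compatibility lemmas already established. First I would observe that, by definition of the Hodge realisation $\chi^{\hdg}_X$ and of the group structure on $\prod_{n\geq 1}\M_{S^nX}$ (resp.\ $\prod_{n\geq 1}K_0(\mhm_{S^nX})$), all four maps in the square are group homomorphisms: the vertical arrows are ring morphisms (hence in particular additive) by Proposition~\ref{chihdgprop} and the remark following it, while $S$ and $S_X^{\hdg}$ are group morphisms by construction. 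Therefore it suffices to verify that the square commutes on a generating set of the source $\M_X$, namely on classes $[Y\xrightarrow{p}X]$ of quasi-projective $X$-varieties (which generate $\M_X$, as used throughout Section~\ref{symprodclasses}).

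For such a generator, going around the square one way gives, in the $n$-th component, the class $\chi^{\hdg}_{S^nX}(S^n[Y]) = \chi^{\hdg}_{S^nX}([S^nY])$, using the very definition of $S$ (Lemma~\ref{varmorphism}) which sends $[Y]$ to the family $(S^nY)_{n\geq 1}$. Going around the other way gives $S^n(\chi^{\hdg}_X[Y])$, the $n$-th component of $S_X^{\hdg}(\chi^{\hdg}_X[Y])$, which by the definition of $S_X^{\hdg}$ (Lemma~\ref{mhmmorphism}) is the $n$-th symmetric power in the sense of Maxim--Saito--Schürmann of the class $\chi^{\hdg}_X[Y] = [p_!\Q_Y^{\hdg}]$. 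The equality of these two classes is then exactly the content of Lemma~\ref{chihdgsymprod}, which asserts $\chi^{\hdg}_{S^nX}(S^nY) = S^n(\chi^{\hdg}_XY)$ in $D^b(\mhm_{S^nX})$ — itself a reformulation of equation~(1.11) in \cite{CMSSY}, i.e.\ $(S^np)_!\Q_{S^nY}^{\hdg} = S^n(p_!\Q_Y^{\hdg})$.

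The only point requiring a little care is that $S$ on $\M_X$ is defined via the group structure on $\prod_{n\geq 1}\M_{S^nX}$ (the ``multiplicative'' convolution-type law from Section~\ref{sect.symprodgrouplaw}), and similarly for $S_X^{\hdg}$; so to pass from ``agree on generators'' to ``agree everywhere'' I must check that the vertical Hodge realisation $\prod_{n\geq 1}\M_{S^nX}\to\prod_{n\geq 1}K_0(\mhm_{S^nX})$ is a homomorphism for these two group laws, not merely componentwise additive. This follows because both group laws are built from exterior products ($\boxtimes$ on the motivic side, $\boxtimes$ on Hodge modules with trivial monodromy) followed by the pushforward along $S^kX\times S^{n-k}X\to S^nX$, and the Hodge realisation commutes with $\boxtimes$ and with $f_!$ by Proposition~\ref{chihdgprop}(1)--(2) (here the twisted products reduce to ordinary ones since there is no monodromy). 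Hence the componentwise Hodge realisation is a morphism of these groups, and since $S$, $S_X^{\hdg}$ and the two verticals are all compatible group morphisms agreeing on the generators $[Y]$, the square commutes. I do not expect any serious obstacle: the work has all been done in Lemmas~\ref{varmorphism}, \ref{mhmmorphism} and \ref{chihdgsymprod}; the present statement is essentially a bookkeeping consequence, the mild subtlety being the verification that the vertical arrow respects the non-obvious group structures on the products.
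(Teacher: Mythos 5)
Your reduction to generators, the multiplicativity check on the right-hand vertical arrow via proposition~\ref{chihdgprop}, and the appeal to lemma~\ref{chihdgsymprod} on classes of quasi-projective $X$-varieties are all correct and coincide with the first half of the paper's argument. However, there is a genuine gap: classes $[Y\xrightarrow{p}X]$ generate $\kvar_X$ as a group, but they do \emph{not} generate $\M_X=\kvar_X[\LL^{-1}]$ — the class $\LL^{-1}$ does not lie in the subgroup generated by effective classes (its Hodge--Deligne realisation $(uv)^{-1}$ is not a polynomial), so "agreeing on $[Y]$" does not force agreement on all of $\M_X$. This matters because the map $S$ on $\M_X$ is itself defined through the localisation procedure of lemmas~\ref{gencompaffinespaces} and~\ref{sloc}, i.e.\ by $S^n(\LL^{-k}\a)=\LL^{-nk}S^n\a$, and your proposal never verifies that the Hodge realisation is compatible with this rescaling.

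Concretely, what is missing is the check on classes of the form $\LL^{-k}\a$ with $\a\in\kvar_X$: one needs the identity $S^n(M(k))=(S^nM)(nk)$ for the Maxim--Saito--Schürmann symmetric power of a Tate twist, which mirrors $S^n(\LL^{-k}\a)=\LL^{-kn}S^n\a$ on the motivic side, and then the chain $\chi^{\hdg}_{S^nX}(S^n(\LL^{-k}\a))=\chi^{\hdg}_{S^nX}(\LL^{-kn}S^n\a)=(\chi^{\hdg}_{S^nX}(S^n\a))(kn)=S^n(\chi^{\hdg}_X(\a)(k))=S^n(\chi^{\hdg}_X(\LL^{-k}\a))$. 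This is precisely the second half of the paper's proof, and without it (or some equivalent argument showing the square "descends to the localisation") your proof only establishes the commutativity of the analogous diagram with $\kvar_X$ in place of $\M_X$.
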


\begin{proof} We checked it is true for classes of quasi-projective varieties $Y$ over~$X$ in lemma \ref{chihdgsymprod}, and such classes generate $\kvar_X$. Moreover, note that for any $M\in D^b(\mhm_X)$, any $n\geq 1$ and any $k\in\Z$, we have $S^n(M(k)) = (S^nM)(nk)$ (recall that $M(k) = M\otimes \Q^{\hdg}_{\pt}(k)$ ). Thus, for any $\a\in\kvar_X$, any $k\in\Z$ and any $n\geq 1$, we have
\begin{eqnarray*} \chi_{S^nX}^{\hdg}(S^n(\LL^{-k}\a)) &= &\chi_{S^nX}^{\hdg}(\LL^{-kn}S^n\a)\\
                                               & = &(\chi_{S^nX}^{\hdg}(S^n\a))(kn)\\
                                               & = &S^n(\chi^{\hdg}_{X}(\a)(k))\\
                                               & = & S^n(\chi_{X}^{\hdg}(\LL^{-k}\a))
                                               \end{eqnarray*} 
                                               which shows that the diagram commutes also on the localisation. 
\end{proof}
\subsection{Grothendieck rings of Hodge modules over the affine line}
%
There are two natural $K_0(\mhm_{X})-$algebra structures on the group $K_0(\mhm_{\A^1_{X}})$. The first one is given by the pullback morphism
$$(\epsilon_X)^*:K_0(\mhm_{X}) \to K_0(\mhm_{\A^1_{X}})$$
where $\epsilon_X:\A^1_{X}\to X$ is the structural morphism. 

Denote by $\convv$ the product induced by the addition morphism $\add:\A^2_X\to \A^1_{X}$:
$$\convv: K_0(\mhm_{\A^1_{X}})\times K_0(\mhm_{\A^1_{X}})\xrightarrow{\boxtimes_X} K_0(\mhm_{\A^2_X})\xrightarrow{\add_!} K_0(\mhm_{\A^1_{X}})$$
(see the end of section \ref{sect.mhmmonodromy} for the definition of $\boxtimes_X$ in the context of mixed Hodge modules). Moreover, we define $i_X:X\to \A^1_X$ and $i^2_X: X\to \A^2_X$ to be the morphisms induced by the inclusions $\{0\}\to \A^1_{\C}$ and $\{(0,0)\}\to \A^2_{\C}$, respectively.
\begin{lemma}
The functor $(i_X)_!$ induces a ring morphism
$$(i_X)_!: K_0(\mhm_{X}) \to (K_0(\mhm_{\A^1_{X}}),\convv),$$
endowing the ring $(K_0(\mhm_{\A^1_X}),\convv)$ with a $K_0(\mhm_X)$-algebra structure. 
\end{lemma}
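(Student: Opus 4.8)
The plan is to mimic, on the Hodge-module side, the proof of Lemma~\ref{iotaSmorphism}, which established the analogous statement in the motivic setting, with the twisted exterior product $\twtimes$ playing the role that the generalised convolution $\Psi$ played there. First I would check that $(i_X)_!$ sends the unit $[\Q_X^{\hdg}]$ of $K_0(\mhm_X)$ to the unit of $(K_0(\mhm_{\A^1_X}),\convv)$: the unit for $\convv$ is $(i_X)_!\Q_X^{\hdg}$ by definition of the addition morphism, so this is immediate. It then remains to verify multiplicativity, i.e.\ that for all $M_1,M_2\in D^b(\mhm_X)$ one has
$$(i_X)_!(M_1\otimes_X M_2) = (i_X)_!M_1 \,\convv\, (i_X)_!M_2$$
in $K_0(\mhm_{\A^1_X})$; since $K_0(\mhm_X)$ is generated by classes of complexes of mixed Hodge modules, this suffices.

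The key computation is the chain of identifications, all of which are formal consequences of compatibilities already recorded in the excerpt. Starting from the left-hand side, one writes $M_1\otimes_X M_2 = \delta_X^*(M_1\boxtimes_X M_2)$ where $\delta_X\colon X\to X\times_X X$, and pushes forward along $i_X$. The essential point is the commutativity of the diagram relating $i_X^2\colon X\to \A^2_X$, the diagonal, the two projections $\A^2_X\to \A^1_X$ composed with $i_X$, and the addition morphism $\add\colon \A^2_X\to \A^1_X$: concretely $\add\circ i_X^2 = i_X$ and $i_X^2 = (i_X\times_X i_X)\circ\delta_X$. Using that $(\cdot)_!$ commutes with composition, that $\add_!$ together with $\boxtimes_X$ defines $\convv$, and that $(i_X\times_X i_X)_!(M_1\boxtimes_X M_2) = (i_X)_!M_1\boxtimes_X (i_X)_!M_2$ (base-change/K\"unneth compatibility of $\boxtimes_X$ with proper pushforward, which holds since the inclusions here have trivial monodromy so $\boxtimes_X$ is the ordinary relative exterior product), one gets
$$(i_X)_!(M_1\otimes_X M_2) = \add_!\bigl((i_X)_!M_1\boxtimes_X (i_X)_!M_2\bigr) = (i_X)_!M_1\,\convv\,(i_X)_!M_2,$$
which is exactly what is wanted. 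This is entirely parallel to the computation displayed in the proof of Lemma~\ref{iotaSmorphism}, only with $\Psi$ suppressed because all the Hodge modules appearing have trivial monodromy and $\twtimes$ restricted to such modules coincides with $\boxtimes$.

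The main obstacle, and the only genuinely Hodge-theoretic input, is ensuring that the relative exterior product $\boxtimes_X$ really commutes with the proper pushforward $(i_X)_!$ at the level of $K_0(\mhm)$; this is a K\"unneth-type statement for mixed Hodge modules along a closed immersion, which follows from Saito's compatibility of the six operations with $\boxtimes$ (\cite{Saito90}, Theorem~0.1) together with properness of $i_X$. Once that is in hand, the rest is bookkeeping with cartesian squares exactly as in the motivic case, and the fact that $(i_X)_!$ is a ring morphism endows $(K_0(\mhm_{\A^1_X}),\convv)$ with the asserted $K_0(\mhm_X)$-algebra structure. (One should also note, as in Remark~\ref{convvtrivialaction}, that for trivial monodromy $\convv$ is simply $\add_!$ applied to the ordinary relative product, so no subtlety with the twisted weight and Hodge filtrations of \eqref{twistedweight} intervenes here.)
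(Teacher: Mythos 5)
Your proof is correct and takes essentially the same route as the paper's: the commutative triangle $\add\circ i^2_X = i_X$ combined with the identity $(i_X)_!M_1\boxtimes_X(i_X)_!M_2=(i^2_X)_!(M_1\otimes_X M_2)$ is exactly the displayed computation in the paper's proof. Your additional remarks on the unit and on Saito's K\"unneth-type compatibility merely make explicit what the paper leaves implicit.
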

\begin{proof} There is a commutative diagram
$$\xymatrix{X \ar[r]^{i^2_X} \ar[rd]^{i_X}& \A^2_X \ar[d]^{\add}\\
                              &\A^1_X}$$
                              which gives us for $M,M'\in D^b(\mhm_X),$
                              \begin{eqnarray*}(i_X)_!(M)\convv (i_X)_! (M') & = & (\add)_!((i_X)_!(M)\boxtimes_X (i_X)_!(M')) \\
        & = &                       (\add)_!(i^2_X)_! (M\otimes_XM') \\
                    &=&          (i_X)_!(M\otimes_X M')                                 
                              \end{eqnarray*}
 \end{proof}

\begin{lemma}\label{chihdgconvv} The Hodge realisation $\chi^{\hdg}_{\A^1_{X}}$ is a morphism 
$$\chi^{\hdg}_{\A^1_{X}} : (\M_{\A^1_{X}},\convv)\to (K_0(\mhm_{\A^1_{X}}),\convv)$$
of $\M_{X}$-algebras. 
\end{lemma}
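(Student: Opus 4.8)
\textbf{Proof plan for Lemma \ref{chihdgconvv}.} The plan is to unwind both sides on generators and reduce everything to the compatibility of the Hodge realisation $\chi^{\hdg}$ with proper pushforward, exterior products and the generalised convolution operator $\Psi$, all of which are already recorded in Proposition \ref{chihdgprop}. First I would recall that the product $\convv$ on $\M_{\A^1_X}$ (defined in chapter \ref{grothrings}, section \ref{section.grothaffineline}) is the composition $\add_!\circ\Psi\circ\boxtimes_X$, while the product $\convv$ on $K_0(\mhm_{\A^1_X})$ introduced just above is $\add_!\circ\boxtimes_X$ (no $\Psi$ is needed there because the relative twisted exterior product $\twtimes_X$ already lands in $K_0(\mhm^{\mon}_{\A^1_X})$ with the correct monodromy, and on the image of $\M_{\A^1_X}$ — which carries trivial monodromy — it restricts to $\boxtimes_X$). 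Note however that $\chi^{\hdg}_{\A^1_X}$ on the level of $\M$-rings lands in $K_0(\mhm_{\A^1_X})$ with trivial monodromy; the point of the statement is precisely that the $\Psi$ on the motivic side is absorbed, via $\chi^{\hdg}$, into the twisted external product on the Hodge side, which on trivial-monodromy objects is the ordinary one.

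The key steps, in order: (i) Check that $\chi^{\hdg}_{\A^1_X}$ sends the unit $[X\xrightarrow{(\id,0)}\A^1_X]$ of $(\M_{\A^1_X},\convv)$ to the unit $(i_X)_!\Q^{\hdg}_X$ of $(K_0(\mhm_{\A^1_X}),\convv)$; this is immediate from functoriality of $\chi^{\hdg}$ under pushforward (part 2 of Proposition \ref{chihdgprop}) applied to the closed immersion $i_X$. (ii) For two classes $\a=[Y\xrightarrow{f}\A^1_X]$ and $\b=[Z\xrightarrow{g}\A^1_X]$, compute $\chi^{\hdg}_{\A^1_X}(\a\convv\b)$ by writing $\a\convv\b = \add_!\Psi([Y\times_X Z\xrightarrow{f\conv_X g}\A^1_X\times_X\A^1_X])$, then apply part 2 of Proposition \ref{chihdgprop} to commute $\chi^{\hdg}$ past $\add_!$, part 1 to commute it past $\Psi$ (turning $\Psi\boxtimes$ into $\twtimes$), and part 2 again (pullback/exterior-product compatibility) to identify the result with $\add_!\big(\chi^{\hdg}_{\A^1_X}(\a)\twtimes_X\chi^{\hdg}_{\A^1_X}(\b)\big)$. (iii) Observe that $\chi^{\hdg}_{\A^1_X}(\a)$ and $\chi^{\hdg}_{\A^1_X}(\b)$ have trivial monodromy, so $\twtimes_X$ reduces to $\boxtimes_X$ there, and hence the expression in (ii) is exactly $\chi^{\hdg}_{\A^1_X}(\a)\convv\chi^{\hdg}_{\A^1_X}(\b)$ for the product $\convv$ on $K_0(\mhm_{\A^1_X})$. (iv) Since classes of the form $[Y\xrightarrow{f}\A^1_X]$ generate $\M_{\A^1_X}$ additively (after inverting $\LL$), and $\chi^{\hdg}$ is $\M_X$-linear by the remark at the end of section \ref{sect.hodgereal} together with the evident compatibility with Tate twists used in the previous lemma, multiplicativity on generators yields the ring-morphism statement; $\M_X$-linearity (i.e. the algebra-morphism part) then follows from compatibility of $\chi^{\hdg}$ with the $(i_X)_!$ structure map, again by part 2 of Proposition \ref{chihdgprop} applied to $i_X$.

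The main obstacle I anticipate is bookkeeping the various $\hat\mu$- and monodromy-actions carefully enough to be sure the $\Psi$-to-$\twtimes$ translation is legitimate here: Proposition \ref{chihdgprop}(1) is stated for the absolute exterior product over $\spec\C$, and one must first descend it to the relative situation over $X$ (replacing $\boxtimes$ by $\boxtimes_X$ and $\twtimes$ by $\twtimes_X$ via the closed immersion $\A^1_X\times_X\A^1_X\hookrightarrow\A^1_X\times_{\C}\A^1_X$ and base-change for $\chi^{\hdg}$), and then check that restricting to trivial-monodromy inputs collapses $\twtimes_X$ to $\boxtimes_X$ — this uses the explicit formula \eqref{twistedweight} with $\al=\be=0$, which forces the $\al\be=0$ branch and recovers the untwisted filtrations. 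Once that relative form of Proposition \ref{chihdgprop}(1) is in hand the rest is a routine diagram chase, entirely parallel to the proof of Lemma \ref{epsilonmorphism} in chapter \ref{grothrings}.
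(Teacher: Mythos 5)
Your proposal is correct and takes essentially the same route as the paper: the paper's proof is the short computation $\chi^{\hdg}_{\A^1_X}(\a\convv\b)=\chi^{\hdg}_{\A^1_X}(\add_!(\a\boxtimes_X\b))=\add_!\left(\chi^{\hdg}_{\A^1_X}(\a)\boxtimes_X\chi^{\hdg}_{\A^1_X}(\b)\right)=\chi^{\hdg}_{\A^1_X}(\a)\convv\chi^{\hdg}_{\A^1_X}(\b)$ carried out for arbitrary $\a,\b\in\M_{\A^1_X}$, using exactly the compatibilities of proposition \ref{chihdgprop} with pushforwards, pullbacks and exterior products that you invoke (including the descent to the relative $\boxtimes_X$ which you rightly flag). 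Your detour through generators, $\Psi$ and $\twtimes_X$ is harmless but unnecessary, since by remark \ref{convvtrivialaction} the product $\convv$ on the non-equivariant ring is already $\add_!\circ\boxtimes_X$ and everything in sight carries trivial monodromy.
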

\begin{proof} Let $\a,\b\in\M_{\A^1_{X}}$. Using the fact that the Hodge realisation commutes with pushforwards, pullbacks and exterior products (proposition \ref{chihdgprop}), we have  
\begin{eqnarray*}
 \chi_{\A^1_X}^{\hdg}(\a\convv \b) & = & \chi^{\hdg}_{\A^1_X}(\add_!(\a\boxtimes_X\b))\\
 & = & \add_{!} \left(\chi_{\A^2_X}^{\hdg}(\a\boxtimes_X\b)\right)\\
 & = & \add_!\left(\chi_{\A^1_X}^{\hdg}(\a)\boxtimes_X\chi_{\A^1_X}^{\hdg}(\b)\right)\\
 & = & \chi_{\A^1_X}^{\hdg}(\a)\convv\chi_{\A^1_X}^{\hdg}(\b).
\end{eqnarray*}
\end{proof}
\subsection{Compatibility with motivic vanishing cycles}
In section \ref{sect.vancycleshodgemodules} we defined a total vanishing cycle functor:
$$\vanhdg_X:\mhm_{\A^1_{X}}\to \mhm_{X}^{\tot}.$$
It induces a group morphism 
$$\Phi_X^{\hdg}: K_0(\mhm_{\A^1_{X}})\to K_0(\mhm_{X}^{\mon})$$
between the corresponding Grothendieck rings. 
\begin{prop}\label{HodgeTSgrothring} The morphism $\Phi^{\hdg}_X$ is a morphism of $K_0(\mhm_{X})$-algebras
$$\Phi_X^{\hdg}:(K_0(\mhm_{\A^1_{X}}),\convv)\to (K_0(\mhm_{X}^{\mon}),\ast).$$\index{Phihdg@$\Phi_X^\hdg$}
\end{prop}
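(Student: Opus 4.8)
The statement to prove is that the group morphism $\Phi^{\hdg}_X \colon K_0(\mhm_{\A^1_X}) \to K_0(\mhm^{\mon}_X)$ induced by the total vanishing cycle functor $\vanhdg_X$ is in fact a morphism of $K_0(\mhm_X)$-algebras, where the source carries the convolution product $\convv$ (induced by addition on $\A^1_X$) and the target carries the product $\ast$ (induced by $\twotimes$, i.e. the twisted tensor product on $\mhm^{\mon}_X$ composed with pullback along the diagonal). The proof decomposes into two tasks: (1) $\Phi^{\hdg}_X$ respects the multiplication; (2) $\Phi^{\hdg}_X$ is compatible with the $K_0(\mhm_X)$-algebra structures, i.e. intertwines $(i_X)_!$ on the source with the natural inclusion $K_0(\mhm_X) \to K_0(\mhm^{\mon}_X)$ (equivalently, sends the unit to the unit and is $K_0(\mhm_X)$-linear). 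The multiplicativity is the serious content and is exactly the Grothendieck-group shadow of the Thom--Sebastiani property for $\vanhdg_X$ established in Proposition~\ref{HodgeTS}.

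\textbf{Multiplicativity.} Let $M_1, M_2 \in K_0(\mhm_{\A^1_X})$, represented by complexes (using the identification $K_0(\mhm_{\A^1_X}) = K_0^{\mathrm{tri}}(D^b(\mhm_{\A^1_X}))$). By definition of $\convv$ we have $M_1 \convv M_2 = \add_!(M_1 \boxtimes_X M_2)$, where $\add\colon \A^1_X \times_X \A^1_X \to \A^1_X$; unwinding the relative exterior product and the base change, this equals $(\add)_!$ applied to the pullback of $M_1 \boxtimes M_2$ to $\A^1_X \times_X \A^1_X$. The plan is to compute $\Phi^{\hdg}_X(M_1 \convv M_2) = \vanhdg_X(\add_!(M_1 \boxtimes_X M_2))$ and match it with $\vanhdg_X(M_1) \twotimes \vanhdg_X(M_2) = \Phi^{\hdg}_X(M_1) \ast \Phi^{\hdg}_X(M_2)$. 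The key is Proposition~\ref{HodgeTS}, which gives $\vanhdg_{X^2}(\add_!(M_1 \boxtimes M_2)) \simeq \vanhdg_X(M_1) \twtimes \vanhdg_X(M_2)$ over $X^2$ for the \emph{absolute} (external) addition $\A^1_X \times \A^1_X \to \A^1_{X^2}$. To pass to the relative situation over a single copy of $X$ one pulls back along the diagonal $\delta\colon X \to X^2$: on the source side $\delta^*$ of the absolute addition map recovers the relative addition $\add$ over $X$ (here one uses base-change compatibility of $\phi^{\hdg}$ and of $(\add)_!$, which holds by the SGA formalism lifted to mixed Hodge modules, cf. Saito and the discussion in section~\ref{sect.mhmmonodromy}), and on the target side $\delta^*(\vanhdg_X(M_1) \twtimes \vanhdg_X(M_2)) = \vanhdg_X(M_1) \twotimes \vanhdg_X(M_2)$ by the very definition of $\twotimes$. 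Since $\vanhdg_X$ and the various functors involved are exact (they send distinguished triangles to distinguished triangles), these isomorphisms descend to identities in the triangulated Grothendieck groups; this handles multiplicativity on generators, and multiplicativity on all of $K_0$ follows by bilinearity.

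\textbf{Compatibility with the algebra structures.} One must show that the diagram relating $(i_X)_! \colon K_0(\mhm_X) \to (K_0(\mhm_{\A^1_X}), \convv)$, the morphism $\Phi^{\hdg}_X$, and the natural map $K_0(\mhm_X) \to K_0(\mhm^{\mon}_X)$ commutes; in particular $\Phi^{\hdg}_X$ sends the unit $(i_X)_!(\Q^{\hdg}_X)$ to the unit of $(K_0(\mhm^{\mon}_X), \ast)$. Concretely, for $N \in K_0(\mhm_X)$ one computes $\Phi^{\hdg}_X((i_X)_! N) = \vanhdg_X((i_X)_! N) = \bigoplus_{a \in \C} \phi^{\hdg}_{\pr - a}((i_X)_! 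N)$ where $\pr\colon \A^1_X \to \A^1_{\C}$; since $(i_X)_! N$ is supported on the zero-section, only the term $a = 0$ survives, and by the special-case behaviour of vanishing cycles for a complex supported on a nowhere-dense smooth locus of the relevant function (the analogue for Hodge modules of example~\ref{vancyclespecialcases}(I), using that $\pr$ restricted to the zero-section is constantly $0$, so $\psi^{\hdg}_{\pr} = 0$ there and $\phi^{\hdg}_{\pr}((i_X)_! N) = N$ with trivial monodromy) this equals $N$ viewed in $K_0(\mhm^{\mon}_X)$. Combined with multiplicativity, this shows $\Phi^{\hdg}_X$ is $K_0(\mhm_X)$-linear, since the $K_0(\mhm_X)$-module structure on the source is given by $\convv$-multiplication by elements of the image of $(i_X)_!$, and on the target by $\ast$-multiplication by elements with trivial monodromy (which by Remark~\ref{asttrivialaction}'s Hodge analogue coincides with the ordinary action); so $\Phi^{\hdg}_X(\alpha \cdot M) = \Phi^{\hdg}_X((i_X)_!\alpha \convv M) = (\Phi^{\hdg}_X(i_X)_!\alpha) \ast \Phi^{\hdg}_X(M) = \alpha \ast \Phi^{\hdg}_X(M) = \alpha \cdot \Phi^{\hdg}_X(M)$.

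\textbf{Expected main obstacle.} The genuinely delicate point is the descent along the diagonal $\delta\colon X \to X^2$ in the multiplicativity argument: one needs that $\delta^*$ applied to the absolute Thom--Sebastiani isomorphism of Proposition~\ref{HodgeTS} really produces the relative statement, which requires carefully tracking base-change isomorphisms for $\phi^{\hdg}$, for $(\add)_!$, and for the twisted exterior product $\twtimes$ (whose Hodge and weight filtrations are defined by the somewhat involved formulas in section~\ref{sect.mhmmonodromy}), and checking that all of these are compatible. Everything else — the vanishing-cycle computation on the zero-section, passage to Grothendieck groups via exactness, and bilinear extension — is routine. This is entirely parallel to how, in chapter~\ref{grothrings}, Corollary~\ref{TStotal} (motivic Thom--Sebastiani for $\atotvan$) was deduced from Theorem~\ref{TS} and then used in Proposition~\ref{motmeasureringmorph} to upgrade $\Phi^{\tot}$ to a ring morphism; the present proposition is the mixed-Hodge-module counterpart of that same mechanism.
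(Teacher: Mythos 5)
Your strategy is the natural one and is visibly what the paper intends: its entire proof is the one sentence ``direct consequence of Proposition~\ref{HodgeTS}'', and your write-up spells out what that reduction needs — base change for the shriek pushforward along the addition map, restriction along the diagonal, and the computation of $\Phi^{\hdg}_X$ on the image of $(i_X)_!$ (this last part of your argument is correct, and so is the base-change step for $(\add)_!$).

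The gap is exactly at the step you yourself flag as delicate, and it cannot be repaired by more careful bookkeeping: the ``base-change compatibility of $\phi^{\hdg}$'' along the diagonal that you invoke is not part of the formalism. Vanishing cycles commute with proper pushforward (Lemma~\ref{vancyclhdgproper}) and with smooth pullback, but not with pullback along an arbitrary morphism, and $\delta\colon X\to X\times X$ (equivalently $\tilde\delta=\delta\times\id_{\A^1}\colon\A^1_X\to\A^1_{X^2}$) is a closed immersion, not smooth, as soon as $\dim X>0$. Moreover, in the situation at hand the needed commutation genuinely fails, so no argument using only Proposition~\ref{HodgeTS} plus base change can close the gap. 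Test case: let $X=\A^1$ with coordinate $s$, and let $M_1,M_2\in K_0(\mhm_{\A^1_X})$ be the classes of the constant Hodge modules on the graphs $\{t=s\}$ and $\{t=-s\}$ in $\A^1_X$. For every $a\in\C$ the function $\pr-a$ restricts on these graphs to the smooth nonconstant functions $s-a$ and $-s-a$, so by Lemma~\ref{vancyclhdgproper} one gets $\vanhdg_X(M_1)=\vanhdg_X(M_2)=0$, hence $\Phi^{\hdg}_X(M_1)\ast\Phi^{\hdg}_X(M_2)=0$. On the other hand $M_1\boxtimes_X M_2$ is the class of the constant Hodge module on $\{t_1=s,\ t_2=-s\}\subset\A^1_X\times_X\A^1_X$, so $M_1\convv M_2=[(i_X)_!\Q^{\hdg}_X]$ is the unit of $(K_0(\mhm_{\A^1_X}),\convv)$, whose image under $\Phi^{\hdg}_X$ is (by your own unit computation) $[\Q^{\hdg}_X]\neq 0$; a unital multiplicative map cannot kill two factors of the unit. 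Equivalently, writing $N=\mathrm{Add}_!(M_1\boxtimes M_2)$ on $\A^1_{X^2}$ with $\mathrm{Add}\colon\A^1_X\times\A^1_X\to\A^1_{X^2}$ as in Proposition~\ref{HodgeTS}, that proposition gives $\vanhdg_{X^2}(N)=0$ while $\vanhdg_X(\tilde\delta^*N)\neq 0$, so the identity $\delta^*\circ\vanhdg_{X^2}=\vanhdg_X\circ\tilde\delta^*$ that your argument requires fails already at the level of Grothendieck classes. Note also that the closing analogy with the motivic chapter does not transfer: there, relative multiplicativity over a base $S$ is deduced from the absolute case fibre by fibre via Lemma~\ref{function.equality}, and no such pointwise principle is available for $K_0(\mhm_X)$, since a class of mixed Hodge modules is not determined by its restrictions to points. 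So as written your proof has a genuine hole at the diagonal step (the paper's one-line proof is silent on the same point); filling it would require either a construction of total vanishing cycles that is genuinely relative over $X$ (matching the fibrewise motivic $\atotvan_{f/X}$) or a restriction or reformulation of the multiplicativity statement, not just a more careful tracking of base-change isomorphisms.
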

\begin{proof} This is a direct consequence of the Thom-Sebastiani property for total vanishing cycles, proposition \ref{HodgeTS} .
\end{proof}

On the other hand, in chapter \ref{grothrings} we defined, for every variety $X$ over a field $k$ of characteristic zero, the motivic vanishing cycles measure
$$\Phi_X:\expp_X\to (\M_X^{\hat{\mu}},\ast).$$
Here, to be able to compare it with $\Phi^{\hdg}_X$, we are going to consider rather its composition 
$$\Phi'_{X}: 
 (\M_{\A^1_X},\convv) \to (\M_X^{\hat{\mu}},\ast)$$\index{phip@$\Phi'_X$}
 with the quotient morphism
$$(\M_{\A^1_X},\convv)\to \expp_{X}$$
(which is given, by definition, by sending to zero the elements $[\A^1_Y\to \A^1_X]$ for all morphisms $Y\to X$, the morphism $\A^1_Y\to \A^1_X$ being the identity on the $\A^1$--components, see the definition of Grothendieck rings with exponentials in chapter \ref{grothrings}, section \ref{subsect.grothrings}). Recall from property \ref{chihgdpropring} of lemma \ref{chihdgprop}  and lemma \ref{chihdgconvv} the multiplicative properties of the Hodge realisation morphisms.
\begin{prop}\label{commdiagramvancycles} The diagram
$$\xymatrix{(\M_{\A^1_{X}},\convv) \ar[r]^{\Phi'_{X}}\ar[d]^{\chi^{\hdg}} & (\M_{X}^{\hat{\mu}},\ast)\ar[d]^{\chi^{\hdg}}\\
(K_0(\mhm_{\A^1_{X}}),\convv)\ar[r]^{\Phi_X^{\hdg}} &(K_0(\mhm_{X}^{\mon}),\ast)
            }$$
            commutes. 
\end{prop}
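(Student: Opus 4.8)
The strategy is to reduce the commutativity of the square to a statement about a single generator and then invoke the compatibilities already established in this chapter. Both $\Phi'_X$ and $\Phi^{\hdg}_X$ are ring morphisms (by Theorem~\ref{motmeasureA1} together with remark~\ref{expquotientmorphism} on the motivic side, and by proposition~\ref{HodgeTSgrothring} on the Hodge side), and $\chi^{\hdg}$ is a ring morphism for the product $\convv$ (lemma~\ref{chihdgconvv}) as well as for $\ast$ (property~\ref{chihgdpropring} of proposition~\ref{chihdgprop}). So the two composites $\chi^{\hdg}\circ \Phi'_X$ and $\Phi^{\hdg}_X\circ \chi^{\hdg}$ are both ring morphisms $(\M_{\A^1_X},\convv)\to (K_0(\mhm_X^{\mon}),\ast)$. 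By lemma~\ref{smoothpropergens} (applied with base $X$), the group $\kvar_{\A^1_X}$ is generated by classes $[Y\xrightarrow{p}\A^1_X]$ where $p$ is proper over a locally closed $T\subseteq X$ and $Y$ is smooth over $T$; after localising at $\LL$ and using that $\Phi'_X(\LL_{\A^1_X})=0$ (lemma~\ref{Phiprop}\eqref{affinezero}) while $\Phi^{\hdg}_X$ kills $\chi^{\hdg}(\LL_{\A^1_X})$ for the same reason (the total vanishing cycles of a smooth proper morphism vanish), it suffices to check the equality on a class $[Y\xrightarrow{f}\A^1_X]$ with $Y$ smooth over $X$ and $f$ proper, or even, since both sides are motivic functions on $X$, on its fibres above points of $X$; thus we may assume $X=\spec\C$ and $f:Y\to\A^1_\C$ proper with $Y$ smooth.

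\textbf{The generic case.} For such a class, the left-hand composite gives, by Theorem~\ref{motmeasureA1}, $\chi^{\hdg}\bigl(\epsilon_!\atotvan_f\bigr)$, where $\atotvan_f = \sum_{a\in\C} f_!\phi_{f-a}$ is the sum of all motivic vanishing cycles of $f$. Now I would unwind the right-hand composite: $\chi^{\hdg}([Y\xrightarrow{f}\A^1])$ is the class of the complex $f_!\Q_Y^{\hdg}\in D^b(\mhm_{\A^1})$, and applying $\Phi^{\hdg}$ yields $\bigoplus_{a\in\C}\phi^{\hdg}_{\pr-a}\bigl(f_!\Q_Y^{\hdg}\bigr)$, which by the proper base change / compatibility of Hodge-theoretic vanishing cycles with proper pushforward (lemma~\ref{vancyclhdgproper}) equals $\bigoplus_{a\in\C} \tilde f_*\,\phi^{\hdg}_{(f-a)}\Q_Y^{\hdg}$. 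The key point is then that $\chi^{\hdg}$ intertwines the motivic vanishing cycles $\phi_{f-a}$ with the Hodge-theoretic ones $\phi^{\hdg}_{f-a}$ at each point $a\in\C$: this is precisely the content of Denef--Loeser's and Guibert--Loeser--Merle's comparison between the motivic nearby/vanishing cycles and their Hodge realisation (the compatibility of $\scr S_{h}$ with $\psi^{\hdg}_h$, cf. \cite{GLM} section~3.16, which is the same input used to define $\chi^{\hdg}_S$ on $\M_S^{\hat\mu}$). Summing over $a$ — a finite sum by example~\ref{vancyclespecialcases} and lemma~\ref{vancyclesfinite} — and pushing forward by $\epsilon_!$ (which $\chi^{\hdg}$ respects, proposition~\ref{chihdgprop}(2)), we get the desired equality $\chi^{\hdg}(\epsilon_!\atotvan_f) = \Phi^{\hdg}_X(\chi^{\hdg}([Y\xrightarrow{f}\A^1]))$.

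\textbf{Bookkeeping and the main obstacle.} Once the equality holds on generators, extending to all of $(\M_{\A^1_X},\convv)$ is automatic by multiplicativity: every element is a $\Z[\LL^{\pm1}]$-combination of such generators and products thereof, and the Thom--Sebastiani theorems (corollary~\ref{TStotal} motivically, proposition~\ref{HodgeTS} Hodge-theoretically) guarantee that both composites send $\convv$-products to $\ast$-products compatibly, so the agreement propagates. To descend from a generator $[Y\xrightarrow{p}\A^1_X]$ over an arbitrary base $X$ to the fibrewise case, I would argue exactly as in the proof of theorem~\ref{glmfamily}/theorem~\ref{motmeasurebase}: both sides are elements of $K_0(\mhm_X^{\mon})$ that can be compared fibre by fibre using that the constructions commute with pullback along $\{x\}\hookrightarrow X$ (lemma~\ref{function.equality}). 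The main obstacle is the comparison step itself — verifying that $\chi^{\hdg}$ really carries the motivic vanishing cycles $\phi_{f-a}\in \M^{\hat\mu}$ to the Hodge-module vanishing cycles $\phi^{\hdg}_{f-a}$, with matching monodromy operator $T_s$. This is the delicate point because it requires matching the $\hat\mu$-action coming from Denef--Loeser's arc-space construction with the semisimple part of the monodromy on the Hodge side; however, it is not a new difficulty here, since it is exactly the fact that underlies the very definition of $\chi^{\hdg}_S$ on Grothendieck rings with $\hat\mu$-action (section~\ref{sect.hodgereal}, following \cite{GLM}), so the proof should consist in citing it carefully rather than reproving it. Everything else is formal diagram-chasing once this and the two Thom--Sebastiani theorems are in hand.
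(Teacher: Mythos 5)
Your overall strategy — reduce everything to the single comparison between the Denef–Loeser/Guibert–Loeser–Merle motivic vanishing cycles and Saito's Hodge-theoretic vanishing cycles, and cite that comparison rather than reprove it — is exactly the substance of the paper's proof, which consists of citing Proposition 3.17 of \cite{GLM} (compatibility of the motivic nearby cycles morphism with the nearby cycles functor on mixed Hodge modules) together with the remark after Notation 3.9 of \cite{DL01} identifying the motivic vanishing cycles, with the sign convention used here, as the motivic incarnation of the perverse sheaf $\phi_f[-1]$ underlying $\phi_f^{\hdg}$. So the core input you isolate is the right one, and your handling of $\LL_{\A^1_X}$, of the generators from lemma \ref{smoothpropergens}, and of multiplicativity via the two Thom–Sebastiani theorems is harmless (additivity alone would suffice once one works on generators).

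The genuine gap is your descent from an arbitrary base $X$ to $X=\spec\C$. You propose to compare the two composites fibre by fibre, "using that the constructions commute with pullback along $\{x\}\hookrightarrow X$ (lemma \ref{function.equality})". But lemma \ref{function.equality} is a statement about $\kvar_R$, $\M_R$, $\evar_R$, $\expp_R$; it has no counterpart for the target $K_0(\mhm_X^{\mon})$, and the analogous statement there is false: a class in $K_0(\mhm_X)$ is not determined by its pullbacks to points. For instance, on $X=\G_m$ a nontrivial rank-one local system with finite monodromy (underlying a simple pure Hodge module) and the constant one have equal pullbacks at every point but distinct classes, since $K_0$ of the semisimple category of pure Hodge modules is free on the simple objects. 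The two sides of the square genuinely carry family/monodromy information over $X$ — the Hodge side applies $\phi^{\hdg}_{\pr-a}$ to complexes on $\A^1_X$ globally — so pointwise agreement does not yield equality in $K_0(\mhm_X^{\mon})$, and the reduction fails as written. The repair is to invoke the comparison in its relative form, which is what the paper does: the compatibility cited from \cite{GLM} is a statement over the base (compatibility of $\scr{S}_h:\M_Y\to\M_{Y_0(h)}^{\hat{\mu}}$ with the Hodge-module nearby cycles functor), and applying it to the construction of $\Phi'_X$ via $\scr{S}_{h/\A^1}$, together with the sign/shift identification for vanishing cycles, gives the commutativity directly over $X$ with no pointwise argument.
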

\begin{proof} Proposition 3.17 in \cite{GLM} shows compatibility between the motivic nearby fibre morphism and the nearby fibre functor on mixed Hodge modules. As for the motivic vanishing cycle morphism, as noted just after notation 3.9 in \cite{DL01}, the motivic vanishing cycles $\scr{S}^{\phi}_f$  for $f:X\to \A^1$ as they were defined by Denef and Loeser should be seen as the motivic incarnation of $\phi_f[d-1]$ where $d$ is the dimension of $X$. With our notation, $\phi_f = (-1)^{d}\scr{S}^{\phi}_f$, so that our vanishing cycles should be the motivic incarnation of $\phi_f[-1]$, which is exactly the perverse sheaf underlying $\phi_f^{\hdg}$. 
\end{proof}

Recall that in section \ref{sect.compsymprodvanhdg} we defined a morphism
$$\overline{\add}_n:S^{n}(\A^1_{X})\to \A^1_{S^nX}$$
for every integer $n\geq 1$. 
\begin{cor}\begin{enumerate}[(a)]\item 
 For any $\a\in\M_{\A^1_X}$ and any integer $n\geq 1$, we have 
$$\chi^{\hdg}_{S^nX}\circ \Phi'_{S^nX}((\overline{\add}_n)_!(S^n\a)) = S^n(\chi^{\hdg}_X\circ \Phi'_X(\a)).$$
 \item  For any $\a\in\expp_X$ and any integer $n\geq 1$, we have 
$$\chi^{\hdg}_{S^nX}\circ \Phi_{S^nX}(S^n\a) = S^n(\chi^{\hdg}_X\circ \Phi_X(\a)).$$
\end{enumerate}
\end{cor}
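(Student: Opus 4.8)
The corollary has two parts, and part (b) should follow from part (a) by passing to the quotient, so the heart of the matter is establishing part (a). The strategy is to chain together the three compatibility results that precede it. Concretely, I want to compute $\chi^{\hdg}_{S^nX}\circ \Phi'_{S^nX}\bigl((\overline{\add}_n)_!(S^n\a)\bigr)$ in stages. First I would use Proposition~\ref{commdiagramvancycles} to move the Hodge realisation past $\Phi'$, rewriting $\chi^{\hdg}_{S^nX}\circ \Phi'_{S^nX}\bigl((\overline{\add}_n)_!(S^n\a)\bigr) = \Phi^{\hdg}_{S^nX}\circ \chi^{\hdg}_{S^n(\A^1_X)}\bigl((\overline{\add}_n)_!(S^n\a)\bigr)$, using that $\chi^{\hdg}$ commutes with the proper pushforward $(\overline{\add}_n)_!$ (property~(2) of Proposition~\ref{chihdgprop}) to get $\Phi^{\hdg}_{S^nX}\bigl((\overline{\add}_n)_!\, \chi^{\hdg}_{S^n(\A^1_X)}(S^n\a)\bigr)$. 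Next, the compatibility of $\chi^{\hdg}$ with symmetric products (the diagram in the lemma just before section~\ref{sect.compsymprodvanhdg}, and its localised version) lets me write $\chi^{\hdg}_{S^n(\A^1_X)}(S^n\a) = S^n\bigl(\chi^{\hdg}_{\A^1_X}(\a)\bigr)$ in $K_0(\mhm_{S^n(\A^1_X)})$. So the left-hand side becomes $\Phi^{\hdg}_{S^nX}\bigl((\overline{\add}_n)_!\, S^n(\chi^{\hdg}_{\A^1_X}(\a))\bigr)$.

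At this point I would invoke Proposition~\ref{vancyclefunctorsymproducts}, which says precisely that for $M\in D^b(\mhm_{\A^1_X})$ one has $\vanhdg_{S^nX}\bigl((\overline{\add}_n)_! S^nM\bigr) = S^n\bigl(\vanhdg_X(M)\bigr)$. Since $\Phi^{\hdg}$ is the morphism on Grothendieck groups induced by the functor $\vanhdg$, and since $S^n$ on Grothendieck groups is induced (via the morphism $S^{\hdg}_X$ of Lemma~\ref{mhmmorphism}) by the symmetric-power functor on complexes, this identity of functors descends to the identity $\Phi^{\hdg}_{S^nX}\bigl((\overline{\add}_n)_! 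S^n(\chi^{\hdg}_{\A^1_X}(\a))\bigr) = S^n\bigl(\Phi^{\hdg}_X(\chi^{\hdg}_{\A^1_X}(\a))\bigr)$. Finally, reapplying Proposition~\ref{commdiagramvancycles} in the form $\Phi^{\hdg}_X\circ \chi^{\hdg}_{\A^1_X} = \chi^{\hdg}_X\circ \Phi'_X$ gives $S^n\bigl(\chi^{\hdg}_X\circ \Phi'_X(\a)\bigr)$, which is the right-hand side of~(a).

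For part (b), I would note that $\Phi_X$ is by construction the composite of $\Phi'_X$ with the quotient map $(\M_{\A^1_X},\convv)\to \expp_X$ (followed by inverting $\LL$), and that the symmetric-product operation $S^n$ on $\expp_X$ is defined compatibly with $S^n$ on $\M_{\A^1_X}$ via $S^{\loc}$ and the quotient morphisms (Remark~\ref{finalsymproddef}). Moreover $\overline{\add}_n$ is exactly the map realising how the $\A^1$-coordinate of $S^n(\A^1_X)$ maps to $\A^1_{S^nX}$, so that passing a class of $\M_{\A^1_X}$ to its image in $\expp_X$ and then taking $S^n$ in the exponential sense is the same as taking $(\overline{\add}_n)_!S^n$ at the level of $\M_{\A^1_X}$ and then projecting. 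Combining this identification with part (a) yields $\chi^{\hdg}_{S^nX}\circ \Phi_{S^nX}(S^n\a) = S^n(\chi^{\hdg}_X\circ \Phi_X(\a))$, using again that $\chi^{\hdg}$ commutes with the relevant symmetric products on the localised rings.

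\textbf{Main obstacle.} The routine-looking but genuinely delicate point is bookkeeping the passage from identities of \emph{functors} (Proposition~\ref{vancyclefunctorsymproducts}, compatibility of $\vanhdg$ with $S^n$) to identities in \emph{Grothendieck groups}: one must check that the symmetric-power functor, though not exact, does induce a well-defined operation on $K_0$ compatible with $S^{\hdg}_X$ — this is exactly what Lemma~\ref{mhmmorphism} provides, but one has to make sure the splitting of the idempotent is used correctly so that $\Phi^{\hdg}$ (an additive functor, hence preserving the splitting) commutes with $S^n$ on classes. A secondary subtlety is keeping the base varieties and the maps $\overline{\add}_n$, $\pi'_X$ straight when moving between $S^n(\A^1_X)$ and $\A^1_{S^nX}$, and checking in part (b) that the exponential-ring definition of $S^n$ on $\expp_X$ really does match $(\overline{\add}_n)_! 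S^n$ on $\M_{\A^1_X}$ after quotienting — essentially unwinding the definitions in sections~\ref{sect.symprodexp} and~\ref{sect.locsymproducts}.
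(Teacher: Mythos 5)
Your proposal matches the paper's proof almost step for step: part (a) via Proposition~\ref{commdiagramvancycles}, the commutation of $\chi^{\hdg}$ with $(\overline{\add}_n)_!$ (Proposition~\ref{chihdgprop}), Lemma~\ref{chihdgsymprod}, and Proposition~\ref{vancyclefunctorsymproducts}; and part (b) by lifting $\a$ along the quotient $q:\M_{\A^1_X}\to\expp_X$ and verifying $q\circ(\overline{\add}_n)_!\circ S^n = S^n\circ q$, which is exactly what the paper spends the bulk of the proof of (b) on. The only quibbles are cosmetic: the subscript in your first display should be $\A^1_{S^nX}$ rather than $S^n(\A^1_X)$ (it becomes $S^n(\A^1_X)$ only after you move $(\overline{\add}_n)_!$ past $\chi^{\hdg}$), and "$\Phi_X$ is the composite of $\Phi'_X$ with the quotient" should read the other way around, $\Phi'_X = \Phi_X\circ q$.
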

\begin{proof} By proposition \ref{commdiagramvancycles}, to prove $(a)$, it suffices to prove
$$\Phi^{\hdg}_{S^nX}\circ \chi^{\hdg}_{\A^1_{S^nX}}((\overline{\add}_n)_!(S^n\a)) = S^n(\Phi_X^{\hdg}\circ \chi^{\hdg}_{\A^1_X}(\a)).$$
We have
\begin{eqnarray*}\Phi^{\hdg}_{S^nX}\circ \chi^{\hdg}_{\A^1_{S^nX}}((\overline{\add}_n)_!(S^n\a)) & = & \Phi^{\hdg}_{S^nX}\circ (\overline{\add}_n)_!\circ \chi^{\hdg}_{S^n(\A^1_X)}(S^n\a)\ \ \text{by proposition \ref{chihdgprop} }\\
& = & \Phi^{\hdg}_{S^nX}\circ (\overline{\add}_n)_!(S^n\chi^{\hdg}_{\A^1_X}(\a))\ \ \ \ \ \ \text{by lemma \ref{chihdgsymprod}}\\
& = & S^n(\Phi_X^{\hdg}\circ \chi^{\hdg}_{\A^1_X}(\a))\ \ \ \ \ \ \ \text{by proposition \ref{vancyclefunctorsymproducts}.}
\end{eqnarray*}
To prove $(b)$, take $\a\in\expp_X$, and, denoting by $q:\M_{\A^1_X}\to  \expp_X$ the quotient map, pick $\a'\in\M_{\A^1_X}$ such that $\a = q(\a')$. Applying $(a)$ to $\a'$, we have (recall $\Phi' = \Phi\circ q$)
$$\chi^{\hdg}_{S^nX}\circ \Phi_{S^nX}\circ q\circ (\overline{\add}_n)_!(S^n\a') = S^n(\chi^{\hdg}_X\circ \Phi_X(\a)).$$
It therefore remains to prove that $q\circ (\overline{\add}_n)_!(S^n\a') = S^n\a$. In other words, we want to show the commutativity of the diagram
\begin{equation}\label{qcircadddiag}\xymatrix{\M_{\A^1_X} \ar[r]^{S} \ar[d]^{q} & \M_{S^{\bullet}(\A^1_X)}\ar[d]^{q\circ (\overline{\add})_!} \\
\expp_{X}\ar[r]^S & \expp_{S^{\bullet}X}
}\end{equation}
(recall the group morphisms $S$ have been defined in chapter \ref{eulerproducts}, sections \ref{symprodclasses} and \ref{sect.locsymproducts}), where $\overline{\add}_! = \prod_{n\geq 1}(\overline{\add}_n)_!$. Let us start by checking that $q\circ (\overline{\add})_!$ is a group morphism. Let $\a = (\a_i)_{i\geq 1}$ and $\b = (\b_i)_{i\geq 1}$ be elements of $\M_{S^{\bullet}(\A^1_X)}$. We have 
$$\a\b = \left(\sum_{i=0}^n\gamma_!(\a_i\boxtimes \b_{n-i})\right)_{n\geq 1}$$
where 
$\gamma$ is the morphism $S^i(\A^1_X)\times S^{n-i}(\A^1_X)\to S^n(\A^1_X)$ induced by the identity $(\A^1_X)^i \times (\A^1_X)^{n-i} \to  (\A^1_X)^n$. To prove that
$$q\circ (\overline{\add})_!(\a\b) = q\circ (\overline{\add})_!(\a)q\circ (\overline{\add})_!(\b)$$
in $\expp_{S^{\bullet}X}$, it suffices to prove that for all $n\geq 1$ and all $i\in\{0,\ldots,n\}$, we have
\begin{equation}\label{qcircadd}q \circ (\overline{\add}_n)_!\gamma_!(\a_i\boxtimes \b_{n-i}) = \beta_!(q\circ (\overline{\add})_i(\a_i)\boxtimes q\circ (\overline{\add}_{n-i})_!(\b_{n-i}))\end{equation}
in $\expp_{S^{n}X},$  where $\beta:S^{i}X\times S^{n-i}X\to S^nX$ is the morphism induced by the identity $X^i\times X^{n-i}\to X^n$.  For this, consider the following diagram:
\begin{equation}\label{qcircadddiagram}\xymatrix{\M_{S^{i}(\A^1_X)}\times \M_{S^{n-i}(\A^1_X)}\ar[r]^-{\boxtimes}\ar[d]_{(\overline{\add}_i)_!\times (\overline{\add}_{n-i})_!} & \M_{S^{i}(\A^1_X)\times S^{n-i}(\A^1_X)}\ar[r]^-{\gamma_!}\ar[d]_-{(\overline{\add}_i\times\overline{\add}_{n-i})_!}&\M_{S^n(\A^1_X)}\ar[d]^{(\overline{\add}_n)_!}\\
\M_{\A^1_{S^iX}}\times \M_{\A^1_{S^{n-i}X}}\ar[r]^-{\boxtimes}\ar[d]_{q\times q}&\M_{\A^1_{S^iX}\times \A^1_{S^{n-i}X}}\ar[r]^-{(\beta\circ \add)_!}\ar[d]_{q\circ \add_!} & \M_{\A^1_{S^{n}X}}\ar[d]^q\\
\expp_{S^iX}\times \expp_{S^{n-i}X}\ar[r]^-{\boxtimes}& \expp_{S^iX\times S^{n-i}X} \ar[r]^-{\beta_!}&\expp_{S^nX}
}\end{equation}
Here $\beta$ denotes the morphism $S^{i}X\times S^{n-i}X\to S^{n}X$, as well as the morphism $\A^1_{S^{i}X\times S^{n-i}X}\to \A^1_{S^nX}$ it induces, and $\add$ refers to the morphism 
$$\A^1_{S^iX}\times \A^1_{S^{n-i}X}\to \A^1_{S^{i}X\times S^{n-i}X}$$
induced by the addition morphism on the $\A^1$-components. 

To prove (\ref{qcircadd}), it suffices to prove that this diagram is commutative. We do this square by square. The commutativity of the top left square comes from the fact that pushdowns commute with exterior products. The commutativity of the top right square comes from the commutativity of the square
$$\xymatrix{
(\A^1_X)^{i}\times (\A^1_X)^{n-i}\ar[r]^-{\id}\ar[d]^{\add_i\times \add_{n-i}} & (\A^1_X)^n\ar[d]^{\add_{n}}\\
\A^1_{X^i}\times \A^1_{X^{n-i}}\ar[r]^-{\add}&\A^1_{X^n}}$$
after taking quotients by the appropriate permutation actions. For the bottom left square, by bilinearity, it suffices to check commutativity for effective elements. For any morphisms $Y\xrightarrow{f}\A^1_{S^iX}$, and $Z\xrightarrow{g}\A^1_{S^{n-i}X}$, we have, by definition, 
\begin{eqnarray*}q\circ \add_!([Y\xrightarrow{f}\A^1_{S^iX}]\boxtimes[Z\xrightarrow{g}\A^1_{S^{n-i}X}])& = &q\circ \add_!([Y\times Z\xrightarrow{ f\times g}\A^1_{S^{i}X}\times \A^1_{S^{n-i}X}])\\
& = & [Y\times Z, \add\circ (f\times g)] \\
&=& [Y,f]\boxtimes [Z,g]\\
& = & q([Y\xrightarrow{f}\A^1_{S^iX}])\boxtimes q([Z\xrightarrow{g}\A^1_{S^{n-i}X}])\end{eqnarray*}
in $\expp_{S^{i}X\times S^{n-i}X}$.
The commutativity of the last square comes from the fact that $q$ commutes with~$\beta_!$. 

We come back to the proof of the main statement. Since all maps involved are group morphisms, it suffices to prove commutativity of diagram (\ref{qcircadddiag}) for effective elements. Let therefore $f:Y\to \A^1_X$ be a morphism, inducing $S^nf:S^nY\to S^n(\A^1_X)$, as well as $f^{(n)} = \overline{\add}_n\circ S^nf:S^nY\to \A^1_{S^nX}$. According to the definition of symmetric products of varieties with exponentials, we have, for all $n\geq 1$, 
\begin{eqnarray*} q\circ (\overline{\add}_n)_!(S^n[Y\xrightarrow{f}\A^1_X])& =& q\circ (\overline{\add}_n)_!([S^nY \xrightarrow{S^nf}S^{n}(\A^1_X)])\\
& = & q([S^nY \xrightarrow{\overline{\add}_n\circ S^nf} \A^1_{S^nX}])\\
& = & [S^nY,  f^{(n)}]\\
& = & S^n([Y,f])\\
& = & S^n(q([Y\xrightarrow{f}\A^1_X]))
\end{eqnarray*} 
in $\expp_{S^nX}$, whence the result.\end{proof}
\section{Weight filtration on Grothendieck rings of mixed Hodge modules}\label{sect.weightmhm}

\subsection{The weight filtration}\label{sect.weightfiltrationmhm}\index{weight filtration!on $K_0(\mhm_S^{\mon})$}
For any integer $n$, denote by $W_{\leq n}K_0(\mhm_S^{\mon})$ \index{WKM@$W_{\leq n}K_0(\mhm_S^{\mon})$} the subgroup of $K_0(\mhm_S^{\mon})$ generated by classes of pure Hodge modules $(M,\id,N)$ (i.e. with trivial semi-simple monodromy) of weight at most $n$ and by classes of pure Hodge modules with monodromy $(M,T_s,N)$ of weight at most $n-1$. 

\begin{remark}\label{eigenspacedecomp} The monodromy $T_s$ is an automorphism of finite order, so that a pure Hodge module $(M,T_s,N)$ over $S$ of weight $m$ with monodromy decomposes into $M = M^{0}\oplus M^{\neq 0}$ where $M^0 = \ker(T_s - \id)$ and $M^{\neq 0} = \ker(T_s^{k-1} + \ldots + T_s + \id)$, where $k$ is minimal such that $T_s^k = 1$. This Hodge module is an element of $W_{\leq m}K_0(\mhm_S^{\mon})$ if $M^{\neq 0} = 0$, and of $W_{\leq m+1}K_0(\mhm_S^{\mon})$ otherwise. 
\end{remark}

We have the following compatibility with respect to pushdowns, pullbacks and exterior products:
\begin{lemma}\label{weightpushpull}
\begin{enumerate}\item\label{pushpull}  Let $f:Y\to X$ be a morphism of complex varieties with fibres of dimension $\leq d$, then for all integers $n$, we have
$$f_{!}\left(W_{\leq n} K_0(\mhm_Y^{\mon})\right)\subset W_{\leq n + d}K_0(\mhm_X^{\mon})$$
and $$f^*\left(W_{\leq n} K_0(\mhm_X^{\mon})\right)\subset W_{\leq n + d}K_0(\mhm_Y^{\mon}).$$
\item \label{exterior}Let $X$ and $Y$ be complex varieties. Then for all integers $n,m$ we have
$$W_{\leq n}K_0(\mhm^{\mon}_X)\twtimes W_{\leq m}K_0(\mhm^{\mon}_Y)\subset W_{\leq n+m} K_0(\mhm^{\mon}_{X\times Y}).$$
\end{enumerate}
\end{lemma}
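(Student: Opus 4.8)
\textbf{Proof proposal for Lemma \ref{weightpushpull}.}

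The plan is to reduce everything to the basic facts about weights of complexes of mixed Hodge modules recalled in Section \ref{sect.mhmbasics}, keeping careful track of the shift $[-1]$ hidden in the ``with monodromy'' convention. For part \eqref{pushpull}, I would first treat the case of trivial monodromy, where $W_{\leq n}K_0(\mhm_X)$ is generated by classes of pure Hodge modules of weight $\leq n$, or equivalently by classes $[\mathcal{H}^j(M)]$ with $M\in D^b(\mhm_X)$ of weight $\leq n$. Given $M\in D^b(\mhm_Y)$ of weight $\leq n$, the functor $f_!$ does not increase weights (\cite{Saito90}, (4.5.2)), so $f_! M$ has weight $\leq n$; by Lemma \ref{cohamplitude}, $f_!$ has cohomological amplitude $\leq d$, so the class $[f_! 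M] = \sum_j (-1)^j [\mathcal{H}^j(f_! M)]$ in $K_0(\mhm_X)$ is a sum of at most $d+1$ terms, each a cohomology module of a weight $\leq n$ complex; by the definition of ``weight $\leq n$ for a complex'', $\gr^W_i \mathcal{H}^j(f_! M) = 0$ for $i > j + n$, hence $\mathcal{H}^j(f_! M)$ has weight $\leq j + n$. The shift by $j$ is the point: combining with the amplitude bound $|j|\leq d$ for the nonzero terms — more precisely, using that for $M$ a pure Hodge module placed in degree $0$ one has $\mathcal{H}^j(f_! M) = 0$ for $j\notin[-d,0]$ or $[0,d]$ depending on conventions, but in any case $j\leq d$ — gives $[f_! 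M]\in W_{\leq n+d}K_0(\mhm_X)$. For $f^*$ the argument is identical, using that $f^*$ does not increase weights and has cohomological amplitude $\leq d$ (Lemma \ref{cohamplitude}).

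To pass to the monodromy setting I would use Remark \ref{eigenspacedecomp}: a generator of $W_{\leq n}K_0(\mhm_Y^{\mon})$ is the class of a pure Hodge module with monodromy $(M,T_s,N)$ which is either of weight $\leq n$ with $M^{\neq 0}=0$ (so effectively trivial monodromy), or of weight $\leq n-1$ with arbitrary $M^{\neq 0}$. Since the functors $f_!$ and $f^*$ commute with the monodromy operators (they are the $\mhm^{\mon}$-lifts of the usual functors, cf. the discussion in Section \ref{sect.mhmmonodromy}) and commute with the eigenspace decomposition $M = M^0\oplus M^{\neq 0}$, one applies the trivial-monodromy case eigenspace by eigenspace. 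A weight $\leq n$, trivially-monodromic generator lands in $W_{\leq n+d}K_0(\mhm_X^{\mon})$ by the previous paragraph; a weight $\leq n-1$ generator with nontrivial monodromy produces, after applying $f_!$ or $f^*$ and taking cohomology, pure monodromic Hodge modules of weight $\leq (n-1)+d = (n+d)-1$, which by definition lie in $W_{\leq n+d}K_0(\mhm_X^{\mon})$. This gives the two inclusions.

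For part \eqref{exterior} I would argue directly from the construction of the twisted exterior product $\twtimes$ in Section \ref{sect.mhmmonodromy}. It suffices to check the inclusion on generators, so take pure monodromic Hodge modules $(M_1,T_s,N)$ on $X$ of weight $w_1$ and $(M_2,T_s,N)$ on $Y$ of weight $w_2$. Inspecting the weight filtration formula \eqref{twistedweight}: on the piece $D_1^\alpha\boxtimes D_2^\beta$ the weight filtration is the convolution of $W_\bullet D_1$ and $W_\bullet D_2$ shifted by $0$, $1$, or $2$ according to whether $\alpha\beta = 0$, $\alpha\beta\neq 0$ with $\alpha+\beta\neq -1$, or $\alpha+\beta=-1$. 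Thus $M_1\twtimes M_2$ is pure of weight $w_1+w_2$ when both monodromies are trivial (the $\alpha=\beta=0$, i.e.\ $\alpha\beta=0$, case, shift $0$), of weight $\leq w_1+w_2+1$ when exactly one monodromy is nontrivial, and of weight $\leq w_1+w_2+2$ when both are nontrivial (but then the shift-$2$ piece $\alpha+\beta=-1$ also has, by the structure of the eigenvalue-$(\neq 1)$ part, trivial resulting monodromy after the Galois descent — this is the subtlety I expect to be the main obstacle and which I would verify carefully against \cite{SaitoTS}, cf.\ Example \ref{twtimesexample}). Now translate to $K_0(\mhm^{\mon})$ using the weight convention: a trivially-monodromic generator of $W_{\leq n}$ has Hodge-module weight $\leq n$, a nontrivially-monodromic one has Hodge-module weight $\leq n-1$. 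In the three cases: $(\leq n)\otimes(\leq m)$ trivial-trivial gives weight $\leq n+m$ trivial, in $W_{\leq n+m}$; $(\leq n-1)$ nontrivial times $(\leq m)$ trivial gives weight $\leq (n-1)+m+1 = n+m$, and the monodromy is nontrivial, so by the convention it lies in $W_{\leq n+m}K_0(\mhm^{\mon})$; $(\leq n-1)$ nontrivial times $(\leq m-1)$ nontrivial gives weight $\leq (n-1)+(m-1)+2 = n+m$ with trivial resulting monodromy on the relevant graded piece, again landing in $W_{\leq n+m}$. Taking $\twtimes$ over $\C$ reduces to $\boxtimes$, and the general relative statement follows by restricting along the diagonal as in the definition of $\twtimes_X$, since pullback along a closed immersion is exact and does not increase weights. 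Since in all cases the product of a $W_{\leq n}$-generator and a $W_{\leq m}$-generator lies in $W_{\leq n+m}K_0(\mhm^{\mon}_{X\times Y})$, bilinearity finishes the proof.
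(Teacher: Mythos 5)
Your part (1) is fine and is essentially the paper's own argument: reduce to a pure monodromic generator, use that $f_!$ and $f^*$ do not increase weights together with the cohomological amplitude bound of Lemma \ref{cohamplitude} (so each $\mathcal{H}^j$ has weight $\leq n+j\leq n+d$), and note that trivial monodromy is preserved while a nontrivially-monodromic generator starts one weight lower. No objection there.

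Part (2), however, contains a genuine gap in the mixed case. You claim that when exactly one of the two monodromies is nontrivial the twisted product has weight $\leq w_1+w_2+1$, and then conclude: ``weight $\leq (n-1)+m+1=n+m$, and the monodromy is nontrivial, so by the convention it lies in $W_{\leq n+m}$''. That last inference contradicts the definition of the filtration: a pure Hodge module with nontrivial monodromy of weight $w$ is only guaranteed to lie in $W_{\leq w+1}$, so ``weight $\leq n+m$ with nontrivial monodromy'' gives membership only in $W_{\leq n+m+1}$, which is not the inclusion the lemma asserts. The point you are missing is that in this mixed case the shift in formula \eqref{twistedweight} is $0$, not $1$: if, say, $M_2=M_2^{0}$, every contributing piece is of the form $D_1^{\al}\boxtimes D_2^{0}$, so $\al\be=0$ and the first line of \eqref{twistedweight} applies; hence $M_1\twtimes M_2$ is pure of weight $(n-1)+m=n+m-1$, and only then does the convention (nontrivial monodromy, weight $\leq n+m-1$) place it in $W_{\leq n+m}$ --- this is exactly how the paper argues, after reducing to the three cases via Remark \ref{eigenspacedecomp} as you do. A similar looseness affects your both-nontrivial case: the crude bound $\leq w_1+w_2+2$ is useless on its own; what is needed, and what the paper checks rather than leaves ``to verify against Saito'', is the eigenvalue bookkeeping: the $T_s$-eigenvalue on $D_1^{\al}\boxtimes D_2^{\be}$ is $\exp(-2i\pi(\al+\be))$, so the shift-$2$ pieces ($\al+\be=-1$) constitute precisely the eigenvalue-$1$ part, of weight $n+m$ with trivial monodromy, while the eigenvalue-$\neq 1$ part only receives shift $1$ and has weight $n+m-1$ with nontrivial monodromy; each part then lies in $W_{\leq n+m}$. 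With these two corrections your argument coincides with the paper's proof.
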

\begin{proof} Let $M$ be a pure Hodge module of weight at most $n$ (resp. $n-1$). Then, since the functor~$f_!$ does not increase weights, $f_{!}M$ is a complex of weight $\leq n$ (resp. $\leq n-1$) which belongs to $D^{\leq d}(\mhm_X)$ by lemma~\ref{cohamplitude}, so
$$[f_!M] = \sum_{i\leq d}(-1)^i[\mathcal{H}^if_!M]$$
is a sum of Hodge modules of weight $\leq n + d$ (resp. $\leq n-1 + d$). If the monodromy on~$M$ is trivial, then it is also trivial on all mixed Hodge modules $\mathcal{H}^if_!M$, so  in any case, we have $[f_!M]\in W_{\leq n+d}K_0(\mhm_S^{\mon})$. The proof is the same for $f^{*}$.

Let $(M_1,T_{s,1},N_1)\in W_{\leq n}K_0(\mhm_X)$ and $(M_2,T_{s,2},N_2)\in W_{\leq m}K_0(\mhm_X)$ be two pure Hodge modules with monodromy. By remark \ref{eigenspacedecomp}, it suffices to treat the following cases :
\begin{itemize} \item $M_1 = M_1^0$ is of weight $n$, $M_2 = M_2^0$ is of weight $m$;
\item $M_1 = M_1^0$ is of weight $n$, $M_2 = M_2^{\neq 0}$ is of weight $m-1$;
\item $M_1 = M_1^{\neq 0}$ is of weight $n-1$ and $M_2 = M_2^{\neq 0}$ is of weight $m-1$.
\end{itemize}
By the definition of the weight filtration on $M_1\twtimes M_2$, in the first case $M_1\twtimes M_2$ is pure of weight $m+n$, with trivial monodromy, and in the second case, pure of weight $m+n-1$. Thus, in both cases, $M_1\twtimes M_2$ is an element of $W_{\leq m+n}K_0(\mhm_S^{\mon})$. 
This leaves us with the third case. For any $\alpha,\beta\in(-1,0]$ such that $\exp(-2i\pi\al)$ (resp. $\exp(-2i\pi\beta)$) is an eigenvalue of $T_{s,1}$ (resp. $T_{s,2}$), the complex number $\exp(-2i\pi(\al + \be))$ is an eigenvalue of monodromy on $M_1\twtimes M_2$, and  $(M_1\twtimes M_2)^1$ is non-zero if and only if there exist such $\al,\beta$ with $\al + \beta = -1$. The weight filtration on $M_1\twtimes M_2$ is such that $(M_1\twtimes M_2)^{\neq 1}$ is of weight $m+n-2$, and $(M_1\twtimes M_2)^{1}$ is of weight $m+n$, so that the Hodge module $M_1\twtimes M_2$ is an element of $W_{\leq m + n}K_0(\mhm_S^{\mon})$.
\end{proof}

%
For any element $\a\in K_0(\mhm_S^{\mon})$, we put
$$w_S(\a):= \inf \{n,\ \a\in W_{\leq n}K_0(\mhm_S^{\mon})\},$$
which defines a function $w_S:K_0(\mhm_S^{\mon})\to \Z\cup \{-\infty\}$. \index{weight function!on $K_0(\mhm_S^{\mon})$}\index{wS@$w_S$ (weight function)} Lemma \ref{weightpushpull} gives us the following:
\begin{lemma}\label{weight.properties} For any complex varieties $S$ and $T$, any $\a,\a'\in K_0(\mhm_S^{\mon})$ and $\mathfrak{b}\in K_0(\mhm_T^{\mon})$ the weight function satisfies the following properties:
\begin{enumerate}[(a)]\item\label{weightofzero} $w_S(0) = -\infty$
\item \label{weight.sum}$w_S(\a + \a')\leq \max\{w_S(\a),w_S(\a')\}$, with equality if $w_S(\a)\neq w_S(\a')$. 
\item \label{weight.extproducts} $w_{S\times T}(\a\,\twtimes\,\mathfrak{b})\leq w_S(\a) + w_T(\mathfrak{b}).$
\item \label{weight.push} If $f:S\to T$ is a morphism with fibres of dimension $\leq d$, then
$$w_T(f_!(\a))\leq w_S(\a) + d.$$
\item  \label{weight.pull} If $f:S\to T$ is a morphism with fibres of dimension $\leq d$, then
$$w_S(f^*(\b))\leq w_T(\b) + d.$$
\item\label{elemcomparison} If $M^{\bullet}\in D^{\leq a}(\mhm_S)$ is a complex of mixed Hodge modules of weight $\leq n$, then
$w_S([M^{\bullet}])\leq a + n$. Equality is achieved if and only if $\gr^{W}_{a+n}\mathcal{H}^a(M^{\bullet}) \neq 0$. 
\end{enumerate}
\end{lemma}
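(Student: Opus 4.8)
\textbf{Proof plan for Lemma \ref{weight.properties}.}

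The plan is to derive each item directly from the definition of $w_S$ together with Lemma \ref{weightpushpull}, treating the six parts in roughly the order listed since they build on one another. For (\ref{weightofzero}), the class $0$ lies in $W_{\leq n}K_0(\mhm_S^{\mon})$ for every $n\in\Z$ (the zero subgroup is contained in all of them), so the infimum defining $w_S(0)$ is over all of $\Z$ and hence equals $-\infty$. For (\ref{weight.sum}), I would use that the $W_{\leq n}$ form an increasing filtration: if $n=\max\{w_S(\a),w_S(\a')\}$ then both $\a$ and $\a'$ lie in $W_{\leq n}K_0(\mhm_S^{\mon})$, hence so does $\a+\a'$, giving the inequality. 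For the equality clause when $w_S(\a)\neq w_S(\a')$, say $w_S(\a)>w_S(\a')$: if we had $w_S(\a+\a')<w_S(\a)$, then writing $\a=(\a+\a')-\a'$ and applying the already-proven subadditivity would force $w_S(\a)\leq\max\{w_S(\a+\a'),w_S(\a')\}<w_S(\a)$, a contradiction; so $w_S(\a+\a')=w_S(\a)=\max$.

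Parts (\ref{weight.extproducts}), (\ref{weight.push}) and (\ref{weight.pull}) are immediate translations of the three statements of Lemma \ref{weightpushpull}. For (\ref{weight.extproducts}): with $n=w_S(\a)$, $m=w_T(\b)$ we have $\a\in W_{\leq n}K_0(\mhm_S^{\mon})$ and $\b\in W_{\leq m}K_0(\mhm_T^{\mon})$, so part (\ref{exterior}) of Lemma \ref{weightpushpull} gives $\a\twtimes\b\in W_{\leq n+m}K_0(\mhm_{S\times T}^{\mon})$, whence $w_{S\times T}(\a\twtimes\b)\leq n+m$. Parts (\ref{weight.push}) and (\ref{weight.pull}) follow the same way from the two inclusions in part (\ref{pushpull}) of Lemma \ref{weightpushpull}, noting that the hypothesis there (fibres of dimension $\leq d$) is exactly the hypothesis here. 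I should also remark, for safety, that these three statements are stated for arbitrary classes whereas Lemma \ref{weightpushpull} is phrased for pure classes; since the pure classes generate $W_{\leq n}K_0(\mhm_S^{\mon})$ by definition and $f_!$, $f^*$, $\twtimes$ are additive (group morphisms, resp. bilinear), the inclusions extend to all of $W_{\leq n}$ automatically — this is really already how Lemma \ref{weightpushpull} was proved, so I would just cite it.

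The last part, (\ref{elemcomparison}), is the only one requiring a genuine (though short) argument, and is the main point to get right. Given $M^{\bullet}\in D^{\leq a}(\mhm_S)$ of weight $\leq n$, write $[M^{\bullet}]=\sum_{i\leq a}(-1)^i[\mathcal{H}^i(M^{\bullet})]$ in $K_0(\mhm_S)$. Each $\mathcal{H}^i(M^{\bullet})$ is a mixed Hodge module, and the condition ``weight $\leq n$'' means $\gr^W_j\mathcal{H}^i(M^{\bullet})=0$ for $j>i+n$; hence in $K_0$ the class $[\mathcal{H}^i(M^{\bullet})]=\sum_j[\gr^W_j\mathcal{H}^i(M^{\bullet})]$ is a sum of pure (trivial-monodromy) Hodge modules of weight $\leq i+n\leq a+n$, so lies in $W_{\leq a+n}K_0(\mhm_S^{\mon})$. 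Summing over $i\leq a$ gives $[M^{\bullet}]\in W_{\leq a+n}K_0(\mhm_S^{\mon})$, i.e. $w_S([M^{\bullet}])\leq a+n$. For the equality criterion: if $\gr^W_{a+n}\mathcal{H}^a(M^{\bullet})\neq 0$, then in the grading of $K_0(\mhm_S)$ coming from the weight filtration, the only contribution to the top piece $W_{\leq a+n}/W_{\leq a+n-1}$ comes from $i=a$, $j=a+n$, with sign $(-1)^a$, and it is nonzero because a nonzero polarizable pure Hodge module has nonzero class (the category $\mathrm{HM}_S$ of pure Hodge modules of a fixed weight is semisimple, so $K_0(\mathrm{HM}_S)$ is free on simple objects and no cancellation occurs); hence $[M^{\bullet}]\notin W_{\leq a+n-1}K_0(\mhm_S^{\mon})$ and $w_S([M^{\bullet}])=a+n$. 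Conversely if $\gr^W_{a+n}\mathcal{H}^a(M^{\bullet})=0$ then the above sum already shows $[M^{\bullet}]\in W_{\leq a+n-1}K_0(\mhm_S^{\mon})$, so the weight drops. The potential subtlety, and the step I would be most careful about, is the no-cancellation claim: I need that the natural map $K_0(\mathrm{HM}_S^{\text{wt }w})\to \gr^W_w K_0(\mhm_S^{\mon})$ is injective, which follows from the semisimplicity of pure polarizable Hodge modules of fixed weight (cited earlier in the excerpt as \cite{Saito88}, Lemme 5) and the fact that classes of distinct simple objects are linearly independent; the monodromy-eigenspace decomposition of Remark \ref{eigenspacedecomp} guarantees the trivial-monodromy pieces sit in the appropriate graded piece without mixing with nontrivial-monodromy contributions.
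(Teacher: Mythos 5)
Your proof is correct and follows the same route the paper intends: the paper states this lemma without proof, as an immediate consequence of Lemma \ref{weightpushpull}, and your parts (a)--(e) are exactly the translations of that lemma and of the definition of $w_S$ that are meant. Your argument for part (\ref{elemcomparison}) — in particular the no-cancellation step, justified by semisimplicity of pure polarizable Hodge modules, freeness of $K_0$ on simple objects, and the monodromy-eigenspace decomposition separating the trivial-monodromy weight-$(a+n)$ piece from the generators of $W_{\leq a+n-1}$ — correctly supplies the one detail the paper leaves implicit.
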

\begin{remark}\label{absweightremark} As a special case of (\ref{weight.push}), denoting by $w$ the weight function $w_{\pt}$ on $K_0(\mhm^{\mon}_{\pt})$ and by $a_S:S\to \pt$ the structural morphism, we have
$$w((a_S)_!\a)\leq w_S(\a) + \dim S.$$
\end{remark}
\subsection{Weights of symmetric powers of mixed Hodge modules}
Recall that in section \ref{mhmsymproducts} we defined a group $K_0(\mhm_{S^{\bullet}X}^{\mon})$ and a morphism
$$S_X^{\hdg}: K_0(\mhm^{\mon}_X) \to K_0(\mhm^{\mon}_{S^{\bullet}X})$$
sending the class of a mixed Hodge module $M$ to $(S^nM)_{n\geq 1}$. Our goal here is to show that $S_X^{\hdg}$ behaves well with respect to the weight filtration from section \ref{sect.weightfiltrationmhm}. 
We define the following natural filtration on $K_0(\mhm^{\mon}_{S^{\bullet}X})$:
$$W_{\leq d}K_0(\mhm^{\mon}_{S^{\bullet}X}) := \prod_{n\geq 1} W_{\leq nd}K_0(\mhm^{\mon}_{S^{n}X})\subset \prod_{n\geq 1}K_0(\mhm^{\mon}_{S^nX}).$$
We have the following properties:

\begin{prop}\label{propweightsymprod} \begin{enumerate}\item For every $d$, $W_{\leq d}K_0(\mhm^{\mon}_{S^{\bullet}X}) $ is a subgroup of the group $K_0(\mhm^{\mon}_{S^{\bullet}X})$. 
\item  For any integer $d$, $S_X^{\hdg}(W_{\leq d}K_0(\mhm^{\mon}_X))\subset W_{\leq d}K_0(\mhm^{\mon}_{S^{\bullet}X}).$
\item \label{mainweightsymprodineq} For every $\a\in K_0(\mhm^{\mon}_X)$ and every integer $n\geq 0$, we have $w_{S^nX}(S^n\a)\leq nw_X(\a)$. \index{weight!of symmetric power}
\end{enumerate}
\end{prop}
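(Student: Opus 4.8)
\textbf{Proof strategy for Proposition \ref{propweightsymprod}.}

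The plan is to prove the three assertions in order, since each feeds into the next. For the first point, I would unwind the definition of the group law on $K_0(\mhm^{\mon}_{S^{\bullet}X})$: the product of $a = (a_n)_{n\geq 1}$ and $b = (b_n)_{n\geq 1}$ has $n$-th component $\sum_{k=0}^{n} a_k \twtimes b_{n-k}$, pushed forward along the natural map $S^kX \times S^{n-k}X \to S^nX$. If $a$ and $b$ both lie in $W_{\leq d}K_0(\mhm^{\mon}_{S^{\bullet}X})$, then $a_k \in W_{\leq kd}$ and $b_{n-k}\in W_{\leq (n-k)d}$, so by property (\ref{weight.extproducts}) of Lemma \ref{weight.properties} the external twisted product $a_k \twtimes b_{n-k}$ has weight at most $kd + (n-k)d = nd$, and then property (\ref{weight.push}) applied to the finite (in fact, generically finite — zero-dimensional fibres) quotient map $S^kX\times S^{n-k}X \to S^nX$ keeps the weight at most $nd$. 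Summing over $k$ and using property (\ref{weight.sum}), the $n$-th component stays in $W_{\leq nd}$. Taking inverses is handled the same way by the inductive formula $b_n = -\sum_{k=1}^{n} a_k \twtimes b_{n-k}$, arguing by induction on $n$. Hence $W_{\leq d}K_0(\mhm^{\mon}_{S^{\bullet}X})$ is a subgroup.

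For the second point, by Lemma \ref{mhmmorphism} the morphism $S_X^{\hdg}$ is a group morphism and $K_0(\mhm^{\mon}_X)$ is generated as a group by classes of pure Hodge modules with monodromy, and indeed $W_{\leq d}K_0(\mhm^{\mon}_X)$ is generated by classes of pure $(M,\id,N)$ of weight $\leq d$ together with pure $(M,T_s,N)$ of weight $\leq d-1$. So, using the subgroup property just established, it suffices to show that for each such generator $[M]$, the family $(S^nM)_{n\geq 1}$ lies in $\prod_n W_{\leq nd}K_0(\mhm^{\mon}_{S^nX})$, i.e. that $w_{S^nX}(S^nM)\leq nd$. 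This is exactly the content of the third point specialised to $\a = [M]$ with $w_X(\a)\leq d$; conversely the third point for a general $\a$ follows from the second by writing $\a$ as an element of $W_{\leq d}K_0(\mhm^{\mon}_X)$ with $d = w_X(\a)$ and reading off the $n$-th component. So the real work is entirely in the third point, and really in the case of a single pure Hodge module with monodromy.

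The key step is therefore the weight estimate $w_{S^nX}(S^nM)\leq n\,w_X(M)$ for a pure Hodge module with monodromy $M = (M_0, T_s, N)$ on $X$. Recall $S^nM = \big(\pi_{X,*}(\bigtwtimes{n}M)\big)^{\Sym_n}$ is the image of an idempotent acting on $\pi_{X,*}(\bigtwtimes{n}M)$, where $\pi_X : X^n \to S^nX$ is the finite quotient map. By iterating property (\ref{weight.extproducts}) of Lemma \ref{weight.properties}, $\bigtwtimes{n}M$ has weight $\leq n\,w_X(M)$ on $X^n$; here one must be a little careful about the behaviour of the twisted weight filtration (\ref{twistedweight}) under iterated twisted external products, but the eigenvalue-bookkeeping in the proof of Lemma \ref{weightpushpull}(\ref{exterior}) extends directly to $n$-fold products (the accumulated shift is absorbed exactly by the convention that a nontrivial-monodromy pure module of weight $m-1$ counts towards $W_{\leq m}$). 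Then $\pi_X$ is finite, hence of fibre dimension $0$, so by property (\ref{weight.push}) the pushforward $\pi_{X,*}(\bigtwtimes{n}M)$ still has weight $\leq n\,w_X(M)$ in the sense of its class in $K_0$; more precisely, as a complex it has weight $\leq n\,w_X(M)$ and is concentrated in degree $0$ since $\pi_X$ is finite. Finally $S^nM$, being a direct summand (the image of an idempotent, split by corollary 2.10 in \cite{BS}) of $\pi_{X,*}(\bigtwtimes{n}M)$, inherits weight $\leq n\,w_X(M)$: a direct summand of a pure complex of weight $\leq w$ is again of weight $\leq w$, because its cohomology sheaves are direct summands of those of the ambient complex and the weight filtration is functorial and strict on morphisms of mixed Hodge modules. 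Hence $w_{S^nX}(S^nM)\leq n\,w_X(M)$ by property (\ref{elemcomparison}).

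\textbf{Main obstacle.} The delicate point is the interaction between the \emph{twisted} exterior product $\twtimes$ (with its shifted weight filtration (\ref{twistedweight})) and the weight function: one must check that iterating the argument of Lemma \ref{weightpushpull}(\ref{exterior}) $n$ times really yields the bound $n\,w_X(M)$ rather than something worse, tracking all the cases according to which tensor factors have trivial versus nontrivial semisimple monodromy and whether the eigenvalue arguments sum to $-1$. Once that combinatorial bookkeeping is done, the rest is a routine application of the properties in Lemma \ref{weight.properties} together with the splitting of the symmetrising idempotent.
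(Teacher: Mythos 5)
Your proposal is correct and follows essentially the same route as the paper: closure of $W_{\leq d}K_0(\mhm^{\mon}_{S^{\bullet}X})$ under the convolution-type group law via Lemma \ref{weightpushpull} and zero-fibre-dimension pushforwards, reduction of part 2 to pure generators, and the key estimate obtained by bounding the weight of $\bigtwtimes{n}M$, pushing forward along the finite quotient $X^n\to S^nX$, and passing to the direct summand cut out by the symmetrising idempotent. The monodromy bookkeeping you flag as the delicate point is handled in the paper only implicitly (it asserts purity of weight $nd$ for $\bigtwtimes{n}M$, which is the iterated form of Lemma \ref{weightpushpull}(\ref{exterior})), so your write-up is, if anything, slightly more explicit than the paper's.
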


\begin{proof} 1.  Let $a = (a_n)_{n\geq 1}$ and $b = (b_n)_{n\geq 1}$ be two elements of $W_{\leq d}K_0(\mhm^{\mon}_{S^{\bullet}X})$. Then for all $n\geq 1$ and all $i\in\{0,\ldots,n\}$, by property \ref{exterior} of lemma \ref{weightpushpull} we have $a_i\twtimes b_{n-i}\in K_0(\mhm^{\mon}_{S^iX\times S^{n-i}X})$ of weight $\leq id + (n-i)d = nd$. The map $S^iX\times S^{n-i}X\to S^nX$ has fibre dimension zero, so by property \ref{pushpull} of lemma \ref{weightpushpull} the same estimate is valid for $a_i\twtimes b_{n-i}$ seen as an element of $K_0(\mhm^{\mon}_{S^nX})$.

2.  Let $M\in \mhm^{\mon}_X$ be a pure Hodge module of weight $d$. Then $\bigtwtimes{n} M$ is a pure Hodge module of weight $nd$, and denoting by $p$ the quotient morphism $X^n\to S^nX$, the complex $p_!(\bigtwtimes{n} M)$ is of weight $\leq nd$ by property \ref{pushpull} of lemma \ref{weightpushpull}, since $p$ has fibres of dimension $0$. Finally, $S^nM$ is obtained as a subobject of $p_!(\bigtwtimes{n} M)$, so its weight is $\leq nd$ again. Statement 3 is a direct consequence of 2.
\end{proof}

\section{Weight filtration on Grothendieck rings of varieties}\label{sect.weightfiltrationkvar}
In this section, we are going to use the previously defined weight filtration to define a notion of weight on Grothendieck rings of varieties with exponentials. For this, we are going to use the motivic vanishing cycles measure from chapter \ref{grothrings}, and the Hodge realisation from section \ref{sect.hodgereal}. 
\subsection{The weight filtration and completion}
Let $S$ be a complex variety. Recall that $\Phi_S:\expp_S\to (\M_S^{\hat{\mu}},\ast)$ is the motivic vanishing cycles measure from theorem \ref{motmeasuremain} of chapter \ref{grothrings}. 
\begin{definition}\label{weightfiltration}\begin{enumerate}\item The weight filtration on the ring $\M_S^{\hat{\mu}}$ is given by 
$$W_{\leq n}\M_S^{\hat{\mu}} := (\chi^{\hdg}_S)^{-1}(W_{\leq n}K_0(\mhm_{S}^{\mon}))$$
for every $n\in\Z$. \index{WnM@$W_{\leq n}\M_S^{\hat{\mu}}$}
The weight function on $\M_S^{\hat{\mu}}$, again denoted by $w_S$, is the composition
$$\M_S^{\hat{\mu}} \xrightarrow{\chi_S^{\hdg}} K_0(\mhm_{S}^{\mon}) \xrightarrow{w_S} \Z.$$
\item The weight filtration on the ring $\expp_S$ is given by 
$$W_{\leq n}\expp_S := (\chi^{\hdg}_S\circ\Phi_S)^{-1}(W_{\leq n}K_0(\mhm_{S}^{\mon}))$$\index{WnE@$W_{\leq n}\expp_S$}
for every $n\in\Z$. The weight function on $\expp_S$, again denoted by $w_S$, is the composition 
$$\expp_S \xrightarrow{\Phi_S} \M_S^{\hat{\mu}} \xrightarrow{\chi_S^{\hdg}} K_0(\mhm_{S}^{\mon}) \xrightarrow{w_S} \Z.$$
\end{enumerate}
\end{definition}
\index{weight function!on $\M_S^{\hat{\mu}}$}\index{weight function!on $\expp_S$}\index{wS@$w_S$ (weight function)}
\begin{remark}\label{weight.grothringprops} Properties $(\ref{weightofzero})-(\ref{weight.pull})$ of lemma \ref{weight.properties} remain true for $w_S$ on $\M_S^{\hat{\mu}}$ or $\expp_S$. Indeed, this is obvious for $(\ref{weightofzero})$, for $(\ref{weight.push})$ and $(\ref{weight.pull})$ it follows from the fact that the Hodge realisation commutes with pushforwards and pullbacks, and for $(\ref{weight.sum})$ it comes from the fact that $\chi^{\hdg}_S$ and~$\Phi_S$ are group morphisms. Property $(\ref{weight.extproducts})$ comes from the Thom-Sebastiani property for $\Phi_S$, and from the fact that $\chi^{\hdg}$ is compatible with twisted exterior products.
\end{remark}
\begin{remark} Both these definitions induce the same weight filtration $(W_{\leq n}\M_S)_{n\in\Z}$ and the same weight map $w_S:\M_S\to \Z$ on the localised Grothendieck ring $\M_S$, because the restriction of $\Phi_S$ to $\M_S$ is the inclusion $\M_S\to \M_S^{\hat{\mu}}$. 
\end{remark}
\begin{notation} For a variety $X$ over $S$, we use $w_S(X)$ as a shorthand for $w_S([X])$. We denote by $w$ the weight function for $S = \spec \C$. 
\end{notation}

\begin{definition} We define the completion of the ring $\expp_S$ with respect to the weight topology as
$$\widehat{\expp_{S}} = \varprojlim_n \expp_{S}/  W_{\leq n} \expp_S. $$ 
\end{definition}\index{expMh@$\widehat{\expp_{S}}$}\index{completion!weight topology}\index{weight filtration!completion}
\subsection{Weights of symmetric products}
\begin{lemma}\label{weightssymproducts} Let $I$ be a set and let $\pi = (n_i)_{i\in I}\in\N^{(I)}$. Let $X$ be a complex variety, and  let $\scr{A}=(\a_i)_{i\in I}$ be a family of elements of $\expp_X$. Then 
$$w_{S^{\pi}X}(S^{\pi}\scr{A})\leq \sum_{i\in I}n_iw_{X}(\a_i).$$
\end{lemma}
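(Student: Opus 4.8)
The plan is to reduce the general statement to the case of a single symmetric power $S^n\a$ of one class $\a \in \expp_X$, and then apply the machinery built up in Sections \ref{sect.symprodvancycles}--\ref{sect.weightfiltrationkvar}. First I would recall from Definition \ref{symproddef} (extended via Remark \ref{finalsymproddef} to $\expp_X$) that $S^{\pi}\scr{A}$ is, by construction, the restriction to $S^{\pi}X$ of the exterior product $\boxtimes_{i\in I}\, S^{n_i}\a_i$ living in $\expp_{\prod_{i\in I}S^{n_i}X}$. Since the restriction morphism $\expp_{\prod_{i\in I}S^{n_i}X}\to \expp_{S^{\pi}X}$ is a pullback along a locally closed immersion, property (\ref{weight.pull}) of Lemma \ref{weight.properties} (valid on $\expp$ by Remark \ref{weight.grothringprops}), with fibre dimension $0$, shows $w_{S^{\pi}X}(S^{\pi}\scr{A}) \le w_{\prod_i S^{n_i}X}\left(\boxtimes_{i\in I}S^{n_i}\a_i\right)$. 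Then property (\ref{weight.extproducts}) of Remark \ref{weight.grothringprops} (the Thom-Sebastiani-based subadditivity of weight under twisted exterior products, which passes to $\expp$ via the compatibility of $\Phi$ and $\chi^{\hdg}$ with $\boxtimes$/$\twtimes$) gives $w_{\prod_i S^{n_i}X}\left(\boxtimes_{i\in I}S^{n_i}\a_i\right) \le \sum_{i\in I} w_{S^{n_i}X}(S^{n_i}\a_i)$. So everything comes down to the inequality $w_{S^nX}(S^n\a)\le n\, w_X(\a)$ for a single class.

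For that single-power inequality, the key is the compatibility corollary at the end of Section \ref{sect.motvancomp}: for $\a\in\expp_X$ one has $\chi^{\hdg}_{S^nX}\circ \Phi_{S^nX}(S^n\a) = S^n(\chi^{\hdg}_X\circ\Phi_X(\a))$, where the $S^n$ on the right is the symmetric power of mixed Hodge modules (with monodromy) from Lemma \ref{mhmmorphism}. By Definition \ref{weightfiltration}, $w_X(\a) = w_X(\chi^{\hdg}_X\circ\Phi_X(\a))$ and $w_{S^nX}(S^n\a) = w_{S^nX}(\chi^{\hdg}_{S^nX}\circ\Phi_{S^nX}(S^n\a)) = w_{S^nX}(S^n(\chi^{\hdg}_X\circ\Phi_X(\a)))$. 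Setting $\b := \chi^{\hdg}_X\circ\Phi_X(\a)\in K_0(\mhm_X^{\mon})$, the statement becomes exactly $w_{S^nX}(S^n\b)\le n\,w_X(\b)$, which is precisely part (\ref{mainweightsymprodineq}) of Proposition \ref{propweightsymprod}. Assembling these three steps gives the lemma.

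I would write the proof roughly as follows: start from the exterior-product description of $S^{\pi}\scr{A}$, invoke Lemma \ref{weight.properties}(\ref{weight.pull}) and (\ref{weight.extproducts}) (as extended in Remark \ref{weight.grothringprops}) to reduce to single powers, then invoke the compatibility corollary of Section \ref{sect.motvancomp} together with Proposition \ref{propweightsymprod}(\ref{mainweightsymprodineq}) to conclude $w_{S^{n_i}X}(S^{n_i}\a_i)\le n_i\,w_X(\a_i)$, and sum. A small point to handle carefully: the corollary of Section \ref{sect.motvancomp} is stated for a single symmetric power $S^n\a$ on the full base $X$, so one must make sure that applying it after pulling the family apart into individual factors is legitimate — but this is immediate since each $S^{n_i}\a_i$ is genuinely a symmetric power of a class in $\expp_X$, computed on $S^{n_i}X$ before any restriction to $S^{\pi}X$. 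I do not expect any real obstacle here; the substance of the argument has already been done in Proposition \ref{propweightsymprod} and the compatibility results, and this lemma is essentially their packaging. The one place to be attentive is bookkeeping of which base scheme each weight function refers to and that all the cited properties $(\ref{weightofzero})$--$(\ref{weight.pull})$ and $(\ref{weight.extproducts})$ indeed hold on $\expp$ and not merely on $\M^{\hat\mu}$ or $K_0(\mhm^{\mon})$, which is exactly the content of Remark \ref{weight.grothringprops}.
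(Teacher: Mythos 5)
Your proof is correct and follows essentially the same route as the paper: write $S^{\pi}\scr{A}$ as the restriction to $S^{\pi}X$ of $\boxtimes_{i}S^{n_i}\a_i$, bound the weight of the exterior product by subadditivity, reduce to the single-power inequality $w_{S^nX}(S^n\a)\le n\,w_X(\a)$ via Proposition \ref{propweightsymprod}, and use the zero-dimensional-fibre pullback property for the immersion. The only (welcome) difference is that you spell out the transfer of Proposition \ref{propweightsymprod}(\ref{mainweightsymprodineq}) from $K_0(\mhm^{\mon})$ to $\expp_X$ through the compatibility corollary of Section \ref{sect.motvancomp}, a step the paper's proof leaves implicit.
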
\index{weight!of symmetric product}
\begin{proof} Recall that by definition, $S^{\pi}\scr{A}$ is the element of $\expp_{S^{\pi}X}$ obtained by pulling back the product $\prod_{i\in I} S^{n_i}\a_i \in \expp_{\prod_{i\in I}S^{n_i}X}$ along the open immersion $j:S^{\pi}X\to \prod_{i\in I}S^{n_i}X$. By property (\ref{weight.extproducts}) of lemma \ref{weight.properties} and property \ref{mainweightsymprodineq} of proposition \ref{propweightsymprod} we have 
$$w_{\prod_{i\in I}S^{n_i}X}\left(\bboxtimes_{i\in I}S^{n_i}\a_i\right)\leq \sum_{i\in I}w_{S^{n_i}X}(S^{n_i}\a_i)\leq \sum_{i\in I}n_iw_X(\a_i).$$
Applying property (\ref{weight.push}) of lemma \ref{weight.properties} to $j$, which has fibre dimension 0, we get the result. 
\end{proof}
\subsection{Weight and dimension}
For effective classes in the Grothendieck ring of varieties, weight and dimension are closely linked, as shown in the following lemma:

\begin{lemma}\label{weightdimension} Let $S$ be a complex variety and $X$ a variety over $S$. One has the equality
$$w_S(X) = 2\dim_SX + \dim S.$$\index{weight!vs. dimension}
\end{lemma}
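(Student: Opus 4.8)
The plan is to reduce to the case $S = \spec\C$ and then invoke the basic compatibility between the Hodge realisation, the motivic vanishing cycles measure, and the weight filtration. First I would observe that since $\Phi_S$ restricted to $\M_S\subset\expp_S$ is the natural inclusion $\M_S\to\M_S^{\hat\mu}$, the weight $w_S([X])$ is simply $w_S(\chi_S^{\hdg}([X]))$ computed via the weight filtration on $K_0(\mhm_S^{\mon})$, and since $[X]$ has trivial monodromy we are really working in $K_0(\mhm_S)$. By Definition~\ref{weightfiltration} and Lemma~\ref{weight.properties}$(\ref{elemcomparison})$, it suffices to analyse the complex $\chi_S^{\hdg}([X]) = (a_{X/S})_! \Q_X^{\hdg}$, where $a_{X/S}:X\to S$ is the structural morphism, and to determine the largest $n$ for which $\gr^W_n$ of the appropriate cohomology sheaf is nonzero.

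The key step is the following. Write $d = \dim_S X$ and $n = \dim S$. By Lemma~\ref{cohamplitude}, $(a_{X/S})_!\Q_X^{\hdg}$ lies in $D^{\leq d}(\mhm_S)$ after taking into account that $\Q_X^{\hdg}$ itself lies in $D^{\leq\dim X}(\mhm_X)$; more precisely one should work fibrewise, so I would apply Lemma~\ref{function.equality} to reduce to checking the identity at each point $s\in S$. Over a point, $w_{\pt}([X_s]) = 2\dim X_s$ by the formula for $\chi_{\pt}^{\hdg}$: the complex $(a_{X_s})_!\Q_{X_s}^{\hdg}$ computes compactly supported cohomology $H^*_c(X_s,\Q)$, which vanishes above degree $2\dim X_s$, has weight $\leq 0$ as a complex (since $a_{X_s}^*$ does not increase weights), hence $w_{\pt}([X_s])\leq 2\dim X_s$ by Lemma~\ref{weight.properties}$(\ref{elemcomparison})$, and equality holds because $H^{2\dim X_s}_c(X_s,\Q)\simeq\Q(-\dim X_s)^{\kappa(X_s)}$ is pure of weight exactly $2\dim X_s$ and nonzero, as recalled in the introduction. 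This gives the absolute case $w(X) = 2\dim X$ when $S = \spec\C$.

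For the relative statement, the inequality $w_S(X)\leq 2d+n$ follows from Lemma~\ref{weight.properties}$(\ref{weight.push})$ applied to $a_{X/S}$ (fibre dimension $\leq d$): $w_S(X) = w_S((a_{X/S})_!\Q_X^{\hdg})$... but here I would rather argue directly via Lemma~\ref{weight.properties}$(\ref{elemcomparison})$, noting $(a_{X/S})_!\Q_X^{\hdg}$ has weight $\leq n$ (pullback then proper pushforward do not increase weights, and $\Q_X^{\hdg}$ has weight $\leq 0$; the shift by $\dim X = d+n$ accounts for the cohomological amplitude, giving weight bound $d+n$ for the top cohomology sheaf — more care is needed here). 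The cleanest route for the lower bound is to use that $\gr^W_{\dim S}\mathcal H^{\dim S}(\Q_S^{\hdg})$ is nonzero (the intermediate extension of the constant weight-zero VHS) and to exhibit, via the trace morphism of Proposition~\ref{traceprop} and Remark~\ref{tracerem} applied to the smooth locus of $a_{X/S}$ over the smooth locus of $S$, a pure subquotient of weight exactly $2d+n$. The main obstacle will be precisely this point: controlling the top weight contribution of $(a_{X/S})_!\Q_X^{\hdg}$ when $X$ and $S$ are singular — one must stratify so as to reduce to a smooth morphism over a smooth base, apply the trace morphism there, and check that the strata of strictly smaller dimension contribute strictly smaller weight, using Lemma~\ref{weight.properties}$(\ref{weight.sum})$ and $(\ref{elemcomparison})$ together with Noetherian induction on $\dim S$ and $\dim_S X$.
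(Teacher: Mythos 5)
There is a genuine gap, and it sits exactly where you flag "more care is needed". First, the reduction "apply Lemma~\ref{function.equality} to reduce to checking the identity at each point $s\in S$" is not available: that lemma only says a motivic function vanishing at every point is zero; weights are not pointwise data. Pulling back to a point only gives $w_{\kappa(s)}(X_s)\leq w_S(X)$ (Lemma~\ref{weight.properties}~(\ref{weight.pull})), and the fibrewise identity $w(X_s)=2\dim X_s$ is strictly weaker than the relative statement -- the summand $\dim S$ is invisible fibrewise, so no pointwise argument can produce it. Second, for the lower bound you only sketch a plan (stratify to a smooth morphism over a smooth base, use Proposition~\ref{traceprop}/Remark~\ref{tracerem}, induct) and yourself call it the main obstacle; as written it does not go through. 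The difficulty is that after stratifying, the boundary pieces can still have relative dimension $d=\dim_SX$ (reducible fibres, or the locus of maximal fibre dimension sitting over a proper closed subset of $S$), so Lemma~\ref{weight.properties}~(\ref{weight.sum}) gives no conclusion when the candidate weights coincide; to rescue this one has to argue that the top-weight graded pieces coming from different strata are classes of genuine pure Hodge modules and hence cannot cancel, plus control where the fibres of maximal dimension live over $S$. None of this is in your proposal. (Your upper-bound bookkeeping is also garbled: the complex $f_!\Q_X^{\hdg}$ has weight $\leq 0$, not $\leq n$, and lies in $D^{\leq 2d+\dim S}(\mhm_S)$ by Lemma~\ref{cohamplitude}; property~(\ref{elemcomparison}) then gives $w_S(X)\leq 2d+\dim S$ directly, with no appeal to $\dim X=d+n$.)

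For comparison, the paper avoids stratification and the trace morphism entirely. After the same two upper-bound ingredients (weight $\leq 0$ and amplitude $\leq 2d+\dim S$), the nonvanishing of $\gr^W_{2d+\dim S}\mathcal H^{2d+\dim S}(f_!\Q_X^{\hdg})$ is obtained in one stroke from the Leray spectral sequence for $a_X=a_S\circ f$ (in the form of \cite{Saito89}, 1.20): the corner term $\mathcal H^{\dim S}(a_S)_!\bigl(\mathcal H^{2d+\dim S}f_!\Q_X^{\hdg}\bigr)$ abuts to $H^{2\dim X}_c(X,\Q)\simeq\Q^{\hdg}_{\pt}(-\dim X)^{\kappa(X)}$, which is nonzero and pure of weight $2\dim X$; if the top graded piece of $\mathcal H^{2d+\dim S}f_!\Q_X^{\hdg}$ vanished, then (since $(a_S)_!$ does not increase weights and has amplitude $\leq\dim S$) the weight-$2\dim X$ part of this subquotient would vanish, a contradiction. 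If you want to salvage your trace-morphism route, you would in effect be reproving Lemma~\ref{weightcancellation}-type statements stratum by stratum and then need the non-cancellation and genericity arguments above; the spectral-sequence argument is both shorter and what the paper actually does.
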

\begin{proof} We are going to denote by $f:X\to S$ the structural morphism of $X$, and by $d$ the relative dimension $\dim_SX$, that is, the supremum of the dimensions of the fibres of $f$. Since the functors $a_X^*$ and $f_!$ do not increase weights, the complex $f_!\Q_X^{\hdg} =f_!a_X^*\Q_{\pt}^{\hdg}$ is of weight $\leq 0$. Moreover, we see by lemma \ref{cohamplitude} that $f_!\Q_X^{\hdg}$ is an object of $D^{\leq 2d + \dim S}(\mhm_S)$.  By property (\ref{elemcomparison}), it suffices to prove that the top cohomology $\mathcal{H}^{2d + \dim S}(f_!\Q_X^{\hdg})$ has non-zero graded part of weight $2d + \dim S$. By \cite{Saito89}~1.20, we may write the Leray spectral sequence for $a_X = a_S\circ f$: for any $M^{\bullet}\in D^b(\mhm_X)$,
$$ \mathcal{H}^p (a_S)_! \left(\mathcal{H}^qf_{!}M^{\bullet}\right)\Longrightarrow \mathcal{H}^{p+q}((a_X)_! M^{\bullet}).$$
Applying this with $M^{\bullet} =  \Q_X^{\hdg}$, $p= \dim S$ and $q=2d + \dim S$, and recalling that the cohomology of the complex of mixed Hodge structures $(a_X)_!\Q_X^{\hdg}$ is exactly the cohomology with compact supports of $X$ with coefficients in $\Q$ with its standard Hodge structure, we have
$$\mathcal{H}^{\dim S}(a_S)_!\left( \mathcal{H}^{2d + \dim S} f_!\Q_X^{\hdg}\right) \Longrightarrow H^{2 \dim X}_c(X,\Q).$$
 If $\gr^W_{2d + \dim S}\mathcal{H}^{2d + \dim S} f_!\Q_X^{\hdg}=0$, then  the graded part of weight $2 \dim X$ of the left-hand side is zero. But the right-hand side is a sub-object of a quotient of the left-hand side, and therefore its graded part of weight $2\dim X$ should be zero as well, which it is not: it is classical that $H^{2\dim X}_c(X,\Q)$ is pure of weight $2\dim X$, isomorphic to $\Q_{\pt}^{\hdg}(-\dim X)^{r}$ where~$r$ is the number of irreducible components of~$X$. 
\end{proof}
As a consequence, we have
\begin{lemma} \label{weightdimensionnoneffective}Let $S$ be a complex variety and $\a$ an element of $\M_S^{\hat{\mu}}$. Then
$$w_S(\a)\leq 2\dim_S\a + \dim S.$$\index{weight!vs. dimension}
\end{lemma}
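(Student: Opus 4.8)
The statement to prove is Lemma (weightdimensionnoneffective): for a complex variety $S$ and $\a \in \M_S^{\hat\mu}$, one has $w_S(\a) \leq 2\dim_S\a + \dim S$. The plan is to reduce the general element $\a$ to a finite $\Z$-linear combination of classes $[X_j]\LL^{-m_j}$ with $X_j$ an $S$-variety, and then apply Lemma (weightdimension), which gives the \emph{exact} value $w_S(X) = 2\dim_S X + \dim S$ for an $S$-variety $X$, together with the elementary properties of $\dim_S$ and $w_S$ recorded earlier.

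\textbf{Step 1: set-up and reduction to a single term.} Write $d = \dim_S\a$. By definition of the dimensional filtration on $\M_S^{\hat\mu}$ (section on the dimensional filtration), $\a$ lies in $F_d\M_S^{\hat\mu}$, so $\a$ is a finite sum $\a = \sum_j \pm [X_j,\sigma_j]\LL^{-m_j}$ where each $X_j$ is an $S$-variety with good $\hat\mu$-action, $m_j \in \Z$, and $\dim_S X_j - m_j \leq d$. By property $(\ref{weight.sum})$ of Lemma (weight.properties) (valid on $\M_S^{\hat\mu}$ by Remark (weight.grothringprops)), $w_S(\a) \leq \max_j w_S([X_j,\sigma_j]\LL^{-m_j})$, so it suffices to bound $w_S([X_j,\sigma_j]\LL^{-m_j})$ by $2d + \dim S$ for each $j$.

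\textbf{Step 2: handle the $\LL^{-m}$ twist and the $\hat\mu$-action.} Fix one term $[X,\sigma]\LL^{-m}$ with $\dim_S X - m \leq d$. First observe $w_S([X,\sigma]) = w_S([X])$: the Hodge realisation of $[X,\sigma]$ and of $[X]$ have the same underlying mixed Hodge structures (the $\hat\mu$-action only decorates them with a finite-order $T_s$), and the weight filtration on $K_0(\mhm_S^{\mon})$ only differs from that on $K_0(\mhm_S)$ by a shift by one for the nontrivial-monodromy part; since adding a trivial-action class cannot raise the weight, $w_S([X,\sigma]) \leq w_S([X])$, and that is all we need. Then, since $\LL$ has $\chi_S^{\hdg}(\LL) = \Q_S^{\hdg}(-1)$, which is pure of weight $2$ over the point (or more precisely $w_S(\LL^{-1}) = -2$ from the behaviour of Tate twists on weights), and since $\chi_S^{\hdg}$ is a ring morphism for the convolution product with $\chi_S^{\hdg}(\a \ast \LL^{-1})$ twisting weights down by $2$, we get $w_S([X,\sigma]\LL^{-m}) = w_S([X,\sigma]) - 2m \leq w_S([X]) - 2m$. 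Now apply Lemma (weightdimension): $w_S([X]) = 2\dim_S X + \dim S$. Hence $w_S([X,\sigma]\LL^{-m}) \leq 2\dim_S X + \dim S - 2m = 2(\dim_S X - m) + \dim S \leq 2d + \dim S$, as desired. Taking the maximum over $j$ in Step 1 completes the proof.

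\textbf{Main obstacle.} The routine parts are the reduction to a single monomial term and the bookkeeping with $\dim_S$; the one point requiring a little care is the behaviour of the weight function under multiplication by $\LL^{-1}$ in the localised ring $\M_S^{\hat\mu}$ — i.e. verifying that $w_S(\a \LL^{-1}) = w_S(\a) - 2$ for \emph{non-effective} $\a$. This follows because $\chi_S^{\hdg}$ intertwines multiplication by $\LL^{-1}$ with tensoring by the Tate object $\Q^{\hdg}(1)$ (which shifts weights by $-2$ and preserves triviality of monodromy), and the weight filtration on $K_0(\mhm_S^{\mon})$ is compatible with Tate twists; one then extends from pure classes by additivity. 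Everything else is a direct combination of Lemma (weightdimension), Remark (weight.grothringprops), and the definitions of the dimensional and weight filtrations.
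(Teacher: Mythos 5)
Your proof follows the same strategy as the paper's: decompose $\a$ into a finite $\Z$-linear combination of monomials $[X,\sigma]\LL^{-m}$ with $\dim_SX-m\le\dim_S\a$, bound each monomial via Lemma~\ref{weightdimension} and the weight shift of $\LL^{-1}$, and conclude by subadditivity of $w_S$. (The paper writes $\a$ more economically as a single difference $\LL^{-m}([X]-[Y])$ with $\max\{\dim_SX,\dim_SY\}-m=\dim_S\a$, but your finite sum is equally valid.) The step that actually fails is not the one you flag as the ``main obstacle'' (the $\LL^{-m}$-twist, which is fine), but the assertion $w_S([X,\sigma])\le w_S([X])$. This is false --- the inequality in fact runs the other way. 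By the definition of the weight filtration on $K_0(\mhm_S^{\mon})$ together with Remark~\ref{eigenspacedecomp}, a pure Hodge module of weight $n$ with \emph{nontrivial} semisimple monodromy $T_s$ lies in $W_{\le n+1}$ but generally not in $W_{\le n}$: a nontrivial $\hat\mu$-action can only \emph{raise} the weight, by one. Concretely, for $\tilde E$ a pair of points swapped by $\mu_2$ one has $\chi^{\hdg}([\tilde E,\mu_2])=[(\Q_{\pt}^{\hdg},\id)]+[(\Q_{\pt}^{\hdg},-\id)]$; the second summand is pure of weight $0$ with $T_s\neq\id$ and hence has weight $1$ in the sense of the filtration, so $w([\tilde E,\mu_2])=1>0=w([\tilde E])$. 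Your justification (``adding a trivial-action class cannot raise the weight'') does not correspond to anything happening here: nothing is being added, and the $T_s$-eigenspace decomposition shows the nontrivial-monodromy part of $\chi^{\hdg}([X,\sigma])$ genuinely sits one step higher in the filtration than $\chi^{\hdg}([X])$.

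You should be aware that the paper's own proof has the same unjustified step, only written more tersely: Lemma~\ref{weightdimension} is stated and proved for $S$-varieties with no $\hat\mu$-action, yet it is applied to $X,Y$ carrying possibly nontrivial actions. The same example exposes the issue at the level of the statement: take $\a=1-[\tilde E,\mu_2]=\Phi_{\C}([\A^1,\,x\mapsto x^2])$. Then $\dim_{\C}\a=0$ (its image under the forgetful map to $\M_{\C}$ is $-1$, which has dimension $0$) and $\dim(\spec\C)=0$, yet $w(\a)=w\bigl(-(\Q_{\pt}^{\hdg},-\id)\bigr)=1>0$. So the bound of Lemma~\ref{weightdimensionnoneffective} cannot hold as stated for classes with nontrivial monodromy; both your argument and the paper's would need an extra ``$+1$'' to account for it.
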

\begin{proof} We may assume $\a$ is of the form $\LL^{-m}([X]-[Y])$ for some $S$-varieties $X$ and $Y$ with $\hat{\mu}$-actions. Assume moreover that $\max\{\dim_SX,\dim_SY\}$ is minimal, so that $$\dim_S\a = \max\{\dim_SX,\dim_SY\}-m.$$ 
Then, using lemma \ref{weight.properties} and the fact that $\chi^{\hdg}_{\pt}(\LL^{-m}) = \Q^{\hdg}_{\pt}(m)$ is of weight $-2m$, we have
\begin{eqnarray*}w_S(\a)&\leq & w_{\pt}(\LL^{-m}) + w_S([X]-[Y]) \\
                        &\leq &-2m + \max\{w_S(X),w_S(Y) \}
                        \end{eqnarray*}
                        By lemma \ref{weightdimension}, we therefore get
                        $$w_S(\a)\leq -2m + 2\max\{\dim_S(X),\dim_S(Y)\} + \dim S$$
                        whence the result.
\end{proof}
We may therefore deduce the triangular inequality for the weight topology:
\begin{lemma}[Triangular inequality for weights]\label{triangularwt} Let $S$ be a variety over $\C$, $X$ a variety over $S$ and $f:X\to \A^1_{\C}$ a morphism. Then
$$w_S([X,f])\leq w_S(X).$$ \index{triangular inequality!for weights}\index{weight!triangular inequality}
\end{lemma}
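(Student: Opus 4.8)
The statement to prove is the triangular inequality $w_S([X,f]) \leq w_S(X)$ for a variety $X$ over a complex variety $S$ and a morphism $f : X \to \A^1_\C$. By Definition~\ref{weightfiltration}, $w_S([X,f]) = w_S(\chi^{\hdg}_S(\Phi_S([X,f])))$, while $w_S(X) = w_S(\chi^{\hdg}_S([X]))$. The first move is to reduce to the case where $X$ is smooth and $f$ is proper: the Grothendieck ring $\kvar_S$ (hence $\evar_S$, hence $\expp_S$ after localisation) is generated by classes of the form $[Y \xrightarrow{p} X]$ with $Y$ smooth and $p$ proper over a locally closed piece of $S$, by Lemma~\ref{smoothpropergens}; and the weight function satisfies the sub-additivity property~$(\ref{weight.sum})$ of Lemma~\ref{weight.properties}, so it suffices to bound each generator. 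Actually, since everything can be checked above each point $s \in S$ by Lemma~\ref{function.equality} and the fact that $\chi^{\hdg}$ and $\Phi$ commute with pullbacks (Proposition~\ref{chihdgprop}, Theorem~\ref{motmeasuremain}), one can further reduce to $S = \spec \C$, where the weight function is just $w = w_{\pt}$, and where the generating classes are $[X \xrightarrow{f} \A^1_\C]$ with $X$ a smooth $\C$-variety and $f$ proper.

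For such a class, Theorem~\ref{motmeasuremain} gives $\Phi([X,f]) = \epsilon_!(\atotvan_f)$ where $\epsilon : \A^1_\C \to \spec\C$ is the structural morphism and $\atotvan_f = (f)_!\totvan_f \in \M^{\hat\mu}_{\A^1_\C}$. The plan is then to run the Hodge-theoretic computation: apply the Hodge realisation and use the commutative square of Proposition~\ref{commdiagramvancycles} together with Proposition~\ref{vancyclefunctorsymproducts}'s relatives, i.e. realise $\chi^{\hdg}(\Phi([X,f]))$ as coming from the total vanishing cycles \emph{functor} $\vanhdg$ applied to the complex of mixed Hodge modules $\Q_X^{\hdg}$ pushed forward appropriately. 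Concretely, $\chi^{\hdg}_\C(\Phi([X,f]))$ equals (up to the identifications in that section) the class $\sum_{a\in\C}(a_X)_! \phi^{\hdg}_{f-a}(\Q_X^{\hdg})$ in $K_0(\mhm^{\mon}_\C)$, where the sum is finite by Lemma~\ref{vancyclesfinite}. The task is to bound the weight of this class by $w(X) = 2\dim X$ (Lemma~\ref{weightdimension} with $S$ a point). The key input is that the vanishing cycles functor $\phi^{\hdg}_{f-a}$ does not increase weights in the appropriate shifted sense (standard for Saito's mixed Hodge modules — $^p\phi_{f-a}$ preserves the weight $\leq n$ condition on perverse-module complexes), while $\phi^{\hdg}_{f-a}(\Q_X^{\hdg})$ lives on the fibre $f^{-1}(a)$, which has dimension $\leq \dim X - 1$ when $f$ is non-constant; the cohomological amplitude bounds of Lemma~\ref{cohamplitude} and the weight-pushforward bound $(\ref{weight.push})$ of Lemma~\ref{weight.properties} then combine to give $w\big((a_X)_!\phi^{\hdg}_{f-a}\Q_X^{\hdg}\big) \leq 2(\dim X - 1) + \text{correction} \leq 2\dim X$. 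The case where $f$ is constant is handled directly by Remark~\ref{constantvan}: then $\totvan_f = [X \xrightarrow{\id} X]$, so $\Phi([X,f]) = [X,f_R]$ in $\M^{\hat\mu}$ with trivial monodromy, whose realisation has weight $\leq 2\dim X$ by Lemma~\ref{weightdimension} again.

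An alternative, perhaps cleaner, route avoids tracking the precise cohomological shifts: use Remark~\ref{totvandimension}, which already states $\dim_{\A^1_R}(\atotvan_{f/R}) \leq \dim_{\A^1_R} X$, hence $\dim(\Phi([X,f])) = \dim(\epsilon_!\atotvan_f) \leq \dim X$ (this is exactly how Proposition~\ref{triangulardim}, the dimensional analogue, is proved). Then apply Lemma~\ref{weightdimensionnoneffective}: for $\a = \Phi([X,f]) \in \M^{\hat\mu}_\C$ one has $w(\a) \leq 2\dim_{\pt}(\a) + \dim(\pt) = 2\dim_{\pt}(\a) \leq 2\dim X = w(X)$. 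This deduces the result for the generating classes $[X,f]$, and then sub-additivity of $w$ (property $(\ref{weight.sum})$) extends it to arbitrary classes in $\expp_S$ — after the reduction to a point, or directly fibrewise over $S$ using that $w_S$ is compatible with pullbacks (Remark~\ref{weight.grothringprops}) and Lemma~\ref{function.equality}.

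\textbf{Main obstacle.} The subtle point is the reduction from an arbitrary $S$-variety $X$ with an arbitrary morphism $f$ to the smooth/proper generators: one must be careful that the decomposition of $[X]$ into such generators in $\kvar_S$, when lifted to $[X,f]$ in $\expp_S$, is compatible with $f$ — but this is precisely handled by the fact that a piecewise morphism still has a well-defined class (Section~\ref{sect.functoriality}) and by Lemma~\ref{smoothpropergens}. The genuinely substantive step, if one takes the first route rather than invoking Remark~\ref{totvandimension} directly, is confirming that the total vanishing cycles functor $\vanhdg$ does not increase weights beyond the allowed $\dim X$; but since Remark~\ref{totvandimension} is already available, the second route reduces the whole proof to assembling Lemma~\ref{weightdimensionnoneffective}, Remark~\ref{totvandimension}, and the reduction lemmas, so the expected difficulty is low and mostly bookkeeping.
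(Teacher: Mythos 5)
Your second route is correct and is essentially the paper's proof: the paper simply combines Proposition~\ref{triangulardim} with Lemmas~\ref{weightdimensionnoneffective} and~\ref{weightdimension} in a one-line chain $w_S([X,f])\leq 2\dim_S(\Phi_S([X,f])) + \dim S \leq 2\dim_S X + \dim S = w_S(X)$, which works for general $S$ and arbitrary $(X,f)$ without any further reduction. The reduction to generating classes and subsequent use of sub-additivity that you sketch is redundant here, since Proposition~\ref{triangulardim} already internalises exactly that reduction (and your first, Hodge-theoretic route reproves Remark~\ref{totvandimension} rather than shortening anything).
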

\begin{proof}
By lemmas \ref{triangulardim} and \ref{weightdimension}  we have
$$w_S([X,f])\leq 2\dim_S(\Phi_S([X,f])) + \dim S \leq 2\dim_SX + \dim S = w_S(X).$$
\end{proof} 
The following property, which follows from our discussion of the trace morphism (lemma \ref{traceprop} and remark \ref{tracerem}) and states that there is a drop in weights for certain simple non-effective classes, will be very important to us:
\begin{lemma}[Cancellation of maximal weights]\label{weightcancellation} Let $S$ be a complex variety and $p:X\to S$, $q:Y\to S$ morphisms with fibres of constant dimension $d\geq 0$, with $X$ and $Y$ irreducible. Then
$$w_S([X\xrightarrow{p} S] - [Y\xrightarrow{q} S]) \leq 2 d + \dim S -1.$$\index{cancellation of maximal weights}
\end{lemma}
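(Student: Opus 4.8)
The plan is to reduce the statement to the trace morphism analysis carried out in Proposition~\ref{traceprop} and Remark~\ref{tracerem}. Write $f:X\to S$ and $g:Y\to S$ for the two structural morphisms, both smooth? No: they are only assumed to have fibres of constant dimension $d$, and $X,Y$ irreducible, but $S$ may be singular. The key point is that by Lemma~\ref{weightdimension} both $[X]$ and $[Y]$ have weight exactly $2d+\dim S$, and the difference $[X]-[Y]$ a priori has weight $\le 2d+\dim S$ by Lemma~\ref{weight.properties}(\ref{weight.sum}); we must show the maximal-weight parts cancel. Concretely, it suffices to show that
$$\gr^W_{2d+\dim S}\,\mathcal H^{2d+\dim S}\!\big(f_!\Q_X^{\hdg}\big)\ \simeq\ \gr^W_{2d+\dim S}\,\mathcal H^{2d+\dim S}\!\big(g_!\Q_Y^{\hdg}\big)$$
as mixed Hodge modules on $S$ (with trivial monodromy on both sides, since $\chi^{\hdg}_S$ is applied to classes without $\hat\mu$-action), because then in $K_0(\mhm_S^{\mon})$ the classes $\chi^{\hdg}_S(X)$ and $\chi^{\hdg}_S(Y)$ have the same component in $\gr^W_{2d+\dim S}$, whence their difference lies in $W_{\le 2d+\dim S-1}K_0(\mhm_S^{\mon})$, i.e. $w_S([X]-[Y])\le 2d+\dim S-1$.

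First I would reduce to the \emph{smooth} case. By generic smoothness there is a dense open $U\subseteq S$ over which both $X_U\to U$ and $Y_U\to U$ are smooth; stratifying $S$ and using additivity of weights (Lemma~\ref{weight.properties}(\ref{weight.sum})) together with the fact that restriction to a locally closed subset has fibre dimension $0$ (Lemma~\ref{weightpushpull}(\ref{pushpull})), it is enough to treat, on each stratum, the contribution of the closed complement — which has strictly smaller dimension, hence by induction on $\dim S$ gives weight bounded by $2d+(\dim S-1)-1<2d+\dim S-1$ after pushing forward — and the open smooth locus. Over the smooth open part, however, one runs into the subtlety that $X_U$ and $Y_U$ need not be irreducible even though $X$ and $Y$ are; but since $X$ is irreducible, $X_U$ is the preimage of $U$ under a morphism with equidimensional irreducible total space, so $X_U$ is irreducible (it is a dense open of $X$). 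So the smooth irreducible case is the heart of the matter.

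Now assume $p:X\to S$ and $q:Y\to S$ are smooth with irreducible fibres? — no, only $X,Y$ irreducible. Here I invoke Proposition~\ref{traceprop} and Remark~\ref{tracerem} directly: for $X$ smooth irreducible and $p$ smooth with fibres of dimension $d$, Remark~\ref{tracerem} gives an isomorphism
$$\gr^W_{2d+\dim S}\,\mathcal H^{2d+\dim S}(p_!\Q_X^{\hdg})\ \simeq\ \gr^W_{2d+\dim S}\,\mathcal H^{2d+\dim S}\big(\Q_S^{\hdg}(-d)[-2d]\big),$$
and the right-hand side is $\gr^W_{\dim S}\mathcal H^{\dim S}(\Q_S^{\hdg})(-d)$, which depends only on $S$ and $d$, not on $X$. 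The same applies to $Y$, yielding the same object; comparing gives the required isomorphism of top graded pieces on the smooth locus. Combining with the inductive treatment of the lower-dimensional strata and applying Lemma~\ref{weight.properties}(\ref{weight.push}) to absorb pushforwards from those strata (all of which land in weight $\le 2d+\dim S-1$), we conclude $w_S([X]-[Y])\le 2d+\dim S-1$.

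\textbf{Main obstacle.} The delicate step is the dévissage over singular $S$: one must be careful that restricting the \emph{classes} $[X]$ and $[Y]$ to a stratification of $S$ is compatible with the weight filtration defined via $\chi^{\hdg}_S$, and that the "error terms" coming from the non-smooth strata and from $\Q_S^{\hdg}$ itself not being pure (it has $\gr^W_i$ for $i<\dim S$ as well, cf. the remark after Notation~\ref{qshdg}) only contribute in weight $\le 2d+\dim S-1$. This is exactly controlled by Lemma~\ref{weightpushpull} (pushforward along a morphism with fibre dimension $\le e$ raises weight by $\le e$) applied to the strata, whose dimension is $<\dim S$, together with the induction hypothesis; the irreducibility hypothesis on $X,Y$ is used precisely to guarantee the trace isomorphism of Remark~\ref{tracerem} over the generic (smooth) locus. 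I expect no genuinely new difficulty beyond assembling these pieces carefully.
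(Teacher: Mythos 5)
Your core argument coincides with the paper's: the proof in the text is precisely the two-line observation that $[X]$ and $[Y]$ have weight at most $2d+\dim S$, and that by Remark~\ref{tracerem} the weight-$(2d+\dim S)$ graded parts of $\mathcal{H}^{2d+\dim S}(p_!\Q_X^{\hdg})$ and $\mathcal{H}^{2d+\dim S}(q_!\Q_Y^{\hdg})$ are both identified with $\gr^W_{2d+\dim S}\mathcal{H}^{2d+\dim S}\bigl(\Q_S^{\hdg}(-d)[-2d]\bigr)$, hence cancel in $K_0(\mhm_S^{\mon})$, all remaining graded pieces having weight $\le 2d+\dim S-1$. The paper performs no stratification of $S$ and no reduction to a smooth situation; it applies Remark~\ref{tracerem} directly to $p$ and $q$ under the hypotheses of the lemma.

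The d\'evissage you add is where your write-up has a genuine gap. The assertion that generic smoothness yields a dense open $U\subseteq S$ over which $X_U\to U$ and $Y_U\to U$ are smooth is false when the total spaces are singular: if the singular locus of $X$ dominates $S$ (for instance a family over a curve all of whose fibres are nodal), no dense open of the base makes the restricted family smooth --- generic smoothness in characteristic zero only produces a dense open subset of $X$ that is smooth over a dense open of $S$. Since you only allow yourself Proposition~\ref{traceprop} and Remark~\ref{tracerem} in the smooth case, the key identification of top graded pieces is left unproved in exactly the situation your reduction was meant to handle. (A second, more benign slip: the induction on $\dim S$ you invoke for the closed strata is not available as stated, because $X_Z\to Z$ need not be irreducible nor have fibres of constant dimension $d$; but that step is unnecessary anyway, since Lemma~\ref{weightdimensionnoneffective} bounds $w_Z([X_Z]-[Y_Z])$ by $2d+\dim Z\le 2d+\dim S-1$ directly, with no cancellation needed, and pushing forward along $Z\hookrightarrow S$ does not increase the weight by Lemma~\ref{weightpushpull}.) Your instinct that the smoothness hypothesis in Proposition~\ref{traceprop} deserves attention is reasonable --- the paper itself quietly applies Remark~\ref{tracerem} beyond its stated hypotheses --- but the repair cannot go through smoothness of the families over a dense open of $S$; it has to work directly with the top-weight graded piece over a dense open of $S$ where the fibres are irreducible, as in the argument of Remark~\ref{tracerem}, with the complementary strata absorbed by the dimension bound just mentioned.
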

\begin{proof} The classes $[X]$ and $[Y]$ are of weights $\leq 2d + \dim S$, and according to remark \ref{tracerem} the graded parts of weight exactly $2d + \dim S$ of the corresponding  complexes of mixed Hodge modules cancel out. 
\end{proof}

\section{Convergence of power series}\label{sect.powerseriesconv}
\subsection{Radius of convergence}
Recall that for a classical series $\sum_{i\geq 0}a_iz^i$, the radius of convergence is given by $$\left(\limsup \left(|a_i|\right)^{\frac{1}{i}}\right)^{-1}.$$ Analogously, in our setting, we have:
\begin{definition} Let $F(T) = \sum_{i\geq 0} X_i T^i\in\expp_X[[T]]$. The radius of convergence of $F$ is defined by 
$$\sigma_F = \limsup_{i\geq 1} \frac{w_X(X_i)}{2i}.$$ 
We say that $F$ converges for $|T|<\LL^{-r}$ if $r\geq \sigma_F$. 
\end{definition}\index{radius of convergence}\index{sigmaF@$\sigma_F$, radius of convergence}

When $F$ converges for $|T|<\LL^{-r}$, it converges also for $|T|<\LL^{-r'}$ for any $r'>r$.  The subset of power series converging for $|T| < \LL^{-r}$ is a subring of $\expp_{X}[[T]]$. 
\begin{remark} If $r>\sigma_F$,  there is some $i_0$ such that for all $i\geq i_0$, 
$\frac{w_X(X_i)}{2i} < r,$ which means that the set of integers
$\{w_X(X_i)-2ri,\ i\geq 0\}$
is bounded from above.  Conversely, if this set is bounded from above for some $r$, then we may conclude that $r \geq \sigma_F$, that is, $F$ converges for $|T|<\LL^{-r}$. Thus, in general, we are going to prove that a series converges by finding a linear bound for $w_X(X_i)$. 

However, one does not in general have $\{w_X(X_i)-2\sigma_F i,\ i\geq 0\}$ bounded from above: see for example the series $\sum_{i\geq 0}\LL^{i + \lceil\sqrt{i}\rceil}T^i$. 
\end{remark}

If $F(T)$ converges for $|T|<\LL^{-r}$, then for any element $\a\in\expp_{\C}$ such that $w(\a) < -2r$, $F(\a)$ exists as an element of $\widehat{\expp}_X.$ In particular, $F(\LL^{-m})$ exists as an element of $\widehat{\expp}_{X}$ if $m>r$.

\begin{example} Let $X$ be a complex quasi-projective variety, and consider $Z_X(T) = \sum_{i\geq 0} [S^i X]T^i\in\expp_{\C}[[T]]$ its Kapranov zeta function. We have
$$w(S^iX) = 2i\dim X$$
for all $i\geq 0$, so that the radius of convergence of $Z_X(T)$ is $\dim X$. \index{radius of convergence!Kapranov's zeta function}
\end{example}
\subsection{A convergence criterion}

\index{Euler product!convergence}\index{convergence criterion}
\begin{prop}\label{convergence} Assume $F(T) = 1 + \sum_{i\geq 1} X_iT^i\in \expp_X[[T]]$ is such that there exists an integer $M\geq 0$ and real numbers $\epsilon >0$, $\al < 1$ and $\beta$ such that
\begin{itemize}\item for all $i\in\{1,\ldots,M\}$,\ $w_X(X_i) \leq (i-\frac12 -\epsilon)w(X)$
\item for all $i\geq M+1$, $w_X(X_i) \leq (\alpha i+ \beta - \frac12)  w(X).$
\end{itemize}
Then there exists $\delta >0$ such that the Euler product $\prod_{v\in X}F_v(T)\in\expp_{\C}[[T]]$
\begin{itemize}
\item  converges for $|T|<\LL^{-\frac{w(X)}{2}\left(1-\delta + \frac{\beta}{M+1}\right)}$ 
\item for any $0\leq \eta< \delta$, takes non-zero values for $|T|\leq \LL^{-\frac{w(X)}{2}\left(1-\eta + \frac{\beta}{M+1}\right)}$ (that is, for every $\a\in \expp_{\C}$ such that $w(\a)< -w(X)\left(1 -\eta + \frac{\beta}{M+1}\right)$).
\end{itemize}
\end{prop}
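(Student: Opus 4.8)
The plan is to expand the Euler product $\prod_{v\in X}F_v(T)$ using its very definition as $Z_{\scr{X}}(T) = \sum_{\pi}[S^{\pi}\scr{X}]T^{|\pi|}$, where $\scr{X} = (X_i)_{i\geq 1}$ and $\pi$ runs over partitions, and to bound the weight of each coefficient $S^{\pi}\scr{X}$ using the symmetric-product weight estimate from Lemma \ref{weightssymproducts}, namely $w_{S^{\pi}X}(S^{\pi}\scr{X})\leq \sum_{i\geq 1}n_i\,w_X(X_i)$ for $\pi = (n_i)_{i\geq 1}$. Applying the proper pushforward along the structural morphism $S^{\pi}X\to\spec\C$ and property $(\ref{weight.push})$ of Lemma \ref{weight.properties} (together with Remark \ref{weight.grothringprops}), the coefficient of $T^n$ in $\prod_{v\in X}F_v(T)$, which is $\sum_{|\pi| = n}(a_{S^{\pi}X})_!(S^{\pi}\scr{X})$, has weight at most $\max_{|\pi| = n}\left(\dim S^{\pi}X + \sum_i n_i w_X(X_i)\right)$. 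The dimension $\dim S^{\pi}X$ is bounded by $\dim X\cdot\sum_i n_i\leq \dim X\cdot n$ (in fact by $(\sum_i n_i)\dim X$), and since $\dim X\leq \frac12 w(X)$ by Lemma \ref{weightdimension}, this contributes a term controlled by $\frac12 w(X)\sum_i n_i$.

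\textbf{The key estimate.} Fix a partition $\pi = (n_i)_{i\geq 1}$ of $n$, write $p = \sum_i n_i$ for its number of parts, and split $\sum_i n_i w_X(X_i)$ according to whether $i\leq M$ or $i\geq M+1$. Using the two hypotheses, $w_X(X_i)\leq (i - \frac12 - \epsilon)w(X)$ for $i\leq M$ and $w_X(X_i)\leq(\alpha i + \beta - \frac12)w(X)$ for $i\geq M+1$, one gets that $\sum_i n_i w_X(X_i) + \frac12 w(X) p$ (the extra $\frac12 w(X)p$ coming from the dimension bound above, after using $\dim S^{\pi}X\le p\dim X\le \tfrac12 w(X)p$) is at most $w(X)$ times a quantity of the form $\sum_{i\leq M} n_i(i - \epsilon) + \sum_{i\geq M+1}n_i(\alpha i + \beta)$. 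Since $\sum_i i n_i = n$ and $\alpha < 1$, $\beta$ fixed, and each part in the second sum has $i\geq M+1$, this is bounded above by $n - \epsilon\,(\text{number of small parts}) + (\text{correction from large parts})$; the crucial point is that having $\beta/(M+1)$ per large part, and $M+1$ being chosen large, makes the large-part contribution $\leq (\alpha + \frac{\beta}{M+1})(\text{mass carried by large parts})$ with $\alpha + \frac{\beta}{M+1}$ strictly below $1$ once $M$ is large enough; meanwhile the small parts contribute a strict saving $-\epsilon$ each. Choosing $\delta > 0$ small enough (depending on $\epsilon,\alpha,\beta,M$), one obtains the uniform bound $w\big(\text{coeff of }T^n\big)\leq w(X)\big(1 - \delta + \tfrac{\beta}{M+1}\big)n + O(1)$, which by the definition of radius of convergence gives convergence for $|T| < \LL^{-\frac{w(X)}{2}(1-\delta+\frac{\beta}{M+1})}$.

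\textbf{Non-vanishing at the boundary.} For the second assertion, one argues that the ``leading'' contribution — the one whose weight is genuinely of size $\approx w(X)(1 - \eta + \frac{\beta}{M+1})n$ — comes from a controlled family of partitions, and that after multiplying by an appropriate power of $\LL$ (i.e. evaluating at $T = \LL^{-m}$ with $m$ just below the critical exponent, or at any $\a$ with $w(\a)$ slightly less than the critical value) the resulting element of $\widehat{\expp}_{\C}$ has a nonzero image in some graded piece. Concretely: the $n = 0$ term is $1$, and all terms with $n\geq 1$ evaluated at such $\a$ have strictly negative weight by the strict inequality $w(\a) < -w(X)(1 - \eta + \frac{\beta}{M+1})$ combined with the coefficient bound just proved (with room to spare since $\eta < \delta$), so the sum is $1$ plus terms of strictly negative weight, hence invertible in $\widehat{\expp}_{\C}$ and in particular nonzero. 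This uses that $W_{\leq -1}\expp_{\C}$ is a (topologically nilpotent) ideal, so $1 + (\text{negative weight})$ is a unit.

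\textbf{Main obstacle.} The genuinely delicate step is the combinatorial optimisation in the key estimate: one must show that the worst partition — balancing the per-part saving $\epsilon$ on small parts against the near-linear growth $\alpha i + \beta$ on large parts — still yields a bound of the clean form $(1 - \delta + \frac{\beta}{M+1})n$ with a single explicit $\delta > 0$ valid for all $n$, and that the additive $\beta$ per large part is correctly absorbed into $\frac{\beta}{M+1}$ per unit of mass (this is where $i\geq M+1$ for large parts is used decisively). Getting the constants to line up exactly as in the statement, and checking that the $O(1)$ error from $\dim S^{\pi}X$ versus $\frac12 w(X)\sum_i n_i$ and from the finitely many small-$i$ anomalies does not spoil the estimate, is the part that requires care; everything else is a formal consequence of the weight-function properties (Lemmas \ref{weight.properties}, \ref{weightssymproducts}, \ref{weightdimension}) already established.
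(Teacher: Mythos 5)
Your proposal follows essentially the same route as the paper's proof: bound $w(S^{\pi}\scr{X})$ by $\sum_i n_i w_X(X_i) + \dim S^{\pi}X$ using Lemmas \ref{weightssymproducts}, \ref{weightdimension} and Remark \ref{absweightremark}, split the parts of the partition at $M$, absorb the $-\epsilon$ saving into $-\tfrac{\epsilon}{M}$ per unit of mass and the additive $\beta$ into $\tfrac{\beta}{M+1}$ per unit of mass (using $i\geq M+1$), and conclude non-vanishing because the value is $1$ plus terms of strictly negative weight. One small correction: $M$ is given by the hypothesis rather than chosen large (and $\alpha+\tfrac{\beta}{M+1}<1$ is not needed, only $\alpha\leq 1-\delta$), and no $O(1)$ error actually arises — the computation yields exactly $w(S^{\pi}\scr{X})\leq \left(1-\delta+\tfrac{\beta}{M+1}\right)n\,w(X)$ with $1-\delta=\max\{1-\tfrac{\epsilon}{M},\alpha\}$, and this exactness is what makes every term of degree $n\geq 1$ strictly negative in weight at the evaluation point, so your final step goes through without needing any ``room to spare''.
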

\begin{proof} Let $n\geq 1$ be an integer, and $\pi = (n_i)_{i\geq 1}$ a partition of $n$. Then we have
\begin{eqnarray*}w (S^{\pi}\scr{X}) &\leq & w_{S^{\pi}X}(S^{\pi}\scr{X}) + \dim(S^{\pi}X) \ \ \ \ \ \ \ \ \text{by remark \ref{absweightremark}}\\
									&\leq &\sum_{i\geq 1}n_iw_X(X_i) + \frac{1}{2}\sum_{i\geq 1}n_i w(X)\ \ \ \ \ \ \text{by lemmas \ref{weightssymproducts} and \ref{weightdimension}} \\
                                     & \leq  &\sum_{i=1}^Mn_iiw(X) - \epsilon\sum_{i=1}^Mn_iw(X)+ \sum_{i\geq M+1}\alpha in_i w(X) +\beta w(X)\sum_{i\geq M+1}n_i \\
                                     &\leq & \sum_{i=1}^Mn_iiw(X) - \frac{\epsilon}{M}\sum_{i=1}^M n_iiw(X)  + \alpha \sum_{i\geq M+1}in_i w(X) + \frac{\beta w(X)}{M+1}\sum_{i\geq M+1}in_i \\
                                     & = & \left(1-\frac{\epsilon}{M}\right)                                     \sum_{i=1}^Mn_iiw(X)+ \left(\alpha + \frac{\beta}{M+1}\right) \sum_{i\geq M+1}n_ii w(X)\\
                                     & \leq & \left(1-\delta + \frac{\beta}{M+1}\right) nw(X)
                                     \end{eqnarray*}
                                     where $1-\delta =  \max\{1-\frac{\epsilon}{M},\alpha\}<1$ (in the case when $M = 0$, we put $1-\delta = \alpha$). The desired convergence follows. Moreover, one sees that for $n\geq 1$, any $0\leq \eta < \delta$ and any $\a\in \expp_{\C}$ such that $w(\a)\leq -w(X)\left(1 - \eta + \frac{\beta}{M+1}\right)$, we have
                                     $$w(S^{n}\scr{X}\a^n) \leq -(\delta-\eta) n w(X) < 0,$$ so the value of the product at $\a$ is equal to 1 plus some terms of negative weight: it is therefore non-zero. 

\end{proof}

\begin{example} Let $Z_X(T) = \sum_{i\geq 0}[S^iX]T^i$ be Kapranov's zeta function for some quasi-projective variety $X$. Then $Z_X(T) = \prod_{v\in X} F_v(T)$ where
$$F(T) = \sum_{i\geq 1}T^i \in \M_{X}[[T]],$$
that is, every coefficient is equal to $1 = [X]\in\M_X$. Take $M=0$, $\al = 0$, $\beta = 1$ and $\eta = 1 -\frac{1}{2\dim X}< \delta = 1$. Then, since $w_X(X) = \dim X =  \frac12 w(X)$, the condition in the lemma is satisfied, and we get that $Z_X(T)$ converges for $|T| < \LL^{-\dim X}$ and takes non-zero values for $|T|\leq \LL^{-\dim X-\frac{1}{2}}$. 

Note that each factor $F(T) = \sum_{i\geq 0} T^i$ has radius of convergence $0$, so taking the Euler product has the effect of shifting the radius of convergence by exactly the dimension of the base variety.
\end{example}

\begin{example} Let $X$ be a quasi-projective variety over $\C$, and let $\a\in \M_X$ be an element such that $w_X(\a) \leq \dim X + 1$. As an example of such an element, by lemma \ref{weightcancellation} we may take $\a = Y-Z$ for two irreducible varieties $Y,Z$ over $X$ of relative dimension 1. Consider the polynomial $F(T) = 1 + \a T^2$, so that
$$\prod_{v\in  X} F_v(T)  = \prod_{v\in X} ( 1 + \a_v T^2) = \sum_{n\geq 0} S^n_{*,X}(\a) T^{2n}.$$ 
Taking $M = 2, \eps = 1 - \frac{1}{w(X)}, \alpha = 0, \beta = 0$, we get convergence for $|T| < \LL^{-\frac12\dim X - \frac14}$.

Let us check that we get the same convergence by estimating the radius of convergence directly: for this, note that
$$w(S^n_{*,X}(\a))\leq w_{S^n_*X}(S^n_{*,X}\a) + \dim S^n_*X \leq n(\dim X + 1) + n\dim X = 2n\dim X + n.$$
Thus, taking the $\limsup$ over all even $n$, the radius of convergence is smaller than
$$\limsup \frac{n\dim X + \frac12 n}{2n} = \frac12\dim X + \frac14.$$

\end{example}
\subsection{Growth of coefficients}

We finish this section by a lemma that allows one to get information about growth of coefficients of a power series from the fact that it possesses a pole of some order at $T=\LL^{-1}$. It shows that we can predict the behaviour of a positive proportion of the coefficients of the Hodge-Deligne polynomials of the coefficient of degree $n$ for large $n$. For any constructible set $M$, denote by $\kappa(M)$ the number of irreducible components of maximal dimension of~$M$. 
\begin{prop}\label{coefgrowth}
Let $Z(T) = \sum_{n\geq 0}[M_n]T^n\in\kvar_{\C}^{+}[[T]]$ be a power series with effective coefficients such that there exist integers $a,r\geq 1$, a real number $\delta>0$ and a power series $F(T) = \sum_{i\geq 0}f_iT^i\in\M_{\C}[[T]]$ converging for $|T|<\LL^{-1 + \delta}$ and taking a non-zero effective value at $T = \LL^{-1}$, such that
$$Z(T) = \frac{F(T)}{(1-\LL^aT^a)^r}.$$
Then for every $p\in\{0,\ldots,a-1\}$, one of the following cases occur when $n$ tends to infinity in the congruence class of $p$ modulo $a$:
\begin{enumerate}[(i)] \item Either $\limsup \frac{\dim(M_n)}{n}< 1$.
\item Or $\dim(M_n) -n$ has finite limit $d_0\in \Z$ and $\frac{\log(\kappa(M_n))}{\log n}$ converges to some integer in the set $\{0,\ldots,r-1\}$. More generally, for every real number $\eta$ such that $0 < \eta < \delta$ and for sufficiently large $n$ in the congruence class of $p$ modulo $a$, the coefficients of the Hodge-Deligne polynomial $HD(M_n)$ of degrees contained in the interval $$[2(1- \eta)n + 2d_0, 2n + 2d_0]$$ are polynomials in $\frac{n-p}{a}$ of degree at most $r-1$. 
\end{enumerate}
Moreover, the second case happens for at least one value of $p$.
\end{prop}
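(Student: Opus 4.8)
The plan is to extract the asymptotics of the coefficients $[M_n]$ from the rational expression $Z(T) = F(T)/(1-\LL^aT^a)^r$ by applying the Hodge--Deligne polynomial motivic measure $HD:\widehat{\M}_{\C}\to \Z[[(uv)^{-1}]][u,v]$ and then comparing growth rates on each side. First I would apply $HD$ to the identity $Z(T)(1-\LL^aT^a)^r = F(T)$ and expand $(1-\LL^aT^a)^{-r} = \sum_{m\geq 0}\binom{m+r-1}{r-1}\LL^{am}T^{am}$, so that for $n\equiv p \pmod a$, writing $n = p + a\ell$, one obtains $[M_n] = \sum_{j=0}^{\ell} \binom{\ell-j+r-1}{r-1}\LL^{a(\ell-j)} f_{p+aj}$ in $\widehat{\M}_{\C}$ (this is a convergent sum because $F$ converges for $|T| < \LL^{-1+\delta}$, i.e. $w(f_i) \leq (2-2\delta)i + O(1)$, while the binomial--power terms have weight $\sim 2a(\ell - j)$). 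The dominant contribution as $\ell\to\infty$ comes from the terms with $j$ small relative to $\ell$: the leading block is $\LL^{a\ell}\big(\sum_{j=0}^{\ell}\binom{\ell-j+r-1}{r-1}\LL^{-aj}f_{p+aj}\big)$, and since $\binom{\ell-j+r-1}{r-1}$ is a polynomial in $\ell$ of degree $r-1$ in the variable $\ell - j$ (equivalently in $\frac{n-p}{a}$), the coefficient of $\LL^{a\ell} = \LL^{n-p}$ is, up to lower-weight corrections, a polynomial of degree $\leq r-1$ in $\frac{n-p}{a}$ with constant term governed by $\sum_{j\geq 0}\LL^{-aj}f_{p+aj}$, which is (a piece of) the value $F(\LL^{-1})$.

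Next I would translate this into the dichotomy. Applying $HD$, the weight filtration becomes the degree filtration on $\Z[[(uv)^{-1}]][u,v]$ (top weight $2m$ corresponds to total degree $2m$, and the number of top-dimensional components reads off as the leading coefficient, using the final remark that $H^{2\dim}_c(M)\simeq \Q(-\dim)^{\kappa(M)}$). Two cases arise according to whether $F(\LL^{-1})$ is non-zero effective of weight exactly $2(p + d_0)$ for the relevant $p$ or whether the relevant partial sum $\sum_{j\geq 0}\LL^{-aj}f_{p+aj}$ vanishes or has strictly smaller weight. In the first case, $HD(M_n)$ has its top-degree part, in degrees down to $2(1-\eta)n + 2d_0$ for $\eta < \delta$ (the cutoff where the error terms coming from $F$'s tail kick in), given by a polynomial in $\frac{n-p}{a}$ of degree $\leq r-1$; reading the very top coefficient gives $\dim(M_n) = n + d_0$ and $\kappa(M_n)$ a polynomial in $\frac{n-p}{a}$ of degree exactly equal to (some integer) $\leq r-1$, whence $\log\kappa(M_n)/\log n \to$ that integer. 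In the second case, the weight of $[M_n]$ is $< 2n + 2(\text{const})$ for all large $n$ in that class, and a short argument (the number of irreducible components of top dimension of an effective class is controlled by $HD$, and one exploits effectivity to rule out cancellation lowering $\dim M_n$ only finitely often) yields $\limsup \dim(M_n)/n < 1$. The fact that the second case cannot hold for \emph{all} $p$ follows because otherwise $w(F(\LL^{-1}))$ would be strictly below $2(\text{everything})$ contradicting that $F(\LL^{-1})$ is a non-zero \emph{effective} value (an effective non-zero class has weight $2\dim \geq 0$ and in fact realizes the expected top weight), so at least one congruence class must carry the genuine pole of order $r$.

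The main obstacle I anticipate is making precise and rigorous the passage from "$HD$ of the leading block is a polynomial in $\frac{n-p}{a}$ plus lower-order error" to the statement about a \emph{positive proportion} of coefficients, i.e. controlling uniformly in $n$ the range of degrees $[2(1-\eta)n + 2d_0, 2n + 2d_0]$ over which the polynomial description is valid. This requires bounding the Hodge--Deligne polynomial of the tail $\sum_{j > \eta' \ell}\binom{\ell-j+r-1}{r-1}\LL^{a(\ell-j)}f_{p+aj}$: its weight is at most $\sup_{j>\eta'\ell}\big(2a(\ell-j) + (2-2\delta)(p+aj) + O(1)\big)$, and one must choose the threshold $\eta'$ (hence $\eta$) so that this stays below $2(1-\eta)n + 2d_0$; this is an elementary but slightly delicate optimization using $\delta > 0$, and it is where the explicit interval $[2(1-\eta)n + 2d_0, 2n + 2d_0]$ in the statement comes from. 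A secondary technical point is justifying that the individual coefficients $f_i \in \M_\C$ may fail to be effective, so one cannot argue purely with dimensions and must genuinely work with $HD$ (or with the weight function $w$) throughout; but all the needed compatibilities — $HD$ extends to $\widehat{\M}_\C$, $w$ behaves well under the measure, effectivity of $F(\LL^{-1})$ — are available from the earlier sections, so I expect this to be bookkeeping rather than a real difficulty.
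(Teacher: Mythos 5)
Your proposal is correct and follows essentially the same route as the paper: the paper produces the very same decomposition $[M_n]\LL^{-n} = (\text{polynomial of degree}\le r-1\ \text{in } n) + g_n$ with $w(g_n)\to-\infty$ linearly in $n$, only it obtains it by Taylor-expanding $F$ at the pole $T=\LL^{-1}$ (so your convolution coefficients $\sum_j c_s(j)\LL^{-aj}f_{p+aj}$ appear there as the derivatives $F_p^{(i)}(\LL^{-1})$, $0\le i\le r-1$), and then reads off $\dim M_n$ and $\kappa(M_n)$ through $HD$ and effectivity exactly as you do, including your argument for "at least one $p$". The one point to tighten when writing this up is the dichotomy: it must be taken on the $HD$-realizations of all $r$ polynomial coefficients (equivalently of $F_p^{(i)}(\LL^{-1})$ for $i=0,\dots,r-1$), not just on $F_p(\LL^{-1})$, since a vanishing leading coefficient with a non-vanishing lower one still lands in case (ii) with a smaller growth exponent — your polynomial $Q(\ell)$ already contains the data needed for this.
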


\begin{proof} Note first that since $[M_n]$ is effective, we have $2\dim [M_n] = w(M_n)$, so it suffices to prove the statement with dimensions replaced by weights divided by two. 

First of all, replace $Z(T)$ by $Z(\LL^{-1} T)$ and $F(T)$ with $F(\LL^{-1} T)$, so that $F$ is now a power series converging for $|T| < \LL^{\delta}$ and taking a non-zero effective value at $T = 1$, and $Z(T) = F(T) (1-T^a) ^{r}$ with $Z(T) = \sum_{n\in \Z}[M_n]\LL^{-n} T^n$. We are going to do calculations in the case $a=1$, and explain later how one can reduce to this case. Note first that if~$F$ converges for $|T|<\LL^{\delta}$, then the same is true for all its derivatives. We may write its Taylor expansion at $T =1$:

$$F(T) = \sum_{i\geq 0} \frac{F^{(i)}(1)}{i!}(T-1)^{i} = \sum_{i\geq 0}\frac{F^{(i)}(1)(-1)^i}{i!}(1-T)^i.$$
Put $G(T) = \sum_{i\geq r} \frac{F^{(i)}(1)(-1)^i}{i!}(1-T)^{i-r}.$ Then
\begin{eqnarray*} G(T) & = & \sum_{i\geq r} \frac{F^{(i)}(1)(-1)^i}{i!} \sum_{j=0}^{i-r}{i-r \choose j}(-1)^jT^j. \\
 & = & \sum_{j\geq 0} \left(\sum_{i\geq r + j}\frac{F^{(i)}(1)(-1)^i}{i!}{i-r \choose j}\right)(-T)^j,\end{eqnarray*}
 so that the coefficient of degree $j$ of $G(T)$ is exactly $g_j = (-1)^j\left(\sum_{i\geq r + j}\frac{F^{(i)}(1)(-1)^i}{i!}{i-r \choose j}\right).$ Writing $F(T) =  \sum_{i\geq 0} f_j T^j$, we have $w(f_j)\to -\infty$ as $j\to +\infty$. More precisely, for any $\eta$ such that $0 < \eta < \delta$ and for sufficiently large $j$, we have
 \begin{equation}\label{coefestimate} w(f_j) < -2\eta j.\end{equation}
 Thus, since
 $$F^{(i)}(1) = \sum_{j\geq i} j(j-1)\ldots (j-i + 1) f_j,$$
we see that as $i$ grows, $w(F^{(i)}(1))\to -\infty$ linearly in $i$. In particular, $w(g_j)\to -\infty$ linearly in $j$, and, more precisely, the estimate
\begin{equation}\label{coefestimategj} w(g_j) < -2\eta j\end{equation}
coming from (\ref{coefestimate}) holds for all sufficiently large $j$. Write now
\begin{eqnarray*} Z(T) & = & \frac{F(T)}{(1-T)^{r}}\\
                      & = & G(T) + \sum_{i=0}^{r-1}  \frac{F^{(i)}(1)(-1)^i}{i!(1-T)^{r-i}}\\
                      & = & G(T) + \sum_{i=0}^{r-1}  \frac{F^{(i)}(1)(-1)^i}{i!}\sum_{n\geq 0} {n + r-i -1\choose r-i-1}T^n\\
                      & = & G(T) + \sum_{n\geq 0} \left(\sum_{i=0}^{r-1}  \frac{F^{(i)}(1)(-1)^i}{i!}{n + r-i -1\choose r-i-1}\right) T^n.
                      \end{eqnarray*}
                      Thus, identifying coefficients, we have 
                      \begin{equation}\label{Mnexpansion}[M_n] \LL^{-n} = g_n + \sum_{i=0}^{r-1}  \frac{F^{(i)}(1)(-1)^i}{i!}{n + r-i -1\choose r-i-1}.\end{equation}
Since by assumption $[M_{n}]$ is an element of $\kvar_k^{+}$, its Hodge-Deligne polynomial is of the form 
\begin{equation}\label{Mnform}\kappa(M_n) (uv)^{\dim(M_n)}+ \ \text{terms of lower degree}.\end{equation}
To get asymptotics for  $\dim(M_n)$ and for the coefficients of high degree of $HD(M_n)$ when $n$ goes to infinity, we therefore need to keep track of the dominant terms of the Hodge-Deligne series of $(\ref{Mnexpansion})$.

We denote by $\{\a \}_d$ the coefficient of $(uv)^d$ in the Hodge-Deligne series of $\a\in \widehat {\M}_{\C}$. Let~$d_0$ be the largest integer $d$ such that there exists $i\in\{0,\ldots,r-1\}$ with $\{F^{(i)}(1)\}_d\neq 0$. Such a $d_0$ does exist since by assumption, we have $F(1)$ effective and non-zero, and therefore there exists some integer $b$ such that $ \{F(1)\}_b\neq 0$. Then for all sufficiently large $n$ (namely, for $n$ such that $w(g_n) < 2d_0$), and for all $d\geq d_0$, , we have
\begin{equation}\label{Mncoef}\{[M_n]\LL^{-n}\}_{d} = \sum_{i=0}^{r-1}  \frac{(-1)^i}{i!}{n + r-i -1\choose r-i-1}\{F^{(i)}(1)\}_d.\end{equation}

   Then, for $d >d_0$, the right-hand side of (\ref{Mncoef}) is zero, forcing the left-hand side to be zero as well, so that $w(M_n\LL^{-n})\leq 2d_0.$ Put now $d=d_0$, and let $i_0$ be the smallest $i$ such that  $\{F^{(i)}(1)\}_d\neq 0$. Then we have

 $$\{[M_n]\LL^{-n}\}_{d_0} \sim_{n\to\infty}\{F^{(i_0)}(1)\}_{d_0}\frac{(-1)^{i_0}}{i_0!(r-i_0-1)!}n^{r-i_0-1},$$
 so that for sufficiently large $n$, $w([M_n]\LL^{-n}) = 2d_0$, and moreover
 $$ \frac{\log\kappa(M_n)}{\log n}  \arr r-i_0-1 \in\{0,\ldots,r-1\}.$$
 
 More generally, going back to equation  (\ref{Mnexpansion}), we see that for sufficiently large $n$, the effective element $M_n$ is the sum of the element $\LL^ng_n$ of $\M_k$ which is of weight strictly less than $2(1-\eta)n$ by estimate (\ref{coefestimategj}), and of the sum 
 $$\sum_{i=0}^{r-1}  \frac{F^{(i)}(1)(-1)^i}{i!}{n + r-i -1\choose r-i-1},$$
 which is a polynomial of degree at most $r-1$ in $n$ with coefficients in $\M_k$ and of weight~$2n$. The statement on the coefficients of the Hodge-Deligne polynomial follows.

It remains to show how to reduce to this when $a >1$. We may decompose $F$ in the following manner:

$$F(T) = \sum_{p=0}^{a-1}\sum_{j\geq 0}f_{aj+p}T^{aj+p} = \sum_{p=0}^{a-1}T^{p}F_p(T^{a}),$$
where $F_p(T) = \sum_{j\geq 0}f_{aj+p}T^{j},$ so that $$Z(T) = \sum_{p=0}^{a-1}T^{p}\frac{F_p(T^a)}{(1-T^a)^r}.$$

Using the expansion we did above and putting $G_p(T) = \sum_{i\geq r} \frac{F_p^{(i)}(1)(-1)^i}{i!}(1-T)^{i-r} = \sum_{m\geq 0}g_{p,m}T^m$, we then have
$$Z(T) = \sum_{p=0}^{a-1} T^p\left(G_p(T^a) + \sum_{m\geq 0}T^{am}\sum_{i=0}^{r-1}  \frac{F_p^{(i)}(1)(-1)^i}{i!}{m + r-i -1\choose r-i-1}\right) $$
 Thus, for every $p\in\{0,\ldots,a-1\}$ and every $m\geq 0$, we have 
 $$[M_{am+p}]\LL^{-(am+p)} = g_{p,m} + \sum_{i=0}^{r-1}  \frac{F_p^{(i)}(1)(-1)^i}{i!}{m + r-i -1\choose r-i-1}.$$
Fix $p\in\{0,\ldots,a-1\}$, and assume first that there is some $d\in\Z$ and some $i\in\{0,\ldots,r-1\}$ such that $\{F_p^{(i)}(1)\}_d\neq 0.$ Then we may conclude as above. If on the contrary such a $d$ does not exist, this means that $$w([M_{am+p}]\LL^{-(am+p)})\to -\infty$$ linearly in $m$ (because $w(g_{p,m})\to -\infty$ linearly in $m$), so that $\limsup \frac{\dim M_n}{n}<1$ when $n$ goes to infinity in the congruence class of $p$ modulo~$a$. 

It remains to show that this last case does not occur for all $p$. For this, recall that $F(1) = \sum_{p=0}^{a-1}F_p(1)$, and, $F(1)$ being effective and non-zero, there exists $d$ such that $\{F(1)\}_d\neq 0$. This means that $\{F_p(1)\}_d\neq 0$ for at least one $p$.   
\end{proof}

\chapter{The motivic Poisson formula}\label{poissonformula}

The aim of this chapter is to extend the scope of Hrushovski and Kazhdan's motivic Poisson formula. As explained in the introduction, it is an analogue of a weakened form of the classical Poisson formula 
for Schwartz-Bruhat functions $f:(\Ad_F)^n\to\C$ on (the $n$-th power, for some $n\geq 1$, of) the adeles of a global field $F$:
\begin{equation}\label{classicalpoisson}\sum_{x\in F^n} f(x) = c\sum_{y\in F^n} \four f(y)\end{equation}\index{Poisson formula!classical}
for some multiplicative constant $c\in \C$, the Fourier transform being calculated with respect to a Haar measure on the locally compact group $\Ad_F^{n}$. The restriction $f_v$ of a  Schwartz-Bruhat function $f:\Ad_F\to C$ to the completion $F_v$ of $F$ at some non-archimedean place~$v$ is locally constant and compactly supported: by compactness, this means that there are integers $M\leq N$ such that, denoting by $\OO_v, t$ respectively the ring of integers and a uniformiser of $F_v$, the function $f_v$ is supported inside $t^{M}\OO_v$ and invariant modulo $t^{N}\OO_v$. Thus, $f_v$ may be viewed as a function on the quotient
$t^M\OO_v/t^N\OO_v.$ Denoting by $\kappa(v)$ the residue field of $F_v$, this quotient group can naturally be identified with the $\kappa(v)$-points of the affine space of dimension $N-M$ over $\kappa(v)$, via
\begin{equation}\label{Akiso}\begin{array}{ccc} t^M\OO_v/t^N\OO_v &\to &\A_{\kappa(v)}^{(M,N)}:=\A_{\kappa(v)}^{N-M}\\
                      t^Mx_M + \ldots + t^{N-1}x_{N-1} + t^{N}\OO_v & \mapsto & (x_M,\ldots,x_{N-1})\end{array}\end{equation}
                      Since the residue field $\kappa(v)$ is finite, the function $f_v$ takes only a finite number of values, and its integral over $F_v$ is given by the formula
                      \begin{equation}\label{equation.integral}\int_{F_v}f_v = q_v^{-N}\sum_{x\in t^M\OO_v/t^N\OO_v} f_v(x),\end{equation}
                      where $q_v$ is the cardinality of $\kappa(v)$. 
                      
 All the above definitions and equalities make heavy use of the local compactness of the adeles and of the local fields $F_v$. When the field $F$ is the function field $k(C)$ of a smooth projective connected curve $C$ over an algebraically closed field $k$, these local compactness properties fail. Hrushovski and Kazhdan's formalism from \cite{HK} allows nevertheless to define analogous objects in this setting, via a form of motivic integration. For example, as suggested by the identification in (\ref{Akiso}), one defines local Schwarz-Bruhat functions as elements of a relative Grothendieck ring with exponentials $\expp_{\A_k^{(M,N)}}$ for some integers $M\leq N$. When $M,N$ vary, these rings fit canonically into an inductive system, via maps that are interpreted respectively as extension by zero and pullback of functions. On the other hand, following formula (\ref{equation.integral}), the \textit{definition} of the integral of such a function $f\in \expp_{\A_k^{(M,N)}}$ is 
 $$\int f = \LL^{-N}\sum_{x\in \A_k^{(M,N)}}f(x),$$
 where the sum in the right-hand side is a notation which stands for the image of $f$ in the absolute Grothendieck ring $\expp_{k}$ via the forgetful morphism $\expp_{\A_k^{(M,N)}}\to \expp_k$. More generally, one can define Fourier transforms of such functions by using the same kind of analogy. 
 
 In the same manner, an element $f$ of the relative Grothendieck ring 
 $$\expp_{\prod_{s\in S}\left(\A_k^{(M_s,N_s)}\right)^n},$$
 for integers $M_s\leq N_s$, $s\in S$ may be seen as a motivic analogue of a Schwartz-Bruhat function on a finite product of powers of local fields $\prod_{s\in S} F_s^n$, for some finite set $S$ of non-archimedean places of the global field $F$ and some integer $n\geq 1$. The inclusion of the Riemann-Roch space 
 $$L(D) = k(C)\cap \prod_{s\in S} t_s^{M_s}\OO_s = \{0\}\cup \{x\in k(C)^{\times},\ \div(x)\geq -D\}$$  for the divisor $D = -\sum_{s\in S}M_s[s]$ on $C$ into $\prod t_s^{M_s}\OO_s$ induces a morphism
 $$\theta:L(D)^n\to \prod_{s\in S}\left(\A_k^{(M_s,N_s)}\right)^n$$
 via which we can pull back $f$. The class of $\theta^*f$ in $\expp_k$ is then denoted $\sum_{x\in k(C)^n}f(x)$. Moreover, the same kind of construction can be done for the motivic Fourier transform $\four f$ of $f$.  Denoting by $g$ the genus of the curve $C$, the motivic Poisson formula
 $$\sum_{x\in k(C)^n}f(x) = \LL^{(1-g)n}\sum_{y\in k(C)^n} \four f(y)$$
 \index{Poisson formula!motivic} proved by Hrushovski and Kazhdan is the motivic analogue of the above classical Poisson formula (\ref{classicalpoisson}) for such functions. 
 
 This chapter focuses on building a framework in which this Poisson formula may be applied for families of such Schwartz-Bruhat functions, with varying set $S$, which will be crucial in chapter \ref{motheightzeta}. The notion of symmetric product of a family of varieties from chapter~\ref{eulerproducts}, in the special case where the set of indices is $\N^p$ for some integer $p\geq 1$, will be central here, and therefore, we start by a review of those in this case in section \ref{sect.reviewsymproducts}. In chapter \ref{motheightzeta}, the integer $p$ will be the cardinality of the set $\scr{A}$ of irreducible components of the divisor at infinity in the equivariant compactification we will consider. 
 Our families of functions will be defined as elements of the relative localised Grothendieck ring with exponentials over symmetric products
 \begin{equation}\label{equation.domdefinition}\scr{A}_{\m}(\al,\be,M,N):= S^{\m}((\A_C^{(\al - M_{\i}, \be + N_{\i})})_{\i\in \N^p}),\end{equation}
 where 
 \begin{itemize} \item $\m\in\N^{p}$ is a $p$-tuple, which in chapter \ref{motheightzeta} will contain the degrees of the sections we count with respect to each of the irreducible components of the divisor at infinity.
 \item $(M_{\i})_{\i\in \N^p}$ and $(N_{\i})_{\i\in \N^p}$ will be families of non-negative integers, with $M_0 = N_0 = 0$.
 \item $\al,\be:C\to \Z$, $\al\leq 0\leq \be$,  are functions on the curve which are zero over some dense open subset $U$ of $C$, which will enable us to take into account the irregular behaviour of the height function at a finite number of places, e.g. places of bad reduction.
 \item $\A_C^{(\al - M_{\i}, \be + N_{\i})}$ is the constructible set over the curve $C$ given by $U\times \A_k^{(-M_{\i},N_{\i})}$ above $U$, and with fibres above $v\in C\setminus U$ given by the affine spaces $\A_k^{(\al_v -M_{\i},\be_v + N_{\i})}$.
 \end{itemize}
 By construction, this symmetric product has a morphism to $S^{\m}C$. A point $D\in S^{\m}C(k)$ may be seen as an ``effective zero-cycle'' $\sum_{v\in C}\m_v v$ where $\m_v\in\N^p$ is such that $\sum_{v}\m_v = \m$. The fibre of the variety (\ref{equation.domdefinition}) above $D$ will be 
 \begin{equation}\label{equation.introfibre}\prod_{v\in C}\A_k^{(\al_v-M_{\m_v}, \be_v + N_{\m_v})},\end{equation}
 that is, a product of affine spaces on which Schwartz-Bruhat functions in the sense of Hrushovski and Kazhdan may be considered. In the framework of chapter \ref{motheightzeta}, this fibre will be the domain of definition of the characteristic function of the sections with poles of orders the coordinates $m_{v,1},\ldots,m_{v,p}$ of the vector $\m_v$ along the $p$ irreducible components of the divisor at infinity. 
 
 The identification (\ref{Akiso}) involves the choice of a uniformiser at the place $v$, which in Hrushovski and Kazhdan's theory may be made arbitrarily, because only a finite number of places  of the field $F$ are involved. To be able to make such identifications for all places of $F$ and keep all operations on families of Schwartz-Bruhat functions algebraic, we discuss in section \ref{sect.uniformisers} how we can choose uniformisers in a ``uniform'' way.
 
 Though we define families of Schwartz-Bruhat functions (of level $\m$) in all generality to be elements of the ring $\expp_{\scr{A}_{\m}(\al,\be,M,N)}$, in fact we will use this definition in two important special cases, namely
 \begin{itemize}\item If all the integers $N_{\i}$ are zero, the family is said to be \textit{uniformly smooth}. By looking at the fibre (\ref{equation.introfibre}), we see indeed that all the functions in such a family will be invariant modulo the same ``compact open'' subset $\prod_{v\in C}t_v^{\be_v}\OO_v$ of the adele ring of~$F$. This will be the case for the family of characteristic functions of sections with given poles mentioned above. The arithmetic analogue of this is the fact that height functions are invariant modulo some compact open subset of the adeles. 
 \item If all the integers $M_{\i}$ are zero, the family is said to be \textit{uniformly compactly supported}. Again, the terminology is clear from the fact that all functions in the family are zero outside $\prod_{v\in C}t_v^{\al_v}\OO_v$. 
 \end{itemize}
 In section \ref{sect.fouriertransform}, we go on to define Fourier transformation for such families of functions, so that it coincides with Hrushovski and Kazhdan's Fourier transform in each fibre above a rational point. This operation exchanges the above two types of families of functions. In section \ref{summation.ratpoints}, we extend Hrushovski and Kazhdan's summation over rational points to families of uniformly compactly supported functions. Finally, in section \ref{Poisson.families}, we formulate the Poisson formula for families of uniformly smooth functions.

\section{Symmetric products}\label{sect.reviewsymproducts}
\subsection{Multidimensional partitions}
Fix an integer $p\geq 1$, and denote $\I = \N^{p}-\{0\}$ and $\I_0 = \N^p$. Consider the free abelian monoid over~$\I$:
$$\N^{(\I)} = \{(m_{\i})_{\i\in \I}\in\N^{\I},\ m_{\i} = 0\ \text{for almost all}\ \i\}.$$
To an element $\pi = (m_{\i})_{\i\in \I}\in \N^{(\I)}$ we can associate canonically a $p$-tuple
$$\lambda(\pi) = \sum_{\i\in \I}m_{\i}\i\in \I_0.$$
Thus, we have a well-defined map 
$$\lambda: \N^{(\I)}\arr \I_0.$$
We say $\pi$ is a partition of $\m\in \I$ if $\lambda(\pi) = \m$.

\begin{notation} Recall from notation \ref{partitionnotation} that another notation for partitions is as follows: a partition of $\m$ can be written in the form $[\a_1,\ldots,\a_r]$ where $\a_1,\ldots,\a_r\in \I$ are not necessarily distinct and such that $\a_1 + \ldots + \a_r = \m$. The order of the $\a_i$ in this notation is not important: we consider $[\a_1,\ldots,\a_r]$ to be the same as $[\a_{\sigma(1)},\ldots,\a_{\sigma(r)}]$ for all $\sigma\in\Sym_r$. 
\end{notation}

\begin{example} For $p=1$, we recover partitions of integers: indeed, in this case an element $\pi$ of $\N^{(\I)}$ is a finite family $(m_i)_{i\geq 0}$ of non-negative integers, $\lambda(\pi) = \sum_{i\geq 1}m_ii$ is some integer $m$, and $\pi$ determines a partition $$\sum_{i\geq 1} \underbrace{(i + \ldots + i)}_{m_i\ \text{times}} = m$$of the integer $m$, the non-negative integer $m_i$ being the number of occurrences of $i$ in this partition.

  For $p=2$, consider for example 
$$\pi = \left[\left(\begin{array}{c} 2 \\ 1  \end{array}\right),\left(\begin{array}{c} 2 \\ 1  \end{array}\right),\left(\begin{array}{c} 0 \\ 3  \end{array}\right)\right]$$
It is a partition of $$\left(\begin{array}{c} 4 \\ 5  \end{array}\right) = 2 \left(\begin{array}{c} 2 \\ 1  \end{array}\right) + \left(\begin{array}{c} 0 \\ 3  \end{array}\right).$$
Note that this 2-dimensional partition gives in particular a one-dimensional partition for each coordinate: $[2,2]$ for the first coordinate, and $[1,1,3]$ for the second one. However, it carries more information than just the choice of these two partitions, since it also matches up their parts in some way. Thus, the partition
$$\left(\begin{array}{c} 4 \\ 5  \end{array}\right) = \left(\begin{array}{c} 0 \\ 1  \end{array}\right) + \left(\begin{array}{c} 2 \\ 1  \end{array}\right) + \left(\begin{array}{c} 2 \\ 3  \end{array}\right).$$
is different from $\pi$, but yields the same partitions of its coordinates.
\end{example}

\subsection{From partitions to symmetric products}\label{symproducts}

Let $k$ be a perfect field. Let $\pi = (n_{\i})_{\i\in\I}\in \N^{(\I)}$ and let $\scr{X} = (X_{\i})_{\i\in \I_0}$ be a family of constructible subsets of projective varieties over a quasi-projective $k$-variety $X$. Assume moreover that there is an open subset $U$ of $X$ such that $X_0\times_XU \simeq U$ and such that $X\setminus U$ is a finite union of closed points. In chapter~\ref{eulerproducts}, in particular in sections \ref{definition}, \ref{anyset} and \ref{sect.addX0}, we defined a notion of \emph{symmetric product}~$S^{\pi}\scr{X}$. It follows from the construction that $S^{\pi}\scr{X}$ comes with a natural morphism to $S^{\pi}X$. Define also for any $\m\in\I$, the constructible set $S^{\m}(\scr{X})$ to be the disjoint union of the $S^{\pi}(\scr{X})$ for all partitions $\pi$ of $\m$. 


\begin{remark} Recall that when $p=1$, for any quasi-projective variety $X$, the variety $S^nX$ can also be obtained directly by taking the quotient of $X^n$ by the natural permutation action of the symmetric group $\Sym_n$. For $p\geq 2$ and $\n = (n_1,\ldots,n_p)\in\N^{p}$, note that giving an element $\sum_v\n_vv$ of $S^{\n}X$ is equivalent to giving its $p$ components
$$\left(\sum_{v}n_{i,v}v\right)_{1\leq i\leq p}\in S^{n_1}X\times \ldots\times S^{n_p}X.$$
Thus, we in fact have a piecewise isomorphism 
$$S^{\n}X\simeq S^{n_1}X\times \ldots\times S^{n_p}X.$$
\end{remark}


\section{Motivic Schwartz-Bruhat functions and  Poisson formula}\label{SBreview}
We start with a review of Hrushovski and Kazhdan's motivic Poisson formula from \cite{HK}, following the exposition in sections 1.2 and 1.3 of \cite{CL}. Let $k$ be a perfect field.  
\subsection{Local Schwartz-Bruhat functions}\label{sect.localSB}\index{Schwartz-Bruhat function!local} Let $F = k((t))$ be the completion of a function field of a curve at a closed point, with uniformiser $t$, ring of integers~$\OO$ and residue field $k$. In \cite{HK}, Hrushovski and Kazhdan considered  local motivic exponential Schwartz-Bruhat functions on $F$: such functions are analogues of classical Schwartz-Bruhat functions on non-archimedean local fields, that is, locally constant and compactly supported functions. For each such function $\phi$, there exist integers $M\leq N$ such that $\phi$ is zero outside $t^M\OO$, and invariant modulo $t^N\OO$, so that $\phi$ can be seen as a function on the quotient $t^M\OO/t^N\OO$. The latter can be endowed with the structure of a $k$-variety, and more precisely of an affine space over $k$, through the following identification:

\begin{equation}\label{affineidentification}\begin{array}{ccc}t^M\OO/t^N\OO&\arr& \A_{k}^{N-M}(k)\\
                 x_Mt^M + \ldots + x_{N-1}t^{N-1} + t^N\OO & \mapsto & (x_M,\ldots,x_{N-1})\end{array}\end{equation}
                 This affine space is denoted by $\aff$. \index{Ak@$\aff$} More generally, for any $n\geq 1$ we denote by $\A_k^{n(M,N)}$ \index{Ak@$\A_k^{n(M,N)}$} the affine space $(\aff)^n$, which is viewed as a motivic incarnation of $(t^M\OO/t^N\OO)^n$. Thus, a Schwartz-Bruhat function of level $(M,N)$ \index{Schwartz-Bruhat function!of level $(M,N)$} on $F^n$ will by definition be an element of  $\Sch(F^n;(M,N)) := \expp_{\A_k^{n(M,N)}}$.  \index{SFM@$\Sch(F^n;(M,N))$} An element $E$ of this ring can indeed be interpreted as a function 
                 $$\phi:\A_k^{n(M,N)}\arr\expp_{k(x)}$$
                  by sending a point $x\in\A_k^{n(M,N)}$ to the class of the fibre $E_x$, where $k(x)$ is the residue field of $x$. As $M$ and $N$ vary, the rings $\Sch(F^n;(M,N))$  fit into a directed system the direct limit of which is the total ring $\Sch(F^n)$ \index{SF@$\Sch(F^n)$} of Schwartz-Bruhat functions. More precisely, let us point out that the natural injection 
                  $t^{M}\OO/t^{N}\OO \to t^{M-1}\OO/t^{N}\OO$ gives rise to the closed immersion \begin{equation}\label{immersion}\begin{array}{rcl}i:\A_k^{(M,N)}&\arr& \A_k^{(M-1,N)} \\
                  (x_M,\ldots,x_{N-1})&\mapsto& (0,x_M,\ldots,x_{N-1})\end{array}\end{equation} whereas the natural projection $t^{M}\OO/t^{N+1}\OO\to t^{M}\OO/t^{N}\OO$ induces a morphism
                \begin{equation}\label{projection}\begin{array}{rcl}p:\A_k^{(M,N+1)}&\arr& \A_k^{(M,N)} \\
                  (x_M,\ldots,x_{N})&\mapsto& (x_M,\ldots,x_{N-1})\end{array}\end{equation}  which is a trivial fibration with fibre $\A^1$. They induce ring morphisms $i_!:\Sch(F^n;(M,N))\to \Sch(F^n;(M-1,N))$ (extension by zero) and $p^{*}:\Sch(F^n;(M,N))\to \Sch(F^n;(M,N+1))$.
                  
               \subsection{Integration}\label{HKintegration} For any Schwartz-Bruhat function $\phi\in\Sch(F^n)$, choosing a pair $(M,N)$ such that $\phi\in\Sch(F^n;(M,N))$ one may define, using the exponential sum notation,
$$\int_{F^n}\phi(x) \dx x     = \LL^{-nN}\sum_{x\in\A_k^{n(M,N)}}\phi(x)\in\expp_k.$$    
This does not depend on the choice of $(M,N)$, and defines an $\expp_k$-linear map 
$$\int_{F^n}:\Sch(F^n)\to \expp_k.$$

\subsection{Fourier kernel} Fix a $k$-linear function $r:F\arr k$ such that there is an integer $a$ with $r_{|t^a\OO} = 0$. The least such integer $a$ is called the \textit{conductor} of $r$ and denoted by $\nu$. Note that because of its linearity, $r$ is invariant modulo $r^{\nu}\OO$, so that for any pair of integers $(M,N)$ such that $M\leq N$ and $\nu\leq N$, it induces a well-defined morphism $r^{(M,N)}:\A_k^{(M,N)}\arr \A_k^1$. 

On the other hand, for any two pairs of integers $(M,N)$ and $(M',N')$ satisfying $M\leq N$ and $M'\leq N'$, the product map $F\times F\arr F$ induces a well-defined map on classes
$$t^{M}\OO/t^{N}\OO \times t^{M'}\OO/t^{N'}\OO\arr t^{M+M'}\OO/t^{N''}\OO$$
where $N''\geq  \mathrm{min}(M'+N,M+N')$, that is, a well-defined morphism
\begin{equation}\label{localprod}\A_k^{(M,N)}\times \A_k^{(M',N')}\arr \A_k^{(M+M',N'')}.\end{equation}
Whenever $N''\geq \nu$, this map may be composed with $r^{(M+M',N'')}$ which yields a morphism
$$\A_k^{(M,N)}\times \A_k^{(M',N')}\arr \A_k^1.$$
More generally, taking $n$-th powers and summing the corresponding maps, we get a morphism
$$\A_k^{n(M,N)}\times \A_k^{n(M',N')}\arr \A_k^1.$$
Note that when $M' = \nu-N$ and $N' = \nu-M$, the condition $N''\geq \nu$ is satisfied. The morphism
\begin{equation}\label{localkernel}r:\A_k^{n(M,N)}\times \A_k^{n(\nu-N,\nu-M)}\to \A^1_k\end{equation}
defined in this setting is called the \textit{Fourier kernel}.
\index{Fourier kernel!local}
\subsection{Local Fourier transform} \label{sect.localfouriertransform} The Fourier transform \index{Fourier transform!local} of a motivic Schwartz-Bruhat function $\phi\in\Sch(F^n;(M,N))$ is defined to be the element $\four \phi\in \Sch(F^n;(\nu-N,\nu-M))$ given by
$$\four \phi = \LL^{-Nn}\phi\cdot[\A_k^{n(M,N)}\times \A_k^{n(\nu-N,\nu-M)},r],$$
 where $r$ is the morphism (\ref{localkernel}), and the product is taken in $\expp_{\A_k^{n(M,N)}}$, and viewed in $\expp_{\A_k^{n(\nu-N,\nu-M)}}$. For every $y\in \A_k^{n(\nu-N,\nu-M)}$, using the notation from section~\ref{exponentialsumnotation} of chapter~\ref{grothrings},  as well as the definition of the integral in section \ref{HKintegration} we have
$$\four \phi(y) = \int_F\phi(x)\psi(r(xy))\dx x.$$

\subsection{Global Schwartz-Bruhat functions}\label{CLglobal}
                    
                  One can extend the above definitions to finite products of fields. Consider a finite family $(F_v)_{v\in S}$ of such fields $F_v = k_v((t_v))$, with local parameters $t_v$ and residue fields $k_v$ (which are assumed to be finite extensions of $k$) and an integer $n\geq 1$.
                  For any  family of pairs of integers $(M_v,N_v)_{v\in S}$, with $M_v\leq N_v$ the space of Schwartz-Bruhat functions on $\prod_{v\in S}F_v^n$ of levels $(M_v,N_v)_{v\in S}$ is defined to be 
                  $$\Sch\left(\prod_{v\in S}F_v^n; (M_v,N_v)_{v\in S}\right):= \expp_{\prod_{v\in S}\Res_{k_v/k}\A_{k_v}^{n(M_v,N_v)}},$$
                  where $\Res_{k_v/k}$ denotes the functor of Weil restriction of scalars. In the case where $k$ is algebraically closed, we have
                  $$\Sch\left(\prod_{v\in S}F_v^n; (M_v,N_v)_{v\in S}\right):= \expp_{\prod_{v\in S}\A_{k}^{n(M_v,N_v)}}.$$
                  The ring $\Sch\left(\prod_{v\in S}F_v^{n}\right)$ is defined as a direct limit of these rings, with the appropriate compatibilities.
                  The notions of integral, Fourier kernel and Fourier transform defined above extend easily to such functions (see \cite{CL}, 1.2.10). \index{Fourier kernel!global}\index{Fourier transform!global}

        We are going to use this in the following setting: let $k$ be a perfect field, $C$ a smooth projective curve over $k$, $F = k(C)$ its function field. Denote by $\mathbb{A}_F$ the ring of adeles of the field~$F$. The rings $\Sch(\prod_{v\in S}F_v^n)$, for finite sets $S$ of closed points of $C$, form a directed system, and their direct limit is the ring $\Sch(\mathbb{A}^n_F)$ of global motivic Schwartz-Bruhat functions on $\mathbb{A}^n_F$. \index{Schwartz-Bruhat function!global}
          
            \subsection{Summation over rational points} \label{CLsummation}\index{summation over rational points}
               
          For details on the contents of this paragraph, see \cite{CL}, 1.3.5.  Let $\phi$ be a global Schwartz-Bruhat function on $\mathbb{A}^n_F$, represented by a class in the ring $$\expp_{\prod_{v\in S}\Res_{k_v/k}\A_{k_v}^{n(M_v,N_v)}}$$ for some finite set $S$ of closed points of $C$ and some family $(M_v,N_v)_{v\in S}$  of pairs of integers such that $M_v\leq N_v$ for all $v\in S$. 
            
              Consider the divisor $D = -\sum_{v\in S}M_vv$ on $C$. For every $v\in C$, the natural embedding of the field $F = k(C)$ into its completion $F_v$ maps the Riemann-Roch space 
$$L(D) = \{0\}\cup \{f\in k(C)^{\times},\div(f)\geq \sum_{v}M_vv\} $$ 
into $t^{M_v}\OO_v$. This gives rise to a morphism of algebraic varieties
$$\theta: L(D)^n\arr\left(\prod_{v}\Res_{k_v/k}\A_{k_v}^{(M_v,N_v)}\right)^n.$$
The sum over rational points of $\phi\in \expp_{\left(\prod_{v}\Res_{k_v/k}\A_{k_v}^{(M_v,N_v)}\right)^n}$, denoted by 
$\sum_{x\in F^n}\phi(x),$ \index{sum@$\sum_{x\in k(C)^n}$}
 is then defined to be the image in $\expp_k$ of the pull-back $\theta^*\phi\in\expp_{L(D)^n}.$ It does not depend on choices.
 
\begin{remark} This definition is motivated by the fact that, when $k$ is a finite field, for a Schwartz-Bruhat function $\phi:(\A_F)^n\to\C$ which is supported inside $\left(\prod_vt^{M_v}\OO_v\right)^n$, we have $\phi(x)=0$ for all $x\not\in F^n\cap\left(\prod_vt^{M_v}\OO_v\right)^n = L(D)^n$, so that we have the equality
$$\sum_{x\in F^n}\phi(x) = \sum_{x\in L(D)^n}\phi(x).$$
\end{remark}
 
 \subsection{Motivic Poisson formula}\label{CLpoissonformula}  We fix a non-zero meromorphic differential form $\omega\in\Omega^{1}_{F/k}$. For every $v\in C$, we choose the linear map $r_v:F_v\to k$ defined by   $r_v:x\mapsto \res_v(x\omega)$ and we compute Fourier transforms with respect to those. Theorem 1.3.10 in \cite{CL} states that for $\phi\in\Sch(\mathbb{A}^n_F)$, we have $\four\phi\in\Sch(\mathbb{A}^n_F)$ and
 $$\sum_{x\in F^n}\phi(x) = \LL^{(1-g)n}\sum_{y\in F^n}\four\phi(y).$$ 
              \index{Poisson formula!motivic}

\section{Families of Schwartz-Bruhat functions}
                 
                 \subsection{Parametrising domains of definition} \label{sect.domainsofdef}
                 
                Let $k$ be an algebraically closed field of characteristic zero, and $C$ a smooth projective connected curve over~$k$.
                 
\begin{definition} Let $X$ be a variety over $k$. A function $\alpha:X\to \Z$ is said to be constructible if for every $n\in \Z$, $\alpha^{-1}(n)$ is a constructible subset of $X$. \index{constructible function}
\end{definition}

\begin{remark} When $X = C$, $\alpha:C\to \Z$ is constructible if and only if it is constant on some dense open subset of $C$. If it is zero on some dense open subset of $C$, we say it is \textit{almost zero}. \index{almost zero function}
\end{remark}

The value of a constructible function $\al$ at a point $v\in C$ will be denoted $\al_v$.

\begin{definition}\label{affinespacedef} Let $M,N:C\arr\Z$ be constructible functions such that $M\leq N$. Let  $U\subset C$ be a dense open set over which they are constant, equal respectively to $M_0\in \Z$ and $N_0\in \Z$. We will denote by $\A_C^{(M,N)}$ \index{Ac@$\A_C^{(M,N)}$} the variety over $C$ isomorphic to $U\times \A_k^{(M_0,N_0)}$ over $U$, and with fibre above $u\not\in U$ given by~$\A_{k}^{(M_u,N_u)}$. Furthermore, we will denote by $\left(\A_C^{(M,N)}\right)^{n}$,\ or $\A_C^{n(M,N)}$, \index{Acn@$\A_C^{n(M,N)}$} the variety over $C$ defined by
$$\A_C^{(M,N)}\times_C\ldots \times _C\A_C^{(M,N)},$$
where the product contains $n$ factors. 
\end{definition}

Recall that we denote by $\I_0$ the additive monoid $\Z_{\geq 0}^p$ and $\I = \I_0\setminus\{0\}$. Fix two almost zero functions $\alpha,\beta:C\arr\Z$ such that $\alpha\leq 0\leq \beta$. Denote by $U$ a dense open subset of $C$  over which $\alpha$ and $\beta$ are zero. Fix also two families of non-negative integers~$M = (M_{\i})_{\i\in \I_0}$ and $N = (N_{\i})_{\i\in \I_0}$, with $M_0 = 0$ and $N_0 = 0$. 

We have a family of varieties $\left(\A_C^{n(\al-M_{\i},\be + N_{\i})}\right)_{\i\in \I_0}$ over $C$, giving rise to symmetric products 
\begin{equation}\label{domdefinitionformula}\scr{A}_{\m}(\al,\be,M,N) := S^{\m}\left(\left(\A_C^{n(\al-M_{\i},\be + N_{\i})}\right)_{\i\in \I_0}\right)\end{equation}
 for all $\m\in \I_0$. \index{Am@$\scr{A}_{\m}(\al,\be,M,N)$!definition}

By the definition of symmetric products, all these objects are varieties endowed with natural morphisms to $S^{\m}C$, which we denote $\varpi_{\m}:\scr{A}_{\m}(\al,\be,M,N)\arr S^{\m}C$
for every $\m\in \I_0$. 
\begin{remark} For clarity, let us point out that, where in definition \ref{affinespacedef} objects denoted $M,N$ were \textit{constructible functions}, from now on, except when explicitly stated (that is, except in section \ref{localkernel}), they will denote integers. The possible variation above a finite number of places will be taken care of by the almost zero functions $\al$ and $\be$. 
\end{remark}
\begin{remark}\label{remark.m0expansion} Denote by $\Sigma = \{x_1,\ldots,x_s\}$ the complement $C\setminus U$, which is a finite union of closed points. By corollary \ref{multzeta} from section \ref{sect.cutapplications} of chapter \ref{eulerproducts}, $\scr{A}_{\m}(\al,\be,M,N)$ is the disjoint union of locally closed subsets isomorphic to products
\begin{equation}\label{m0expansion}S^{\m_0}\left(\left(\A_{U}^{n(-M_{\i},N_{\i})}\right)_{\i\in\I}\right)\times \prod_{j=1}^sS^{\m_j}\left(\left(\A_{\{x_j\}}^{n(\al_{x_j} - M_{\i},\be_{x_j} + N_{\i})}\right)_{\i\in\I_0}\right)\end{equation}
for all $\m_0,\ldots,\m_s\in\I_0$ such that $\m_0 + \ldots + \m_s = \m$. By example \ref{generalexn} of chapter \ref{eulerproducts}, the variety~(\ref{m0expansion}) is isomorphic to 
$$S^{\m_0}\left(\left(\A_{U}^{n(-M_{\i},N_{\i})}\right)_{\i\in\I}\right)\times \prod_{j=1}^s \A_{\{x_j\}}^{n(\al_{x_j} - M_{\m_j},\be_{x_j} + N_{\m_j})}.$$
\end{remark}

\begin{remark} Though these definitions depend on the choice of $U$, the ring $$\expp_{\scr{A}_{\m}(\al,\be,M,N)}$$ which we will consider later won't depend on it.  
\end{remark}

\subsection{The fibres of the domains of definition}\label{sect.fibres} \index{Am@$\scr{A}_{\m}(\al,\be,M,N)$!fibre} Let $D\in S^{\m}C$, with residue field $\kappa(D)$. We want to describe the fibre $\varpi_{\m}^{-1}(D)$ above the point $D$. We know that $S^{\m}C$ is the disjoint union of locally closed subsets isomorphic to 
$$S^{\m_0}U\times S^{\m-\m_0}\Sigma$$
for all $\m_0\in\I_0$ such that $\m_0\leq \m$.  Let $\m_0\leq \m$ be such that $D$ belongs to the subset corresponding to $\m_0$. Since the field $k$ is algebraically closed, $S^{\m-\m_0}\Sigma$ is a disjoint union of a finite number of closed points, and therefore the variety $S^{\m_0}U\times S^{\m-\m_0}\Sigma$ has a finite number of connected components each corresponding to a point of $S^{\m-\m_0}\Sigma$. Thus, the schematic point $D$ of $S^{\m}C$ is of the form $(D_U,D_{\Sigma})$, where $D_U\in S^{\m_0} U$ and $D_{\Sigma}\in S^{\m-\m_0}\Sigma$. Let $\pi = (m_{\i})_{\i\in\I}$ be the partition of $\m_0$ such that $D_U\in S^{\pi}U$. On the other hand, $D_{\Sigma}$ is an effective zero-cycle with coefficients in $\I_0$ and with support contained in $\Sigma$, so it may be written in the form
$$D_{\Sigma} = \m_1 x_1 + \ldots + \m_s x_s\in S^{\m-\m_0} \Sigma$$
for $\m_1,\dots,\m_s\in\I_0$ such that $\m_0 + \ldots + \m_s = \m$. Using remark \ref{remark.m0expansion} as well as proposition~\ref{affine} from section~\ref{affinespaces} of chapter~\ref{eulerproducts}, the fibre above $D$ is of the form
\begin{equation}\label{fibre}\prod_{\i\in\I} \A_{\kappa(D)}^{m_{\i}n(N_{\i} + M_{\i}) }\times_{\kappa(D)} \prod_{j=1}^s \A_{\kappa(D)}^{n(\al_{x_j} - M_{\m_j},\be_{x_j} + N_{\m_j})}.\end{equation}

More precisely, we have the diagram
$$\xymatrix{\left(\prod_{\i\in\I} U^{m_{\i}}\right)_* \ar[d] & \ar[l]\left(\prod_{\i\in \I}\A^{m_{\i}n(-M_{\i},N_{\i})}_U\right)_{*,U}\ar[d]\\
S^{\pi}U & \ar[l] S^{\pi}((\A^{m_{\i}n(-M_{\i},N_{\i})}_U)_{\i\in\I})}$$
where the vertical maps are the quotient morphisms, the upper horizontal line is a trivial vector bundle, and the lower line is a vector bundle. Let $D'_U$ be a point of $\left(\prod_{\i\in\I} U^{m_{\i}}\right)_*$ lifting $D_U\in S^{\pi}U$. Taking fibres above $D_U$ and $D'_U$ and denoting by $K$ the residue field of~$D'_U$ (so that $K$ is a finite extension of $\kappa(D)$), the diagram becomes

$$\xymatrix{ \spec K \ar[d] & \ar[l] \prod_{\i\in \I}\A_K^{m_{\i}n(-M_{\i},N_{\i})}\ar[d] \\
            \spec \kappa(D) & \ar[l]\prod_{\i\in\I} \A_{\kappa(D)}^{m_{\i}n(N_{\i} + M_{\i}) }}$$
            so that we have a linear $K$-isomorphism
            $$\prod_{\i\in \I}\A_K^{m_{\i}n(-M_{\i},N_{\i})} \simeq \prod_{\i\in \I}\A_{\kappa(D)}^{m_{\i}n(N_{\i} + M_{\i}) }\otimes_{\kappa(D)}K.$$
            In other words, the fibre above $D_U$ is a \textit{twisted form} of the variety
            $$\prod_{\i\in \I}\A_{\kappa(D)}^{m_{\i}n(-M_{\i},N_{\i})}$$
            which splits above the finite extension $K$ of $\kappa(D)$. 
            
            We may conclude that the fibre $\scr{A}_{\m}(\al,\be,M,N)_{D}$ above $D$ may be seen as the domain of definition of a Schwartz-Bruhat function, up to extension of scalars to some finite extension of $\kappa(D)$. 
\begin{remark} We write $\A_{\kappa(D)}^{m_{\i}n(N_{\i} + M_{\i})}$ instead of $\A_{\kappa(D)}^{m_{\i}n(M_{\i},N_{\i})}$ to signify that through the quotient morphism, the chosen identification of the form (\ref{affineidentification}) is twisted. Therefore, when looking at functions on such a fibre $\omega_{\m}^{-1}(D)$ later, for example in section \ref{twistedsummation}, we will pull them back via the quotient morphism before performing operations on them which via (\ref{affineidentification}) can be understood as analogues of operations from classical Fourier theory . 
\end{remark}
%

Let us make the particular case where $D\in S^{\m}C(k)$ more explicit. Recall $k$ is algebraically closed. Thus, $D$ may be seen as an effective zero-cycle $\sum \i_v v$ for points $v\in C(k)$ and $\i_v\in \I_0$, and (\ref{fibre}) may be written in the form
$$\prod_{v\in C} \A_k^{n (\al_v - M_{\i_v},\be_v + N_{\i_v})}$$ 
because the field $k$ is algebraically closed.



\subsection{Uniform choice of uniformisers}\label{sect.uniformisers}\index{uniformisers}
In section \ref{sect.domainsofdef}, we have defined families of domains of definition of Schwartz-Bruhat functions: the product $\prod_{v}\A_k^{(M_v,N_v)}$ has to be understood as representing
$\prod_{v}t_v^{M_v}\OO_v/t_v^{N_v}\OO_v.$ However, this identification depends on the choice of the uniformisers $t_v$ at each place $v$, and therefore so will some of the operations we are going to perform in what follows. This choice has to be made as uniformly as possible, so that these operations remain algebraic. We explain in this section how this can be done.
\begin{lemma}
Fix a non-constant element $t\in k(C)$. Then there is a dense open set $U\subset C$ such that for all $v\in U$, the function $t_v = t-t(v)$ is a local parameter at $v$. 
\end{lemma}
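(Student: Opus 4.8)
The plan is to realise $t$ as a non-constant morphism $t\colon C\to \Proj^1_k$ and to take for $U$ the complement of the finitely many ``bad'' points of $t$: its poles, together with its critical points. First I would let $P\subset C$ be the (finite) set of poles of $t$, so that $t$ restricts to a regular function on the open subset $C_0:=C\setminus P$. For $v\in C_0$ the value $a:=t(v)\in k$ is then well defined, and $t_v=t-a$ is a regular function on $C_0$ vanishing at $v$, so $n:=\ord_v(t_v)\geq 1$; the content of the lemma is precisely that $n=1$ for all $v$ outside a finite set.

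Next I would bring in the differential $dt\in\Omega^1_{k(C)/k}$. Since $t$ is non-constant and $\mathrm{char}\,k=0$, one has $dt\neq 0$, hence $\div(dt)$ is a finite divisor; in particular there is a finite set $Z\subset C_0$ of points of $C_0$ at which $dt$ vanishes, while on all of $C_0$ the form $dt$ is regular, so $\ord_v(dt)\geq 0$ there. I would then set $U:=C_0\setminus Z$, which is a dense open subset of $C$ because only finitely many points have been removed from the irreducible curve $C$.

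Finally, for $v\in U$, I would conclude using the elementary fact that in characteristic zero $\ord_v(df)=\ord_v(f)-1$ whenever $\ord_v(f)\geq 1$: writing $f=u\pi^{n}$ in a local uniformiser $\pi$ at $v$ and differentiating gives $df=\pi^{n-1}(nu+\pi u')\,d\pi$, and the hypothesis $\mathrm{char}\,k\nmid n$ is exactly what makes the coefficient $nu+\pi u'$ a unit at $v$. Applying this to $f=t_v$, and using $dt_v=dt$ together with $\ord_v(dt)=0$ (which holds since $v\notin Z$), I obtain $n-1=0$, i.e.\ $\ord_v(t_v)=1$, so $t_v$ is a uniformiser at $v$.

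This argument presents no genuine obstacle; the only point requiring a little care is that the characteristic-zero hypothesis is used twice — once to guarantee $dt\neq 0$, and once in the order identity $\ord_v(df)=\ord_v(f)-1$ — and both uses are standard facts about smooth curves.
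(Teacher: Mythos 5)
Your proof is correct and follows essentially the same route as the paper: discard the poles of $t$ and the (finitely many) zeros of the non-zero differential $\mathrm{d}t$, then use $\mathrm{d}t_v=\mathrm{d}t$ at the remaining points to conclude. The only cosmetic difference is the last step, where you verify $\ord_v(t_v)=1$ by the explicit characteristic-zero identity $\ord_v(\mathrm{d}f)=\ord_v(f)-1$, whereas the paper invokes directly the criterion that an element of $\mathfrak{m}_v$ with non-vanishing differential at a smooth point is a local parameter.
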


\begin{proof} Denote by $U_0$ an open dense set of $C$ on which $t$ is regular. We therefore get a holomorphic differential $\mathrm{d}t$ on $U_0$. It is non-vanishing when restricted to some open dense subset $U$ of $U_0$. At any $v\in U$, the function $t_v$ is an element of the maximal ideal $\mathfrak{m}_v$, and its differential $\mathrm{d}(t_v) =  \mathrm{d}t$ is non-zero, so it is a local parameter.
\end{proof}

From now on, we fix such an element $t\in k(C)$. The uniformiser at any place $v$ in the open set  $U$ furnished by the lemma will be given by $t_v = t-t(v)$. For $v\in C\backslash U$, we fix some arbitrary uniformiser~$t_v$.
\begin{lemma}\label{coef} Let $f\in k(C)$. For any $v\in C$, write the $t_v-$adic expansion of $f$ as
$$\sum_{p\in \Z}a_p(f,v)t_v^p\in F_v.$$
There is an open dense subset $U'$ of $U$ such that for any integer $p$, the map 
$$\begin{array}{rcl}a_p(f,\cdot): U'&\arr& k\\
v&\mapsto& a_p(f,v)\end{array}$$ 
is a regular function.
\end{lemma}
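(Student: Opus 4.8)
The plan is to express the Taylor coefficient $a_p(f,v)$ in terms of iterated derivatives of $f$ with respect to $t$, and then to exploit the fact that, on $U$, differentiation with respect to $t$ does not create new poles.

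First I would reduce to the case where $f$ has no poles on $U$: set $U_1 = U\setminus\{v\in C : f \text{ has a pole at } v\}$, which is dense open in $U$ since $f$ has finitely many poles. For $v\in U_1$ the element $f$ lies in the local ring $\OO_{C,v}$, so its $t_v$-adic expansion satisfies $a_p(f,v)=0$ for all $p<0$; these coefficients are trivially regular on $U_1$. It therefore suffices to treat $p\geq 0$ and to prove that $U'=U_1$ works. Next, since the characteristic is zero, I claim the identity $a_p(f,v) = \frac{1}{p!}\left(\frac{\dx^{p} f}{\dx t^{p}}\right)(v)$ holds for all $v\in U_1$, where $\frac{\dx}{\dx t}$ denotes the derivation of $k(C)$ dual to the differential $\dx t$ (which is nonzero on $U$), and $\frac{\dx^{p}f}{\dx t^{p}}$ is its $p$-fold iterate applied to $f$. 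Indeed, at a point $v\in U$ the function $t_v = t-t(v)$ is a uniformiser, so $\dx t = \dx t_v$ and $\frac{\dx}{\dx t}$ agrees locally at $v$ with $\frac{\dx}{\dx t_v}$; differentiating the expansion $f=\sum_{j\geq 0}a_j(f,v)t_v^{j}$ termwise $p$ times and reducing modulo the maximal ideal $\mathfrak{m}_v$ leaves precisely the constant term $p!\,a_p(f,v)$.

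The main point is then to show that every iterated derivative $\frac{\dx^{p}f}{\dx t^{p}}$ is regular on all of $U_1$, which I would prove by induction on $p$. For $p=0$ this is the definition of $U_1$. For the inductive step, suppose $\phi := \frac{\dx^{p}f}{\dx t^{p}}$ is regular at a point $v\in U_1$; writing its expansion $\phi=\sum_{j\geq 0}c_j t_v^{j}$ in the completed local ring $\widehat{\OO}_{C,v}=k[[t_v]]$ and using $\frac{\dx}{\dx t}=\frac{\dx}{\dx t_v}$ at $v$, one gets $\frac{\dx\phi}{\dx t}=\sum_{j\geq 1}j c_j t_v^{j-1}\in k[[t_v]]$, so the rational function $\frac{\dx\phi}{\dx t}$ has no pole at $v$ and hence is regular there. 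Since $v\in U_1$ was arbitrary, $\frac{\dx^{p+1}f}{\dx t^{p+1}}$ is regular on $U_1$. Combining this with the identity of the previous paragraph, $a_p(f,\cdot)=\frac{1}{p!}\frac{\dx^{p}f}{\dx t^{p}}\big|_{U_1}$ is a regular function on $U_1$ for every $p$, so $U':=U_1$ does the job.

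There is no serious obstacle here; the only thing to be careful about is that $\dx t$ is nonvanishing precisely on $U$ — this is exactly what makes the identification $\frac{\dx}{\dx t}=\frac{\dx}{\dx t_v}$ at each $v\in U$ legitimate and what keeps the induction inside $U$, so that no shrinking of the open set beyond removing the (finitely many) poles of $f$ is needed. I would also note in passing that the same argument, applied to each of the finitely many functions involved, is what will later allow analogous "uniformity" statements for the operations built on these expansions.
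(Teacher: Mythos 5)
Your proof is correct and follows essentially the same route as the paper: both reduce to the locus where $f$ is regular, use that $\dx t$ is nonvanishing on $U$ to identify the coefficients with (iterated) derivatives of $f$ with respect to $t$, and induct, the paper's recursive replacement of $f$ by $f_1=\dx f/\dx t$ being exactly your factorial identity $a_p(f,\cdot)=\frac{1}{p!}\frac{\dx^p f}{\dx t^p}$ made step by step.
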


\begin{proof} Denote by $U'$ an open dense subset of $U$ over which $f$ is regular, so that the $a_{i}$ with $ i<0$ are identically zero. For any $v\in U'$, we have by definition $f(v) = a_0(f,v)$, and therefore $a_0(f,\cdot) = f_{|U'}$ is a regular map. Now, the differential $\mathrm{d}f$ is a holomorphic differential on $U'$, so there is a regular function $f_1:U'\arr k$ such that $\mathrm{d}f = f_1\mathrm{d}t.$ On the other hand, differentiating the $t_v$-adic expansion of $f$, we get, since $\mathrm{d}t = \mathrm{d}t_v$, 
$$\mathrm{d}f = (a_1(f,v) + 2a_2(f,v) t_v + 3a_2(f,v)t_v^2+ \ldots)\mathrm{d}t$$
Thus, since the differential $\mathrm{d}t$ doesn't vanish on $U'$, we have $a_1(f,\cdot) = f_1$, which is regular. To prove regularity of $a_2(f,\cdot)$, replace $f$ by $f_1$ and proceed in the same way. By induction, we get regularity of all $a_p$'s.
\end{proof}


\subsection{Families of Schwartz-Bruhat functions}

\begin{definition}\label{def.constr.families} Let $\m\in \I_0$. Let $\al,be:C\to \Z$ be almost zero functions such that $\al\leq 0 \leq \be$, and let $M = (M_{\i})_{\i\in\I_0}$, $N= (N_{\i})_{\i\in\I_0}$  be two families of non-negative integers such that $M_0 = N_0 = 0$. The elements of $\expp_{\scr{A}_{\m}(\al,\be,M,N)}$\ are called constructible families of Schwartz-Bruhat functions of level $\m$. \index{Schwartz-Bruhat function!family}
\end{definition}

Let $\Phi\in\expp_{\scr{A}_{\m}(\al,\be,M,N)}$ be such a family of functions, and let $D$ be a schematic point of $S^{\m}C$. The fibre of $\scr{A_{\m}}(\al,\be,M,N)$ above $D$ has been computed in (\ref{fibre}). Restricting~$\Phi$ to it, we obtain, up to extension of scalars to a finite extension of the residue field of $D$, a Schwartz-Bruhat function $\Phi_D$ in the sense of Hrushovski and Kazhdan. Thus, $\Phi$ gives rise to a family of ``twisted'' Schwartz-Bruhat functions $(\Phi_D)_{D\in S^{\m}C}$ indexed by $D\in S^{\m}C$.


In the particular case when $D\in S^{\m}C(k)$, denoting by $|D|$ the support of the effective zero-cycle~$D$, in the notation of section \ref{CLglobal}, we have
$$\Phi_D\in\scr{S}\left(\prod_{v\in |D|\cup \Sigma}k(C)_v^n,(\al_v-M_{\i_v},\be_v+N_{\i_v})_{v\in |D|\cup \Sigma}\right),$$
 and $k(C)_v$ is the completion of the function field $k(C)$ at the place $v$. 



\subsection{Uniformly smooth or uniformly compactly supported families}\label{sect.uniformfamilies}
In a similar manner to (\ref{immersion}) and (\ref{projection}), we may define constructible morphisms
$$ p: \A_C^{(\al - M_{\i},\be + N_{\i})}\to \A_C^{(\al-M_{\i},\be)}$$ and
$$i: \A_C^{(\al,\be + N_{\i})}\to \A_C^{(\al - M_{\i},\be + N_{\i})}$$
for every $\i\in \I_0$. Above $v\in C$, the first one is a projection on the first $\beta_v-\alpha_v + M_{\i}$ coordinates, and the second one is 
$$(x_{\al_v},\ldots , x_{\be_v + N_{\i} -1})\mapsto (\!\!\!\underbrace{0,\ldots,0}_{M_{\i} \ \text{coordinates}}\!\!\!, x_{\al_v},\ldots , x_{\be_v + N_{\i} -1}).$$
Taking symmetric products, we get, for every $\m\in\I_0$, morphisms
$$p_{\m}: S^{\m} ((\A_C^{(\al - M_{\i},\be + N_{\i})})_{\i\in\I_0}) \to  S^{\m} ((\A_C^{(\al - M_{\i},\be)})_{\i\in\I_0}),$$
$$i_{\m}: S^{\m} ((\A_C^{(\al,\be + N_{\i})})_{\i\in\I_0}) \to  S^{\m} ((\A_C^{(\al - M_{\i},\be + N_{\i})})_{\i\in\I_0}),$$
that is, $$p_{\m}:\scr{A}_{\m}(\al,\be,M,N) \to \scr{A}_{\m}(\al,\be,M,0) $$ and $$i_{\m}:\scr{A}_{\m}(\al,\be,0,N) \to \scr{A}_{\m}(\al,\be,M,N) .$$ Those induce injective ring morphisms
$$p_{\m}^*:\expp_{\scr{A}_{\m}(\al,\be,M,0)} \to \expp_{\scr{A}_{\m}(\al,\be,M,N)}$$ and
$$i_{\m,!}:\expp_{\scr{A}_{\m}(\al,\be,0,N)} \to \expp_{\scr{A}_{\m}(\al,\be,M,N)}.$$

\begin{definition} Let $\Phi\in\expp_{\scr{A}_{\m}(\al,\be,M,N)}$ be a constructible family of Schwartz-Bruhat functions of level $\m$. It is said to be
\begin{itemize}\item \textit{uniformly smooth} if it belongs to the image of the morphism $p^*_{\m}$.
\item \textit{uniformly compactly supported} if it belongs to the image of the morphism $i_{\m,*}$.
\end{itemize}\index{Schwartz-Bruhat function!family!uniformly smooth}\index{Schwartz-Bruhat function!family!uniformly compactly supported}
\end{definition}
A word of explanation on this terminology, which is inherited from the classical $p$-adic setting: let  $\Phi$ be a constructible family of functions. If $\Phi$ is uniformly smooth, then every~$\Phi_D$ for $D = \sum_{v}\i_vv\in S^{\m}C(k)$ may be seen as an element of $\expp_{\prod_{v}\A_k^{n(\al_v-M_{\i_v},\be_v)}}$, that is as a function on $\prod_{v}t^{\al_v-M_{\i_v}}\OO_v$, invariant modulo $\prod_{v}t^{\be_v}\OO_v$, this invariance domain being independent of $D$. In the same manner, if~$\Phi$ is uniformly compactly supported, all~$\Phi_D$ are supported inside $\prod_{v}t^{\al_v}\OO_v$ independently of $D$.

\section{Fourier transformation in families}\label{sect.fouriertransform}

\subsection{Local construction of the Fourier kernel in families}\label{localkernelfamilies}\index{Fourier kernel!in families!local}
Let $\omega\in \Omega_{F/k}$ be a non-zero meromorphic differential form. For every closed point $v\in C$ we write $\nu_v=-\ord_v\omega$, so that $\div\, \omega = -\sum_{v\in C}\nu_vv$. Then for every $v$ we get a $k$-linear map $r_v:F_v\arr k$ given by
$$r_v(x) = \res_v(x\omega).$$
It is non-zero, and its conductor, that is, the least integer $a$ such that $r_{v|t^a\OO_v}$ is zero, is equal to $\nu_v$. 

Let $M\leq N$ be constructible functions as in definition \ref{affinespacedef}, with the additional assumption that for every~$v$, we have $\nu_v\leq N_v$. Using lemma \ref{coef}, we see that the map~$r$ gives rise to a piecewise morphism $r^{(M,N)}:\A_C^{(M,N)}\arr\A_k^1$, sending an element $(v,x)$ to $r_v(x)$. 

Fix two additional constructible functions $M',N':C\arr\Z$ such that $M'\leq N'$. For every $v$, the product map $F_v\times F_v\arr F_v$ defines a  morphism
\begin{equation}\label{product}\A_{\kappa(v)}^{(M_v,N_v)}\times_{\kappa(v)} \A_{\kappa(v)}^{(M'_v,N'_v)}\arr \A_{\kappa(v)}^{(M_v+M_v',N_v'')}\end{equation}
where $N'' = \min\{M+N',M'+N\}$ (see (\ref{localprod})). More precisely, there is a morphism of constructible sets over~$C$
$$\A_C^{(M,N)}\times_C \A_C^{(M',N')}\arr \A_C^{(M+M',N'')}$$
 such that for every $v\in C$ the induced morphism on the fibre above $v$ is (\ref{product}). When $N''\geq \nu$, for example when $M'=\nu-N$ and $N' =\nu-M$, this can be composed with $r^{(M+M',N'')}$ to get a constructible morphism
\begin{equation}\label{kernelfamily}\A_C^{(M,N)}\times_C \A_C^{(M',N')}\arr\A^1.\end{equation}
 The restriction to the fibre above $v$ is given by the Fourier kernel $(x,y)\mapsto r_v(xy)$ from~(\ref{localkernel}). Thus, the map (\ref{kernelfamily}) may be interpreted as a parametrisation of all local Fourier kernel maps induced by the differential form $\omega$. 
\subsection{Global construction of the Fourier kernel}\label{sect.fourkernel}
\index{Fourier kernel!in families!global}
Fix two almost zero functions $\al\leq 0\leq \be$, and two non-negative families of integers $M = (M_{\i})_{\i\in\I_0}$ and $N = (N_{\i})_{\i\in\I_0}$ such that $M_0 =N_0 = 0$. According to the discussion in the previous paragraph, for any $\i$ there exists a constructible Fourier kernel morphism
$$\A_C^{n(\al-M_{\i} ,\be + N_{\i})}\times_C\A_C^{n(\nu-\be-N_{\i},\nu- \al + M_{\i})}\to \A^1$$
Taking symmetric products, we get morphisms
\begin{equation}\label{eq.fourierkernel}r_{\m}:\scr{A}_{\m}(\al,\be,M,N)\times_{S^{\m}C}\scr{A}_{\m}(\nu-\be,\nu-\al,N,M)\to \A^1.\end{equation}
for any $\m\in\I_0,$ using the following straightforward lemma:
\begin{lemma}\label{symmetricfibreproduct} Let $\scr{X} = (X_{\i})_{\i}$ and $\scr{Y} = (Y_{\i})_{\i}$ be families of constructible sets over $X$, and assume  that for every~$\i$ we are given a constructible morphism $f_{\i}:X_{\i}\times_XY_{\i}\to \A^1$. Denote by $\scr{X}\times_X\scr{Y}$ the family $(X_{\i}\times_X Y_{\i})_{\i}$. Then  for every $\pi\in\N^{(\I)}$  there is a natural piecewise isomorphism
$$S^{\pi}(\scr{X}\times_X\scr{Y})\simeq S^{\pi}(\scr{X})\times_{S^{\pi}(X)}S^{\pi}(\scr{Y})
$$
given by 
$$\sum_{\i}\i((x_{\i,1},y_{\i,1}),\ldots,(x_{\i,n_{\i}},y_{\i,n_{\i}}))\mapsto \left(\sum_{\i}\i(x_{\i,1} + \ldots + x_{\i,n_{\i}}),\sum_{\i}\i(y_{\i,1} + \ldots + y_{\i,n_{\i}})\right), $$
through which the morphism $f^{(\pi)}:S^{\pi}(\scr{X}\times_X\scr{Y})\to \A^1$ becomes 
$$\left(\sum_{\i}\i(x_{\i,1} + \ldots + x_{\i,n_{\i}}),\sum_{\i}\i(y_{\i,1} + \ldots + y_{\i,n_{\i}})\right)\mapsto \sum_{\i}(f(x_{\i,1},y_{\i,1}) + \ldots f(x_{\i,n_{\i}},y_{\i,n_{\i}})).$$

\end{lemma}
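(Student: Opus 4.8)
The statement to prove is Lemma \ref{symmetricfibreproduct}, which provides a natural piecewise isomorphism $S^{\pi}(\scr{X}\times_X\scr{Y})\simeq S^{\pi}(\scr{X})\times_{S^{\pi}(X)}S^{\pi}(\scr{Y})$ compatible with the morphisms to $\A^1$ built from a family $f_{\i}\colon X_{\i}\times_X Y_{\i}\to\A^1$.

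\medskip

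The plan is to unwind the construction of symmetric products from section \ref{definition}/\ref{anyset} on both sides and exhibit the isomorphism before passing to the permutation quotient, then check it descends. Fix $\pi = (n_{\i})_{\i\in\I}$. On the one hand, $S^{\pi}(\scr{X}\times_X\scr{Y})$ is by definition the quotient by $\prod_{\i}\Sym_{n_{\i}}$ of the open subset $\left(\prod_{\i}(X_{\i}\times_XY_{\i})^{n_{\i}}\right)_{*,X}$ of $\prod_{\i}(X_{\i}\times_XY_{\i})^{n_{\i}}$ lying above the complement of the big diagonal in $\prod_{\i}X^{n_{\i}}$. On the other hand, $S^{\pi}(\scr{X})\times_{S^{\pi}(X)}S^{\pi}(\scr{Y})$ is the fibre product over $S^{\pi}X$ of the two quotients $\left(\prod_{\i}X_{\i}^{n_{\i}}\right)_{*,X}/\prod_{\i}\Sym_{n_{\i}}$ and $\left(\prod_{\i}Y_{\i}^{n_{\i}}\right)_{*,X}/\prod_{\i}\Sym_{n_{\i}}$. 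First I would observe that there is a canonical isomorphism of schemes over $\prod_{\i}X^{n_{\i}}$
$$\prod_{\i}(X_{\i}\times_XY_{\i})^{n_{\i}} \;\simeq\; \Big(\prod_{\i}X_{\i}^{n_{\i}}\Big)\times_{\prod_{\i}X^{n_{\i}}}\Big(\prod_{\i}Y_{\i}^{n_{\i}}\Big),$$
simply by reshuffling factors; this restricts to an isomorphism over the complement of the big diagonal, giving $\left(\prod_{\i}(X_{\i}\times_XY_{\i})^{n_{\i}}\right)_{*,X}\simeq \left(\prod_{\i}X_{\i}^{n_{\i}}\right)_{*,X}\times_{\left(\prod_{\i}X^{n_{\i}}\right)_{*}}\left(\prod_{\i}Y_{\i}^{n_{\i}}\right)_{*,X}$. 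This isomorphism is evidently equivariant for the diagonal action of $\prod_{\i}\Sym_{n_{\i}}$ (each $\sigma_{\i}\in\Sym_{n_{\i}}$ permutes the $n_{\i}$ copies simultaneously on $X_{\i}$, $Y_{\i}$ and $X$).

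\medskip

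Next I would pass to the quotient. The subtle point is that quotient by a finite group action does not commute with arbitrary fibre products in general, but here the base $\left(\prod_{\i}X^{n_{\i}}\right)_{*}$ carries the \emph{same} group action, and crucially, since we have removed the diagonal, the action of $\prod_{\i}\Sym_{n_{\i}}$ on $\left(\prod_{\i}X^{n_{\i}}\right)_{*}$ is free (distinct coordinates cannot be permuted into one another). For a free action of a finite group $G$ on a (quasi-projective, hence admitting a good action) scheme $B$, and $G$-equivariant schemes $A_1\to B$, $A_2\to B$, one has $(A_1\times_B A_2)/G \simeq (A_1/G)\times_{B/G}(A_2/G)$: this is a standard descent statement, which can be proved either by working étale-locally on $B/G$ (where $B\to B/G$ becomes a trivial $G$-torsor) or by invoking the fact that $B\to B/G$ is a $G$-torsor and fibre products of torsor-descent data glue. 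Applying this with $G = \prod_{\i}\Sym_{n_{\i}}$, $B = \left(\prod_{\i}X^{n_{\i}}\right)_{*}$, $A_1 = \left(\prod_{\i}X_{\i}^{n_{\i}}\right)_{*,X}$, $A_2 = \left(\prod_{\i}Y_{\i}^{n_{\i}}\right)_{*,X}$, and noting $B/G = S^{\pi}X$, yields exactly the desired isomorphism $S^{\pi}(\scr{X}\times_X\scr{Y})\simeq S^{\pi}(\scr{X})\times_{S^{\pi}X}S^{\pi}(\scr{Y})$. The explicit formula on points claimed in the statement is then just the translation of "reshuffle factors then take orbits", which is immediate from the construction.

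\medskip

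Finally, for the compatibility with the exponential morphisms, I would recall from section \ref{sect.symprodexpvarietiesdef} that $f^{(\pi)}$ on $S^{\pi}(\scr{X}\times_X\scr{Y})$ is induced by $f^{\pi}\colon \prod_{\i}(X_{\i}\times_XY_{\i})^{n_{\i}}\to\A^1$, $(z_{\i,1},\ldots,z_{\i,n_{\i}})_{\i}\mapsto \sum_{\i}\sum_{j=1}^{n_{\i}}f_{\i}(z_{\i,j})$, by restriction to the complement of the diagonal and passage to the quotient (the map being $G$-invariant since we are summing over all coordinates). Under the coordinate reshuffle above, $z_{\i,j} = (x_{\i,j},y_{\i,j})$ and this is literally the sum $\sum_{\i}\sum_{j}f_{\i}(x_{\i,j},y_{\i,j})$, i.e. the displayed expression. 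Since the reshuffle isomorphism and the quotient are both compatible with this $\A^1$-valued map, $f^{(\pi)}$ transports to the asserted formula on $S^{\pi}(\scr{X})\times_{S^{\pi}X}S^{\pi}(\scr{Y})$. I expect the only genuine obstacle to be the descent lemma for quotients of fibre products by a free finite group action; everything else is bookkeeping. One should be slightly careful that the isomorphism is stated only as a \emph{piecewise} isomorphism, so if one prefers to avoid the freeness/descent argument one can instead cut $S^{\pi}X$ into locally closed pieces (as in Proposition \ref{iterate} and section \ref{symprodpoints}) over which all the bundles trivialise, construct the isomorphism piece by piece, and check the pieces match up — but the direct descent argument is cleaner and in fact yields an honest isomorphism.
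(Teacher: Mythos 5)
Your argument is correct. The paper itself gives no proof of this lemma --- it is introduced as ``the following straightforward lemma'' and used immediately --- so there is no official argument to compare against; your write-up simply supplies the routine verification the paper leaves implicit. The three ingredients you use are all sound: the reshuffling identification $\prod_{\i}(X_{\i}\times_XY_{\i})^{n_{\i}}\simeq\bigl(\prod_{\i}X_{\i}^{n_{\i}}\bigr)\times_{\prod_{\i}X^{n_{\i}}}\bigl(\prod_{\i}Y_{\i}^{n_{\i}}\bigr)$ before taking quotients, the observation that the $\prod_{\i}\Sym_{n_{\i}}$-action is free off the big diagonal so that $\bigl(\prod_{\i}X^{n_{\i}}\bigr)_{*}\to S^{\pi}X$ is a finite \'etale torsor and quotients therefore commute with fibre products over it, and the compatibility with $f^{(\pi)}$, which is immediate since $f^{\pi}$ is the $G$-invariant sum over all coordinates; the reduction of the constructible-set/piecewise-morphism case to quasi-projective locally closed pieces, which you flag at the end, is exactly what the ``piecewise'' qualifier in the statement accommodates.
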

Using the description of the fibre above $D$ from \ref{sect.fibres}, we see that for every $D\in S^{\m}C$, the morphism 
$$r_{D}: \scr{A}_{\m}(\al,\be,M,N)_D\times_{\kappa(D)}\scr{A}_{\m}(\nu-\be,\nu-\al,N,M)_D\to \A^1$$
induced by $r_{\m}$ on the fibre above $D$ is a twisted form of the Fourier kernel of Hrushovski and Kazhdan. The twisting being linear, $r_D$ is a $\kappa(D)$-bilinear map. 


\subsection{Fourier transform}\index{Fourier transform!in families}
To define the Fourier transform of a family of Schwartz-Bruhat functions $$\Phi\in\expp_{\scr{A}_{\m}(\al,\be,M,N)},$$ we start by defining the factor we need to normalise it by, so that it does not depend on the choice of $\beta$ and $N$. 

For this, we start with the family of $C$-varieties $(\A_C^{n(\beta + N_{\i})})_{\i\in\I_0}$. Taking symmetric products, we get a constructible morphism
\begin{equation}\label{normalisation}S^{\m}((\A_C^{n(\beta + N_{\i})})_{\i\in\I_0})\to S^{\m}C.\end{equation}
Using the notation in remark \ref{remark.m0expansion} and section \ref{sect.fibres}, $U$ an open dense subset of $C$ above which $\be$ is zero, $\Sigma = C\setminus U$, $\m_0\in\I_0$ such that $\m_0 \leq \m$, $D_{\Sigma} = \sum_{v\in\Sigma}\i_v [v]\in S^{\m-\m_0}\Sigma$ and $\pi = (m_{\i})_{\i\in \I}$ a partition of $\m_0$ we get, by proposition~\ref{affine} from section~\ref{affinespaces} of chapter~\ref{eulerproducts}, that the restriction of (\ref{normalisation}) above the locally closed subset 
$S^{\pi}U\times \{D_{\Sigma}\}$ of $S^{\m_0}C$ is a vector bundle of rank $\sum_{\i}nm_{\i}N_{\i} + \sum_{v\in\Sigma}n(\be_{v} + N_{\i_v})$. Thus, the class of (\ref{normalisation}) in $\expp_{S^{\m}C}$ is
$$\sum_{\m_0 \leq \m}\ \ \sum_{\substack{D_{\Sigma} \in S^{\m-\m_0}\Sigma\\
                                      D_{\Sigma}  =\sum_{v\in\Sigma}\i_v [v]}}\ \ \  \sum_{\substack{\pi = (m_{\i})_{\i\in \I}\\
                                                     \sum_{\i\in\I}m_{\i}\i = \m_0}}\LL^{\sum_{\i}nm_{\i}N_{\i} + \sum_{v\in\Sigma}n(\be_{v} + N_{\i_v})}[S^{\pi}U\times \{D_{\Sigma}\}\to S^{\m}C],$$
                                                     and it makes sense to consider
                                                     $$[S^{\m}((\A_C^{n(\beta + N_{\i})})_{\i\in\I_0})]^{-1}\in\expp_{S^{\m}C}$$
                                                     defined by the formula
$$\sum_{\m_0 \leq \m}\ \ \sum_{\substack{D_{\Sigma} \in S^{\m-\m_0}\Sigma\\
                                      D_{\Sigma}  =\sum_{v\in\Sigma}\i_v [v]}}\ \ \  \sum_{\substack{\pi = (m_{\i})_{\i\in \I}\\
                                                     \sum_{\i\in\I}m_{\i}\i = \m_0}}\LL^{-\sum_{\i}nm_{\i}N_{\i} - \sum_{v\in\Sigma}n(\be_{v} + N_{\i_v})}[S^{\pi}U\times \{D_{\Sigma}\}\to S^{\m}C],$$
                                                     that is, the same one as above, but with the powers of $\LL$ inverted.
\begin{remark} This element is indeed the inverse of $[S^{\m}((\A_C^{n(\beta + N_{\i})})_{\i\in\I_0})]$ in the ring $\expp_{S^{\m}C}$, so our notation is consistent. 
\end{remark}                                                     
 We denote by $R_{\m}$ the element of the Grothendieck ring
$$\expp_{\scr{A}_{\m}(\al,\be,M,N)\times_{S^{\m}C}\scr{A}_{\m}(\nu-\be,\nu-\al,N,M)}$$
given by
$$R_{\m}:= [\scr{A}_{\m}(\al,\be,M,N)\times_{S^{\m}C}\scr{A}_{\m}(\nu-\be,\nu-\al,N,M), r_{\m}].$$
Moreover, we denote by $\pr_1,\pr_2$ the projections
$$\pr_1:\scr{A}_{\m}(\al,\be,M,N)\times_{S^{\m}C}\scr{A}_{\m}(\nu-\be,\nu-\al,N,M)\to \scr{A}_{\m}(\al,\be,M,N)$$
and
$$\pr_2:\scr{A}_{\m}(\al,\be,M,N)\times_{S^{\m}C}\scr{A}_{\m}(\nu-\be,\nu-\al,N,M)\to \scr{A}_{\m}(\nu-\be,\nu-\al,N,M).$$
Let $\Phi\in\expp_{\scr{A}_{\m}(\al,\be,M,N)}$ be a constructible family of Schwartz-Bruhat functions. The family  $\four\Phi\in\expp_{\scr{A}_{\m}(\nu-\be,\nu - \al,N,M)}$ is defined by the formula
$$\four \Phi:= [(S^{\m}((\A_C^{n(\beta + N_{\i})})_{\i\in\I_0}))]^{-1}(\pr_2)_!((\pr_1)^*\Phi \cdot R_{\m})\in\expp_{\scr{A}_{\m}(\nu-\be,\nu-\al,N,M)}$$
where $\cdot$ is the product in the Grothendieck ring $\expp_{\scr{A}_{\m}(\al,\be,M,N)\times_{S^{\m}C}\scr{A}_{\m}(\nu-\be,\nu-\al,N,M)}$. 

Explicitly, if $\Phi = [V,f]\in\expp_{\scr{A}_{\m}(\al,\be,M,N)}$, then $\four \Phi$ is given by
$$ [V\times_{\scr{A}_{\m}(\al,\be,M,N)}\scr{A}_{\m}(\al,\be,M,N)\times_{S^{\m}C}\scr{A}_{\m}(\nu- \be,\nu-\al,N,M), f\circ\pr_1 + r_{\m}(\pr_2\cdot\pr_3)]$$
multiplied by the normalisation factor $[(S^{\m}((\A_C^{n(\beta + N_{\i})})_{\i\in\I_0}))]^{-1}$. 

\begin{remark}\label{duality} Taking $N=0$ (resp. $M=0$) one can see that the Fourier transform of a family of uniformly smooth (resp. uniformly compactly supported) functions is a family of uniformly compactly supported (resp. uniformly smooth) functions.  
\end{remark}
\subsection{Compatibility between symmetric products and Fourier transformation}\label{sect.symprodfourtransform}
This section deals with the special case where $\Phi\in \expp_{\scr{A}_{\m}(\al,\be,M,N)}$ is given by a symmetric product. More precisely, suppose we are given, for any $\i\in\I_o$, an element $\phi_{\i}\in \expp_{\A_C^{n(\al-M_{\i},\be + N_{\i})}}$, and that $\Phi$ is the symmetric product $S^{\m}((\phi_{\i})_{\i \in \I_0})$. There is a natural notion of Fourier transformation for elements of the ring  $\expp_{\A_C^{n(\al-M_{\i},\be + N_{\i})}}$, defined in the following manner: denote by $r_{\i}$ the Fourier kernel 
$$\A_C^{n(\al-M_{\i},\be + N_{\i})}\times_C \A_C^{n(\nu -\be+ N_{\i},\nu - \al + M_{\i})}\to \A^1$$
defined in (\ref{kernelfamily}) and by $R_{\i}$ the element of the Grothendieck ring 
$$\expp_{\A_C^{n(\al-M_{\i},\be + N_{\i})}\times_C \A_C^{n(\nu -\be+ N_{\i},\nu - \al + M_{\i})}}$$ given by 
$$R_{\i} = [\A_C^{n(\al-M_{\i},\be + N_{\i})}\times_C \A_C^{n(\nu -\be+ N_{\i},\nu - \al + M_{\i})},r_{\i}].$$
Moreover, denoting again by $U$ the open subset of $C$ on which $\al$ and $\be$ are zero and  by $\Sigma$ its complement, we define the element $\left(\A_C^{n(\be + N_i)}\right)^{-1}$ of $\expp_C$ by 
$$\LL^{-nN_{\i}}[U\to C]+ \sum_{v\in \Sigma}\LL^{-n(\be_v - N_{i})}[\{v\}\to C].$$
We denote by $\pr_1$, $\pr_2$ the projections
$$\pr_1: \A_C^{n(\al-M_{\i},\be + N_{\i})}\times_C \A_C^{n(\nu -\be+ N_{\i},\nu - \al + M_{\i})}\to \A_C^{n(\al-M_{\i},\be + N_{\i})}$$
and 
$$\pr_2: \A_C^{n(\al-M_{\i},\be + N_{\i})}\times_C \A_C^{n(\nu -\be+ N_{\i},\nu - \al + M_{\i})}\to \A_C^{n(\nu -\be+ N_{\i},\nu - \al + M_{\i})}.$$ We then define
$$\four \phi_i = \left(\A_C^{n(\be + N_i)}\right)^{-1}(\pr_2)_!((\pr_1)^*\phi_i\cdot R_{\i} )\in \expp_{\A_C^{n(\nu -\be+ N_{\i},\nu - \al + M_{\i})}}$$
where $\cdot$ is the product in the ring $\expp_{\A_C^{n(\al-M_{\i},\be + N_{\i})}\times_C \A_C^{n(\nu -\be+ N_{\i},\nu - \al + M_{\i})}}.$
\begin{remark}\label{familyrestrictiontov} For every $v\in C$, denote by $\phi_{\i,v}\in \expp_{\A_{\kappa(v)}^{n(\al_v-M_{\i},\be_v + N_{\i}}}$ the local Schwartz-Bruhat function obtained from $\phi_{\i}$ by restriction to the fibre $\A_{\kappa(v)}^{n(\al_v-M_{\i},\be_v + N_{\i})}$ of $\A_C^{n(\al-M_{\i},\be + N_{\i})}$ above $v$. By definition of the Fourier transform of a local Schwartz-Bruhat function (see section \ref{sect.localfouriertransform} )  as well as of the Fourier kernel $r_{\i}$ (see section \ref{localkernelfamilies}), we see that 
$$\four(\phi_{\i,v}) = (\four\phi_i)_v$$
in $\expp_{\A_{\kappa(v)}^{n(\nu_v -\be_v+ N_{\i},\nu_v - \al_v + M_{\i})}},$
where the right-hand side is the restriction of $\four\phi_i$ to the fibre $\A_{\kappa(v)}^{n(\nu_v -\be_v+ N_{\i},\nu_v - \al_v + M_{\i})}$. Thus, the operation we just defined performs Fourier transformation on families of local Schwartz-Bruhat functions parametrised by $C$, with level constant except at a finite number of closed points. 
\end{remark}
\begin{prop} \label{prop.symproductfourtransform} We have
$$\four S^{\m}((\phi_{\i})_{\i\in\I_0}) = S^{\m}((\four \phi_{\i})_{\i\in\I_0}) .$$
\end{prop}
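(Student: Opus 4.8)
\textbf{Proof plan for Proposition \ref{prop.symproductfourtransform}.}

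The plan is to unravel both sides directly from the definitions of the Fourier transform in families (section \ref{sect.fouriertransform}) and the Fourier transform of a single parametrised local Schwartz--Bruhat function (section \ref{sect.symprodfourtransform}), and to reduce everything to the compatibility of symmetric products with the operations $\pr_i^*$, $(\pr_i)_!$, the Grothendieck-ring product, and the Fourier kernel morphisms. The key structural fact to invoke is Lemma \ref{symmetricfibreproduct}: it gives a piecewise isomorphism $S^{\pi}(\scr X \times_X \scr Y) \simeq S^{\pi}\scr X \times_{S^{\pi}X} S^{\pi}\scr Y$ under which the symmetrised morphism $f^{(\pi)}$ is exactly ``add up the $f$'s of the components''. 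Applying this with $X = C$, $\scr X = (\A_C^{n(\al-M_{\i},\be+N_{\i})})_{\i}$, $\scr Y = (\A_C^{n(\nu-\be+N_{\i},\nu-\al+M_{\i})})_{\i}$, and $f_{\i} = r_{\i}$ the local Fourier kernel, we see that the symmetric product of the data defining $\four\phi_{\i}$ assembles precisely into the data defining $\four S^{\m}((\phi_{\i})_{\i})$: indeed $r_{\m}$ was \emph{defined} (equation (\ref{eq.fourierkernel})) as the symmetric product of the $r_{\i}$, and $R_{\m}$ as the class of $\scr{A}_{\m}(\al,\be,M,N)\times_{S^{\m}C}\scr{A}_{\m}(\nu-\be,\nu-\al,N,M)$ with this morphism.

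The steps I would carry out, in order, are as follows. First, record that the morphisms $p^*$, $i_!$ and, more importantly here, $\pr_1^*$ and $(\pr_2)_!$ commute with symmetric products: $\pr_1^*$ commutes with $S^{\bullet}$ by functoriality of symmetric products (the remark after the construction in section \ref{anyset}) together with the identification of Lemma \ref{symmetricfibreproduct}, and $(\pr_2)_!$ commutes with $S^{\bullet}$ by the same kind of argument one uses for the pushforward compatibility underlying Proposition \ref{iterate} and its exponential analogue Proposition \ref{expiterate} --- concretely, $\pr_2$ is a morphism over $C$, and taking the symmetric product of the fibrewise data is compatible with proper (or at least finite-type) pushforward because forming $S^{\pi}$ is itself a ``finite quotient of a product'' operation. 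Second, observe that the Grothendieck-ring product $(\pr_1)^*\phi_{\i}\cdot R_{\i}$ corresponds, under $S^{\m}$, to $(\pr_1)^*\Phi \cdot R_{\m}$: this is exactly the multiplicativity of symmetric products with respect to the product of classes with exponentials, i.e.\ the content of Remark \ref{symprodexpdefandprop} applied to the family $((\pr_1)^*\phi_{\i}\cdot R_{\i})_{\i}$, combined with the fact that under Lemma \ref{symmetricfibreproduct} the exponential part $r_{\i}$ symmetrises to $r_{\m}$. Third, handle the normalisation factors: one must check that $S^{\m}$ of the family $(\,[\A_C^{n(\be+N_{\i})}]^{-1}\,)_{\i}$ --- or rather the way this scalar enters fibrewise --- matches the factor $[S^{\m}((\A_C^{n(\be+N_{\i})})_{\i\in\I_0})]^{-1}$ appearing in the definition of $\four\Phi$; this is a direct computation using Proposition \ref{affine} (symmetric products of affine-space bundles multiply the $\LL$-exponents), precisely as was done when $[S^{\m}((\A_C^{n(\be+N_{\i})})_{\i})]^{-1}$ was defined in section \ref{sect.fourkernel}, with the $\LL$-powers inverted. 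Finally, assemble: all four operations commuting with $S^{\m}$ forces $\four S^{\m}((\phi_{\i})_{\i}) = S^{\m}((\four\phi_{\i})_{\i})$.

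The main obstacle will be the second step --- keeping the bookkeeping straight for the Grothendieck-ring product under symmetric products when an exponential (the kernel $r$) is present. One has to be careful that the ``removing the big diagonal in $C$'' condition built into $S^{\m}$ is compatible on both sides: on the left, $S^{\m}$ of the fibre product $\scr X\times_C\scr Y$ removes the diagonal in $C$ only once, while a priori forming $\four\phi_{\i}$ for each $\i$ and then taking $S^{\m}$ could double-count; Lemma \ref{symmetricfibreproduct} is exactly what guarantees these coincide, but one should spell out that the identification there carries $(f_1,\dots)^{(\pi)}$-type morphisms (here $r^{(\pi)}_{\m} = \sum r_{\i}$ of components) correctly, so that the exponential on $R_{\m}$ is genuinely the symmetric product of the exponentials on the $R_{\i}$. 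A clean way to finish, and the safest, is to verify the identity \emph{fibrewise over} $D \in S^{\m}C$: by Lemma \ref{function.equality} it suffices to check equality of the two classes in $\expp_{\scr{A}_{\m}(\nu-\be,\nu-\al,N,M)}$ after restricting to each fibre, and there, using the description of fibres in section \ref{sect.fibres} and Remark \ref{familyrestrictiontov}, both sides reduce to (a twisted form of) the elementary fact that the Fourier transform of a product $\bigotimes_v \phi_{\i_v,v}$ over a finite set of places is the product of the local Fourier transforms --- which is immediate from the definition of Hrushovski--Kazhdan Fourier transform on $\prod_v F_v^n$ in section \ref{CLglobal}. This fibrewise reduction sidesteps the delicate global diagonal bookkeeping and is, I expect, the route actually taken.
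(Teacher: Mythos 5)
Your plan is essentially the paper's proof: the paper simply observes that the projections and the kernel class $R_{\m}$ at level $\m$ are by construction symmetric products of the level-$\i$ data, so that $(\pr_2)_!\bigl((\pr_1)^*S^{\m}((\phi_{\i})_{\i\in\I_0})\cdot R_{\m}\bigr) = S^{\m}\bigl(((\pr_2)_!((\pr_1)^*\phi_{\i}\cdot R_{\i}))_{\i\in\I_0}\bigr)$, and then concludes via Lemma \ref{symmetricfibreproduct} together with a comparison of the normalisation factors, exactly as in your first three steps. The only discrepancy is your closing guess: the paper keeps the global argument and does not pass to the fibrewise verification over $D\in S^{\m}C$ via Lemma \ref{function.equality}.
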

\begin{proof} By definition of $r_{\m}$ and of $\Phi$, with the notations from the previous sections we have
$$(\pr_2)_!((\pr_1)^*S^{\m}((\phi_{\i})_{\i\in\I_0})\cdot R_{\m}) = S^{\m}\left( ((\pr_2)_!((\pr_1)^*\phi_i\cdot R_{\i} ))_{\i\in\I_0}\right),$$
since the projections in the previous section are obtained from those in the previous section by taking symmetric products. Therefore, by lemma \ref{symmetricfibreproduct} we get the result, comparing the normalisation factors. 
\end{proof}
\subsection{Inversion formula}

This section will not be used in what follows, but we include it for the sake of completeness. 

For every $\i\in \I_0$, define a constructible morphism $\A_C^{(\al-M_{\i},\be + N_{\i})}\to \A_C^{(\al-M_{\i},\be + N_{\i})} $ of $C$-varieties by
$$(v,x_{\al_{v}-M_{\i}},\ldots,x_{\be_v + N_{\i}-1})\to (v,-x_{\al_{v}-M_{\i}},\ldots,-x_{\be_v + N_{\i}-1}).$$
Taking symmetric products, it induces a constructible morphism of $S^{\m}C$-varieties
$$\mathrm{inv}_{m}:\scr{A}_{\m}(\al,\be,M,N)\to \scr{A}_{\m}(\al,\be,M,N).$$

This Fourier transform satisfies the following Fourier inversion formula:
\begin{prop} For every $\Phi$ in $\expp_{\scr{A}_{\m}(\al,\be,M,N)}$, we have
$$\four\four \Phi = \LL^{n(2g-2)}\Phi\circ \mathrm{inv}_{\m}$$
in $\expp_{\scr{A}_{\m}(\al,\be,M,N)}$. \index{inversion formula!families}
\end{prop}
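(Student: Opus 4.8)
The plan is to reduce the statement to the classical (local, then global-in-families) Fourier inversion formula of Hrushovski and Kazhdan, exactly as the Fourier transform in families was itself built up from the local one. First I would treat the special case where $\Phi$ is a symmetric product $S^{\m}((\phi_{\i})_{\i\in\I_0})$ of a family $(\phi_{\i})_{\i\in\I_0}$ with $\phi_{\i}\in\expp_{\A_C^{n(\al-M_{\i},\be+N_{\i})}}$. By Proposition~\ref{prop.symproductfourtransform} applied twice,
$$\four\four\, S^{\m}((\phi_{\i})_{\i\in\I_0}) = S^{\m}((\four\four\phi_{\i})_{\i\in\I_0}),$$
so it suffices to prove the inversion formula
$$\four\four\phi_{\i} = \LL^{n(2g-2)}\phi_{\i}\circ\mathrm{inv}_{\i}$$
for a single constructible family parametrised by $C$, where $\mathrm{inv}_{\i}$ is the fibrewise sign change on $\A_C^{n(\al-M_{\i},\be+N_{\i})}$. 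This last identity can be checked fibrewise over $C$ by Lemma~\ref{function.equality}: by Remark~\ref{familyrestrictiontov}, the restriction of $\four\four\phi_{\i}$ to the fibre above $v$ is $\four\four(\phi_{\i,v})$, the double Fourier transform of a \emph{local} Schwartz-Bruhat function on $\A_{\kappa(v)}^{n(\al_v-M_{\i},\be_v+N_{\i})}$. For local Schwartz-Bruhat functions, the inversion formula is classical: $\four\four\psi = \LL^{-n\nu_v}\psi\circ(-\id)$ where $\nu_v = -\ord_v\omega$ is the conductor of $r_v$ (this is precisely the normalisation built into the local Fourier transform of section~\ref{sect.localfouriertransform}; it is the motivic analogue of the standard $p$-adic statement, and is part of Hrushovski and Kazhdan's formalism as recalled in section~\ref{SBreview}). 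Summing over $v$ via the product-over-$C$ structure, the local conductors assemble to $\sum_v n\nu_v = -n\deg(\div\,\omega) = n(2-2g)$ by Riemann-Roch for the canonical divisor — but one has to be careful about the sign and about how the normalisation factors $(\A_C^{n(\be+N_{\i})})^{-1}$ appearing in the two successive Fourier transforms interact: the first transform divides by a factor controlled by $\be+N_{\i}$, the second by a factor controlled by $\nu-\al+M_{\i}$, and the product of these against the two copies of the Fourier kernel must collapse to exactly $\LL^{n(2g-2)}$ times the identity up to the sign change. Tracking this bookkeeping carefully over all of $C$ — rather than just over the dense open set where $\al,\be$ vanish — is where the real work lies.

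For the general case, I would argue that every element of $\expp_{\scr{A}_{\m}(\al,\be,M,N)}$ can be reached from symmetric products by the cut-and-paste relations: decompose $\scr{A}_{\m}(\al,\be,M,N)$ along the stratification of $S^{\m}C$ into pieces coming from $S^{\pi}U\times S^{\m-\m_0}\Sigma$ as in Remark~\ref{remark.m0expansion}, and on each such piece the domain of definition is a product of a symmetric product over $U$ with affine spaces over the finitely many points of $\Sigma$, which falls within the scope of the symmetric-product case combined with the multiplicativity of Fourier transformation over a fibre product (the product $R_{\m}$ of Fourier kernels factors as a product over the factors). More directly: both sides of the claimed identity are $\expp_k$-linear in $\Phi$ (the Fourier transform is a composition of $(\pr_1)^*$, multiplication by the fixed class $R_{\m}$, $(\pr_2)_!$, and multiplication by a fixed normalisation factor — all $\expp_k$-linear operations on Grothendieck groups), so it is enough to verify it on a generating set of $\expp_{\scr{A}_{\m}(\al,\be,M,N)}$. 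Classes $[V\xrightarrow{g}\scr{A}_{\m}(\al,\be,M,N),f]$ generate, and pushing such a class forward along fibres of $\varpi_{\m}:\scr{A}_{\m}(\al,\be,M,N)\to S^{\m}C$ one again reduces to checking the identity fibre-by-fibre over $S^{\m}C$ via Lemma~\ref{function.equality}, where by the fibre description of section~\ref{sect.fibres} the fibre of $\scr{A}_{\m}$ over a point $D$ is (a twisted form of) a product $\prod_{v\in|D|\cup\Sigma}\A^{n(\ldots)}$, i.e.\ exactly the domain of a classical global Schwartz-Bruhat function for the finite place set $|D|\cup\Sigma$, on which Hrushovski and Kazhdan's inversion formula applies directly. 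The twisting is linear, hence compatible with Fourier transformation and with the sign involution, so it does not affect the identity.

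The main obstacle I anticipate is purely the normalisation/sign accounting: making sure that the factor $\LL^{n(2g-2)}$ emerges with the correct sign from the interplay of the two normalisation factors $[S^{\m}((\A_C^{n(\be+N_{\i})})_{\i\in\I_0})]^{-1}$ (for the first $\four$, with levels $(\nu-\be,\nu-\al,N,M)$ for the second), the two Fourier kernels $r_{\m}$, and the conductors $\nu_v$; the identity $\sum_{v}\nu_v = 2g-2$ (degree of the canonical divisor) is what produces the genus, but one must check that no stray power of $\LL$ coming from the $N_{\i},M_{\i}$ or from the jumps $\al_v,\be_v$ at bad points survives. A clean way to organize this is to first prove it when $\al=\be=0$ and all $M_{\i}=N_{\i}=0$ (where $\scr{A}_{\m}$ is just $S^{\m}$ of copies of $\A_C^n$ and everything is transparent), then propagate through the change-of-level morphisms $i_{\m,!}$ and $p_{\m}^*$ of section~\ref{sect.uniformfamilies}, observing how each shift of level shifts both Fourier transforms in compensating ways, and finally localise to handle the jumps of $\al,\be$ at the finitely many points of $\Sigma$ using that a local Fourier inversion with a nonstandard conductor still gives the same answer up to the power of $\LL$ dictated by that conductor. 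Modulo this calculation the statement is a formal consequence of the constructions already in place.
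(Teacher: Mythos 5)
Your closing ``more direct'' argument is, in substance, the paper's own proof: one reduces by Lemma \ref{function.equality} to checking the identity on the fibre above each schematic point $D\in S^{\m}C$, observes via the fibre description of section \ref{sect.fibres} that there the whole construction specialises to a (linearly twisted form of the) Hrushovski--Kazhdan/Chambert-Loir--Loeser theory, and invokes their inversion formula (Theorem 1.2.9 of \cite{CL}), whose proof only uses that the domains are affine spaces, that the Fourier kernel is bilinear, and the relation $[\A^1,\id]=0$ --- all of which survive the linear twisting. So that route is correct and coincides with the paper.

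Your first route, however, contains a genuine error. The per-family identity $\four\four\phi_{\i}=\LL^{n(2g-2)}\phi_{\i}\circ\mathrm{inv}_{\i}$ is false, and it even contradicts the local formula you state just afterwards: by Remark \ref{familyrestrictiontov} the fibre of $\four\four\phi_{\i}$ at $v\in C$ is $\LL^{-n\nu_v}\phi_{\i,v}(-\,\cdot\,)$, so the correct factor is the \emph{nonconstant} class over $C$ with fibre $\LL^{-n\nu_v}$, not a global constant. Moreover, even if it were a constant, it would not pass through $S^{\m}$ unchanged: by Proposition \ref{genaffinespaces}, the symmetric product of the family $(\LL^{c}\psi_{\i})_{\i}$ along a partition $\pi$ is $\LL^{c|\pi|}S^{\pi}((\psi_{\i})_{\i})$, so your reduction would output $\LL^{n(2g-2)|\pi|}$ rather than $\LL^{n(2g-2)}$. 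The genus factor only emerges after collecting the conductors over \emph{all} places of $C$: in $S^{\m}\bigl((\LL^{-n\nu}\,\phi_{\i}\circ\mathrm{inv}_{\i})_{\i}\bigr)$ the places outside the support of the zero-cycle contribute as well, through the index-$0$ member of the family --- equivalently, through the normalisation factor of the second transform, whose fibre over $D$ contains $\LL^{-n\nu_v}$ even where $\m_v=0$, since $\nu_v-\al_v+M_{0}=\nu_v$ --- and then $\sum_{v}\nu_v=2-2g$ produces $\LL^{n(2g-2)}$. For the same reason, in the fibrewise check one must regard $\Phi_D$ as a Schwartz-Bruhat function relative to a place set containing the support of $\div\,\omega$, not just $|D|\cup\Sigma$, or directly use the global statement of \cite{CL}; this is exactly the bookkeeping you left open (``modulo this calculation''), and it is what the paper's appeal to Theorem 1.2.9 of \cite{CL} settles.
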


\begin{proof} Note that looking at the fibres of the constructions in the previous paragraphs above every rational point $D\in S^{\m}C(k)$, we recover the theory from \cite{CL}, described in section \ref{SBreview}.  For a general schematic point $D\in S^{\m}C$, we recover a twisted version of this theory. Theorem 1.2.9 in \cite{CL} implies that we have  $$\four\four \Phi_D(x) = \LL^{n(2g-2)}\Phi_D(-x)$$ for all rational points $D\in S^{\m}C(k)$. The heart of the proof of theorem 1.2.9 is the fact that the domains of definition of our functions are affine spaces, that the Fourier kernel is bilinear and the use of the relation $[\A^1,\id] = 0$. Therefore, this proof generalises easily to the twisted setting, and the Fourier inversion formula is valid for $\Phi_D$ for any schematic point $D\in S^{\m}C$. We may conclude using lemma \ref{function.equality}.
\end{proof}

\section{Summation over $k(C)^n$} \label{summation.ratpoints} 

\subsection{Summation for twisted Schwartz-Bruhat functions}\label{twistedsummation}
We defined constructible sets $\scr{A}_{\m}(\al,\be,M,N)$ lying above the symmetric power $S^{\m}C$ in section \ref{sect.domainsofdef}, and gave an explicit description of the fibre above a point $D \in S^{\m}C$ in section \ref{sect.fibres}, which shows that it may be seen as a twisted version of the domain of definition of a Schwartz-Bruhat function in the sense of Hrushovski and Kazhdan's theory.
A function $\Phi$ on $\scr{A}_{\m}(\al,\be,M,N)$ may therefore be seen as a family $(\Phi_D)_{D\in S^{\m}C}$, each $\Phi_D$ being a function on $\scr{A}_{\m}(\al,\be,M,N)_D$. In this section, we explain how Hrushovski and Kazhdan's operation of summation over rational points from section \ref{CLsummation} may be extended to the functions $\Phi_D$.

In the notation of section \ref{sect.fibres}, we have $D = (D_U,D_{\Sigma})\in S^{\m_0}U\times S^{\m-\m_0}\Sigma$ where $U$ is a dense open subset over which $\al$ and $\be$ are zero, and $\Sigma = C\setminus U$, a finite union of closed points, is its complement. Moreover, for some partition $\pi = (m_{\i})_{\i\in\I}$ of $\m_0$, we have $D_U\in S^{\pi}U$. Via the quotient morphism
$$\left(\prod_{\i\in \I} U^{m_{\i}}\right)_*\to S^{\pi} U,$$
the schematic point $D_U\in S^{\pi}U$ pulls back to some $K$-point $D'_U$ of $\left(\prod_{\i\in \I}U^{m_{\i}}\right)_*$, where~$K$ is a finite extension of the residue field $\kappa(D)$ of $D$. We choose $D'_U$ so that $K$ is of minimal degree. We denote by $v_{\i,j}$ the projection of $D'_U$ on the $j$-th copy of $U$ in the factor~$U^{m_{\i}}$. This gives us a collection
 $\{v_{\i,j}\}_{\substack{\i\in \I\\ j\in\{1,\ldots,m_{\i}\}}}$ of~$K$-points of the open subset $U$ of the curve~$C$, all distinct because they come as projections from a point in the complement of the diagonal. 

\begin{remark}\label{rationalpointfibre} A different choice of $D'_U$ amounts to a permutation of the points $v_{\i,j}$ via the $G = \prod_{\i\in\I}\Sym_{m_{\i}}$-action on $\left(\prod_{\i\in \I}U^{m_{\i}}\right)_*$. More precisely, the quotient morphism $\left(\prod_{\i\in \I} U^{m_{\i}}\right)_*\to S^{\pi} U$ being étale, the fibre product $\left(\prod_{i\in \I}U^{m_{\i}}\right)_* \times_{S^{\pi}U}\spec \kappa(D)$ is the spectrum of an étale algebra $\mathcal{E}$ over $\kappa(D)$, endowed with a $G$-action such that
$\mathcal{E}^{G} = \kappa(D)$. The étale algebra $\mathcal{E}$ is isomorphic to some power of $K$, and the point~$D'_U$ is one of the irreducible components of $\spec \mathcal{E}$. If we denote by $H$ the subgroup of $G$ stabilising $D'_U$, then the invariant field $K^H$ is $\kappa(D)$ (see e.g. proposition \ref{subquotient} in chapter~\ref{eulerproducts}). Consequently, in the commutative diagram
$$\xymatrix{\spec K\ar[d]  & \ar[d]^{q_{D_U}}\ar[l]\prod_{\i\in \I}\A_K^{m_{\i}n(-M_{\i},N_{\i})}\\
            \spec \kappa(D)& \ar[l]\prod_{\i\in \I}\A_{\kappa(D)}^{m_{\i}n(M_{\i}+N_{\i})}}
$$
the vertical morphisms, induced by the quotient morphisms on the fibres above $D_U$ and~$D'_U$, are exactly the quotients by the action of the finite group $H$. The diagram gives an equivariant $K$-linear isomorphism
$$\prod_{\i\in \I}\A_K^{m_{\i}n(-M_{\i},N_{\i})}\simeq \prod_{\i\in \I}\A_{\kappa(D)}^{m_{\i}n(M_{\i}+N_{\i})}\times_{\kappa(D)} K.$$
This induces a $\kappa(D)$-linear isomorphism between the $\kappa(D)$-points of the left-hand side  (that is, the points invariant by the $H$-action) and the affine space $\prod_{\i\in \I}\A_{\kappa(D)}^{m_{\i}n(M_{\i}+N_{\i})}$. In other words, the morphism $q_{D_U}$ induces a $\kappa(D)$-linear isomorphism on $\kappa(D)$-points. 
\end{remark} 
 We define an effective zero-cycle $E_D$ on the curve $C_{\kappa(D)}$, that is, the curve $C$ seen as a curve over the field $\kappa(D)$, by
 $$E_D:= \sum_{\i\in \I}M_{\i}(v_{\i,1} + \ldots + v_{\i,m_{\i}}) - \sum_{v\in\Sigma}(\al_v - M_{\i_v})v$$
 where the $\i_v$ are given by $D_{\Sigma} = \sum_{v\in\Sigma}\i_v[v]$. The zero-cycle $E_D$ has the same field of definition as $D$, that is, $\kappa(D)$, and an associated Riemann-Roch space
 $$L_{\kappa(D)}(E_D ) = \{0\}\cup \{f\in \kappa(D)(C), \div f \geq -E_D\},$$
 which is a finite-dimensional vector space  over $\kappa(D)$.  
 
Then we may define a morphism
$$\theta_D: L_{\kappa(D)}(E_D)^n  \to \scr{A}_{\m}(\al,\be,M,N)_D = \prod_{\i\in \I}\A_{\kappa(D)}^{nm_{\i}(N_{\i} + M_{\i})}\times_{\kappa(D)} \prod_{v\in \Sigma}\A_{\kappa(D)}^{n(\al_v - M_{\i_v},\be_v + N_{\i_v})}$$
in the following manner. We start by defining an intermediate morphism $$\theta'_D: L_{\kappa(D)}(E_D)^n  \to \prod_{\i\in\I}\A_K^{nm_{\i}(-M_{\i},N_{\i})}\times_{K}\prod_{v\in \Sigma}\A_{K}^{n(\al_v - M_{\i_v},\be_v + N_{\i_v})}.$$
For simplicity, assume $n=1$.  Then, for any $f\in L_{\kappa(D)}(E_D)$:

\begin{enumerate}\item \textbf{Image of $f$ in the component $\prod_{v\in \Sigma}\A_{K}^{(\al_v - M_{\i_v},\be_v + N_{\i_v})}$:}
it is given, for each $v\in \Sigma$,  by the coefficients of the $v$-adic expansion of $f$ in the range $\al_v -M_{\i_v},\ldots ,\be_v + N_{\i_v} -1$. 
 
 \item \textbf{Image of $f$ in the component $\prod_{\i\in \I}\A_{K}^{m_{\i}(-M_{\i}, N_{\i})}$:}
 We may consider $f$ as an element of the function field $K(C)$ of the curve $C$ seen as a curve over $K$. We therefore send $f$ to the point of 
$$\prod_{\i\in \I}\prod_{j=1}^{m_{\i}}\A_K^{(-M_{\i},N_{\i})}$$
with $(\i,j)$-component given by the coefficients of orders $-M_{\i},\ldots, N_{\i}-1$ of the $v_{i,j}$-adic expansion of $f$. Note that $f$ will define a $\kappa(D)$-point of this above affine space. 
\end{enumerate}
Then we compose $\theta'_D$ with the quotient morphism
$$q_{D}: \prod_{\i\in\I}\A_K^{m_{\i}n(-M_{\i},N_{\i})}\times_{K}\prod_{v\in \Sigma}\A_{K}^{n(\al_v - M_{\i_v},\be_v + N_{\i_v})}\to \scr{A}_{\m}(\al,\be,M,N)_D$$ 
to get $\theta_D = q_D\circ \theta'_D$. 

\begin{remark} Because of the composition with the quotient morphism, a different choice of $D'_U$ gives the same $\theta_D$.
\end{remark}

\begin{definition} For $\Phi_D\in \expp_{\scr{A}_{\m}(\al,\be,M,N)_D}$, we define its \textit{summation over rational points}, denoted 
$\sum_{x\in\kappa(D)(C)} \Phi_D(x)$, to be the class of $\theta_D^*\Phi_D$ in $\expp_{\kappa(D)}.$ \index{summation over rational points!twisted}
\end{definition}

\begin{lemma}[Poisson formula] \label{twistedpoisson} For $\Phi_D\in \expp_{\scr{A}_{\m}(\al,\be,M,N)_D}$, we have 
$$\sum_{x\in \kappa(D)(C)^n}\Phi_D(x) = \LL^{(1-g)n} \sum_{x\in \kappa(D)(C)^n}\four \Phi_D(x).$$ \index{Poisson formula!twisted}
\end{lemma}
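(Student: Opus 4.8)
The plan is to reduce the twisted Poisson formula (Lemma \ref{twistedpoisson}) to Hrushovski and Kazhdan's untwisted motivic Poisson formula recalled in section \ref{CLpoissonformula}, by descending along the quotient morphism $q_D$ and keeping track of the field of definition. First I would unwind the construction of $\theta_D = q_D \circ \theta'_D$ from section \ref{twistedsummation}: the intermediate morphism $\theta'_D$ has as its target a product of genuine (untwisted) affine spaces over the finite extension $K$ of $\kappa(D)$, and by construction it is exactly the morphism $\theta$ appearing in Hrushovski and Kazhdan's summation over rational points (section \ref{CLsummation}) for the zero-cycle $E_D$ viewed over $K$. More precisely, the coefficients-of-the-$v$-adic-expansion maps defining $\theta'_D$ are literally the components of the embedding of a Riemann-Roch space $L_K(E_D)^n$ into $\prod_v (t_v^{M_v}\OO_v/t_v^{N_v}\OO_v)^n$. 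Thus $\theta_D^* \Phi_D = (\theta'_D)^* q_D^* \Phi_D$, and $q_D^* \Phi_D$ is an honest Schwartz-Bruhat function over $K$, to which the formula of section \ref{CLpoissonformula} applies directly, giving
$$\sum_{x \in K(C)^n} q_D^*\Phi_D(x) = \LL^{(1-g)n}\sum_{y \in K(C)^n}\four(q_D^*\Phi_D)(y)$$
in $\expp_K$.

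The second step is to transport this identity from $\expp_K$ down to $\expp_{\kappa(D)}$. Here I would use the fact, recorded in remark \ref{rationalpointfibre}, that $q_D$ is the quotient by the action of the finite group $H = \ker(K/\kappa(D))$-stabiliser, that $K^H = \kappa(D)$, and crucially that $q_D$ induces a \emph{bijection on $\kappa(D)$-points} of the source onto $\scr{A}_{\m}(\al,\be,M,N)_D$. Since $L_{\kappa(D)}(E_D)$ is exactly the $H$-invariant part of $L_K(E_D)$, and since $\theta'_D$ lands in the $H$-invariant ($\kappa(D)$-rational) locus, one checks that $\theta_D^*\Phi_D$ over $\kappa(D)$ and $(\theta'_D)^* q_D^*\Phi_D$ over $K$ have matching classes after the appropriate base-change / descent — this is the content of interpreting the summation over $\kappa(D)(C)^n$ as a restriction of the summation over $K(C)^n$ to the invariant locus. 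The same descent has to be applied to the Fourier side: by the $\kappa(D)$-bilinearity of the twisted Fourier kernel $r_D$ (section \ref{sect.fourkernel}) and the compatibility of $\four$ with $q_D$, we get $\four(q_D^*\Phi_D) = q_{D}'^*(\four \Phi_D)$ for the appropriate dual quotient morphism $q_D'$ (note the twisting swaps the roles of $M$ and $N$), so that $\sum_{y}\four(q_D^*\Phi_D)(y)$ descends to $\sum_{x\in\kappa(D)(C)^n}\four\Phi_D(x)$.

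The main obstacle I anticipate is precisely this descent bookkeeping: making rigorous the claim that ``summation over rational points of a twisted Schwartz-Bruhat function equals the corresponding summation of its untwisted pullback,'' i.e. that passing to the $H$-invariant locus on both the source affine spaces and the Riemann-Roch spaces is compatible with the class maps into $\expp_{\kappa(D)}$ versus $\expp_K$. One has to be careful that the normalisation factors in the definition of $\four$ (section \ref{sect.symprodfourtransform}, the $[S^{\m}((\A_C^{n(\beta+N_{\i})})_{\i})]^{-1}$ term) are the same on both sides — over $\kappa(D)$ the relevant affine space is $\prod \A_{\kappa(D)}^{m_{\i}n(M_{\i}+N_{\i})}$, over $K$ it is a twisted form of it, but both have the same class in the Grothendieck ring after the base change, since twisted forms of affine space split (Hilbert 90, as in proposition \ref{affine}), so the powers of $\LL$ agree. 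Once that compatibility is isolated and checked, the result follows by combining it with the untwisted Poisson formula; the genus factor $\LL^{(1-g)n}$ is inherited unchanged because $C$ over $\kappa(D)$ has the same genus $g$ as over $k$. Alternatively, and perhaps more cleanly, one can avoid an explicit $H$-descent argument: since both sides of the asserted identity are elements of $\expp_{\kappa(D)}$ and the identity is known over $K$, and since the formation of all the objects involved ($\theta_D$, $\four\Phi_D$, the summations) commutes with base change along $\spec K \to \spec\kappa(D)$ up to the identifications above, one concludes by the injectivity statement of lemma \ref{function.equality} applied after base change, together with faithful flatness of $K/\kappa(D)$.
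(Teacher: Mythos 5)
Your route — split the twist over the finite extension $K$, apply the untwisted Hrushovski--Kazhdan/Chambert-Loir--Loeser formula there, and descend — is genuinely different from the paper's, but the descent step you yourself flag as "bookkeeping" is in fact a real gap. The identity you would obtain after applying the untwisted formula to $q_D^*\Phi_D$ lives in $\expp_{K}$ (the summation over rational points in the sense of section \ref{CLsummation} pulls back along the Riemann--Roch space $L_K$ over $K$, not along $L_{\kappa(D)}(E_D)$, so your identification of $\theta'_D$ with the untwisted $\theta$ already conflates the two), whereas the statement to be proved is an identity in $\expp_{\kappa(D)}$. At best your argument shows that the base change to $\expp_K$ of the element $\sum_{x\in\kappa(D)(C)^n}\Phi_D(x)-\LL^{(1-g)n}\sum_{x}\four\Phi_D(x)$ vanishes; but the base-change morphism $\expp_{\kappa(D)}\to\expp_{K}$ is not known to be injective, and neither of the tools you invoke supplies this: lemma \ref{function.equality} says a motivic function over a base variety $R$ is determined by its fibres at points of $R$, which for $R=\spec\kappa(D)$ is vacuous, and faithful flatness of $\spec K\to\spec\kappa(D)$ is a statement about modules, not about classes in Grothendieck rings of varieties with exponentials. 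Similarly, "restricting the summation over $K(C)^n$ to the $H$-invariant locus" is not an operation on Grothendieck-ring classes, and the asserted compatibility $\four(q_D^*\Phi_D)=q_D'^{*}(\four\Phi_D)$ would itself have to be proved by redoing the integral computation, since $q_D$ is a finite quotient map between objects living over different fields.

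The paper avoids descent altogether: it reduces, as in the proof of Theorem 1.3.10 of \cite{CL}, to the case $n=1$ and $\Phi_D=[\{a\}\to\scr{A}_{\m}(\al,\be,M,N)_D]$ the delta function of a rational point, and then computes both sides of the formula directly in the twisted setting. The rational point $a$ lifts (remark \ref{rationalpointfibre}) to a $\kappa(D)$-rational centre $b$ of a polydisc, the Fourier kernel satisfies $r_D(q_D(u),\tilde q_D(v))=r(u,v)$, and both sides are evaluated as explicit powers of $\LL$ (or $0$) according to whether $b\in\Omega(-\Lambda_D)^{\perp}$, using the Riemann--Roch theorem and Serre duality for the divisor $\Lambda_D$ on the curve $C_{\kappa(D)}$ \emph{over the field $\kappa(D)$}. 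If you want to salvage your strategy you would need either an injectivity statement for base change of Grothendieck rings along $K/\kappa(D)$ (not available) or a genuine Galois-descent mechanism for motivic classes; the honest fix is to argue over $\kappa(D)$ from the start, as the paper does.
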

\begin{proof} By the same kind of reduction as in the proof of theorem 1.3.10 in \cite{CL}, it suffices to prove the formula in the case where $\Phi_D$ is given by a class $[\{a\}\to \scr{A}(\al,\be,M,N)_D]$ where $a$ is a rational point of $\scr{A}(\al,\be,M,N)_D$. We may also assume $n=1$. 

We are going to use the notations from section \ref{twistedsummation} throughout the proof, denoting with a tilde the objects pertaining to the Fourier side: $\tilde{q}_D$ for the quotient morphism
$$\prod_{\i\in\I}\A_K^{(-N_{\i},M_{\i})} \times_{K}\prod_{v\in\Sigma}\A_{K}^{(\nu_v -\be_v - N_{\i_v}, \nu_v -\al_v + M_{\i_v})}\to\scr{A}_{\m}(\nu-\be,\nu-\al, N,M)_D,$$
$\tilde{E}_D$ for the divisor
$$\tilde{E}_D = \sum_{\i\in\I}N_{\i}(v_{\i,1} + \ldots + v_{\i,m_{\i}}) - \sum_{v\in\Sigma}(\nu_v - \be_v -N_{\i_{v}})v,$$
$\tilde{\theta}_D, \tilde{\theta'}_D$ for the summation morphisms, etc. 

Define the zero-cycle 
$$\Lambda_D = \sum_{\i,j}N_{\i}v_{\i,j} + \sum_{v\in \Sigma}(\be_v + N_{\i_v})v = \tilde{E}_D - \div\, \omega$$
on $C_{\kappa(D)}$. It has the same field of definition as $D$, namely $\kappa(D)$. 

The proof is essentially the same as the proof of theorem 1.3.10 in \cite{CL}: it will boil down to the theorem of Riemann-Roch and Serre duality for the divisor $\Lambda_D$ on the curve $C_{\kappa(D)}$ over the field $\kappa(D)$. We refer to \cite{CL}, 1.3.7 for reminders on these results. 

Denote by $F_D$ the function field $\kappa(D)(C)$ of the curve $C_{\kappa(D)}$. For any divisor $E$ on $C_{\kappa(D)}$ define
$$\Omega(E) = \{\omega\in \Omega_{F_D/\kappa(D)}, \div\, \omega \geq E\},$$
$$\Ad_{F_D}(E) = \{a\in \Ad_{F_D}, \div\, a \geq -E\}.$$
Recall that Serre's duality theorem says that for any divisor $E$ on $C_{\kappa(D)}$, the morphism
$$\Omega_{F_D/\kappa(D)} \to \mathrm{Hom}(\Ad_{F_D},\kappa(D))$$
given by $$\omega\mapsto \left( (x_s)_s\mapsto \sum_{s}\mathrm{res}_s(x_s\omega)\right)$$
induces an isomorphism
$$\Omega(E) \to \mathrm{Hom}(\Ad_{F_D} / (\Ad_{F_D}(E) + F_D), \kappa(D))$$
identifying $\Omega(E)$ with the orthogonal subspace of $\Ad_{F_D}(E) + F_D$ in $\mathrm{Hom}(\Ad_{F_D},\kappa(D))$, which itself is isomorphic to the dual of the cohomology group $H^1(\scr{L}(E))$. 

By remark \ref{rationalpointfibre}, a rational point $a\in \scr{A}_{\m}(\al,\be,M,N)_D$ comes from a $\kappa(D)$-point $b$ of the fibre 
$$\prod_{\i\in\I}\A_K^{m_{\i}(-M_{\i},N_{\i})} \times_k \prod_{v\in\Sigma}\A_k^{(\al_v - M_{\i_v},\be_v + N_{\i_v})} $$
via the quotient morphism
$$q_D:\prod_{\i\in\I}\A_K^{m_{\i}(-M_{\i},N_{\i})} \times_k \prod_{v\in\Sigma}\A_k^{(\al_v - M_{\i_v},\be_v + N_{\i_v})}\to \scr{A}_{\m}(\al,\be,M,N)_D.$$
Thus, it corresponds to the characteristic function of a polydisc inside the adeles $\Ad_{K(C)}$ with $\kappa(D)$-rational centre $b$ and radius described by the divisor $\Lambda_D$. The Fourier transform $\four \Phi_D$ is defined on the $\kappa(D)$-variety $\scr{A}_{\m}(\nu-\be,\nu-\al,N,M)_D$, which is the image of the quotient map
$$\tilde{q}_D:\prod_{\i\in\I}\A_K^{m_{\i}(-N_{\i},M_{\i})} \times_k \prod_{v\in\Sigma}\A_k^{(\nu -\be_v - N_{\i_v},\nu -\al_v - M_{\i_v})}\to \scr{A}_{\m}(\nu-\be,\nu-\al,N,M)_D.$$

Let us compute the right-hand side of the Poisson formula. For this, recall that the Fourier kernel
$$r_D:\scr{A}_{\m}(\al,\be,M,N)\times_{\kappa(D)}\scr{A}_{\m}(\nu-\be,\nu-\al,N,M)\to \A^1$$
is a $\kappa(D)$-bilinear morphism satisfying
$$r_D(q_D(u),\tilde{q}_D(v)) = r(u,v)$$
for any $\kappa(D)$-rational points $u,v$ of the fibres described above, where $r$ is the Fourier kernel associated to the differential form $\omega$ on the adeles $\Ad_{K(C)}$. 

By definition, for any $y\in \scr{A}_{\m}(\nu-\be,\nu-\al, N,M)_D(\kappa(D))$, we have
\begin{eqnarray*}\four \Phi_D(y)& = &\LL^{-\deg \Lambda_D}[\{a\}\times_{\scr{A}_{\m}(\al,\be,M,N)_D}\scr{A}_{\m}(\al,\be,M,N)_D\times_{\kappa(D)}\{y\}, r_D(\pr_2,\pr_3)]\\
                                & = & \LL^{-\deg \Lambda_D}[\spec k,r_D(a,y)]\\
                                & = & \LL^{-\deg \Lambda_D}\psi(r_D(a,y))\end{eqnarray*}
                                
                            For any $f\in L_{\kappa(D)}(\div\,\omega + \Lambda_D)$, we have
                            $$r_D(a,\tilde{\theta}_D(f)) = r(b,\tilde{\theta'}_D(f)) = \sum_{s} \res_s(b_sf\omega)$$
                            where the sum goes over the points of the curve $C_K$. Note that the map $f\mapsto f\omega$ identifies $L_{\kappa(D)}(\div(\omega) + \Lambda_D)$ with $\Omega(-\Lambda_D)$. By invariance of the residue, $f\mapsto r_D(a,\theta_D(f)) $ is identically zero on $L_{\kappa(D)}(\div\,\omega + \Lambda_D)$ if and only if $b\in \Omega(-\Lambda_D)^{\perp}.$ 
                            By lemma 1.1.11 in \cite{CL}, we have 
                                \begin{eqnarray*}\sum_{x\in F_D}\four\Phi_D(x) &= &\LL^{-\deg \Lambda_D} \sum_{f\in L(\div(w) + \Lambda_D)} \psi(r_D(a,\tilde{\theta}_D(f)))\\
                                & = & \left\{\begin{array}{cc} \LL^{-\deg\Lambda_D + \dim L_{\kappa(D)}(\div\,\omega + \Lambda_D)}& \text{if\ $b\in \Omega(-\Lambda_D)^{\perp} $}\\
                                0 & \text{otherwise}.\end{array}\right. \end{eqnarray*} 
                                Thus, by the Riemann-Roch theorem applied to $\Lambda_D$ on the curve $C_{\kappa(D)}$  we have:
                                $$\LL^{1-g}\sum_{x\in F_D}\four \Phi_D(x) =\left\{\begin{array}{cc} \LL^{\dim L(-\Lambda_D)}& \text{if\ $b\in \Omega(-\Lambda_D)^{\perp} $}\\
                                0 & \text{otherwise}\end{array}\right.$$ 
We now compute the left-hand side of the Poisson formula. If $b = (b_s)_s \in \Omega(-\Lambda_D)^{\perp} = \Ad_{F_D}(-\Lambda_D) + F_D$, there exists $c\in F_D$ such that $\div(c-b)\geq \Lambda_D$. In other words, there exists an element $c$ of $F_D$ in the polydisc of centre $b$ with radii controlled by the divisor~$\Lambda_D$. Then the intersection of $F_D$ with this polydisc is exactly $c + L(-\Lambda_D)$, so that
$$\sum_{x\in F_D} \Phi_D(x) = \sum_{x\in F_D} \Phi_D(x-c) = \sum_{x\in L(-D)}1 = \LL^{\dim L(-D)}.$$
If on the other hand $b\not\in \Omega(-\Lambda_D)^{\perp}$, then the intersection of $F_D$ with the polydisc is empty, and the sum on the left-hand side of the Poisson formula is zero. This concludes the proof.
\end{proof}

\subsection{Uniform summation for uniformly compactly supported functions}

In what follows, we are going to show that summation over $\kappa(D)(C)$ may be done \textit{uniformly} in~$D$ if $\Phi$ is a family of uniformly compactly supported functions, that is, if all integers in the family $M$ are zero. Let us recall the key point of the construction in this particular case: the zero-cycle $E_D$ from section \ref{twistedsummation} is equal to $D_{\al} = -\sum_{v}\al_v v$, and we are interested in the space $L_{\kappa(D)}(D_{\al})$. Since $D_{\al}$ is defined over $k$, by flat base change (see e.g. \cite{Milne}, theorem 4.2 $(a)$) we have a $\kappa(D)$-linear canonical isomorphism
$$L_{\kappa(D)}(D_{\al})\simeq L(D_{\al})\otimes_k\kappa(D)$$
where $$L(D_{\al}) = \{0\}\cup \{f\in k(C), \div f \geq -D_{\al}\}.$$
 There is a morphism of algebraic varieties over $\kappa(D)$:
$$\theta_D: L(D_{\al})^n\otimes_k \kappa(D)\arr \scr{A}_{\m}(\al,\be,0,N)_D,$$
constructed in the previous section. 
 
 The aim of the following proposition is to show that, as $D$ varies in $S^{\m}C$, the maps $\theta_D$ can be combined into a morphism performing summation over rational points uniformly in $D$. Of course, our uniform choice of uniformisers will be crucial here.

\begin{prop} \label{prop.uniformsummationuc} There exists a constructible morphism
$$\theta_{\m}: L(D_{\al})^n\times S^{\m}C \arr \scr{A}_{\m}(\al,\be,0,N)$$
over $S^{\m}C$ such that for any $D\in S^{\m}C$, the induced morphism 
$$L(D_{\al})^n\otimes_k \kappa(D)\arr \scr{A}_{\m}(\al,\be,0,N)_D$$
on fibres above $D$ is exactly the morphism $\theta_D$ constructed above.
\end{prop}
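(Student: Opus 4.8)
The plan is to build $\theta_{\m}$ over the finite stratification of $S^{\m}C$ coming from Corollary~\ref{multzeta} and Section~\ref{sect.fibres}, and then glue. First I would replace $U$ (the dense open subset on which $\al$ and $\be$ vanish) by a smaller dense open subset --- which changes neither $S^{\m}C$ nor $\scr{A}_{\m}(\al,\be,0,N)$, only the stratification used below --- so that in addition $t_v = t - t(v)$ is a uniformiser at every $v\in U$ (Section~\ref{sect.uniformisers}) and, having fixed once and for all a $k$-basis $e_1,\dots,e_r$ of the finite-dimensional space $L(D_{\al})$, the functions $v\mapsto a_p(e_i,v)$ of Lemma~\ref{coef} are regular on $U$ for all $i$ and all $p$ in the (finite) range of orders that will occur. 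Writing $\Sigma = C\setminus U$ and recalling Remark~\ref{remark.m0expansion}, the variety $\scr{A}_{\m}(\al,\be,0,N)$ is a finite disjoint union of locally closed pieces
$$S^{\pi}\big((\A_U^{n(0,N_{\i})})_{\i\in\I}\big)\times \prod_{j=1}^s \A_{\{x_j\}}^{n(\al_{x_j},\be_{x_j}+N_{\m_j})},$$
indexed by decompositions $\m = \m_0 + \m_1 + \cdots + \m_s$, effective zero-cycles $D_{\Sigma} = \sum_j\m_j[x_j]\in S^{\m-\m_0}\Sigma$ (a finite set since $k$ is algebraically closed), and partitions $\pi = (m_{\i})_{\i\in\I}$ of $\m_0$; this piece lies over the piece $S^{\pi}U\times\{D_{\Sigma}\}$ of $S^{\m}C$. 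It suffices to produce the morphism over each such piece and glue.

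On the factors indexed by $\Sigma$ there is nothing to vary: $x_j$ is a fixed closed point with fixed uniformiser $t_{x_j}$, and since $\div f\geq -D_{\al}$ forces $\ord_{x_j}f\geq \al_{x_j}$, the map $f\mapsto (a_p(f,x_j))_{\al_{x_j}\leq p<\be_{x_j}+N_{\m_j}}$ is a $k$-linear map $L(D_{\al})\to \A_k^{n(\al_{x_j},\be_{x_j}+N_{\m_j})}$ (after taking $n$-th powers); base-changing to the stratum gives the $\Sigma$-components of $\theta_{\m}$. For the factor indexed by $U$ I would work \emph{before} passing to quotients. Recall that $S^{\pi}\big((\A_U^{n(0,N_{\i})})_{\i}\big)$ is the quotient by $G := \prod_{\i\in\I}\Sym_{m_{\i}}$ of the variety $\big(\prod_{\i\in\I}(\A_U^{n(0,N_{\i})})^{m_{\i}}\big)_{*,U}$, which lies over $\big(\prod_{\i\in\I}U^{m_{\i}}\big)_*$. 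Using the fixed basis, the assignment sending $(f_1,\dots,f_n)\in L(D_{\al})^n$ and a tuple $(v_{\i,j})$ of pairwise distinct points of $U$ to the collection of coefficients $(a_p(f_l,v_{\i,j}))_{0\leq p<N_{\i}}$ is a morphism
$$L(D_{\al})^n\times\Big(\prod_{\i\in\I}U^{m_{\i}}\Big)_*\ \longrightarrow\ \Big(\prod_{\i\in\I}(\A_U^{n(0,N_{\i})})^{m_{\i}}\Big)_{*,U}$$
over $\big(\prod_{\i\in\I}U^{m_{\i}}\big)_*$: writing $f = \sum_ic_ie_i$, one has $a_p(f,v) = \sum_ic_i\,a_p(e_i,v)$, which by Lemma~\ref{coef} is regular in $(c_1,\dots,c_r,v)$ on $\A_k^r\times U$.

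This morphism is $G$-equivariant for the trivial action on $L(D_{\al})^n$ and the permutation actions on the base $\big(\prod_{\i}U^{m_{\i}}\big)_*$ and on the target (permuting the tuples $(v_{\i,j})$ permutes the corresponding blocks of coefficients). Since quotienting a product of quasi-projective varieties by a finite group acting trivially on one factor commutes with that factor, it descends to a morphism $L(D_{\al})^n\times S^{\pi}U\to S^{\pi}\big((\A_U^{n(0,N_{\i})})_{\i}\big)$ over $S^{\pi}U$. Combining the $U$- and $\Sigma$-components on each stratum and gluing over the finitely many strata yields a constructible morphism $\theta_{\m}:L(D_{\al})^n\times S^{\m}C\to \scr{A}_{\m}(\al,\be,0,N)$ over $S^{\m}C$.

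It remains to identify the fibre of $\theta_{\m}$ over each $D\in S^{\m}C$ with $\theta_D$ of Section~\ref{twistedsummation}, and this is the one genuinely delicate point of the argument. By flat base change the fibre of $L(D_{\al})^n\times S^{\m}C$ over $D$ is $L(D_{\al})^n\otimes_k\kappa(D) = L_{\kappa(D)}(E_D)^n$ --- recall $E_D = D_{\al}$ precisely because all $M_{\i}=0$, which is exactly why the uniform construction is available in this case --- so the source matches. The $\Sigma$-components agree by construction. For the $U$-component one pulls back along the finite étale cover $\big(\prod_{\i}U^{m_{\i}}\big)_*\times_{S^{\pi}U}\spec\kappa(D) = \spec\mathcal{E}$ of Remark~\ref{rationalpointfibre}: over the component $\spec K$ corresponding to a chosen lift $D'_U = (v_{\i,j})$ the cover-level morphism becomes precisely $(f_l)\mapsto (a_p(f_l,v_{\i,j}))_{0\leq p<N_{\i}}$, i.e. the $U$-part of $\theta'_D$; taking invariants under the stabiliser $H$ of $D'_U$ recovers, on one side, the fibre of the $U$-component of $\theta_{\m}$, and on the other, $q_D$ applied to $\theta'_D$, because $q_D$ is by definition the $H$-quotient morphism (Remark~\ref{rationalpointfibre}). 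Hence the fibre of $\theta_{\m}$ over $D$ is $q_D\circ\theta'_D = \theta_D$. I expect the real work here to be purely bookkeeping around the twisted form of the fibre of $\scr{A}_{\m}$ and the fact that $\theta_D$ is, by construction, a coefficient-expansion map descended through the étale quotient $q_D$ --- which is exactly the structure the family construction above is designed to reproduce.
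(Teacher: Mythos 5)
Your proposal is correct and takes essentially the same route as the paper: the uniform choice of uniformisers together with Lemma~\ref{coef} gives algebraicity of the coefficient-expansion maps, which are then assembled through the symmetric-product formalism into a morphism over $S^{\m}C$. The differences are only presentational: the paper takes symmetric products of the sections $\phi_{f,\i}$ for each fixed $f$ and then "combines over a basis" of $L(D_{\al})^n$, whereas you build the joint map in $(f,v)$ upstairs before the quotient and descend by $G$-equivariance (with trivial action on the $L(D_{\al})^n$ factor), and you make explicit, via the étale cover and $H$-invariants of Remark~\ref{rationalpointfibre}, the fibrewise identification with $\theta_D$ that the paper records simply as "by construction".
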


\begin{proof}  Lemma \ref{coef} shows that for any $f\in L(D_{\al})$ and any $\i\in \I_0$, there is a constructible morphism
$$\phi_{f,\i}: C\arr \A_C^{(\al,\be + N_{\i})}.$$
over $C$, sending $v\in C$ to $(v, a_{\al_{v}}(f,v),\ldots,a_{\be_v + N_{\i}-1}(f,v))$ (where we use the notation from lemma \ref{coef}).
Taking products over $C$, for any $f = (f_1,\ldots,f_n)\in L(D_{\al})^n,$ and any $\i\in \I_0$, there is a constructible morphism
$$\phi_{f,\i} :C\arr \A_C^{n(\al,\be + N_{\i})}.$$
Taking symmetric products, for any $\m\in \I_0$ we get morphisms
$$S^{\m}\phi_f: S^{\m}C\arr \scr{A}_{\m}(\al,\be,0,N).$$

Finally, $L(D_{\al})^n$ being a finite-dimensional $k$-vector space, combining these morphisms for a basis of the latter, we get constructible morphisms
$$\theta_{\m}: L(D_{\al})^n\times S^{\m}C \arr \scr{A}_{\m}(\al,\be,0,N).$$
By construction, the restriction to the fibre over a schematic point $D\in S^{\m}C$ corresponds indeed to $\theta_D$.
\end{proof}
We see $L(D_{\al})^n\times S^{\m}C$ as a variety over $S^{\m}C$ via the second projection, which yields a group morphism 
$$\expp_{L(D_{\al})^n\times S^{\m}C}\arr \expp_{S^{\m}C},$$
interpreted as ``summation over rational points in each fibre''. Using this morphism, we can define uniform summation over rational points:

\begin{definition} \label{cs.summable} Let $\Phi\in\expp_{\scr{A}^{\m}(\al,\be,0,N)}$ be a family of uniformly compactly supported functions. The uniform summation of $\Phi$ over rational points, denoted by $$\left(\sum_{x\in \kappa(D)(C)^n}\Phi_D(x)\right)_{D\in S^{\m}C},$$ is the image in $\expp_{S^{\m}C}$ of the pullback $\theta_{\m}^*\Phi$.
\end{definition}

\begin{remark}[Order of summation] \label{orderofsummation} Starting from a uniformly compactly supported family $\Phi\in \scr{A}^{\m}(\al,\be,0,N)$ and pulling it back to $L(D_{\al})^n\times S^{\m}C$ via $\theta_{\m}$, we obtain an object of $\expp_{L(D_{\al})^n\times S^{\m}C}$, from which we can get an object of $\expp_k$, by either projecting first to $S^{\m}C$ and then to $k$, or first to $L(D_{\alpha})$ and then to $k$. The fact that these two operations commute can be interpreted in terms of motivic sums as the possibility of interchanging the order of summation:
$$\sum_{D\in S^{\m}C}\ \sum_{x\in \kappa(D)(C)^n}\Phi_D(x) = \sum_{x\in k(C)^n}\sum_{D\in S^{\m}C} \Phi_D(x).$$
Here $\sum_{x\in k(C)^n}$ denotes summation over $L(D_{\alpha})$.
\end{remark}

\section{Poisson formula in families}\label{Poisson.families}
In this section we describe the way in which the  Poisson formula is going to be used in section \ref{applicPoisson} of chapter \ref{motheightzeta}. 
\subsection{Uniformly summable families}\label{sect.uniformlysummable}
\begin{definition} We say that a constructible family of functions $$\Phi\in\expp_{\scr{A}_{\m}(\al,\be,M,N)}$$ is uniformly summable \index{uniformly summable} if there is an element  $\Sigma \in \expp_{S^{\m}C}$ such that for any schematic point $D\in S^{\m}(C)$, the pullback of~$\Sigma$ along $D$ is $\sum_{x\in \kappa(D)(C)^n}\Phi_D(x)\in \expp_{\kappa(D)}$. 
\end{definition}
\begin{remark}\label{uniqueness.sum} By lemma \ref{function.equality}, such an element is unique. 
\end{remark}
\begin{notation}\label{summable.not} The element $\Sigma$ from the definition will be denoted $$\left(\sum_{x\in \kappa(D)(C)^n}\Phi_D(x)\right)_{D\in S^{\m}C}.$$
\end{notation}
\begin{example}\label{cs.summable.ex} We showed in the previous section that a uniformly compactly supported family of functions is uniformly summable. The notation in definition \ref{cs.summable} is consistent with the one in \ref{summable.not}.
\end{example}

\subsection{Motivic Poisson formula in families}\label{poissonfamiliesproof}
Let $\Phi\in\expp_{\scr{A}_{\m}(\al,\be,M,N)}$ be a constructible family of Schwartz-Bruhat functions, which we assume to be uniformly summable.  By the motivic Poisson formula as it is stated in lemma \ref{twistedpoisson}, for any schematic point $D\in S^{\m}C$, we have 
$$\sum_{x\in \kappa(D)(C)^n}\Phi_D(x) = \LL^{(1-g)n}\sum_{y\in \kappa(D)(C)^n}\four\Phi_D(y).$$
We may conclude from this and from lemma \ref{function.equality} that the family $$\four \Phi\in\expp_{\scr{A}_{\m}(\nu-\be,\nu-\al,N,M)}$$ is uniformly summable as well, and that
\begin{equation}\label{uniform.Poisson}\left(\sum_{x\in \kappa(D)(C)^n}\Phi_D(x)\right)_{D\in S^{\m}C} = \LL^{(1-g)n}\left(\sum_{y\in \kappa(D)(C)^n}\four\Phi_D(y)\right)_{D\in S^{\m}C}\end{equation}
as elements of $\expp_{S^{\m}C}.$ 

In other words, the Poisson formula from lemma \ref{twistedpoisson} shows that $\Phi$ is uniformly summable if and only if  $\four \Phi$ is uniformly summable, and that in this case the families of their sums are related by a Poisson formula \ref{uniform.Poisson}. \index{Poisson formula!in families}

\subsection{The case of a uniformly smooth family}\label{us.poisson}
In chapter \ref{motheightzeta}, section \ref{applicPoisson}, we are going to use this formula in the case where $\Phi$ is a uniformly smooth family, so that $\four\Phi$ is uniformly compactly supported. By example \ref{cs.summable.ex}, $\four \Phi$ is then uniformly summable, and section \ref{poissonfamiliesproof} states that so is $\Phi$, and that we have equality (\ref{uniform.Poisson}). Taking classes in $\expp_k$ (written as motivic sums), we then have:

\begin{equation}\label{sc.Poisson}\sum_{D\in S^{\m}C}\sum_{x\in \kappa(D)(C)^n}\Phi_D(x) = \LL^{(1-g)n} \sum_{D\in S^{\m}C}\sum_{y\in \kappa(D)(C)^n}\four\Phi_D(y).\end{equation}
\index{Poisson formula!uniformly smooth family}

\subsection{Reversing the order of summation}\label{reversesummation} By remark \ref{orderofsummation}, it makes sense to reverse the order of summation in the right-hand-side of (\ref{sc.Poisson}), to get:
\begin{equation}\label{equation.reverseorder}\sum_{D\in S^{\m}C}\sum_{x\in \kappa(D)(C)^n}\Phi_D(x) = \LL^{(1-g)n}\sum_{y\in k(C)^n}\sum_{D\in S^{\m}C}\four\Phi_D(y).\end{equation}

\begin{remark}\label{dropkappaD} For simplicity of notation, in chapter \ref{motheightzeta} we will drop the mention of $\kappa(D)$ in the summations, and write simply $\sum_{x\in k(C)}$. 
\end{remark}



\chapter{Motivic height zeta functions}\label{motheightzeta}
This final chapter leads up to the proof of theorem \ref{main} and corollary \ref{maincor} from the introduction. Section~\ref{sectGeometry} starts by defining equivariant compactifications of $\G^n_a$, giving some of their properties, fixing some notations and recalling the geometric framework from assumptions~\ref{hypothese.geom} and~\ref{hypothese.section}, as well as from \cite{CL}. 

In section \ref{sect.heightzeta} we define the moduli spaces we are interested in and introduce their generating series, namely motivic height zeta functions.
We then decompose the moduli spaces of sections with respect to their local behaviour at each place $v\in C$, and make use of this to write them as summations of suitable constructible families of Schwartz-Bruhat functions
in the terminology of chapter~\ref{poissonformula}. Applying the motivic Poisson formula as stated in section \ref{Poisson.families} of chapter~\ref{poissonformula} we obtain a decomposition 
 \begin{equation}\label{equation.motivicheightintro}Z(T) = \sum_{\xi\in G(F)}Z(T,\xi) = \sum_{\xi\in G(F)} \prod_{v\in C} Z_v(T,\xi)
 \end{equation}
of the  motivic height zeta function (in fact, even of its multivariate version) as a motivic sum of series with coefficients in $\expp_{k}$, each of which has an Euler product decomposition. We then investigate in section~\ref{sectLocal} the convergence of the factors $Z_v(T,\xi)$, first for the places $v\in C_0$, then for places $v\in S = C\setminus C_0$. Each factor takes the form of a motivic Fourier transform which can be rewritten, using ideas from~\cite{CL}, as a motivic integral over the arc space of the chosen model~$\scr{X}$ of our equivariant compactification~$X$. The first three subsections of section~\ref{sectLocal} recall some results from~\cite{CL} used for this, and contain some general computations of motivic integrals. These local computations, together with the convergence result (proposition \ref{convergence}) from chapter \ref{hodgemodules}, yield convergence statements for each $Z(T,\xi)$ for $\xi$ both trivial and non-trivial. More precisely:
\begin{itemize} \item We show that the product $\prod_{v\in C_0}Z_v(T,0)$ has a meromorphic continuation for $|T|<\LL^{-1 + \delta}$ for some $\delta>0$ and compute the exact order of its pole at $\LL^{-1}$, thanks to assumption~\ref{hypothese.section} on local existence of sections.  
\item By an analysis analogous to the one in \cite{CLT}, we show that for every non-zero~$\xi$, the product $\prod_{v\in C_0}Z_v(T,\xi)$ has a meromorphic continuation for $|T|<\LL^{-1 + \delta}$ for some~$\delta>0$, and get a bound on the order of the pole.
\item  To get the exact contribution of $\prod_{v\in S}Z_v(T,0)$ to the order of the pole at $\LL^{-1}$, we rewrite each factor using the language of Clemens complexes.
\item Using arguments from \cite{CLT,CLTi, CL}, we observe that the summation in equation~(\ref{equation.motivicheightintro}) is in fact over $\xi$ in some finite-dimensional $k$-vector space $V$, and we bound the order of the pole of $\prod_{v\in S} Z_v(T,\xi)$ uniformly on the strata of a finite constructible partition of~$V\setminus\{0\}$, in terms of dimensions of some explicit subcomplexes of the Clemens complexes appearing for $\xi = 0$. 
\end{itemize}
We finish by combining these results to establish theorem~\ref{main}, and deduce corollary \ref{maincor}, first over $\C$, then over a general algebraically closed field~$k$ of characteristic zero. 
\section{Geometric setting}\label{sectGeometry}
\subsection{Equivariant compactifications}\label{sect.equivariant}
Let $k$ be an algebraically closed field of characteristic zero. Let $C_0$\index{C0@$C_0$} be a smooth quasi-projective curve over~$k$, $C$ be its  smooth projective compactification, and  $S = C\setminus C_0$. We denote by $F = k(C)$ the function field of $C$, $g$ its genus, and $G$ the additive group scheme~$\G^{n}_{a}$. A smooth projective \emph{equivariant compactification} \index{equivariant compactification} of $G_F$ is a smooth projective $F$-scheme $X$ containing $G_F$ \index{GF@$G_F$} as a dense open subset, and such that  the group law $G_F\times G_F\arr G_F$ extends to a group action $G_F\times X\arr X$. 

The geometry of such varieties has been investigated in \cite{HT}. We summarise here the main facts that will be used in this chapter.

\begin{prop} Let $X$ be a smooth projective equivariant compactification of~$G_{F}$.
\begin{enumerate} \item The boundary $X\setminus G_F$ is a divisor.  
\item The Picard group of~$X$ is freely generated by the irreducible components $(D_{\alpha})_{\alpha\in\scr{A}}$ of $X\setminus G_F$. \index{A@$\scr{A}$}\index{Da@$D_{\al}$}
\item The closed cone of effective divisors $\Lambda_{\mathrm{eff}}(X) \subset \mathrm{Pic}(X)_{\R}$ is given by
$$\Lambda_{\mathrm{eff}}(X) = \bigoplus_{\al\in \scr{A}}\R_{\geq 0}D_{\al}.$$
\item Up to multiplication by a scalar, there is a unique $G_F$-invariant meromophic differential form~$\omega_X$ on $X$. Its restriction to $G_F$ is proportional to the form $\dx x_1\wedge \ldots \wedge \dx x_n$.
\item There exist integers $\rho_{\al}\geq 2$ such that the divisor \index{omega@$\omega_X$} $-\div(\omega_X)$ is given by
$$-\div(\omega_X) = \sum_{\alpha\in\scr{A}}\rho_{\alpha}D_{\alpha}.$$ \index{rho@$\rho_{\al}$}
\end{enumerate}
\end{prop}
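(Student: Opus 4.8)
The plan is to deduce all five assertions from the structure theory of smooth equivariant compactifications of vector groups worked out by Hassett and Tschinkel in~\cite{HT}; I indicate the arguments and isolate the one step that requires genuine work. For~(1), the open orbit $G_F\simeq\A^n_F$ is affine, and the complement of an affine open subset in a normal projective variety is of pure codimension one (a classical fact, also part of the analysis in~\cite{HT}); since $X$ is smooth it is normal, so $\partial X = X\setminus G_F$ is a divisor, and we call its irreducible components $(D_{\al})_{\al\in\scr{A}}$. For~(2), the localisation exact sequence $\bigoplus_{\al}\Z D_{\al}\to\mathrm{Pic}(X)\to\mathrm{Pic}(G_F)\to 0$ together with $\mathrm{Pic}(\A^n_F)=0$ gives surjectivity onto $\mathrm{Pic}(X)$; for injectivity, a relation $\sum_{\al}n_{\al}D_{\al}=\div(f)$ with $f\in F(X)^{\times}$ forces $f|_{G_F}$ to be a unit on $\A^n_F$, hence $f\in\mathcal{O}(\A^n_F)^{\times}=F^{\times}$, so $f$ is constant and all $n_{\al}$ vanish. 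Thus the $D_{\al}$ form a $\Z$-basis of $\mathrm{Pic}(X)$.

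For~(3), the inclusion $\bigoplus_{\al}\R_{\geq 0}D_{\al}\subseteq\Lambda_{\mathrm{eff}}(X)$ is immediate since each $D_{\al}$ is effective. The reverse inclusion is the substantive part, proved in~\cite{HT}: the point is that every effective divisor on $X$ is linearly equivalent to one supported on the boundary, which one sees by using the $\G^n_a$-action to degenerate $D|_{G_F}=\div(h)$ (for $h$ a polynomial, $\A^n_F$ being factorial) towards the boundary; combined with~(2) and effectivity of $D$, all coefficients in $[D]=\sum_{\al}n_{\al}D_{\al}$ are then forced to be $\geq 0$. For~(4), the translation-invariant top-degree forms on $G_F\simeq\A^n_F$ are exactly the scalar multiples of $\omega_0:=\dx x_1\wedge\cdots\wedge\dx x_n$; since $X$ is proper, $\omega_0$ extends to a meromorphic top form $\omega_X$ on $X$, which is $G_F$-invariant on the dense open $G_F$, hence $G_F$-invariant everywhere because the action $G_F\times X\to X$ extends the group law and $G_F$ is dense. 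Any other invariant meromorphic top form restricts on $G_F$ to a scalar multiple of $\omega_0$, hence equals a scalar multiple of $\omega_X$ on the irreducible variety $X$; this gives uniqueness up to scalar and the stated restriction.

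For~(5), since $\omega_X|_{G_F}$ is a nonzero scalar multiple of $\omega_0$, which has neither zeros nor poles on $\A^n_F$, the divisor $\div(\omega_X)$ is supported on $\partial X$, so $-\div(\omega_X)=\sum_{\al}\rho_{\al}D_{\al}$ for well-defined integers $\rho_{\al}$. The remaining content — and the main obstacle — is the lower bound $\rho_{\al}\geq 2$: this is not formal, and is exactly the local computation performed in~\cite{HT}. The idea there is to work at a general point $x$ of a fixed boundary divisor $D_{\al}$: the stabiliser of $x$ in $\G^n_a$ is a codimension-one subgroup $\cong\G^{n-1}_a$ acting on $D_{\al}$ with a dense orbit, and there is a complementary one-parameter subgroup acting transversally to $D_{\al}$; choosing local coordinates adapted to this decomposition, with the transversal $\G_a$ parametrised through the translation action, one computes that $\omega_0$ acquires a pole of order at least two along $D_{\al}$, the extra order reflecting the unipotency of $\G_a$ (the transversal coordinate being an inverse-type function of the group parameter). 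In the write-up I would invoke~\cite{HT} for this step rather than reproduce the computation.
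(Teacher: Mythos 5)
Your proposal is essentially consistent with the paper, which gives no argument at all for this proposition: it is stated as a summary of facts taken from Hassett--Tschinkel \cite{HT}. Your standalone arguments for the formal parts are correct — purity of the boundary of an affine open in a normal variety for (1), the localisation sequence plus $\mathcal{O}(\A^n_F)^{\times}=F^{\times}$ for (2), the obvious inclusion in (3), and the density/extension argument for (4) and for the fact that $\div(\omega_X)$ is supported on the boundary in (5) — and you have correctly isolated the two genuinely non-formal inputs (the reverse inclusion in (3), where the real content is that every effective class contains a $G_F$-invariant, hence boundary-supported, representative, and the bound $\rho_{\al}\geq 2$ in (5)), which you, like the paper, take from \cite{HT}. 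One caveat on your heuristic for (5): the claim that the stabiliser of a general point of $D_{\al}$ is a codimension-one subgroup acting on $D_{\al}$ with a dense orbit is false in general — for $X=\Proj^n$ the translation action of $\G^n_a$ fixes the hyperplane at infinity pointwise — so that sketch, taken literally, would not prove $\rho_{\al}\geq 2$; the mechanism is rather that along the closure of a generic one-parameter orbit the invariant form $dt$ picks up a double pole at infinity (equivalently, the global vector fields generating the action are tangent to the invariant boundary, forcing extra vanishing of $\partial_1\wedge\cdots\wedge\partial_n$ as a section of $\wedge^n T_X$). Since you explicitly defer that step to \cite{HT}, this inaccuracy does not create a gap in the proof as proposed, but the aside should be corrected or removed in a write-up.
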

We will moreover assume that the divisor $X\setminus G_F$ has strict normal crossings. This  means that the components $D_{\alpha}$ are geometrically irreducible, smooth and meet transversally. 

\subsection{Partial compactifications}
A \textit{partial compactification} \index{partial compactification} \index{equivariant compactification!partial} of $G_F$ is a smooth quasi-projective scheme~$U$, containing $G_F$ as an open subset, and endowed with an action of $G_F$ which extends the group law $G_F\times G_F\arr G_F$. If $U$ is an open subscheme of a projective smooth equivariant compactification~$X$ of $G_F$, globally invariant under the action of $G_F$, with complement a reduced divisor $D=X\setminus U$, we denote by $\scr{A}_D$ the subset of $\scr{A}$ such that
$$D = \sum_{\al\in \scr{A}_D}D_{\al},$$\index{AD@$\scr{A}_D$}
where $(D_{\al})_{\al\in\scr{A}}$ are the irreducible components of $X\setminus G_F$. The log-anticanonical class \index{log-anticanonical class} of the partial compactification $U$ is the class of the divisor 
$$-K_X(D) = -(K_X + D) = \sum_{\al\in\scr{A}_D}(\rho_{\al}-1)D_{\al} + \sum_{\al\not\in\scr{A}_D}\rho_{\al}D_{\al} = \sum_{\al\in\scr{A}}\rho'_{\al}D_{\al},$$
\index{KXD@$K_X(D)$} \index{KX@$K_X$} \index{rhop@$\rho'_{\al}$}
where $\rho'_{\al} = \rho_{\al}-1$ for $\al\in\scr{A}_D$, and $\rho_{\al}$ otherwise. Since $\rho_{\al} \geq 2$, it belongs to the interior of the effective cone of $X$, and therefore it is big.



\subsection{Choice of a good model}\label{sect.goodmodelchoice}

Let $k, C_0, C, F$ be as in section \ref{sect.equivariant}. We now assume we are in the situation described in the introduction, namely that we are given  a projective irreducible $k$-scheme $\scr{X}$ \index{X@$\scr{X}$} together with a non-constant morphism $\pi:\scr{X}\to C$, $\scr{U}$ a Zariski open subset of $\scr{X}$, and $\scr{L}$ a line bundle on $\scr{X}$. We make the following assumptions on the generic fibres $X = \scr{X}_F$ and $U = \scr{U}_{F}$:
\begin{itemize}\item $X$ is a smooth equivariant compactification of $G_F$, and $U$ is a partial compactification of $G_F.$ 
\item the boundary $D= X\setminus U$ is a strict normal crossings divisor. 
\item the restriction $L$ of the line bundle $\scr{L}$ to $X$ is the log-anticanonical line bundle $-K_X(D)$. 
\end{itemize} 
We also assume that for all $v\in C_0$ we have $\G(F_v)\cap \scr{U}(\OO_v) \neq \varnothing$, where $F_v$ is the completion of~$F$ at $v$, and $\OO_v$ its ring of integers. According to lemma 3.4.1 in \cite{CL}, up to modifying the  models without changing this hypothesis nor the counting problem we are going to deal with, we may assume additionally that in fact~$\scr{X}$ is a good model, \index{good model} that is, it is smooth over $k$ and the sum of the non-smooth fibres of $\scr{X}$ and of the closures~$\scr{D}_{\al}$ \index{dalpha@$\scr{D}_{\alpha}$} of the irreducible components $D_{\al}$ of $X\setminus U$ is a divisor with strict normal crossings on~$\scr{X}$. Moreover, we may assume that $\scr{U}$ is the complement in $\scr{X}$ to a divisor with strict normal crossings. We will make use of the notations concerning equivariant compactifications introduced in section \ref{sect.equivariant}.

\subsection{Vertical components} \index{vertical components} For every $v\in C(k)$, we write $\scr{B}_v$ for the set of irreducible components of $\pi^{-1}(v)$, and $\scr{B}$ for the disjoint union of all $\scr{B}_v$, $v\in C(k)$. \index{B@$\scr{B}$, $\scr{B}_v$} For $\beta\in \scr{B}_v$, we denote by $E_{\be}$ \index{Eb@$E_{\be}$} the corresponding component, and by $\mu_{\be}$ \index{mub@$\mu_{\be}$} its multiplicity in the special fibre of $\scr{X}$ at $v$.  

The line bundle $\scr{L}$ is of the form $\sum_{\al\in\scr{A}}\rho'_{\al}\scr{L}_{\al}$ where for every $\al$, the line bundle $\scr{L}_{\al}$ on~$\scr{X}$ extends $D_{\al}$. We may write:
$$\scr{L}_{\al} = \scr{D}_{\al} + \sum_{\be\in \scr{B}}e_{\al,\be}E_{\be}$$\index{Lalpha@$\scr{L}_{\alpha}$}
where the integers $e_{\al,\be}$ are zero for almost all $\be$. We also define integers $\rho_{\be}$ such that
$$-\div(\omega_X) = \sum_{\al}\rho_{\al}\scr{D}_{\al} + \sum_{\be\in\scr{B}}\rho_{\be}E_{\be}$$
where $\omega_X$ is seen as a meromorphic section of $K_{\scr{X}/C}$.   

\subsection{Weak Néron models}\label{Neron} \index{weak Néron model} Let $\scr{B}_1$ \index{B1@$\scr{B}_1$, $\scr{B}_{1,v}$} be the subset of $\scr{B}$ consisting of those $\beta$ for which  the multiplicity $\mu_{\be}$ is equal to~1. Let $\scr{B}_{1,v} = \scr{B}_1\cap \scr{B}_v$. Let $\scr{X}_1$ be the complement in $\scr{X}$ of the union of the components $E_{\be}$ for $\be\in \scr{B}\setminus\scr{B}_1$ as well as of the intersections of distinct vertical components. It is a smooth scheme over~$C$. By lemma 3.2.1 in \cite{CL}, the $C$-scheme $\scr{X}_1$ is a weak Néron model of $X$. This means that for every smooth $C$-scheme~$\scr{Z}$, the canonical map
$$\mathrm{Hom}_C(\scr{Z},\scr{X}_1)\to \mathrm{Hom}_F(\scr{Z}_F,X)$$
is a bijection.

Applying this to $\scr{Z} = C$, we see that in particular, any rational point $g:\spec F\to X$ extends to a section $\sigma_g:C\to \scr{X}_1$ with image inside $\scr{X}_1$. 

\section{Height zeta functions} \label{sect.heightzeta}
\subsection{Definition} \label{sect.heightzetadef}
  Let $\lambda =(\lambda_{\al})_{\al\in\scr{A}}$ be a family of positive integers, and let $$\scr{L}_{\lambda} = \sum_{\al\in\scr{A}}\lambda_{\al}\scr{L}_{\al}$$ be the corresponding line bundle on the good model $\scr{X}$. 
  For every integer $n\in\Z$ and any $\n = (n_{\al})_{\al\in \scr{A}}\in\Z^{\scr{A}}$ let $M_{U,n}$ be the moduli space of sections $\sigma:C\to\scr{X}$ such that 
  \begin{itemize} \item the section $\sigma$ maps the generic point $\eta_C$ of $C$ to a point of $G_F$. 
                  \item it represents an $S$-integral point of $U$, that is, $\sigma(C_0)\subset \scr{U}$. 
                  \item $\deg_{C}(\sigma^*\scr{L}_{\lambda}) = n$
                  \end{itemize}
and 
 $M_{U,\mathbf{n}}$ the moduli space of sections $\sigma:C\to\scr{X}$ such that \index{moduli space of sections} \index{MUn@$M_{U,n}$, $M_{U,\n}$}
 \begin{itemize} \item the section $\sigma$ maps the generic point $\eta_C$ of $C$ to a point of $G_F$.
 \item it represents an $S$-integral point of $U$, that is, $\sigma(C_0)\subset \scr{U}$. 
  \item for all $\alpha\in \scr{A}$, $\deg_{C}(\sigma^*\scr{L}_{\al}) = n_{\al}.$
  \end{itemize}
 According to Proposition 2.2.2 in \cite{CL}, these moduli spaces exist as constructible sets over $k$, and there exists an integer $m$ such that $M_{U,n} $ (resp. $M_{U,\mathbf{n}}$) is empty when $n<m$ (resp. when $n_{\al}<m$ for all $\al\in\scr{A}$). Moreover, $M_{U,n}$ can be seen as the disjoint union of all $M_{U,\mathbf{n}}$ such that $\sum_{\al\in\scr{A}}\lambda_{\al}n_{\al} = n$. 
 
 The multivariate motivic height zeta function is given by
 $$Z(\T) = \sum_{\mathbf{n}\in\Z^{\scr{A}}}[M_{U,\mathbf{n}}]\T^{\mathbf{n}}\in\M_k[[(T_{\al})_{\al\in\scr{A}}]]\left[\prod_{\al\in\scr{A}}T_{\al}^{-1}\right],$$ \index{ZT@$Z(\T)$, $Z(T)$, motivic height zeta function}\index{motivic height zeta function!multivariate}
 and the motivic height zeta function associated to the line bundle $\scr{L}$, defined as $$Z_{\lambda}(T) = \sum_{n\in \Z}[M_{U,n}]T^n\in\M_k[[T]][T^{-1}],$$ can be written in the form
 $$Z_{\lambda}(T) = Z((T^{\lambda_{\al}})) = \sum_{m\in \Z}\left(\sum_{\substack{\mathbf{n}\in\Z^{\scr{A}}\\\sum_{\al}\lambda_{\al}n_{\al} = m}}[M_{U,\mathbf{n}}]\right)T^m.$$\index{motivic height zeta function!associated to line bundle}\index{Zl@$Z_{\lambda}(T)$}
 As stated in the introduction, we will investigate the case  where $\scr{L} = \scr{L}_{\rho'}= \sum_{\al\in\scr{A}}\rho'_{\al}\scr{L}_{\al}$ is generically the log-anticanonical line bundle. Denote by $Z(T)$ the corresponding height zeta function. 
 
\subsection{Local intersection degrees}
Let $v\in C(k)$. To every point $g\in G(F_v)$, one can attach local intersection degrees $(g,\scr{D}_{\al})_v\in\N$ for all $\al\in\scr{A}$ and $(g,E_{\beta})_v$ for all $\beta\in\scr{B}_v$ such that \index{local intersection degree}
\begin{enumerate}\item For every $g\in G(F)$ and every $\al\in\scr{A}$, one has
$$\deg_{C}(\sigma_g^*(\scr{D}_{\al})) = \sum_{v\in C(k)}(g,\scr{D}_{\al})_v$$\index{gd@$(g,\scr{D}_{\al})_v$}
where $\sigma_g:C\to\scr{X}$ is the canonical section extending $g$.
\item There is exactly one $\beta\in\scr{B}_v$ such that $(g,E_{\beta})_v = 1$, and this $\beta$ has multiplicity one. For any $\beta'\in \scr{B}_v$ different from this $\beta$, one has $(g,E_{\beta'})_v = 0$ (see section \ref{Neron}).
\end{enumerate}
We refer to \cite{CL}, 3.3 for details. 

\subsection{Decomposition of $G(F_v)$}\label{decomposition}
We can decompose $G(F_v)$ into definable (in the Denef-Pas language, see section~2.1 of \cite{CluckLos}) bounded domains on which all the above intersection degrees are constant: for all $\m\in\N^{\scr{A}}$ and all $\beta\in\scr{B}_v$, we define
$$G(\m,\beta)_v =\{g\in G(F_v),\ (g,E_{\beta})_v = 1\ \text{and}\ (g,\scr{D}_{\al})_v = m_{\al}\ \text{for all}\ \al\in\scr{A}\}$$
and $G(\m)_v = \cup_{\beta\in\scr{B}_v}G(\m,\beta)_v$. \index{gmbeta@$G(\m,\beta)_v$, $G(\m)_v$}
Lemma 3.3.2 in \cite{CL} says that for any $\m\in\N^{\scr{A}}$ and any $\beta\in\scr{B}_v$, the set $G(\m,\beta)_v$ is a bounded definable subset of $G(F_v)$, and that $G(F_v)$ is the disjoint union of all the $G(\m,\beta)_v$ for $\m\in\N^{\scr{A}}$ and $\be\in\scr{B}_v$.

Moreover, Lemmas 3.3.3 and 3.3.4 from \cite{CL} can be summarised in the following proposition:

\begin{prop}\label{gm} There exists a dense open subset $C_1$ of $C_0$ such that for every $v\in C_1(k)$:
\begin{enumerate}[(i)]\item The set $\scr{B}_v$ contains only one element $\be_v$.
\item The set $G(\mathbf{0},\be)_v$ is equal to the subgroup $G(\OO_v)$ of $G(F_v)$.  
\end{enumerate}
Moreover, there is an almost zero function $s:C\to\Z$ such that for every $v\in C(k)$, $\m\in\N^{\scr{A}}$ and $\beta\in\scr{B}_v$, the set $G(\m,\beta)_v$ is invariant under the subgroup $G(\mathfrak{m}_v^{s_v})$ of $G(\OO_v)$, where $\mathfrak{m}_v$ is the maximal ideal of $\OO_v$, and one can take $s_v=0$ for all $v\in C_1(k)$. 
\end{prop}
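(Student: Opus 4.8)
This proposition is largely a repackaging of results from \cite{CL} (Lemmas 3.3.3 and 3.3.4), so the plan is to explain how to assemble the statement from those building blocks together with standard spreading-out arguments, rather than to reprove everything from scratch.

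\textbf{Step 1: The generic behaviour of the special fibres.} By the choice of good model in section \ref{sect.goodmodelchoice}, $\scr{X}$ is smooth over $k$ and $\pi:\scr{X}\to C$ is non-constant, hence flat over a dense open subset of $C$ (generic flatness). A flat projective morphism between smooth varieties has geometrically irreducible fibres over a dense open subset (after shrinking, by the constructibility of the locus where the fibre is geometrically irreducible and the fact that the generic fibre $X$ is irreducible); moreover one can arrange that these fibres are reduced. Intersecting with the locus where none of the strict normal crossings divisors $\scr{D}_\al$ or the boundary $\scr{U}\setminus(\scr{X}\setminus\scr{U})$ behaves badly, we obtain a dense open $C_1\subset C_0$ such that for $v\in C_1(k)$, $\pi^{-1}(v)$ is irreducible with multiplicity one; this gives $\scr{B}_v=\{\be_v\}$, i.e. point (i).

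\textbf{Step 2: Identification of $G(\mathbf{0},\be)_v$ with $G(\OO_v)$.} For $v\in C_1(k)$, a point $g\in G(F_v)$ lies in $G(\mathbf{0},\be_v)_v$ exactly when it has trivial local intersection with every $\scr{D}_\al$ and with the (unique, multiplicity-one) vertical component; by the definition of the local intersection degrees via the canonical section into the weak N\'eron model $\scr{X}_1$ (section \ref{Neron}), this says precisely that the canonical section $\sigma_g$ meets the fibre at $v$ in the smooth locus $G(\OO_v)$ of $\scr{X}_1$, i.e. that $g\in G(\OO_v)$. This is exactly Lemma 3.3.3 of \cite{CL}; after shrinking $C_1$ if necessary so that the model is ``good at $v$'' in the sense used there, one obtains (ii). I would simply cite \cite{CL} for this and note that it holds over the open set produced in Step 1 (possibly shrunk).

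\textbf{Step 3: Uniform invariance and the function $s$.} For a fixed $v$, Lemma 3.3.4 of \cite{CL} provides an integer $s_v$ such that every $G(\m,\be)_v$ is invariant under $G(\mathfrak m_v^{s_v})$, and for $v\in C_1(k)$ one may take $s_v=0$ by Step 2 (the group $G(\OO_v)$ acts on itself by translation, and equivariance of the compactification propagates this invariance to all the $G(\m,\be)_v$ — this is the content of the cited lemma). The only genuinely new point is that $v\mapsto s_v$ can be chosen to be an \emph{almost zero} constructible function on $C$: this follows because the finite set $\Sigma_0 = C\setminus C_1$ is where $s_v$ can be nonzero, and on this finite set one just takes the maximum of the finitely many integers provided by \cite{CL} at each of the finitely many points (or more precisely, one defines $s$ to be $0$ on $C_1$ and equal to the \cite{CL}-value at each $v\in\Sigma_0$); constructibility and almost-zero-ness are then immediate since a function on $C$ which is zero off a finite set is automatically constructible.

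\textbf{Main obstacle.} There is no deep obstacle here: the proposition is essentially bookkeeping on top of \cite{CL}. The one point requiring a little care is making sure the \emph{same} dense open set $C_1$ works for both (i) and (ii) simultaneously, and that it is compatible with the open set $C_1$ already referred to implicitly elsewhere in the chapter — this is handled by taking a finite intersection of dense opens, which remains dense open since $C$ is an irreducible curve. I would therefore write the proof as: ``This is a reformulation of Lemmas 3.3.3 and 3.3.4 of \cite{CL}; we only check that the exceptional set can be taken to be a finite set of closed points and that $s$ is constructible,'' followed by the spreading-out argument of Step 1 and the elementary observation of Step 3.
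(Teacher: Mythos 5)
Your proposal is correct and takes essentially the same route as the paper: the paper offers no proof of Proposition \ref{gm} at all, introducing it with the phrase that Lemmas 3.3.3 and 3.3.4 of \cite{CL} ``can be summarised in the following proposition'', which is exactly the reduction you make. Your extra bookkeeping (spreading out geometric irreducibility and reducedness of the fibres for (i), shrinking $C_1$ so the model of $G$ is the constant one for (ii), and patching the finitely many exceptional values of $s$ into an almost zero function) is routine and sound.
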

As a consequence of this, as in Corollary 3.3.5 from \cite{CL}, the characteristic function of each set $G(\m,\beta)_v$ may be seen as motivic Schwartz-Bruhat functions on $G(F_v)$ in the sense of \ref{sect.localSB}, with $N=s_v$ and with $M$ such that $G(\m,\beta)_v\subset (t^M\OO_v)^n$ (which exists by boundedness).  
\begin{notation}\label{C1} In what follows, it will be convenient for us to consider an even smaller set~$C_1$. Namely, from now on $C_1$ denotes the open subset of places $v\in C_0$ satisfying 
\begin{itemize}
\item the conditions in proposition \ref{gm};
\item $e_{\al,\be_v} = 0$ for all $\al\in\scr{A}$; 
\item $\rho_{\be_v} = 0$
\item For all $\al\in \scr{A}\setminus \scr{A}_D$, $\scr{D}_{\al}\times_C C_1\to C_1$ is smooth.\end{itemize}\end{notation}\index{C1@$C_1$}

\begin{prop}\label{gmdeffamily} There exist almost zero functions $s$ and $s'$, an unbounded family $N = (N_{\m})_{\m \in \N^{\scr{A}}}$ with $N_{\mathbf{0}} = 0$, and for every $\m\in \N^{\scr{A}}$ and every $\beta\in \prod_{v}\scr{B}_v$, a constructible subset $G_{\m,\beta}$ of $\A_C^{n(s'-N_{\m},s)}$, the fibre of which at every $v\in C(k)$ is exactly $G(\m,\beta_v)_v$. \index{Gmbeta@$G_{\m,\be}$}
\end{prop}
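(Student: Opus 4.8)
The plan is to treat the generic places by a single spreading-out argument and the finitely many remaining places individually, glueing the results together along $C$. First I would fix the almost zero function $s$ and the dense open subset $C_1$ of Proposition~\ref{gm} and Notation~\ref{C1}, so that $s_v=0$ for $v\in C_1(k)$ and every $G(\m,\beta)_v$ is $G(\mathfrak{m}_v^{s_v})$-invariant. Over $C_1$ the model $\scr{X}\times_C C_1$ is good in the strong sense (smooth fibres, $\scr{B}_v=\{\beta_v\}$ a singleton, $\scr{D}_\alpha$ smooth over $C_1$ for $\alpha\notin\scr{A}_D$, $e_{\alpha,\beta_v}=\rho_{\beta_v}=0$), and the conditions ``$(g,\scr{D}_\alpha)_v=m_\alpha$ for all $\alpha$'' cutting out $G(\m,\beta_v)_v$ inside $G(F_v)$ are expressed uniformly in $v\in C_1$ by a Denef--Pas formula whose coefficients are regular functions of $v$; this is the uniform-in-$v$ reformulation of Lemma~3.3.2 of~\cite{CL}, valid because the incidence data of the divisors $\scr{D}_\alpha$, $E_\beta$ on $\scr{X}$ is algebraic over $C$ (one spreads out over $C_1$ by Noetherian induction). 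Using the uniform choice of uniformisers $t_v=t-t(v)$ from~\ref{sect.uniformisers} and Lemma~\ref{coef}, the coordinate identification of Definition~\ref{affinespacedef} is algebraic over $C_1$.

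The delicate step is the choice of the support family $N=(N_\m)_{\m\in\N^{\scr{A}}}$. I would bound, uniformly in $v\in C_1$, the valuation of an element of $G(\m,\beta_v)_v$ in terms of $\m$: since the horizontal intersection degrees $(g,\scr{D}_\alpha)_v$ are prescribed to be the $m_\alpha$ and the vertical contribution is bounded independently of $\m$, the order of $g$ at $v$ is controlled linearly by $\m$, whence $G(\m,\beta_v)_v\subset(t_v^{-N_\m}\OO_v)^n$ for an integer $N_\m$ depending only on $\m$, with $N_{\mathbf{0}}=0$ since $G(\mathbf{0},\beta_v)_v=G(\OO_v)=\OO_v^n$ by Proposition~\ref{gm}(ii). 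Being $\OO_v^n$-invariant and supported in $(t_v^{-N_\m}\OO_v)^n$, the set $G(\m,\beta_v)_v$ is, via the coordinate identification, a constructible subset of the fibre at $v$ of the restriction to $C_1$ of $\A_C^{n(-N_\m,0)}$; feeding this into the spread-out formula yields a constructible set $G_{\m,\beta}^{(1)}$ over $C_1$ with the prescribed fibres. Unboundedness of $N$ is then clear, since a generic element of $G(\m,\beta_v)_v$ has valuation tending to $-\infty$ with $\m$.

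For the finite set $C\setminus C_1$, for each such $v$ and each $\m$, $\beta$, Corollary~3.3.5 of~\cite{CL} gives that $G(\m,\beta)_v$ is a motivic Schwartz--Bruhat function on $G(F_v)$, i.e.\ supported in $(t_v^{M_{v,\m}}\OO_v)^n$ and invariant modulo $(t_v^{s_v}\OO_v)^n$ for suitable integers $M_{v,\m}\le 0$. Choosing an almost zero function $s'$ supported on $C\setminus C_1$ with $s'_v$ negative enough that $s'_v-N_\m\le M_{v,\m}$ for all $\m$ simultaneously --- possible because the excess of the valuation at $v$ over the generic bound $N_\m$ is bounded independently of $\m$, being governed only by the bad fibres and vertical components over $v$ --- we realise $G(\m,\beta_v)_v$ as a constructible subset of the fibre at $v$ of $\A_C^{n(s'-N_\m,s)}$. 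By Definition~\ref{affinespacedef} this variety is glued from its restriction to $C_1$ (which by our choices is $C_1\times\A_k^{n(-N_\m,0)}$) and these finitely many fibres over $C\setminus C_1$, so glueing $G_{\m,\beta}^{(1)}$ with the finitely many fibre pieces produces the required constructible subset $G_{\m,\beta}$ of $\A_C^{n(s'-N_\m,s)}$, whose fibre at every $v\in C(k)$ is $G(\m,\beta_v)_v$ by construction. The main obstacle is precisely the quantitative uniformity in the last two steps: that a single integer $N_\m$ and a single almost zero function $s'$ control all the supports, which forces one to use the precise relation between the intersection degrees $(g,\scr{D}_\alpha)_v$, $(g,E_\beta)_v$ and the valuation of $g$, together with the good-model hypotheses of~\ref{sect.goodmodelchoice}.
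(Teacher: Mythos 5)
Your proposal is correct and follows essentially the same route as the paper: both obtain the support bounds $N_{\m}$ and the almost zero functions $s,s'$ from the definable boundedness and invariance results of \cite{CL} (Proposition~\ref{gm}, Lemma~3.3.2 and Corollary~3.3.5 there), establish uniformity in $v$ over a dense open subset of $C$ by writing the conditions $(g,\scr{D}_{\al})_v=m_{\al}$ as order conditions on local equations $f_{\al}$ (the paper does this on an affine chart $\scr{Y}\subset\scr{X}$, you via spreading out and Lemma~\ref{coef}), and then glue in the finitely many remaining places using their individual constructibility. The only cosmetic difference is your argument for unboundedness of $(N_{\m})$ via valuations going to $-\infty$, where the paper simply notes that $G(F_v)$ is the (unbounded) union of the $G(\m,\be)_v$; these are equivalent.
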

\begin{proof} The definable boundedness of $G(\m,\be)_v$ means that there exists an almost zero function $s':C\to \Z$, and, for every $\m\in\N^{\scr{A}}$, a non-negative integer $N_{\m}$ such that 
$$G(\m,\be)_v\subset (t_v^{s'_v-N_{\m}}\OO_v)^{n},$$
for all $v\in C$.  Statement $(ii)$ in proposition \ref{gm} implies that we may take $N_{\mathbf{0}} = 0$. Since $G(F_v)$ is the union of all the $G(\m,\beta)_v$, the family $(N_{\m})_{\m}$ is necessarily unbounded. Taking the almost zero function $s$ from proposition \ref{gm}, we see that $G(\m,\be)_v$ defines naturally a constructible subset of $\A_k^{n(s'_v-N_{\m},s_v)}$: the conditions on the intersection degrees will indeed translate into Zariski open  and Zariski closed polynomial conditions on the coordinates of $\A_k^{n(s'_v-N_{\m},s_v)}$.  Let $\scr{Y}$ be an affine subset of $\scr{X}$ such that $\pi_{|\scr{Y}}:\scr{Y}\to C$ is non-constant. Let~$C'$ be an open dense subset of $C$ contained in the intersection of the image of $\pi_{|\scr{Y}}$ and of the open subset $C_1$ from proposition \ref{gm}. For every $\al\in\scr{D}_{\al}$, let $f_{\al}$ be a local equation for $\scr{D}_{\al}$ in $\scr{Y}$. 
 Then the condition that $(g,\scr{D}_{\al})_v = m_{\al}$ may be written $\ord (f_{\al}(g)) = m_{\al}$ for all $v$ in the image of $\pi_{|\scr{Y}}$. We thus see that we may take the same integer $N_{\m}$ for all $v$, and that for all $v\in C'$, the equations defining $G(\m)_v$ inside $\A_k^{n(s'_v-N_{\m},s_v)}$ are uniform in $v$ (in the sense that they are defined by the same formula in  the Denef-Pas Language for every such $v$). This, together with the fact that the $G(\m,\beta)_v$, for $v\in C\setminus C'$ are constructible, guarantees the existence of a constructible subset of $\A_C^{n(s'-N_{\m},s)}$ as in the statement of the proposition.
\end{proof}

\subsection{Integral points}\label{sect.integral}

The  complement of the model $\scr{U}$ inside $\scr{X}$ is the union of the divisors $\scr{D}_{\al}$ for $\al\in\scr{A}_D$, and of the vertical components $E_{\be}$ for a finite subset $\scr{B}^0$ of~$\scr{B}$. We then set $\scr{B}_v^{0} = \scr{B}^0\cap \scr{B}_v$ for every $v\in C(k)$, and define 
$$\scr{B}_0 = \scr{B}_1\setminus \left(\cup_{v\in C_0}\scr{B}_v^0\right),$$
and $\scr{B}_{0,v} = \scr{B}_0\cap \scr{B}_v$. In other words, $\scr{B}_{0}$ corresponds to vertical components of multiplicity one which either lie above $S$ or are contained in $\scr{U}$. Thus in particular $\scr{B}_{0,v} = \scr{B}_{1,v}$ for $v\in S$. \index{B0@$\scr{B}_0$, $\scr{B}_{0,v}$}

Let $\m_v\in\N^{\scr{A}}$ and $\be_v\in\scr{B}_v$. We say that the pair $(\m_v,\be_v)$ is $v$-\textit{integral} if \index{vintegral@$v$-integral}
\begin{itemize}\item either $v\in S$
\item or $v\in C_0$, $\be_v\in\scr{B}_0$ and $m_{\al,v} = 0$ for every $\al\in \scr{A}_D$. 
\end{itemize}
In other words, the union of the sets $G(\m_v,\be_v)_v$ for all $v$-integral pairs $(\m_v,\be_v)$ is equal to $\scr{U}(\OO_v)$ if $v\in C_0$, and $G(F_v)$ otherwise.

For any $(\m_v,\be_v)\in \N^{\scr{A}}\times \scr{B}_v$, define 
$$H(\m_v,\be_v)_v = \left\{\begin{array}{cc} G(\m_v,\be_v)_v& \text{if}\  (\m_v,\be_v)\ \text{is~$v$-integral}\\\varnothing& \text{else}.\end{array}\right.$$
 Then the union of the sets $H(\m_v,\be_v)_v$ for all $(\m_v,\be_v)$ is equal to~$\scr{U}(\OO_v)$ if~$v\in C_0$, and~$G(F_v)$ otherwise. We also define $H(\m_v)_v  = \cup_{\be\in\scr{B}_v} H(\m_v,\be)_v$. 
\index{Hmbeta@$H(\m,\be)_v$, $H(\m)_v$}
Let $\m = (\m_v)_{v}$ and $\beta = (\beta_v)_v$ be families indexed by $v\in C(k)$, where $\m_v = (m_{\al,v})\in\N^{\scr{A}}$ and $\beta_v\in\scr{B}_v$ for all~$v$, such that $\m_v = 0$ for almost all $v$. The element $\m$ must be seen as an effective zero-cycle on $C$ with coefficients in~$\N^{\scr{A}}$.  We say $(\m,\be)$ is \emph{integral} if $(\m_v,\be_v)$ is $v$-integral for every~$v$. \index{integral pair $(\m,\be)$}
\begin{remark} For fixed $\n\in \N^{\scr{A}}$ and $\be\in \prod_{v}\scr{B}_{0,v}$, families $\m = (\m_v)_v$ such that the pair $(\m,\be)$ is integral and such that $\sum_{v\in C}\m_v = \n$ are parametrised by the symmetric product $S^{\n'}(C\setminus C_0)\times S^{\n''}(C)$ (where $\n' = (n_{\al})_{\al\in \scr{A}_D}$ and $\n'' = (n_{\al})_{\al\in\scr{A}\setminus \scr{A}_D}$)
which may naturally be seen as a constructible subset of $S^{\n}(C)$. 
\end{remark}
 For any pair $(\m,\be)$, the characteristic functions of the subsets
$$G(\m,\beta) = \prod_{v\in C(k)}G(\m_v,\be_v)_v\subset G(\mathbb{A}_F)$$ \index{gmbeta@$G(\m,\beta)$ (adelic set)}
and
$$H(\m,\beta) = \prod_{v\in C(k)}H(\m_v,\be_v)_v\subset G(\mathbb{A}_F)$$ \index{hmbeta@$H(\m,\beta)$ (adelic set)}
may be seen as global motivic Schwartz-Bruhat functions by proposition \ref{gm}. More precisely, using the notation of proposition \ref{gmdeffamily}, they may be seen as elements of $$\expp_{\prod_{v}\A_k^{n(s'_v-N_{\m_v},s_v)}}.$$ We have $H(\m,\be) = G(\m,\be)$ if $(\m,\be) $ is integral, and $H(\m,\be)=\varnothing$ else.

In the same manner as the $G(\m,\be)_v$, the $H(\m,\be)_v$ may be combined into a constructible set $H_{\m,\be}$:
\begin{prop}\label{hmdeffamily} For any $\m\in\N^{\scr{A}}$ and any $\be = (\be_v)_v\in\prod_{v}\scr{B}_v$, there is a constructible subset $H_{\m,\be}\subset G_{\m,\be}$ the fibre of which at any $v\in C(k)$ is exactly $H(\m,\be_v)_v$. \index{Hmbeta@$H_{\m,\be}$}
\end{prop}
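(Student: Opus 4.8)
\textbf{Proof plan for Proposition \ref{hmdeffamily}.}

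The statement asserts the existence of a constructible subset $H_{\m,\be}$ of $G_{\m,\be}$ whose fibre over each closed point $v\in C(k)$ recovers $H(\m,\be_v)_v$. The key point is that $H(\m_v,\be_v)_v$ differs from $G(\m_v,\be_v)_v$ only in that it is forced to be empty unless the pair $(\m_v,\be_v)$ is $v$-integral, and the $v$-integrality condition is governed purely by where $v$ sits relative to $S = C\setminus C_0$, by whether $\be_v\in\scr{B}_0$, and by whether $m_{\al,v}=0$ for $\al\in\scr{A}_D$. Since $\m$ is fixed, the last condition ($m_{\al,v}=0$ for all $\al\in\scr{A}_D$) is a condition that does not depend on $v$ at all: either it holds for the fixed tuple $\m$ or it does not. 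So the plan is to dispose of that condition first by a case distinction on $\m$.

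First I would fix $\m\in\N^{\scr{A}}$ and $\be = (\be_v)_v\in\prod_v\scr{B}_v$, and recall from Proposition \ref{gmdeffamily} the constructible set $G_{\m,\be}\subset \A_C^{n(s'-N_{\m},s)}$ with fibre $G(\m,\be_v)_v$ over $v$. Next, I would observe that the locus of closed points $v\in C(k)$ at which $(\m,\be_v)$ is $v$-integral is a constructible subset $C_{\m,\be}$ of $C$: indeed, $S$ is closed, $C_0$ is open, the condition $\be_v\in\scr{B}_0$ cuts out a constructible subset of $C_0$ (it excludes the finitely many $v$ for which $\be_v\in\scr{B}_v^0$ together with the locus where $\mu_{\be_v}>1$, and both of these are constructible by the definition of $\scr{B}_0$ in section \ref{sect.integral}), and — crucially — whether $m_{\al,v}=0$ for all $\al\in\scr{A}_D$ is, for the fixed tuple $\m$, simply a yes/no question independent of $v$. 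Concretely: if there exists $\al\in\scr{A}_D$ with $m_{\al}\neq 0$, then no pair $(\m,\be_v)$ with $v\in C_0$ is $v$-integral, so $C_{\m,\be} = S$; otherwise $C_{\m,\be} = S\cup\{v\in C_0 : \be_v\in\scr{B}_0\}$, which is constructible in $C$. Then I would simply define
$$H_{\m,\be} := G_{\m,\be}\times_C C_{\m,\be},$$
the restriction of $G_{\m,\be}$ over the constructible subset $C_{\m,\be}$, which is again a constructible subset of $G_{\m,\be}$ (hence of $\A_C^{n(s'-N_{\m},s)}$). By construction its fibre over $v\in C_{\m,\be}$ is $G(\m,\be_v)_v = H(\m,\be_v)_v$, and its fibre over $v\notin C_{\m,\be}$ is empty, which is exactly $H(\m,\be_v)_v$ in that case by the definition of $H(\m_v,\be_v)_v$ in section \ref{sect.integral}. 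This gives the claim.

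I do not expect a serious obstacle here; the proposition is essentially a bookkeeping lemma, and the only thing to be careful about is checking that each piece of the $v$-integrality condition really does carve out a constructible subset of $C$, and in particular that the finiteness statements hidden in the definition of $\scr{B}_0$ (the set $\scr{B}^0$ of vertical components not contained in $\scr{U}$ is finite, and components of multiplicity $>1$ occur over only finitely many $v$) are used correctly. The mild subtlety worth spelling out is that the condition involving $\scr{A}_D$ is $v$-independent given $\m$, so that the construction really is just a base change of the already-constructed family $G_{\m,\be}$ along an explicit constructible subset of the curve, rather than requiring any new definability argument in the Denef--Pas language. Once $C_{\m,\be}$ is identified, the verification of the fibres is immediate from the definitions.
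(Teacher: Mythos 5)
Your proposal is correct and is essentially the paper's own argument: the paper makes the same case distinction on whether $m_{\al}=0$ for all $\al\in\scr{A}_D$, in the first case removing the fibres of $G_{\m,\be}$ over the finitely many $v\in C_0$ with $\be_v\notin\scr{B}_0$, and in the second case restricting $G_{\m,\be}$ to the finite set $S$. Your formulation via the single constructible locus $C_{\m,\be}\subset C$ and the base change $H_{\m,\be}=G_{\m,\be}\times_C C_{\m,\be}$ is just a uniform repackaging of these two cases.
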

\begin{proof} There are two cases to consider. Assume first that $\m$ is such that $\m_{\al} = 0$ for all $\al \in \scr{A}_D$. Then $H_{\m,\be}$ is obtained from $G_{\m,\be}$ by removing the fibres above the finite number of points $v\in C(k)$ such that $(\m,\be_v)$ is not $v$-integral, that is, such that $v\in C_0$ and $\be_v\not\in \scr{B}_0$. If on the contrary there exists $\al\in \scr{A}_D$ such that $\m_{\al}\neq 0$, then $H_{\m,\be}$ is the restriction of $G_{\m,\be}$ to the finite set of points $S$.  
\end{proof}
\subsection{Two constructible families of Schwartz-Bruhat functions}\label{twoconstructiblefamilies}
In propositions \ref{gmdeffamily} and \ref{hmdeffamily} we have combined, for any $\m\in\N^{\scr{A}}$ and any $\beta = (\be_v)_v\in \prod_{v}\scr{B}_v$, the sets $G(\m,\be_v)_v$ (resp. $H(\m,\be_v)_v$) into a family $G_{\m,\beta}\subset \A_C^{n(s'-N_{\m},s)}$ (resp. a family $H_{\m,\be}\subset G_{\m,\be}$) above~$C$. The symmetric product construction then allows us to consider, for any $\n\in\N^{\scr{A}}$ and any $\be\in\prod_{v}\scr{B}_v$, constructible subsets
$$S^{\n}((H_{\m,\be})_{\m\in\N^{\scr{A}}})\subset S^{\n}((G_{\m,\be})_{\m\in\N^{\scr{A}}}) \subset S^{\n}\left(\left(\A_C^{n(s'-N_{\m},s)}\right)_{\m\in\N^{\scr{A}}}\right) = \scr{A}_{\n}(s',s,N,0).$$
with the notation of section \ref{sect.domainsofdef}. Therefore, using the terminology of section \ref{sect.uniformfamilies}, this defines two uniformly smooth constructible families of Schwartz-Bruhat functions of level~$\n$.  They parametrise the characteristic functions of the adelic sets $H(\m,\be)$ and $G(\m,\be)$ for fixed $\beta\in\prod_{v}\scr{B}_v$ and with $\m$ varying inside $S^{\n}C$: we will therefore denote these families $(\1_{H(\m,\be)})_{\m\in S^{\n}C}$ and $(\1_{G(\m,\be)})_{\m\in S^{\n}C}$. Their Fourier transforms will then be uniformly compactly supported constructible families of  functions, defined on $\scr{A}_{\n}(\nu -s,\nu-s',0,N)$.

\subsection{Applying the Poisson summation formula}\label{applicPoisson}

Let $\n\in \Z^{\scr{A}}$.  For any $\beta = (\be_v)_{v}\in \prod_{v}\scr{B}_{0,v}$, and  $\al\in\scr{A}$, put $n^{\beta}_{\al} := n_{\al} - \sum_{v}e_{\al,\be_v}.$ We define $M_{\n,\beta}$ to be the constructible subset of $M_{\n}$ of sections~$\sigma_g$ such that for all $v$, $(g,E_{\be_v})_v = 1$ (so that $(g,E_{\be'_v}) = 0$ for all $\be'_{v}\neq \be_v$). By definition, these sections satisfy $\deg (\sigma_g^*\scr{D}_{\al}) = n_{\al}^{\be}$ for all $\alpha \in  \scr{A}$. Thus, since the $\scr{D}_{\al}$ are effective, for any $\n$ such that $M_{\n,\be}\neq \varnothing$, the $n_{\al}^{\be},\ \al\in\scr{A}$, are non-negative integers. 

 \begin{lemma} Let $\n\in\Z^{\scr{A}}$ and $\beta\in \prod_{v}\scr{B}_{0,v}$ be such that $M_{\n,\be}$ is non-empty. There is a morphism of constructible sets defined by
$$\begin{array}{ccc}M_{\n,\be}&\to& S^{\n^{\be}}C\\
                     \sigma_g & \mapsto & \displaystyle{\sum_{v\in C(k)}}\left((g,\scr{D}_{\al})_v\right)_{\al\in\scr{A}}[v]\end{array} $$ 
                     \end{lemma}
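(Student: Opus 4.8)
The plan is to realise the map as a genuine scheme-theoretic morphism coming from the universal section over $M_{\n,\be}$, by pulling back the boundary divisors $\scr{D}_\al$ and using the classical identification of the Hilbert scheme of points on a curve with its symmetric power.

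By its construction (see \cite{CL}, Proposition 2.2.2), the moduli space $M_{U,\n}$, hence its constructible subset $M_{\n,\be}$, comes equipped with a universal section $\tilde\sigma\colon C\times M_{\n,\be}\to\scr{X}$ whose restriction to the fibre over the point of $M_{\n,\be}$ corresponding to $g\in G(F)$ is the canonical section $\sigma_g\colon C\to\scr{X}$. For each $\al\in\scr{A}$ the boundary divisor $\scr{D}_\al$ is Cartier on $\scr{X}$, so, with $f_\al$ a local equation, $Z_\al:=\tilde\sigma^{-1}(\scr{D}_\al)$ is the zero locus of the single function $f_\al\circ\tilde\sigma$ on $C\times M_{\n,\be}$. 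Since every section parametrised by $M_{\n,\be}$ maps $\eta_C$ into $G_F$, which is disjoint from $D_\al$, the function $f_\al\circ\sigma_g$ is not identically zero on $C$ for any such $g$; hence $f_\al\circ\tilde\sigma$ is a non-zero-divisor on every fibre $C\times\{m\}$, so $Z_\al$ is flat over $M_{\n,\be}$, and being closed in $C\times M_{\n,\be}$ with finite fibres it is finite locally free over $M_{\n,\be}$. Its relative degree is locally constant and is computed on a fibre from the relation $\scr{L}_\al=\scr{D}_\al+\sum_{\be\in\scr{B}}e_{\al,\be}E_\be$ in $\mathrm{Pic}(\scr{X})$: taking degrees of $\sigma_g^{*}$ and using $\deg_C\sigma_g^{*}\scr{L}_\al=n_\al$, that each $E_\be$ with $\be\in\scr{B}_v$ is supported over $v$ with $\deg_C\sigma_g^{*}E_\be=(g,E_\be)_v$, and that $(g,E_{\be_v})_v=1$ while $(g,E_{\be'})_v=0$ for $\be'\neq\be_v$ on $M_{\n,\be}$, one gets $\deg_C\sigma_g^{*}\scr{D}_\al=n_\al-\sum_v e_{\al,\be_v}=n^{\be}_\al$.

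Now, $C$ being a smooth projective curve, the Hilbert scheme $\mathrm{Hilb}^{d}(C)$ of length-$d$ subschemes of $C$ is canonically isomorphic to the symmetric power $S^{d}C$; the finite locally free family $Z_\al\subset C\times M_{\n,\be}$ of degree $n^{\be}_\al$ therefore defines a morphism $\phi_\al\colon M_{\n,\be}\to S^{n^{\be}_\al}C$. Taking the product over $\al\in\scr{A}$ yields $\Phi=(\phi_\al)_{\al\in\scr{A}}\colon M_{\n,\be}\to\prod_{\al\in\scr{A}}S^{n^{\be}_\al}C$, and composing with the inverse of the piecewise isomorphism $S^{\n^{\be}}C\simeq\prod_{\al\in\scr{A}}S^{n^{\be}_\al}C$ of section \ref{symproducts} --- under which a zero-cycle $\sum_v\m_v[v]$ on the left corresponds to the tuple of its coordinate-wise components $\big(\sum_v m_{\al,v}[v]\big)_{\al\in\scr{A}}$ on the right --- gives the asserted morphism of constructible sets $M_{\n,\be}\to S^{\n^{\be}}C$. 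To identify it with the stated formula it suffices to evaluate on $k$-points: for $g\in G(F)$ the fibre $Z_\al\cap(C\times\{\sigma_g\})$ is the effective divisor $\sigma_g^{*}\scr{D}_\al$, which by definition of the local intersection degrees has multiplicity $(g,\scr{D}_\al)_v$ at $v$, so $\phi_\al(\sigma_g)=\sum_v(g,\scr{D}_\al)_v[v]$ and, recombining the components, $\Phi(\sigma_g)=\sum_{v\in C(k)}\big((g,\scr{D}_\al)_v\big)_{\al\in\scr{A}}[v]$.

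The one point requiring genuine care is the middle step --- that $Z_\al$ is a relative effective Cartier divisor, finite locally free of constant relative degree $n^{\be}_\al$ over $M_{\n,\be}$ --- which is precisely where the hypothesis $\sigma_g(\eta_C)\in G_F$ enters, and where one should, if necessary, pass to the reduced structure of $M_{\n,\be}$ or argue étale-locally to control its associated primes; once this is settled, the identification $\mathrm{Hilb}^{d}(C)\simeq S^{d}C$ and the bookkeeping with the multi-index symmetric product are formal. An alternative, purely hands-on route would instead stratify $M_{\n,\be}$ into the finitely many locally closed strata on which the zero-cycle has a fixed partition shape of $\n^{\be}$, and use the constructible families $G_{\m,\be}$ over $C$ from Proposition \ref{gmdeffamily} to exhibit the support of the zero-cycle as an étale cover of each stratum; the divisor-theoretic argument above seems cleaner.
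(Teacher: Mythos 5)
Your proposal takes essentially the same route as the paper: pull back the Cartier divisor $\scr{D}_\al$ along the evaluation (universal-section) morphism $C\times M_{\n,\be}\to\scr{X}$, observe that the resulting closed subscheme has zero-dimensional fibres because sections land in $G_F$ generically, invoke the identification $\mathrm{Hilb}^n(C)\simeq S^n C$, and recombine over $\al\in\scr{A}$. The only difference is how flatness over the base is obtained: you argue that the defining equation $f_\al\circ\tilde\sigma$ is a non-zero-divisor on each fibre $C\times\{m\}$ (true since the fibre is reduced, being $C\times_k k(m)$ and the restriction is not identically zero) and then appeal to the fibrewise criterion for relative Cartier divisors, whereas the paper simply invokes generic flatness and stratifies $M_{\n,\be}$ by Noetherian induction. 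Your route, once restricted to the locally closed strata carrying the scheme structure on the constructible set $M_{\n,\be}$ (a step you flag yourself), gives a genuine morphism on each stratum rather than just a constructible one, which is marginally stronger; the paper's route is a touch shorter since generic flatness is cited rather than re-proved. The degree computation and the $k$-point verification you include are correct and a welcome sanity check, though not strictly needed for the statement.
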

                     \begin{proof} We will start by making some reductions. To simplify notations, write $M_{\n,\be} = M$, and $\n^{\be} = \n$. Since $S^{\n}C = \prod_{\al\in\scr{A}}S^{n_{\al}}C$, it suffices to prove constructibility for the map 
                     $$\begin{array}{rcl} M&\to &S^{n_{\al}}C\\
                                     \sigma_g & \mapsto & \sum_{v}(g,\scr{D}_{\al})_{v}[v]
                                     \end{array}$$
                                     associated to  one $\scr{D}_{\al}$. To simplify notations further, write $n_{\al} = n$ and $\scr{D}_{\al} = \scr{D}$. By definition of the moduli space of sections, there is a morphism
                                     $$\begin{array}{rccc} \epsilon: &C\times M &\to& \scr{X}\\
                                                           &(v,\sigma) & \mapsto & \sigma(v)\end{array}$$
                                                           Denote by $s_{\scr{D}}$ the canonical section of the line bundle $\OO_{\scr{X}}(\scr{D})$ and put $\Delta = \div (\epsilon^*s_{\scr{D}})$. This is a closed subscheme of $C\times M$, finite over $M$. By generic flatness, we may stratify~$M$ into locally closed subsets $U_i$ such that for every $i$, $\Delta\times_MU_i\subset C\times U_i$ is flat over $U_i$. By definition of Hilbert schemes, it therefore defines a morphism $U_i\to \mathrm{Hilb}(C)$ to the Hilbert scheme of points of $C$. Moreover,                                                        for every $\sigma \in M$, the fibre $\Delta_{\sigma} = \div(\sigma^{*}s_{\scr{D}})$ is a zero-dimensional subscheme of $C$ of length $n$. Thus, the image of the above morphism is in fact contained in the Hilbert scheme of $n$ points of $C$, which we may identify with the symmetric product $S^{n}C$. The constructible morphism we want is then obtained by combining these morphisms $U_i\to S^nC$ for every $i$.
                     \end{proof}
 Recall that a point in $M_{\n,\be}$ represents a section that intersects components in $\scr{A}_D$ only above places in $S = C\setminus C_0$. Thus, in fact, $M_{\n,\be}$ lies above the constructible subset of $S^{\n^{\be}}C$ consisting of zero-cycles $\m = (\m_v)_{v}$ with components with respect to $\al\in\scr{A}_D$ supported inside $S$: in other words, these are the zero-cycles $\m$ such that $(\m,\be)$ is integral. 

The exact correspondence between sections $\sigma:C\to \scr{X}$ and elements of $X(F)$ via $\sigma \mapsto \sigma(\eta_C)$ restricts to an exact correspondence between
sections $\sigma \in M_{\n,\be}$ lying above $\m\in S^{\n^{\be}}C$ and elements in $G(F)\cap H(\m,\be)$, where $H(\m,\be)$ is the adelic set defined in section \ref{sect.integral}. We denote $H_F(\m,\be):=G(F)\cap H(\m,\be)$, which is a constructible set over~$k$. Note that by definition of summation over rational points (see section \ref{twistedsummation} in chapter \ref{poissonformula}), we have, for every $\m\in S^{\n^{\be}}C$
$$\sum_{x \in \kappa(\m)(C)^n}\1_{H(\m,\be)}(x) = [H_F(\m,\be)]  = (M_{\n,\be})_{\m}\in \M_{\kappa(\m)}.$$
 With the notation of section \ref{twoconstructiblefamilies}, the uniformly smooth family $(\1_{H(\m,\be)})_{\m\in S^{\n^\be}C}$ is uniformly summable (see section \ref{sect.uniformlysummable} of chapter \ref{poissonformula}), and its sum is exactly the class of $M_{\n,\be}$ in $\M_{S^{\n^{\be}}C}$.  Taking classes in $\M_k$, we may therefore write the motivic summation
 $$ [M_{\n,\be}] = \sum_{\m\in S^{\n^{\be}}C}[H_F(\m,\be)] .$$
  \begin{remark} Here the existence of the moduli spaces $M_{\n,\be}$ shows that the family of functions $(\1_{H(\m,\be)})_{\m\in S^{\n^{\be}}C}$ is uniformly summable, independently of section \ref{us.poisson}. By uniqueness of the sum (remark \ref{uniqueness.sum}), this sum (the class of $M_{\n,\be}$ in $\expp_{S^{\n^\be}C}$) is equal to the one given by the Poisson formula as stated in \ref{us.poisson}. 
 \end{remark}
  Note that 
  $$\T^{\n} = \prod_{\al\in\scr{A}}T_{\al}^{n_{\al}} = \prod_{\al\in\scr{A}}T_{\al}^{\sum_{v}e_{\al,\be_v}}\prod_{\al\in\scr{A}}\T_{\al}^{n_{\al}^{\be}}.$$
  
  Write $\T^{||\be||}$ for $ \prod_{\al\in\scr{A}}T_{\al}^{\sum_{v}e_{\al,\be_v}}$. Then
\begin{eqnarray*}Z(\T) = \sum_{\n\in \Z^{\scr{A}}}[M_{\n}]\T^{\n}&= &\sum_{\n\in\Z^{\scr{A}}}\sum_{\be\in\prod_{v}\scr{B}_{0,v}}[M_{\n,\be}]\T^{\n}\\
 & = & \sum_{\n\in\Z^{\scr{A}}}\left(\sum_{\be\in\prod_{v}\scr{B}_{0,v}}\sum_{\m\in S^{\n^{\be}}C}[H_F(\m,\be)]\right)\T^{\n}\\
 & \substack{=\\ \n \leftarrow \n^{\be}}& \sum_{\be\in\prod_{v}\scr{B}_{0,v}}\T^{||\be||}\sum_{\n\in\N^{\scr{A}}}\sum_{\m\in S^{\n}C} [H_F(\m,\be)] \T^{\n}
 \end{eqnarray*}
 For clarity, let us remark that in this summation, we have three different types of sums:  the sum over $\prod_{v}\scr{B}_{0,v}$, which is a finite sum, the one over $\Z^{\scr{A}}$ or over $\N^{\scr{A}}$ which is the sum of the formal series, and the motivic sum over $\m\in S^{\n}C$. 
 
 The Poisson summation formula (see section \ref{Poisson.families}, and especially \ref{us.poisson} and \ref{reversesummation}; see also remark \ref{dropkappaD}), applied to the uniformly smooth constructible family of Schwartz-Bruhat functions $(\1_{H(\m,\be)})_{\m\in S^{\n}C}$ (see the end of section \ref{twoconstructiblefamilies}) gives, for any $\n\in\N^{\scr{A}}$:
 \begin{eqnarray*} \sum_{\m\in S^{\n}C}\  [H_F(\m,\be)] & = & \sum_{\m\in S^{\n}C}\ \ \sum_{x\in k(C)^n} \1_{H(\m,\be)}(x)\\
 & = & \sum_{\m\in S^{\n}C}\ \ \left(\LL^{(1-g)n}\sum_{\xi\in k(C)^n} \four(\1_{H(\m,\be)})(\xi)\right)\\
 & = & \LL^{(1-g)n}\sum_{\xi\in k(C)^n}\ \ \sum_{\m\in S^{\n}C}\four(\1_{H(\m,\be)})(\xi)
 \end{eqnarray*}
 Then $Z(\T)$ may me rewritten in the form
 $$Z(\T) = \LL^{(1-g)n}\sum_{\xi\in k(C)^n}\sum_{\be\in\prod_{v}\scr{B}_{0,v}}\T^{||\be||}\sum_{\n\in\N^{\scr{A}}} \ \sum_{\m\in S^{\n}C}\four(\1_{H(\m,\be)})(\xi)\T^{\n}.$$
 By the definition of Euler products, we have
 \begin{equation}\label{zetaeulerproduct}\sum_{\n\in\N^{\scr{A}}}\sum_{\m\in S^{\n}C}\four(\1_{H(\m,\be)})(\xi)\T^{\n}  =  \prod_{v\in C}\left(\sum_{\m_v\in \N^{\scr{A}}}\four(\1_{H(\m_v,\be_v)})(\xi_v)\T^{\m_v}\right).\end{equation}
 Indeed, we are dealing here with the uniformly compactly supported family $$(\four (\1_{H(\m,\be)}))_{\m\in S^{\n}C}\in \expp_{\scr{A}_{\n}(\nu-s,\nu-s', 0, N_{\i})}$$ (see end of section \ref{twoconstructiblefamilies}). The summation over $k(C)^n$ is therefore in fact a summation on the $n$-th power of the Riemann-Roch space associated to the divisor $-\sum_v(\nu_v-s_v)[v]$. For every point $\xi$ of this space, as explained in the beginning of the proof of lemma \ref{prop.uniformsummationuc}, $\xi$ induces constructible morphisms $$\phi_{\xi,\i}:C\to \A_C^{(\nu-s,\nu-s'+N_{\i})}$$ for every $\i\in \N^{\scr{A}}$, which after taking symmetric products, give constructible morphisms
  $$S^{\n}\phi_{\xi}: S^{\n}C\to \scr{A}_{\n}(s',s,0,N)$$
such that   $$(\four (\1_{H(\m,\be)})(\xi))_{\m\in S^{\n}C} = (S^{\n}\phi_{\xi})^*(\four (\1_{H(\m,\be)}))_{\m\in S^{\n}C}.$$
  By section \ref{sect.symprodfourtransform} (especially proposition \ref{prop.symproductfourtransform}), we have the equality $$(\four (\1_{H(\m,\be)}))_{\m\in S^{\n}C}=S^{\n}(  (\four\1_{H_{\i,\be}})_{\i\in\N^{\scr{A}}})$$ in $\expp_{\scr{A}_{\n}(\nu-s,\nu-s', 0, N)}$. Pulling back via $S^{\n}\phi_{\xi} = S^{\n}((\phi_{\xi,\i})_{\i\in\N^{\scr{A}}})$ gives 
   the equality
 $$(\four (\1_{H(\m,\be)})(\xi))_{\m\in S^{\n}C}=S^{\n}( ( \phi_{\xi,\i}^*\four(\1_{H_{\i,\be}})_{\i\in\N^{\scr{A}}})$$
 in $\expp_{S^{\n}C}.$
 The pullback to any $v\in C$ of the $\i$-th element $\phi_{\xi,\i}^*\four\1_{H_{\i,\be}}\in \expp_C$ of the family on the right-hand side is exactly $\four(\1_{H(\i,\be_v)})(\xi_v)$, by the definition of $\phi_{\xi,\i}$ and by remark \ref{familyrestrictiontov}, so this should be the $\i$-th coefficient of the local factor of the Euler product corresponding to $v$. 

 \begin{notation}\label{notationfromCL} We use the notation from \cite{CL}, 3.6: for every $v\in C$, $\al\in\scr{A}$, $\m_v\in\N^{\scr{A}}$ and $\be\in \scr{B}_v$, we define
 $$||\m_v,\be_v||_{\al} := m_{\al,v} + e_{\al,\be_v}$$
 and $\T^{||\m_v,\be_v||}:= \prod_{\al\in\scr{A}}T_{\al}^{||\m_v,\be_{v}||}.$
 Note that for $g\in G(F_v)\cap G(\m_v,\be_v)_v$, the local intersection degree $(g,\scr{L}_{\al})_v$ is exactly
 $$(g,\scr{L}_{\al})_v = (g,\scr{D}_{\al})_v + \sum_{\be\in\scr{B}_v}e_{\al,\be}(g,E_{\be})_v = m_{\al,v} + e_{\al,\be_v} = ||\m_v,\be_v||_{\al}.$$
 \end{notation}
 Finally, we have
 \begin{eqnarray*}Z(\T) &= &\LL^{(1-g)n}\sum_{\xi\in k(C)^n}\sum_{\be\in\prod_{v}\scr{B}_{0,v}}\prod_{v\in C}\prod_{\al\in \scr{A}}T_{\al}^{e_{\al,\be_v}}\prod_{v\in C}\left(\sum_{\m_v\in \N^{\scr{A}}}\four(\1_{H(\m_v,\be_v)})(\xi_v)\T^{\m_v}\right)\\
  & = & \LL^{(1-g)n}\sum_{\xi\in k(C)^n} \prod_{v\in C}\left(\sum_{\be_v\in\scr{B}_{0,v}}\prod_{\al\in\scr{A}}T_{\al}^{e_{\al,\be_v}}\sum_{\m_v\in \N^{\scr{A}}}\four(\1_{H(\m_v,\be_v)})(\xi_v)\T^{\m_v}\right)\\
  & = & \LL^{(1-g)n}\sum_{\xi\in k(C)^n} \prod_{v\in C}\left(\sum_{\substack{\m_v\in \N^{\scr{A}}\\ \be_v\in\scr{B}_{0,v}}}\four(\1_{H(\m_v,\be_v)})(\xi_v)\T^{||\m_v,\be_v|| }\right)\end{eqnarray*}
 Thus, we have written $Z(\T)$ in the form 
 \begin{equation}\label{summation.zeta}Z(\T) = \LL^{(1-g)n}\sum_{\xi\in k(C)^n}Z(\T,\xi)\end{equation} \index{ZTxi@$Z(\T,\xi)$}
 where $Z(\T,\xi)$ has an Euler product decomposition with local factors 
 $$Z_v(\T,\xi) := \sum_{\substack{\m_v\in \N^{\scr{A}}\\ \be_v\in\scr{B}_{0,v}}}\four(\1_{H(\m_v,\be_v)})(\xi_v)\T^{||\m_v,\be_v||} = \sum_{(\m_v,\be_v)\ v-\text{integral}} \four(\1_{G(\m_v,\be_v)})(\xi_v)\T^{||\m_v,\be_v||}.$$
More precisely, it is the product of the finite number of factors corresponding to $v\in C\setminus C_1$, and of the Euler product of the series
 $$Z_{C_1}(\T,\xi):= \sum_{\m\in\N^{\scr{A}}}\four(\1_{H_{\m,\be}\times_CC_1})(\xi)\T^{\m}\in \expp_{C_1}[[T]]$$
                                  where $\T^{\m} = \prod_{\al\in \scr{A}}T_{\al}^{m_{\al}}$. In what follows, we will study these local factors to be able to apply lemma \ref{convergence} to the series $Z_{C_1}(\T,\xi)$. Combined with estimates for the remaining local factors, this will give us the convergence properties of the series $Z(\T)$. 

\begin{remark}[Restriction of the summation domain]\label{summationdomain}
As noted in the end of section \ref{twoconstructiblefamilies}, the Fourier transforms of the families $(\1_{H(\m,\be)})_{\m\in S^{\m}C}$ and $(\1_{G(\m,\be)})_{\m\in S^{\m}C}$ are uniformly compactly supported. We may conclude, as in \cite{CL}, 4.2, that there is a finite-dimensional $k$-vector space $V$ (given by an appropriate Riemann-Roch space), and a linear $F$-morphism $\a:V_F\to G_F$ such that the summation (\ref{summation.zeta}) restricts in fact to 
$$Z(\T) = \LL^{(1-g)n}\sum_{\xi\in \a(V)}Z(\T,\xi).$$
\end{remark}



\section{Analysis of local factors and convergence}\label{sectLocal}
The aim of this section is to study the local factors 
 \begin{eqnarray*}Z_v(\T,\xi) &= &\sum_{\substack{\m_v\in \N^{\scr{A}}\\ \be\in\scr{B}_{0,v}}}\four(\1_{H(\m_v,\be_v)})(\xi_v)\T^{||\m_v,\be_v||}\\ &=& \sum_{\substack{\m_v\in \N^{\scr{A}}\\ \be\in\scr{B}_{0,v}}} \int_{H(\m_v,\be_v)}\prod_{\al\in\scr{A}}T_{\al}^{(g,\scr{L}_{\al})_v}\mathrm{e}(\langle g,\xi\rangle)\dx g.\end{eqnarray*}
 (We use here the notation $\mathrm{e}$ from \cite{CL}, which is a substitute for the notation $\psi\circ r$ from section \ref{sect.localfouriertransform}.)
 We follow \cite{CL} and rewrite this integral as an integral with respect to the motivic measure on the arc space~$\scr{L}(\scr{X}).$ We then give estimates for it, first in the case when $\xi$ is the trivial character, then in the case when it is non-trivial. This allows us to study convergence of the Euler product $Z(T,\xi)$. 
 
In this section, we will often omit the index $v$ once the place $v$ is fixed.

 \subsection{Motivic functions and integrals}
 
   In this section, we consider a field $k$ of characteristic zero, $R = k[[t]]$ and $K=k((t))$. For this and the next section only, let $\scr{X}$ \index{X@$\scr{X}$}be a smooth and flat $R$-scheme of finite type and of pure relative dimension $n$. We are going to use the notion of \textit{motivic residual function} \index{motivic residual function} on the arc space $\scr{L}(\scr{X})$, \index{arc space} \index{LX@$\scr{L}(\scr{X})$} introduced in section 2.4 of~\cite{CL}. 
 
 Recall that the spaces of $m$-jets $\scr{L}_m(\scr{X})$ \index{LmX@$\scr{L}_m(\scr{X})$} for $m\geq 0$ come with natural affine morphisms $p^{m+1}_m:\scr{L}_{m+1}(\scr{X})\to \scr{L}_{m}(\scr{X})$ which turn the collection of relative Grothendieck rings $\expp_{\scr{L}_m(\scr{X})}$ into an inductive system via the induced ring morphisms
 $$ \left(p_m^{m+1}\right)^*: \expp_{ \scr{L}_{m}(\scr{X})} \to \expp_{\scr{L}_{m+1}(\scr{X})} $$
  sending the class of a variety $H\to \scr{L}_m(\scr{X})$ to the class of $H\times_{\scr{L}_m(\scr{X})}\scr{L}_{m+1}(\scr{X})\to \scr{L}_{m+1}(\scr{X})$ with the appropriate operation on exponentials.
 
 \begin{definition} The ring of motivic residual functions on $\scr{L}(\scr{X})$ is the inductive limit of all Grothendieck rings $\expp_{\scr{L}_m(\scr{X})}$, $m\geq 0$. 
 \end{definition}
 
 For example, take a constructible subset $W$ of $\scr{L}(\scr{X})$, that is, a subset of the form $p_m^{-1}(W_m)$ where $W_m$ is a constructible subset of $\scr{L}_m(\scr{X})$ and $p_m:\scr{L}(\scr{X})\to \scr{L}_m(\scr{X})$ is the projection morphism. Then the characteristic function of $W$ may be seen as a motivic residual function.


Let $h$ be a motivic residual function. Assume it to be of the form $[H,f]$ where $H$ is a variety over $\scr{L}_{m}(\scr{X})$ for some $m$, and $f:H\to\A^1$ a morphism.  Then the integral of $h$ over the arc space $\scr{L}(\scr{X})$ is defined to be
$$\int_{\scr{L}(\scr{X})}h(x)\dx x = \int_{\scr{L}(\scr{X})}H_x\psi(r(f(x)))\dx x := \LL^{-(m+1)n}[H,f]_k\in\expp_{k}.$$\index{motivic residual function!integration}
This does not depend on $m$ because for any $m'\geq m$, the projection morphism $\scr{L}_{m'}(\scr{X})\to \scr{L}_m(\scr{X})$ is a locally trivial fibration with fibre $\A_k^{(m'-m)n}.$ More generally, one can consider integrals $\int_{W}$ over constructible subsets of $\scr{L}(\scr{X})$ by multiplying the integrand by the characteristic function $\1_W$.

\begin{example}\label{volume.constset} If $W = p_m^{-1}(W_m)$ is a constructible subset of $\scr{L}(\scr{X})$ for some $m\geq 0$, then one may define the volume of $W$ \index{volume} to be
$$\mathrm{vol}(W) := \int_{\scr{L}(\scr{X})}\1_{W}(x) \dx x = \LL^{-(m+1)n}[W_m,0]\in\expp_{k}.$$\index{vol@$\mathrm{vol}$}
In particular, if $W = \scr{L}(\scr{X}) = p_0^{-1}(\scr{X}_k)$ where $\scr{X}_k$ is the special fibre of $\scr{X}$, then
\begin{equation}\label{volumearcspace}\mathrm{vol}(\scr{L}(\scr{X})) = \LL^{-n}[\scr{X}_k,0].\end{equation}
Another useful special case is the volume of the subspace $\scr{L}(\A^1,0)$ of $\scr{L}(\A^1)$ of arcs with origin in $0\in \A^1$. We have:
\begin{equation}\label{volumeaffineline}\mathrm{vol}(\scr{L}(\A^1,0)) = \LL^{-1}.\end{equation}
Combining (\ref{volumearcspace}) and (\ref{volumeaffineline}), we also get that the volume of the subspace of arcs of order 0 in $\scr{L}(\A^1)$ is
\begin{equation}\label{volumeaffinelineordzero}\mathrm{vol}(\{x\in\scr{L}(\A^1),\ \ord(x) = 0\}) = \mathrm{vol}(\scr{L}(\A^1)) - \mathrm{vol}(\scr{L}(\A^1,0)) = 1-\LL^{-1}\end{equation}
\end{example}

\begin{remark}\label{volume.integrals}  Let $W= p^{-1}(W_m)$ be a constructible subset of $\scr{L}(\scr{X})$ for some $W_m\subset \scr{L}_m(\scr{X})$ and some $m\geq 0$ and let $h = [\scr{L}_{m}(\scr{X}),f]$. Then by the triangular inequality for weights (chapter \ref{hodgemodules}, lemma \ref{triangularwt}), as well as lemma \ref{weightdimension}:
\begin{eqnarray*}w\left(\int_{W}h(x)\dx x \right)& = & w\left(\int_{\scr{L}(\scr{X})}\mathbf{1}_{W}(x)\mathrm{e}(f(x))\dx x \right)\\
& =& w(\LL^{-(m+1)n}[W,f_{|W}])\\ 
&\leq& -2(m+1)n + w([W,0])\\
& = & w(\mathrm{vol}(W))\end{eqnarray*}
We are going to use this property repeatedly.
\end{remark}


\subsection{Some computations of motivic integrals}
We keep the notations from the previous section. Let $r:K\to k$ be the linear map defined by $r(a) = \res_0(a\dx t),$ so that $r(t^{-1}) =1$ and $r(t^n) = 0$ for any $n\neq -1$. 
 \begin{lemma} Let $d$ be a non-zero integer and let  $\xi\in K$ be such that $\ord(\xi) = 0$. Let $Q=a_0 + a_1 x + \ldots + a_dx^d \in k((t))[x]$ be a non-zero polynomial of degree $\leq d$ such that for all $i\in\{1,\ldots,d\}$, $\ord (a_i)> \ord (a_0)$.  Then for any $n\in\N$ such that $-2n \leq \ord(a_0) < -n$, one has
 $$\int_{\xi + t^nk[[t]]}\psi(r(Q(x)x^{-d}))\dx x =0.$$
 \end{lemma}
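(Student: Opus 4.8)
The plan is to perform the substitution $x=\xi+t^nu$ with $u\in k[[t]]$, show that after this change of variables the integrand $\psi(r(Q(x)x^{-d}))$ becomes the pullback of a class of the form $[\A^n_k,\ell+c]\in\expm$ where $\ell$ is a \emph{nonzero} $k$-linear form and $c$ a constant, and then conclude that the integral vanishes via the relation $[\A^1_k,\id]=0$.

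First I would record two trivial reductions: since $Q\ne0$ has degree $\le d$ and $d\ne0$, necessarily $d\ge1$; and the hypothesis $-2n\le\ord(a_0)<-n$ forces $n\ge1$. Set $e:=\ord(a_0)$, so $-2n\le e\le-n-1$, and $g(x):=Q(x)x^{-d}$. For $x\in\xi+t^nk[[t]]$ one has $\ord(x)=0$ (because $\ord(\xi)=0$ and $n\ge1$), hence $x^{-d}\in k[[t]]^{\times}$; moreover $\ord(a_ix^i)=\ord(a_i)>e$ for $i\ge1$ while $\ord(a_0)=e$, so $\ord(Q(x))=e$ and $\ord(g(x))=e$.

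Next, expanding by the binomial series (valid $t$-adically since $t^nu\xi^{-1}$ is topologically nilpotent, and with $\binom{i-d}{j}\in\Z\subset k$ as $\operatorname{char}k=0$), I would obtain
$$g(\xi+t^nu)=\sum_{j\ge0}c_j\,t^{nj}u^j,\qquad c_j=\sum_{i=0}^{d}a_i\binom{i-d}{j}\xi^{\,i-d-j}.$$
For each $j$ the $i=0$ summand of $c_j$ has order exactly $e$ (because $\binom{-d}{j}\ne0$ and $\xi$ is a unit) while the $i\ge1$ summands have order $>e$, whence $\ord(c_j)=e$ for all $j$. Consequently $\ord(c_jt^{nj}u^j)\ge e+nj$, which is $\ge0$ once $j\ge2$ (using $e\ge-2n$), so $r$ kills those terms; as $c_0=g(\xi)$ is independent of $u$, this gives $r(g(\xi+t^nu))=r(c_0)+r(c_1t^nu)$.

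The key step, and the main obstacle, is then to check that $u\mapsto r(c_1t^nu)$ is a nonzero $k$-linear form in the first $n$ coordinates of $u$. Here $\ord(c_1t^n)=e+n\in\{-n,\dots,-1\}$; writing $c_1t^n=\sum_{m\ge e+n}b_mt^m$ with $b_{e+n}\ne0$ and $u=\sum_{\ell\ge0}u_\ell t^\ell$, picking off the $t^{-1}$-coefficient yields $r(c_1t^nu)=\sum_{\ell=0}^{-(e+n)-1}b_{-1-\ell}u_\ell$, in which $b_{e+n}$ is the (nonzero) coefficient of $u_{-(e+n)-1}$ and $0\le-(e+n)-1\le n-1$. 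Thus $r(g(x))$ depends only on $x\bmod t^{2n}$, so that the disc $\xi+t^nk[[t]]$ maps onto an $n$-dimensional affine subspace $\A^n_k$ of $\scr{L}_{2n-1}(\A^1)\cong\A^{2n}_k$ on which $\psi(r(g(x)))$ is the pullback of $[\A^n_k,\ell+c]$ with $\ell$ a nonzero linear form and $c=r(c_0)$. Hence the integral equals $\LL^{-2n}[\A^n_k,\ell+c]$; a linear change of coordinates turning $\ell$ into a coordinate gives $[\A^n_k,\ell+c]=[\A^1_k,\id]\cdot[\A^{n-1}_k,0]\cdot[\spec k,c]=0$ in $\expm$, since $[\A^1_k,\id]=0$. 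Besides this bookkeeping on orders (pinning $\ord(c_1t^n)$ in $[-n,-1]$ so its leading coefficient is nonzero and the index $-(e+n)-1$ lands in $\{0,\dots,n-1\}$), the only other point requiring care is the $t$-adic convergence of the binomial expansions; everything else is a routine unwinding of the definition of the motivic integral.
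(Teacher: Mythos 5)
Your proof is correct and takes essentially the same approach as the paper's: substitute $x=\xi+t^nu$ (the paper uses the equivalent $x=\xi(1+t^nu)$), truncate the expansion of $g(x)$ at degree one in $t^nu$ because the hypothesis $e\ge-2n$ lets $r$ kill all higher-order terms, and observe that the degree-one coefficient has valuation in $\{-n,\dots,-1\}$, so the resulting integral is that of a nontrivial additive character on an affine space, which vanishes by the relation $[\A^1,\id]=0$. The only cosmetic differences are that you expand $g=Q\cdot x^{-d}$ directly via the binomial series while the paper expands $Q$ (Taylor) and $x^{-d}$ (binomial) separately and multiplies, and that you make explicit the final identification with $\LL^{-2n}[\A^n_k,\ell+c]$ which the paper leaves as a one-line observation about the negativity of the valuation.
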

 
 \begin{proof} The proof goes along the same lines as the proof of lemma 5.1.1 in \cite{CL}. The condition on the orders of the coefficients of $Q$ implies that:
 \begin{itemize}
\item $\ord (Q(\xi)) = \ord (a_0)$.
 \item For all $i\in\{1,\ldots,d\}$, 
 \begin{equation}\label{Qderivatives}\ord (Q^{(i)}(\xi))\geq \min_{i\leq j\leq d}\ \ord (a_j) > \ord (Q(\xi))\geq -2n.\end{equation}
\end{itemize}
Write $x = \xi(1 + t^n u)$ for some $u$ with $\ord (u) \geq 0$, so that we have
$$\int_{\xi + t^nk[[t]]}\psi(r(Q(x)x^{-d}))\dx x = \LL^{-n}\int_{k[[t]]}\psi(r(Q(\xi(1 + t^nu))(\xi(1 + t^nu))^{-d})\dx u.$$
 We may expand
$$Q(\xi(1 + t^n u)) = Q(\xi) + Q'(x)\xi t^n u + Q''(x)(\xi t^{n} u)^2 + \ldots$$
and
$$(\xi(1 + t^n u))^{-d} = \xi^{-d}\left(1 - dt^n u + {-d\choose 2} t^{2n} u^2 + \ldots\right)$$
Taking the product and using (\ref{Qderivatives}), we have
$$r\left(Q(\xi(1 + t^nu))\xi^{-d}(1 + t^n u)\right) = r(Q(\xi)\xi^{-d}) + r\left(Q'(\xi)\xi^{1-d} t^n u -Q(\xi)\xi^{-d} d t^nu\right),$$
since all the other terms belong to the maximal ideal $tk[[t]]$. We therefore have
$$\int_{k[[t]]}\psi(r(Q(\xi(1 + t^nu))(\xi(1 + t^nu))^{-d})\dx u$$
$$ = [\spec k, r(Q(\xi)\xi^{-d})]\int_{k[[t]]}\psi(r\left(Q'(\xi)\xi^{1-d} t^n u -Q(\xi)\xi^{-d} d t^nu\right))\dx u.$$
Now, $\ord( Q'(\xi)\xi^{1-d} t^n-Q(\xi)d\xi^{-d} t^n )  =\ord (Q(\xi)t^n) < 0$, and therefore the integral in the right-hand side is zero.
 \end{proof}
 
 For $m\in\Z$, let $C_m$ be the annulus defined by $\ord(x) = m$. 
 \begin{lemma}\label{motivic.integral} Let $m$ and $d$ be positive integers and $P\in k[[t]][x]$ a non-zero polynomial such that $\ord(P(0)) = 0$. Then
 $$\int_{C_m}\psi(r(P(x)x^{-d}))\dx x = \left\{\begin{array}{lc} -\LL^{-2} & \text{if}\ m=d=1\\
                                                                 0     & otherwise 
                                                                 \end{array}\right.$$
 \end{lemma}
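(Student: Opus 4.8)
The plan is to reduce the statement to an explicit computation by the substitution $x = t^{m}u$ with $u$ a unit, and then to isolate a single ``digit'' of $u$ that enters the phase linearly and can be integrated out using the relation $[Y\times\A^{1},\pr_{2}]=0$ in $\evar_{k}$.

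First I would write $P(x)=\sum_{i\geq 0}a_{i}x^{i}$ with $a_{i}\in k[[t]]$ and, by the hypothesis $\ord(P(0))=0$, with $a_{0}\in k[[t]]^{\times}$; let $a_{0,0}\in k^{\times}$ be its constant term. On the annulus $C_{m}=\{x\in\scr{L}(\A^{1}_{R}):\ord(x)=m\}$ I substitute $x=t^{m}u$ with $\ord(u)=0$; the change-of-variables formula for the motivic measure (ordinal Jacobian $m$) gives
\[
\int_{C_{m}}\psi(r(P(x)x^{-d}))\,\dx x=\LL^{-m}\int_{\{\ord(u)=0\}}\psi\big(r(Q(u))\big)\,\dx u,\qquad Q(u)=\sum_{i\geq 0}a_{i}\,t^{m(i-d)}u^{i-d}.
\]
Since $r$ vanishes on $k[[t]]$, the terms of $Q(u)$ with $i\geq d$ contribute nothing, so, writing $[t^{\ell}](\cdot)$ for the coefficient of $t^{\ell}$ in a power series in $t$ and $u=\sum_{\ell\geq 0}u_{\ell}t^{\ell}$ (with $u_{0}\in\G_{m}$, $u_{\ell}\in\A^{1}$), one gets $r(Q(u))=\sum_{j=1}^{d}[t^{mj-1}]\big(a_{d-j}u^{-j}\big)$, a polynomial in $u_{0},\dots,u_{md-1}$.

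The key point is the behaviour in the top digit $u_{md-1}$. Only the summand $j=d$ involves it, and there it occurs linearly: expanding $u^{-d}=u_{0}^{-d}(1+\sum_{\ell\geq 1}(u_{\ell}/u_{0})t^{\ell})^{-d}$ one sees that $[t^{md-1}](a_{0}u^{-d})$ contains exactly the monomial $(-d)\,a_{0,0}\,u_{0}^{-d-1}u_{md-1}$ (all other contributions to the $t^{md-1}$-coefficient involve only $u_{0},\dots,u_{md-2}$, and the summands $j<d$ involve only $u_{0},\dots,u_{mj-1}$ with $mj-1<md-1$). Since $\mathrm{char}\,k=0$, the coefficient $-d\,a_{0,0}\,u_{0}^{-d-1}$ is invertible on $\G_{m}$. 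Now if $(m,d)\neq(1,1)$ then $md-1\geq 1$, so $u_{md-1}$ is a genuine (non-leading) coordinate; writing $r(Q(u))=c(u_{0})u_{md-1}+R'(u_{0},\dots,u_{md-2})$ with $c$ invertible on $\G_{m}$, a change of variables in the $u_{md-1}$-direction over $\G_{m}\times\A^{md-2}$ identifies $[\G_{m}\times\A^{md-1},\,r(Q(u))]$ with $[(\G_{m}\times\A^{md-2})\times\A^{1},\pr_{2}]=0$ in $\evar_{k}$; hence the integral vanishes, which is the first case. For $m=d=1$ we have $C_{1}\subset tk[[t]]$, so $r(P(x)x^{-1})=r(a_{0}x^{-1})$, which with $x=tu$ equals $[t^{0}](a_{0}u^{-1})=a_{0,0}u_{0}^{-1}$, depending only on $u_{0}$; therefore
\[
\int_{C_{1}}\psi(r(P(x)x^{-1}))\,\dx x=\LL^{-1}\cdot\LL^{-1}\,[\G_{m},\,u_{0}\mapsto a_{0,0}u_{0}^{-1}]=-\LL^{-2},
\]
since $u_{0}\mapsto a_{0,0}u_{0}^{-1}$ is an automorphism of $\G_{m}$ and $[\G_{m},\id]=[\A^{1},\id]-[\{0\},0]=0-1=-1$ in $\evar_{k}$.

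The main obstacle, and the part requiring real care, is the digit bookkeeping: one must check that $u_{md-1}$ appears in $r(Q(u))$ only through the $j=d$ summand and only linearly, so that the elementary relation in $\evar_{k}$ applies cleanly, and one must track the volume and Jacobian factors ($\LL^{-m}$ from the substitution, the $\LL^{-1}$ normalisation of the level-$0$ integral) correctly. The hypothesis $d\neq 0$ together with $\mathrm{char}\,k=0$ is exactly what guarantees the linear coefficient $-d\,a_{0,0}u_{0}^{-d-1}$ is nonvanishing; this is where that hypothesis enters.
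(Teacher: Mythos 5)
Your proof is correct, and its skeleton coincides with the paper's: the same substitution $x=t^{m}u$ with Jacobian factor $\LL^{-m}$, a stationary-phase vanishing when $md>1$, and essentially the same direct computation (reducing to $[\G_m,\id]=-1$ with the two normalising factors $\LL^{-1}$) when $m=d=1$. Where you genuinely differ is in how the vanishing for $md>1$ is organised: the paper invokes the auxiliary lemma proved just before it --- choose $n$ with $-2n\le -md<-n$, decompose $C_0$ into cosets $\xi+t^{n}k[[t]]$, and show each coset integral vanishes because after writing $x=\xi(1+t^{n}u)$ the phase becomes affine in $u$ with a coefficient of negative order --- whereas you work directly in jet coordinates over the whole annulus, isolating the top digit $u_{md-1}$, checking it enters the phase only through the $j=d$ term and only linearly with coefficient $-d\,a_{0,0}u_0^{-d-1}$ (nowhere vanishing on $\G_m$, using $\mathrm{char}\,k=0$), and then killing the class via the relation $[Y\times\A^1,\pr_2]=0$. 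Both arguments rest on the same underlying mechanism (an affine-linear phase in one variable with unit coefficient), so this is a reorganisation rather than a new idea: your version is self-contained and makes the role of $d\neq 0$ and characteristic zero explicit, at the price of the digit bookkeeping, which you carry out correctly; the paper's route factors the jet-level computation through a coset lemma, which keeps this proof shorter. One cosmetic slip: the vanishing alternative is the second case of the displayed formula, not the first.
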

 
 \begin{proof} Let $I(m,d,P)$ be the above integral. A change of variable allows us to write
 $$I(m,d,P) = \LL^{-m}\int_{C_0}\psi(r(P(t^mu)t^{-md}u^{-d}))\dx u.$$
 Thus, we have $I(m,d,P) = I(0,d,Q)$ where $Q(u) = a_0 + a_1 u + \ldots \in k((t))[u]$ is the polynomial given by $Q(u) = P(t^mu)t^{-md}$. Since $P$ has integral coefficients, and $\ord(P(0)) = 0$, we have 
 \begin{itemize}\item $\ord (a_0) = \ord (P(0)t^{-md}) = -md$
 \item for all $i\geq 1$, $\ord (a_i) \geq mi - md > -md = \ord(a_0).$
 \end{itemize}
 If $md> 1$, then there exists a positive integer $n$ such that $-2n\leq -md < -n$, and choosing such an~$n$, the previous lemma tells us that
 $$I(m,d,P) = \LL^{-m}\int_{C_0}\int_{k[[t]]}\psi(r(Q(u)(u+t^ny)^{-d})\dx y\, \dx u = 0.$$ 
 Assume now $m=d=1$. Then, writing $P(0) = a$, we have
 \begin{eqnarray*}I(1,1,P) &= &\LL^{-1}\int_{C_0}\psi(r(P(tu)t^{-1}u^{-1}))\dx u  = \LL^{-1}\int_{C_0}\psi(r(at^{-1}u^{-1}))\dx u\\
                         &=&\LL^{-1}\int_{C_0}\psi(r(at^{-1}u))\dx u\\
                         &=& \LL^{-1}\left(\int_{k[[t]]}\psi(r(at^{-1}u))\dx u -  \int_{tk[[t]]}\psi(r(at^{-1}u))\dx u \right)\\
                         \end{eqnarray*}
                         which gives the result, since the first term in the parenthesis is zero (using again that $\ord(a)=0$), and the second one is equal to~$\LL^{-1}$. 
 \end{proof}
 
 \subsection{An integral over the arc space}\label{sect.integralarcspace}
 We now go back to the setting and notations of section \ref{sectGeometry}. In this section we recall briefly, following~\cite{CL}, how integrals of motivic Schwartz-Bruhat functions can be rewritten as integrals on arc spaces. 
 
 
 \begin{lemma}(\cite{CL}, lemma 6.1.1) Let $\Phi\in\Sch(F_v^n)$ be a local motivic Schwartz-Bruhat function. Then the integral $\int_{G(F_v)}\Phi(g)\dx g$ can be rewritten as 
 $$\int_{\scr{L}(\scr{X})}\Phi(x)\LL^{-\mathrm{ord}_{\omega}(x)}\dx x$$
 where $\dx x$ denotes the motivic measure on the arc space $\scr{L}(\scr{X}).$ 
 \end{lemma}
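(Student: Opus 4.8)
The statement is an identity between a motivic integral over $G(F_v)$ and a motivic integral over the arc space $\scr{L}(\scr{X})$, where on the right the integrand carries the twist $\LL^{-\ord_{\omega}(x)}$. The strategy is to use that $\scr{X}_1$ (the weak N\'eron model from section~\ref{Neron}) parametrises all $F_v$-points of $X$, so that $G(F_v) \subset X(F_v)$ corresponds, via $g \mapsto \sigma_g$, to a constructible subset of $\scr{L}(\scr{X})$ — namely the arcs whose generic point lands in $G_F$. The key change-of-variables principle is the one underlying motivic integration: for a smooth $R$-scheme $\scr{X}$, the measure on $G(F_v)$ coming from the translation-invariant gauge form $\dx x_1 \wedge \cdots \wedge \dx x_n$ differs from the intrinsic motivic measure on $\scr{L}(\scr{X})$ precisely by the Jacobian factor measuring the order of vanishing of $\omega_X$ (which generates $K_{\scr{X}/C}$ as a meromorphic section), and this Jacobian is by definition $\LL^{-\ord_{\omega}(x)}$.

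\textbf{Steps.} First I would recall the identification of $G(F_v)$ with a subset of $\scr{L}(\scr{X})$: since $\scr{X}_1 \to C$ is a weak N\'eron model of $X$, every $g \in G(F_v) \subset X(F_v)$ extends uniquely to an arc $\sigma_g \in \scr{L}(\scr{X})$ (with image meeting $\scr{X}_1$), and this gives a bijection between $G(F_v)$ and the constructible subset $W \subset \scr{L}(\scr{X})$ of arcs $x$ with $x(\eta) \in G_F$ and $x$ landing in $\scr{X}_1$. Second, I would pull back the Schwartz–Bruhat function $\Phi$: if $\Phi$ is viewed, via the identification $(t^M\OO_v/t^N\OO_v)^n \simeq \A_k^{n(M,N)}$ of section~\ref{sect.localSB}, as a residual function on affine space, then composing with the truncation map $\scr{L}(\scr{X}) \to \scr{L}_m(\scr{X})$ for suitable $m$ realises $\Phi(x)$ as a motivic residual function on $\scr{L}(\scr{X})$ supported on $W$; this uses the equivariant coordinates on $G_F \simeq \G_{a,F}^n$ and the compatibility of jet-scheme truncations with the inclusion $G_F \hookrightarrow X$. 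Third — the heart of the argument — I would invoke the change-of-variables formula for motivic integrals: the translation-invariant form $\dx x_1 \wedge \cdots \wedge \dx x_n$ on $G_F$ is proportional to $\omega_X$ by the last item of the Proposition in section~\ref{sect.equivariant}, and the motivic measure $\dx g$ on $G(F_v)$ attached to this form relates to the intrinsic arc-space measure $\dx x$ on $\scr{L}(\scr{X})$ by the formula $\dx g = \LL^{-\ord_{\omega}(x)} \dx x$, where $\ord_{\omega}(x)$ is the order along the arc $x$ of the section of $K_{\scr{X}/C}$ cutting out $-\div(\omega_X) = \sum_{\al}\rho_{\al}\scr{D}_{\al} + \sum_{\be}\rho_{\be}E_{\be}$. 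Finally, combining these, $\int_{G(F_v)}\Phi(g)\,\dx g = \int_{W}\Phi(x)\LL^{-\ord_{\omega}(x)}\,\dx x = \int_{\scr{L}(\scr{X})}\Phi(x)\LL^{-\ord_{\omega}(x)}\,\dx x$, the last equality because $\Phi(x)$ (as a pulled-back Schwartz–Bruhat function) vanishes off $W$ up to the constructible subsets irrelevant for the measure (arcs not meeting $\scr{X}_1$ correspond to $X(F_v) \setminus G(F_v)$-points or to positive-codimension loci, hence contribute measure zero, using remark~\ref{volume.integrals} and the fact that the non-smooth locus and the higher-multiplicity vertical components form a proper closed subscheme).

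\textbf{Main obstacle.} The delicate point is the change-of-variables step: one must make precise the comparison between the ``adelic'' Haar-type measure $\dx g$ on $G(F_v)$ (normalised so that $G(\OO_v)$ has volume $\LL^{-n}[\scr{X}_{1,v}]$ or similar, cf. section~\ref{HKintegration} and example~\ref{volume.constset}) and the intrinsic motivic measure on $\scr{L}(\scr{X})$, and to check that the discrepancy is exactly the Jacobian $\LL^{-\ord_{\omega}(x)}$ and not some other twist. This requires unwinding the local computation in coordinates: on an affine chart of $\scr{X}$ on which $\omega_X$ has a local expression $u \cdot \prod f_j^{-\rho_j}\, \dx y_1 \wedge \cdots \wedge \dx y_n$ (with $u$ a unit and $f_j$ local equations for the components of $\div \omega_X$), the passage from the coordinates $(x_i)$ on $G_F$ to the coordinates $(y_i)$ on the chart has Jacobian whose order along an arc is precisely $\ord_{\omega}(x)$. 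This is, however, a standard computation in the style of Denef–Loeser, and the references in~\cite{CL} (lemma 6.1.1 and its proof) carry it out; so in practice I would simply cite that proof, noting that the only extra ingredient beyond the statement in~\cite{CL} is the observation — already recorded in the Proposition of section~\ref{sect.equivariant} — that $\omega_X$ restricts on $G_F$ to a scalar multiple of the standard volume form, so that the motivic measure coming from $\dx x_1 \wedge \cdots \wedge \dx x_n$ agrees up to an invertible constant with the one coming from $\omega_X$, and invertible constants do not affect the identity.
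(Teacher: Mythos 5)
The paper offers no proof of this lemma: it simply recalls the statement from \cite{CL}, Lemma 6.1.1, which is in the end exactly what you do, after sketching the gauge-form/change-of-variables argument that underlies the citation. Your sketch (identification of $G(F_v)$ with the arcs on the model whose generic point lies in $G_F$, negligibility of the boundary contribution, and comparison of the measure attached to $\dx x_1\wedge\cdots\wedge\dx x_n$ with the intrinsic arc-space measure via the Jacobian factor $\LL^{-\mathrm{ord}_{\omega}(x)}$, with $\omega_X$ normalised to restrict to the standard invariant form) is consistent with the argument carried out in \cite{CL}, so there is nothing to correct.
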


 
For every subset $A\subset \scr{A}$ and every $\be\in\scr{B}_{1,v}$ for some $v$, we denote by $\Delta(A,\be)$ the locally closed subset of the special fibre $\scr{X}_v:= \scr{X}\times_R \spec(k)$ of points belonging to the divisors $\scr{D}_{\scr{\al}},\ \al\in A$ and no other, as well as to the vertical divisor $E_{\be}$, and no other. 

The special fibre $\scr{X}_v$ identifies with the jet scheme $\scr{L}_0(\scr{X})$ of order 0, and therefore there is a specialisation morphism $\scr{L}(\scr{X})\arr\scr{X}_v$. We denote by $\Omega(A,\beta)$ the preimage in $\scr{L}(\scr{X})$ of $\Delta(A,\beta).$ Lemma 5.2.6 of \cite{CL} then states the following:\index{omegaa@$\Omega(A,\beta)$}\index{Delta@$\Delta(A,\beta)$}

\begin{lemma}\label{measure} Let $A$ be a subset of $\scr{A}$ and let $B$ be a set of cardinality $n-\card(A)$. There exists a mesure-preserving definable isomorphism $\theta$ from $\Delta(A,\beta)\times \scr{L}(\A^1,0)^{A}\times \scr{L}(\A^1,0)^{B}$ with coordinates $x_{\al},\ \al\in A$ and $y_{\be},\ \be\in B$, to $\Omega(A,\be)$, such that $\ord_{\scr{D}_{\al}}(\theta(x)) = \ord(x_{\al})$ for $\al\in A$, and $\ord_{\scr{D}_\al}(\theta(x)) = 0$ for $\al\not \in A$. 
\end{lemma}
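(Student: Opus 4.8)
The plan is to reconstruct the proof of Lemma~5.2.6 of \cite{CL}, which we now sketch. The assertion is local on $\Delta(A,\beta)$: it suffices to cover a Zariski neighbourhood of $\Delta(A,\beta)$ in $\scr{X}$ by finitely many affine opens on which a convenient coordinate system exists, construct $\theta$ there, and then patch. So the first step is to produce, near any point of $\Delta(A,\beta)$, a regular system of parameters $(w_{\al})_{\al\in A},\,t,\,(s_j)_{1\le j\le n-\card(A)}$ on $\scr{X}$ such that: $\scr{D}_{\al}=V(w_{\al})$ for $\al\in A$; the reduced special fibre equals $E_{\beta}=V(t)$ there, so that since $\mu_{\beta}=1$ the uniformiser $\tau$ of $R$ pulls back to $\pi^{*}\tau=t\cdot u$ with $u\in\OO_{\scr{X}}^{\times}$; $\Delta(A,\beta)=V\!\big(t,(w_{\al})_{\al\in A}\big)$ with residual coordinates $(s_j)$; and, after shrinking, $\scr{D}_{\al}$ is empty in the chart for $\al\notin A$, i.e.\ $w_{\al}$ is a unit there. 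Producing such parameters is exactly where one uses that $\scr{X}$ is smooth of dimension $n+1$, that $\sum_{\al}\scr{D}_{\al}$ together with the vertical components is a strict normal crossings divisor, and that $\mu_{\beta}=1$.

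Second, I would describe the arcs. A point of $\scr{L}(\scr{X})$ is a section $\gamma\colon\spec k[[\tau]]\to\scr{X}$ of $\pi$ (so $\gamma^{*}\pi^{*}\tau=\tau$); it lies in $\Omega(A,\beta)$ precisely when its origin $\gamma(0)$ lies in $\Delta(A,\beta)$. Read in the chart, $\gamma$ is equivalent to the data of the point $P=\gamma(0)\in\Delta(A,\beta)$, the series $x_{\al}:=\gamma^{*}w_{\al}\in\tau k[[\tau]]$ for $\al\in A$, and the series $y_j:=\gamma^{*}s_j-s_j(P)\in\tau k[[\tau]]$: the remaining coordinate $\gamma^{*}t$ is forced, being the unique solution in $\tau k[[\tau]]$ of $\gamma^{*}t\cdot u(P,x_{\al},s_j(P)+y_j)=\tau$, which exists by Hensel's lemma since $u(P)\ne 0$ and automatically has order $1$. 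Conversely such data determine a unique arc, and the map $\theta\colon(P,(x_{\al}),(y_j))\mapsto\gamma$ with $\gamma^{*}w_{\al}=x_{\al}$, $\gamma^{*}s_j=s_j(P)+y_j$ is a bijection of $\Delta(A,\beta)|_{\mathrm{chart}}\times\scr{L}(\A^{1},0)^{A}\times\scr{L}(\A^{1},0)^{B}$ onto $\Omega(A,\beta)|_{\mathrm{chart}}$, definable because the relation defining $\gamma^{*}t$ is definable in the Denef--Pas language. By construction $\ord_{\scr{D}_{\al}}(\theta(\cdots))=\ord_{\tau}(x_{\al})=\ord(x_{\al})$ for $\al\in A$, and $\ord_{\scr{D}_{\al}}(\theta(\cdots))=0$ for $\al\notin A$ since $w_{\al}$ is then a unit on the chart.

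Third, I would check measure preservation. The point is that at each finite jet level, in the above coordinates, $\theta$ is literally the identity map: truncating modulo $\tau^{m+1}$, the $R$-section condition expresses $\gamma^{*}t \bmod \tau^{m+1}$ through $u$ in terms of the other truncated coordinates, so $\scr{L}_m(\scr{X})$ restricted to $\Omega(A,\beta)\cap\mathrm{chart}$ is identified with $\Delta(A,\beta)|_{\mathrm{chart}}\times\A^{m\card(A)}\times\A^{m\card(B)}$, which is exactly $\Delta(A,\beta)|_{\mathrm{chart}}\times\scr{L}_m(\A^{1},0)^{A}\times\scr{L}_m(\A^{1},0)^{B}$, and $\theta$ matches them coordinate for coordinate; hence $\theta$ is an isomorphism at every finite level and thus measure preserving. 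One can cross-check on volumes using \eqref{volumearcspace} and \eqref{volumeaffineline} that both sides have volume $\LL^{-n}[\Delta(A,\beta)]$.

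Finally, to globalize I would take a finite family of such charts covering $\Delta(A,\beta)$ and, by repeatedly removing closed subsets, refine to a finite constructible partition $\Delta(A,\beta)=\bigsqcup_i Z_i$ with each $Z_i$ inside a single chart; defining $\theta$ on $\mathrm{sp}^{-1}(Z_i)\cap\Omega(A,\beta)$ by the local recipe yields a global definable isomorphism onto $\Omega(A,\beta)$ that is measure preserving and order-preserving, piece by piece hence globally. \textbf{The main obstacle is precisely this last step}: one must ensure the local constructions are uniformly definable across the pieces and patch into a genuine isomorphism rather than merely a piecewise measure-preserving bijection, which is the technical heart of the argument in \cite{CL}; in the present text the lemma is simply quoted from there.
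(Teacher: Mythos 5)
Your proposal is correct and matches the intended argument: the paper gives no proof of Lemma \ref{measure}, quoting it directly from Lemma 5.2.6 of \cite{CL}, and your reconstruction (strict normal crossings coordinates near $\Delta(A,\be)$, multiplicity one of $E_{\be}$ so that the section condition determines the $t$-coordinate via Hensel's lemma, identification at each finite jet level to obtain measure preservation, then piecewise globalisation) follows essentially the same route as the proof given there. The only point to adjust is your closing worry: in this formalism a map defined piecewise, over a finite constructible partition, by definable measure-preserving isomorphisms is itself a definable measure-preserving isomorphism, so the patching step is routine rather than the technical heart of the argument.
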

\begin{remark} We changed the normalisation slightly with respect to \cite{CL}. Note that this is consistent with example \ref{volume.constset}: the volume of $\Omega(A,\be)$ is given by $[\Delta(A,\be)]\LL^{-n}$. 
\end{remark}
In what follows, we therefore identify a point of $\Omega(A,\be)$ with a triple 
$$(w,x,y)\in \Delta(A,\beta)\times \scr{L}(\A^1,0)^{A}\times \scr{L}(\A^1,0)^{B}.$$

We also recall Lemma 6.2.6 from \cite{CL}, which uses this isomorphism to rewrite the motivic Fourier transforms $Z_v(\T,\xi)$ as sums of motivic integrals over arc spaces:

\begin{lemma}\label{arcspace} For every motivic residual function $h$ on $\scr{L}(\scr{X})$ and every $\xi\in G(F_v)$, one has
$$\int_{G(F_v)}\prod_{\al\in\scr{A}}T_{\al}^{(g,\scr{L}_{\al})_v}h(g)\mathrm{e}(\langle g,\xi\rangle)\dx g $$
$$= 
\sum_{\substack{A\subset \scr{A}\\ \beta\in\scr{B}_1}}\prod_{\al\in\scr{A}}T_{\al}^{e_{\al,\be}}\LL^{\rho_{\be}}\int_{\Omega(A,\be)}\prod_{\al\in A}(\LL^{\rho_{\al}}T_{\al})^{\ord(x_{\al})}h(x)\mathrm{e}(\langle x,\xi\rangle)\dx x. $$\end{lemma}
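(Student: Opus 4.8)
The plan is to reduce to the change--of--variables formula on arc spaces together with the stratification of $\scr{L}(\scr{X})$ by the loci $\Omega(A,\be)$. Since the statement is a recollection of Lemma~6.2.6 of \cite{CL}, I would only indicate the main steps. First, one works coefficient by coefficient in the formal variables $(T_\al)_{\al\in\scr{A}}$ --- which is legitimate because on each bounded definable piece $G(\m_v,\be_v)_v$ the degrees $(g,\scr{L}_\al)_v$ are constant (Proposition~\ref{gm}), so each coefficient of the integrand is a genuine motivic Schwartz--Bruhat function times a motivic residual function --- and applies the preceding lemma (Lemma~6.1.1 of \cite{CL}, in the version for motivic residual functions) to convert the integral over $G(F_v)$ into an integral over $\scr{L}(\scr{X})$:
$$\int_{G(F_v)}\prod_{\al\in\scr{A}}T_\al^{(g,\scr{L}_\al)_v}h(g)\,\mathrm{e}(\langle g,\xi\rangle)\,\dx g=\int_{\scr{L}(\scr{X})}\prod_{\al\in\scr{A}}T_\al^{\ord_{\scr{L}_\al}(x)}h(x)\,\mathrm{e}(\langle x,\xi\rangle)\,\LL^{-\ord_{\omega}(x)}\,\dx x,$$
using that an arc $x$ corresponding to $g\in G(F_v)$ satisfies $(g,\scr{L}_\al)_v=\ord_{\scr{L}_\al}(x)=\ord_{\scr{D}_\al}(x)+\sum_{\be}e_{\al,\be}\ord_{E_\be}(x)$ and that $\ord_\omega(x)=\langle\div(\omega_X),x\rangle=-\sum_\al\rho_\al\ord_{\scr{D}_\al}(x)-\sum_\be\rho_\be\ord_{E_\be}(x)$.

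Next I would decompose $\scr{L}(\scr{X})$ into the locally closed subsets $\Omega(A,\be)$, for $A\subseteq\scr{A}$ and $\be\in\scr{B}_1$, which are the preimages under the specialization map $\scr{L}(\scr{X})\to\scr{X}_v$ of the strata $\Delta(A,\be)$ partitioning $\scr{X}_1\cap\scr{X}_v$; the arcs whose origin does not lie on $\scr{X}_1$ may be discarded, since by the weak N\'eron model property of $\scr{X}_1$ (Lemma~3.2.1 of \cite{CL}) they contribute nothing to $\int_{G(F_v)}$. On a fixed $\Omega(A,\be)$ one has $\ord_{\scr{D}_\al}(x)=0$ for $\al\notin A$, $\ord_{E_{\be'}}(x)=0$ for $\be'\neq\be$, and $\ord_{E_\be}(x)=1$ since $E_\be$ has multiplicity $1$ in the special fibre; applying the measure--preserving definable isomorphism $\theta$ of Lemma~\ref{measure}, under which $\ord_{\scr{D}_\al}(\theta(x))=\ord(x_\al)$ for $\al\in A$, the integrand becomes
$$\prod_{\al\in\scr{A}}T_\al^{\ord_{\scr{L}_\al}(x)}\LL^{-\ord_\omega(x)}=\Big(\prod_{\al\in\scr{A}}T_\al^{e_{\al,\be}}\Big)\LL^{\rho_\be}\prod_{\al\in A}\big(\LL^{\rho_\al}T_\al\big)^{\ord(x_\al)}.$$
Summing the finitely many contributions over $A\subseteq\scr{A}$ and $\be\in\scr{B}_1$ and pulling the constant factors $\prod_\al T_\al^{e_{\al,\be}}\LL^{\rho_\be}$ out of each integral yields the asserted identity.

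The one delicate point is the second step: justifying that the arcs not lying over $\scr{X}_1$ may be ignored, which is exactly where the construction of the weak N\'eron model $\scr{X}_1$ and the measure--theoretic content of Lemma~6.1.1 enter. Everything else is bookkeeping of orders of vanishing along the components $\scr{D}_\al$ and $E_\be$, combined with the change of variables of Lemma~\ref{measure} and the finiteness of the index set in the sum.
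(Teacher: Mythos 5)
Your sketch is correct and follows the same route as the proof of Lemma 6.2.6 in \cite{CL}, which the paper itself only cites without reproving: Lemma 6.1.1 of \cite{CL} to pass to the arc space, the weak N\'eron property of $\scr{X}_1$ (equivalently the multiplicity-one computation) to confine the relevant arcs to the strata $\Omega(A,\be)$ with $\be\in\scr{B}_1$, and the measure-preserving isomorphism of Lemma \ref{measure} together with the bookkeeping of orders along the $\scr{D}_{\al}$, the $E_{\be}$ and $\omega_X$. Nothing essential is missing.
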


\subsection{A few words on convergence of Euler products in our setting}
If in our general convergence result, proposition \ref{convergence}, we take $X$ to be a curve (so that $w(X) = 2$), $\epsilon = \frac12$ and $\beta =0$ (and replacing $\alpha$ by the letter $c$ to avoid confusion with the indices of the components $D_{\al}$), the obtain the following particular case which will be the one we are going to use:
\begin{lemma}\label{convergencetoapply} Let $X$ be a curve over $\C$. Assume $$F(T) = 1 + \sum_{i\geq 1}X_it^i\in\expp_{X}[[T]]$$ is such that there exist an integer $M\geq 1$ and a real number $c<1$  such that
\begin{itemize}\item for all $i\in\{1,\ldots,M\}$, $w_X(X_i)\leq 2i-2$
\item for all $i\geq M$, $w_X(X_i)\leq 2c i -1$.
\end{itemize}
Then there exists $\delta>0$ such that the Euler product $\prod_{v\in X}F_v(T)\in\expp_k[[T]]$ converges for $|T|<\LL^{-1 + \delta}$.
\end{lemma}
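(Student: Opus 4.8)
The plan is to obtain this statement as a direct specialisation of the general convergence criterion of Proposition~\ref{convergence}. The only work is to match parameters and carry out the bookkeeping of the index ranges; there is no genuine obstacle here, all the substance being contained in Proposition~\ref{convergence} and ultimately in the weight estimates of Chapter~\ref{hodgemodules}.

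First I would record that, since $X$ is a curve over $\C$, Lemma~\ref{weightdimension} applied with base $S = \spec\C$ gives $w(X) = 2\dim X + \dim(\spec\C) = 2$. In particular $w(X)$ is a positive even integer, so the bounds in the statement of the present lemma may legitimately be read as bounds on $w_X(X_i)$ relative to $w(X)$.

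Next I would invoke Proposition~\ref{convergence} with the integer $M$ unchanged and with the parameters $\epsilon = \frac12$, $\beta = 0$ and $\al = c$ (the real number called $c$ in the statement above, renamed to avoid a clash with the component indices $D_\al$; note $\al = c < 1$, as the hypothesis of Proposition~\ref{convergence} requires). With these choices its first bullet reads $w_X(X_i) \le \bigl(i - \tfrac12 - \tfrac12\bigr)\,w(X) = 2i - 2$ for $i \in \{1,\dots,M\}$, which is precisely the first hypothesis of the present lemma; and its second bullet reads $w_X(X_i) \le \bigl(c\,i + 0 - \tfrac12\bigr)\,w(X) = 2c\,i - 1$ for $i \ge M+1$, which is implied by the second hypothesis of the present lemma (stated there for all $i \ge M$, hence in particular for $i \ge M+1$). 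Thus all hypotheses of Proposition~\ref{convergence} are satisfied.

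Finally I would read off the conclusion: Proposition~\ref{convergence} produces $\delta > 0$ such that $\prod_{v \in X} F_v(T)$ converges for $|T| < \LL^{-\frac{w(X)}{2}\left(1 - \delta + \frac{\beta}{M+1}\right)}$. Substituting $w(X) = 2$ and $\beta = 0$, the exponent equals $-(1-\delta) = -1 + \delta$, so the Euler product converges for $|T| < \LL^{-1+\delta}$, which is the assertion (the non-vanishing clause of Proposition~\ref{convergence}, not needed here, specialises in the same way). The one point that deserves a moment's care — and which I would flag explicitly in the write-up — is the discrepancy between the index range $i \ge M$ appearing in the present hypothesis and $i \ge M+1$ appearing in Proposition~\ref{convergence}: at $i = M$ only the weaker first-bullet bound $w_X(X_M) \le 2M-2$ is required by Proposition~\ref{convergence}, and this is exactly what the first hypothesis of the lemma supplies, so no gap arises.
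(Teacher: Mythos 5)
Your proposal is correct and is exactly the paper's own argument: the paper obtains Lemma~\ref{convergencetoapply} by specialising Proposition~\ref{convergence} with $w(X)=2$, $\epsilon=\tfrac12$, $\beta=0$ and $\alpha=c$, just as you do. Your explicit check of the index range at $i=M$ (versus $i\ge M+1$ in Proposition~\ref{convergence}) is a fine extra precaution, and no gap arises there.
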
\index{Euler product!convergence}
In practice, $X$ will be some dense open subset of our original curve $C$, and the convergence of the remaining factors will be checked separately. 

\begin{remark}\label{bounddimension} Note that by lemma \ref{weightdimensionnoneffective}, to bound $w_X(X_i)$, it suffices to bound $$2\dim_X(X_i) + \dim X.$$ We are going to use this remark for almost all terms except the very few first ones where a bound on weights rather than dimensions is crucial. 
\end{remark}

\begin{remark}\label{apply.polynomial}Whenever the series $F(T)$ is in fact a polynomial, we may take $M$ to be its degree and then it suffices to check the bound $w_X(X_i)\leq 2i-2$ for all $i$, taking $c$ to be zero in the statement of the lemma. In the case when we want to check it on dimensions, this boils down to the inequality $\dim_X(X_i)\leq i-2$. 
\end{remark}
This remark motivates the following terminology which we will use to discard terms that won't obstruct convergence:
\begin{definition}\label{admissible} Let $X$ be a $k$-variety and $i\geq 0$ be an integer.
 A polynomial $F = \sum a_iT^i\in\expp_X[T]$ is said to be admissible if $\dim_X(a_i) \leq i-2$ for all $i\geq 0$. A polynomial $F = \sum_{\m\in \N^{\scr{A}}}a_{\m}\T^{\m}\in \expp_X[\T]$ is said to be $\rho'$-admissible if $F((T^{\rho'_{\al}})_{\al\in\scr{A}})$ is admissible.
\end{definition}\index{admissible}\index{rhopa@$\rho'$-admissible}
Note that $F\in\expp_{X}[T]$ is admissible if and only if for all $v\in X$, $F_v = \sum a_{i,v}T^{i}\in \expp_{k}[T]$ is admissible, so admissibility may be checked locally.

In what follows, we are going to use the weight function from chapter \ref{hodgemodules}. Therefore, unless explicitly stated, in all what follows, the base field $k$ will be the field of complex numbers~$\C$.

\subsection{Places in $C_0$}
Let $v$ be a place in $C_0$. In this case, for any character $\xi$, $Z_v(\T,\xi)$ is given by lemma \ref{arcspace}, taking~$h$ to be the characteristic function of the set~$\scr{U}(\OO_v)$ inside $G(F_v)$. In other words, one has $h=0$ on $\Omega(A,\be)$ whenever $A\cap \scr{A}_D \neq \varnothing$ or $\be\not\in\scr{B}_0$, and $h=1$ otherwise. Therefore,
\begin{equation}\label{zeta.formula} Z_v(\T,\xi) = \sum_{\substack{A\subset \scr{A}\setminus\scr{A}_D\\\be\in\scr{B}_{0,v}}}\prod_{\al\in\scr{A}}T_{\al}^{e_{\al,\be}}\LL^{\rho_{\be}}\int_{\Omega(A,\be)}\prod_{\al\in A}(\LL^{\rho_{\al}}T_{\al})^{\ord(x_{\al})}\mathrm{e}(\langle x,\xi\rangle)\dx x.\end{equation}

We are going to study the local factors in this form. For all but a finite number of places in $C_0$, a precise analysis is required to prove meromorphic continuation of the Euler product (specialised for $\T = (T^{\rho'_{\al}})_{\al\in\scr{A}}$) for $|T|<\LL^{-1 + \delta}$, with a pole at $\LL^{-1}$. For the finite number of remaining places, a coarser estimate suffices, given by the following lemma:

\begin{lemma}\label{finitenumberfactors} Let $v\in C_0$. The local factor $Z_v((T^{\rho'_{\al}})_{\al},\xi)$ converges for $|T|<\LL^{-1 + \delta}$ for some $\delta >0$.
\end{lemma}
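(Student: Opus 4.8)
The plan is to exploit that, for a fixed place $v$, the series $Z_v(\T,\xi)$ is a \emph{finite} sum of motivic integrals over arc spaces, so that it suffices to bound the weights of the coefficients of the power series in $T$ obtained after the substitution $\T=(T^{\rho'_\al})_{\al\in\scr{A}}$ and to check that its radius of convergence exceeds $\LL^{-1}$. Concretely, I would first invoke Lemma \ref{arcspace} (taking for $h$ the characteristic function of $\scr{U}(\OO_v)\subset G(F_v)$), which, as recorded in (\ref{zeta.formula}), expresses $Z_v(\T,\xi)$ as the finite sum over pairs $(A,\be)$ with $A\subset\scr{A}\setminus\scr{A}_D$, $\be\in\scr{B}_{0,v}$, of
\[
\prod_{\al\in\scr{A}}T_\al^{e_{\al,\be}}\,\LL^{\rho_\be}\int_{\Omega(A,\be)}\prod_{\al\in A}(\LL^{\rho_\al}T_\al)^{\ord(x_\al)}\mathrm{e}(\langle x,\xi\rangle)\,\dx x .
\]
Since $A\subset\scr{A}\setminus\scr{A}_D$ one has $\rho'_\al=\rho_\al$ for $\al\in A$, so after $T_\al\mapsto T^{\rho'_\al}$ every factor $(\LL^{\rho_\al}T_\al)^{\ord(x_\al)}$ becomes $(\LL T)^{\rho_\al\ord(x_\al)}$. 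Expanding the integral according to the orders $k_\al=\ord(x_\al)\geq 1$ for $\al\in A$ and writing $j=\sum_{\al\in A}\rho_\al k_\al$, the coefficient of $(\LL T)^j$ in this integral is a finite sum of oscillatory integrals $\int_{\Omega(A,\be)_{\mathbf k}}\mathrm{e}(\langle x,\xi\rangle)\,\dx x$, where $\Omega(A,\be)_{\mathbf k}=\{x\in\Omega(A,\be):\ord(x_\al)=k_\al\ \forall\,\al\in A\}$.

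The next step is to bound the weight of such an integral. By the triangular inequality for weights in the form of Remark \ref{volume.integrals} (which rests on Lemma \ref{triangularwt}), its weight is at most $w(\mathrm{vol}(\Omega(A,\be)_{\mathbf k}))$. The measure-preserving isomorphism of Lemma \ref{measure} identifies $\Omega(A,\be)_{\mathbf k}$ with $\Delta(A,\be)\times\prod_{\al\in A}\{x\in\scr{L}(\A^1,0):\ord(x)=k_\al\}\times\scr{L}(\A^1,0)^{\,n-|A|}$, and a direct computation in the spirit of Example \ref{volume.constset} gives $\mathrm{vol}(\{x\in\scr{L}(\A^1,0):\ord(x)=k_\al\})=(\LL-1)\LL^{-(k_\al+1)}$, of weight $-2k_\al$; together with $w(\mathrm{vol}(\scr{L}(\A^1,0)))=-2$ and $w([\Delta(A,\be)])=2\dim\Delta(A,\be)\leq 2n$ this yields $w(\mathrm{vol}(\Omega(A,\be)_{\mathbf k}))\leq 2n-2\sum_{\al\in A}k_\al$. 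Since $\sum_{\al\in A}k_\al\geq j/\rho_{\max}$ with $\rho_{\max}=\max_{\al\in\scr{A}}\rho_\al\geq 2$, the coefficient of $(\LL T)^j$ has weight $\leq 2n-2j/\rho_{\max}$, hence the coefficient of $T^j$ in the $(A,\be)$-term has weight $\leq 2j+2n-2j/\rho_{\max}+2\rho_\be=2(1-\rho_{\max}^{-1})j+C_\be$. Taking the maximum over the finitely many $(A,\be)$ and absorbing the harmless monomial shifts $\prod_\al T^{\rho'_\al e_{\al,\be}}$ (which only translate the exponent by a bounded amount) and using property (b) of Lemma \ref{weight.properties}, one gets that the coefficient of $T^i$ in $Z_v((T^{\rho'_\al})_\al,\xi)$ has weight at most $2(1-\rho_{\max}^{-1})i+C$ for some constant $C$ and all $i$. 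Consequently $\limsup_i \tfrac{1}{2i}\,w(\text{coeff.\ of }T^i)\leq 1-\rho_{\max}^{-1}<1$, so by the definition of convergence in Section \ref{sect.powerseriesconv} the series $Z_v((T^{\rho'_\al})_\al,\xi)$ converges for $|T|<\LL^{-(1-\rho_{\max}^{-1})}$, and the lemma follows with $\delta=\rho_{\max}^{-1}>0$.

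I do not expect a serious obstacle: the content is routine once the reduction to a single arc integral and the application of the triangular inequality are in place. The one point needing care is that $\langle x,\xi\rangle$ be a motivic residual function on the relevant jet truncation of $\Omega(A,\be)_{\mathbf k}$, so that Remark \ref{volume.integrals} applies; this is part of the very setup of Lemma \ref{arcspace} borrowed from \cite{CL}. Alternatively --- and this is how the analogous statement is handled in \cite{CL} --- one can evaluate these motivic integrals explicitly (Lemma \ref{motivic.integral} and its variant with a character), which shows that each $(A,\be)$-term becomes, after the substitution, a rational function of $T$ whose only poles lie on the circles $|T|=\LL^{-1+\rho_\al^{-1}}>\LL^{-1}$; this gives convergence for $|T|<\LL^{-1+\delta}$ with the same $\delta$ directly.
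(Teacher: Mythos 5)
Your argument is correct and is essentially the paper's own proof: the same finite decomposition over pairs $(A,\be)$ via Lemma \ref{arcspace}, the same use of Remark \ref{volume.integrals} together with Lemma \ref{measure} to bound each coefficient by the weight of the volume of the corresponding order-stratum, and the same comparison of the geometric decay $\LL^{-\sum_{\al\in A}k_{\al}}$ against the monomials $(\LL^{\rho_{\al}}T^{\rho_{\al}})^{k_{\al}}$ — the paper merely packages this as a change of variables extracting the factors $\LL^{-m_{\al}}$ and then compares with $\prod_{\al\in A}\LL^{\rho_{\al}-1}T_{\al}/(1-\LL^{\rho_{\al}-1}T_{\al})$, which is the same bookkeeping, and it uses the sharper bound $\dim\Delta(A,\be)\leq n-|A|$ where your cruder $\leq n$ also suffices. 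The only shaky point is your closing aside: for non-trivial $\xi$ one cannot directly claim each term is a rational function with poles only on $|T|=\LL^{-1+\rho_{\al}^{-1}}$ (that finer analysis requires resolving the indeterminacies of $f_{\xi}$, as in Section \ref{nontrivchar} and Proposition \ref{nontrivcharS}), but your main argument does not rely on it.
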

\begin{proof} Write $$Z_v(\T,\xi) = \sum_{\beta\in\scr{B}_{0,v}}\prod_{\al\in\scr{A}}T_{\al}^{e_{\al,\be}}\LL^{\rho_{\be}}\sum_{A\subset \scr{A}\setminus \scr{A}_D}Z_{A,\be}(\T,\xi),$$
with $$Z_{A,\be}(\T,\xi) = \int_{\Omega(A,\be)}\prod_{\al\in A}(\LL^{\rho_{\al}}T_{\al})^{\ord(x_{\al})}\mathrm{e}(\langle x,\xi\rangle)\dx x.$$
It suffices to check convergence of each $Z_{A,\be}(\xi)$. Using the notation $f_{\xi}$ for the linear form $\langle \cdot, \xi \rangle$ on $G_F$ and the rational function it induces on $X$, we have
\begin{eqnarray*}Z_{A,\be}(\T,\xi)& =& \sum_{\m\in\N_{>0}^{A}}\prod_{\al\in A}(\LL^{\rho_{\al}}T_{\al})^{m_{\al}}\int_{\substack{\Delta(A,\be)\times \scr{L}(\A^1,0)^{n- |A|}\times\scr{L}(\A^1,0)^{|A|}\\ \ord(x_{\al}) = m_{\al}}}\mathrm{e}(f_{\xi}(x,y))\dx x \dx y\\
& = & \sum_{\m\in\N_{>0}^{A}}\prod_{\al\in A}(\LL^{\rho_{\al}-1}T_{\al})^{m_{\al}}\int_{\substack{\Delta(A,\be)\times \scr{L}(\A^1)^{n-|A|}\times\scr{L}(\A^1,0)^{|A|}\\ \ord(x_{\al}) = 0}}\mathrm{e}(f_{\xi}(t^{\m}x,y))\dx x \dx y.\end{eqnarray*}
By remark \ref{volume.integrals}, each integral is of weight at most the weight of the volume of the domain of integration, which, by (\ref{volumeaffineline}) and (\ref{volumeaffinelineordzero}) is 
\begin{eqnarray*} w( [\Delta(A,\be)] (1-\LL^{-1})^{|A|} \LL^{-n+ |A|})&\leq& 2\dim [\Delta(A,\be)] + 2(- n + |A|)\\
& \leq& 0
\end{eqnarray*}
because the divisors $D_{\al}$ intersect transversely. Thus, the series converges if the series
$$\sum_{\m \in \N_{>0}^{A}}\prod_{\al\in A}(\LL^{\rho_{\al}-1}T_{\al})^{m_{\al}}.$$
converges. This series is equal to 
$$\prod_{\al\in A}\frac{\LL^{\rho_{\al}-1}T_{\al}}{1-\LL^{\rho_{\al}-1}T_{\al}}.$$
Specialising to $\T = (T^{\rho_{\al}})_{\al\in A}$ we get the result. 
\end{proof}

For $\xi = 0$, we need to be more precise and to check the exact order of the pole, therefore we are going to use this lemma only for $\xi\neq 0$. 

\subsubsection{Trivial character}\label{trivchar}
Assume $\xi = 0$. Then the factor $\mathrm{e}(\langle x,\xi\rangle)$ equals 1, and we may compute directly (see \cite{CL}, 6.4):
$$Z_v(\T,0) = \sum_{\be\in\scr{B}_{0,v}}\prod_{\alpha\in \scr{A}}T^{e_{\al,\be}}_{\al}\LL^{\rho_\be}\sum_{A\subset\scr{A}\setminus\scr{A}_D}[\Delta(A,\be)]\LL^{-n + |A|}(1-\LL^{-1})^{|A|}\prod_{\al\in A}\frac{\LL^{\rho_{\al}-1}T_{\al}}{1-\LL^{\rho_{\al}-1}T_{\al}}.$$
Therefore, $Z_v(\T,0)$ is a rational function, and $Z_v(\T,0) \prod_{\al\in \scr{A}\setminus\scr{A}_D}(1-\LL^{\rho_{\al}-1}T_{\al}) $ is a Laurent polynomial (and a polynomial for almost all $v$). 

\begin{prop}\label{trivialcharconv} There is a real number $\delta>0$ such that the product
$$\prod_{v\in C_0}\left(Z_v((T^{\rho'_{\al}})_{\al},0)\prod_{\al\in \scr{A}\setminus\scr{A}_D}(1-\LL^{\rho_{\al}-1} T^{\rho_{\al}}) \right)$$
converges for $|T|< \LL^{-1+\delta}$ and takes a non-zero effective value in $\widehat{\M_k}$ at $T = \LL^{-1}$.
\end{prop}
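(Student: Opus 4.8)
The plan is to study the local factor
\[
W_v(T) := Z_v\!\left((T^{\rho'_{\al}})_{\al\in\scr{A}},0\right)\prod_{\al\in\scr{A}\setminus\scr{A}_D}\bigl(1-\LL^{\rho_{\al}-1}T^{\rho_{\al}}\bigr)
\]
separately on a dense open subset $C'\subseteq C_1$ (cf.\ Notation~\ref{C1}, possibly shrunk further as explained below) and on its finite complement. By the explicit formula for $Z_v(\T,0)$ recalled in section~\ref{trivchar}, the data defining $W_v$ for $v\in C_1$ (the strata $\Delta(A,\be_v)$ and the multiplicities $\rho_\al$) spread out over $C_1$, so there is a polynomial $W(T)=1+\sum_{i\geq 1}X_iT^i\in\M_{C'}[T]$ whose fibre at each $v\in C'$ is $W_v(T)$; for $v\in C_0\setminus C'$, section~\ref{trivchar} shows that $W_v(T)$ is a Laurent polynomial. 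Thus $\prod_{v\in C_0}W_v(T)=\bigl(\prod_{v\in C'}W_v(T)\bigr)\bigl(\prod_{v\in C_0\setminus C'}W_v(T)\bigr)$, the second factor being a (finite product of) Laurent polynomial(s), hence harmless for convergence, and the first being an Euler product over $C'$ to which I would apply the convergence criterion of Lemma~\ref{convergencetoapply}.

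The key point is a weight estimate for the coefficients $X_i$. Expanding $Z_v(\T,0)\prod_{\al\in\scr{A}\setminus\scr{A}_D}(1-\LL^{\rho_\al-1}T_\al)$ from the formula of section~\ref{trivchar} and substituting $T_\al\mapsto T^{\rho'_\al}=T^{\rho_\al}$, one obtains a sum over disjoint subsets $A,B\subseteq\scr{A}\setminus\scr{A}_D$ of terms $\pm[\Delta(A,\be_v)]\LL^{-n+|A|}(1-\LL^{-1})^{|A|}\prod_{\al\in A\cup B}\LL^{\rho_\al-1}$ sitting in degree $i=\sum_{\al\in A\cup B}\rho_\al$. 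Since every $\rho_\al\geq 2$, only degrees $i\geq 2$ occur (so $X_1=0$), and by Lemma~\ref{weightdimension} applied to the family $\Delta(A,\be)$, which has relative dimension $n-|A|$ over $C'$, together with the weight of the powers of $\LL$, each such term has weight $w_{C'}\leq 2i-2|A\cup B|+1$. For $|A\cup B|\geq 2$ this is already $\leq 2i-3$. The only borderline contributions are those with $|A\cup B|=\{\al\}$ a singleton with $\rho_\al=i$, namely $[\Delta(\{\al\},\be_v)]\LL^{-n+1}(1-\LL^{-1})\LL^{i-1}$ and $-[\Delta(\varnothing,\be_v)]\LL^{-n}\LL^{i-1}$, whose sum is $\LL^{i-1-n}\bigl([\Delta(\{\al\},\be_v)]\LL-[\Delta(\{\al\},\be_v)]-[\Delta(\varnothing,\be_v)]\bigr)$. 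Here I would invoke the cancellation of maximal weights, Lemma~\ref{weightcancellation}: after shrinking $C'$ so that $\Delta(\varnothing,\be_v)\cong\G^n_a$ and $\Delta(\{\al\},\be_v)$ spread out to irreducible $C'$-schemes (using that the $\scr{D}_\al$ are geometrically irreducible, by the strict normal crossings assumption), the classes $[\Delta(\{\al\},\be_v)]\LL$ and $[\Delta(\varnothing,\be_v)]$ have equal relative dimension $n$ and their maximal weight parts cancel, so $w_{C'}\bigl([\Delta(\{\al\},\be_v)]\LL-[\Delta(\varnothing,\be_v)]\bigr)\leq 2n$ while $w_{C'}([\Delta(\{\al\},\be_v)])=2n-1$; hence $w_{C'}(X_i)\leq 2(i-1-n)+2n=2i-2$. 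Collecting all contributions, $w_{C'}(X_i)\leq 2i-2$ for every $i\geq 1$.

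Granting this, $W(T)$ being a polynomial over the curve $C'$ with $w_{C'}(X_i)\leq 2i-2$, Lemma~\ref{convergencetoapply} (in the polynomial case, cf.\ Remark~\ref{apply.polynomial}) yields $\delta>0$ with $\prod_{v\in C'}W_v(T)$ convergent for $|T|<\LL^{-1+\delta}$, and multiplying by the finite Laurent polynomial $\prod_{v\in C_0\setminus C'}W_v(T)$ gives the first assertion. For the value at $T=\LL^{-1}$: for $v\in C'$ the geometric series collapse, since $(1-\LL^{-1})^{|A|}\prod_{\al\in A}\frac{\LL^{\rho_\al-1}\LL^{-\rho_\al}}{1-\LL^{\rho_\al-1}\LL^{-\rho_\al}}=\LL^{-|A|}$, whence $Z_v((\LL^{-\rho'_\al})_\al,0)=\LL^{-n}\sum_{A\subseteq\scr{A}\setminus\scr{A}_D}[\Delta(A,\be_v)]=\LL^{-n}[\scr{U}^{\mathrm{red}}_v]=\mathrm{vol}(\scr{U}(\OO_v))$, the motivic local volume of the model, so $W_v(\LL^{-1})=\mathrm{vol}(\scr{U}(\OO_v))(1-\LL^{-1})^{|\scr{A}\setminus\scr{A}_D|}$. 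Thus $\prod_{v\in C_0}W_v(\LL^{-1})$ is a convergent infinite product of local volumes times convergence factors; it is effective as the weight-limit of the effective classes $\LL^{-N_F}[X_F]$ given by its finite partial products, and it is non-zero because, by Lemma~\ref{weightssymproducts}, the degree-$|\pi|$ contribution of $\prod_{v\in C'}W_v(\LL^{-1})$ has weight $\leq\sum n_i(2i-1)-2\sum in_i=-|\pi|$, so that this factor equals $1$ plus an element of strictly negative weight and hence is a unit in $\widehat{\M_k}$, while the remaining finite factor is non-zero since each $Z_v$ is an integral over the non-empty set $\scr{U}(\OO_v)$ (Setting~\ref{hypothese.section}).

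The main obstacle is the estimate $w_{C'}(X_i)\leq 2i-2$ in the second paragraph, precisely the $|A\cup B|=1$ case: the naive bound is $2i-1$, exactly one too large, and one must isolate the correct alternating combination of the classes $[\Delta(\{\al\},\be_v)]$ and $[\Delta(\varnothing,\be_v)]$ and apply the cancellation-of-maximal-weights lemma, which forces the careful choice of $C'$ ensuring the relevant strata spread out into irreducible $C'$-schemes. By comparison, the collapse of the geometric series at $\LL^{-1}$ and the effectiveness bookkeeping are routine.
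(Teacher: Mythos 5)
Your proof is correct and follows essentially the same route as the paper: reduce to the dense open $C_1$ where the factors spread out to a polynomial over the curve, observe that all terms are admissible except the degree-$\rho_{\al}$ ones, handle those by the cancellation-of-maximal-weights lemma~\ref{weightcancellation}, apply lemma~\ref{convergencetoapply} via remark~\ref{apply.polynomial}, and use Setting~\ref{hypothese.section} for the non-vanishing of the finitely many remaining factors at $T=\LL^{-1}$. The only (immaterial) difference is that you organise the borderline cancellation as $[\Delta(\{\al\})]\LL-[\Delta(\varnothing)]$ in relative dimension $n$, whereas the paper writes it as $[\scr{D}_{\al}]-\LL^{n-1}$ in relative dimension $n-1$; both give the bound $w_{C_1}(X_i)\leq 2i-2$.
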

\begin{proof}  Since all factors are Laurent polynomials, and by the properties of Euler products, it suffices to check convergence of the product over $v$ inside the dense open subset~$C_1$ of~$C_0$ (see notation \ref{C1}). Thus, we may assume that $\scr{B}_v = \{\be\}$ (and denote $\Delta(A,\be)$ simply by $\Delta(A)$) and that the integers $e_{\al,\be}$ and $\rho_{\be}$ are zero. Then the above formula simplifies to 
$$Z_v(\T,0) = \sum_{A\subset \scr{A}\setminus\scr{A}_D}[\Delta(A)]\LL^{-n + |A|}(1-\LL^{-1})^{|A|}\prod_{\al\in A}\frac{\LL^{\rho_{\al}-1}T_{\al}}{1-\LL^{\rho_{\al}-1}T_{\al}}.$$
Put $F_v(\T,0):= Z_{v}(\T,0)\prod_{\al\in\scr{A}\setminus \scr{A}_D}(1-\LL^{\rho_{\al}-1}T_{\al})$. It is a polynomial.
 For $A$ such that $|A|\geq 2$ we have $\dim\Delta(A)\leq n-|A|$ so that 
\begin{eqnarray*}F_v(\T,0) & = & 1 - \sum_{\alpha\in\scr{A}\setminus\scr{A}_D}\LL^{\rho_{\al}-1}T_{\al} + \sum_{\alpha\in \scr{A}\setminus\scr{A}_D}[\Delta(\{\al\})]\LL^{1-n}\LL^{\rho_{\al}-1}T_{\al} + P_v(\T)\\
& = & 1-\sum_{\al\in \scr{A}\setminus \scr{A}_D}([\scr{D}_{\al,v}]-\LL^{n-1})\LL^{-n}\LL^{\rho_{\al}}T_{\al} + P_v(\T)\end{eqnarray*}
where $P_v(\T)\in\M_k[\T]$ is a $\rho'$-admissible polynomial (see definition \ref{admissible}). This computation is uniform in $v\in C_1$, meaning that there is a polynomial $F(\T,0)\in\M_{C_1}[\T]$ and a $\rho'$-admissible polynomial $P(\T)\in\M_{C_1}[\T]$ such that, denoting by $v:\spec k\to C_1$ the morphism defining the point $v$, we have
$v^*F = F_v$, $v^*P = P_v$, and
$$F(\T,0) = 1 - \sum_{\al\in\scr{A}\setminus \scr{A}_D}([\scr{D}_{\al}] - \LL^{n-1})\LL^{-n}\LL^{\rho_{\al}}T_{\al} + P(\T)$$
in $\M_{C_1}[\T]$. By lemma \ref{weightcancellation}, we have
$$w_{C_1}(([\scr{D}_{\al}] - \LL^{n-1})\LL^{-n}\LL^{\rho_{\al}}) \leq 2(n-1) -2n + 2\rho_{\al} \leq 2(\rho_{\al} -1).$$
Specialising $T_{\al}$ to $T^{\rho_{\al}}$, we see that we can apply lemma \ref{convergencetoapply} with $X = C_1$, as explained in remark~\ref{apply.polynomial}.

Thus, there is a real number $\delta>0$ such that the infinite product $\prod_{v\in C_1}F_v((T^{\rho'_{\al}})_{\al},0)$ converges for $|T|<\LL^{-1 + \delta}$ and has a non-zero effective value at~$\LL^{-1}$ in $\widehat{\M_{k}}$, which is of the form $1 + a$ with $w(a) < 0$. Multiplying by the finite number of factors $v\in (C_0\setminus C_1)(k)$ does not change convergence. Moreover, for $v\in C_0\setminus C_1(k)$ the value of $F_v((T^{\rho'_\al})_{\al},0)$ at $T = \LL^{-1}$ is exactly:
\begin{equation}\label{valueatL}(1-\LL^{-1})^{|\scr{A}\setminus \scr{A}_D|}\sum_{\substack{A\subset\scr{A}\setminus\scr{A}_D\\ \beta\in \scr{B}_0}} \LL^{\rho_{\be}-\sum_{\al\in\scr{A}}\rho'_{\al}e_{\al,\be}}[\Delta(A,\be)]\LL^{-n},\end{equation}
which is clearly effective. It is non-zero unless $\Delta(A,\beta) = \varnothing$ for all $A\subset \scr{A}\setminus\scr{A}_D$ and all $\beta\in\scr{B}_{0,v}$, which would mean that $\scr{U}(\OO_v) = \varnothing$. The latter was ruled out by our assumption of existence of local sections, see \ref{sect.goodmodelchoice}. We may conclude that the product
$$\prod_{v\in C_0}F_v((T^{\rho'_\al})_{\al},0)$$
converges for $|T| < \LL^{-1 + \delta}$ and has a non-zero effective value at~$\LL^{-1}$ in $\widehat{\M_{\C}}$. Moreover, we may give a formula for this value. For all $v\in C_1$, the expression in (\ref{valueatL}) simplifies to 
$$(1-\LL^{-1})^{|\scr{A}\setminus\scr{A}_D|}\sum_{A\subset\scr{A}\setminus\scr{A}_D}[\Delta(A)]\LL^{-n}$$
$$ = (1-\LL^{-1})^{|\scr{A}\setminus\scr{A}_D|}\left ( 1 + \LL^{-n}\sum_{\varnothing\neq A\subset \scr{A}\setminus\scr{A}_D}[\Delta(A)]\right)$$
because $[\Delta(\varnothing)]= [G(k)] = \LL^{n}$, so that the total value at $\LL^{-1}$ is:
$$\prod_{v\in C_1}(1-\LL^{-1})^{|\scr{A}\setminus \scr{A}_D|}\left(1 + \LL^{-n}\sum_{\varnothing\neq A\subset \scr{A}\setminus\scr{A}_D}[\Delta(A)]\right)
$$
$$\times\prod_{v\in C_0\setminus C_1}(1-\LL^{-1})^{|\scr{A}\setminus \scr{A}_D|}\left(\sum_{\substack{A\subset\scr{A}\setminus\scr{A}_D\\ \beta\in \scr{B}_0}} \LL^{\rho_{\be}-\sum_{\al\in\scr{A}}\rho'_{\al}e_{\al,\be}}[\Delta(A,\be)]\LL^{-n}\right).$$\end{proof}

This result implies that the infinite product $\prod_{v\in C_0}Z_v(0,(T^{\rho'_{\al}})_{\al})$  has a meromorphic continuation for $|T|<\LL^{-1 + \delta}$, its only pole being a pole of order $\card(\scr{A}\setminus\scr{A}_D) = \mathrm{rk\ Pic}(U)$ at $T = \LL^{-1}$. 
\subsubsection{Non-trivial characters} \label{nontrivchar}

For every $\xi\in G(F_v)$, the linear form $x\mapsto \langle x,\xi\rangle$ on $G_{F_v}$ defines a meromorphic function $f_{\xi}$ on  $X$. The support of its divisor of poles is contained in $\bigcup_{\alpha} D_{\alpha}$, so that we can write
$$\div f_{\xi} = E(\xi) - \sum_{\alpha\in \scr{A}}d_{\alpha}(\xi) D_{\alpha},$$
where $E(\xi)$ is an effective divisor, and the integers $d_{\alpha}(\xi)$ \index{dalph@$d_{\alpha}(\xi)$, $d_{\al}$} are non-negative. Once $\xi$ is fixed, we will simply write $E$ and $d_{\alpha}$, omitting the mention of $\xi$. In the following analysis, the place $v$ is fixed, so that $Z_v(\T,\xi)$ will be simply denoted $Z(\T,\xi)$. Moreover, using lemma \ref{finitenumberfactors}, we may assume that $v$ belongs to the open subset $C_1$ of $C$ (see notation \ref{C1}). 
 We write $\Omega(A)$ \index{omegaa@$\Omega(A)$} for $\Omega(A,\be)$. Recall that because of the restriction of the domain of summation (section \ref{summationdomain}), $\xi$ is an element of $(t^{\nu_v}k[[t]])^n = (k[[t]])^n$. 

We have
$$Z(\T,\xi) = \sum_{A\subset \scr{A}\setminus \scr{A}_D} Z_A(\T,\xi)$$ where
$$Z_{A}(\T,\xi) = \int_{\Omega(A)}\prod_{\alpha\in A}\left(\LL^{\rho_{\al}}T_{\al}\right)^{\ord x_{\al}}\mathrm{e}(f_{\xi}(x))\dx x.$$\index{ZA@$Z_A(\T,\xi)$}

\paragraph{The case $A = \varnothing$} The set $\Omega(\varnothing)$ corresponds to arcs with origin contained in none of the $D_{\al}$, that is, contained in $G$.
Then we get 
$$Z_{\varnothing}(\T,\xi) = \int_{\scr{L}(\mathbf{G}^n_{\mathrm{a}})}\mathrm{e}(\langle x,\xi\rangle)\dx x $$

Since $\xi$ is an element of $(t^{\nu_v}k[[t]])^n$, for all $x\in\scr{L}(\mathbf{G}^n_{\mathrm{a}})(k) = k[[t]]^n$, we have 
$$\ord(\langle x,\xi\rangle) = \ord(x_1\xi_1+\ldots + x_n\xi_n) \geq \nu_v.$$
Thus, in fact $r(\langle x,\xi\rangle) = 0$, and the integral is equal to 1. 
 
 \paragraph{The case $A = \{\al\}$} We are going to cut the integral into two pieces: arcs with origin outside or inside the divisor $E$:
 \begin{eqnarray*} Z_{v,\{\al\}}(\T,\xi) &= &\int_{\scr{L}(\scr{X},D_{\al}^{\circ}\backslash E)} (\LL^{\rho_{\al}}T_{\al})^{\ord x_{\al}} \mathrm{e}(f_{\xi}(x_{\al},\y))\dx x_{\al} \dx \y \\
 & &+ \int_{\scr{L}(\scr{X},D_{\al}^{\circ}\cap E)} (\LL^{\rho_{\al}}T_{\al})^{\ord x_{\al}} \mathrm{e}(f_{\xi}(x_{\al},\y)) \dx x_{\al} \dx \y.\end{eqnarray*}
 
By the equality $X(k((t)))= \scr{X}(k[[t]])$, the rational function $f_{\xi}$ on $X$ induces a rational function $f_{\xi}$ on $\scr{L}(\scr{X})$. On the subspace $\scr{L}(\scr{X},D_{\al}^{\circ}\setminus E)$, $f_{\xi}$ is of the form $f_{\xi}(x,\y) = g_{\xi}(x,\y)x^{-d}$, where $g_{\xi}$ is a regular function on $\scr{L}(\scr{X},D_{\al}^{\circ}\setminus E)$. We may expand the function $g_{\xi}$ as a converging and non-vanishing power series in $x\in\scr{L}^{1}(\A^1,0)$ and $\y\in(\scr{L}^{1}(\A^1,0))^{B}$ where $B = A\backslash\{\al\}$:
\begin{equation}\label{g.series}g_{\xi}(x,\y) = \sum_{\substack{p\geq 0 \\ \mathbf{q}\in\N^B}}g_{p,q}x^{p}\y^{\mathbf{q}},\end{equation}
with $g_{p,q}\in\OO(D_{\al}^{\circ})[[t]].$ Since we consider only arcs with origin outside $E$, we have that $\ord(g_0)= 0$, and more generally, $\ord (g_{\xi}(x,\y)) = 0$ for all $x,\y$. 

There are several cases to consider, depending on the order $d =d_{\al}$ of the pole of $f_{\xi}$ at~$D_{\al}$. Define
$$\scr{A}_0(\xi)^D =\{\al\in\scr{A}\setminus \scr{A}_D,\ d_{\al} =0\}$$
and 
$$\scr{A}_1(\xi)^D =\{\al\in\scr{A}\setminus \scr{A}_D,\ d_{\al} =1\}.$$

\paragraph{The order of the pole at $D_{\al}$ is zero}
 Here we assume that $\alpha\in\scr{A}_0^{D}(\xi)$, so that $d=0$. Since $\ord (g) \geq 0$, we have $r\circ g = 0$. Therefore 

\begin{eqnarray*} & & \int_{\scr{L}(\scr{X},D_{\al}^{\circ}\backslash E)} (\LL^{\rho_{\al}}T_{\al})^{\ord x_{\al}} \mathrm{e}(f_{\xi}(x,\y))\dx x \dx \y\\
&= &\int_{(D_{\al}^{\circ}\setminus E) \times \scr{L}(\A^1,0)^{n-1}}\sum_{m\geq 1}(\LL^{\rho_{\al}}T_{\al})^{m}\left(\int_{\substack{\scr{L}(\A^1,0) \\ \ord x = m}}\psi(r(g_{\xi}(x,\y)))\dx x \right)\dx \y\\
& = & \int_{(D_{\al}^{\circ}\setminus E) \times \scr{L}(\A^1,0)^{n-1}}\sum_{m\geq 1}(\LL^{\rho_{\al}}T_{\al})^{m} \LL^{-m}(1-\LL^{-1})\\
& = & (1-\LL^{-1})\frac{\LL^{\rho_{\al}-1}T_{\al}}{1-\LL^{\rho_{\al}-1}T_{\al}}[D^{\circ}_{\alpha}\setminus E]\LL^{-n+1}
\end{eqnarray*}

\paragraph{The order of the pole at $D_{\al}$ is positive} Assume now $d\geq 1$. Then:

$$\int_{\scr{L}(\scr{X},D_{\al}^{\circ}\setminus E)} (\LL^{\rho_{\al}}T_{\al})^{\ord x_{\al}} \mathrm{e}(f_{\xi}(x,\y))\dx x \dx \y $$
\begin{eqnarray*} &=&\sum_{m\geq 1}(\LL^{\rho_{\al}}T_{\al})^{m}\int_{\substack{\scr{L}(\scr{X},D_{\al}^{\circ}\setminus E)\\ \ord x = m}}\mathrm{e}(g_{\xi}(x,\y)x^{-d})\dx x \dx \y\\
&=&\int_{(D_{\al}^{\circ}\setminus E)\times\scr{L}(\A^1,0)^{n-1}}\sum_{m\geq 1}(\LL^{\rho_{\al}}T_{\al})^{m}\left(\int_{\substack{\scr{L}(\A^1,0)\\ \ord x = m}}\psi(r(g_{\xi}(x,\y)x^{-d}))\dx x \right)\dx \y.\end{eqnarray*}
Fixing $\y$ and viewing $g_{\xi}(x,\y)$ as a power series in~$x$, we may apply lemma \ref{motivic.integral}: indeed, first of all we may drop all the terms of degree in $x$ greater than $d$ because of the invariance of $r$, so that we get a polynomial in $x$ with coefficients in $k[[t]]$. Moreover, its constant term is $g_{\xi}(0,\y)$ which by a remark above is of order $0$.  This shows that this expression is zero when $d>1$. When $d=1$, only the term for $m=1$ remains, and it is equal to 
$$[D_{\al}^{\circ}\setminus E]\times \LL^{1-n} \LL^{\rho_{\al}}T_{\al}(-\LL^{-2}) ,$$
which is $\rho'_{\al}$-admissible, as $\dim [D_{\al}^{\circ}\setminus E] = n-1$. 

\paragraph{Arcs with origin in $E$}
The term corresponding to arcs with origin in $E$ may be rewritten as
\begin{eqnarray*}&&\sum_{m\geq 1}(\LL^{\rho_{\al}}T_{\al})^{m}\int_{\substack{\scr{L}(\scr{X},D_{\al}^{\circ}\cap E)\\ \ord x = m}}\mathrm{e}(f_{\xi}(x,\y)) \dx x \dx \y\\
& = & \sum_{m\geq 1}(\LL^{\rho_{\al}}T_{\al})^{m}\int_{\substack{D_{\al}^{\circ}\cap E\times\scr{L}(\A^1) \scr{L}(\A^1,0)^{n-1}\\ \ord x = m}}\mathrm{e}(f_{\xi}(t^mx,\y)) \dx x \dx \y\\
\end{eqnarray*}
 Using remark \ref{volume.integrals} again, the weight of the coefficient of degree $m$  is bounded by the the weight of $\LL^{m\rho_{\al}}[D_{\al}^{\circ}\cap E]\times \LL^{-m}(1-\LL^{-1})\LL^{-n+1}$. The dimension of the latter is smaller than 
 $$m\rho_{\al}+ (n-2) -m -(n-1) = m\rho_{\al} -m -1 $$
 Since $m\geq 1$, we see that all these terms are $\rho'$-admissible. It remains to bound all terms of sufficiently large degree as in lemma \ref{convergencetoapply}. For this, put
 \begin{equation}\label{defofc}c = \max_{\al\in\scr{A}\setminus\scr{A}_D}\left(1- \frac{1}{2\rho_{\al}}\right) < 1,\end{equation}
 so that for all $\alpha\in\scr{A}\setminus \scr{A}_D$, $\rho_{\al}-\frac{1}{2}\leq c\rho_{\al}.$
 Using remark \ref{bounddimension}, we see that
  $$2(m\rho_{\al} - m-1) + \dim C_1 \leq 2(mc\rho_{\al} -1) + 1 = 2c (m\rho_{\al}) -1$$
  so if $T_{\al}$ is specialised to $T^{\rho_{\al}}$ for all $\alpha\in\scr{A}\setminus\scr{A}_D$, we are in the situation of lemma \ref{convergencetoapply}. Note that here we in fact could have taken a smaller $c$, namely $c = \max_{\al\in \scr{A}\setminus\scr{A}_D}(1 -\rho_{\al}^{-1})$. The definition we chose will be important in the next case.

\paragraph{The case $\#A > 2$} Then $Z_{A}$ can be rewritten as:
\begin{eqnarray*}Z_{v,A}(\T,\xi) &=& \sum_{\m\in\N_{>0}^{A}} \prod_{\alpha\in A}(\LL^{\rho_{\al}}T_{\al})^{m_{\al}}\int_{\ord x_{\al} = m_{\al}}\mathrm{e}(f_{\xi}(x,y))\dx x \dx y \\
\end{eqnarray*}
We proceed as in the previous case. The integral in each term is over the constructible subset of $\Omega(A)$ 
of points satisfying $\ord(x_{\al}) = m_{\al}$, which has motivic volume $$[D_A^{\circ}]\prod_{\al\in A}\left( \LL^{-m_{\al}}(1-\LL^{-1})\right) \LL^{-n + \#A}.$$ The constructible set $D_A^{\circ}$ is given as an open subset of an intersection of the $|A|$ divisors $D_{\al}$, $\al\in A$, and therefore is of dimension at most $n-|A|$. We may conclude that the dimension of the element of $\expp_k$ given by the integral is at most $-\sum_{\alpha\in A}m_{\al} \leq -2.$ Thus, all terms are $\rho'$-admissible, and it remains to give a stronger bound for all terms of sufficiently large degree.
Using remark \ref{bounddimension} again, with $c$ given by (\ref{defofc}), we have
\begin{eqnarray*}2\sum_{\al\in A}m_{\al}(\rho_{\al}-1) + \dim C_1 &\leq& 2 \sum_{\al\in A}m_{\al}\left(\rho_{\al}-\frac{1}{2}\right)  - \sum_{\al\in A} m_{\al}+1\\
&\leq & 2c \sum_{\al\in A} m_{\al}\rho_{\al} - 1.\end{eqnarray*}
Thus, here again we are in the situation of lemma \ref{convergencetoapply}.

\paragraph{Putting everything together}
We decomposed the zeta function at $v$ in the following manner:
\begin{eqnarray*}Z_{v}(\T,\xi)  &= & 1 + \sum_{\al\in\scr{A}_0}Z_{v,\al}(\T,\xi) + \sum_{\al\in\scr{A}_1}Z_{v,\al}(\T,\xi) + \sum_{\# A>2} Z_{v,A}(\T,\xi)\\
 &= & 1 + \LL^{-n}\sum_{\al\in \scr{A}_0^{D}(\xi)}[D_{\al}^{\circ}\setminus E] \frac{\LL^{\rho_{\al}}T_{\al}}{1-\LL^{\rho_{\al}-1} T_{\al}} + \substack{\text{terms satisfying the bounds}\\\text{of lemma \ref{convergencetoapply} for $c$ given by (\ref{defofc})}}
\end{eqnarray*}
Since $\rho_{\al}-1 \leq c\rho_{\al}$, multiplication by $\LL^{\rho_{\al}-1}T^{\rho_{\al}}$ preserves the bounds of lemma \ref{convergencetoapply}. Thus, multiplying everything by $$\prod_{\al\in \scr{A}^D_0(\xi)}(1-\LL^{\rho_{\al}-1}T_{\al})$$ and keeping only potentially non-admissible terms, we get:

\begin{eqnarray}\prod_{\al\in \scr{A}_0^D(\xi)}(1-\LL^{\rho_{\al}-1}T_{\al})Z_{v}(\T,\xi)& = & 1 - \sum_{\al\in\scr{A}_0^D(\xi)}\LL^{\rho_{\al}-1}T_{\al} \label{A0line}\\
&& +\ \LL^{1-n}\sum_{\al\in\scr{A}^D_0(\xi)}[D_{\al}^{\circ}\setminus E](\LL^{\rho_{\al}-1}T_{\al})\label{A1line}\\
 &  &  +\ \ \substack{\text{terms satisfying the bounds}\\\text{of lemma \ref{convergencetoapply} for $c$ given by (\ref{defofc})}}  \label{A2line}
\end{eqnarray}

\begin{prop}\label{nontrivcharconv} There is a real number $\delta>0$ such that the product
$$\prod_{v\in C_0}\left(Z_v((T^{\rho'_{\al}})_{\al},\xi)\prod_{\al\in \scr{A}^D_0(\xi)}(1-\LL^{\rho_{\al}-1} T^{\rho_{\al}}) \right)$$
converges for $|T|< \LL^{-1+\delta}$. 
\end{prop}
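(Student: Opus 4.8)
The plan is to follow the pattern of the proof of Proposition~\ref{trivialcharconv}: reduce the Euler product over $C_0$ to the Euler product over the dense open subset $C_1$ of Notation~\ref{C1}, assemble the local computations of Section~\ref{nontrivchar} into a convergent family over $C_1$, and apply the convergence criterion Lemma~\ref{convergencetoapply}. The only new feature compared with the trivial-character case is that the data $E=E(\xi)$, the orders $d_\al=d_\al(\xi)$ and the set $\scr{A}_0^D(\xi)$ depend on $\xi$; but since $\xi$ is fixed throughout, this causes no difficulty. First I would split $\prod_{v\in C_0}=\prod_{v\in C_1}\cdot\prod_{v\in C_0\setminus C_1}$. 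The second product is finite, and each of its factors $Z_v((T^{\rho'_\al})_\al,\xi)\prod_{\al\in\scr{A}_0^D(\xi)}(1-\LL^{\rho_\al-1}T^{\rho_\al})$ converges for $|T|<\LL^{-1+\delta_v}$ with $\delta_v>0$ by Lemma~\ref{finitenumberfactors} (the extra factor being a fixed polynomial). Taking the minimum over these finitely many $\delta_v$ reduces the problem to the convergence of $\prod_{v\in C_1}F_v(T)$, where $F_v(T):=Z_v((T^{\rho'_\al})_\al,\xi)\prod_{\al\in\scr{A}_0^D(\xi)}(1-\LL^{\rho_\al-1}T^{\rho_\al})$.

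Next I would observe that all the local computations of Section~\ref{nontrivchar} --- the decomposition $Z_v(\T,\xi)=\sum_{A\subset\scr{A}\setminus\scr{A}_D}Z_{v,A}(\T,\xi)$ coming from Lemmas~\ref{arcspace} and~\ref{measure}, together with the case-by-case evaluation of each $Z_{v,A}$ ($A=\varnothing$; $A=\{\al\}$ with $d_\al=0$, $d_\al=1$, or $d_\al\ge 2$; the subintegral over arcs with origin in $E$; and $\#A\ge 2$) --- involve only the strata $D_A^\circ$, $D_\al^\circ\setminus E$, $D_\al^\circ\cap E$ of the good model restricted to $C_1$, the local intersection degrees, and the motivic measure on the arc space of $\scr{X}_{C_1}$, all of which spread out over $C_1$ by the usual Noetherian induction. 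After shrinking $C_1$ once more, if necessary, so that for each $\al\in\scr{A}_0^D(\xi)$ the model stratum $D_\al^\circ\setminus E$ is either empty or smooth of relative dimension $n-1$ over $C_1$, one obtains an identity in $\expp_{C_1}[[T]]$; collecting the terms that are not $\rho'$-admissible and specialising $T_\al\mapsto T^{\rho_\al}$ yields, exactly as in \eqref{A0line}--\eqref{A2line}, a series $F_{C_1}(T)=1+\sum_{i\ge 1}X_iT^i$ in which the combination of \eqref{A0line} and \eqref{A1line} contributes to the coefficient of $T^{\rho_\al}$ (for $\al\in\scr{A}_0^D(\xi)$) the class $\pm([D_\al^\circ\setminus E]-\LL^{n-1})\LL^{-n}\LL^{\rho_\al}$, while every other $X_i$ is either $\rho'$-admissible or satisfies $w_{C_1}(X_i)\le 2ci-1$ with $c<1$ the constant of \eqref{defofc}, and the coefficients that are only $\rho'$-admissible without the $c$-bound occur in bounded degree $i\le M_1:=\sum_{\al\in\scr{A}\setminus\scr{A}_D}\rho_\al$.

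Finally I would verify the hypotheses of Lemma~\ref{convergencetoapply} for $F_{C_1}$ with $X=C_1$ and $M:=M_1+1$. For the boundary coefficients, $D_\al^\circ\setminus E$ and $\A^{n-1}_{C_1}$ are irreducible (or the former is empty) of relative dimension $n-1$ over $C_1$, so the cancellation of maximal weights, Lemma~\ref{weightcancellation}, gives $w_{C_1}([D_\al^\circ\setminus E]-\LL^{n-1})\le 2n-2$, hence $w_{C_1}(X_{\rho_\al})\le 2\rho_\al-2$ by subadditivity of the weight under multiplication by powers of $\LL$; since $2\rho_\al-2\le 2c\rho_\al-1$ by the definition \eqref{defofc} of $c$, these terms satisfy both bounds. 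Every $\rho'$-admissible coefficient satisfies $w_{C_1}(X_i)\le 2\dim_{C_1}X_i+1\le 2i-3$ by Lemma~\ref{weightdimensionnoneffective}, and any coefficient with $w_{C_1}(X_i)\le 2ci-1$ automatically has $w_{C_1}(X_i)\le 2i-2$ since weights are integers and $2ci-1<2i-1$. Thus $w_{C_1}(X_i)\le 2i-2$ for all $i$, while for $i\ge M$ only $c$-bounded contributions survive, so $w_{C_1}(X_i)\le 2ci-1$; Lemma~\ref{convergencetoapply} then produces $\delta_{C_1}>0$ with $\prod_{v\in C_1}F_v(T)$ convergent for $|T|<\LL^{-1+\delta_{C_1}}$, and $\delta:=\min(\delta_{C_1},\min_{v\in C_0\setminus C_1}\delta_v)$ does the job.

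The only real obstacle I anticipate is bookkeeping rather than conceptual: one must check carefully that all the fixed-$v$ computations of Section~\ref{nontrivchar} are genuinely uniform over $C_1$, translate the pointwise dimension estimates there into the $C_1$-relative weight estimates used above, and arrange (by shrinking $C_1$) that $D_\al^\circ\setminus E$ is equidimensional over $C_1$ so that Lemma~\ref{weightcancellation} may be applied to the boundary terms. Everything else parallels the proof of Proposition~\ref{trivialcharconv}, and in particular no non-vanishing statement at $T=\LL^{-1}$ is needed here.
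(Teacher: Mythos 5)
Your proposal is correct and follows essentially the same route as the paper: reduce to $v\in C_1$ via Lemma~\ref{finitenumberfactors}, use the uniform-in-$v$ formulas (\ref{A0line})--(\ref{A2line}), bound the weight of the combined boundary terms by Lemma~\ref{weightcancellation} exactly as in Proposition~\ref{trivialcharconv}, and conclude with Lemma~\ref{convergencetoapply} for the constant $c$ of (\ref{defofc}). Your extra care about equidimensionality of $D_{\al}^{\circ}\setminus E$ over $C_1$ (shrinking $C_1$ if necessary) is a harmless refinement of what the paper leaves implicit.
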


\begin{proof} The above calculations give explicit formulas for the main terms of all $v\in C_1$. Using lemma \ref{finitenumberfactors}, it suffices to check convergence for these places, by showing that they satisfy the conditions of lemma \ref{convergencetoapply} for $c$ given by (\ref{defofc}) and some sufficiently large $M$. According to the above, it suffices to bound the weights of the terms in (\ref{A0line}) and (\ref{A1line}), which is done exactly as in the proof of
 proposition \ref{trivialcharconv}. The conclusion follows.
\end{proof}

 

\subsection{Places in $S$}
Let $v$ be a place in $S = C\setminus C_0$. In this case, for any character $\xi$, $Z(\T,\xi)$ is given by lemma \ref{arcspace}, taking $h=1$:
$$Z(\T,\xi) = \sum_{\substack{A\subset \scr{A}\\\be\in\scr{B}_0}}\prod_{\al\in\scr{A}}T_{\al}^{e_{\al,\be}}\LL^{\rho_{\be}}\int_{\Omega(A,\be)}\prod_{\al\in A}(\LL^{\rho_{\al}}T_{\al})^{\ord(x_{\al})}\mathrm{e}(\langle x,\xi\rangle))\dx x.$$

In this section we are going to use Clemens complexes: see \cite{CL}, section 2.3, for the definition.

\subsubsection{Trivial character}\label{trivcharS}
Assume $\xi = 0$, so that, as in section 6.4 of \cite{CL},
\begin{equation}\label{zetatrivialcharS}Z_{v}(\T,0) = \sum_{\substack{A\subset \scr{A}\\ \beta\in\scr{B}_{1,v}}}\left(\prod_{\al\in\scr{A}}T_{\al}^{e_{\al,\be}}\LL^{\rho_{\be}}\right) [\Delta(A,\be)](1-\LL^{-1})^{|A|}\LL^{-n + |A|}\prod_{\al\in A}\frac{\LL^{\rho_{\al}-1}T_{\al}}{1-\LL^{\rho_{\al}-1} T_{\al}}.\end{equation}
This case has essentially been treated in section 6.4 and proposition 4.3.2 of \cite{CL}. The only modification we have to make is to adapt to the case where $D$ is not all of $X\setminus G$, so that the $\alpha\not\in \scr{A}_D$ do not contribute to the pole. For this, we proceed as in \cite{CL}, fixing for every pair $(A,\beta)$ a maximal subset $A_0$ of $\scr{A}_{D}$ such that $A\cap \scr{A}_D\subset A_0$ and $\Delta(A_0,\be)\neq \emptyset$. We collect the terms in equation \ref{zetatrivialcharS} corresponding to pairs $(A,\be)$ associated with any given $A_0$:
$$Z_{v}(\T,0) = \sum_{\substack{A_0\in \Cl_{v}^{\an,\max}(X,D)}}\sum_{\substack{A\subset \scr{A}, \beta\in\scr{B}_{1,v}\\
(A,\be) \mapsto A_0}}\left(\prod_{\al\in\scr{A}}T_{\al}^{e_{\al,\be}}\LL^{\rho_{\be}}\right) [\Delta(A,\be)]$$
$$\times(1-\LL^{-1})^{|A|}\LL^{-n + |A|}\prod_{\al\in A\setminus\scr{A}_D}\frac{\LL^{\rho_{\al}-1}T_{\al}}{1-\LL^{\rho_{\al}-1} T_{\al}} \prod_{\al\in A\cap \scr{A}_D}\frac{\LL^{\rho_{\al}-1}T_{\al}}{1-\LL^{\rho_{\al}-1} T_{\al}}.$$
 Thus, there exists a family of Laurent polynomials $(P_{v,A})$ with coefficients in $\M_k$ indexed by the set of maximal faces $A$ of $\Cl_v^{\an}(X,D)$ such that
$$Z_v(\T,0) = \sum_{A\in \Cl_{v}^{\an,\max}(X,D)}\frac{P_{v,A}(\T)}{\prod_{\al\in \scr{A}\setminus \scr{A}_D}(1-\LL^{\rho_{\al}-1}T_{\al})}\prod_{\al\in A}\frac{1}{1-\LL^{\rho_{\al}-1}T_{\al}},$$
with $P_{A,v}( \T)$ congruent to some non-zero effective element of $\M_k$ modulo the ideal generated by the polynomials $1-\LL^{\rho_{\al}-1}T_{\al}$ for $\al\in \scr{A}$. Putting $T_{\al} = T^{\rho'_{\al}}$ for every $\al\in\scr{A}$, we may deduce from this that there is a family of Laurent series 
  $(F_{v,A})$ with coefficients in $\M_k$, indexed by the set of maximal faces $A$ of $\Cl_v^{\an}(X,D)$, converging for $|T| < \LL^{-1 + \min_{\al\in\scr{A}\setminus \scr{A}_D} \frac{1}{\rho_{\al}}}$, taking a non-zero effective value in $\widehat{\M}_k$ at $\LL^{-1}$, and such that
$$Z_v(\T,0) = \sum_{A\in \Cl_{v}^{\an,\max}(X,D)}F_{v,A}(T)\prod_{\al\in A}\frac{1}{1-(\LL T)^{\rho_{\al}-1}}.$$
 In particular, setting $d_v = 1 + \dim \mathrm{Cl}_v^{\mathrm{an}}(X,D)$, we may deduce the following result:

\begin{prop} There is a real number $\delta >0$ such that for every non-zero common multiple $a$ of the integers $\rho_{\al}-1$, $\al\in\scr{A}_D$, the Laurent series $(1-\LL^{a}T^{a})^{d_v}(Z_v(T,0))$ converges for $|T| <\LL^{-1 + \delta}$ and takes a non-zero effective value at~$\LL^{-1}$.
\end{prop}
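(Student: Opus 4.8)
The plan is to build directly on the decomposition obtained just above: after the substitution $T_\al = T^{\rho'_\al}$ one has
$$Z_v(\T,0) = \sum_{A\in \Cl_{v}^{\an,\max}(X,D)}F_{v,A}(T)\prod_{\al\in A}\frac{1}{1-(\LL T)^{\rho_{\al}-1}},$$
where each $F_{v,A}$ is a Laurent series with coefficients in $\M_k$ that converges for $|T|<\LL^{-1+\delta}$, with $\delta := \min_{\al\in\scr{A}\setminus\scr{A}_D}\rho_\al^{-1}$ (an empty minimum being read as, say, $1$), and takes a non-zero effective value at $T=\LL^{-1}$. I would use two standard facts about the analytic Clemens complex: its vertices lie in $\scr{A}_D$, so every maximal face satisfies $A\subseteq\scr{A}_D$ and $|A|\le 1+\dim\Cl_v^{\an}(X,D)=d_v$; and at least one maximal face has $|A|=d_v$, since a face of maximal dimension is automatically inclusion-maximal.

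The core of the argument is a divisibility observation. Fix a non-zero common multiple $a$ of the integers $\rho_\al-1$ for $\al\in\scr{A}_D$ (these are $\ge 1$ as $\rho_\al\ge 2$). For each such $\al$ the polynomial $1-(\LL T)^{\rho_\al-1}$ divides $1-(\LL T)^a = 1-\LL^aT^a$ in $\M_k[T]$, the quotient being $\sum_{j=0}^{a/(\rho_\al-1)-1}(\LL T)^{j(\rho_\al-1)}$, whose value at $T=\LL^{-1}$ is the positive integer $a/(\rho_\al-1)$ — a non-zero effective element of $\M_k$. Consequently, for each maximal face $A$,
$$(1-\LL^aT^a)^{d_v}\,F_{v,A}(T)\prod_{\al\in A}\frac{1}{1-(\LL T)^{\rho_{\al}-1}} = F_{v,A}(T)\,(1-\LL^aT^a)^{d_v-|A|}\prod_{\al\in A}\frac{1-(\LL T)^a}{1-(\LL T)^{\rho_{\al}-1}}$$
is the product of $F_{v,A}(T)$ with a polynomial in $\LL T$ (using $d_v-|A|\ge 0$). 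Summing over the finitely many maximal faces shows that $(1-\LL^aT^a)^{d_v}Z_v(T,0)$ is a finite sum of Laurent series each converging for $|T|<\LL^{-1+\delta}$, hence converges there.

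It remains to evaluate at $T=\LL^{-1}$. In the summand attached to a maximal face $A$ with $|A|<d_v$, the factor $(1-\LL^aT^a)^{d_v-|A|}$ vanishes at $T=\LL^{-1}$, so that term contributes $0$; in the summand with $|A|=d_v$ it contributes $F_{v,A}(\LL^{-1})\prod_{\al\in A}\frac{a}{\rho_\al-1}$. Thus
$$\big[(1-\LL^aT^a)^{d_v}Z_v(T,0)\big]_{T=\LL^{-1}} = \sum_{\substack{A\in\Cl_v^{\an,\max}(X,D)\\ |A|=d_v}} F_{v,A}(\LL^{-1})\prod_{\al\in A}\frac{a}{\rho_\al-1},$$
a non-empty sum of products of non-zero effective elements with positive integers. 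Each such product is non-zero and effective, and a sum of non-zero effective classes is again non-zero and effective; the non-vanishing may be checked, if desired, after applying the Hodge–Deligne realisation, whose top-degree coefficient is strictly positive on each summand. This last non-vanishing step in the completed ring $\widehat{\M}_k$ is the only point I expect to require genuine care — all the rest is formal manipulation of rational functions together with the elementary fact that $1-x^{\rho_\al-1}$ divides $1-x^a$.
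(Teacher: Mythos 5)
Your proposal is correct and follows essentially the route the paper intends: it starts from the decomposition $Z_v(T,0)=\sum_{A\in \Cl_v^{\an,\max}(X,D)}F_{v,A}(T)\prod_{\al\in A}(1-(\LL T)^{\rho_\al-1})^{-1}$ established just before the statement, and deduces the proposition from the divisibility of $1-(\LL T)^a$ by each $1-(\LL T)^{\rho_\al-1}$ together with the bound $|A|\le d_v$ and the existence of a face with $|A|=d_v$. The paper leaves this deduction implicit ("we may deduce the following result"), and your filling-in, including the evaluation at $T=\LL^{-1}$ and the effectivity discussion, matches its level of rigour.
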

\begin{remark} According to the above calculations, one may take $\delta = \min_{\al\in\scr{A}\setminus\scr{A}_D}\frac{1}{\rho_{\al}}.$
\end{remark}

\subsubsection{Non-trivial characters} 
 For this case we refer to proposition 4.3.4 and section~6.5 in~\cite{CL}. Recall from \ref{summationdomain} that we restricted the summation domain to a finite-dimensional $k$-vector space $V$. For any $v$, denote by $\a:V_{F_v}\to G_{F_v}$ the corresponding $F_v$-linear inclusion. For every $v$, we then have a Laurent series $Z_{v}(T,\a(\cdot))\in \expp_{V}[[T]][T^{-1}]$, and we ask for its convergence properties.  Section 6.5 of \cite{CL} gives an argument, based on ideas from section 3.4 of \cite{CLTi}, to describe the convergence properties of $Z_v(T,\a(\cdot))$, uniformly on the strata of a constructible partition of $V$. First of all, Chambert-Loir and Loeser show lemma~6.5.1, which allows them to resolve indeterminacies of the function~$f_{\xi}$ uniformly on each stratum $P$ of such a partition. Then they apply change of variables to show that one can compute the integral giving $Z_v(\T,\xi)$ on the fibre above $\xi$ of such a resolution. On the other hand, they have a general result, proposition 5.3.1, giving a formula for motivic Igusa zeta functions without indeterminacies. It states that the poles of such an Igusa zeta function are controlled by the set of maximal faces of the subcomplex $\mathrm{Cl}^{\an}_v(X,D)_{\xi}$ of the complex $\mathrm{Cl}^{\an}_v(X,D)$ where we only keep vertices $\al\in \scr{A}_{D}$ such that $d_{\al}(\xi)=0$. This argument works in exactly the same way in our setting, the only difference being that in Chambert-Loir and Loeser's paper, the set $\scr{A}_D$ is equal to $\scr{A}$, which is not necessarily the case here. Therefore, proposition 4.3.4 from \cite{CL} adapts to our setting in the following form:
 
 \begin{prop}\label{nontrivcharS} Let $v\in S$ and let $d_v(\xi) = 1 + \dim \mathrm{Cl}_v^{\mathrm{an}}(X,D)_{\xi}$. There exists a constructible partition $(U_{v,i})$ of $V\setminus\{0\}$ on each stratum of which $\xi\mapsto d_v(\xi)$ is constant equal to some integer $d_{v,i}$, and, for every $i$, an element $P_{v,i}\in\expp_{U_{v,i}}[\T,\T^{-1}]$ and finite families $(a_{v,i,j})$, $(b_{v,i,j})$ where $a_{v,i,j}\in\N$, $b_{v,i,j}\in\N^{\scr{A}}$, such that the restriction of $Z_{v}(\T,\a(\cdot))$ to $U_{v,i}$ equals 
 $$\prod_{j}(1-\LL^{a_{v,i,j}}\T^{b_{v,i,j}})^{-1} P_{v,i}(\T;\cdot).$$
 Moreover, there exist integers $a_{v,i}\geq 1$ and a real number $\delta>0$ such that the restriction to~$U_{v,i}$ of $(1-(\LL T)^{a_{v,i}})^{d_{v,i}}Z_v(T,\a(\cdot))$ converges for $|T|<\LL^{-1 + \delta}$. 
 \end{prop}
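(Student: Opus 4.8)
The plan is to follow, essentially verbatim, the argument of \cite{CL}, proposition 4.3.4 together with section 6.5 (which itself adapts the analysis of \cite{CLTi}, section 3.4), the only modification being to keep track of the fact that in our setting $\scr{A}_D$ need not equal $\scr{A}$, exactly as was done for the trivial character case in \ref{trivcharS} and for places of $C_0$ in \ref{nontrivchar}. First I would start from the expression for $Z_v(\T,\xi)$ furnished by lemma \ref{arcspace} applied with $h=1$, writing it as a finite sum over $A\subset\scr{A}$ and $\beta\in\scr{B}_{1,v}$ of motivic integrals over the arc subspaces $\Omega(A,\beta)$ weighted by the function $\mathrm{e}(\langle x,\xi\rangle)$. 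The rational function $f_\xi=\langle\cdot,\xi\rangle$ on $X$ has divisor $\div f_\xi = E(\xi)-\sum_{\al\in\scr{A}}d_\al(\xi)D_\al$ with $E(\xi)$ effective, so these integrals are motivic Igusa-type zeta functions with indeterminacies, and the difficulty is to compute them uniformly as $\xi$ ranges over the finite-dimensional space $V$ through which the summation in \ref{summationdomain} runs.

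The second step is to resolve these indeterminacies uniformly in $\xi$: by lemma 6.5.1 of \cite{CL} there is a finite constructible partition $(U_{v,i})$ of $V\setminus\{0\}$ and, over each stratum, a proper birational morphism which simultaneously resolves $f_\xi$ for all $\xi$ in the stratum; refining the partition, we may moreover assume that the subset $\{\al\in\scr{A}_D:\ d_\al(\xi)=0\}$, and hence the subcomplex $\Cl_v^{\an}(X,D)_\xi$ and the integer $d_v(\xi)=1+\dim\Cl_v^{\an}(X,D)_\xi$, are constant equal to some $d_{v,i}$ on $U_{v,i}$. Applying the change of variables formula for motivic integrals on the fibre over $\xi$ of this resolution, as in \cite{CL}, section 6.5, rewrites the restriction of $Z_v(\T,\a(\cdot))$ to $U_{v,i}$ as a motivic Igusa zeta function without indeterminacies, to which proposition 5.3.1 of \cite{CL} applies: it yields a rational expression $\prod_j(1-\LL^{a_{v,i,j}}\T^{b_{v,i,j}})^{-1}P_{v,i}(\T;\cdot)$ with $P_{v,i}\in\expp_{U_{v,i}}[\T,\T^{-1}]$, whose denominators at $\LL^{-1}$ (after the relevant specialization) are indexed by the maximal faces of $\Cl_v^{\an}(X,D)_\xi$.

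Finally, for the convergence statement I would specialise $T_\al\mapsto T^{\rho'_\al}$ and observe that the vertices $\al\notin\scr{A}_D$ enter the resolution with multiplicity $\rho_\al\geq\rho'_\al+1$, so their contributions produce poles located strictly to the left of $\LL^{-1}$, exactly as in the computations of \ref{trivchar}--\ref{nontrivchar} and \ref{trivcharS}; only the vertices $\al\in\scr{A}_D$ with $d_\al(\xi)=0$ reach $\LL^{-1}$, and the order of the pole there is therefore at most $d_{v,i}=1+\dim\Cl_v^{\an}(X,D)_\xi$. Choosing $a_{v,i}\geq 1$ to be a common multiple of the integers $a_{v,i,j}$ appearing in denominators that reach $\LL^{-1}$, the restriction to $U_{v,i}$ of $(1-(\LL T)^{a_{v,i}})^{d_{v,i}}Z_v(T,\a(\cdot))$ is then a Laurent series whose remaining poles lie in $|T|\geq\LL^{-1+\delta}$ for a suitable $\delta>0$, giving the claimed convergence. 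The main obstacle will be the bookkeeping of the pole structure under the specialization and, above all, establishing the uniformity over the strata $U_{v,i}$ — i.e. invoking the constructibility of the partition and the existence of a uniform resolution from \cite{CL}, lemma 6.5.1, and matching the resulting denominators with the maximal faces of the correct Clemens subcomplex $\Cl_v^{\an}(X,D)_\xi$; the rest is a routine transcription of the $\scr{A}_D=\scr{A}$ case treated in \cite{CL}.
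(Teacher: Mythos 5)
Your proposal follows essentially the same route as the paper, which likewise reduces the statement to \cite{CL} — lemma 6.5.1 for the uniform resolution of the indeterminacies of $f_{\xi}$ on the strata of a constructible partition of $V$, the change of variables to compute the integral on the fibre of the resolution, and proposition 5.3.1 controlling the poles by the maximal faces of $\Cl_v^{\an}(X,D)_{\xi}$ — the only adaptation being that $\scr{A}_D$ may differ from $\scr{A}$. One small slip: for $\al\notin\scr{A}_D$ one has $\rho'_{\al}=\rho_{\al}$ (not $\rho_{\al}\geq\rho'_{\al}+1$), but the conclusion you draw from it — that after specialising $T_{\al}\mapsto T^{\rho'_{\al}}$ these factors only produce poles at radius $\LL^{-1+1/\rho_{\al}}$, away from $\LL^{-1}$, so that only vertices of $\scr{A}_D$ with $d_{\al}(\xi)=0$ contribute to the pole at $\LL^{-1}$ — is nevertheless correct.
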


\section{Proof of the main theorem and its corollary}
According to (\ref{summation.zeta}), we may write the multivariate zeta function $Z(\T)$ in the form
$$Z(\T) = \LL^{(1-g)n}Z(\T,0) + \LL^{(1-g)n}\sum_{\xi\in V\setminus\{0\}}Z(\T,\xi)$$
where $V$ is a finite-dimensional $k$-vector space contained in $k(C)^n$ and $g$ is the genus of the smooth projective curve $C$. We are interested in the convergence properties of  
$$Z(T) = \LL^{(1-g)n}Z(T,0) + \LL^{(1-g)n}\sum_{\xi\in V\setminus\{0\}}Z(T,\xi)$$
where for all $\xi$, we have $Z(T,\xi) = Z((T^{\rho'_{\al}})_{\al},\xi).$ Recall we still assume $k=\C$.
\subsection{The function $Z(T,0)$}\label{zt0}
In section \ref{trivchar}  we showed the convergence beyond~$\LL^{-1}$ of the product of local factors $Z_v(T,0)$  over $v\in C_0$ after multiplication by the polynomial
$$\prod_{\al\in\scr{A}\setminus\scr{A}_D} (1-\LL^{\rho_{\al}-1}T^{\rho_{\al}})$$
at each place $v$. To derive a meromorphic continuation for $\prod_{v\in C_0}Z_v(T,0)$, it therefore suffices to describe the convergence of the product
$$\prod_{\al\in\scr{A}\setminus\scr{A}_D}\prod_{v\in C_0} (1-\LL^{\rho_{\al}-1}T^{\rho_{\al}})^{-1}.$$
In the latter, we recognise for each $\al\in \scr{A}\setminus \scr{A}_D$ the Euler product decomposition of the motivic zeta function of $C_0$ at $\LL^{\rho_{\al}-1}T^{\rho_{\al}}$. 

Denote by $Z_C(T) = \sum_{m\geq 0}[S^m C] T^m\in\M_k[[T]]$  the motivic zeta function of the smooth projective curve $C$. Since $k=\C$ is algebraically closed, we have, by theorem 1.1.9 in \cite{Kapr}, that $$Z_C(T) = \frac{P_C(T)}{(1-T)(1-\LL T)}$$ where $P_C(T)\in\M_k[T]$ is a polynomial of degree $2g$ such that $P_C(\LL^{-1}) = \LL^{-g}[J(C)]$, with $J(C)$ being the Jacobian of $C$. 
Moreover, the zeta function $Z_{C_0}(T)$ of the open dense subset $C_0\subset C$ is given by
$$Z_{C_0}(T) = \prod_{v\in C_0}(1-T)^{-1} = Z_C(T)\prod_{v\in C\setminus C_0}(1-T).$$
We have
\begin{eqnarray*}\prod_{v\in C} \prod_{\al\in\scr{A}\setminus \scr{A}_D}(1-\LL^{\rho_{\al}-1}T^{\rho_{\al}})^{-1} &=& \prod_{\al\in\scr{A}\setminus \scr{A}_D}Z_C(\LL^{\rho_{\al}-1}T^{\rho_{\al}})\\
& =& \prod_{\al\in\scr{A}\setminus\scr{A}_D}\frac{P_C(\LL^{\rho_{\al}-1}T^{\rho_{\al}})}{(1-\LL ^{\rho_{\al}-1}T^{\rho_{\al}})(1-(\LL T)^{\rho_{\al}})}.\end{eqnarray*}
Since $1-\LL^{\rho_{\al}-1}T^{\rho_{\al}}$ evaluated at $\LL^{-1}$ is $1-\LL^{-1} = \LL^{-1}[\A^1\setminus\{0\}]$ which is effective, we may conclude that the product over places in $C_0$
$$\prod_{v\in C_0}\prod_{\al\in \scr{A}\setminus\scr{A}_D}(1-\LL^{\rho_{\al}-1}T^{\rho_{\al}})^{-1} $$
 is of the form
$$\frac{F(T)}{\prod_{\al\in\scr{A}\setminus\scr{A}_D}(1-(\LL T)^{\rho_{\al}})}$$
where $F\in \M_k[[T]]$ is a rational power series converging for $|T| < \LL^{-1+\delta}$ for some $\delta>0$ (more precisely, any $\delta\leq \min_{\al\in\scr{A}\setminus \scr{A}_D} \frac{1}{\rho_{\al}}$ works) and taking a non-zero effective value at~$\LL^{-1}$. Thus, by proposition \ref{trivialcharconv}, for a common multiple $a$ of the $\rho_{\al}, \al\in \scr{A}\setminus\scr{A}_D$, we may write the product $\prod_{v\in C_0}Z(T,0)$ in the form
$$\frac{F_1(T)}{(1-(\LL T)^{a})^{|\scr{A}\setminus \scr{A}_{D}|}}$$
for $F_1(T)\in\M_k[[T]]$ converging for $|T|<\LL^{-1 + \delta}$ for some $\delta$ and taking a non-zero effective value at $\LL^{-1}$. Thus, the product $\prod_{v\in C_0}Z_v(T,0)$ has a pole at $\LL^{-1}$ of order $|\scr{A}\setminus \scr{A}_D| = \mathrm{rk}\ \mathrm{Pic} U$, and a meromorphic continuation beyond  $\LL^{-1}$. 

Combining this with the result of section \ref{trivcharS} we see each place $v\in C\setminus C_0$ gives an additional contribution to the pole at $\LL^{-1}$, of order exactly $d_v = 1+ \Cl_v^{\an}(X,D)$. Finally, we can conclude  that there is a real number $\delta >0$,  a Laurent series $G_0(T)\in \M_k[[T]][T^{-1}]$ converging for $|T|<\LL^{-1 + \delta}$, and taking a non-zero effective value at $\LL^{-1}$, and an integer $a\geq 1$ (we may take $a$ to be any common multiple of the $\rho'_{\al},\ \al\in\scr{A}$) such that the product $Z(T,0) = \prod_{v\in C} Z(T,0)$ may be written in the form 
$$Z(T,0) = \frac{G_0(T)}{(1-(\LL T)^a)^{r}},$$
where $$r = \mathrm{rk}\ \mathrm{Pic}(U) +\sum_{v\in C\setminus C_0}(1 + \dim\Cl_v^{\an}(X,D)).$$

\begin{example}\begin{enumerate}\item Assume $U = G$, so that $\mathrm{Pic}(U) = 0$. One then recovers the result from \cite{CL}.
\item Assume $U=X$. Then one may take $C_0 = C$, and the order of the pole is exactly $\mathrm{rk}\ \mathrm{Pic}(X)$. 
\end{enumerate}
\end{example}
\subsection{The function $Z(T,\xi)$}\label{ztxi}
We proceed as in section \ref{zt0}: according to proposition \ref{nontrivcharconv}, for every $\xi\in V\setminus\{0\}$ the product $$\left(\prod_{v\in C_0}Z_v(T,\xi)\right)\prod_{\al\in \scr{A}_0^{D}(\xi)}Z_C(\LL^{\rho_{\al}-1}T^{\rho_{\al}})\in\expp_k[[T]][T^{-1}]$$ converges for $|T| < \LL^{-1 + \delta}$ for some $\delta >0$. We may apply this to the generic point of $V\setminus\{0\}$ and use spreading-out and induction on the dimension to show that there exists a finite constructible partition of $V\setminus\{0\}$ on which the functions $\xi\mapsto d_{\al}(\xi)$ are constant, and such that this convergence holds uniformly in $\xi$ on each piece of the partition. Proposition \ref{nontrivcharS} on the other hand tells us that the order of the pole of $\prod_{v\in C\setminus C_0} Z(T,\xi)$ at $\LL^{-1}$ is at most $\sum_{v} d'_v$ where $d'_v = 1 + \dim \mathrm{Cl}^{\an}_v(X,D)_0$, again uniformly on the pieces of a constructible partition of $V\setminus\{0\}$. Using a partition of $V\setminus\{0\}$ refining the aforementioned two partitions, we may conclude that for any stratum $P$ of this partition, the order  of the pole of 
$$\LL^{(1-g)n}\sum_{\xi\in \a(P)}Z(T,\xi)$$ is at most
$$|\scr{A}_0^D(\xi))| + \sum_{v\in C\setminus C_0}d_v(\xi)$$
for any $\xi\in \a(P)$ (recall $\xi\mapsto d_{\al}(\xi)$, and therefore also $\xi \mapsto |\scr{A}_0^D(\xi))|$ and $\xi \mapsto d_v(\xi)$ are constant on $P$). Lemma 3.5.4 in \cite{CLTi} shows that this is strictly less than the order~$r$ of the pole of $Z(T,0)$ at $\LL^{-1}.$

\subsection{Conclusion of the proof of theorem \ref{main}}
Taking $a$ to be the least common multiple of the integers $\rho'_{\al}$, $\al\in \scr{A}$ and of the integers $a_{v,i}$ appearing in proposition \ref{nontrivcharS}, we have shown that $(1-(\LL T)^{a})^{r}Z(T)$ converges for $T = \LL^{-1}$, and takes a non-zero effective value at $\LL^{-1}$, which concludes the proof of theorem~\ref{main}.

\subsection{Proof of corollary \ref{maincor}} Applying lemma \ref{coefgrowth}, we get corollary \ref{maincor} in the case where $k = \C$. We now explain how we may deduce from this the general case.

All the geometric data in our counting problem involves a finite number of equations over the field $k$: we may therefore assume that everything is defined over a finitely generated subfield of $\C$. Moreover, the assumption $\scr{U}(\OO_v) \neq \varnothing$ for all $v\in C_0$ may be reformulated more geometrically by saying that the volume of the arc space $\scr{L}(\scr{U}_v)$ at the place $v$ should be non-zero. With the notations of section \ref{sect.integralarcspace}, this volume may be expressed by the formula:
\begin{eqnarray*} \mathrm{vol}(\scr{L}(\scr{U}_v)) &= &\sum_{\substack{ A \subset \scr{A}\setminus \scr{A}_D\\ \be\in \scr{B}_{0,v}}}\mathrm{vol}(\Omega(A,\be))\\
& = & \LL^{-n}\sum_{\substack{ A \subset \scr{A}\setminus \scr{A}_D\\ \be\in \scr{B}_{0,v}}}[\Delta(A,\be)].\end{eqnarray*}
Thus, at least one of the sets $\Delta(A,\be)$ for $A \subset \scr{A}\setminus \scr{A}_D$, $\be\in \scr{B}_{0,v}$ has a $k$-point. Since it is defined over $\C$, it also has a $\C$-point.

Thus, we have shown that without loss of generality, even if the original problem was stated over some algebraically closed field $k$ of characteristic zero, we may in fact assume everything is defined over $\C$, and apply corollary \ref{maincor} in this setting. By functoriality of Hilbert schemes, we may then deduce corollary \ref{maincor} over $k$.

\backmatter

\cleardoublepage

\cleardoublepage
\printindex

\end{document}